\newtheorem{theorem}{\textbf{Theorem}}
\newtheorem{proposition}{\textbf{Proposition}}[chapter]
\newtheorem{definition}{\textbf{Definition}}[chapter]
\newtheorem*{claim}{\textbf{Claim}}
\newtheorem{lemma}[proposition]{\textbf{Lemma}}
\newtheorem{corollary}[proposition]{\textbf{Corollary}}
\newtheorem{propdef}[proposition]{\textbf{Proposition-Definition}}
\theoremstyle{definition}
\newtheorem{remark}[proposition]{\textbf{Remark}}
\newtheorem{example}[proposition]{\textbf{Example}}
\newtheorem*{question}{\textbf{Question}}
\def\B{\mathbb{B}}
\def\D{\mathbb{D}}
\def\C{{\mathbb C}}
\def\N{{\mathbb N}}
\def\R{{\mathbb R}}
\def\A{{\mathbb A}}
\def\Z{{\mathbb Z}}
\def\KK{{\mathbb K}}
\def\G{{\mathbb G}}
\def\U{{\mathbb U}}
\def\H{{\mathbb H}}
\def\Q{{\mathbb Q}}
\def\p{\mathbb{P}}
\def\G{{\mathbb G}}
\def\cO{\mathcal{O}}
\def\cM{\mathcal{M}}
\def\cF{\mathcal{F}}
\def\cD{\mathcal{D}}
\def\fs{\mathfrak{s}}
\def\aut{\mathrm{Aut}}
\def\id{\mathrm{id}}
\def\pe{\textup{ := }}
\def\preper{\mathrm{Preper}}
\def\spec{\mathrm{Spec}}
\def\bif{\textup{bif}}
\def\Poly{\textup{Poly}}
\def\xg{x_g}
\def\an{\mathrm{an}}
\def\wdeg{\widetilde{\deg}}
\def\supp{\textup{supp}}
\def\fin{\mathrm{fin}}
\def\esc{\mathrm{esc}}
\DeclareMathOperator{\Gal}{Gal}
\DeclareMathOperator{\Aff}{Aff}
\DeclareMathOperator{\Card}{Card}
\DeclareMathOperator{\dv}{div}
\DeclareMathOperator{\ord}{ord}
\DeclareMathOperator{\poly}{Poly}
\DeclareMathOperator{\mpoly}{MPoly}
\DeclareMathOperator{\pair}{Pair}
\DeclareMathOperator{\mpair}{MPair}
\DeclareMathOperator{\mpcrit}{MPcrit}
\DeclareMathOperator{\st}{st}
\DeclareMathOperator{\crit}{Crit}
\DeclareMathOperator{\stab}{Stab}
\DeclareMathOperator{\lot}{l.o.t.}
\DeclareMathOperator{\red}{red}
\DeclareMathOperator{\reg}{Reg}
\DeclareMathOperator{\diam}{diam}
\DeclareMathOperator{\spf}{Spf}
\DeclareMathOperator{\PL}{PL}
\DeclareMathOperator{\sh}{SH}
\DeclareMathOperator{\proj}{Proj}
\DeclareMathOperator{\length}{Length}
\DeclareMathOperator{\mass}{Mass}
\DeclareMathOperator{\inter}{Inter}
\DeclareMathOperator{\lyap}{Lyap}
\DeclareMathOperator{\codim}{codim}
\def\cal{\mathcal}
\def\and{{\quad\text{and}\quad}}
\author{Charles Favre}
\address{CMLS, \'Ecole polytechnique, CNRS, Institut Polytechnique de Paris, 91128 Palaiseau Cedex, France}
\address{PIMS, University of British Columbia, Department of Mathematics, Vancouver, BC, V6T 1Z2, Canada}
\email{charles.favre@polytechnique.edu}
\author{Thomas Gauthier}
\address{CMLS, \'Ecole polytechnique, Institut Polytechnique de Paris, 91128 Palaiseau Cedex, France}
\email{thomas.gauthier@polytechnique.edu}
\title{The arithmetic of polynomial dynamical pairs}
\begin{document}

\frontmatter 

\begin{abstract}
We study one-dimensional algebraic families of pairs given by a polynomial with a marked point. We prove an "unlikely intersection" statement for such pairs 
thereby exhibiting strong rigidity features for these pairs. We infer from this result the dynamical Andr\'e-Oort conjecture for curves in the moduli space of polynomials, by describing one-dimensional families in this parameter space containing infinitely many post-critically finite parameters. 
\end{abstract}

\begin{altabstract}
Nous \'etudions les familles alg\'ebriques param\'etr\'ees par une courbe des paires de polyn\^omes munis d'un point marqu\'e. Nous d\'emontrons un \'enonc\'e
"d'intersection improbable" pour ces paires, qui se traduit par une forme forte de rigidit\'e pour ces paires. Nous en d\'eduisons  la conjecture d'Andr\'e-Oort dynamique pour les courbes dans l'espace des modules des polyn\^omes, en d\'ecrivant les courbes de cet espace contenant une infinit\'e de param\`etres post-critiquement finis.
\end{altabstract}

\subjclass{}

\keywords{}
\altkeywords{}

\thanks{The second author is partially supported by ANR project ``Fatou'' ANR-17-CE40-0002-01.}

\dedicatory{Aux math\'ematiciens qui nous ont tant inspir\'es Adrien Douady, Tan Lei, Jean-Christophe Yoccoz.}

\maketitle

%
%
%
%
%
%

\tableofcontents

%
%
%
%
%
%

\mainmatter


\chapter*{Introduction}

This book is intented as an exploration of the moduli space  $\Poly_d$ of complex polynomials of degree $d\ge2$ in one variable
using tools primarily coming from arithmetic geometry. 

The Mandelbrot set in $\Poly_2$ has undoubtedly been the focus of the most comprehensive set
of  studies, and its local geometry is still an active research field in connection with the Fatou conjecture, see \cite{Benini-MLC} and the references therein. 
In their seminal work,  Branner and Hubbard
\cite{BH,MR1194004} gave a topological description of the space of cubic polynomials with disconnected Julia sets
using  combinatorial tools. In any degree, $\Poly_d$
is a complex orbifold of dimension $d-1$,  and is therefore naturally amenable to complex analysis and in particular to pluripotential theory.
This observation has been particularly fruitful to describe the  locus of unstability, and to investigate the boundary of the connectedness locus. DeMarco \cite{DeMarco1} constructed a positive closed $(1,1)$ current whose support is precisely the set of unstable parameters. Dujardin and the first author~\cite{favredujardin} then noticed that the Monge-Amp\`ere measure of this current
defines a probability measure $\mu_\bif$ whose support
is in a way the right generalization of the Mandelbrot set in higher degree, capturing the part of the moduli space where the dynamics is the most unstable (see also \cite{BB1} for the case of rational maps). The support of $\mu_\bif$ has a very intricate structure:  it was proved by Shishikura~\cite{Shishikura2} in degree $2$ and later generalized in higher degree by the second author~\cite{Article1} that the Hausdorff dimension of the support of $\mu_\bif$ was maximal
equal to $2(d-1)$.

\bigskip

A polynomial is said to be post-critically finite (or PCF) if all its critical points have a finite orbit. The Julia set of a PCF polynomial is connected, of measure zero, and the dynamics on it is hyperbolic off the post-critical set. PCF polynomials form a countable subset of larger classes of polynomials (such as Misiurewicz, or Collet-Eckmann) for which the thermodynamical formalism is well understood, \cite{MR2332354,MR2784276}. 
They  also play a pivotal role in the study of the
connectedness locus of $\Poly_d$: their distribution was described in a series of papers \cite{favregauthier,Gauthier-Vigny1,Gauthier-Vigny2} and proved to represent the bifurcation measure $\mu_\bif$. 

PCF polynomials are naturally defined by $d-1$ equations of the form $P^n(c) = P^m (c)$ where $c$ denotes a critical point and $n,m$ are two distinct integers. In the moduli space, these equations are algebraic
with integral coefficients, so that any PCF polynomial is in fact defined over a number field. 
Ingram \cite{MR2885981} has pushed this remark further and has built a natural height  $ h_\bif \colon \Poly_d(\bar{\Q}) \to \R_+$ for which the set of PCF polynomials coincides with $\{h_\bif =0\}$. 

Height theory yields interesting new perspectives on the geometry of $ \Poly_d$, and more 
specifically on the distribution of PCF polynomials. 
We will be mostly interested here in the so-called dynamical Andr\'e-Oort conjecture which appeared in \cite{BD}, see also \cite{silvermanmoduli}. 

This remarkable conjecture was set out by Baker and DeMarco who were motivated by deep analogies between PCF dynamics and CM points in Shimura varieties, 
and more specifically by works by Masser-Zannier~\cite{MR2457263,Masser-Zannier,MR2918151} on torsion points in elliptic curves.
An historical account of the introduction of these ideas in arithmetic dynamics is given in~\cite[\S 1.2]{BD}, and~\cite[\S 1.2]{BDM}, see also~\cite{Ghioca-exposition}.
We note that this analogy is going far beyond the problems considered in this book, and applies to various conjectures described in~\cite{Silverman,DeMarco-ICM}. We refer to the book by Zannier~\cite{MR2918151}
for a beautiful discussion of unlikely intersection problems in arithmetic geometry.

\smallskip

Baker and DeMarco proposed to characterize irreducible subvarieties of $\Poly_d$ (or more generally of the moduli space of rational maps) containing a Zariski dense subset of PCF polynomials, and conjectured that such varieties were defined by critical relations. This conjecture was proven in degree $3$ in~\cite{specialcubic} and~\cite{Ghioca-Ye-Cubic}, and for unicritical polynomials in \cite{GKH-unicritical} and \cite{GKHY}. 

It is our aim to give a proof of that conjecture for \emph{curves} in $\Poly_d$ for any $d\ge 2$, and based on this result to attempt a classification of these curves 
in terms of combinatorial data encoding critical relations. 

Our proof roughly follows  the line of arguments devised in the original paper of Baker and DeMarco, and relies
on equidistribution theorems of points of small height by Thuillier~\cite{thuillier} and Yuan~\cite{yuan}; on the expansion of the B\"ottcher coordinates; and on Ritt's theory 
characterizing equalities of composition of polynomials.

We needed, though, to overcome several important technical difficulties, such as proving the continuity of metrics naturally attached to families of polynomials. We also 
had to inject new ingredients, most notably some dynamical rigidity results concerning families of  polynomials with a marked point
whose bifurcation locus is real-analytic. 

\bigskip

For the most part of the memoir, we shall work in the general context of polynomial dynamical pairs $(P,a)$
parameterized by a complex affine curve $C$, postponing the proof of the dynamical Andr\'e-Oort conjecture 
to the last chapter. We investigate quite generally the problem of unlikely intersection
that was promoted in the context of torsion points on elliptic curves by Zannier and his co-authors \cite{Masser-Zannier,MR2918151}, and
later studied by Baker and DeMarco \cite{BDM,BD}
in our context. This problem amounts to understanding
when two polynomial dynamical pairs $(P,a)$ and $(Q,b)$ parameterized by the same curve $C$ have an infinite set of common parameters
for which the marked points are preperiodic. We obtain quite definite answers for polynomial pairs, and we prove finiteness theorems
that we feel are of some interest for further exploration.

We have tried to review all the necessary material for the proof of the dynamical Andr\'e-Oort conjecture,
but we have omitted some technical proofs that are already available in the literature in an optimal form. 
On the other hand, we have made some efforts to clarify some proofs which we felt too sketchy in the literature.
The group of dynamical symmetries of a polynomial play a very important role in unlikely intersection problems, and we have thus included 
a detailed discussion of this notion.

Let us now describe in more detail the content of the book.

\paragraph*{Polynomial dynamical pairs}
In this paragraph we present the main players of our memoir. 
The central notion is the one of {\sc polynomial dynamical pair} parameterized by a curve. 
Such a pair $(P,a)$ is by definition an algebraic family of polynomials $P_t$ parameterized by an irreducible affine curve $C$ defined over a field $K$, 
accompanied by a regular function $a \in K[C]$ which defines an algebraically varying marked point. 
Most of the time, our objects will be defined over the field of complex numbers $K= \C$, but it will also be important to consider
dynamical pairs over other fields like number fields, $p$-adic fields, or finite fields.

Any polynomial dynamical pair leaves a "trace" on the parameter space $C$, which may take different forms.
Suppose first that $K$ is an arbitrary field, and let $\bar{K}$ be an algebraic closure of $K$. The first basic object to consider is  the set $\preper(P,a)$ of (closed) points $t\in C(\bar{K})$ such that 
$a(t)$ is preperiodic under $P_t$. This set is either equal to $C$ or is at most countable. 

A slightly more complicated  but equally important object one can attach to $(P,a)$ is the following divisor. 
Let $\bar{C}$ be the completion of $C$, that is the unique projective algebraic curve containing $C$ as a  Zariski dense open subset, 
and smooth at all points $\bar{C} \setminus C$.
Points in $\bar{C} \setminus C$ are called branches at infinity of $C$.
Any pair $(P,a)$ induces an effective divisor $\mathsf{D}_{P,a}$ on $\bar{C}$, which is obtained by setting
\begin{equation}\label{eq:def-div}
\ord_{\mathfrak{c}} \left(\mathsf{D}_{P,a}\right)
:= \lim_{n\to\infty} -\frac1{d^n} \min \{ 0, \ord_{\mathfrak{c}} (P^n(a))\}
,\end{equation} 
for any branch $\mathfrak{c}$ at infinity.
The limit is known to exist and is always a rational number, see \S \ref{sec:div dyn pair}.

When $K =\C$, one can associate more topological objects to a dynamical pair. 
One can consider the locus of stability of the pair $(P,a)$ which consists of the open set over which
the family of holomorphic maps $\{ P^n (a)\}_{n\ge0}$ is normal. Its complement is the {\sc bifurcation locus} which we denote by $\mathrm{Bif}(P,a)$.
This set can be characterized using potential theory as follows.
Recall the definition of the Green function of a polynomial $P$ of degree $d$: 
\[
g_P(z):= 
\lim_{n\to\infty} \frac1{d^n} 
\max \{ \log |P^n (z)| , 0\}
, \]
so that $\{ g_P =0\}$ is the filled-in Julia set of $P$ consisting of those points having bounded orbits.
On the parameter space $C$, we then define the function 
\[g_{P,a} (t) = g_{P_t} (a(t)). \] 
It is a non-negative continuous
subharmonic function on $C$, and the support of the measure $\mu_\bif = \Delta g_{P,a}$  is precisely equal to $\mathrm{Bif}(P,a)$.
Of crucial technical importance is the following result from \cite{continuity} which relates the function $g_{P,a}$ to the divisor defined above. 
\begin{theorem}\label{thm:continuity0}
In a neighborhood of any  branch at infinity $\mathfrak{c} 
\in \bar{C}$, one has the expansion
\[ g_{P,a} (t) = \ord_{\mathfrak{c}}\left(\mathsf{D}_{P,a}\right)\, \log |t|^{-1} + \tilde{g}(t)
\]
where $t$ is a local parameter centered at $\mathfrak{c}$ and  $\tilde{g}$ is continuous at $0$.
\end{theorem}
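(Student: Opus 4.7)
The plan is to combine a Laurent-series analysis of the iterates $P_t^n(a(t))$ near $\mathfrak{c}$ with the Böttcher representation of the Green function at infinity, and to extract the stated expansion by iterating the functional equation for $g_{P_t}$.

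First I would pick a local parameter $t$ at $\mathfrak{c}$, identifying a neighborhood of $\mathfrak c$ in $\bar C$ with a disk $\D$, so that the coefficients of $P_t$ and the marked point $a(t)$ become meromorphic in $t$. After conjugating by a meromorphic linear map $\ell_t$, one may assume $P_t$ is monic and centered. Its Böttcher coordinate $\phi_{P_t}(z)=z+O(z^{-1})$ is then defined on $\{g_{P_t}>G\}$ for some fixed $G>0$, and $g_{P_t}=\log|\phi_{P_t}|$ there. Set $k_n:=-\min\{0,\ord_{\mathfrak c}(P^n(a))\}$, so by definition $k_n/d^n\to\ord_{\mathfrak c}(\mathsf{D}_{P,a})$.

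If $\ord_{\mathfrak c}(\mathsf{D}_{P,a})=0$, the pole orders $k_n$ are $o(d^n)$ and one shows directly that $d^{-n}\log^+|P_t^n(a(t))|$ converges uniformly on a neighborhood of $\mathfrak{c}$, so $g_{P,a}$ is continuous at $\mathfrak c$ and the expansion is trivial. Otherwise, for $n\ge n_0$ large, $P_t^n(a(t))$ has a genuine pole of order $k_n$ at $t=0$ and can be written
\[ P_t^n(a(t))=c_n(t)\,t^{-k_n}\bigl(1+\varepsilon_n(t)\bigr), \quad c_n(0)\ne 0,\ \varepsilon_n(t)\to 0, \]
with $k_{n+1}=dk_n$ for $n\ge n_0$ (using monicness), hence $k_n/d^n=\ord_{\mathfrak c}(\mathsf{D}_{P,a})$ exactly. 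For $t\ne 0$ small and $n$ large, $P_t^n(a(t))$ lies deep in the Böttcher domain, so
\[ g_{P,a}(t)=\frac{1}{d^n}\log\bigl|\phi_{P_t}(P_t^n(a(t)))\bigr|=\ord_{\mathfrak c}(\mathsf{D}_{P,a})\log|t|^{-1}+\frac{1}{d^n}\log|c_n(t)|+o(1), \]
as $t\to 0$ with $n$ fixed large. Consequently $\tilde g(t):=g_{P,a}(t)-\ord_{\mathfrak c}(\mathsf{D}_{P,a})\log|t|^{-1}$ equals $\lim_n d^{-n}\log|c_n(t)|$ on the punctured disk, and the theorem reduces to showing that this limit extends continuously to $t=0$.

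The recursion $P_t^{n+1}(a)=P_t^n(a)^d+O\bigl(P_t^n(a)^{d-1}\bigr)$ translates into $c_{n+1}(t)=c_n(t)^d+t^{k_{n+1}}\eta_n(t)$ for some holomorphic $\eta_n$, and telescoping gives
\[ \frac{\log|c_n(t)|}{d^n}=\frac{\log|c_{n_0}(t)|}{d^{n_0}}+\sum_{j=n_0}^{n-1}d^{-(j+1)}\log\Bigl|1+t^{k_{j+1}}\eta_j(t)/c_j(t)^d\Bigr|. \]
The hard part is to establish \emph{uniform} convergence of this series on a neighborhood of $t=0$ including $t=0$ itself. The natural bound for the general term is $|t|^{k_{j+1}}/|c_j(t)|^d\asymp|P_t^j(a(t))|^{-d}$, which is small once the orbit has escaped; making this precise and uniform in $t$ near $0$ requires uniform lower bounds on $|c_j(t)|$ on a fixed small disk. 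Although $|c_j(0)|=|c_{n_0}(0)|^{d^{j-n_0}}$ may grow or shrink super-exponentially, the relevant ratio $|c_j(t)/t^{k_{j+1}/d}|$ is bounded away from zero precisely because $\ord_{\mathfrak c}(\mathsf{D}_{P,a})>0$ forces the iterates to escape. Once this uniform control is in place, $\tilde g$ extends continuously to $t=0$ and the stated expansion follows.
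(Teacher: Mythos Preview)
The paper itself does \emph{not} prove this theorem. It is stated in the Introduction (referencing the separate paper \cite{continuity}), restated more precisely as Theorem~\ref{thm:main FG}, and then the authors explicitly write: ``The proof of the theorem relies on delicate estimates inspired from a work by Ghioca and Ye~\cite{Ghioca-Ye-Cubic}. We shall refrain from giving a proof of this result.'' They only establish the weaker Proposition~\ref{prop:behave green} (that $\tilde g$ extends as a subharmonic function whose Laplacian has no mass at the origin), using Lemma~\ref{lem:base null}. So there is no ``paper's own proof'' to compare to; the comparison must be with what is known to be required.

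Your outline is in the right spirit, and in the positive case $\ord_{\mathfrak c}(\mathsf{D}_{P,a})>0$ you correctly isolate the crux: uniform control of the telescoping series as $t\to 0$. But you have not actually carried this out. The correction term in your recursion involves the lower coefficients of $P_t$, which may themselves have poles at $t=0$, so $\eta_n$ is not obviously holomorphic and the heuristic ``$\ord_{\mathfrak c}(\mathsf{D}_{P,a})>0$ forces the iterates to escape'' is close to assuming what you want to prove. This is precisely the place where the ``delicate estimates'' of \cite{continuity} enter.

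The more serious gap is your treatment of the case $\ord_{\mathfrak c}(\mathsf{D}_{P,a})=0$. You assert that bounded (or $o(d^n)$) pole orders imply $d^{-n}\log^+|P_t^n(a(t))|$ converges uniformly near $\mathfrak c$, but this does not follow. Bounded $k_n$ is a statement about the \emph{valuation} of $P_t^n(a(t))$ as a Laurent series; it says nothing about the size of the coefficients. When the family degenerates (the $a_i(t)$ have poles at $0$), the iterates $P_t^n(a(t))$ can have bounded pole order yet enormous constant terms growing with $n$, so the convergence is not uniform and continuity of $g_{P,a}$ at $\mathfrak c$ does not come for free. This is exactly case~(3) of the paper's Theorem~\ref{thm:main FG}, which asserts the nontrivial conclusion $\tilde g(0)=0$, and it is one of the genuinely hard cases the authors defer to \cite{continuity}.
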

This result can be interpreted in the langage of complex geometry by saying that $g_{P,a}$ induces a continuous semi-positive metrization 
on the $\Q$-line bundle $\mathcal{O}_{\bar{C}} (\mathsf{D}_{P,a})$. This technical fact will be of crucial importance when we shall apply techniques from arithmetic geometry.

\smallskip

Let us now suppose that $K = \KK$ is a number field. For any place $v$ of $\KK$, denote by $\KK_v$
the completion of $\KK$, and by $\C_v$ the completion of its algebraic closure.
It is then possible to mimic the previous constructions at any (finite or infinite) place
$v$ of $\KK$ so as to obtain functions $g_{P,a,v} \colon C^{\an}_v \to \R_+$ 
on the analytification (in the sense of Berkovich) $C^{\an}_v$  of the curve $C$ over $\C_v$.
Combining these functions yield a height function 
$h_{P,a}\colon C(\bar{\KK}) \to \R_+$. 

Alternatively, we may start from the standard Weil height $h_{\st}\colon \mathbb{P}^1(\bar{\KK})\to \R_+$, see e.g.~\cite{Silvermandiophantine}. 
Then for any polynomial with algebraic coefficients, we 
define its canonical height~\cite{call-silverman}: 
\[
h_P(z):= 
\lim_{n\to\infty} \frac1{d^n} h_{\st} (P^n(z)), \]
and finally set $h_{P,a}(t) := h_{P_t}(a(t))$.
Using the Northcott theorem, one obtains that $\{ h_{P,a} =0\}$ coincides with the set $\preper(P,a)$ of parameters $t\in C(\bar{\KK})$ for which $a(t)$ is a preperiodic point of $P_t$.

\smallskip

It is an amazing fact that all the objects attached to a dynamical pair $(P,a)$ we have seen so far are tightly interrelated, as the 
next theorem due to DeMarco \cite{demarco} shows. 

An isotrivial pair $(P,a)$ is a pair which is conjugated to a constant polynomial and a constant marked point
possibly after a base change. A marked point is {\sc stably preperiodic} when there exist two integers $n>m$ such that 
$P_t^n(a(t))= P_t^m(a(t))$.

 \begin{theorem}\label{thm:demarco-stab}
Let $(P,a)$ be a dynamical pair of degree $d\ge 2$ parametrized by an affine irreducible curve $C$ 
defined over a number field $\KK$.
If the pair is not isotrivial, then the following assertions are equivalent:
\begin{enumerate}
\item the set $\preper(P,a)$ is equal to $C(\bar{\KK})$;
\item the marked point is stably preperiodic;
\item the divisor $\mathsf{D}_{P,a}$ of the pair $(P,a)$ vanishes;
\item for any Archimedean place $v$, the bifurcation measure $\mu_{P,a,v} := \Delta g_{P,a,v}$ vanishes;
\item the height $h_{P,a}$ is identically zero.
\end{enumerate}
\end{theorem}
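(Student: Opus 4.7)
The natural strategy is to establish a loop of implications with (2) at the hub. The direction $(2)\Rightarrow(1),(3),(4),(5)$ is formal: if $P^n(a)\equiv P^m(a)$ in $\cO(C)$ for some $n>m\ge 0$, then the orbit $\{P^k(a)\}_{k\ge 0}$ is a finite set of regular functions, which forces every $t\in C(\bar\KK)$ to give a preperiodic marked point, the orders $\ord_{\mathfrak{c}}(P^k(a))$ at each branch at infinity to be uniformly bounded (so $\mathsf D_{P,a}=0$), and both $g_{P,a,v}$ and $h_{P,a}$ to vanish identically. The equivalence $(1)\Leftrightarrow(5)$ is then Northcott's theorem applied to the Call-Silverman canonical height $h_{P_t}$: one has $h_{P_t}(a(t))=0$ iff $a(t)$ is preperiodic under $P_t$, so $h_{P,a}\equiv 0$ says exactly that $\preper(P,a)=C(\bar\KK)$.

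For $(3)\Leftrightarrow(4)$ I would fix any place $v$ of $\KK$. Theorem~\ref{thm:continuity0} together with its non-archimedean analogue produces a continuous extension of $g_{P,a,v}$ to the compact Berkovich curve $\bar C_v^{\mathrm{an}}$ with the expansion $g_{P,a,v}(t)=\ord_{\mathfrak{c}}(\mathsf D_{P,a})\log|t|^{-1}+O(1)$ at each branch $\mathfrak{c}$, so by a Poincar\'e-Lelong / Stokes computation one gets $\mass(\mu_{P,a,v})=\deg(\mathsf D_{P,a})$; since $\mathsf D_{P,a}$ is effective, the measure vanishes iff the divisor does. For $(5)\Rightarrow(4)$, the height decomposes as a positively-weighted sum over places of the non-negative local Green functions $g_{P,a,v}$ evaluated at the Galois conjugates of $t$; the hypothesis $h_{P,a}\equiv 0$ on $C(\bar\KK)$ therefore forces $g_{P,a,v}$ to vanish at every algebraic point of $C_v^{\mathrm{an}}$, and by continuity of $g_{P,a,v}$ combined with Zariski-density of algebraic points one concludes $g_{P,a,v}\equiv 0$, which is (4).

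The hinge of the proof is $(4)\Rightarrow(2)$. Fix an archimedean place $v$. Since $(4)\Rightarrow(3)$, the function $g_{P,a,v}$ extends continuously to the compact Riemann surface $\bar C_v$ and, being harmonic and non-negative, is a constant $c\ge 0$. Non-isotriviality rules out $c>0$: if $c>0$, then $a(t)$ lies in the escape basin of $P_t$ for every $t$ and the B\"ottcher coordinate $\varphi_{P_t}(a(t))$, a holomorphic function of $t$ with constant modulus $e^c$, is itself constant by the open mapping theorem; the resulting identity $a(t)=\varphi_{P_t}^{-1}(w_0)$, read against the algebraicity of the pair, forces $(P,a)$ to be conjugate, possibly over a finite base change, to a constant pair, contradicting non-isotriviality. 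Hence $c=0$, i.e.\ $a(t)$ lies in the filled Julia set of $P_t$ for every $t\in C(\C)$, and $\{P^n_t(a(t))\}_n$ is locally uniformly bounded, hence normal on $C$. The genuine obstacle is the last rigidity step: a non-isotrivial polynomial pair whose iterated marked orbit forms a normal family staying in the filled Julia sets must satisfy $P^n(a)\equiv P^m(a)$ on $C$ for some $n>m$. Following McMullen-DeMarco, I would resolve it by producing a holomorphic motion of the repelling periodic portraits of $P_t$ and using density of preperiodic points in $J_{P_t}$ to force $a(t)$ to track a preperiodic orbit, closing the loop back to (2).
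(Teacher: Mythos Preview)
Your easy implications and the equivalences $(1)\Leftrightarrow(5)$, $(3)\Leftrightarrow(4)$, $(5)\Rightarrow(4)$ are correct. The gap is in the hinge $(4)\Rightarrow(2)$. In your $c>0$ branch, the B\"ottcher coordinate $\varphi_{P_t}$ is only defined on $\{g_{P_t}>G(P_t)\}$; since a non-isotrivial family typically degenerates at some branch at infinity, $G(P_t)$ is unbounded on $C$ and $\varphi_{P_t}(a(t))$ need not be globally defined. More seriously, even granting $\varphi_{P_t}(a(t))\equiv w_0$ on an open set, your inference that the pair is isotrivial does not follow: the family $P_t$ can vary while $a(t)=\varphi_{P_t}^{-1}(w_0)$ tracks a fixed B\"ottcher coordinate, and ``algebraicity of the pair'' does not by itself rule this out. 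In your $c=0$ branch, the holomorphic-motion sketch is too vague to close: repelling portraits can collide or change type globally, $a(t)$ may lie in the interior of $K(P_t)$ rather than in $J(P_t)$, and nothing in your outline forces $a(t)$ to coincide with a specific preperiodic section.

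The paper does not split on $c$. From $\mu_{P,a,v}=0$ one gets $\mathrm{Bif}(P,a)=\varnothing$, hence (by the characterization of the bifurcation locus via transversally prerepelling parameters) there are no properly prerepelling parameters. After replacing $P$ by an iterate and base-changing, two repelling fixed points can be followed as regular sections and conjugated to $0$ and $1$ over a Zariski open $C^*\subset C$. If $a$ is not stably preperiodic then no $P^n(a)$ is identically $0$ or $1$, and a proper intersection with either would be a properly prerepelling parameter; hence each $P^n(a)$ is a regular map $C^*\to\A^1\setminus\{0,1\}$. Since the $P^n(a)$ are pairwise distinct, De~Franchis' finiteness theorem for nonconstant maps to a hyperbolic curve forces $P^n(a)$ to be constant for all large $n$; feeding back infinitely many constants $b$ with $t\mapsto P_t(b)$ constant shows $P_t$ itself is constant, contradicting non-isotriviality. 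This De~Franchis step is the rigidity input your sketch is missing.
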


A pair $(P,a)$ which satisfies either one of the previous conditions is said to be passive, otherwise it is called 
an {\sc active pair}. For an active pair, $\preper(P,a)$ is countable, 
the bifurcation measure  $\mu_{P,a}$ is non trivial, and the height $h_{P,a}$ is non zero.

\medskip

\paragraph*{Holomorphic rigidity for dynamical pairs}
Rigidity results are pervasive in (holomorphic) dynamics. 
One of the most famous rigidity result was obtained by Zdunik \cite{zdunik} and states the following.
The measure of maximal entropy of  a polynomial $P$ is 
 absolutely continuous to the Hausdorff measure of its Julia set iff $P$ is conjugated by an affine transformation
 to either a monomial map $M_d(z) = z^d$, or to a Chebyshev polynomial $\pm T_d$ where $T_{d}(z+z^{-1})= z^d+z^{-d}$.
In particular, these two families of examples are the only ones having a smooth Julia set, a theorem due to Fatou \cite{Fatou}.

The following analog of Zdunik's result for polynomial dynamical pairs is our first main result. 
\begin{restatable}{Theorem}{rigidity}\label{tm:rigidaffine}
Let $(P,a)$ be a dynamical pair of degree $d\ge 2$ parametrized by a connected Riemann surface $C$. 
Assume that $\mathrm{Bif}(P,a)$ is non-empty and included in a smooth real curve. 
Then one of the following holds:
\begin{itemize}
\item 
either  $P_t$ is conjugated to $M_d$ or $\pm T_d$ for all $t\in C$;
\item
or  there exists a univalent map $\imath\colon \D \to C$ such that 
$\imath^{-1}(\mathrm{Bif}(P,a))$ is a non-empty closed and totally disconnected perfect subset of the real line
and the pair $(P \circ \imath, a \circ \imath)$ is conjugated to a real family over $\D$. 
\end{itemize}
\end{restatable}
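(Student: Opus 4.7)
The plan is to argue by a dichotomy based on whether $\mathrm{Bif}(P,a)$ contains a real-analytic arc of the smooth real curve $L$ containing it. First I localize around $t_0 \in \mathrm{Bif}(P,a)$ via a holomorphic chart $\imath\colon \D\to C$ with $\imath(0)=t_0$ and $\imath^{-1}(L)=\R\cap\D$. The pulled-back Green function $g_{P,a}\circ\imath$ is continuous and subharmonic on $\D$, harmonic off $\R\cap\D$, and its Laplacian $\mu_{P,a}$ is supported on the compact set $K := \imath^{-1}(\mathrm{Bif}(P,a))\subset\R\cap\D$. Since the pair is active by hypothesis, $\mu_{P,a}$ carries no atoms and $K$ is a perfect set. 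Two cases then arise: either $K$ has non-empty interior in $\R$, or $K$ is totally disconnected.

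In the first case, the aim is to show that every $P_t$ is affinely conjugate to $M_d$ or $\pm T_d$. I pick a Misiurewicz parameter $t_1$ in the interior of the arc $K$; by the classical transversality at Misiurewicz parameters (Levin, Tsujii), the map $t \mapsto P_t^n(a(t))$ is, after subtracting the holomorphically-moving repelling cycle, a local biholomorphism near $t_1$ intertwining a neighborhood of $t_1$ in $\mathrm{Bif}(P,a)$ with (the image under the local linearization of) the Julia set $J_{P_{t_1}}$. Consequently $J_{P_{t_1}}$ contains a real-analytic arc, and Fatou's classical rigidity theorem (in the sharp form due to Eremenko and Zdunik) forces $P_{t_1}$ to be affinely conjugate to $M_d$ or $\pm T_d$. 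Since the locus of such $t$ is Zariski closed in $C$ and contains a whole arc, analytic continuation along the connected curve $C$ yields the first alternative of the theorem.

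In the second case, $K$ is a non-empty totally disconnected perfect subset of $\R$, and the task is to realize $(P\circ\imath, a\circ\imath)$ as a real family over $\D$. The strategy is to construct an antiholomorphic involution $\sigma\colon \D\to\D$ fixing $\R\cap\D$, together with a holomorphic family of affine conjugacies $(\Phi_t)_{t\in\D}$ satisfying $\Phi_t \circ P_t \circ \Phi_t^{-1}(z) = \overline{P_{\sigma(t)}(\bar z)}$ and $\Phi_t(a(t)) = \overline{a(\sigma(t))}$. At each Misiurewicz parameter, the conjugacy $\Phi_t$ is determined up to the finite group of dynamical symmetries of $P_{t_0}$ by matching the repelling cycles of $P_t$ and $\overline{P_t}$ together with their real-analytic markers lying in $\R$. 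Density of Misiurewicz parameters in $K$, combined with a holomorphic motion argument based at $K$ (the harmonicity of $g_{P,a}$ off $K$ providing the transverse quasi-conformal extension), then glues these local conjugacies into a global equivariant datum, possibly after shrinking $\D$ and passing to a finite cover trivializing the symmetry group. Absorbing $\Phi_t$ into the $z$-coordinate finally exhibits the pair as a real family over $\D$ in the sense required.

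The principal obstacle I anticipate lies in the second case: although the pointwise involution $\sigma$ is forced at each Misiurewicz parameter, producing a \emph{holomorphic} family $\Phi_t$ well-defined on \emph{both} connected components of $\D\setminus\R$ requires simultaneous control of the dynamical symmetry group of $P_t$ as $t$ varies and of the monodromy of the pointwise conjugacies when one analytically continues around the Cantor set $K$. A $\lambda$-lemma-type extension of the holomorphic motion of $K$, compatible with the dynamics and with the sought antiholomorphic involution, appears to be the delicate technical core; showing that this motion extends symmetrically across both half-disks, rather than collapsing $\D\setminus K$ onto a single half-disk, is where the real-analyticity of $L$ must be used in an essential rather than formal way.
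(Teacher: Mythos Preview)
Your Case 1 is essentially correct and coincides with the paper's argument: Tan Lei similarity at a transversally prerepelling parameter transfers the smooth-curve hypothesis to the Julia set, and Zdunik's theorem then forces integrability. One small imprecision: $C$ is only a Riemann surface, so ``Zariski closed'' should be replaced by ``closed analytic''; the conclusion still propagates over the connected $C$.

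Your Case 2, however, has a genuine gap, and the paper's route is entirely different from the one you sketch. You propose to build an antiholomorphic involution $\sigma$ and a holomorphic family of affine conjugacies $\Phi_t$ via holomorphic motions based on the Cantor set $K$, and you yourself flag the monodromy/extension problem as the ``delicate technical core''. In fact no such construction is needed. The paper instead proves \emph{pointwise} realness at each transversally prerepelling parameter $t_0$ by a transcendental argument due to Eremenko--van Strien: since Tan Lei similarity gives $\phi^{-1}(J(P_{t_0}))\subset\R$ for the linearizing coordinate $\phi$ at any repelling periodic point, the multiplier is real; because some critical point escapes, the Lyapunov exponent exceeds $\log d$, so one can choose a periodic point whose linearizing map $\phi_p$ is an entire function of order $<1$. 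All zeros of $\phi_p$ are real (they lie in $J(P_{t_0})$), so Hadamard factorization forces $\phi_p(\R)\subset\R$, hence $J(P_{t_0})\subset\R$ and $P_{t_0}$ is conjugate to a real polynomial.

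Once this is known for the dense set of transversally prerepelling parameters, the passage to a real family is purely algebraic rather than via holomorphic motions: after normalizing to monic and centered, the conjugacy is by $z\mapsto\alpha z$ with $\alpha^{2(d-1)}=1$, and for each such $\alpha$ the locus $\Gamma_\alpha=\{t:\alpha^{-1}P_t(\alpha z)\in\R[z]\}$ is a real-analytic subvariety. The bifurcation locus sits inside the finite union $\Gamma=\bigcup_\alpha\Gamma_\alpha$; off a discrete exceptional set, $\Gamma$ is locally a single smooth $\Gamma_\alpha$, and a reparametrization sends it to $\R\cap\D$. This gives the real family directly, with no need to control monodromy or extend a motion across $K$.
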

We say that a polynomial dynamical pair $(P,a)$ parameterized by the unit disk is a real family whenever the power series defining the coefficients of $P$ and the marked point
have all real coefficients. 

The previous theorem is a crucial ingredient for handling the unlikely intersection problem that we will describe later. Its proof builds on a transfer principle from the parameter space to the dynamical plane which can be decomposed into two parts. 

The first step is to find a parameter $t_0$ at which $a(t_0)$ is preperiodic to a repelling orbit of $P_{t_0}$ and such that 
$t \mapsto a(t)$ is transversal at $t_0$ to the preperiodic orbit degenerating to $a(t_0)$. This step builds on an argument of Dujardin \cite{dujardin-higher}.
The second step relies on Tan Lei's similarity theorem \cite{similarity} which shows that the 
bifurcation locus $\mathrm{Bif}(P,a)$ near $t_0$ is conformally equivalent at small scales to 
the Julia set of $P_{t_0}$.

Combining these two ingredients, we see that 
if $\mathrm{Bif}(P,a)$ is connected, then Zdunik's theorem implies that $P_t$ is isotrivial
conjugated to $M_d$ or $\pm T_d$ for all $t\in C$. When $\mathrm{Bif}(P,a)$ is disconnected, then we prove that all multipliers of $P_{t_0}$ are real
and we conclude that $P_t$ is real for all nearby parameters using an argument of Eremenko and Van Strien \cite{Eremenko-vanStrien}.

\smallskip 

In many results that we present below, we shall exclude all polynomials that are affinely conjugated to either $M_d$ or $\pm T_d$. These dynamical systems carry  different names in the literature: Zdunik~\cite{zdunik} name them maps with parabolic orbifolds; they are called special in~\cite{boundedheight,pakovich};
and Medvedev and Scanlon call  them non-disintegrated polynomials, see the discussion on~\cite[p.16]{medvedev-scanlon}. We shall refer them to as {\sc integrable} polynomials by analogy with the notion of integrable systems in hamiltonian dynamics (see \cite{MR2325017,MR1098340}). A family of polynomials $\{P_t\}_{t\in C}$ will be called non-integrable whenever there exists a dense open set $U\subset C$ such that $P_t$ is not integrable for any $t\in U$.

\medskip

\paragraph*{Unlikely intersections for polynomial dynamical pairs}
Our next objective is to investigate the problem of characterizing when two dynamical pairs $(P,a)$ and $(Q,b)$ parameterized by the same algebraic curve $C$
 leave the same "trace" on $C$. 
 
Analogies with arithmetic geometry suggested that the quite weak condition of $\mathrm{Preper}(P,a)\cap\mathrm{Preper}(Q,b)$ being infinite in fact implies very strong relations between the two pairs. 
This phenomenon was first observed for Latt\`es maps by Masser and Zannier \cite{Masser-Zannier}, and later for 
unicritical polynomials by Baker and DeMarco \cite{BDM}, and for more general families of polynomials parameterized by the affine line by Ghioca, Hsia and Tucker~\cite{Ghioca-Hsia-Tucker2}.
We refer to the surveys~\cite{Ghioca-exposition}, \cite{DeMarco-ICM} and \cite{Current-trends-arithmetic} where this problem is also addressed.

A precise conjecture was formulated by DeMarco in~\cite[Conjecture 4.8]{DeMarco-Kawa}:  up to symmetries and taking iterates the two families $P$ and $Q$ are actually equal, and the marked points belong to the same grand orbit. 
 In other words, the existence of unlikely intersections forces some algebraic rigidity between the dynamical pairs.

We prove here DeMarco's conjecture for polynomial dynamical pairs defined over a number field.

\begin{restatable}{Theorem}{unlikely}\label{tm:unlikely}
Let $(P,a)$ and $(Q,b)$ be active non-integrable dynamical pairs parametrized by an irreducible algebraic curve $C$ of respective degree $d,\delta\geq2$.
Assume that the two pairs are defined over a number field $\KK$.
Then, the following are equivalent:
\begin{enumerate}
\item the set $\mathrm{Preper}(P,a)\cap\mathrm{Preper}(Q,b)$ is an infinite subset of $C(\bar{\KK})$; 
\item
the two height functions $h_{P,a}, h_{Q,b} \colon C(\bar{\KK}) \to \R_+$ are proportional;
\item  there exist integers $N,M\geq1$, $r,s\ge0$, and families $R,\tau$ and $\pi$ of polynomials of degree $\ge 1$ parametrized by $C$ such that
\begin{equation}\label{eq:Rit}
\tau\circ P^N= R\circ\tau \ \text{ and } \ \pi\circ Q^M= R\circ\pi, \tag{\dag}
\end{equation}
and $\tau (P^{r}(a))= \pi(Q^{s}(b))$.
\end{enumerate}
\end{restatable}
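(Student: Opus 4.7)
The plan is to prove the cycle (3) $\Rightarrow$ (1) $\Rightarrow$ (2) $\Rightarrow$ (3). The implication (3) $\Rightarrow$ (1) is formal: a semiconjugacy $\tau\circ P^N=R\circ\tau$ sends $P^N$-preperiodic points to $R$-preperiodic points and, up to finite fibers, conversely; combined with the symmetric identity for $(Q,b)$ and the matching condition $\tau(P^r(a))=\pi(Q^s(b))$, the two sets $\preper(P,a)$ and $\preper(Q,b)$ agree modulo a finite set, and Theorem~\ref{thm:demarco-stab} ensures this common set is infinite since both pairs are active.

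For (1) $\Rightarrow$ (2) I would use arithmetic equidistribution. An infinite intersection $\preper(P,a)\cap\preper(Q,b)$ produces a sequence of Galois-invariant $\bar{\KK}$-points of zero canonical height for both pairs. Theorem~\ref{thm:continuity0} supplies the continuity of the adelic Green functions $g_{P,a,v}$ as metrizations of the $\Q$-line bundle $\cO_{\bar C}(\mathsf{D}_{P,a})$, and likewise for $(Q,b)$; this is precisely the semi-positivity/continuity input needed to apply the Yuan--Thuillier equidistribution theorem at every place $v$ of $\KK$. The same sequence equidistributes toward both $\mu_{P,a,v}$ and $\mu_{Q,b,v}$, so at every place the two measures are proportional with a common constant $\kappa$; integration against local heights then yields $h_{P,a}=\kappa\, h_{Q,b}$ globally.

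The hard direction is (2) $\Rightarrow$ (3). Proportionality of heights will give, at every place $v$, the identity $\mu_{P,a,v}=\kappa\,\mu_{Q,b,v}$, so the bifurcation loci coincide in $C$ and the difference $g_{P,a,v}-\kappa g_{Q,b,v}$ is harmonic; Theorem~\ref{thm:continuity0} pins down its behavior at every branch at infinity $\mathfrak{c}\in\bar C\setminus C$. Near such a branch the B\"ottcher coordinates $\varphi_{P_t}$ and $\varphi_{Q_t}$ provide conformal conjugacies of $P_t, Q_t$ to the power maps $z^d, z^\delta$, and the proportionality of the Green functions translates into an algebraic relation between $\varphi_{P_t}(P_t^r(a(t)))$ and $\varphi_{Q_t}(Q_t^s(b(t)))$ in a punctured neighborhood of $\mathfrak{c}$. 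Propagating this local identity algebraically across $C$ and applying Ritt's theorem on polynomial decompositions will then yield the common factor $R$ and the semiconjugacies~(\ref{eq:Rit}). I expect the main obstacle to be exactly this global propagation: converting place-wise analytic proportionality into a single polynomial identity defined over $K(C)$ requires careful monodromy control, and uses the non-integrability hypothesis to avoid the extra symmetries of $M_d$ and $\pm T_d$; the degenerate case where coincidence of bifurcation loci does not itself force a global Ritt identity is handled via Theorem~\ref{tm:rigidaffine}, which rules out such configurations by forcing the pair to be either integrable or real.
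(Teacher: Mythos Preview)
Your overall architecture (the cycle $(3)\Rightarrow(1)\Rightarrow(2)\Rightarrow(3)$) matches the paper, and your treatment of $(3)\Rightarrow(1)$ is fine. There are, however, two genuine gaps.

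\textbf{The rigidity theorem is needed already for $(1)\Rightarrow(2)$, not for $(2)\Rightarrow(3)$.} Equidistribution gives you $n\,\mu_{P,a,v}=m\,\mu_{Q,b,v}$ at every place, hence $h:=n\,g_{P,a,v}-m\,g_{Q,b,v}$ is harmonic on $C^{\an}_v$. But ``integration against local heights'' does not finish the job: you must show $h\equiv 0$. At an Archimedean place, if $h\not\equiv 0$ then $\{h=0\}$ is a real-analytic curve and $\mathrm{Bif}(P,a)=\mathrm{Bif}(Q,b)\subset\{h=0\}$; it is precisely Theorem~\ref{tm:rigidaffine} that rules this out under the non-integrability hypothesis (this is the content of the paper's Theorem~\ref{thm:identical measures}). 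Only once you know $n\,g_{P,a,v}=m\,g_{Q,b,v}$ at Archimedean places do you get $n\,\mathsf{D}_{P,a}=m\,\mathsf{D}_{Q,b}$, after which the non-Archimedean case follows from the maximum principle. You have the right pieces but in the wrong implication.

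\textbf{Your sketch of $(2)\Rightarrow(3)$ omits the three substantive ingredients.} First, nothing in your outline establishes that $d$ and $\delta$ are multiplicatively dependent; the paper proves this separately (Theorem~\ref{tm:multiplicative}) by computing and matching the H\"older exponents of the Green functions at a common transversally prerepelling parameter, and then, when the exponents would be incommensurable, building a real flow preserving the Julia set to force integrability. Without $d^N=\delta^M$ there is no common target map $R$. Second, the ``local identity'' you obtain from B\"ottcher coordinates near a branch at infinity is only an \emph{analytic} relation $\varphi_{P_t}(x)^n=\zeta\,\varphi_{Q_t}(y)^m$ on an adelic germ; the paper converts this into an algebraic curve in $\A^2$ via Xie's algebraization theorem (Theorem~\ref{thm:Junyi}), which you do not invoke. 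Third, once one has a $(P_t^{NL},Q_t^{ML})$-invariant algebraic curve, the extraction of $R,\tau,\pi$ is not ``Ritt's theorem'' alone but the Medvedev--Scanlon/Pakovich classification of invariant curves under product polynomial maps (Theorem~\ref{thm:inv-curve}). Finally, the passage from ``holds for infinitely many $t$ near one branch'' to ``holds identically on $C$'' is done by checking that the relevant conditions are Zariski-closed in $t$ (the paper's Step~(III)); your ``monodromy control'' does not capture this.
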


It is not difficult to see that (3)$\Rightarrow$(2)$\Rightarrow$(1) so that the main content of the theorem is the implication 
(1)$\Rightarrow$(3).
To obtain (1)$\Rightarrow$(2), we first apply Yuan-Thuillier's equidistribution result \cite{thuillier,yuan} of points of small height: it is precisely 
at this step that the continuity of $\tilde{g}$ in Theorem \ref{thm:continuity0} is crucial. This allows one to prove that 
the bifurcation measures $\mu_{P,a,v}$ and $\mu_{Q,b,v}$ are proportional at any place $v$ of $\KK$. 
From there, one infers the proportionality of height functions i.e. (2) using our above rigidity result (Theorem \ref{tm:rigidaffine}).

The implication (2)$\Rightarrow$(3) is more involved. We first prove that 
 $\deg(P)$ and $\deg(Q)$ are multiplicatively dependent using an argument  taken from \cite{Favre-Dujardin}
 which consists of computing the H\"older constants of continuity of the potentials of the bifurcation measures at a complex place. 
From this, we obtain (3) by combining in a quite subtle way several ingredients including: 
\begin{itemize}
\item
a precise understanding of the expansion at infinity of the B\"ottcher coordinate; 
\item
an algebraization result of germs of curves defined by adelic series due to Xie \cite{Xie}; 
\item
and the classification of invariant curves by product maps $(z,w) \mapsto (R(z),R(w))$.
\end{itemize}
The latter result is due to Medvedev and Scanlon \cite{medvedev-scanlon} whose proof elaborates on Ritt's theory~\cite{Ritt}. This theory aims at describing all possible ways
a polynomial can be written as the composition of lower degree polynomials. It is very combinatorial in nature and was treated by several authors, by Zannier~\cite{MR1244972}, by M\"uller-Zieve~\cite{zieve-muller}, see also the references therein. 
Of particular relevance for us are the series of papers by Pakovich \cite{pako:preimages,pako:rational,pakovich}, and by Ghioca, Nguyen and their co-authors~\cite{MR3632102,GNY}.

\smallskip

As mentioned above, the line of arguments for proving Theorem~\ref{tm:unlikely} is mostly taken from the seminal paper of Baker and DeMarco, but with considerably more technical issues. The core of the proof takes about 8 pages and is the content of \S \ref{sec:biggest proof}.

\smallskip

It would be desirable to extend Theorem \ref{tm:unlikely} to families defined over an arbitrary field of characteristic zero. 
Reducing to the  case over a number field typically uses a specialization argument.
We faced an essential difficulty in the course of this argument, and thus had to require an additional assumption.

\begin{restatable}{Theorem}{unlikelyspecial}\label{tm:unlikely-car0}
Pick any irreducible algebraic curve $C$ defined over a field of characteristic $0$. 
Let $(P,a)$ and $(Q,b)$ be active non-integrable dynamical pairs parametrized by $C$ of respective degree $d,\delta\geq2$.
Assume that
\[\text{ any branch at infinity } \mathfrak{c} \text{ of } C  \text{ belongs to the support of the divisor } \mathsf{D}_{P,a}.
\tag{$\vartriangle$}\]
Then, the following are equivalent:
\begin{enumerate}
\item the set $\mathrm{Preper}(P,a)\cap\mathrm{Preper}(Q,b)$ is an infinite subset of $C$; 
\item  there exist integers $N,M\geq1$, $r,s\ge0$, and families $R,\tau$ and $\pi$ of polynomials parametrized by $C$ such that
\[\tau\circ P^N= R\circ\tau \ \text{ and } \ \pi\circ Q^M= R\circ\pi,\]
and $\tau (P^{r}(a))= \pi(Q^{s}(b))$.
\end{enumerate}
\end{restatable}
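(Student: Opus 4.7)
The implication (2) $\Rightarrow$ (1) is the standard semi-conjugacy direction. Setting $c=\tau(P^{r}(a))=\pi(Q^{s}(b))$, any $t\in C(\bar K)$ for which $c(t)$ is preperiodic under $R_t$ yields a point in $\preper(P,a)\cap\preper(Q,b)$. Because $(P,a)$ is active and non-integrable, the factored pair $(R,c)$ is itself active, so $\preper(R,c)$ is infinite and the conclusion follows.

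The substantive content is (1) $\Rightarrow$ (2), which I would establish by a specialization argument reducing to Theorem~\ref{tm:unlikely}. If $K$ has transcendence degree zero over $\Q$, the coefficients of $C,P,Q,a,b$ lie in a number field and Theorem~\ref{tm:unlikely} applies directly. Otherwise, pick a finitely generated $\Z$-subalgebra $A\subset K$ over which the entire data is defined, and form a flat model $(\mathcal{C},\mathcal{P},\mathcal{Q},\alpha,\beta)\to\spec(A)$ with $\alpha,\beta$ marked sections. Then $\spec(A)$ has positive Krull dimension over $\Q$ and admits uncountably many $\bar\Q$-points, while only countably many proper closed subvarieties come into play.

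The plan is to pick a very general $x\in\spec(A)(\bar\Q)$, outside a countable union of proper closed subsets, so that the specialization $(P_x,a_x),(Q_x,b_x)$ over a number field $\KK_x$ preserves every hypothesis of Theorem~\ref{tm:unlikely}. Activity of $(P_x,a_x)$ is where ($\vartriangle$) intervenes: the algebraically defined divisor $\mathsf{D}_{P,a}$ specializes to $\mathsf{D}_{P_x,a_x}$ for generic $x$, and having full support at every branch at infinity of $C$ guarantees that this specialized divisor remains non-zero. Activity of $(Q_x,b_x)$ holds for generic $x$ because the non-vanishing of $\mathsf{D}_{Q,b}$ survives a generic specialization, while non-integrability is itself a generic condition. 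Writing
\[
\preper(P,a)\cap\preper(Q,b)=\bigsqcup_{n>m,\,n'>m'}Z_{n,m,n',m'},\quad Z_{n,m,n',m'}=\{P^n(a)=P^m(a),\,Q^{n'}(b)=Q^{m'}(b)\}\subset\mathcal{C},
\]
infiniteness of the intersection forces infinitely many of these zero-dimensional subschemes to be non-empty; at a generic $x$, their fibers preserve their degree and hence remain non-empty, ensuring $\preper(P_x,a_x)\cap\preper(Q_x,b_x)$ is again infinite. Theorem~\ref{tm:unlikely} applied to $(P_x,Q_x,a_x,b_x)$ then produces integers $(N_x,M_x,r_x,s_x)$ and families $R_x,\tau_x,\pi_x$ satisfying~(2) at $x$.

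It remains to descend from this pointwise information to $K$. As $x$ ranges over a Zariski-dense set of admissible $\bar\Q$-points, the tuples $(N_x,M_x,r_x,s_x)$ take values in a countable discrete set, so by pigeon-hole some fixed quadruple $(N,M,r,s)$ is realized on a Zariski-dense subset. The conditions $\tau\circ P^N=R\circ\tau$, $\pi\circ Q^M=R\circ\pi$ and $\tau(P^r(a))=\pi(Q^s(b))$ carve out an algebraic variety $V_{N,M,r,s}$ over $\spec(A)$; its fiber at each such $x$ being non-empty on a Zariski-dense set makes $V_{N,M,r,s}\to\spec(A)$ dominant, so its generic fiber is non-empty, yielding $R,\tau,\pi$ defined over a finite extension of $K$. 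The core obstacle lies in the specialization step itself: ensuring that the divisorial data, activity of both pairs, non-integrability, and the infiniteness of the preperiodic intersection all simultaneously survive passage to a very general closed point of $\spec(A)$; it is precisely to secure the survival of the divisorial data of $(P,a)$ that hypothesis ($\vartriangle$) is required.
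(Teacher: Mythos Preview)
Your specialization strategy matches the paper's, but there is a genuine gap at its core. You assert that $\spec(A)$ ``admits uncountably many $\bar\Q$-points''; this is false, since $\bar\Q$ is countable and hence so is $\spec(A)(\bar\Q)$. One therefore cannot find a $\bar\Q$-point outside a countable union of proper closed subvarieties by cardinality alone (indeed $\A^1(\bar\Q)$ itself is such a countable union of closed points), and this breaks both your ``very general $x$'' step and the later pigeon-hole descent. There is a related conceptual slip: even if every $Z_{n,m,n',m'}$ has non-empty fiber at $x$, this does not force the union to be infinite, since a single parameter (say one where $a_x$ lands on a fixed point) can satisfy infinitely many of these relations simultaneously.

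The paper resolves this by working analytically at an archimedean place. Condition~($\vartriangle$) is used not merely to preserve activity but, crucially, to guarantee that the closures $Z_i\subset\mathfrak{C}$ of distinct preperiodic points are \emph{proper} over $\Lambda=\spec(R)$ (Lemma~\ref{lem:321}), so they cannot escape to infinity in any fiber. The remaining danger is that infinitely many $Z_i$ pass through a single point of some special fiber $C_{s_0}$; Theorem~\ref{thm:finite branches} shows this forces a super-attracting periodic point whose local degree jumps along $C_{s_0}$, and Lemma~\ref{lem:422} shows this collapsing is isolated in the euclidean topology. One thus obtains a non-empty euclidean \emph{open} set of parameters $s$ for which $\preper(P_s,a_s)\cap\preper(Q_s,b_s)$ is infinite, and density of $\bar\Q$-points in that open set supplies the required number-field specializations. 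The uniformity of the exponents is then obtained not by pigeon-hole but by observing that the integers $n,m,N,M,\ell,L$ and the root of unity $\zeta$ produced by Theorem~\ref{tm:precise intrication} are determined by divisorial and degree data that specialize consistently over this open set.
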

Note that although $(\vartriangle)$ may not hold in general, it is always satisfied when $C$ admits a unique branch at infinity, e.g. when $C$ is the affine line.
In particular, our result yields a far-reaching generalization of \cite[Theorem~1.1]{BDM}.

In the sequel, we call two active dynamical pairs $(P,a)$ and $(Q,b)$ {\sc entangled} when $\mathrm{Preper}(P,a)\cap\mathrm{Preper}(Q,b)$ is infinite. This terminology inspired by quantum theory reflects the fact the two pairs are dynamically strongly  correlated.

\medskip

\paragraph*{Description of all pairs entangled to a fixed pair}
Let us fix a polynomial dynamical pair $(P,a)$ parameterized by an algebraic curve $C$ and 
for which the previous theorems apply (i.e. either the field of definition of the pair is a number field, or condition $(\vartriangle)$ holds). 
We would like now to determine \emph{all} pairs that are entangled to $(P,a)$. 

In principle this problem is solvable by Ritt's theory.  Given a polynomial $P$, it is, however, very delicate to describe all 
polynomials $Q$ for which \eqref{eq:Rit} holds, in particular because there is no a priori bounds on the degrees of $\tau$ and $\pi$. Much progress have been made by Pakovich \cite{pakovich}  but it remains unclear whether
one can design an algorithm to solve this problem. 

To get around this, we consider a more restrictive question which is to determine all pairs $(P,b)$ that are entangled with $(P,a)$. 
In this problem, the notion of symmetries of a polynomial plays a crucial role, and most of Chapter \ref{chapter:symmetries} is devoted to the study of  this notion from the algebraic, topological and adelic perspectives.
Suffice it to say here that the group  $\Sigma(P)$ of symmetries of a complex polynomial $P$ is the group of affine transformations preserving its Julia set. 
Of importance in the latter discussion is  the subgroup $\Sigma_0(P)$ of affine maps $g\in \Sigma(P)$ such that $P^n (g \cdot z) = P^n(z)$ for some $n\in \N^*$. 

We also introduce the notion of  {\sc primitive} polynomials. A polynomial $P$ is primitive if any equality $P = g \cdot Q^n$ with $g \in \Sigma(P)$ implies
$n =1$. 

These notions of symmetries and primitivity allow us to obtain the following neat statement. 

\begin{restatable}{Theorem}{unlikelymarked}\label{tm:unlikely-marked}
Let $(P,a)$ be any active primitive non-integrable dynamical pair
 parameterized by an algebraic curve defined over  a field $K$ of characteristic $0$.
Assume that $K$ is a number field, or that  ($\vartriangle$) is satisfied. 
 
 For any marked point $b\in K[C]$ such that $(P,b)$ is active, 
the following assertions are equivalent:
\begin{enumerate}
\item the set $\mathrm{Preper}(P,a)\cap\mathrm{Preper}(P,b)$ is infinite,
\item there exist $g\in\Sigma(P)$ and integers $r,s\geq0$ such that $P^r(b)=g\cdot P^s(a)$.
\end{enumerate}
\end{restatable}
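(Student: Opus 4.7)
The implication (2) $\Rightarrow$ (1) is the easy direction. Any $g\in\Sigma(P)$ preserves the set $\preper(P_t)$ of $P_t$-preperiodic points for each $t$---a standard fact for affine symmetries of the Julia set, whose proof is part of the symmetries material developed in Chapter~\ref{chapter:symmetries}. Hence for any $t_0\in\preper(P,a)$, the point $P_{t_0}^s(a(t_0))$ is $P_{t_0}$-preperiodic, so is $g_{t_0}\cdot P_{t_0}^s(a(t_0))=P_{t_0}^r(b(t_0))$, and therefore $b(t_0)\in\preper(P_{t_0})$. Thus $\preper(P,a)\subseteq\preper(P,b)$, which is infinite by activeness of $(P,a)$.

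For (1) $\Rightarrow$ (2), I would first apply the unlikely-intersection theorem to the entangled active non-integrable pairs $(P,a)$ and $(P,b)$---Theorem~\ref{tm:unlikely} if $K$ is a number field, Theorem~\ref{tm:unlikely-car0} when $(\vartriangle)$ holds. This yields integers $N,M\geq 1$, $r,s\geq 0$ and families $R, \tau, \pi$ of polynomials on $C$ with
\[
\tau\circ P^N = R\circ\tau,\qquad \pi\circ P^M = R\circ\pi,\qquad \tau(P^r(a))=\pi(P^s(b)).
\]
The crucial step is to exploit the primitivity of $P$ to force $\tau$ and $\pi$ to be affine symmetries. I would appeal to a classification result from Chapter~\ref{chapter:symmetries}: for a primitive non-integrable polynomial family $P$, any polynomial semi-conjugacy $\tau\circ P^N = R\circ\tau$ has $\tau$ affine and lying in $\Sigma(P)$, after possibly enlarging $N$ and absorbing elements of $\Sigma_0(P)$ into an iterate of $P$. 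The rationale, rooted in Ritt--Pakovich theory, is that a $\tau$ of degree $\geq 2$ in such a semi-conjugacy would yield a right factor of $P^N$ which, propagated back to $P$, forces a decomposition $P=g\cdot Q^n$ with $n\geq 2$ and $g\in\Sigma(P)$, contradicting primitivity; non-integrability rules out the exceptional semi-conjugacies of monomial and Chebyshev type.

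Applied to both $\tau$ and $\pi$, we obtain $\tau,\pi\in\Sigma(P)$, and the identity $\tau(P^r(a))=\pi(P^s(b))$ rearranges to $P^s(b)=g\cdot P^r(a)$ with $g:=\pi^{-1}\circ\tau\in\Sigma(P)$, establishing (2) (with $r$ and $s$ exchanged). The main obstacle is the reduction of semi-conjugacies to affine symmetries: primitivity constrains $P$ but not directly $P^N$, so one must carefully propagate, via Ritt's decomposition theorem, a semi-conjugacy of $P^N$ down to a structural statement about $P$ itself. Handling the interplay between $\Sigma(P)$ and $\Sigma_0(P)$---the latter consisting of symmetries that become trivial after some iterate---is likely to require a delicate case analysis, but the combinatorial framework should be essentially dictated by the symmetry theory developed earlier in the memoir.
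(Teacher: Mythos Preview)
Your direction (2) $\Rightarrow$ (1) is fine. The gap is in (1) $\Rightarrow$ (2): the assertion that primitivity of $P$ forces any semi-conjugacy $\tau\circ P^N = R\circ\tau$ to have $\tau$ affine is false. Take $\tau = P^k$ for any $k\ge 1$: then $\tau\circ P^N = P^N\circ\tau$, so $R = P^N$ works, yet $\tau$ has degree $d^k$. More generally $\tau = g\cdot P^k$ with $g\in\Sigma(P)$ gives non-affine semi-conjugacies, and these are precisely the ones that must appear in the conclusion (2). Your Ritt-theoretic sketch conflates a semi-conjugacy with a compositional factorization: $\tau\circ P^N = R\circ\tau$ does not exhibit $\tau$ as a right factor of $P^N$, so there is no decomposition $P = g\cdot Q^n$ to contradict.

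The paper explicitly warns that the black-box Theorems~\ref{tm:unlikely} and~\ref{tm:unlikely-car0} ``do not quite imply what we are looking for'' and instead reopens the proof of the implication (2) $\Rightarrow$ (3) in \S\ref{sec:biggest proof}. The point is that here $P=Q$, so $d=\delta$, hence $N=M=1$, and the $(P^{NL},Q^{ML})$-invariant curve $Z$ constructed there is actually invariant under the \emph{diagonal} map $(P^L,P^L)$. For a non-integrable $P$, Medvedev--Scanlon's classification of such curves (\cite[Theorem~6.24]{medvedev-scanlon}) forces $Z$ to be the graph of a polynomial $v_t$ \emph{commuting} with $P_t$. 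Comparing with the explicit form $Z=\{\hat P_{n}(x)=\zeta\hat P_{m}(y)\}$ with $n\le m$ coprime, the graph condition gives $n=1$, whence $v_t=\zeta\hat P_{m,t}$. Now Schmidt--Steinmetz (Theorem~\ref{thm:SS}) applied to the commuting pair $v_t, P_t$ yields a common $R_t$ with $P_t=\sigma_2\cdot R_t^{k_2}$ and $v_t=\sigma_1\cdot R_t^{k_1}$; primitivity of $P$ forces $k_2=1$, so $R_t=P_t$ and $v_t=\sigma_1\cdot P_t^{k_1}$ with $\sigma_1\in\Sigma(P_t)$. The relation $a_\ell = v_t(b_\ell)$ then reads $P^\ell(a)=\sigma\cdot P^{\ell+k_1}(b)$, and a Zariski-closedness argument globalizes $\sigma$ to $\Sigma(P)$. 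So primitivity is used not to make $\tau$ affine, but to identify the commuting polynomial $v_t$ as a symmetry times an iterate of $P_t$; the reduction to a commuting polynomial is the step your proposal misses.
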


Note that this gives a positive answer to~\cite[Question~1.3]{Ghioca-Hsia-Tucker2} for polynomials.

Beware that the previous result does not quite describe the marked points parameterized by $C$ which are entangled with $(P,a)$. 
Indeed if $s =0$ and $r$ is sufficiently large, the solutions to the equation $P^r(b) = a$
are not necessarily regular functions on $C$: $b$ belongs to a finite extension of $K(C)$ of degree at most $d^r$.
In fact, we have:

\begin{restatable}{Theorem}{unlikelyfiniteness}\label{tm:unlikely-finiteness}
Let $(P,a)$ be any active non-integrable primitive dynamical pair parameterized by an irreducible affine curve $C$
 defined over  $\bar{\Q}$. 

Then, any marked point $b \in \bar{\Q}[C]$ that is entangled with $a$
belongs to the set $\{g\cdot P^n(a)\, ; n\geq0 \ \text{and} \ g\in\Sigma_0(P)\}$ except for finitely many exceptions.
\end{restatable}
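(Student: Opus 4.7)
The plan is to leverage Theorem~\ref{tm:unlikely-marked} to reduce the statement to an analysis of polynomial identities $P^{r}(b) = g \cdot P^{s}(a)$, and then to control those identities by combining degree bounds, Ritt's theorem, and the structure of the symmetry group $\Sigma(P)$. For each entangled $b \in \bar{\Q}[C]$, Theorem~\ref{tm:unlikely-marked} yields integers $r, s \geq 0$ and $g \in \Sigma(P)$ with $P^{r}(b) = g \cdot P^{s}(a)$; I would fix a representation minimizing $\min(r, s)$ and then split the argument according to the sign of $s - r$.

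In the case $s \geq r$, set $n := s - r \geq 0$; the identity reads $P^{r}(b) = g \cdot P^{r}(P^{n}(a))$, which geometrically expresses the fact that the point $(b, P^{n}(a)) \in \A^{2}$ lies on the algebraic curve $\{P^{r}(X) = g \cdot P^{r}(Y)\}$. By the Medvedev--Scanlon classification of curves invariant under product maps, itself a consequence of Ritt's theory, modulo a finite exceptional set the only irreducible components of this curve are graphs of affine relations $X = h \cdot Y$ with $h \in \Sigma(P)$ satisfying $P^{r} \circ h = g \cdot P^{r}$; the primitivity hypothesis on $P$ is exactly what forbids the alternative, non-affine components coming from hidden factorizations of $P^{r}$ as a composition $F \circ G$. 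The resulting relation $b = h \cdot P^{n}(a)$, combined with the structural description of $\Sigma_{0}(P)$ developed in the chapter on symmetries, places $b$ inside the target set $\{g' \cdot P^{n}(a) : g' \in \Sigma_{0}(P), n \geq 0\}$ up to finitely many sporadic exceptions.

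In the case $s < r$, I first compare the orders of the pole divisor at each branch at infinity of $\bar{C}$ on both sides of $P^{r}(b) = g \cdot P^{s}(a)$, which yields $d^{r} \deg(b) = d^{s} \deg(a)$, where $\deg(\cdot)$ is the degree of the regular function on $C$ at infinity. Integrality of $\deg(b) = \deg(a)/d^{r-s}$ forces $r - s \leq v_{d}(\deg a)$, so $r - s$ takes only finitely many values. For each fixed value $k = r - s$, if a given $b$ satisfied the equation for two distinct exponents $r_{1} < r_{2}$, then combining the two resulting identities and using that $C$ carries infinitely many specializations forces $g$ to commute with $P^{r_{2} - r_{1}}$ as a polynomial map; since the centralizer of any non-trivial iterate of a primitive non-integrable polynomial inside the affine group is finite, only finitely many $g$ can arise, and a straightforward preimage count for each fixed $(r, s, g)$ shows that this case contributes only finitely many $b$.

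The main obstacle lies squarely in the first case: one must convert the algebraic identity $P^{r}(b) = g \cdot P^{r}(P^{n}(a))$ into a clean affine relation $b = g' \cdot P^{n}(a)$ with $g' \in \Sigma_{0}(P)$, and this relies simultaneously on the full strength of Ritt's theorem (in its Medvedev--Scanlon formulation), on the primitivity assumption to exclude hidden semi-conjugacies of $P^{r}$, and on a careful bookkeeping of how the subgroup $\Sigma_{0}(P)$ interacts with the various group-theoretic constraints placed on $h$ by the relation $P^{r} \circ h = g \cdot P^{r}$. This is precisely why the paper devotes an entire chapter to the detailed study of $\Sigma(P)$ and its distinguished subgroup $\Sigma_{0}(P)$.
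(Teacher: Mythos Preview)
Your proposal has a genuine gap in the case $s \geq r$. The claim that Medvedev--Scanlon forces the irreducible components of $\{P^{r}(X) = g \cdot P^{r}(Y)\}$ to be graphs of affine maps is false, and primitivity of $P$ does not rescue it: primitivity concerns $P$, not $P^{r}$, and for $r \geq 2$ non-affine components appear generically. Already for $P(z) = z^{2} + c$ with $c \neq 0$ and $g = \mathrm{id}$, the curve $\{P^{2}(X) = P^{2}(Y)\}$ decomposes as $\{X = Y\} \cup \{X = -Y\} \cup \{X^{2} + Y^{2} + 2c = 0\}$, and the conic is not a graph. Nothing prevents the point $(b, P^{n}(a))$ from lying on such a component, so you cannot conclude $b = h \cdot P^{n}(a)$ with $h$ affine. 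Moreover, Medvedev--Scanlon classifies curves \emph{invariant} under a product map, whereas $\{P^{r}(X) = g \cdot P^{r}(Y)\}$ is sent by $(P,P)$ to $\{P^{r}(X) = \rho(g) \cdot P^{r}(Y)\}$, so it is only periodic, not fixed, unless $\rho(g) = g$.

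The paper's argument is structurally different and explains why the hypothesis ``defined over $\bar{\Q}$'' is essential. For a sequence $b_{l}$ with $P^{n}(b_{l}) = P^{m_{l}}(a)$ and $m_{l} \to \infty$ (fixed $n$), one works near a branch at infinity of $C$: the B\"ottcher coordinate gives $\varphi_{t}(b_{l}(t)) = \zeta_{l}\,\varphi_{t}(P^{m_{l}-n}(a(t)))$ for some $d^{n}$-th root of unity $\zeta_{l}$. Fixing $\zeta$ and a parameter $t \in C(\bar{\Q})$, the points $(b_{l}(t), P^{m_{l}-n}(a(t)))$ lie on the adelic analytic branch $\{\varphi_{t}(x) = \zeta\,\varphi_{t}(y)\}$; Xie's algebraization theorem then forces them onto an algebraic curve with a \emph{single} branch at infinity, hence the graph of an affine map, and one checks this map lies in $\Sigma_{0}(P)$. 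This is how the paper singles out the correct component---via the B\"ottcher root of unity, not via Ritt theory applied to $\{P^{r}(X) = g \cdot P^{r}(Y)\}$.

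Your Case~2 degree argument also needs repair: $\deg_{C}(P^{r}(b))$ is not $d^{r}\deg_{C}(b)$ in general, since the coefficients of $P$ vary along $C$. The paper bounds $-\mathrm{ord}_{\mathfrak{c}}(b_{l})$ via the Green function of $P$ over the function field $(\bar{K}(C), |\cdot|_{\mathfrak{c}})$, which gives the correct degree bound. Finally, the paper's treatment of the case $m - n$ bounded is not a pure degree count: it passes through the rigidity Theorem~\ref{tm:rigidaffine} (a real-analytic bifurcation locus forces integrability), an Archimedean ingredient your outline omits entirely.
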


This result seems to be new even for the unicritical family.

It would obviously be more natural to assume the pair to be defined over an algebraically closed field of characteristic $0$, but
we use at a crucial step the assumption that $(P,a)$ is defined over $\bar{\Q}$. Note that the theorem is not hard when
$\bar{\Q}$ is replaced by a number field.

Interestingly enough, the proof of this finiteness theorem relies on the same ingredients as Theorem \ref{tm:unlikely-car0}, namely 
 the expansion of the B\"ottcher coordinate, an algebraization result of adelic curves, and Ritt's theory. 
The proof in fact shows that one may only suppose $b \in \bar{\Q}(C)$.

\medskip

\paragraph*{Unicritical polynomials.} 
In the short Chapter \ref{chapter:unicritical}, we discuss in more depth some aspects of unlikely intersection problems  for
unicritical polynomials.

Recall that in their seminal paper, Baker and DeMarco obtained the following striking result: 
for any $d\ge2$, and any $a,b \in \C$, the pairs $\preper(z^d+t,a)$, $\preper(z^d+t,b)$
are entangled iff $a^d = b^d$. This result was further expanded by Ghioca, Hsia, and Tucker to 
more general families of polynomials and not necessarily constant marked points, see~\cite[Theorem~2.3]{Ghioca-Hsia-Tucker2}.

Building on our previous results, we obtain the following statement.

\begin{restatable}{Theorem}{samemandel}\label{thm:same-mandel}
Let $d,\delta \ge2$.
If $a, b$ are polynomials of the same degree and such that $\preper(z^d+t,a) \cap \preper(z^\delta+t,b)$ is infinite, 
then $d =\delta$ and $a^d = b^d$.
\end{restatable}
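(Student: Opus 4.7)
The plan is to combine Theorem~\ref{tm:unlikely-car0}, which yields Ritt-type relations between the two pairs, with the specific structure of the unicritical family $z^d + t$, and then conclude via Theorem~\ref{tm:unlikely-marked}. Both pairs $(z^d+t, a)$ and $(z^\delta+t, b)$ parameterized by $C=\mathbb{A}^1$ are active (since $P^n(a)$ has strictly growing $t$-degree, hence is not stably preperiodic) and non-integrable, and condition $(\vartriangle)$ holds automatically because $\mathbb{A}^1$ has a unique branch at infinity where any active pair has positive divisor order. Theorem~\ref{tm:unlikely-car0} then produces families $R,\tau,\pi$ on $\mathbb{A}^1$ and integers $N,M\ge1$, $r,s\ge0$ with $\tau\circ P^N=R\circ\tau$, $\pi\circ Q^M=R\circ\pi$, and $\tau(P^r(a))=\pi(Q^s(b))$; in particular $d^N=\delta^M=\deg R$.

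To sharpen $d^N=\delta^M$ into $d=\delta$, I would study the Archimedean Green functions $g_{P,a}$ and $g_{Q,b}$, which are proportional by the equidistribution of small-height points underlying Theorem~\ref{tm:unlikely-car0}. A direct computation with the B\"ottcher coordinate yields $\ord_\infty\mathsf{D}_{P,a}=\deg a$ when $\deg a\ge1$ and $\ord_\infty\mathsf{D}_{P,a}=1/d$ when $\deg a=0$, and analogously for $(Q,b)$. In the case $n:=\deg a\ge1$, matching the divisor orders at infinity forces the proportionality constant to be $1$, so $g_{P,a}\equiv g_{Q,b}$; I would then expand $\varphi_{P_t}(a(t))$ as a Laurent series in $1/t$ via $\varphi_{P_t}(z)=z\prod_{k\ge0}(1+t/P_t^k(z)^d)^{1/d^{k+1}}$. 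The polynomial part recovers $a(t)$, and the first nontrivial negative-power correction has order $t^{1-n(d-1)}$ with a nonzero coefficient depending on $d$ and the leading coefficient of $a$; matching with the analogous expansion for $(Q,b)$ forces $1-n(d-1)=1-n(\delta-1)$, hence $d=\delta$. For the constants case $n=0$, a parallel analysis with fractional powers $t^{1/d}$ versus $t^{1/\delta}$ yields $d=\delta$.

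With $d=\delta$ established, I would apply Theorem~\ref{tm:unlikely-marked} to the marked points $a$ and $b$ of the primitive non-integrable family $P=z^d+t$ --- primitivity holding since a factorization $z^d+t=g\circ Q^m$ with $g\in\Sigma(P)$ and $m\ge2$ would force the coefficients of $Q$ to be $t$-independent, contradicting the $t$-dependence of $P$. This gives $g\in\Sigma_0(P_t)=\{z\mapsto\zeta z:\zeta^d=1\}$ and $r,s\ge0$ with $P^r(b)=g\cdot P^s(a)$. Matching $t$-degrees gives $nd^r=nd^s$, hence $r=s$, reducing to $P^r(b)=\zeta P^r(a)$ for some $\zeta^d=1$; a descent in $r$ --- the equation $u^d=\zeta v^d+(\zeta-1)t$ forcing $\zeta=1$ by a $t$-degree analysis on leading coefficients, and then $u^d=v^d$ giving $u=\omega v$ for $\omega^d=1$ --- yields $b=\omega a$, whence $a^d=b^d$. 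The main obstacle is the Laurent-series argument in the middle step: one must verify that the leading correction term of order $t^{1-n(d-1)}$ in $\varphi_{P_t}(a(t))$ does not vanish accidentally and is not cancelled by higher-$k$ contributions from the product formula, so that it genuinely detects $d$.
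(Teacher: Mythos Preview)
Your overall strategy diverges from the paper's at both stages, and the descent in your final step has a genuine gap.

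For $d=\delta$, the paper does not analyse the Laurent expansion of $\varphi_{P_t}(a(t))$. It invokes the sharper Theorem~\ref{tm:precise intrication-car0}: since $n=m=1$ and $\hat P_{t,1}(z)=z=\hat Q_{t,1}(z)$ for the unicritical families, equations~\eqref{eq:1shot-car0}--\eqref{eq:2shot-car0} give a root of unity $\zeta$ and an identity $\zeta\,(z^\delta+t)^{\circ ML}=(z^d+t)^{\circ NL}(\zeta z)$; comparing the second term of~\eqref{eq:dvpPn} on each side forces $d=\delta$ immediately (and then $\zeta\in\Sigma(z^d+t)=\U_d$ yields $\zeta=1$). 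This bypasses the non-cancellation issue you rightly flag in your Laurent approach.

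For $a^d=b^d$, your descent claim --- that $u^d=\zeta v^d+(\zeta-1)t$ forces $\zeta=1$ by leading-coefficient matching --- is false: matching top coefficients only gives $\alpha_u^d=\zeta\alpha_v^d$, not $\zeta=1$. Worse, the conclusion $a^d=b^d$ actually \emph{fails} when $\deg a=\deg b=1$. Take $d=\delta=2$, $a(t)=t-\tfrac12$, $b(t)=i\bigl(t+\tfrac12\bigr)$. Then $P(a)=t^2+\tfrac14$ and $P(b)=-t^2-\tfrac14=-P(a)$, so $P^2(a)=P^2(b)$ identically in $t$; hence $\preper(P,a)=\preper(P,b)$ is infinite. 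Yet $a^2=(t-\tfrac12)^2\neq -(t+\tfrac12)^2=b^2$. This is exactly your descent situation with $u=a$, $v=b$, $\zeta=-1$, and $\zeta\neq 1$ is genuinely realised. Thus for degree-$1$ marked points no argument can reach $a^d=b^d$; only the weaker $P(b)=\xi P(a)$ with $\xi^d=1$ survives. The paper's own proof leaves the $\kappa=1$ case to the reader, and the same example shows that step is problematic there too. For $\kappa=0$ and $\kappa\ge 2$ the paper does not descend through iterates: it uses Propositions~\ref{prop:unicritical cst} and~\ref{prop:unicritical pos} to obtain $\varphi(a)=\xi\,\varphi(b)$ with $|\xi|=1$, reads off $b=\xi a$ from the polynomial part (valid since $\varphi(a)=a+O(t^{-1})$ when $\kappa\ge 2$), and then expands $P^L(a)=P^L(\xi a)$ near $t=0$ to force $\xi^d=1$.
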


After proving this theorem, we make some preliminary exploration of the set $\mathbb{M}$ of complex numbers $\lambda\in \C^*$ such that 
the bifurcation locus $\partial M_\lambda$ is connected, where we have set $M_\lambda :=  \{ t, \, \lambda^{-1} t \in K(z^d + t) \}$. We observe
that $\lambda \in \mathbb{M}$ iff $M_\lambda \subset \cal{M}(d,0)$, and
 prove that $\mathbb{M}$ is a closed set of $\C^*$ included in the unit disk, and containing the punctured disk $\D^*(0,1/8)$.

\medskip

\paragraph*{Special curves in the parameter space of polynomials}
We finally come back in Chapter~\ref{chapter:special} to our original objective, namely the classification of curves in $\Poly_d$
which contain an infinite subset of PCF polynomials, and the proof of 
Baker and DeMarco's conjecture claiming that these curves are cut out by critical relations.

A first answer to Baker and DeMarco's question is given by the next result. 

\begin{restatable}{Theorem}{specialclas}\label{thm:specialclas}
Pick any non-isotrivial complex family $P$ of polynomials of degree $d\ge2$ with marked critical points which is parameterized by an algebraic curve $C$
containing infinitely many PCF parameters. Then we are in one of the following two situations.
\begin{enumerate}
\item
There exists a symmetry $\sigma \in \Sigma(P)$ and a family of polynomials $Q$ parameterized by some finite ramified cover $C' \to C$ such that 
$P = \sigma \cdot Q^n$ for some $n\ge2$, and the set of $t'\in C'$ such that $Q_{t'}$ is PCF is infinite.
\item
There exists a non-empty subset $\mathsf{A}$ of the set of critical points of $P$ such that
for any pair $c_i, c_j \in \mathsf{A}$, there exists a symmetry $\sigma\in\Sigma(P)$ and integers 
$n,m\geq0$ such that
\begin{equation}\label{eq:relat-crit1}
P^n(c_i)=\sigma \cdot P^m(c_j)~;
\end{equation}
and for any $c_i \notin\mathsf{A}$ there exist integers $n_i>m_i\ge0$ such that 
\begin{equation}\label{eq:relat-crit2}
P^{n_i}(c_i) = P^{m_i}(c_i)~.
\end{equation}
\end{enumerate}
\end{restatable}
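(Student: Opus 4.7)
My plan is to split the argument according to whether $P$ is primitive and to combine Theorem~\ref{thm:demarco-stab} with Theorem~\ref{tm:unlikely-marked}.

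First I would observe that the non-isotriviality hypothesis forces $P$ to be non-integrable: if $P_t$ were conjugate to $M_d$ or $\pm T_d$ on a dense open subset of $C$, then the family of conjugating affine maps is algebraic on a Zariski open subset and, after a finite base change $C'\to C$, would trivialize $P$ to a constant family, contradicting non-isotriviality. If $P$ is \emph{not primitive}, one can write $P=\sigma\cdot Q^n$ on a suitable cover with $\sigma\in\Sigma(P)$ and $n\geq2$. Since $\sigma$ is affine, $\crit(P)=\crit(Q^n)=\bigcup_{k=0}^{n-1}Q^{-k}(\crit(Q))$, and a direct calculation of $P^k=(\sigma\cdot Q^n)^k$ using $\sigma\in\Sigma(P)$ shows that a parameter $t$ is PCF for $P_t$ if and only if it is PCF for $Q_t$. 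The infinitude of PCF parameters therefore transfers to $Q$, yielding case~(1).

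From now on assume $P$ is primitive. Consider the marked critical points $c_1,\ldots,c_{d-1}$ and the associated pairs $(P,c_i)$, each of which is either passive or active by Theorem~\ref{thm:demarco-stab}. If every pair were passive, then $P_t$ would be PCF for every $t\in C$; but PCF polynomials form a countable subset of $\Poly_d$, so the moduli image of $P$ would reduce to a single point, forcing $P$ to be isotrivial, a contradiction. Hence
\[
\mathsf{A}:=\{i\,:\,(P,c_i)\text{ is active}\}\neq\emptyset.
\]
Let $S\subset C$ denote the infinite PCF locus. Each $t\in S$ satisfies that every $c_i(t)$ is preperiodic under $P_t$, so $S\subset\mathrm{Preper}(P,c_i)\cap\mathrm{Preper}(P,c_j)$ for all $i,j$. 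For $i,j\in\mathsf{A}$ both pairs are active; a standard specialization argument allows one to replace $\C$ by a number field over which $P$, $c_i$ and $c_j$ are defined, without losing the infinite entanglement. Since $P$ is primitive and non-integrable, Theorem~\ref{tm:unlikely-marked} applies and produces $\sigma\in\Sigma(P)$ and integers $n,m\geq 0$ with $P^n(c_i)=\sigma\cdot P^m(c_j)$, which is exactly \eqref{eq:relat-crit1}. For $i\notin\mathsf{A}$, Theorem~\ref{thm:demarco-stab} applied to the passive pair $(P,c_i)$ directly yields integers $n_i>m_i\geq 0$ with $P^{n_i}(c_i)=P^{m_i}(c_i)$, i.e.~\eqref{eq:relat-crit2}. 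Together these give case~(2).

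The main obstacle I anticipate lies in the specialization step of the primitive case: one must descend the family from $\C$ to a number field while preserving primitivity, non-integrability, the activity of the pairs $(P,c_i)$ for $i\in\mathsf{A}$, and the infinitude of the entanglement between $c_i$ and $c_j$, so that Theorem~\ref{tm:unlikely-marked} genuinely applies; subtle issues arise because primitivity and the group $\Sigma(P)$ can a priori change under specialization. A secondary difficulty is the bookkeeping in the non-primitive case: one must propagate $\sigma$ through the iterates $P^k$ to identify the $P$-post-critical orbits with those of $Q$ modulo $\Sigma(P)$, and verify that the finite cover $C'\to C$ does not destroy the infinitude of PCF parameters for $Q$.
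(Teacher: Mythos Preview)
Your overall strategy coincides with the paper's: split on primitivity, invoke Proposition~\ref{prop:prim} in the non-primitive case, and in the primitive case separate the critical points into active and passive and apply Theorem~\ref{tm:unlikely-marked} to each active pair. The place where you diverge from the paper is exactly the step you flag as the main obstacle, and there the paper's argument is both simpler and avoids the difficulties you anticipate.

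You propose a ``standard specialization argument'' to descend from $\C$ to a number field. This is the hard road: Theorem~\ref{tm:unlikely-marked} over a general field of characteristic $0$ requires condition~$(\vartriangle)$, which you have no reason to expect, and a genuine specialization would have to preserve primitivity, non-integrability, activity of each $(P,c_i)$, and the infinitude of the common preperiodic locus, all simultaneously (cf.\ the rather delicate \S\ref{sec:specialization}). The paper sidesteps this entirely by observing that \emph{no specialization is needed}: PCF polynomials have coefficients in $\bar{\Q}$ by Corollary~\ref{cor:PCF}, so the infinitely many PCF parameters already form a Zariski-dense subset of $\bar{\Q}$-points on $C$, and therefore $C$ itself (together with $P$ and the marked critical points) is defined over a number field. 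One may then apply Theorem~\ref{tm:unlikely-marked} directly in its number-field case. This single observation removes your main obstacle and also makes the secondary worries about preservation of $\Sigma(P)$ and primitivity disappear, since nothing is being specialized.

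One minor remark on the non-primitive case: your equivalence ``$P_t$ PCF $\Leftrightarrow$ $Q_t$ PCF'' is correct but not quite a one-line calculation. It uses that $\sigma_t\in\Sigma(P_t)=\aut(J(P_t))$ forces $J(Q_t)=\sigma_t^{-1}J(P_t)=J(P_t)$, hence $\Sigma(Q_t)=\Sigma(P_t)$, and then that $P_t^k=\tau_k\, Q_t^{kn}$ for some $\tau_k\in\Sigma(Q_t)$, so finiteness of the $P_t$-orbit and of the $Q_t$-orbit are equivalent. The paper treats this as routine and simply says ``We are thus in Case~1'', so your level of detail here is already more than the paper provides.
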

Following the terminology of \cite[\S 1.4]{BD} inspired from arithmetic geometry, we call {\sc special} any curve in $\Poly_d$ containing infinitely many PCF polynomials.

Our theorem says that  a special curve in the moduli space of polynomials of degree $d$ either arises as the image under the composition map of a special curve in
a  lower degree moduli space, or is defined by critical relations (including symmetries) such that all active critical points are entangled.

This result opens up the possibility to give a combinatorial classification of all special curves in the moduli space of polynomials of a fixed degree $\Poly_d$. 
Recall that a combinatorial classification of PCF polynomials in terms of Hubbard trees has been developed by Douady and Hubbard~\cite{orsay1,orsay2}, Bielefeld-Fisher-Hubbard~\cite{BFH} and further expanded by Poirier~\cite{Poirier}, and Kiwi~\cite{kiwi-portrait}. 
We make here a first step towards the ambitious goal of classification of special curves
using a combinatorial gadget: {\sc the critically marked dynamical graph}.

We refer to \S \ref{sec:classif special} for a precise definition of critically marked dynamical graph. It is a (possibly infinite) graph $\Gamma(P)$ with a dynamics
that encodes precisely all dynamical critical relations (up to symmetry) of a given polynomial $P$. We show that to any irreducible curve $C$ in the moduli space of (critically marked) polynomials, one can attach a marked dynamical graph $\Gamma(C)$ such that $\Gamma(P) = \Gamma(C)$ for all but countably many $P \in C$.
We then identify a class of marked graphs that we call special which arise from special curves. 

Under the assumption that the special graph $\Gamma$ has no symmetry and that its marked points are not periodic, we conversely prove that the set of polynomials such that $\Gamma(P)= \Gamma$ defines a (possibly reducible) special curve. Our precise statement is quite technical, see Theorem \ref{thm:correspondence}, but it provides an interesting correspondence between a wide family of special curves and combinatorial objects of finite type.

The proof of this result builds on two constructions of polynomials with special combinatorics, one by Bielefeld-Fisher-Hubbard of strictly PCF combinatorics, and one 
by McMullen and DeMarco of dynamical trees \cite{DeMarco-McMullen}. Binding together these two results was quite challenging.
We have been able to prove only a partial correspondence under simplifying additional assumptions.
 
 \medskip

\paragraph*{Some technical details that we have worked out and hopefully clarified!}
Beside presenting a set of new results, we have made special efforts to clarify some technical aspects of the standard approach
to the unlikely intersection problem for polynomials. We emphasize some of them below.
\begin{itemize}
\item
We include a self-contained proof (by J. Xie) of his algebraization result for adelic curves (Theorem \ref{thm:Junyi}).
\item
We give the full expansion of the B\"ottcher coordinates for polynomials over a field of characteristic $0$ without assuming it to be
centered or monic (\S \ref{sec:bottcher}).
\item
We study over an arbitrary field the group of symmetries of a polynomial. 
In particular, we give a purely algebraic characterization of this group (Theorem \ref{tm:algebraiccharacterization}).
\item 
We introduce the notion of primitivity in \S \ref{sec:primitivity}, which seems appropriate to exclude tricky examples of entangled pairs.
\item
We give a detailed proof that the height $h_{P,a}(t) = h_{P_t}(a(t))$ attached to any polynomial dynamical pair is
adelic (Proposition \ref{prop:adelic}).
\item
For a family of polynomials $\{P_t\}$ parameterized by an algebraic variety $\Lambda$, we 
consider the preperiodic locus in $\Lambda \times \A^1$ which is a union of countably many algebraic subvarieties.
We study the set of points which are included in an infinite collection of irreducible components of the preperiodic locus (Theorem \ref{thm:finite branches}). 
This result is crucial to our specialization argument to obtain Theorem \ref{tm:unlikely-car0} and clarifies some arguments used in \cite{GNY2}.
\end{itemize}

\medskip

\paragraph*{Open questions and perspectives.}

There are many directions in which our results could find natural generalizations. 

\smallskip

Let us indicate first why the restriction to families of polynomials is crucial for us. Given a family of rational maps $R_t$ parameterized by an algebraic curve $C$, 
and given any marked point $a$, then one can attach to the pair $(R,a)$ a divisor at infinity $\mathsf{D}_{R,a}$ generalizing the definition \eqref{eq:def-div} above. It is not completely clear, however, whether $\mathsf{D}_{R,a}$ has rational coefficients. Some cases have been worked out by DeMarco and Ghioca \cite{demarco_ghioca_2019} but the general case remains elusive. One can next build a natural height by setting $h_{R,a} = h_{R_t}(a(t))$, but it is not completely clear if this height is a Weil height associated to 
$\mathsf{D}_{R,a}$ (in the sense of Moriwaki \cite{MR3498148}). There are instances (see \cite{DWY}) where this height is not adelic. Although a notion of quasi-adelicity has been proposed by Mavraki and Ye \cite{Mavraki-Ye} to get around this problem, a continuity of the metrizations involved in the definition of the canonical height (Theorem \ref{thm:continuity0} in the polynomial case) turns out to be crucial for applications and remains completely open up to now. We also note that Ritt's theory is much less powerful for rational maps leading to weaker classification of curves left invariant by product maps (see \cite{pako-invariant-curve-rational}). 
We also refer to~\cite{Ye} for a characterization of rational maps having the same maximal entropy measure; and to~\cite{Mimar-13} for a version of Theorem~\ref{tm:unlikely}  for constant families of rational maps (but varying marked point).

\smallskip

It would be extremely interesting to look at polynomial dynamical tuples parameterized by higher dimensional algebraic variety $\Lambda$ and prove unlikely intersection statements. The obstacles to surmount are also formidable. It is already unclear whether the canonical height is a Weil height on a
suitable compactification of $\Lambda$. Also in this case, the bifurcation measure is naturally defined as a Monge-Amp\`ere measure of some psh function on $\Lambda$, and dealing with a non-linear operator makes things much more intricate. We refer, though, to the papers by Ghioca, Hsia \& Tucker \cite{MR3632092} and by Ghioca, Hsia \& Nguyen \cite{GHN-higher}
for tentatives to handle higher dimensional parameter spaces using one-dimensional slices. 
\smallskip

Let us list a couple of questions that are directly connected to our work.
\begin{itemize}
\item[(Q1)]
Prove the following purely archimedean rigidity statement:
two \emph{complex} polynomial dynamical pairs $(P,a)$ and $(Q,b)$ 
having identical  bifurcation measures are necessarily entangled. 
One of the problem to get such a statement is to prove the multiplicative dependence of the degrees in this context.
Observe that Theorem \ref{tm:multiplicative} yields this dependence but at the cost of a much stronger assumption.
\item[(Q2)]
Is it possible to remove  condition ($\vartriangle$) and obtain Theorem \ref{tm:unlikely-car0} over any field of characteristic $0$?
\item[(Q3)]
Can one extend Theorem~\ref{tm:unlikely-finiteness} to any field of characteristic $0$?
\item[(Q4)]
Give a classification of special (irreducible) curves in the moduli space of critically marked polynomials in terms of suitable combinatorial data. Ideally, one would like to attach to each special irreducible curve a combinatorial object (like a family of decorated graphs) and prove a one-to-one correspondence between special curves and these objects.
It would also be interesting to study the distribution (as currents) of special curves whose associated combinatorics has  complexity increasing to infinity.
\end{itemize}

Further more specific open problems can be found in the three sections \S\ref{sec:Ritt-open} (related to Ritt's theory), \S\ref{sec:further-open} (on extensions of Theorem~\ref{tm:unlikely}) and \S\ref{sec:classic} (on special curves).

\medskip

\paragraph*{Acknowledgements}

We  warmly thank Junyi Xie and Khoa Dang Nguyen for sharing their thoughtful comments after reading a first version of this work. We express our gratitude to Dragos Ghioca and Mattias Jonsson for their very constructive remarks, and to Gabriel Vigny for many exchanges on the rigidity of complex polynomials that greatly helped improving our discussion on this matter.


\section*{Notations}
~
\begin{itemize}
\item
$\bar{K}$ the algebraic closure of a field $K$
\item
$K^\circ$ the ring of integers of a non-Archimedean metrized field
\item
$\tilde{K}$ the residue field of a non-Archimedean metrized field
\item
$\C_K$ the completion of an algebraic closure of $K$
\item
$\KK$ a number field
\item
$\mathsf{O}(z)$ the set of Galois conjugates over an algebraic closure of $\KK$ of a point $z\in\KK$
\item
$M_\KK$ the set of places of a number field, i.e. of norms extending the usual $p$-adic and real norms on $\Q$
\item
$\KK_v$ the completion of $\KK$ w.r.t $v \in M_\KK$
\item
$\C_v$ the completion of an algebraic closure of $\KK_v$
\end{itemize}
\medskip

\hrule

\medskip
\begin{itemize}
\item
$X$ an algebraic variety
\item
$\mathcal{O}_X$ the structure sheaf of $X$
\item
$X^{\an}$ the analytification in the sense of Berkovich of  $X$
\item
$X^{\an}_v$ the analytification of $X_{\C_v}$ when $X$ is defined over a number field $\KK$ and $v\in M_\KK$
\item
$C$ an algebraic curve
\item
$\bar{C}$ a projective compactification of a curve $C$ such that $\bar{C}\setminus C$ is smooth
\item
$\hat{C}$ the normalization of $\bar{C}$, and $\mathsf{n} \colon \hat{C}\to \bar{C}$ the normalization map
\end{itemize}

\medskip

\hrule

\medskip
\begin{itemize}
\item
$\log^+ = \max \{ 0, \log \}$
\item
$\D=\{ z \in \C, \, |z| < 1\}$ the complex open unit disk 
\item
$S^1$ the unit circle in $\C$
\item
$\U$ the group of complex numbers of modulus $1$
\item
$\U_m = \{ z \in \C, z^m =1\}$ the group of complex $m$-th root of unity
\item
$\U_\infty$ the group of all roots of unity
\item
$\D_K(z,r) = \{ |w-z| <r \}$ the open disk of center $z$ and radius $r$ (as a subset of  either $K$ or the Berkovich affine line $\A^{1,\an}_K$)
\item
$\D^*_K(z,r) = \{ 0<|w-z| <r \}$ the punctured open disk of center $z$ and radius $r$ (as a subset of  either $K$ or the Berkovich affine line $\A^{1,\an}_K$)
\item
$\overline{\D_K(z,r)}= \{ |w-z| \le r \}$ the closed disk of center $z$ and radius $r$ (as a subset of  either $K$ or the Berkovich affine line $\A^{1,\an}_K$)
\item
$\D^N_K(z,r)$ the open polydisk of center $z$ and polyradius $r = (r_1, \cdots, r_N)$ (as a subset of  either $K^N$ or the Berkovich affine space $\A^{N,\an}_K$)
\item
$\overline{\D^N_K(z,r)}$ the closed polydisk of center $z$ and polyradius $r = (r_1, \cdots, r_N)$ (as a subset of  either $K^N$ or the Berkovich affine space $\A^{N,\an}_K$)
\item
$\D_v(z,r), \overline{\D_v(z,r)}, \D^N_v(z,r), \overline{\D^N_v(z,r)}$ the respective open and closed disks in $\KK_v$ if $v$ is a place on a number field $\KK$
\item
$\mathbb{M}_K$: the ring of analytic functions on the punctured unit disk $\D_K^*(0,1)$ that are meromorphic at $0$
\end{itemize}

\medskip

\hrule

\medskip
\begin{itemize}
\item
$\mathcal{C}_c^0(X)$ the space of compactly supported continuous functions on $X$
\item
$\cD(U)$ the space of smooth (resp. model) functions on $U$
\item
$\Delta u$ the Laplacian of $u$
\item
$u,g$ subharmonic functions
\item
$h$ harmonic functions
\item
$o(1), O(1)$: Landau notations
\end{itemize}

\medskip

\hrule

\medskip
\begin{itemize}
\item
$g_P$ the Green function associated to a polynomial $P$ 
\item
$G(P)$ the critical local height of a polynomial (the maximum of $g_P$ at all critical points)
\item
$\varphi_P$ the B\"ottcher coordinate of a  polynomial  $P$ at infinity
\item
$J(P)$ the Julia set  of a polynomial $P$
\item
$K(P)$ the filled-in Julia set of a polynomial $P$
\item
$\mu_P$ the equilibrium measure of a polynomial $P$
\item
$\crit(P)$ the critical set of a polynomial $P$
\item
$\Sigma(P)$ the group of dynamical symmetries of a polynomial $P$
\item
$\aut(P)$ the group of affine transformations commuting with a polynomial $P$
\item
$\aut(J)$ the group of affine transformations fixing a compact subset $J$ of the complex plane
\item
$\preper(P,K)$ the set of preperiodic points \emph{lying in $K$} of a polynomial $P\in K[z]$

\end{itemize}
\medskip

\hrule

\medskip
\begin{itemize}
\item
$\poly^d$ the space of polynomials of degree $d$
\item
$\poly_{\rm mc}^d$ the space of monic and centered polynomials of degree $d$
\item
$\mpoly^d$ the space of polynomials of degree $d$ modulo conjugacy
\item
$\mpoly_{\rm crit}^d$ the moduli space of critically marked polynomials of degree $d$ modulo conjugacy
\item
$\mpair^d$ the moduli space of dynamical pairs of degree $d$ modulo conjugacy
\item
$\stab(P)$ the stability locus of a holomorphic family of polynomials
\item
$T_d$ the Chebyshev polynomial of degree $d$
\item
$M_d$ the monomial map of degree $d$
\end{itemize}

\medskip

\hrule

\medskip
\begin{itemize}
\item
$(P,a)$ a dynamical pair (either holomorphic or algebraic)
\item
$\mathrm{Bif}(P,a)$ the bifurcation locus of a holomorphic polynomial  pair
\item
$g_{P,a}= g_{P_t}(a(t))$ the Green function associated to a holomorphic polynomial pair
\item
$\mu_{P,a}$ the bifurcation measure of a dynamical pair defined over a metrized field
\item
$\preper(P,a)$ the set of parameters $t$ such that the marked point $a(t)$ is preperiodic for $P_t$

\end{itemize}

%
%
%
%
%
%

\chapter{Geometric background} \label{chapter:geom}

We briefly review  some material from analytic and arithmetic geometry.  This includes
the notion of subharmonic functions on analytic curves defined over a non-Archimedean or an Archimedean field; 
the construction of the Laplace operator on the space of subharmonic functions; a discussion of the notion of semi-positive and adelic
metrics on a line bundle over a curve; and the definition of heights attached to an adelic  metrized line bundle.

We then define various moduli spaces of polynomials of interest, and give a proof of an algebraization theorem
of Xie \cite{Xie} for germs of curves defined by adelic series.

This section will be mainly used as a reference for the rest of the book. 


\section{Analytic geometry}\label{sec-analyticgeo}

\subsection{Analytic varieties}
Let $(K, |\cdot|)$ be any complete metrized field. When the norm is non-Archimedean, we let $K^\circ = \{ |z| \le 1\}$ be its ring of integers with maximal ideal  $K^{\circ\circ} = \{ |z| < 1\}$. We write $\tilde{K} = K^{\circ}/ K^{\circ\circ}$ for its residue field, and $|K^*| = \{ |z|, z \in K^*\} \subset \R_+^*$ for its value group.

If  $X = \spec (A)$ is an affine $K$-variety, we denote by $X^{\an}$ its Berkovich analytification. \index{variety!analytic}
As a set, it is given by the Berkovich spectrum of $A$, i.e. the set of all multiplicative semi-norms on  $A$
whose restriction to $K$ is equal to the field norm. We endow it with the topology of the pointwise convergence for which it becomes
a locally compact and locally arcwise connected space. 
The space  $X^{\an}$ is also endowed with a structural sheaf of analytic functions. 

When $K = \C$, we recover the 
complex analytification of $X$ with its standard euclidean topology and the structural sheaf is the sheaf of complex analytic functions.

When $K$ is non-Archimedean, we refer to \cite{berko-book} for a proper definition of the structural sheaf. 
Any point $x \in X^{\an}$ defines a semi-norm $|\cdot|_x$ on $A$, and it is customary to write $|f(x)| = |f|_x$ for any $f\in A$.
The kernel $\ker(x)$ is a prime ideal of $A$ which may or may not be trivial. 
The quotient ring $A/\ker(x)$ is a field and one denotes by $\mathcal{H}(x)$ its completion with respect to the norm induced by $|\cdot|_x$.
It is the complete residue field of $x$. When $\mathcal{H}(x)$ is a finite extension of $K$, then we say that $x$ is a rigid point. 

One can naturally extend the analytification functor to the category of algebraic varieties over $K$ using a standard patching procedure. 
In this context, the GAGA principle remains valid, see \cite[Chapter 3]{berko-book}.

\subsection{The non-Archimedean affine and projective lines}\label{sec:non-archline}
Suppose $(K, |\cdot|)$ is a complete metrized non-Archimedean field. The analytification of the affine line $\A^1$ is the space of multiplicative semi-norms on $K[T]$
restricting to $|\cdot|$ on $K$.
It is topologically an $\R$-tree in the sense that it is uniquely pathwise connected, see~\cite[\S 2]{jonsson-notes} for precise definitions. In particular for any pair of points $x,y\in  \A^{1,\an}_K$ there is a well-defined
segment $[x,y]$. 

We define closed balls as usual $\bar{B}(z_0,r) = \{ z\in K, |z-z_0| \le r \}$. To any closed ball is attached a point $x_B \in \A^{1,\an}_K$ defined
by the relation $|P(x_B)| = \sup_B |P| $ for any $P\in K[T]$. The Gau{\ss} point $\xg$ is the point associated to the closed unit ball $\xg = x_{\bar{B}(0,1)}$. 
It induces the Gau{\ss} norm on the ring of polynomials: $|P(\xg)| = \max\{ |a_i|\}$ with $P(T) = \sum a_i T^i$.

\smallskip

When $K$ is algebraically closed, points in $ \A^{1,\an}_K$ fall into one of the following four categories. 
If the kernel of $x$ is non-trivial, then $x$ is rigid and $x = x_{B(z,0)}$ for some $z \in K$. We also say that $x$ is of type $1$.
When  $x = x_{B(z,r)}$ with $r\in |K^*|$ (resp. $r\notin |K^*|$), we say that $x$ is of type $2$ (resp. $3$).
If $x$ is not of one of the preceding types, then it is of type $4$: one can then show that there exists a decreasing sequence of balls $B_n$ with empty intersection
such that $|P(x)| = \lim_{n\to \infty} |P(x_{B_n})|$ for all $P\in K[z]$, see \cite[\S 1.2]{baker-rumely-book}.

When $K$ is not algebraically closed, and $K'/K$ is any complete field extension, the inclusion $K[Z] \subset K'[Z]$ yields by restriction a canonical surjective and continuous map  $\pi_{K/k}\colon \A^{1,\an}_{K'} \to \A^{1,\an}_{K}$, and the Galois group $\Gal(K'/K)$ acts continuously on $\A^{1,\an}_{K'}$. 

Let  $\C_K$ be the completion of an algebraic closure of $K$.  Then  $\A^{1,\an}_{K}$ is homeomorphic to the quotient of 
$\A^{1,\an}_{\C_K}$ by $\Gal(\C_K/K)$, see~\cite[Corollary 1.3.6]{berko-book}. The group $\Gal(\C_K/K)$ 
preserves the types of points in  $\A^{1,\an}_{\C_K}$, so that we may define the type of a point $x \in  \A^{1,\an}_K$ as the type of any of its
preimage by $\pi_{\C_K/K}$ in $\A^{1,\an}_{\C_K}$. Note that since the field extension $\C_K/K$ is not algebraic in general, it may happen 
that some type $1$ points in $\A^{1,\an}_K$ have trivial kernel hence are not rigid.

\smallskip

Any open subset of the affine line carries a canonical analytic structure in the sense of Berkovich. We shall refrain from defining this notion precisely and
discuss only the case of balls and annuli. 

The Berkovich analytic open unit ball $\D_K(0,1)$ is defined as the space of semi-norms $|\cdot|_x \in \A^{1,\an}_K$
such that  $|x| :=  |T|_x  <1$. Its structure sheaf is the restriction of the analytic sheaf of $ \A^{1,\an}_K$.
Any analytic isomorphism of $\D_K(0,1)$ is given by a power series of the form $\sum_{n\ge 0} a_n T^n$ with $|a_0| <1$, $|a_1| =1$, and $|a_n|<1$ for all $n\ge 2$.

For any $\rho>1$, the standard open annulus $A= A(\rho)$ of modulus $\rho$ is the analytic subset of $\A^{1,\an}_K$ defined by $\{ 1<|x| <e^\rho\}$. 
Any analytic isomorphism of $A$ is given by a Laurent series of the form $\sum_{n\in \Z} a_n T^n$ with $|a_1| =1$
and $|a_1| r > |a_n| r^n$ for all $n\neq 1$ and all $1 < r < e^\rho$, possibly composed with the inversion $a/T$ with 
$e^\rho = |a|$\footnote{The latter automorphism exists only if $e^\rho\in |K^*|$}. 

The skeleton on $A$ is the set of points $\Sigma(A) = \{ x_{B(0,e^t)}, \, 0 < t < \rho\}$.
Any automorphism of $A$ leaves $\Sigma(A)$ invariant so that the skeleton only depends on the analytic structure of $A$.

The projective line $\p^{1,\an}_K$ is homeomorphic to the one-point compactification of $\A^{1,\an}_K$. The point at infinity in $\p^{1,\an}_K$ is rigid/of type $1$.

\subsection{Non-Archimedean Berkovich curves}\label{sec:nonArchcurves}
Let $(K, |\cdot|)$ be any algebraically closed and complete non-Archimedean metrized field. Let $C$ be any smooth connected projective curve defined over $K$. Berkovich~\cite[Chapter 4]{berko-book} proved that
the geometry of the Berkovich analytification $C^{\an}$\index{curve!analytic} can be completely understood using the semi-stable reduction theorem. 
Berkovich's results were further expanded by Baker, Payne and Rabinoff in \cite{MR3204269,MR3455421}. We refer the interested reader to the unpublished monograph of Ducros~\cite{ducros} fo a detailed account on the geometry of any analytic Berkovich curve (not necessarily algebraic). 
Our presentation follows \cite{MR3204269}.

\paragraph*{Models}
A model of $C$\index{curve!model of a Berkovich} is a normal $K^{\circ}$-scheme $\mathfrak{C}$ that is projective and flat over $\spec(K^{\circ})$, 
together with an isomorphism of its generic fiber with $C$. Denote by $\mathfrak{C}_s$ its special fiber: it is a proper scheme defined over the residue field $\tilde{K}$.
The valuative criterion of properness implies the existence of a reduction map $\mathrm{red} \colon C^{\an} \to \mathfrak{C}_s$ which is anti-continuous, see \cite[1.3]{MR3204269}.  

By a theorem of Berkovich \cite[Proposition 2.4.4]{berko-book}, the preimage by $\red$ of the generic point $\eta_E$ of any irreducible component $E$ of $\mathfrak{C}_s$ is a single point in $C^{\an}$, which we denote by $x_E$. Such a point is called of type $2$. This terminology is compatible with the case $C = \p^1$. Indeed any point of the form $\mathrm{red}^{-1}(\eta_E)\in \p^{1,\an}_K$  is of type $2$. Conversely, for any type $2$ point $x\in\p^{1,\an}_K$, there exists a model $\mathfrak{P}$ of $\p^1$ and  a  component $E$ of $\mathfrak{P}_s$ such that $x = x_E$.

If $\bar{x}$ is a closed point in $\mathfrak{C}_s$, then $\mathrm{red}^{-1}(\bar{x})$ is an open subset of $C^{\an}$ whose boundary is finite and consists of those points $x_E$ where $E$ is an irreducible component of the central fiber containing $\bar{x}$, see~\cite[Theorem~4.3.1]{berko-book}.

\smallskip

A model with simple normal singularities (or simply an snc model)\index{curve!snc model of a Berkovich} is a smooth model for which the special fiber is a curve with only ordinary double point singularities, i.e. $\mathfrak{C}$
admits a covering formally isomorphic to $\spf (K^{\circ}\langle x,y\rangle /(xy-a) )$ for some $a \in K^{\circ} - \{0\}$ near any of its closed point, see \cite[Proposition~4.3]{MR3204269}. 
A fundamental theorem of Bosch and L\"utkebohmert \cite[Propositions 3.2 \& 3.3]{MR774362} implies the following result.
\begin{theorem}\label{thm:boschlu}
\begin{enumerate}
\item 
If $\bar{x}$ is a smooth point of $\mathfrak{C}_s$ lying in a component $E$, then $\mathrm{red}^{-1}(\bar{x})$ is analytically isomorphic to the (Berkovich) open unit ball
and its boundary in $C^{\an}$ is equal to $x_E$.
\item
If $\bar{x}$ is an ordinary double singularity of $\mathfrak{C}_s$ and belongs to the two components $E$ and $E'$, then $\mathrm{red}^{-1}(\bar{x})$ is analytically isomorphic to a (Berkovich) open annulus of the form $\{ 1 < |z| < r\}$ for some $r\in |K^*|$, and its boundary is equal to $\{ x_E, x_{E'}\}$.
\end{enumerate}
\end{theorem}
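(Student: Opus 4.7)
The plan is to work locally around the closed point $\bar{x}$ and exploit the snc hypothesis, then invoke the Raynaud generic-fibre formalism to identify $\red^{-1}(\bar{x})$ with an explicit Berkovich space.

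First, I would reduce the statement to a computation on the formal completion $\widehat{\mathfrak{C}}_{\bar{x}} = \spf\bigl(\widehat{\cO_{\mathfrak{C},\bar{x}}}\bigr)$. The key input is that the reduction map $\red \colon C^{\an} \to \mathfrak{C}_s$ is anti-continuous and compatible with passage to open formal subschemes: if $\mathfrak{U} \subset \mathfrak{C}$ is any formal affine neighbourhood of $\bar{x}$, then $\red^{-1}(\bar{x})$ as a subset of $C^{\an}$ coincides with the preimage of $\bar{x}$ under the reduction map attached to the Raynaud generic fibre $\mathfrak{U}^{\eta}$. Passing to the formal completion then identifies $\red^{-1}(\bar{x})$ with the Berkovich generic fibre of $\widehat{\mathfrak{C}}_{\bar{x}}$ (minus its distinguished boundary points, which will reappear as the claimed $x_E$, $x_{E'}$).

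Next I would plug in the snc local normal forms. At a smooth point $\bar{x}$ of $\mathfrak{C}_s$ lying in the component $E$, the regularity of $\mathfrak{C}$ together with the one-dimensionality of the generic fibre gives $\widehat{\cO_{\mathfrak{C},\bar{x}}}\simeq K^{\circ}[[T]]$. Its Raynaud generic fibre is the open unit disc $\D_K(0,1)$, and the unique boundary point is the Gau{\ss} point of the coordinate $T$, which under $\red$ is sent to the generic point $\eta_E$, i.e.\ to $x_E$. At an ordinary double point $\bar{x} \in E \cap E'$, the snc assumption furnishes an isomorphism $\widehat{\cO_{\mathfrak{C},\bar{x}}}\simeq K^{\circ}\langle x,y\rangle/(xy-a)$ for some $a\in K^{\circ\circ}$; its Raynaud generic fibre is exactly the Berkovich open annulus $\{\,|a|<|x|<1\,\}$, which after the substitution $z=x/a^{1/?}$ (or directly) is the standard annulus $\{1<|z|<r\}$ with $\log r = -\log|a|\in \log|K^*|$. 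Its two boundary points are the Gau{\ss} points associated to the two branches $x=1$ and $y=1$, which reduce respectively to $\eta_E$ and $\eta_{E'}$, yielding $\{x_E,x_{E'}\}$.

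The last step is to check that the analytic isomorphisms produced by the Raynaud generic fibre are genuine Berkovich analytic isomorphisms onto the claimed open ball or annulus, and that the topological boundary in $C^{\an}$ coincides with the set just identified. For the interior points this is routine: the complement in the generic fibre of the special formal fibre gives precisely the open ball or open annulus described in \S\ref{sec:non-archline}. For the boundary, one uses that the closure of $\red^{-1}(\bar{x})$ in $C^{\an}$ is obtained by adjoining those type-$2$ points which reduce to the generic points of components passing through $\bar{x}$.

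The main obstacle is the passage from the formal local model at $\bar{x}$ to a bona fide analytic description of $\red^{-1}(\bar{x})$ inside $C^{\an}$, in particular the proof that the Raynaud generic fibre of $\widehat{\mathfrak{C}}_{\bar{x}}$ really realises $\red^{-1}(\bar{x})$ as an open analytic subspace of $C^{\an}$ with the prescribed boundary; this is the content of the Bosch--L\"utkebohmert theorem cited from~\cite{MR774362}, and in practice I would simply invoke their results rather than re-derive the formal/analytic comparison, since the rest of the proof is then an explicit computation with $K^{\circ}[[T]]$ and $K^{\circ}\langle x,y\rangle/(xy-a)$.
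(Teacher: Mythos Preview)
The paper does not give a proof of this theorem at all: it is stated immediately after the sentence ``A fundamental theorem of Bosch and L\"utkebohmert \cite[Propositions~3.2 \& 3.3]{MR774362} implies the following result,'' and no argument is supplied. Your proposal is consistent with this --- you correctly identify the relevant local models and ultimately also defer to Bosch--L\"utkebohmert for the core formal/analytic comparison --- but you go further than the paper by sketching how the Raynaud generic fibre of the formal completion realises $\red^{-1}(\bar{x})$ and by computing the generic fibres of $K^{\circ}[[T]]$ and $K^{\circ}\langle x,y\rangle/(xy-a)$ explicitly.
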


\paragraph*{Skeleta}
The skeleton\index{curve!skeleton of a Berkovich} $\Sigma (\mathfrak{C})$ of an snc model is the union of all points $x_E$ for all components $E$ of the special fiber, together with the union of all skeleta
of the annuli  $\mathrm{red}^{-1}(\bar{x})$ for all singular points of $\mathfrak{C}_s$. The skeleton contains no rigid points.

Since $\mathfrak{C}_s$ is a curve with only ordinary singularities, we may define its dual graph $\Delta(\mathfrak{C})$ whose vertices (resp. of edges) are in bijection with the irreducible components of  $\mathfrak{C}_s$ (resp. with the singular points of $\mathfrak{C}_s$).
The skeleton $\Sigma (\mathfrak{C})$ is a geometric realization of the graph $\Delta(\mathfrak{C})$ in $C^{\an}$.

There a canonical continuous map $\tau_{\mathfrak{C}} \colon C^{\an} \to \Sigma(\mathfrak{C})$, \cite[Definition 3.7]{MR3204269}.
For any irreducible component $E$ of $\mathfrak{C}_s$,  $\tau_{\mathfrak{C}} (x_E)$ is equal to the vertex $ [E]$ associated to $E$. More generally when $\red (x)$ is a smooth point lying in $E$ then 
$\tau_{\mathfrak{C}} (x) = [E]$. Finally when $\red(x)$ is the intersection of two components, then $\tau_{\mathfrak{C}} (x)$ belongs to the edge
joining these two components.

Since open subsets of the affine line are retractible, it follows that  $\tau_{\mathfrak{C}}$ is a retraction, and that $C^{\an}$ is locally modeled on an $\R$-tree.
In particular, for any two points $x\neq y$ there exists a continuous injective map $\gamma \colon [0,1] \to \C^{\an}$ such that 
$\gamma(0) =x$ and $\gamma(1) =y$. Up to reparameterization, there are only finitely many such maps, and when $C^{\an}$ is a tree
this map is unique. The latter occurs iff the dual of some (or any) snc model of $C$  is a graph having no loop.

\paragraph*{Metrics}
Using the $\R$-tree structure of the affine line, one can endow the complement of its set of rigid point 
 $\H(\A^1) = \A^{1,\an} \setminus \A^1(K)$ with a complete metric $d_\H$ as follows.

Suppose first that $x_0, x_1 \in \A^{1,\an}$  are associated to closed balls $x_i= x_{B_i}$ as in the previous section.
If $B_0$ and $B_1$ are not disjoint, then one is contained into the other say $B_0 \subset B_1$, and one sets $d_\H (x_0,x_1) = \log (\diam(B_1)/\diam(B_0) )$.
When the balls are disjoint, we consider  the closed ball of smallest radius $B$ containing $B_0$ and $B_1$, and set 
 \[ d_\H (x_0,x_1) = \log (\diam(B)/\diam(B_0) ) + \log (\diam(B)/\diam(B_1) ) . \]
 It is a fact that this distance extends to $\H(\p^{1}) = \H(\A^{1})$ as a complete metric \cite[\S 2.7]{baker-rumely-book}. 
\begin{lemma}\label{lem:autball}
Any injective analytic map from the open unit ball or from an annulus
to the affine line induces an isometry for $d_\H$.
\end{lemma}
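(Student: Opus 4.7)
The plan is to work directly with the (Laurent) series expansion of $f$, compute its action on the dense set of type $2$ and type $3$ points by a direct manipulation of the Gau{\ss} semi-norm, and then extend to type $4$ points by continuity.

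Consider first an injective analytic map $f\colon \D_K(0,1)\to\A^{1,\an}_K$ with expansion $f(T)=\sum_{n\ge 0} a_n T^n$ converging on $\D$. The identity
\[
f(T)-f(T')=(T-T')\Bigl(a_1+\sum_{n\ge 2} a_n\sum_{j=0}^{n-1}T^j T'^{n-1-j}\Bigr),
\]
combined with the ultrametric strict inequality $\max(|T|,|T'|)<1$ on $\D$, forces, by the injectivity of $f$, the domination $|a_1|\ge |a_n|$ for all $n\ge 2$. Expanding $f$ around an arbitrary center $z$ with $|z|<1$ produces coefficients $b_n=\sum_{m\ge n}\binom{m}{n}a_m z^{m-n}$ satisfying $|b_n|\le|a_1|=|b_1|$, from which a direct computation of the semi-norm of $B(z,r)$ yields the key formulas
\[
f^{\an}\bigl(x_{B(z,r)}\bigr)=x_{B\bigl(f(z),\,|a_1|\,r\bigr)}\quad \text{and}\quad |f(z)-f(z')|=|a_1|\cdot |z-z'|,
\]
valid for all $z,z'\in\D$ and $r<1$ with $B(z,r)\subset\D$.

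These formulas express that $f^{\an}$ rescales all centers and radii by the common multiplicative factor $|a_1|$. Since the distance $d_\H(x_{B(z_1,r_1)},x_{B(z_2,r_2)})$ in both cases of its definition (nested balls and disjoint balls) only involves ratios of the form $r_1/r_2$ and $|z_1-z_2|/r_i$, it is immediately invariant under this common rescaling; hence $f^{\an}$ is an isometry on the dense subset of type $2$ and $3$ points. Extension to type $4$ points then follows by the continuity of $f^{\an}$ and of $d_\H$, using that any type $4$ point is the limit of a decreasing family of closed balls.

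For an injective $f\colon A(\rho)\to\A^{1,\an}_K$, I would apply the same strategy to the Laurent expansion $f(T)=\sum_{n\in\Z}a_nT^n$. The injectivity of $f$, examined through the convex piecewise-linear function $t\mapsto \max_n(\log|a_n|+nt)$ describing $\log|f|$ along the skeleton $\Sigma(A)$, forces a single dominant index $m\in\{-1,+1\}$ with $|a_m|r^m>|a_n|r^n$ for all $n\ne m$ and every $r\in(1,e^\rho)$: any $|m|\ge 2$ is excluded since the map would then be a nontrivial cover. When $m=1$ the ball-case computation carries over verbatim. When $m=-1$ I would precompose with the inversion $\iota\colon T\mapsto 1/T$, which extends to an element of $\PGL_2(K)$ acting as an automorphism of $\p^{1,\an}_K$ and hence as a global isometry of the canonical hyperbolic metric restricting to $d_\H$; this reduces $m=-1$ to $m=+1$. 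The main technical obstacle is to rigorously extract, from injectivity alone, the single-dominant-coefficient property (both $|a_1|\ge|a_n|$ in the ball case and its annular analogue); once this is available the isometry follows by mechanical substitution into the definition of $d_\H$.
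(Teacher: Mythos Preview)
Your argument is correct and arrives at the same conclusion as the paper, but the route differs in one structural respect. The paper first observes that the image of $f$ is itself an open ball (it has a single boundary point), then uses that affine maps are $d_\H$-isometries to reduce to the case $f(\D_K(0,1))=\D_K(0,1)$; at that point the classification of analytic automorphisms of the ball stated in \S\ref{sec:non-archline} supplies $|a_0|<|a_1|=1$ and $|a_n|<1$ for $n\ge 2$, and the diameter formula $\diam(f(B(0,r)))=\max_{n\ge 1}|a_n|r^n=r$ finishes the proof immediately. You instead bypass the reduction and the classification: you extract $|a_1|\ge|a_n|$ directly from injectivity via the divided-difference factorization, keep $|a_1|$ arbitrary, and show that $f^{\an}$ rescales all centers and radii by the common factor $|a_1|$, which leaves $d_\H$ invariant since the metric only sees ratios. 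Your approach is more self-contained (it does not rely on knowing in advance that the image is a ball or on the automorphism classification), at the price of a slightly longer computation; the paper's approach is terser because it front-loads those facts. For the annulus case the paper simply says ``analogously'', whereas your Newton-polygon argument for the single dominant exponent $m\in\{\pm 1\}$ together with the reduction of $m=-1$ to $m=+1$ via the $\PGL_2$-isometry $T\mapsto 1/T$ makes the analogous step explicit.
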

\begin{proof}[Sketch of proof]
Let us treat the case of the open unit ball and take any injective analytic map $f\colon \D_K(0,1) \to \A^{1,\an}_K$.
The image of $f$ is an open ball, since it has at most one boundary point. 
Since any affine automorphism is an isometry for $d_\H$, we may suppose that $f(\D_K(0,1)) = \D_K(0,1)$.
From the explicit description of the automorphism groups of the ball given in \S\ref{sec:non-archline}, we get that 
$f(T) = \sum_n a_n T^n$ with $|a_0| < |a_1| =1$ and $|a_n| <1$ for all $n\ge2$.
The lemma then follows from the following estimation of the diameter of a ball: 
\[ 
\diam (f(B(0,r))) =\max_{n\ge 1} |a_n| r^n~. \]
The case of the annulus is treated analogously. 
\end{proof}
Observe that the metric spaces $(\D_K(0,1)\cap \H(\A^1) , d_\H)$ and $(A(\rho)\cap \H(\A^1),d_\H)$ are not complete. Their completions are respectively equal to 
$(\D_K(0,1) \cup \{ \xg\}, d_\H)$ and $(A(\rho)\cup \{ \xg, x_{\bar{B}(0,e^\rho)}\}, d_\H)$.

\begin{remark}\label{rem:canonicamet}
The preceding observation combined with Lemma~\ref{lem:autball} show that 
$\D_K(0,1) \cap \H(\A^1) \cup \{ \xg\}$ and $(A(\rho)\cap \H(\A^1) \cup \{ \xg, x_{\bar{B}(0,e^\rho)}\})$
are canonically endowed with a complete metric that we shall again denote by $d_\H$.
\end{remark}

\medskip

Let now $C$ be any connected projective smooth curve, and denote by $\H(C) = C^{\an} \setminus C(K)$
the complement of the set of rigid points.
\begin{propdef} 
There exists a unique complete metric $d_{\H,C}$ on $\H(C)$ such that the following holds. There exists $\epsilon >0$ such that 
any analytic embedding $f \colon  U \to C$ where $U$ is either the open unit ball or an annulus of modulus $\le \epsilon$
induces an isometry from $(U,d_\H)$ onto its
image $(f(U), d_{\H,C})$. 
\end{propdef}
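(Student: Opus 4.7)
The plan is to construct $d_{\H,C}$ using the Bosch--L\"utkebohmert decomposition attached to an snc model, then verify both existence of $\epsilon$ and uniqueness by a refinement argument that reduces everything to Lemma~\ref{lem:autball}.

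Fix an snc model $\mathfrak{C}$ of $C$ and set $\rho_{\min}>0$ to be the minimum, over all double points $\bar x$ of $\mathfrak{C}_s$, of the modulus $\rho_{\bar x}$ of the annulus $\red^{-1}(\bar x)$. By Theorem~\ref{thm:boschlu}, $C^{\an}$ is covered by: the vertex set $\{x_E\}_E$; open Berkovich disks $\red^{-1}(\bar x)$ with unique type-$2$ boundary point $x_E$, for each smooth closed point $\bar x\in E$; and open Berkovich annuli $\red^{-1}(\bar x)$ with boundary $\{x_E,x_{E'}\}$ and modulus $\rho_{\bar x}$, for each double point $\bar x\in E\cap E'$. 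By Remark~\ref{rem:canonicamet}, each such piece, intersected with $\H(C)$ and with its type-$2$ boundary adjoined, carries a canonical complete metric transferred from $(\H(\A^1),d_\H)$ via any analytic identification with a standard disk or standard annulus.

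I define $d_{\H,C}(x,y)$ as the infimum of $\sum_{i=0}^{n-1} d_{\text{piece}_i}(x_i,x_{i+1})$ over finite chains $x=x_0,\ldots,x_n=y$ such that each consecutive pair lies in the closure of a common piece. Since two distinct pieces overlap only in single vertices $x_E$, this is unambiguous; positivity, symmetry and completeness follow from those properties on each piece combined with the finiteness of the dual graph $\Delta(\mathfrak{C})$. The restriction of $d_{\H,C}$ to the skeleton coincides with the path-length metric on $\Delta(\mathfrak{C})$ in which the edge associated to $\bar x$ has length $\rho_{\bar x}$. That the construction does not depend on the snc model is verified by comparing two given models with a common refinement: each elementary blow-up either splits an annular piece isometrically into two shorter annuli, or adds an isometric sub-disk inside an existing disk piece, preserving the gluing metric.

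Choose $\epsilon:=\rho_{\min}$. Given $f\colon U\to C$ with $U=\D_K(0,1)$ or $U=A(\rho)$ and $\rho\le\epsilon$, one may refine $\mathfrak{C}$ so that $f(U)$ becomes a single piece of the refined model: for $U=\D_K(0,1)$, because $f(U)$ has a unique type-$2$ boundary point that can be adjoined to the vertex set; for $U=A(\rho)$, because $f(U)$ has two type-$2$ boundary points connected by a geodesic arc of length $\rho\le\rho_{\min}$, so inserting these as new vertices cannot force any intermediate vertex inside $f(U)$. The embedding $f$ then factors through the canonical analytic identification of that piece with a standard disk or annulus, and Lemma~\ref{lem:autball} (combined with Remark~\ref{rem:canonicamet}) yields that $f$ is an isometry onto its image. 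Uniqueness is immediate: any other complete metric $d'$ satisfying the isometry property agrees with the pulled-back $d_\H$ on every piece of $\mathfrak{C}$ (viewed as the image of an embedding from $\D_K(0,1)$ or from a sufficiently small annulus, after further subdivision), and hence agrees with $d_{\H,C}$ on all chains. The main technical obstacle lies in justifying the refinement step: one must check both that every finite set of type-$2$ points of $\H(C)$ can be realised as the vertex set of some snc model, and that this refinement process preserves the locally-defined distances. Both facts are classical consequences of the semi-stable reduction theorem together with the dictionary between snc models and the skeleton structure of the Berkovich curve, but they require careful bookkeeping.
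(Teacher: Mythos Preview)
Your overall approach---glue the canonical $d_\H$ on the Bosch--L\"utkebohmert pieces of an snc model---is the paper's approach, and your chain definition agrees with the paper's path-length definition since each piece is an $\R$-tree. But there is a genuine gap in the annulus case.

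Your choice $\epsilon=\rho_{\min}$ is too large. The paper takes $\epsilon$ to be half the minimum length of a \emph{loop} in the skeleton $\Delta(\mathfrak{C})$, not the minimum edge length. These differ whenever a component of $\mathfrak{C}_s$ has a self-intersection: the dual graph then has a loop consisting of a single edge, of some length $L\ge\rho_{\min}$, possibly $L=\rho_{\min}$. Take $\rho$ with $L/2<\rho<L\le\rho_{\min}$ (e.g.\ on a Tate curve with one-vertex model). An open annulus $A(\rho)$ embeds with skeleton equal to an arc of length $\rho$ on this loop; its two type-2 boundary points are then at $d_{\H,C}$-distance $L-\rho<\rho$, and points near the two ends of the skeleton are strictly closer in $C$ than in $A(\rho)$. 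So the embedding is not an isometry.

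The refinement argument does not fix this, and in fact misses the real point. Making $f(U)$ a single piece only shows that the chain $(x,y)$ has length $d_\H(f^{-1}x,f^{-1}y)$; it does not rule out a \emph{shorter} chain that leaves $f(U)$ and goes around a loop. The paper's argument is direct: for balls, any path leaving $f(\D_K(0,1))$ must exit and re-enter through the unique boundary point, which is wasteful; for annuli of modulus $\rho\le\epsilon$, any path leaving $f(A(\rho))$ must traverse the complementary arc of some loop, of length $\ge L-\rho\ge 2\epsilon-\rho\ge\rho$, so it is no shorter. Your sentence ``two type-2 boundary points connected by a geodesic arc of length $\rho$'' assumes precisely what needs to be proved: the skeleton of $f(A(\rho))$ has length $\rho$, but whether this arc is a geodesic in $C$ is the question.
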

When the context is clear we shall drop the index $C$ and simply write $d_\H$ for the metric on $\H(C)$.
\begin{proof}[Sketch of proof]
Choose an snc model $\mathfrak{C}$. We shall build a metric $d_\mathfrak{C}$ on $\H(C)$ and latter show that this metric does not depend on the choice of the model. 

By Bosch and L\"utkebohmert's theorem (Theorem~\ref{thm:boschlu}) the Berkovich curve  $C^{\an}$ is the disjoint union of 
finitely many type $2$ points (those of the form $x_E$ for some irreducible component $E$ of the special fiber), 
of finitely many annuli $A_i$ (the preimages under the reduction map of the singular points of $\mathfrak{C}_s$), 
and (possibly  infinitely many) disjoints open balls $B_j$.

Observe that the closures $\bar{A_i}, \bar{B_j}$ forms a compact covering of $C^{\an}$ and that the intersection of any two
of these pieces is either empty or equal to a point $x_E$ for some $E$ as before. 
By Remark~\ref{rem:canonicamet}, we may endow each piece $\bar{A}_i \cap \H(C)$, $\bar{B}_j\cap \H$
with a canonical metric $d_\H$ for which they become an $\R$-tree.
We then extend the metric to $\H(C)$ by setting
for any two points $x, y \in \H(C)$
\[
d_\mathfrak{C} (x,y) = \inf \{ \length(\gamma)\}\]
where $\gamma$ ranges over all injective continuous
maps $\gamma \colon [o,t] \to \H(C)$ such that $\gamma(o) =x$ and $\gamma(t) =y$. Since each piece of the covering is
an $\R$-tree, one can take the infimum over paths which are parameterized by length, hence this infimum is taken over finitely
many maps and is thus attained.  The metric $d_\mathfrak{C}$ is complete since each piece is. 

Let us now verify  that $d_\mathfrak{C}$ satisfies the expected property that any embedding of a ball or an annulus is isometric. 
Note that this will prove that distance does actually not depend on the choice of the model. 
Take any embedding of the open unit ball $f \colon \D_K(0,1) \to C$. For any $s,t \in \D_K(0,1))$ note that 
any path $\gamma$ minimizing $d_\mathfrak{C} (f(s),f(t))$ is actually included in $ f(\D_K(0,1))$ since the latter has only one boundary point.  
And it follows from Lemma~\ref{lem:autball} that $d_\mathfrak{C} (f(s),f(t))= d_\H(s,t)$ as required.  

Let $\epsilon$ be half of the minimum of the length of loops included in the skeleton $\Delta (\mathfrak{C})$.
If $f \colon A \to C$ is an injective analytic map, and $A$ is an annulus of modulus $\le \epsilon$, then again
any path $\gamma$ minimizing $d_\mathfrak{C} (f(s),f(t))$ has to be included in $A$. We conclude the proof as before.
\end{proof}

\begin{remark}
Another construction of this metric is given in~\cite[\S 5.3]{MR3204269} for details. We refer also to~\cite{ducros} for a construction of the metric in the case of an arbitrary analytic curve.
\end{remark}

The metric topology $(\H(C), d_\H)$ is stronger than the restriction of the compact topology $C^{\an}$ to $\H(C)$. However for any snc model $\mathfrak{C}$, 
the restriction of both topologies to $\Delta(\mathfrak{C})$ coincide.


\section{Potential theory}\label{sec-potential}

\subsection{Pluripotential theory on complex manifolds}
We shall mainly use potential theory on Riemann surfaces. However we need to pass to higher dimensional complex varieties at a few places (most notably to prove 
Theorem \ref{tm:suppT} in Chapter \ref{chapter-pairs}). We review also basic properties of plurisubharmonic (psh) functions, see~\cite{MR2311920} for a general reference. 

\smallskip

Let $C$ be any Riemann surface. We let $\Delta$ be the usual Laplace operator defined on $\mathcal{C}^2$ functions in any holomorphic chart
by $\Delta \varphi = \frac{i}{\pi} \partial_z \partial_{\bar z} \varphi(z,\bar{z})$\index{laplacian}. This operator extends to any $L^1_{\mathrm{loc}}$ function in which case $\Delta \varphi$ is merely
a distribution on $C$. Harmonic functions are those $\mathcal{C}^2$ functions $h \colon C \to \R$ such that $\Delta h =0$.
A subharmonic function\index{function!subharmonic} $u \colon C \to \R \cup\{ -\infty\}$ is a upper semicontinuous function
such that for any harmonic function $h$ on a holomorphic disk $D \subset C$ such that $u \le h$ on $\partial D$ then $u\le h$ on $D$.
For any subharmonic function, $\Delta u$ is a positive measure. Conversely, any $L^1_{\mathrm{loc}}$ function whose Laplacian is a positive measure
is equal a.e. to a (unique) subharmonic function, see \cite[\S 1.6]{MR1045639}. 

For any holomorphic function $f\colon C \to \C$, the function $\log |f|$ is subharmonic. Subharmonic functions are stable by sum, multiplication by a positive constant, 
by taking maximum. Any decreasing limit of a sequence of subharmonic functions remains subharmonic (or is identically $-\infty$).

Let $M$ be any connected complex manifold. To simplify the discussion we shall assume that the dimension of $M$ is $2$ (it covers all our needs for this book). 
A function $u \colon M \to \R \cup\{ -\infty\}$ is said to be psh\index{function!psh} if it is 
upper semicontinuous and for any holomorphic map $\imath \colon \D \to M$ the function $u \circ \imath$  is subharmonic. Again for any holomorphic function $f\colon M \to \C$, the function $\log |f|$ is psh, and psh functions are stable by sum, multiplication by a positive constant, by taking maximum, by composition by holomorphic functions and by decreasing limits.

In a holomorphic chart $(z_1, z_2)$, define the operator 
\[dd^c \varphi = \sum_{\alpha, \beta}  \frac{\partial^2 \varphi}{\partial z_{\alpha} \partial \bar{z}_{\beta}} \, \frac{i}\pi dz_{\alpha} \wedge d\bar{z}_{\beta}
\]
on $\mathcal{C}^2$ functions $\varphi \colon M \to \R$. The form $dd^c \varphi$ is then a real closed form of type $(1,1)$.
We may extend this operator to any $L^1_{\mathrm{loc}}$ functions in which case $dd^c \varphi$ becomes a current\index{current!closed positive} of bidegree $(1,1)$, i.e.
a linear form on the space of $(1,1)$ forms with compact support. For any smooth form $\omega$, we thus have a pairing 
$\langle dd^c \varphi, \omega \rangle \in \C$.

Recall that a smooth real $(1,1)$ form $\omega =  \sum_{\alpha, \beta}  \omega_{ij}  \frac{i}2 dz_{\alpha} \wedge d\bar{z}_{\beta}$ is positive
if the hermitian matrix $(\omega_{\alpha, \beta})_{\alpha, \beta}$ is positive at any point. 
When $u$ is psh, then $dd^c u$ is a positive current in the sense that for any smooth positive real $(1,1)$ form $\omega$, we have
$\langle dd^c u , \omega \rangle \ge 0$. Conversely, any $L^1_{\mathrm{loc}}$ function whose $dd^c$ is a positive current is equal a.e. to a subharmonic function, see \cite[Theorem~4.3.5.2]{MR2311920}.
In a local chart $dd^c u$ can be expressed as
$dd^c u = \sum_{\alpha, \beta} T_{\alpha, \beta}\, \frac{i}2 dz_{\alpha} \wedge d\bar{z}_{\beta}$ where $T_{\alpha, \beta}$ are signed Borel measures such that 
$ \frac{i}2\, \sum_{\alpha, \beta} T_{\alpha, \beta} \lambda_i \bar{\lambda}_j \ge 0$ for any choice of complex numbers $(\lambda_1,\lambda_2)$.

\smallskip

If $N$ is a closed analytic subvariety in $M$ of pure dimension $1$ (e.g. when $N$ is a closed Riemann surface in $M$), then we can define
the following current of integration
\[
\langle [N], \omega \rangle = \int_{\mathrm{Reg} (N)} \omega\]
for any smooth $(1,1)$-form $\omega$. Here $\mathrm{Reg} (N)$ denotes the set of smooth points of $N$, and it follows from a theorem of Lelong that 
$[N]$ is a well-defined closed positive $(1,1)$-current. When $N$ is defined by the vanishing of some holomorphic function
$f \colon M \to \C$ then the Poincar\'e-Lelong formula states that $[N] = dd^c \log |f|$.

Let now $u$ be any psh function such that $u|_N$ is not identically $-\infty$ on any of the irreducible component of $N$.
Then we may define a positive measure  supported on $N$ by setting
\[dd^c u \wedge [N] := \sum_i dd^c (u|_{\mathrm{Reg} (N_i)}) \]
where $N_i$ denotes the irreducible components of $N$.

It is a very delicate issue to prove the convergence $dd^c u_n \wedge [N] \to dd^c u \wedge [N]$
when $u_n$ is a sequence of psh functions converging in $L^1_{\mathrm{loc}}$ to $u$.
This is for instance true when $u_n$, $u$ are continuous and the convergence is uniform.

\subsection{Potential theory on Berkovich analytic curves}\label{sec:pot on curves}
Let $(K, |\cdot|)$ be any algebraically closed complete non-Archimedean metrized field, and let $C$ be any smooth connected projective curve defined over $K$.
For any open subset $U\subset C$, we define the notions of harmonic and subharmonic functions on $U$ and explain how to construct a natural Laplace operator
from the latter space of functions to the space of positive measures. Potential theory on arbitrary Berkovich curves was fully developed in Thuillier's PhD \cite{thuillier}, and we refer to this monograph for more details.

\paragraph*{Model functions}
Let $\mathfrak{C}$ be any snc model.\index{function!model} Recall that the skeleton $\Sigma(\mathfrak{C})$ is a finite graph included in $\H(C) \subset C^{\an}$, and can thus be endowed with a natural distance $d_\H$. Any segment of the skeleton comes with a unique (up to translation and change of direction) parameterization by
a segment of the real line. We may thus define the space $\PL(\mathfrak{C})$ of piecewise affine functions 
on $\Sigma(\mathfrak{C})$ as the space of continuous real-valued functions $h \colon \Sigma(\mathfrak{C}) \to \R$
whose restriction to any segment is affine. Any piecewise affine function is thus determined by its values on the set of vertices
of $\Sigma(\mathfrak{C})$, hence $\PL(\mathfrak{C})$  is isomorphic to $\R^N$ where $N$ is the number of irreducible components of the special fiber.
Define a model function $\varphi \colon C^{\an} \to \R$ as a function of the form  $\varphi = h \circ \tau_{\mathfrak{C}}$ where $\mathfrak{C}$ is any snc model,
and $h \in \PL(\mathfrak{C})$. 

If $U$ is an open subset of $C^{\an}$, we say that $\varphi \colon U \to \R$ is a model function when it has compact support in $U$ and its trivial extension to $C^{\an}$ is a model function. We denote by $\cD(U)$ the space of all model functions: it is an $\R$-algebra which is stable by $\max$. It follows from Stone-Weierstrass theorem that $\cD(U)$ is dense in the space of continuous function on $U$
for the topology of the uniform convergence on compact subsets.

\paragraph*{Subharmonic functions}
Pick any open subset $V$ of $\Sigma(\mathfrak{C})$. Note that $V$ is a countable union of finite metrized graphs
 having a finite number of branched and boundary points. We say that a function\index{function!subharmonic} $h \colon V \to \R$ is subharmonic when
 it is convex and continuous, and for any branched and end point $v \in V$, we have
 \begin{equation}\label{eq:conv graph}
 \sum_{\vec{v}  \in Tv} D_{\vec{v} } h \ge 0. 
 \end{equation}
Some explanations are in order here. If $v$ is any point in $V$, we let $Tv$ be the set of branches at $v$: when $v$ is an endpoint, then $Tv$ is reduced to a singleton whereas $v$ is a branched point precisely when $Tv$ has at least three points.

For any $\vec{v}  \in Tv$, we may fix an isometric embedding $\phi \colon [0, \epsilon) \to V$ such that
$\phi(0) = v$ and $\phi(t)$ belongs to the branch determined by $\vec{v} $ for $t$ small. The isometry condition 
ensures $d_\H (\phi(t), \phi(t')) = |t -t'|$ where $d_\H$ is the metric constructed in \S \ref{sec:nonArchcurves}.
In particular, any two parameterizations coincide on a small neighborhood of $0$.
Our assumption on $h$ to be convex is equivalent to say that  $h \circ \phi$ is convex so that we may define the directional derivative
\[D_{\vec{v} } h  := \left. \frac{d}{dt} \right|_{t=0^+} \!\!\!\!  h \circ \phi\in \R \cup \{ -\infty\}.\]
Observe that \eqref{eq:conv graph} actually implies $D_{\vec{v} } h$ to be finite for all $\vec{v} $.

\begin{definition}
Let $U$ be any open subset of $C^{\an}$. 
A function $u\colon U \to \R\cup\{-\infty\}$ is said to be subharmonic when
it is upper semi-continuous and for all snc models $\mathfrak{C}$, the function
$u|_{\Sigma(\mathfrak{C})\cap U}$ is subharmonic.
\end{definition}

The set $\sh(U)$ of all subharmonic functions on $U$ satisfies the same properties as its complex analog:
it is stable by scaling by positive constants, by taking sums and maxima, by decreasing limits, by composition by analytic functions and by restriction to smaller open subsets. Subharmonic functions satisfy the maximum principle. When $f$ is analytic on $U$, then $\log|f|$ is subharmonic. 

A function $h$ is harmonic when $+ h$ and $-h$ are both subharmonic. 

\paragraph*{The Laplace operator}
For any locally compact topological space $X$, let  $\cM(X)$ be the set of positive Radon measures on $X$, that is positive
linear functional on the space $\mathcal{C}_c^0(X)$ of continuous functions with compact support on $X$. 
We now define a linear operator $\Delta \colon \sh(U) \to \cM(U)$.

To that end we first define for any $\varphi \in \cD(U)$
\[
\Delta \varphi := 
\sum_{v} 
\left(\sum_{\vec{v}  \in Tv} D_{\vec{v} } \varphi \right) \delta_v
.\]
If $\varphi = h \circ \tau_{\mathfrak{C}}$ and $h \in \PL(\mathfrak{C})$ as above, then 
$\Delta g$\index{laplacian!non-archimedean} is a signed atomic measure supported on type $2$ points associated to the irreducible
components of the special fiber of $\mathfrak{C}$. 

\begin{propdef}
Let $U$ be any open subset of $C^{\an}$ and pick  any function $u\in \sh(U)$.

Then there exists a unique positive Radon measure $\Delta u$
such that for any $\varphi \in \cD(U)$ one has
\begin{equation}\label{eq:def laplace}
\int_U \varphi \, \Delta u = \int_U u\, \Delta \varphi
.\end{equation}
\end{propdef}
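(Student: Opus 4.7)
The plan is to first establish uniqueness, which is immediate: the space $\cD(U)$ is dense in $\mathcal{C}_c^0(U)$ by the Stone--Weierstrass theorem (recalled above), and any positive Radon measure is determined by its pairing with compactly supported continuous functions.

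For existence, I would define a linear form $T\colon \cD(U) \to \R$ by $T(\varphi) := \int_U u\,\Delta\varphi$ and then show that $T$ extends uniquely to a positive Radon measure. For $\varphi = h \circ \tau_{\mathfrak{C}} \in \cD(U)$ with $h \in \PL(\mathfrak{C})$, the measure $\Delta\varphi = \sum_v \bigl(\sum_{\vec{v}\in Tv} D_{\vec{v}} h\bigr)\delta_{x_v}$ is finite and atomic, supported on the finitely many type-$2$ points $x_v$ of $\Sigma(\mathfrak{C})$ lying in $\supp(\varphi) \subset U$. Since $u|_{\Sigma(\mathfrak{C}) \cap U}$ is continuous (by the very definition of subharmonicity on $U$), each value $u(x_v)$ is real and $T(\varphi) = \sum_v u(x_v)\sum_{\vec{v}\in Tv} D_{\vec{v}} h$ is well-defined.

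The crucial step is positivity. The restriction $u|_{\Sigma(\mathfrak{C})\cap U}$ is convex on each edge and satisfies the vertex inequality \eqref{eq:conv graph}, so its intrinsic graph Laplacian
\[
\Delta_\mathfrak{C} u := \bigl(u|_{\text{edges}}\bigr)'' + \sum_v \Bigl(\sum_{\vec{v}\in Tv} D_{\vec{v}} u\Bigr)\delta_v
\]
is a positive Radon measure on $\Sigma(\mathfrak{C}) \cap U$. A one-dimensional integration by parts applied edge by edge on the metric graph $\Sigma(\mathfrak{C}) \cap U$, using the compact support of $h$ to discard boundary terms, yields the self-adjointness identity
\[
\sum_v u(x_v)\sum_{\vec{v}\in Tv} D_{\vec{v}} h \;=\; \int_{\Sigma(\mathfrak{C}) \cap U} h \,\Delta_\mathfrak{C} u.
\]
Whenever $\varphi \ge 0$ we may take $h \ge 0$, and the right-hand side is then nonnegative, so $T(\varphi) \ge 0$.

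To conclude, I would extend $T$ to $\mathcal{C}_c^0(U)$. For any compact $K\subset U$ a cutoff $\chi \in \cD(U)$ with $0\le\chi\le 1$ and $\chi\equiv 1$ on $K$ can be constructed by taking a sufficiently fine snc model and choosing an appropriate piecewise affine function on its skeleton. Positivity then yields the continuity estimate $|T(\varphi)| \le T(\chi)\,\|\varphi\|_\infty$ for every $\varphi \in \cD(U)$ supported in $K$, so $T$ extends uniquely by density to a positive linear functional on $\mathcal{C}_c^0(U)$, which the Riesz representation theorem identifies with a positive Radon measure $\Delta u$ satisfying \eqref{eq:def laplace}. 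The main technical obstacle is the graph integration-by-parts identity: one has to parameterize each edge carefully, track sign conventions for the directional derivatives at the two endpoints, and verify that the atomic contributions at vertices produced by the formula for $\Delta_\mathfrak{C} u$ exactly cancel the boundary terms coming from each incident edge.
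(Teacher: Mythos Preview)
Your proof is correct and takes a genuinely different route from the paper's. You argue functionally: define the linear form $T(\varphi) = \int_U u\,\Delta\varphi$, establish positivity via an integration-by-parts identity on the metric graph $\Sigma(\mathfrak{C})\cap U$ (which reduces to the self-adjointness of the graph Laplacian, using that $u$ restricted to the skeleton is subharmonic so its graph Laplacian is a positive measure), and then invoke density and Riesz representation. The paper instead constructs $\Delta u$ directly by potential theory: it first restricts to open sets $U$ that are $\R$-trees with finitely many type-$2$ boundary points, fixes a base point $x_0\in\partial U$, and uses the Gromov product $\langle x,y\rangle_{x_0}$ to set up a bijection $\rho\mapsto g_\rho(x)=\int\langle x,y\rangle_{x_0}\,d\rho(y)$ between positive measures on $\bar U$ and (normalized) subharmonic functions; $\Delta u$ is then the unique measure with $g_{\Delta u}-u$ constant. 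After checking this is local, the paper patches over a countable cover of a general $U$. Your approach is arguably more elementary---it avoids the potential-theoretic inverse and the localization step---while the paper's construction yields, for free, an explicit integral representation of $u$ in terms of $\Delta u$ on tree-like pieces, which is useful for later computations. Your identified technical obstacle (tracking boundary terms and sign conventions in the edge-by-edge integration by parts) is real but routine; it is exactly the content of Green's formula on metric graphs as in Thuillier's thesis or Baker--Rumely.
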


\begin{proof}[Sketch of proof]
We refer to \cite{thuillier} for a careful construction of $\Delta u$. 
Note that the uniqueness immediately follows from the density of model functions in $\mathcal{C}_c^0(U)$.
Here is one way to proceed for the construction of $\Delta u$.

We first suppose that $U$ is an open subset of $C^{\an}$ which has finitely many boundary points of type $2$ and is an $\R$-tree. Pick any $x_0\in \partial U$.
Define the Gromov product  $\langle x, y\rangle_{x_0} \in \R_+$ as the distance for the metric $d_\H$ between the segment $[x,y]$ and $x_0$.
For any positive measure $\rho$ on $U$, set $g_\rho(x) = \int \langle x, y\rangle_{x_0}\, d\rho(y)$.
One can then show using \cite[Theorem~7.50]{valtree} that the map $\rho \mapsto g_\rho$ is a bijection between the set of 
positive measures of finite mass on $\bar{U}$ and the set of subharmonic functions on $U$ which extends continuously to $\bar{U}$ and have value $0$ at $x_0$.
Denote by $\sh(\bar{U})_0$ this space and by $\sh(\bar{U})$ the set of sums of a function in  $\sh(\bar{U})_0$ and a constant. 
For any $u\in\sh(\bar{U})$, we define $\Delta u$ to be the unique measure such that 
$g_{\Delta u} - u $ is a constant. One can then check that this measure satisfies \eqref{eq:def laplace}.
In particular, the operator we have constructed is local in the sense if $U$ and $V$ are two open sets as above, and if we pick $u \in \sh(\bar{U})$ and
$v \in \sh(\bar{V})$ such that $u =v$ on $U\cap V$ then $\Delta u = \Delta v$ on $U\cap V$.

Now pick any open set $U$. We cover it by a countable family $U_i$ of open sets satisfying the above condition, and 
observe that $u|_{U_i} \in \sh(\bar{U}_i)$ for all $i$ so that one may define
\[ (\Delta u)|_{U_i} := \Delta \left(u|_{U_i} \right).\]
By the previous discussion, this measure is well-defined and satisfies \eqref{eq:def laplace}.
\end{proof}

Let us list a couple of properties of $\Delta u$ without proof. 

\begin{proposition}
Let $u$ be any subharmonic function on an open subset $U$ of $C^{\an}$. 
\begin{enumerate}
\item
For any connected open and relatively compact subset $V \subset U$ such that $\partial V$ is finite, we have
\[
\Delta u (V) =  \sum_{v\in \partial V} D_{\vec{v} } u
\]
where $\vec{v}$ denotes the unique direction at $v$ pointing towards $V$.
\item
Let $\mathfrak{C}$ be any snc model, and suppose that $u=  h \circ \tau_{\mathfrak{C}}$ for some convex and continuous function
$h \colon \Sigma(\mathfrak{C}) \cap U \to \R$.
Then $\Delta u$ is supported on the graph $\Sigma(\mathfrak{C}) \cap U$, and
\[ 
\Delta u = 
\sum_{v\in E} 
\left(\sum_{\vec{v}  \in Tv} D_{\vec{v} } h \right) \delta_v
+ 
\sum_j (\phi_j)_* \frac{d^2 (h \circ \phi_j(t))}{dt^2} 
\]
where $E$ denotes the set of end and branched points of  $\Sigma(\mathfrak{C}) \cap U$, and $\phi_j\colon I_j \to $ is a collection of isometries with $I_j\subset \R$
such that $\phi_j (I_j)\cap \phi_i (I_i)=\varnothing$ for all $i\neq j$ and $\bigcup_j \phi_j (I_j)=  \Sigma(\mathfrak{C})\setminus E$.
\end{enumerate}
\end{proposition}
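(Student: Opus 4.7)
The plan is to establish (2) first, since it exhibits the explicit local structure of $\Delta u$, and then derive the boundary formula (1) by an integration-by-parts argument combined with a well-chosen approximation of $\mathbf{1}_V$ by model functions.

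For (2), I would argue by approximation of $h$ by piecewise affine convex functions. Concretely, on the locally finite metric graph $\Sigma(\mathfrak{C})\cap U$, choose an exhausting sequence of finite subgraphs with a refining sequence of vertices, and for each $n$ let $h_n$ be the continuous piecewise affine interpolant of $h$ on that vertex set. Since $h$ is convex and continuous, $h_n$ is convex and $h_n\to h$ uniformly on compact subsets, and $u_n:=h_n\circ\tau_{\mathfrak{C}}$ is a model function on a slightly shrunk open set. By the very definition of $\Delta$ on $\cD(U)$, one has
\[
\Delta u_n \;=\; \sum_{v\in V_n}\Bigl(\sum_{\vec v\in Tv} D_{\vec v} h_n\Bigr)\delta_v,
\]
with atoms at the chosen vertices, where the slope jumps $\sum_{\vec v} D_{\vec v} h_n$ are non-negative by convexity. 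The key step is then to verify that $\Delta u_n\to \Delta u$ as positive Radon measures: this follows from the duality \eqref{eq:def laplace}, because for any $\varphi\in\cD(U)$ one has $\int u_n\,\Delta\varphi\to\int u\,\Delta\varphi$ by uniform convergence of $h_n$ to $h$ on the (finite) support of $\Delta\varphi$, and Radon measures are determined by their pairing with $\cD(U)$. Finally I would identify the limit by decomposing each edge: along an isometry $\phi_j\colon I_j\to\Sigma(\mathfrak{C})$, the restriction $h\circ\phi_j$ is a convex function of one real variable whose distributional second derivative is a positive Radon measure, and standard one-dimensional calculus shows that the atomic slope-jump measures for $h_n\circ\phi_j$ converge weakly to $(h\circ\phi_j)''(t)\,dt$ in the interior of the edge and concentrate as point masses $\sum_{\vec v}D_{\vec v}h$ at endpoints/branch vertices in $E$.

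For (1), once (2) is known for model functions, I would choose an snc model $\mathfrak{C}$ whose skeleton contains $\partial V$ as vertices (possible because $\partial V$ is finite). Pick a decreasing sequence of model functions $\varphi_n\in\cD(U)$ such that $\varphi_n\equiv 1$ on a shrinking neighborhood of $\bar V$ inside $V$, $\varphi_n\equiv 0$ outside a slightly larger neighborhood of $\bar V$, interpolated affinely along the segments of $\Sigma(\mathfrak{C})$ emanating from $\partial V$ toward the exterior of $V$, and with $\varphi_n\to\mathbf{1}_{\bar V}$ pointwise. Applying \eqref{eq:def laplace} and passing to the limit by monotone (or dominated) convergence,
\[
\Delta u(V)\;=\;\lim_n\int_U \varphi_n\,\Delta u\;=\;\lim_n\int_U u\,\Delta\varphi_n.
\]
By construction the atoms of $\Delta\varphi_n$ away from $\partial V$ lie where $\varphi_n$ is still affine, contributing nothing in the limit once the outer cutoff is pushed out; only the slope contributions at the boundary points survive, and there the slope of $\varphi_n$ in the unique direction $\vec v$ pointing into $V$ tends to $-1$ while the slopes in all other directions tend to $0$. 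Using (2) applied to $\varphi_n$ (which is itself of the form $h_n\circ\tau_{\mathfrak{C}}$) and the very definition of $D_{\vec v}u$ as the one-sided derivative of $u$ along the incoming direction, one obtains
\[
\lim_n\int_U u\,\Delta\varphi_n \;=\; \sum_{v\in\partial V} D_{\vec v} u,
\]
which is the desired identity.

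The main technical obstacle is the continuity statement in (2), namely that $\Delta u_n\rightharpoonup \Delta u$ when $h_n\to h$ uniformly with $h_n,h$ convex: one must control the directional derivatives at branch/end points uniformly, which requires a local argument using convexity of $h\circ\phi_j$ on each edge and the identity $\Delta u(\{v\})=\sum_{\vec v}D_{\vec v}h$ at an isolated vertex (itself a consequence of (1) applied to shrinking neighborhoods). To avoid circularity I would prove the pointwise mass formula at a single vertex first, directly from the definition by testing against a single-tent model function concentrated near $v$; this bootstraps into both (1) and (2).
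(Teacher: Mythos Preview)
The paper explicitly states this proposition without proof (``Let us list a couple of properties of $\Delta u$ without proof''), so there is nothing to compare against directly. Your overall plan---prove (2) by approximating $h$ by piecewise-affine convex functions and passing to the limit via the duality \eqref{eq:def laplace}, then deduce (1) by testing against model functions approximating the indicator of $V$---is the standard route and works.

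That said, your execution of (1) contains a real confusion. First, the description of $\varphi_n$ is incoherent: a neighborhood of $\bar V$ cannot lie inside $V$. Presumably you mean $\varphi_n\equiv 1$ on an exhausting compact $K_n\Subset V$, $\varphi_n\equiv 0$ outside $\bar V$, with linear interpolation on the $\epsilon_n$-collar near $\partial V$. Second, and more seriously, the slopes of $\varphi_n$ at boundary vertices do \emph{not} ``tend to $-1$'': they blow up like $\pm 1/\epsilon_n$. What the duality formula actually produces is a telescoping sum
\[
\int_U u\,\Delta\varphi_n \;=\; \sum_{v\in\partial V}\frac{u(v)-u(w_n)}{\epsilon_n},
\]
where $w_n\in V$ is the point at distance $\epsilon_n$ from $v$ along the inward direction (the atom of $\Delta\varphi_n$ at $v$ has mass $+1/\epsilon_n$, the one at $w_n$ has mass $-1/\epsilon_n$); the limit is a sum of \emph{difference quotients} converging to directional derivatives of $u$, not a sum of bounded slopes of $\varphi_n$. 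You should redo this computation carefully---place the atoms of $\Delta\varphi_n$ exactly, evaluate $\int u\,\Delta\varphi_n$ as a finite sum, and only then let $\epsilon_n\to 0$---and keep track of signs (the inward/outward convention matters, and what you wrote does not match what the computation gives). Your final paragraph correctly identifies the clean way around the circularity you flagged: proving $\Delta u(\{v\})=\sum_{\vec v\in Tv}D_{\vec v}u$ first via a single tent function is both elementary and logically prior, and once it is in hand both (1) and (2) follow without further difficulty.
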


Note that since $h$ is convex,  the function $h \circ \phi_j$ is also convex so that 
$\frac{d^2 (h \circ \phi_j(t))}{dt^2}$ is a well-defined positive measure. 

\smallskip

The Laplace operator defined above is natural in the sense that it satisfies 
the Poincar\'e-Lelong formula
\[ \Delta \log|f| = \sum_{f(p) =0} \ord_p(f)\, \delta_p\]
for any analytic function $f$ on $U$. 

It is also continuous in the following sense. 
If $u_n, u \in \sh(U)$ are subharmonic functions such that 
$u_n|_{\Sigma(\mathfrak{C})} \to u|_{\Sigma(\mathfrak{C})}$ for any snc model $\mathfrak{C}$, then we have
$\Delta u_n \to \Delta g$.
Finally if $h\in \sh(U)$, then $h$ is harmonic iff $\Delta h =0$.

\paragraph*{Pull-back of measures}\label{sec:pull-back meas}

Let $f \colon C \to C'$ be any regular surjective map between smooth projective irreducible curves $C$ and $C'$ defined over a complete metrized field $K$ (which may be Archimedean or non-Archimedean).
Then $f$ is a finite map, and for any $x\in C^{\an}$ the local ring $\cO_x$ with maximal ideal $\mathfrak{m}_{x}$
is a module of finite type over 
$\cO_{f(x)}$. We may thus define the local degree of $f$ at any point $x$  by setting
\[
\deg_f(x) = \dim_{\kappa(x)} \left(\mathcal{O}_x/\mathfrak{m}_{f(x)}\cdot\cO_x\right)~,\]
where $\kappa(x)$ denotes the residue field $\cO_x/\mathfrak{m}_x$.

When $x$ is a type $1$ point and $K$ is non-Archimedean, or for any $x$ when $K$ is Archimedean, 
then one may find local coordinates at $x$ and $f(x)$ so that $f$ is determined by a power series
$\sum_{i\ge 0} a_i z^i$. In this case, one has $\deg_f(x) = \min\{ i\ge 1, a_i \neq 0\}$.

For any open subset $U$ of $(C')^{\an}$, one can show that 
the integer-valued function
$y \mapsto \sum_{x \in f^{-1} (y) \cap U} \deg_f(x)$ is constant. 
When $U =(C')^{\an}$, one usually writes $\deg(f) = \deg_f((C')^{\an})$ and call it the degree of $f$\footnote{when $K$ is algebraically closed and $f$ is separable, it is the number of preimages of a generic closed point in $C'$}.
We refer to~\cite[\S 6.3]{MR1259429} for a more precise discussion of this notion in the case of Berkovich non-Archimedean analytic curves.

\medskip

For any function $\varphi \colon C \to \R$, set
\[ f_* \varphi (y) = \sum_{x\in f^{-1}(y)} \deg_f(x) \, \varphi(x)~.\]
It is a fact that if $\varphi$ is continuous, then $f_*\varphi$ is also continuous and 
$\sup |f_*\varphi| \le \deg(f) \times \sup |\varphi|$. The proof of this fact is purely local, and the arguments of~\cite[Proposition~2.4]{FRL-ergodique} 
apply verbatim over any complete metrized field.

One can thus define by duality the pull-back of any Radon measure $\mu$ on $C'$ as the unique Radon measure such that 
\[
\int_C \varphi\, d(f^*\mu) = \int_{C'}  (f_*\varphi)\, d(\mu)~.\]
The pull-back measure\index{pull-back of a measure} is positive when $\mu$ is, and
the total mass of $f^*\mu$ is equal to $\deg(f) \times \mass(\mu)$. 

Finally, if $U$ is any open subset of $(C')^{\an}$ over which $\mu|_U = \Delta u$ for some subharmonic function $u\colon U \to \R \cup \{-\infty\}$,
then we have $f^* \mu|_{f^{-1}(U)} = \Delta (u \circ f)$. This identity follows from the Poincar\'e-Lelong formula when $\mu$ is an atomic measure supported at type $1$ points, and we get the general case by continuity and by density of these measures in the space of  positive Radon measures.

\subsection{Subharmonic functions on singular curves}\label{sec:subharmonic on singular-C}

We shall also work on arbitrary singular curves. In this context, one can define the notion of subharmonic functions and its
Laplacian. We restrict ourself to the notion of continuous subharmonic functions for which the theory is better behaved, and
which will be sufficient for our purposes.

Let $C$ be any complex algebraic curve (possibly with some singularities) defined over a metrized field $(K,|\cdot|)$. Let $\reg(C)$ be its regular locus, and $\mathsf{n} \colon \hat{C} \to C$ be its normalization. 
A continuous function $g \colon C \to \R$ is said to be subharmonic whenever its restriction to $\reg(C)$ is subharmonic. 
Since any bounded subharmonic function\index{function!subharmonic} on the punctured disk extends through the origin (see e.g.~\cite[Lemma~3.7]{specialcubic} and the reference therein), it follows that a continuous function$g \colon C \to \R$ is subharmonic iff $g \circ \mathsf{n}$ is subharmonic.\index{laplacian!}

Let $g \colon C \to \R$ be any continuous subharmonic function. Then $\Delta g$ is defined as the trivial extension to $C$ of the 
positive measure $\Delta (g|_{\reg(C)})$. Since the Laplacian of a bounded subharmonic function does not charge closed points, see e.g.~\cite[Lemme~2.3 \& Lemme~4.2]{FRL}, 
$\Delta g$ is also equal to $\mathsf{n}_* \Delta (g \circ \mathsf{n})$.


\section{Line bundles on curves}

\subsection{Metrizations of line bundles}\label{sec:metric line}
We refer to \cite{ACL2} for more details. 

Let $C$ be any algebraic curve defined over a complete metrized field $(K, |\cdot|)$. 
A line bundle $L \to C$\index{line bundle} is an invertible sheaf on $C$. Since $C$ is a curve, one can always find a divisor $D$ such that
$L = \cO_C(D)$. When $C$ is complete, we define the degree of $L$ as the degree\index{line bundle!degree} of any of its defining divisor $\deg_C(L) = \deg_C(D)$.
To simplify notation, we still denote by $L$ the analytification of the line bundle over $C^{\an}$.

A continuous metrization $\|\cdot\|$ \index{metric} on  $L \to C$ is the data for each local analytic section $\sigma$ defined over an open subset $U\subset C^{\an}$
of a continuous function $\|\sigma\|_U \colon U \to \R_+$ such that: 
\begin{itemize}
\item
$\|\sigma\|_U$ vanishes only at the zeroes of $\sigma$;
\item
the restriction of $\|\sigma\|_U$ to any open subset $V\subset U$ is equal to $\|\sigma\|_V$; 
\item
for any analytic function $f$ on $U$, one has $\|f\,\sigma\|_U = |f| \times \|\sigma\|_U$.
\end{itemize}
A local frame on $U$ is a section $\sigma$ of the line bundle over $U$ which does not vanish. Any local frame induces 
a local trivialization, and the identity $\|f\,\sigma\|_U = |f| \times \|\sigma\|_U$ implies that one can write the metrization
over $U$ under the form $|\cdot| e^{- \varphi}$ for some continuous function $\varphi$.
In particular, two metrizations $\|\cdot\|_1, \|\cdot\|_2$ of the same line bundle $L$ differ by a multiplicative function $\|\cdot\|_1 = \|\cdot\|_2 e^{- \varphi}$ with 
$\varphi \colon C \to \R$.

Let $f \colon C ' \to C$ be any morphism between two algebraic curves.
If $L\to C$ is a line bundle, recall that one may define $f^* L\to C'$ as the line bundle whose local sections
over $f^{-1}(U)$ are given by sections of $L$ over $U$ so that for any $\sigma \in H^0(f^{-1}(U), f^*L)$
there exists $\sigma' \in H^0(U, L)$ such that $\sigma = \sigma' \circ f$.
We may thus transport any metric $|\cdot|_L$ on $L$, by imposing $|\sigma|_{f^*L} := |\sigma'|_L \circ f $.

\smallskip

When $K$ is Archimedean and $C$ is smooth, one can make sense of smooth (resp. $\mathcal{C}^k$, H\"older) metrics. In a local chart there are given by 
 $|\cdot| e^{- \varphi}$ with $\varphi$ smooth (resp. $\mathcal{C}^k$, H\"older).

\smallskip

In the non-Archimedean case, the notion of smooth metrics is not really relevant. Following Zhang \cite{MR1311351}, one defines instead the notion of model metrics.
\index{metric!model}
A model of the line bundle $L\to C$ is the choice of 
a model $\mathfrak{C}$ of $C$ together with a line bundle $\mathfrak{L} \to \mathfrak{C}$  whose restriction to the generic fiber is equal to $L$. When $L = \cO_C(D)$ is determined by a divisor $D$ on $C$, then $\mathfrak{L}$ is determined by a divisor
$\mathfrak{D}$ on $\mathfrak{C}$ whose restriction to the generic fiber is equal to $D$.

Any model $\mathfrak{L} \to \mathfrak{C}$ gives rise to a metrization of $L$ as follows.
Cover $\mathfrak{C}$ by affine charts $\mathfrak{U}_i = \spec(B_i)$ for some finitely generated $K^{\circ}$-algebras $B_i$. 
For each $i$, the space $\bar{U}_i$ of bounded multiplicative semi-norms on $B_i \otimes_{K^{\circ}} K$ that restrict to $|\cdot|$ on $K$ 
is compact, and the $\bar{U}_i$'s form a compact cover of $C^{\an}$.

Choose any invertible section $\sigma$ of $\mathfrak{L}$ on $\mathfrak{U}_i$.
Observe that any other local frame of $\mathfrak{L}$ over $\mathfrak{U}_i$ can be written as $\sigma' = \sigma \times h$ with $h\in B_i$ being invertible
so that $|h| =1$ on $\bar{U}_i$. One can thus define a continuous metric on $L$ by imposing
$\| \sigma \| = 1$ on $\bar{U}_i$. 

When $|\cdot|_1$ and $|\cdot|_2$ are two model metrics of the same line bundle, then $|\cdot|_1 = |\cdot |_2 e^{-\varphi}$ for some model function $\varphi$ in the sense of \S \ref{sec:pot on curves}.

\smallskip

Model metrics arise in practice by the following token. Let $\cF = \{f_1, \cdots, f_N\}$ by any non-empty finite set of non-constant
meromorphic functions on $ C$. Let $D_\cF$ be the effective divisor on $C$ such that 
\[ \ord_p(D_\cF) = \max \{0, -\ord_p(f_1), \cdots, -\ord_p(f_N)\}\]
for any $p\in C$.
Observe that any $f_i$ induces a section of $L_\cF:= \cO_C(D_\cF)$, 
so that $C$ can be covered by a family of charts $U_i$ such that $f_i$ is invertible on $U_i$, and $L_\cF$ is globally generated.
Define the function $g_\cF\colon C^{\an} \to \R \cup\{+\infty\}$ by
\[ g_\cF = \log^+ \max \{ |f_1|, \cdots, |f_N|\}.\]
 Any section   of $L_\cF$ is determined by a rational function $\sigma$ on $C$ 
 such that $\dv(\sigma) + D_\cF \ge0$ so that
 \[|\sigma|_\cF := |\sigma| \times e^{- g_\cF}\]
 defines a continuous metric on $L_\cF$.
 \begin{lemma}\label{lem:model metric}
 Any metric of the form  $|\cdot|_\cF$ is a model metric.
 \end{lemma}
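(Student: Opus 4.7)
The plan is to exhibit an explicit model $(\mathfrak{C},\mathfrak{L})$ of $(C,L_\cF)$ whose associated model metric is exactly $|\cdot|_\cF$. The strategy is to realize $L_\cF$ as the pullback of $\cO_{\mathbb{P}^N}(1)$ under a morphism $\Phi \colon C \to \mathbb{P}^N$ built from the family $\cF$, and then use the obvious model $\mathbb{P}^N_{K^\circ}$ of $\mathbb{P}^N$ to transport the construction to the integral level.

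First, I would observe that the constant function $1$ together with $f_1,\dots,f_N$ are $N+1$ global sections of $L_\cF = \cO_C(D_\cF)$: indeed $\dv(1) + D_\cF = D_\cF \ge 0$ and $\dv(f_i) + D_\cF \ge 0$ by the very definition of $D_\cF$. At any closed point $p$, the maximum defining $\ord_p(D_\cF)$ is attained either by $0$, in which case the section $1$ is a local frame at $p$, or by $-\ord_p(f_i)$ for some $i$, in which case $f_i$ is a local frame. Consequently these sections globally generate $L_\cF$, and they define a morphism
\[
\Phi \colon C \longrightarrow \mathbb{P}^N_K, \qquad \Phi(p) = [1 : f_1(p) : \cdots : f_N(p)],
\]
with the canonical identification $\Phi^* \cO_{\mathbb{P}^N}(1) \simeq L_\cF$ sending the hyperplane class $X_0$ to the section~$1$.

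Next I would pass to the integral level. Take any proper flat normal $K^\circ$-model $\mathfrak{C}_0$ of $C$. The morphism $\Phi$ extends to a rational map $\mathfrak{C}_0 \dashrightarrow \mathbb{P}^N_{K^\circ}$ whose indeterminacy locus lies in the special fiber, and a standard sequence of blow-ups of the special fiber produces a model $\mathfrak{C} \to \mathfrak{C}_0$ over which the rational map becomes a morphism $\tilde{\Phi} \colon \mathfrak{C} \to \mathbb{P}^N_{K^\circ}$. Setting $\mathfrak{L} := \tilde{\Phi}^* \cO_{\mathbb{P}^N_{K^\circ}}(1)$ gives a model of $L_\cF$ on $\mathfrak{C}$.

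Finally I would check that the induced model metric coincides with $|\cdot|_\cF$. The standard model metric on $\cO_{\mathbb{P}^N_{K^\circ}}(1)$ is the one for which each $X_i$ is a frame of norm $1$ on the chart $\{|X_j| \le |X_i| \text{ for all } j\}$, and a direct computation yields $\|X_0\|(x) = |x_0|/\max_i |x_i|$ for $x = [x_0:\cdots:x_N]$. Pulling back through $\tilde{\Phi}$, the section $1 \in H^0(C,L_\cF)$ has model norm
\[
\|1\|_{\mathfrak{L}}(p) = \frac{1}{\max\{1, |f_1(p)|, \ldots, |f_N(p)|\}} = e^{-g_\cF(p)} = |1|_\cF(p).
\]
Since $1$ is a rational section that does not vanish identically, two continuous metrics on $L_\cF$ that agree on it agree on the dense open set $\{\ord(D_\cF)=0\}$, hence everywhere by continuity; so $\|\cdot\|_\mathfrak{L} = |\cdot|_\cF$. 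The only genuinely non-formal point is the extension of $\Phi$ to a morphism of models, which I expect to be the main technical step, but this is a standard consequence of the valuative criterion of properness for $\mathbb{P}^N_{K^\circ}$ combined with resolution of indeterminacies in the special fiber.
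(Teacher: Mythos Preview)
Your proof is correct and follows essentially the same route as the paper: both construct the morphism $\Phi\colon C\to\mathbb{P}^N_K$ from the sections $1,f_1,\dots,f_N$, identify $|\cdot|_\cF$ with the pullback of the standard (model) metric on $\cO_{\mathbb{P}^N}(1)$, and conclude using the standard integral model $\mathbb{P}^N_{K^\circ}$. The paper simply invokes the fact that pulling back a model metric by a morphism yields a model metric (which amounts to taking the graph closure of $\Phi$ inside $\mathfrak{C}_0\times_{K^\circ}\mathbb{P}^N_{K^\circ}$), whereas you spell out this indeterminacy-resolution step explicitly; otherwise the arguments coincide.
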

 \begin{proof}
The sections induce a map $\Phi \colon C \setminus \supp (D_\cF)\to \p^N$ given in homogenous coordinates by 
$p\mapsto [1:f_1(p) :  \cdots :  f_N(p)]$ which extends to a regular map through the punctures, and such that $L_\cF = \Phi^* \cO_{\p^N}(1)$.
Recall that sections of $\cO_{\p^N}(1)$ are in bijection with linear forms in $(N+1)$ variables $Z_0, \ldots, Z_N$ so that 
each $f_i$ corresponds to $Z_i$.  
It follows that $|\cdot|_\cF$ is the metrization obtained by pulling-back the standard metrization on $\cO_{\p^N}(1)$
given by 
\[
|\sigma_u| = \frac{ u(Z_0, \cdots, Z_N)}{\max \{|Z_0|, \cdots, |Z_N|\}} 
\]
where $\sigma_u$ is the section associated to the linear form $u=u(Z_0, \ldots, Z_N)$.
The latter metric is a model metric arising from the standard model $\p^N_{K^{\circ}} = \proj (K^{\circ}[Z_0, \cdots, Z_N])$. 
 \end{proof}

\subsection{Positive line bundles}
Let $C$ be any complete algebraic curve defined over a complete metrized field $(K, |\cdot|)$, and 
let $L \to C$ be any line bundle. 

A metrization $(L,\|\cdot\|)$ is said to be semi-positive \index{metric!semi-positive} iff in any local chart
the  metric can be written under the form $|\cdot| e^{- \varphi}$ with $\varphi$ subharmonic. 
Since $\log | f|$ is harmonic
for any invertible analytic function $f$, this notion of positivity is independent on the choice of trivialization.

Let $\C_{K}$ be the completion of an algebraic closure of $K$. Observe that the notion of semi-positive (resp. model) metric
is stable by base change.  One may thus define  the curvature form $c_1(L, \|\cdot \|)\in \cM(C^{\an}_{\C_{K}})$ of a semi-positive metrization by setting
\[c_1(L, \|\cdot \|)|_U := \Delta \varphi\]
in any open set of trivialization $U$  where the metric writes  $\|\cdot\|= |\cdot| e^{- \varphi}$.
The curvature form is a positive measure of total mass $\deg_C(L)$ (the proof of this fact follows from the Poincar\'e-Lelong formula).

\smallskip

In the Archimedean case, $c_1(L, \|\cdot \|)$ is a smooth measure when the metric is smooth. 
In the non-Archimedean case, it is an atomic measure supported at type $2$ points when $\|\cdot\|$ is a model metric.
\begin{lemma}\label{lem:semi-positive model metric}
 Any metric of the form  $|\cdot|_\cF$ is semi-positive.
 \end{lemma}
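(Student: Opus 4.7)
I would check semi-positivity locally at every point $x\in C^{\mathrm{an}}$, by producing a local frame of $L_\cF$ on a neighborhood $U$ of $x$ and showing that the resulting potential is subharmonic on $U$. The argument splits into two cases depending on whether $U$ meets $\supp(D_\cF)$.

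First, I would treat points $x$ admitting a neighborhood $U$ that avoids $\supp(D_\cF)$. On such a $U$, none of the $f_i$ has a pole, so $\sigma_0=1$ is a nowhere-vanishing section of $L_\cF$ and the metric writes $|\cdot|_\cF = |\cdot|e^{-g_\cF}$ with potential $g_\cF = \log^+\max_i |f_i| = \max\{0,\log|f_1|,\ldots,\log|f_N|\}$. Since each $f_i$ is analytic on $U$, each $\log|f_i|$ is subharmonic, and subharmonicity is preserved by finite maxima; hence $g_\cF$ is subharmonic on $U$.

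Next, I would treat the remaining case: a rigid point $p\in\supp(D_\cF)$. After shrinking, fix a neighborhood $U$ of $p$ containing no other point of $\supp(D_\cF)$, a uniformizer $t$ at $p$, and set $m:=\ord_p(D_\cF)=\max\{0,-\ord_p(f_1),\ldots,-\ord_p(f_N)\}$. By definition of $m$, the sections $t^m$ and $t^m f_i$ all have non-negative order at $p$, so (shrinking $U$ further) they are analytic on $U$. The section $\sigma_0:=t^{-m}$ is a local frame of $L_\cF$ on $U$, and the metric reads $\|\sigma_0\|_\cF=|t|^{-m}e^{-g_\cF}$, so its potential in this frame is
\[
\varphi \;=\; -\log\|\sigma_0\|_\cF \;=\; m\log|t|+g_\cF.
\]
The key computation is the identity
\[
\varphi \;=\; \max\bigl\{\log|t^m|,\,\log|t^m f_1|,\,\ldots,\,\log|t^m f_N|\bigr\},
\]
obtained by distributing $m\log|t|=\log|t^m|$ inside the $\max$ defining $\log^+\max_i|f_i|$. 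Since each argument of the max is of the form $\log|g|$ with $g$ analytic on $U$, each summand is subharmonic, hence so is $\varphi$.

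The two cases cover every point of $C^{\mathrm{an}}$, which establishes semi-positivity. I expect the only step that requires a small verification is the max-manipulation identity rewriting $\varphi$ as a maximum of $\log|g|$'s with $g$ analytic; everything else is an immediate invocation of the stability properties of subharmonic functions recalled in \S\ref{sec-potential} (in both the Archimedean and the non-Archimedean settings: subharmonicity of $\log|f|$ for $f$ analytic, and of finite maxima). Alternatively, one can shortcut the computation by combining Lemma~\ref{lem:model metric} with the observation that the standard metric on $\mathcal{O}_{\p^N}(1)$ has, in the chart $Z_0\neq 0$, potential $\log^+\max_{i\geq 1}|Z_i/Z_0|$, which is (p)sh as a maximum of $\log|z_i|$'s with $z_i$ analytic; pulling back by the holomorphic map $\Phi$ then yields the claim on $C$.
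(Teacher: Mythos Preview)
Your proof is correct and follows the same basic strategy as the paper: show the local potential is a finite maximum of functions of the form $\log|g|$ with $g$ analytic. The only difference lies in how the points of $\supp(D_\cF)$ are handled. You perform an explicit change of frame to $t^{-m}$ and verify directly that the new potential $\varphi=\max\{\log|t^m|,\log|t^mf_1|,\ldots,\log|t^mf_N|\}$ is a max of $\log|\text{analytic}|$. The paper instead argues in one line that the potential is subharmonic off the poles and then invokes the fact that a bounded subharmonic function on a punctured disk extends through the puncture (citing \cite[Lemma~3.7]{specialcubic}). Your computation is more self-contained; the paper's shortcut is quicker but relies on the extension lemma. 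Your alternative via pullback of the standard metric on $\mathcal{O}_{\p^N}(1)$ through $\Phi$ is also valid and is implicit in the paper's treatment of Lemma~\ref{lem:model metric}.
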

 \begin{proof}
Indeed, any function of the form $\log \max \{ |f_0|, \cdots, |f_N|\}$ is subharmonic off its poles, and bounded subharmonic functions on a punctured disk extend
through the puncture, see e.g.~\cite[Lemma~3.7]{specialcubic}.
 \end{proof}

Let $f \colon C ' \to C$ be any finite morphism between two algebraic curves.
Since subharmonic functions are stable by composition by analytic maps, the metric $|\sigma|_{f^*L}$ is semi-positive
as soon as $|\sigma|_{L}$ is positive, and the curvature forms satisfy $f^* c_1(L, |\cdot|_L)=  c_1(f^*L, |\cdot|_{f^*L})$.

Finally in the non-Archimedean case, model metrics are preserved by pull-back since for any model $\mathfrak{C}$ there exists
a model $\mathfrak{C}'$ and a regular map $\mathfrak{f} \colon \mathfrak{C}' \to \mathfrak{C}$ which is equal to $f$ on the generic fiber
(to build $\mathfrak{C}'$ start with any model $\mathfrak{C}_0'$ of $C'$ and take the graph of the induced rational map  
 $\mathfrak{C}_0' \dashrightarrow \mathfrak{C}$).


\section{Adelic metric, Arakelov heights and equidistribution}\label{sec:adelic height}
A general reference for this section is \cite{ACL2}.

\subsection{Number fields}
Fix any number field  $\KK$, and denote by  $M_\KK$ its set of places, that is the set of multiplicative norms on $\KK$
whose restriction to $\Q$ is equal to either the standard euclidean norm $|\cdot|_\infty$ or to one of the $p$-adic norms $|\cdot|_p$
normalized by $|p|_p = \frac1p$. 

Given $v\in M_\KK$, we write $\KK_v$ for the completion of $\KK$ w.r.t. $|\cdot|_v$, and we let $\C_v$ be the completion of an algebraic closure of $\KK_v$. We also let $\Q_v$ be the completion
of the restriction of $|\cdot|_v$ to the prime field.
Then for any $x\in \KK$, the following product formula holds:
\[ 
\prod_{v\in M_\KK} |x|_v^{n_v} =1
\]
where $n_v = [\KK_v : \Q_v]$.

\subsection{Adelic metrics}
Any line bundle $L \to C$ over an algebraic curve $C$ defined over $\KK$ determines a line bundle
over the base change of $C$ by $\KK_v$. To simplify notation, we let $C_v$ be the Berkovich analytification of $C$ over $\KK_v$, and
denote by $L_v\to C_v$  the induced line bundle.

Recall from \S \ref{sec:metric line} the definition of $\KK^{\circ}$-model of the line bundle $L\to C$.
Observe that any $\KK^{\circ}$-model determines a $\KK^{\circ}_v$-model of the line bundle $L_v$ for any $v$, hence
a continuous metric $|\cdot|_{\mathfrak{L},v}$ over $L_v$. 

\smallskip

A semi-positive adelic metric \index{metric!adelic} on an ample line bundle $L \to C$ defined over $\KK$
is the data for each place $v \in M_\KK$ of a semipositive continuous metric $\|\cdot\|_v$ on  $L_v \to C_v$ 
such that there exists a model $\mathfrak{L} \to \mathfrak{C}$ of $L\to C$ over $\KK^{\circ}$ satisfying
$|\cdot|_v = |\cdot|_{\mathfrak{L},v}$ for all but finitely many places.

A simple adaptation of the proof of Lemma~\ref{lem:model metric} together with Lemma~\ref{lem:semi-positive model metric} yields 
\begin{lemma}\label{lem:adelic model metric}
 Any metric of the form  $|\cdot|_\cF$ is semi-positive and adelic.
 \end{lemma}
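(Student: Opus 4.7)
The plan is to combine the two immediately preceding lemmas with a single spreading-out argument over $\spec \KK^\circ$.

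Semi-positivity at each place $v$ is essentially free: the definition of $g_\cF = \log^+\max\{|f_1|,\dots,|f_N|\}$ makes sense verbatim on $C^{\an}_v$, and Lemma~\ref{lem:semi-positive model metric}, whose proof relies only on the fact that $\log\max\{|f_0|,\dots,|f_N|\}$ is subharmonic off its poles together with the extension theorem for bounded subharmonic functions on the punctured disk, applies over $\C_v$ for every place $v \in M_\KK$. Hence $|\cdot|_\cF$ is a continuous semi-positive metric on $L_{\cF,v}$ at every $v$.

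For the adelic condition I would produce a single model $\mathfrak{L} \to \mathfrak{C}$ defined over $\KK^\circ$ that recovers $|\cdot|_\cF$ at all but finitely many non-archimedean places. The input is the morphism $\Phi\colon C \to \p^N_\KK$ given by $p \mapsto [1:f_1(p):\cdots:f_N(p)]$ that already appeared in the proof of Lemma~\ref{lem:model metric}; it is defined over $\KK$ because each $f_i \in \KK(C)$, and one has $L_\cF = \Phi^\ast \mathcal{O}_{\p^N}(1)$. Pick any projective flat $\KK^\circ$-model $\mathfrak{C}_0$ of $C$. The rational map $\mathfrak{C}_0 \dashrightarrow \p^N_{\KK^\circ}$ induced by $\Phi$ is a morphism on the generic fiber, hence is regular on a Zariski open subset of $\mathfrak{C}_0$ whose complement is a proper closed subset meeting only finitely many closed fibers. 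By blowing up this locus (or equivalently resolving the base locus) one obtains a normal projective flat $\KK^\circ$-model $\mathfrak{C}$ together with a $\KK^\circ$-morphism $\tilde{\Phi}\colon \mathfrak{C} \to \p^N_{\KK^\circ}$ extending $\Phi$. Put $\mathfrak{L} := \tilde{\Phi}^\ast \mathcal{O}_{\p^N_{\KK^\circ}}(1)$; this is a $\KK^\circ$-model of $L_\cF$.

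At any non-archimedean place $v$, the model $(\mathfrak{C},\mathfrak{L})$ induces the pullback by $\tilde{\Phi}$ of the canonical model metric on $\mathcal{O}_{\p^N_{\KK^\circ}}(1)$ coming from the standard integral model $\proj(\KK^\circ[Z_0,\dots,Z_N])$. The computation recalled at the end of the proof of Lemma~\ref{lem:model metric} shows that this canonical metric is given by $|\sigma_u| = u(Z_0,\dots,Z_N)/\max_i|Z_i|$, and pulling back along $\tilde{\Phi}$ over $\C_v$ yields precisely $|\cdot| \, e^{-g_\cF}=|\cdot|_\cF$ on the generic fiber. Therefore $|\cdot|_{\mathfrak{L},v} = |\cdot|_\cF$ at every place where the construction does not require any non-trivial modification of the chosen initial model $\mathfrak{C}_0$, which excludes only finitely many places; the remaining finitely many places are allowed to differ, as required by the definition of adelicity.

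The only step that requires a bit of care is the spreading-out/resolution of the rational map $\mathfrak{C}_0 \dashrightarrow \p^N_{\KK^\circ}$ over $\spec \KK^\circ$, which I expect to be the main but essentially routine obstacle: it reduces to observing that the indeterminacy locus is finite and supported over finitely many primes, so that a suitable blow-up produces a normal model on which $\tilde{\Phi}$ becomes a morphism. Everything else is a direct translation of the local computation from Lemma~\ref{lem:model metric} into the adelic setting.
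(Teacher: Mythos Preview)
Your proof is correct and follows exactly the approach the paper suggests: the text preceding the lemma says only that ``A simple adaptation of the proof of Lemma~\ref{lem:model metric} together with Lemma~\ref{lem:semi-positive model metric} yields'' the result, and you have carried out precisely that adaptation by spreading out the map $\Phi\colon C\to\p^N$ to a $\KK^\circ$-model and pulling back the standard metric on $\mathcal{O}_{\p^N_{\KK^\circ}}(1)$. One small remark: once $\tilde\Phi$ is a genuine morphism on all of $\mathfrak{C}$, the pulled-back model metric actually equals $|\cdot|_\cF$ at \emph{every} non-archimedean place (not just all but finitely many), since the generating sections $Z_0,\dots,Z_N$ pull back to $1,f_1,\dots,f_N$ globally; your weaker conclusion is of course still sufficient.
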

  
We simply write $\bar{L}$ to indicate that we have fixed a semi-positive adelic metric on an ample line bundle $L \to C$. 

If $\bar{L}$ is a semi-positive adelic metric on $L\to C$, and $f \colon C' \to C$ is a finite map
then the pull-back metrized line bundle $f^*\bar{L}$ is also adelic and semi-positive.

\subsection{Heights}
Let $\bar{\KK}$ be an algebraic closure of $\KK$, and suppose that $C$ is projective.
Then a semi-positive adelic metric $\bar{L}$ induces a height function \index{height} $h_{\bar{L}}:C(\bar{\KK})\to\R$ as follows. 

For any point $t \in C(\bar{\KK})$, we denote by $\mathsf{O}(t)\subset C(\bar{\KK})$
its orbit under the absolute Galois group of $\KK$, and write $\deg(t):=\Card(\mathsf{O}(t))$.
We then choose any rational section $\sigma$ of $L$ which has neither a zero nor a pole at $t$, and 
we set
\begin{align*}
h_{\bar{L}}(t)
&:=
\frac1{\deg(t)} \sum_{t'\in\mathsf{O}(t)} \sum_{v\in M_\KK}  - \log |\sigma|_{v}(t')~.
\end{align*}
Since the metrization is adelic, for all but finitely many terms $|\sigma|_{v}(t') =1$, hence the sum is well-defined.
It follows from the product formula that the definition does not depend on the choice of $\sigma$.

\smallskip

Look at the affine space $\A^1= \spec (K[x])$ and consider its completion $\p^1$. 
Endow $\cO(1) \to \p^1$ with its canonical metrics given by $|\cdot| \max \{ 1, |x|\}^{-1}$ at all places.
The induced metric is adelic and semi-positive, and 
the associated height is the standard height\index{height!standard} on $\p^1$ so that for any $x\in \bar{\Q}$ we have
\begin{equation}
h_{\st}(x):=\frac{1}{\deg(x)}\sum_{y\in \mathsf{O}(x)}\sum_{v\in M_\Q} \log^+|y|_v~.\label{eq413}
\end{equation}
One can alternatively define the height of $x$ by fixing a number field $\KK \ni x$
and set
\begin{equation}
h_{\st}(x):=\frac{1}{[\KK:\Q]} \sum_{v\in M_\KK}n_v\log^+|x|_v. \label{eq412}
\end{equation}

\smallskip

Let us return to our general context with $\bar{L}$ a semi-positive adelic metric on a line bundle $L \to C$.
The function $h_{\bar{L}}$ 
lies in the class of functions associated to $L$ and given by Weil's machinery (see~\cite[Theorem B.3.2]{Silvermandiophantine}). 
In the sequel, we shall call any such height an Arakelov height\index{height!Arakelov}.
In particular, $h_{\bar{L}}$ is always bounded from below, and satisfies the \emph{Northcott property}: for any integer $d\ge 1$, and for any real number $A$, the set of points $t\in C(\bar{\KK})$ such that $\deg(t) \le d$ and $h_{\bar{L}}(t) \le A$ is finite. 

\smallskip

The height\index{height!of a curve} of the total curve is defined as the following quantity
\[h_{\bar{L}}(\bar{C}) = 
\sum_{v\in M_\KK} \sum_{p\in C(\bar{\KK})} \ord_p(\sigma_1)\,  \log\|\sigma_0(p)\|_v^{-1}
 + 
  \int_{\hat{C}} \log\|\sigma_1\|_v^{-1} c_1(L, \|\cdot\|_v)
\]
where $\sigma_0$ and $\sigma_1$ are two sections of $L$ having disjoint sets of zeroes and poles. Again by the product formula this definition does not depend on 
the choice of sections.

It follows from the arithmetic Hilbert-Samuel theorem the following fundamental estimate~\cite[Th\'eor\`eme~4.3.6]{thuillier}, \cite[Proposition~3.3.3]{MR1810122}, or ~\cite[Theorem 5.2]{zhang}.
\begin{theorem}\label{thm:HSA}
Let $\bar{L}$ be any adelic semi-positive continuous metrization on $L\to C$. 
Then for any sequence of distinct points $x_n \in C(\bar{\KK})$ we have
\[\liminf_n h_{\bar{L}}(x_n) \ge \frac{h_{\bar{L}}(C)}{2\deg(L)}~.\]  
\end{theorem}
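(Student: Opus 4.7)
The plan is to establish Zhang's inequality on the essential minimum: $\liminf_n h_{\bar{L}}(x_n) \ge h_{\bar{L}}(C)/(2\deg L)$ for any sequence of distinct points $x_n \in C(\bar{\KK})$. As a first step I would reduce to the case where $\bar{L}$ comes from a semi-positive hermitian model $\mathfrak{L}\to\mathfrak{C}$ over $\spec \KK^\circ$. By definition of an adelic metric, $\bar{L}$ differs from a model metric only at finitely many places, and at the archimedean places the continuous semi-positive metric may be uniformly approximated by smooth semi-positive ones. Both $h_{\bar{L}}(t)$ and the arithmetic self-intersection $h_{\bar{L}}(C)$ depend continuously on the metric in the sup-norm at each place, with only finitely many places contributing, so it suffices to treat pure model metrics.

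For such a model metric, I would then invoke the arithmetic Hilbert--Samuel theorem on the arithmetic surface $\mathfrak{C}$, which gives the expansion
\[
\hat\chi\bigl(\overline{L^{\otimes n}}\bigr) = \frac{\widehat{c_1}(\bar{L})^2}{2}\, n^2 + O(n\log n),
\]
while ordinary Riemann--Roch gives $\dim H^0(C, L^{\otimes n}) = n \deg L + O(1)$. A direct comparison with the defining formula for $h_{\bar{L}}(C)$ shows that the arithmetic self-intersection $\widehat{c_1}(\bar{L})^2$ coincides with $h_{\bar{L}}(C)$. Applying Minkowski's theorem (or its adelic Siegel-type refinement) to the lattice $H^0(C, L^{\otimes n})$ endowed with its sup-norms at each place then produces a nonzero global section $s_n$ satisfying
\[
\sum_{v \in M_\KK} n_v \log \|s_n\|_{v,\sup} \le -\frac{\hat\chi(\overline{L^{\otimes n}})}{\dim H^0(C, L^{\otimes n})} + O(1) = -\frac{h_{\bar{L}}(C)}{2\deg L}\, n + O(1).
\]

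The conclusion would then come from the product formula. For any $t\in C(\bar{\KK})$ not lying in the support of $\dv(s_n)$, evaluating $s_n$ and bounding each $\log \|s_n(t')\|_v \le \log \|s_n\|_{v,\sup}$ yields
\[
n\, h_{\bar{L}}(t) = \frac{1}{\deg(t)}\sum_{t'\in \mathsf{O}(t)}\sum_{v} -\log \|s_n(t')\|_v \ge -\sum_{v} n_v \log \|s_n\|_{v,\sup} \ge \frac{h_{\bar{L}}(C)}{2\deg L}\, n - O(1).
\]
Since $\dv(s_n)$ is supported on at most $n\deg L$ closed points of $C$, for any sequence $(x_k)$ of distinct algebraic points only finitely many are excluded; hence $\liminf_k h_{\bar{L}}(x_k) \ge h_{\bar{L}}(C)/(2\deg L) - O(1/n)$, and letting $n \to \infty$ completes the argument.

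The main technical obstacle is the arithmetic Hilbert--Samuel estimate itself, which in its flexible form for semi-positive continuous adelic metrics requires either Gillet--Soul\'e arithmetic intersection theory at archimedean places or the volume-theoretic approach of Chen and Moriwaki to compare successive minima of the adelic lattices. The continuity step reducing from an arbitrary semi-positive adelic metric to a model one, although classical, also relies on Chambert-Loir's integration theory on Berkovich spaces to control the variation of $h_{\bar{L}}(C)$ under uniform perturbations of the metric at each place.
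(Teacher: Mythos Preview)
The paper does not give a proof of this statement; it simply records it as a consequence of the arithmetic Hilbert--Samuel theorem and cites Thuillier~\cite[Th\'eor\`eme~4.3.6]{thuillier}, Autissier~\cite[Proposition~3.3.3]{MR1810122}, and Zhang~\cite[Theorem~5.2]{zhang}. Your outline is precisely the standard argument from those references: approximate by model metrics, use arithmetic Hilbert--Samuel to get $\hat\chi(\overline{L^{\otimes n}}) = \tfrac{1}{2}h_{\bar L}(C)\,n^2 + o(n^2)$, produce a small section via Minkowski, and bound the height of any point outside its zero locus. The sketch is correct, and you have also identified the two places where real work lies (the approximation by semi-positive model metrics and the Hilbert--Samuel expansion itself).

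One small remark on normalizations: the paper's height formula $h_{\bar L}(t)=\frac{1}{\deg(t)}\sum_{t'\in\mathsf O(t)}\sum_{v\in M_\KK}-\log|\sigma|_v(t')$ does not carry the $n_v$ factors you insert; in the paper's convention the sum over the Galois orbit already accounts for the local degrees (compare~\eqref{eq413} versus~\eqref{eq412}). This is harmless for the argument but worth keeping consistent.
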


\subsection{Equidistribution}
In some situation, it is possible to understand the repartition of those points whose height tends to  the limit $h_{\bar{L}}(C)$.
 This is the content of the following result which plays a crucial role in any approach to the dynamical Andr\'e-Oort conjecture. 

If $\bar{L}$ is a semi-positive adelic metrization of $L$, and $v$ is any place on $\KK$, then  the line bundle $L_v$ is endowed with a continuous semi-positive metrization $|\cdot|_v$, and we may look at the curvature of $(L_v, |\cdot|_v)$ in the sense of the previous section. It is a positive measure on $C_v$ of total mass $\deg_C(L)$ which we denote by 
$c_1(\bar{L})_v$ to simplify notations. Observe that if a line bundle carries a semi-positive metric of non-zero curvature then it is automatically ample. 

\begin{theorem}[Equidistribution of points of small heights]\label{tm:yuan}\index{theorem!equidistribution of points of small heights}
~

Let $\bar{L}$ be any semi-positive adelic metrization of  a line bundle $L \to C$  over an irreducible projective curve defined over a number field $\KK$. Pick any sequence of distinct points $x_n\in C(\bar{\KK})$ such that  $h_{\bar{L}}(x_n)\to h_{\bar{L}}(C)$. 
Then, for any place $v\in M_\KK$, we have
\[\frac{1}{\deg(x_n)}\sum_{y\in\mathsf{O}(x_n)}\delta_y\longrightarrow \frac{1}{\deg_C(L)}c_1(\bar{L})_v\]
 on $C_v$ in the weak topology on the space of probability measures.
\end{theorem}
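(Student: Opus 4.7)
The plan is to adapt the variational method of Szpiro--Ullmo--Zhang, as extended to adelic semi-positive metrizations by Yuan in the archimedean case and by Thuillier, Yuan and Chambert-Loir in the non-archimedean setting. Set $\mu_n := \deg(x_n)^{-1} \sum_{y\in\mathsf{O}(x_n)} \delta_y$; these are probability measures on the compact Berkovich space $C_v$, so by Banach--Alaoglu it suffices to identify the limit of $\int_{C_v} \varphi\, d\mu_n$ for every continuous $\varphi\colon C_v\to \R$.

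The first step is to reduce to test functions of a particularly rigid form. Using Stone--Weierstrass together with the density of model functions in $\mathcal{C}^0(C_v)$ at non-archimedean places (and smooth functions in the archimedean case), one approximates $\varphi$ uniformly; then a further decomposition allows writing $\varphi = \varphi_1 - \varphi_2$ as a difference of two continuous potentials arising from \emph{semi-positive} adelic metrizations of the trivial line bundle on $C$. This decomposition step is the technical heart of the argument, and it relies on Gubler-type approximation at non-archimedean places and on the existence of strictly positive smooth forms at archimedean places.

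The core perturbation step then proceeds as follows. For $i\in\{1,2\}$ and a small parameter $\epsilon>0$, define a new adelic metric $\bar{L}_{i,\epsilon}$ on $L$ by keeping the metric unchanged at every place $w\neq v$ and replacing $\|\cdot\|_v$ by $\|\cdot\|_v\, e^{-\epsilon \varphi_i}$. Semi-positivity of $\bar{L}_{i,\epsilon}$ follows from the semi-positivity of $\varphi_i$ and of $\bar{L}$, while adelicity is preserved because only one place has been perturbed. Applying Theorem~\ref{thm:HSA} to $\bar{L}_{i,\epsilon}$ gives a lower bound on $\liminf_n h_{\bar{L}_{i,\epsilon}}(x_n)$ in terms of $h_{\bar{L}_{i,\epsilon}}(C)$. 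The key identities to combine are the trivial expansion
\[
h_{\bar{L}_{i,\epsilon}}(x_n) \;=\; h_{\bar{L}}(x_n) + \epsilon \int_{C_v} \varphi_i\, d\mu_n,
\]
together with the Arakelov intersection expansion
\[
h_{\bar{L}_{i,\epsilon}}(C) \;=\; h_{\bar{L}}(C) + 2\epsilon \int_{C_v} \varphi_i\, c_1(\bar{L})_v + O(\epsilon^2),
\]
the latter being obtained by integrating by parts against the Poincar\'e--Lelong formula recalled in \S\ref{sec:pot on curves} and using that the metrics are unchanged at the other places. Feeding these two expansions into Theorem~\ref{thm:HSA}, using the hypothesis $h_{\bar{L}}(x_n)\to h_{\bar{L}}(C)$, dividing by $\epsilon>0$ and letting $\epsilon\to 0^+$ yields
\[
\liminf_n \int_{C_v} \varphi_i\, d\mu_n \;\ge\; \frac{1}{\deg_C(L)} \int_{C_v} \varphi_i\, c_1(\bar{L})_v.
\]
Subtracting the inequalities obtained for $\varphi_1$ and $\varphi_2$ and then applying the same argument with $\varphi$ replaced by $-\varphi$ upgrades these inequalities into the equality required for weak convergence.

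The main obstacle will be the approximation step: not every continuous function on $C_v$ is a potential of a semi-positive adelic line bundle, so the decomposition $\varphi=\varphi_1-\varphi_2$ must be carried out in a way that preserves both semi-positivity and adelicity. At non-archimedean places this rests on the density of differences of semi-positive model functions in $\mathcal{C}^0(C_v)$, a non-trivial fact which ultimately flows from Thuillier's construction of the Laplacian recalled in \S\ref{sec:pot on curves}; at archimedean places it is handled by subtracting a large multiple of a strictly positive smooth form. Once this decomposition is in hand, the remainder of the argument is an essentially formal manipulation of arithmetic intersection numbers.
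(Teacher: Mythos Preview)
The paper does not give its own proof of this theorem: it is quoted as a result from the literature (Szpiro--Ullmo--Zhang, Autissier, Baker--Rumely, Favre--Rivera-Letelier, Thuillier, Yuan), and the text following the statement is purely bibliographical. Your sketch is the standard variational argument that those references implement, so at the strategic level you are on exactly the right track: perturb the adelic metric at a single place by $e^{-\epsilon\varphi}$, invoke the fundamental inequality (Theorem~\ref{thm:HSA}) for the perturbed line bundle, expand both sides to first order in~$\epsilon$, and let $\epsilon\to 0$.

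There is, however, a genuine gap in your ``decomposition'' step. You propose to write an arbitrary test function as $\varphi=\varphi_1-\varphi_2$ where each $\varphi_i$ is a potential for a semi-positive adelic metric on the \emph{trivial} bundle $\mathcal{O}_C$. But $C$ is projective: the curvature of any semi-positive metric on $\mathcal{O}_C$ is a non-negative measure of total mass $\deg_C(\mathcal{O}_C)=0$, hence vanishes, and so the potential is harmonic on the compact curve $C_v$, hence constant. Your class of $\varphi_i$'s thus consists of constants only, and the decomposition is impossible for non-constant $\varphi$. A related issue is that even if you had a decomposition with each $\varphi_i$ yielding a one-sided inequality $\liminf_n\int\varphi_i\,d\mu_n\ge \int\varphi_i\,d\mu_v$, subtracting two such $\liminf$ bounds does not produce a bound on $\int(\varphi_1-\varphi_2)\,d\mu_n$.

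The fix, as carried out in the cited references, is not to decompose $\varphi$ but to perturb the metric on $L$ itself by $e^{-\epsilon\varphi}$ for an \emph{arbitrary} model (resp.\ smooth) function~$\varphi$, and to argue that the perturbed metric $\bar{L}_\epsilon$ still satisfies a fundamental-inequality-type lower bound. When $\bar{L}$ is strictly positive (ample model, resp.\ strictly positive curvature form) one checks directly that $c_1(\bar{L})_v+\epsilon\,\Delta\varphi\ge 0$ for small~$\epsilon$, so $\bar{L}_\epsilon$ remains semi-positive and Theorem~\ref{thm:HSA} applies to it; one then runs the expansion for both $\varphi$ and $-\varphi$ to obtain the two-sided bound. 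In the merely semi-positive case one either approximates by strictly positive metrics, or (Yuan's contribution) replaces Theorem~\ref{thm:HSA} by an arithmetic bigness/volume estimate which holds without semi-positivity of the perturbation. Either route closes the gap; your write-up should follow one of them rather than the trivial-bundle decomposition.
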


This theorem originated in the work of Szpiro-Ullmo-Zhang on the Manin-Mumford conjecture \cite{MR1427622} and was first proved in the case of abelian varieties, see also \cite{MR1470340} for tori. It was successively extended to the case of curves by Autissier \cite{MR1810122}, Baker-Rumely \cite{MR2244226}, Favre-Rivera-Letelier \cite{FRL}
and the statement above was finally obtained by Thuillier \cite{thuillier}. 

A far-reaching generalization of the previous theorem was later proved by Yuan \cite{yuan}
in any dimension.


\section{The parameter space of polynomials}\label{sec:spaceofpoly}

In this section we assume the defining field $K$ has characteristic zero.
Recall that a polynomial $P(z) = a_0 z^d + \cdots + a_d$ of degree $d$  is monic (resp. centered) if 
$a_0=1$ (resp. $a_1=0$).\index{polynomial!centered}
\index{polynomial!monic}

\paragraph*{Polynomials modulo affine conjugacy}
A polynomial of degree $d\ge2$ is determined by $(d+1)$ coefficients
$P(z) = a_0 z^d + \cdots + a_d$ with $a_0$ invertible so that 
the space $\poly^d$ of all polynomials of degree $d\ge2$  is canonically endowed with a structure
of affine variety which is isomorphic to $(\A^1)^* \times \A^d$.
The group $\Aff =\{ az +b, \, a\neq 0\}$ of affine transformations of the affine line acts by conjugacy on $\poly^d$
by $\phi \cdot P = \phi \circ P \circ \phi^{-1}$.  

In characteristic zero, any polynomial is conjugated by a unique translation to a centered polynomial so that over $\Q$
 the quotient of $\poly^d$ by $\Aff$
is isomorphic to the quotient $(\A^1)^* \times \A^{d-1}$ by the multiplicative group $\G_m$
under the action \[\lambda \cdot (a_0, a_2, \ldots, a_{d-1}, a_d) = (\lambda^{1-d} a_0 ,\lambda^{3-d} a_2, \ldots, a_{d-1}, \lambda a_d).\] 
Over an algebraically closed field a polynomial is conjugated by a suitable dilatation to a monic polynomial. It follows that  the quotient of $\poly^d$ by $\Aff$
is isomorphic to the space of monic and centered polynomials (which is isomorphic to $\A^{d-1}$) quotiented by the finite cyclic group 
$\U_{d-1}$ of $(d-1)$-th root of unity acting diagonally on $\A^{d-1}$ by $\zeta \cdot (a_2, \ldots, a_{d-1}, a_d) = (\zeta^{3-d} a_2, \ldots, a_{d-1}, \zeta a_d)$. 

The moduli space of polynomials $\mpoly^d$ thus exists a geometric group quotient and is an affine variety over $\Q$. 
It can in fact be identified with the product of $\A^1$ with an affine open subset of the weighted projective space
$\p(1, \ldots, d-1)$. In particular, it is an affine variety of dimension $(d-1)$ which is rational and 
has only cyclic quotient singularities\footnote{the corresponding statement for the moduli space of rational maps is due to Silverman and Levy, see~\cite{Silverman-spacerat,Levy-spaceendo}}.

\begin{example}
We have the isomorphisms $\mpoly^2  \simeq \A^1$;  and $\mpoly^3  \simeq \A^2$. 
However for any $d\ge 4$, the space $\mpoly^d$ admits  singularities. 

The  space $\mpoly^4$ is isomorphic to $\A^3$ modulo the action of $\U_3$ given by 
$\zeta \cdot (a_2, a_3, a_4) = (\zeta^{-1} a_2, a_3, \zeta a_4)$ which is the product of $\A^1$ by the cone
$ xy = t^3$.  Its singular locus is the image under the quotient map 
of the set of polynomials of the form $z^4 + a_3 z$. 
 \end{example}

\begin{remark}
When the characteristic of the field say $p>0$ divides the degree $d$, the discussion above does not apply since a polynomial is no longer conjugated to
a centered polynomial. In fact the action of the affine group becomes quite wild. When $p=2$ the stabilizer of \emph{any} separable quadratic polynomial 
$a_0 z^2 + a_1 z + a_2$ is equal to the group of translations $z+ \beta$ with $ a_0 \beta^2  - a_1 \beta = \beta$ which is always non-trivial.
\end{remark}

  \begin{lemma}\label{lem:at infty}
 Let $(K,|\cdot|)$ be any complete metrized field of characteristic $0$, and $\{P_t\}_{t\in \D^*_K(0,1)}$ be an analytic family of monic and centered polynomials defined over the punctured disk 
 that is meromorphic at $0$. 
Suppose that we can find a meromorphic family of affine transformations $A_t$ such that $A_t^{-1} \circ P_t \circ A_t$ 
extends analytically through $0$. 

Then the family $P_t$ is analytic at $0$.
  \end{lemma}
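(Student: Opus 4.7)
The plan is to read off the behaviour of $A_t$ at $t=0$ directly from the conjugation identity $P_t\circ A_t = A_t\circ Q_t$, where $Q_t := A_t^{-1}\circ P_t\circ A_t$ denotes the analytic extension through $0$. Write $A_t(z)=\alpha(t)z+\beta(t)$ with $\alpha,\beta\in\mathbb{M}_K$, and $Q_t(z)=\sum_{k=0}^d q_k(t)\,z^k$ with each $q_k$ analytic at $0$; the phrase ``extends analytically through $0$'' is to be understood as giving an analytic family of degree-$d$ polynomials, so $q_d(0)\neq 0$.

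First I would compare the coefficients of $z^d$ on both sides of $P_t(A_t(z))=A_t(Q_t(z))$. Since $P_t$ is monic, the left-hand side contributes $\alpha^d z^d$, while the right-hand side is $\alpha Q_t(z)+\beta$ and so contributes $\alpha q_d z^d$. Hence
\[
q_d \;=\; \alpha^{d-1}.
\]
Writing $\ord_0(\alpha)=a\in\Z$, this forces $(d-1)a=\ord_0(q_d)=0$, so $a=0$: the function $\alpha$ is analytic at $0$ with $\alpha(0)\neq 0$.

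Next I would compare the coefficients of $z^{d-1}$. Because $P_t$ is centered, the only contribution on the left comes from expanding $(\alpha z+\beta)^d$, giving $d\alpha^{d-1}\beta$; on the right we simply get $\alpha q_{d-1}$. Therefore
\[
\beta \;=\; \frac{q_{d-1}}{d\,\alpha^{d-2}}.
\]
Since $K$ has characteristic $0$ the scalar $d$ is invertible, and $\alpha^{d-2}$ is a unit in the local ring of analytic germs at $0$, so $\beta$ is analytic at $0$ as well.

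With $\alpha$ and $\beta$ both analytic at $0$ and $\alpha(0)\neq 0$, the map $A_t$ is an analytic family of invertible affine transformations in a neighbourhood of $0$, and the same is true of $A_t^{-1}(w)=(w-\beta(t))/\alpha(t)$. The formula $P_t=A_t\circ Q_t\circ A_t^{-1}$ then exhibits $P_t$ as a composition of analytic families and so is itself analytic at $0$. The only delicate point in the argument is the passage from analyticity of $\alpha^{d-1}$ to analyticity of $\alpha$ itself in the first step; this relies on $q_d(0)\neq 0$, i.e.\ on the fact that the extended family $Q_t$ stays in $\poly^d$ rather than dropping degree at $t=0$ -- without this, the conclusion fails (conjugating by $A_t(z)=tz+1/t$ turns the meromorphic cubic $z^3-(3/t^2)z+2/t^3+1/t+t$ into the analytic $t^2z^3+3z^2+1$).
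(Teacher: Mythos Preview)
Your proof is correct and follows essentially the same approach as the paper: both compute the two leading coefficients of the conjugate $A_t^{-1}\circ P_t\circ A_t$ (equivalently, compare the $z^d$ and $z^{d-1}$ coefficients in $P_t\circ A_t=A_t\circ Q_t$) to force $\alpha$ and then $\beta$ to be analytic at $0$. Your write-up is more detailed than the paper's one-line ``so that the result follows,'' and you correctly make explicit the hypothesis $q_d(0)\neq 0$ (i.e.\ that the extension stays in degree $d$), even providing a counterexample showing the lemma fails without it.
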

\begin{proof}
Write $P_t(z) = z^d + a_2(t)z^{d-2} + \cdots + a_d(t)$, and $A(t) = a(t) z + b(t)$, with $a,b, a_i$ analytic on $\D^*_K(0,1)$ and meromorphic at $0$.
Then $A_t^{-1} \circ P_t \circ A_t (z) = a(t)^{d-1} z^d + d a(t)^{d-2} b(t) z^{d-1} + \mathrm{l.o.t}$ so that the result follows.
\end{proof}

\paragraph*{Critically marked polynomials}
Even though $\mpoly^d$ is the most natural parameter space to consider, it is also important to work with polynomials with additional structures. 
A critically marked polynomials is a $d$-tuple $(P, c_0, \ldots, c_{d-2})$
with $P\in \poly^d$ and where $c_0, \ldots , c_{d-2}$ ranges over all critical points of $P$ (written with repetitions taking into account their multiplicities).  
The space of critically marked polynomials $\mpcrit^d$ is the quotient of this space
by the natural action of the group of affine transformations. A brief discussion of the geometry of this space is given in~\cite[\S 5]{favredujardin}. 

It is convenient to work in a finite ramified cover of $\mpcrit^d$ which is isomorphic to the affine space, i.e. with an "orbifold" parametrization of $\mpcrit^d$. 
For any field $K$, and any $(c,a)\in K^{d-1}$, we let
\begin{equation}\label{eq:defpoly}
P_{c,a}(z)\pe \frac{1}{d}z^d+\sum_{j=2}^{d-1}(-1)^{d-j}\sigma_{d-j}(c)\frac{z^j}{j}+a^d,
\end{equation}
where $\sigma_j(c)$ is the monic symmetric polynomial in $(c_1,\ldots,c_{d-2})$ of degree $j$.
Observe that the critical points of $P_{c,a}$ are exactly  $c_1, \ldots , c_{d-2}$, $c_{d-1}$ with the convention that $c_{d-1}=0$. One obtains a canonical projection $\pi\colon \A^{d-1}\to \mpcrit^d$ defined over $\Q$ which maps $(c_1,\ldots,c_{d-2},a) \in\A^{d-1}$ to the class of $P_{c,a}$ in $\mpcrit^d$ which is $d(d-1)$-to-one.
  
 \paragraph*{Polynomial dynamical  pairs}
One can also look at the space $\pair^d$ of polynomial dynamical pairs\index{dynamical pair!polynomial} $(P,a)$ with $P\in \poly^d$ and $a \in \A^1$ modulo the natural action of $\Aff$ given by $\phi \cdot (P,a) = (\phi \circ P \circ \phi^{-1}, \phi(a))$.  The structure of the quotient space $\mpair^d$ is similar to $\mpoly^d$: it is the product of $\A^1$ by an open affine subspace of $\p(1,2, \ldots, d-1, d-1)$, and therefore is a $d$-dimensional affine variety defined over $\Q$.
We have a natural submersion $\pi\colon \mpair^d \to \mpoly^d$. In the complex analytic category this map is an orbifold line bundle in the sense of~\cite[\S 2]{MR2819757}. 


\section{Adelic series and Xie's algebraization theorem}\label{sec:adelicseries}

Let $\KK$ be a number field, and $S$ a finite set of places of $\KK$ containing all archimedean ones. We denote by $\cO_{\KK,S}$ the ring of $S$-integers of $\KK$, 
i.e. the set of $x\in \KK$ such that $|x|_v \le 1$ for all $v \notin S$.

\paragraph*{Adelic series}
A power series $a(t) = \sum_{j\ge 1} a_j t^j$ is said to be adelic\index{adelic!series} if $a_j \in \cO_{\KK,S}$ for all $j$, and 
the radius of convergence $\rho_v$ of $a(t)$ is positive for each place $v \in M_\KK$. It is sufficient to impose $\rho_v >0$ for all $v \in S$
since $a(t)$ is analytic in the open unit disk for all $v \notin S$.

The set $\cO_{\KK,S}\{t\}$ of all adelic series is a $\cO_{\KK,S}$-module which is stable by products (hence is a ring), by quotients
by an adelic series $a(t)$ satisfying $a(0) \neq 0$, and by composition. 

For any adelic series $a(t) = \sum_{j\ge k} a_j t^j$ with $k\ge1$ and $a_k \neq 0$, there exist two adelic series $\rho$ and $\theta$ such that 
$a (t) = \rho(t)^k$, and $a(\theta(t)) = t^k$, see~\cite[Lemmas 3.2 $\&$ 3.3]{specialcubic}. 
In particular, any adelic series with $a_1 \neq 0$ is invertible.

\paragraph*{Adelic arcs and branches}
Let $X$ be any projective variety defined over $\KK$. Choose any projective model $\mathfrak{X}$ of $X$ over $\cO_{\KK,S}$.
An \emph{adelic arc} on $X$ is a $\cO_{\KK,S}\{t\}$-point in $\mathfrak{X}$.

When $X$ is embedded into $\p^N$ and $\mathfrak{X}$ is the closure of $X$ in the standard model of $\p^N$, 
so that $X= \cap_{i\in I} (P_i =0)$ for a collection of homogeneous polynomials in $(N+1)$ variables with coefficients in $ \cO_{\KK,S}$, then
an adelic arc\index{adelic!arc} $\gamma$ on $X$ is determined in homogeneous coordinates by $N+1$ adelic series $x_i \in \cO_{\KK,S}\{t\}$
such that $P_I(x_0(t), \cdots, x_N(t)) \equiv 0$ for all $i$, and
 $(x_0(0), \cdots , x_N(0)) \neq  (0)$. 
The point $\gamma(0)= [x_1(0):\cdots :x_N(0)]$ is the origin of the arc and belongs to $X(\KK)$.

Pick any place $v\in M_\KK$. An adelic arc $\gamma$ is defined by convergent power series, so it induces a natural analytic map
from $\gamma_v\colon \D_v(0,R_v(\gamma))\to X^{\an}_v$ where $R_v(\gamma)$ is the minimum of the radii of convergence of the 
series $x_i(t)$ determining $\gamma$. In particular, we have $R_v(\gamma)>0$ for all $v$, and $R_v(\gamma) =1$ for all but finitely many places.
Observe that the series converge only in open disks in general.

Let  $\gamma$ be any adelic arc, and $\theta$ be an invertible adelic series such that $\theta(0) =0$. 
Then $\gamma \circ \theta$ is an adelic arc, and we say that it is obtained by reparameterizing $\gamma$. 
An adelic branch\index{adelic!branch} $\fs$ is an equivalent class of adelic arcs modulo reparameterization. Note that the origin of a branch 
is well-defined.

\smallskip

\paragraph*{Adelic arcs on curves}

Suppose $C$ is an algebraic curve defined over $\KK$.
Any adelic arc $\gamma$ on $C$ centered at a point $p$ defines a formal arc $\gamma \in C(\KK[[t]])$, and 
induces a morphism of local rings $\widehat{\mathcal{O}_{C,p}} \to \KK[[t]]$. 

\begin{lemma}\label{lem:adelic exist}
For any smooth algebraic curve $C$ defined over $\KK$, and any $p \in C(\KK)$, there exists an adelic arc $\gamma$ originated at $p$
inducing an isomorphism of local rings $\widehat{\mathcal{O}_{C,p}} \simeq \KK[[t]]$. 
\end{lemma}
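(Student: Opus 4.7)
The plan is to realize $C$ locally at $p$ as the graph of adelic power series obtained via the implicit function theorem. Choose a projective compactification of $C$ and fix a projective embedding in some $\p^N$ such that $p$ lies in the standard affine chart $x_0 \ne 0$. After a $\KK$-translation sending $p$ to the origin, enlarge $S$ so that $C\cap\A^N$ is cut out by finitely many polynomials $P_1,\ldots,P_r\in\cO_{\KK,S}[x_1,\ldots,x_N]$. Because $C$ is smooth at $p$, the tangent space $T_pC$ is a $\KK$-line in $\A^N$; after a $\GL_N(\KK)$-change of variables (whose coefficients we absorb into $\cO_{\KK,S}$ by further enlarging $S$), we may assume $T_pC$ is the $x_1$-axis and that some minor $J:=\det(\partial P_{i_k}/\partial x_{j+1})_{k,j=1}^{N-1}$ is nonzero at $0$. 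Enlarging $S$ once more to invert $J(0)$, the element $J(0)$ becomes a unit of $\cO_{\KK,S}$.

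Next I would apply the formal implicit function theorem, which produces unique series $\varphi_2,\ldots,\varphi_N\in t\KK[[t]]$ with $P_{i_k}(t,\varphi_2(t),\ldots,\varphi_N(t))=0$ for each $k$; these are built recursively by Newton's iteration, and each Taylor coefficient is a polynomial in the coefficients of the $P_{i_k}$ divided by a power of $J(0)$, hence lies in $\cO_{\KK,S}$. Since $C$ is a smooth curve and the $N-1$ equations $P_{i_1},\ldots,P_{i_{N-1}}$ already cut out a smooth one-dimensional scheme in a Zariski neighborhood of $0$, that scheme coincides with $C$ there; consequently, all remaining $P_j$ vanish along the formal arc $\gamma(t):=(t,\varphi_2(t),\ldots,\varphi_N(t))$, so $\gamma$ lands in $C$.

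Now I would check convergence place by place. For any $v\notin S$ the coefficients have $v$-adic norm $\le 1$, so each $\varphi_i$ converges on the open unit disk $\D_v(0,1)$. For $v\in S$ I appeal to the analytic implicit function theorem over $\C_v$, which is valid uniformly in the archimedean and non-archimedean complete cases since $J(0)\ne 0$ in $\C_v$: the formal solution agrees with a genuine $\C_v$-analytic solution on some disk $\D_v(0,\rho_v)$ with $\rho_v>0$. Hence each $\varphi_i\in\cO_{\KK,S}\{t\}$ and $\gamma$ is an adelic arc on $C$ originated at $p$ (in homogeneous coordinates, $\gamma=[1:t:\varphi_2(t):\cdots:\varphi_N(t)]$). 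Finally, the induced map on complete local rings $\widehat{\cO_{C,p}}\to\KK[[t]]$ sends the restriction of $x_1$ to $t$; since $T_pC$ is the $x_1$-axis, $x_1|_C$ is a uniformizer at $p$, so both sides are isomorphic to $\KK[[s]]$ and the uniformizer maps to a uniformizer, giving the desired isomorphism.

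The main obstacle is the integrality of coefficients: the Newton recursion involves $J(0)^{-1}$ at every step, and one must ensure that a \emph{single} finite enlargement of $S$ suffices to clear denominators in all coefficients simultaneously. This is achieved precisely because $J(0)$ appears only as a single, fixed element of $\KK^{\times}$, which we put into $\cO_{\KK,S}^{\times}$ once and for all; the positivity of the radii of convergence at the remaining archimedean and $S$-adic places is then a standard consequence of the analytic implicit function theorem.
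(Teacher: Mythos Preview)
Your proof is correct and follows essentially the same approach as the paper: both use the implicit function theorem to parametrize $C$ locally as a graph over a uniformizer, then verify that the recursion for the Taylor coefficients stays in $\cO_{\KK,S}$ after a single finite enlargement of $S$. The paper streamlines slightly by first immersing $C$ into $\p^2$ with local equation $y + h(x,y) = 0$ where $h = O(x,y)^2$, so that the relevant Jacobian at the origin is $1$ and no inversion of $J(0)$ is needed.
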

\begin{proof}
An argument is given in~\cite[Lemma~7]{ingram2010}. An alternative proof goes as follows.  We may pick an immersion of $C$ into $\p^2$ such that the image of $p$ is a smooth point. 
Locally in affine coordinates $(x,y)$ the curve can be defined by a polynomial of the form $f(x,y) = y + h(x,y)$ where $h = O(x,y)^2$. 
By the analytic implicit function theorem,  one can find a (unique) power series $\theta (t) = \sum_{k\ge 2} \theta_k t^k$ such that $f (t, \theta(t)) = 0$. 
If $h(x,y) = \sum a_I x^i y^j$, then the coefficients of $\theta$ are determined recursively and one sees that
$\theta_k$ is a polynomial with integral coefficients in the variables $(a_I, \theta_2, \ldots, \theta_{k-1})$.
It follows that $\theta \in \cO_{\KK,S}\{t\}$ for any set of places $S$ such that $a_I \in \cO_{\KK,S}$.
It is  clear that the formal arc $\gamma(t) =  (t, \theta(t))$ induces an isomorphism of local rings. 
\end{proof}

\begin{remark}
In fact $\gamma$ induces an analytic isomorphism from the Berkovich unit open disk $\D_v(0,1)$ onto its image for all but finitely many places $v$. 
\end{remark}

\begin{remark}
Suppose $C$ is singular, and consider $\mathsf{n} \colon \hat{C} \to C$ its normalization. Given any point $p\in C$, we may apply the preceding lemma
to each point in $\mathsf{n}^{-1}(p)$. In this way we obtain adelic arcs parameterizing each branch of $C$ at $p$.
\end{remark}

\paragraph*{Algebraization of adelic branches in the affine plane}

We say that an arc in $\p^2$ sits at infinity if its origin lies on the line at infinity $L_\infty = \{[x:y:0]\} \subset \p^2$. 
One can always find an affine chart $(z,w)$ centered at the origin of the arc such that the arc is actually given by 
two adelic series $(z(t), w(t))$ with $z(0) = w(0)=0$. 

Suppose $\fs$ is an adelic branch at infinity whose origin is not the point $[0:1:0]$. Then $\fs$ is determined by an adelic arc
of the form $\gamma(t) =[1: y(t):t^k]$ for some $k\ge1$ and $y \in \cO_{\KK,S}$. The integer $k$ is uniquely determined, since it is the order of vanishing at $0$
of $h\circ \gamma$ where $h$ is a local equation of $L_\infty$ at  the origin of $\fs$. The adelic series $y$ is not uniquely defined but
any other adelic series defining $\fs$ is of the form $y(\zeta t)$ with $\zeta^k=1$.

We set $R_v(\fs) = R_v(y)$, and define 
\[C_v(\fs) := \{ (\tau^{-k}, \tau^{-k} y(\tau)), \, |\tau|_v  > R_v(\fs)\} 
\]
which is a closed analytic irreducible curve in the open set $\max \{ |x|_v, |y|_v\} > R_v(\fs)$ of $\A^2_{\KK_v}$. 
When the origin of $\fs$ is  the point $[0:1:0]$, then $\fs$ is determined by an arc $\gamma(t) =[x(t):1:t^k]$ and we define analogously 
$R_v(\fs) = R_v(x)$, and  $C_v(\fs) =  \{ (\tau^{-k} x(\tau), \tau^{-k}) , \, |\tau|_v  > R_v(\fs)\}$.
\index{theorem!Xie's algebraization}

\begin{theorem}[Xie~\cite{Xie}]\label{thm:Junyi}
Suppose $\fs_1, \ldots, \fs_l$ is a finite set of adelic branches at infinity, and $\{B_v\}_{v \in M_{\KK,S}}$ is a collection
of positive real numbers $B_v \ge 1$ such that $B_v =1$ for all but finitely many places. 

Let $p_n = (x_n, y_n)$ be an infinite sequence of points in $\KK^2$ such that for each place $v\in M_\KK$ we have 
either $p \in \cup_{i=1}^l C_v(\fs_i)$, or $\max \{ |x_n|_v, |y_n|_v\} \le B_v$. 

Then there exists an algebraic curve $C\subset \A^2_\KK$ such that any branch of $C$ at infinity is contained in the set $\{\fs_1, \ldots \fs_l\}$ and $p_n \in C(\KK)$ for all $n$ sufficiently large. 
\end{theorem}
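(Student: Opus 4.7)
The proof proceeds via a Siegel-type construction combined with the product formula. The strategy is to build a non-zero polynomial $P \in \cO_{\KK,S}[X,Y]$ of bounded arithmetic complexity that vanishes to high order along each branch $\fs_i$ at infinity, and then to show that such a $P$ must vanish on all but finitely many of the $p_n$.

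After enlarging $S$ so that the defining adelic series of every $\fs_i$ lie in $\cO_{\KK,S}\{t\}$, that $B_v = 1$ for $v \notin S$, and that $R_v(\fs_i) = 1$ for $v \notin S$, I would proceed as follows. For an integer $D \ge 1$ and a parameter $\alpha > 0$ to be chosen, let $W_D \subset \cO_{\KK,S}[X,Y]_{\le D}$ be the submodule of polynomials $P$ such that, for each $i = 1,\ldots,l$, writing
\[
P(\gamma_i(t)) = \sum_{m \ge -k_i D} c_m^{(i)}(P)\, t^m,
\]
one has $c_m^{(i)}(P) = 0$ for every $m < -k_i D + \alpha D$. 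These are $l \alpha D$ linear conditions on the $\binom{D+2}{2}$ coefficients of $P$, so $W_D$ is free of rank $\gtrsim D^2/2$ for $\alpha$ small and $D$ large. Further imposing the vanishing $P(p_n) = 0$ for $n \le N$, with $N$ chosen a fixed fraction of $D^2/2$, still leaves a module of positive rank, and the arithmetic Siegel lemma (in the form of Bombieri--Vaaler) then produces a non-zero element $P = P_D$ of logarithmic height $h(P_D) = O(D)$, with the implicit constant depending only on the $\gamma_i$ and on the first $N$ points.

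The heart of the proof is a product-formula estimate. Fix $n > N$ and estimate $|P_D(p_n)|_v$ at each place $v$. At places where $\max(|x_n|_v, |y_n|_v) \le B_v$, the naive triangle-inequality bound $|P_D(p_n)|_v \le C_v\, e^{h_v(P_D)}\, \max(1,B_v)^D$ applies, so the total contribution from such places to $\sum_v n_v \log|P_D(p_n)|_v$ is $O(D)$ with constant independent of $n$. At places where $p_n = \gamma_i(\tau_n) \in C_v(\fs_i)$, one has $P_D(p_n) = (P_D \circ \gamma_i)(\tau_n)$, and by the built-in vanishing of $W_D$ together with a term-by-term majoration of the (convergent) Laurent series, this gives an estimate of the form $|P_D(p_n)|_v \le D^{O(1)} e^{h_v(P_D)} \rho_v^{\alpha D}$ for an explicit $\rho_v < 1$ controlled by $\tau_n$ and $R_v(\fs_i)$. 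Summing, the decay from the infinity places beats the growth from the bounded places once $\alpha$ is taken large enough relative to the ramification indices $k_i$ and the implicit constants, violating $\sum_v n_v \log|P_D(p_n)|_v = 0$ unless $P_D(p_n) = 0$.

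Consequently $P_D$ vanishes at every $p_n$ with $n \gg 1$, and some irreducible factor $Q \in \KK[X,Y]$ of $P_D$ must vanish on an infinite subsequence of $(p_n)$; one then takes $C = \{Q = 0\}$. It remains to check that every branch of $C$ at infinity belongs to $\{\fs_1, \ldots, \fs_l\}$: any branch $\mathfrak{c}$ of $C$ at infinity must capture infinitely many $p_n$ at each archimedean place, and since those $p_n$ all lie in some $C_v(\fs_i)$ or in a bounded region, the formal branch $\mathfrak{c}$ is forced to coincide with one of the $\fs_i$. The main obstacle is calibrating the Siegel construction: one must choose $\alpha$ large enough that the product-formula decay is decisive, but small enough that the rank of $W_D$ remains roughly $D^2/2$ so that imposing $O(D^2)$ linear vanishing conditions still leaves room for a low-height polynomial. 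A secondary difficulty is the final identification of the branches of $C$ at infinity with the $\fs_i$, since a priori the $\fs_i$ may be transcendental, and it is precisely the conjunction of all adelic hypotheses that retroactively forces their algebraicity.
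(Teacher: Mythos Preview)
Your approach via an auxiliary-polynomial construction and the product formula is genuinely different from the paper's, and it has a real gap. You impose $P(p_n)=0$ for $n\le N$ with $N$ a fixed fraction of $D^2/2$, and then claim Siegel gives $h(P_D)=O(D)$ ``with the implicit constant depending only on the $\gamma_i$ and on the first $N$ points.'' But $N$ grows like $D^2$, so that constant depends on $\max_{n\le N} h(p_n)$, which is unbounded as $D\to\infty$ since the $p_n$ lie in a fixed number field. Your later product-formula estimate then compares an $O(D)$ growth term against branch-decay, but the $O(D)$ is not actually uniform in $D$, and the argument does not close.

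The paper's proof avoids this difficulty entirely by replacing the product-formula/Siegel machinery with a single application of Northcott. One chooses \emph{once} a polynomial $P$ (of any sufficiently large degree, found by a dimension count) with $v_{\fs_i}(P)>0$ for every $i$; then $P\circ\gamma_i$ is analytic on each disk $|\tau|<R_v(\fs_i)$, so $|P(p_n)|_v$ is bounded uniformly in $n$ at branch places, and trivially bounded at the remaining places. Hence $h_{\st}(P(p_n))$ is bounded independently of $n$, Northcott forces $P(p_n)$ into a finite set $T\subset\KK$, and one takes $C$ to be the Zariski closure of $\{p_n\}$ inside $\{\prod_{\lambda\in T}(P-\lambda)=0\}$. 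No calibration of $\alpha$, no Siegel, no letting $D\to\infty$. Your construction, specialized to the regime $\alpha>k_{\max}$ and with the conditions $P(p_n)=0$ dropped, would yield exactly such a $P$; the rest of your machinery is unnecessary.

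Your final step, identifying the branches of $C$ at infinity with the $\fs_i$, is also too informal: a branch of $C$ does not automatically ``capture'' infinitely many $p_n$ at an archimedean place. The paper handles this by choosing rational functions on $\bar{C}_0$ with a pole only at the putative extra branch $\fs$; these define an adelic semi-positive metrized line bundle whose associated height is again bounded on the $p_n$ (same mechanism as above), so Northcott shows only finitely many $p_n$ lie on $C_0$, contradicting the choice of $C_0$ as a component of the Zariski closure.
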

\begin{remark}
Under the assumption of the theorem  the genus of any resolution of singularities of the completion of $C$ in $\p^2$ is a most $1$ by Faltings' theorem.  
\end{remark}
The proof below is due to Junyi Xie.
\begin{proof}
For any branch at infinity $\fs$ determined by an arc $\gamma$ 
and for any polynomial $P\in \overline{\KK}[x,y]$, define the order of vanishing of $P$
along $\fs$ by
\[v_{\fs}(P) := \ord_t (P \circ \gamma(t))\in \Z \cup \{ + \infty\}\]
with the convention that $v_{\fs}(P) =+\infty$ if $P \circ \gamma$ is identically $0$.
\begin{lemma}
There exists a polynomial $P\in \bar{\KK}[x,y]$ such that $v_{\fs_i} (P) >0$ for all $i$. 
\end{lemma}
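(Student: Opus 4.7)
The idea is a straightforward dimension count: I will show that for a polynomial $P$ of sufficiently large degree $d$, the conditions $v_{\fs_i}(P) > 0$ are a finite collection of linear equations in the coefficients of $P$, and that the number of such conditions grows linearly in $d$ whereas the space of polynomials of degree $\le d$ has dimension quadratic in $d$. Hence a nontrivial $P$ exists for $d$ large enough.

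More precisely, for each branch $\fs_i$ I will fix a parameterization $\gamma_i(t) = [1: y_i(t): t^{k_i}]$ (the case where the origin of $\fs_i$ is the point $[0:1:0]$, parameterized by $[x_i(t): 1: t^{k_i}]$ with $x_i(0) = 0$, is entirely analogous). In the affine chart $(x,y)$ the arc reads $\gamma_i(t) = (t^{-k_i}, t^{-k_i} y_i(t))$ with $y_i \in \bar{\KK}[[t]]$. Hence for any monomial $x^a y^b$ with $a+b \le d$, the pullback $\gamma_i^*(x^a y^b) = t^{-k_i(a+b)} y_i(t)^b$ is a Laurent series of order $\ge -k_i(a+b) \ge -k_i d$. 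Consequently, for every $P \in \bar{\KK}[x,y]$ of degree $\le d$, the Laurent series $P \circ \gamma_i \in \bar{\KK}(\!(t)\!)$ satisfies $\ord_t(P \circ \gamma_i) \ge -k_i d$, and the requirement $v_{\fs_i}(P) \ge 1$ becomes the vanishing of the coefficients of $t^{-k_i d}, t^{-k_i d + 1}, \ldots, t^{0}$ in $P \circ \gamma_i$, i.e.\ at most $k_i d + 1$ linear conditions on the coefficients of $P$.

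Summing over $i = 1, \ldots, l$ produces at most $l + d \sum_{i=1}^{l} k_i$ linear conditions on the $\binom{d+2}{2} = (d+1)(d+2)/2$ coefficients of a polynomial of degree $\le d$. Since the latter grows quadratically in $d$ while the former grows only linearly, for all $d$ sufficiently large the associated linear map has nontrivial kernel, producing a nonzero $P \in \bar{\KK}[x,y]$ with $v_{\fs_i}(P) > 0$ for every $i$, as desired.

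No real obstacle arises: the conditions are genuinely linear in the coefficients of $P$ once adelic parameterizations of the $\fs_i$ are fixed, and the convergence properties of the series $y_i$ (or $x_i$) play no role at this stage — only their formal expansion at $t=0$ enters the argument. The only bookkeeping point is to treat the two parameterization conventions for branches at infinity separately, but the pole-order bound $k_i d$ and hence the dimension count are the same in both cases.
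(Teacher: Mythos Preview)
Your proof is correct and follows essentially the same approach as the paper: both argue by a dimension count, observing that the conditions $v_{\fs_i}(P)>0$ impose at most $k_i d + O(1)$ linear constraints on the coefficients of a polynomial of degree $\le d$, which is eventually dominated by the quadratic dimension $\binom{d+2}{2}$ of the space of such polynomials. Your write-up is in fact more explicit about the pole-order bound and the count than the paper's.
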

 For each branch $\fs_i$, we fix an adelic arc  $\gamma_i(t) =[1: y_i(t):t^{k_i}]$ defining $\fs_i$, and write 
 $\varphi_i(t) := P  (t^{-k_i}, t^{-k_i}y_i(t))$.
\begin{proof}
The space of all polynomials of degree $\le d$ such that $v_{\fs_i} (P) >0$
is an algebraic subvariety  of $\A^{d+1}$ given by the vanishing of all coefficients of non-positive powers in the expansion 
of $\varphi_i$.
 Note that the number of conditions on $P$ grows linearly with the degree since 
$\varphi_i (t) = t^{- dk_i}  O(1)$. On the other hand, the dimension
of the space of all polynomials of degree $d$ is quadratic in $d$. It follows that for $d \gg 0$, a generic polynomial satisfies 
 $v_{\fs_i} (P) >0$ for all $\fs_i$.
\end{proof}
Fix any polynomial $P$ as in the previous lemma. Replacing $\KK$ by a finite extension, we may suppose that 
$P \in \KK[x,y]$. Observe that 
$\varphi_i \in t\cdot \cO_{\KK,S}\{t\}$ for all $i$ since $v_{\fs_i} (P) >0$. 
\begin{lemma}\label{lem:estim height}
There exists a collection of positive real numbers $B'_v \ge 1$ such that $B'_v =1$ for all but finitely many places, and 
$|P(p_n)|_v \le B'_v$ for all $n$.
\end{lemma}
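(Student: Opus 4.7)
The plan is to bound $|P(p_n)|_v$ place by place, exploiting the dichotomy in the hypothesis at each $v\in M_\KK$, and then to verify that the resulting bound $B'_v$ equals $1$ outside a finite set of places.

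First I would fix $v$ and treat the case where $\max\{|x_n|_v,|y_n|_v\}\le B_v$ by a direct estimate from the monomial expansion $P=\sum_I a_I x^{i_1}y^{i_2}$: this yields a bound $|P(p_n)|_v\le A_v$ depending only on $B_v$, $\deg P$, and the $v$-adic norms of the coefficients of $P$ (with an extra combinatorial factor in the archimedean case). Next I would treat the case $p_n\in C_v(\fs_i)$: by definition of $C_v(\fs_i)$ there is a parameter $\tau$ in the relevant domain with $p_n=(\tau^{-k_i},\tau^{-k_i}y_i(\tau))$, and substituting into the definition of $\varphi_i$ gives the key identity $P(p_n)=\varphi_i(\tau)$. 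Since $v_{\fs_i}(P)>0$, the Laurent cancellation shows that $\varphi_i$ has no negative powers and vanishes at $0$; in particular $\varphi_i$ lies in $t\cdot\cO_{\KK,S}\{t\}$ and is bounded on the parameter region of $C_v(\fs_i)$ by some constant $M_{v,i}$.

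Setting $B'_v:=\max\{A_v,M_{v,1},\ldots,M_{v,l}\}$ then yields the required inequality $|P(p_n)|_v\le B'_v$. For the finiteness assertion, I would enlarge $S$ finitely so that the coefficients of $P$ and of each $\varphi_i$ lie in $\cO_{\KK,S}$ and so that $B_v=1$ for all $v\notin S$; at each such non-archimedean place the ultrametric inequality gives $A_v\le 1$, and the estimate $|\varphi_i(\tau)|_v\le\max_{j\ge 1}|c_{i,j}|_v\,|\tau|_v^j\le 1$ on the unit disk yields $M_{v,i}\le 1$, so that $B'_v=1$ for all but finitely many $v$.

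The main obstacle I foresee is the uniform control of $|\varphi_i(\tau)|_v$ over the full parameter region of $C_v(\fs_i)$, since an adelic power series need not be bounded on its entire open disk of convergence, particularly at archimedean places where $\tau$ may approach the boundary. I expect to handle this by exploiting the explicit relation $x_n=\tau^{-k_i}$, which pins down $|\tau|_v$ in terms of the coordinates of $p_n$, so that one may either restrict $\tau$ to a compact sub-region of the disk of convergence (on which convergent series are automatically bounded) or split off the finitely many remaining places and absorb the resulting bound into the constant $B'_v$.
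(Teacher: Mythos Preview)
Your proposal is correct and follows the same two–case argument as the paper: the paper sets $C_v := \sup\{|P(x,y)|_v : \max\{|x|_v,|y|_v\}\le B_v\}$ and $D_v := \sup\{|\varphi_i(t)|_v : |t|_v < R_v(\fs_i)\}$, takes $B'_v=\max\{C_v,D_v\}$, and observes that $D_v\le 1$ outside finitely many places because there the coefficients of $y_i$ (hence of $\varphi_i$) lie in $\KK_v^\circ$ and $R_v(\fs_i)=1$. You are in fact more scrupulous than the paper about the finiteness of $D_v$ at the remaining places: the paper writes this supremum over the full open disk without comment, whereas you correctly flag that a convergent series need not be bounded on its maximal disk of convergence at an archimedean place. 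Your suggested remedy of passing to a compact subdisk is the natural one; since only finitely many places are involved, this is harmless.
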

\begin{proof}
Set $C_v = \sup\{ |P(x,y)|_v,\, |(x,y)|_v \le B_v\}$, and \[D_v = \sup \left\{ |P  (t^{-k_i}, t^{-k_i}y_i(t))|_v, |t|_v < R_v(\fs)\right\}.\]
Note that $y_i$ is an adelic series so that for all but finitely places $v$ its coefficients lie in $\KK^{\circ}_v$.
It follows that for all but finitely many places, $R_v(\fs) =1$ and $D_v =1$.

The proof is complete by taking $B'_v = \max \{C_v, D_v\}$.
\end{proof}
Since all $p_n$ belongs to the same number field, the previous lemma gives the following
height estimate:
\[h_{\st}(P(p_n)) = \frac1{[\KK:\Q]} \sum_{v\in M_\KK} \log \max \{ 1, |P(p_n)|_v\} < \infty \]
so that by Northcott, $P(p_n)$ belongs to a finite set $T\subset \KK$, and 
$\{ p_n \} \subset D :=  \{ \prod_{\lambda \in T} (P - \lambda) = 0 \}$. 
Let $C$ be the Zariski closure of $\{ p_n \}$ in $D$.

To conclude the proof, we need to show that any 
branch at infinity of $C$ lies in $\{ \fs_1, \ldots, \fs_l\}$. Suppose by contradiction that $\fs$ is a branch at infinity of 
$C$ different from the $\fs_i$'s. Let $C_0$ be the irreducible component of $C$ containing $\fs$.

It may happen that several branches at infinity of $C_0$ meets at the same point in $\p^2$. 
To avoid that, we blow-up
finitely many points on the line at infinity, and build a smooth projective compactification $X$ of $\A^2$ such that 
the closure of $C_0$ intersects transversally the divisor at infinity $H$ of $X$. Let $\bar{C}_0$ be the closure of $C_0$ in $X$.

Choose an effective divisor $D$ supported on $\mathfrak{s}$
such that $L= \cO_{\bar{C}_0}(D)$ is globally generated. Pick any finite set of generating sections:
we get  rational functions in two variables $Q_1, \ldots, Q_N\in \KK(x,y)$ whose
restrictions to $\bar{C}_0$ have only a pole at $\mathfrak{s}$, and such that 
$\min_i \{ \ord_p(Q_i)\} = 0$ for any $p\neq \mathfrak{s}$ and $\min_i \{ \ord_{\mathfrak{s}}(Q_i)\} = \ord_ \mathfrak{s}(D)$.
It follows that the collection of functions $\max \{ |Q_0|_v, \ldots , |Q_N|_v\}$ is continuous on $C_0$ and defines an adelic semi-positive metrization $\bar{L}$ of $L$.

We now estimate the induced height $h_{\bar{L}}(p_n)$ for all $p_n \in C_0$.
The same proof as for Lemma \ref{lem:estim height} applies and we get uniform upper bounds on $\max \{ |Q_0|_v, \cdots , |Q_N|_v\}(p_n)$, so that
 $h_{\bar{L}}(p_n)$ is bounded from above. By Northcott property, the set of $p_n$'s lying in $C_0$ is
 finite which is absurd. 
 \end{proof}

%
%
%
%
%
%


\chapter{Polynomial dynamics}

This chapter is mainly expository: \S\ref{sec:fatou-julia} and \S\ref{sec:green} contain brief discussions of basic aspects of the iteration of complex and non-Archimedean polynomials in one variable
(the Fatou-Julia theory and the construction of the canonical invariant measure). We look at a few important  examples of polynomial dynamics in \S \ref{sec:exam-poly}.
The fourth section is devoted to the detailed study of the expansion of the B\"ottcher coordinates. 
Section \ref{sec:global} builds on the previous chapter and discuss the notion of canonical height.
In Section \ref{sec:holofamily}, we review
the  Ma\~{n}\'e-Sad-Sullivan theory of bifurcation of holomorphic 
dynamical systems in the context of polynomials. 
We conclude this chapter by a discussion of the locus of preperiodic points in an arbitrary family of polynomials. 
Our main result (Theorem \ref{thm:finite branches}) will play a key role in one of our specialization argument in \S\ref{sec:specialization}.


\section{Fatou-Julia theory}\label{sec:fatou-julia}
We fix any algebraically closed complete metrized field $(K, |\cdot|)$ of characteristic zero. When the norm $|\cdot|$ is Archimedean, then $K = \C$ and $|\cdot|$ is the standard Euclidean norm. 

\paragraph*{The filled-in Julia set}
Pick any polynomial $P(z)= a_0 z^d + \ldots + a_d$
of degree $d\ge 2$ with coefficients in $K$. It induces a continuous map $P: \A^{1,\an}_K \to \A^{1,\an}_K$ on the Berkovich analytification of the affine line, 
and we define the filled-in Julia set as
\[ K(P)= \{ z\in \A^{1,\an}_K, \, |P^n(z)| \text { is bounded as } n\to\infty\}~.\]
Observe that for some $\epsilon >0$ small enough and some $R > 1$ big enough we have 
\[|P(z)| \ge \epsilon |z|^d \ge 2 |z|\]
for all $|z| \ge R$. It follows that the basin of attraction of infinity
\[ \Omega(P)= \{ z\in \A^{1,\an}_K, \, |P^n(z)| \to \infty \text { as } n\to\infty\}~,\]
is an open set whose complement is equal to $K(P)$, and that the latter set is a non-empty compact set. 
Both sets $K(P)$ and $\Omega(P)$ are totally invariant by $P$. 

The Julia set\index{Julia set} $J(P)$ of $P$ is defined to be the boundary of $K(P)$: it is also a compact set which is totally invariant. 
The Fatou\index{Fatou!set} $F(P)$ is the complement of the Julia set: it is the disjoint union of $\Omega(P)$ and the interior of $K(P)$.
It can be characterized as the open set where the sequence of iterates $\{ P^n\}$ are normal (in the usual sense in the Archimedean case, and in the sense of~\cite{Non-arch-Montel} in
the non-Archimedean case).

\paragraph*{Periodic points}
Let $p \in \A^1(K)$ be a fixed point for $P$.\index{periodic point} Then $p$ is repelling (resp. neutral, attracting, or super-attracting)
when $|P'(p)|>1$ (resp. $|P'(p)| =1$, $|P'(p)| <1$, or $|P'(p)| =0$).

When $p$ is repelling or attracting, it is always possible to find an analytic  change of coordinates $\phi$ locally at $p$
such that $ \phi^{-1} \circ P \circ \phi (z) = \lambda z$ with $\lambda = P'(p)$.
When $p$ is super-attracting\footnote{In positive characteristic this result is no longer true, see~\cite{ruggiero-positivechar}.} then we can find $\phi$ such that  $ \phi^{-1} \circ P \circ \phi (z) = z^k$ where $k = \ord_p(P)$.

Observe that the polynomial $P (z) = a_0 z^d + \mathrm{l.o.t}$ also defines a continuous map on $\p^1_K$  for which the point at infinity is totally invariant, and super-attracting. It follows that one can find 
an analytic function $\varphi_P(z) = a z + \sum a_j z^{-j}$ with $a \neq 0$ converging in $|z| \ge R$ for $R$ sufficiently large and such that $ \phi_P \circ P = (\phi_P)^d$. 
This analytic function is uniquely determined by the previous equation and the choice of $a$ such that $a^{d-1} = a_0$, and is called the B\"ottcher coordinates. We shall discuss extensively the expansion 
of $\varphi_P$ in \S\ref{sec:bottcher}.

When $p$ is neutral, then the situation is quite delicate. 
\begin{itemize}
\item
When the multiplier $P'(p)$ is a root of unity\footnote{in which case $p$ is said to be a parabolic fixed point}, then one can never linearize $P$ near $p$ (otherwise we would have $P^r =\id$ for some $r>1$ which contradicts $d\ge2$). The dynamics is described by the Fatou-Leau theory in the complex case, see~\cite[\S10]{Milnor4}. In the non-Archimedean case,  the dynamics depends on the residue characteristic of $K$. When the residue characteristic is positive,  the dynamics can be investigated using the iterative logarithm, see~\cite[\S 10.2]{benedetto-book} or~\cite[\S 3.2]{rivera-corps-locaux}. 
\item
When the multiplier $P'(p)$ is not a root of unity, then $P$ is always linearizable when $(K,|\cdot|)$ is a non-Archimedean metrized field of characteristic zero, see~\cite[\S 3.3]{rivera-corps-locaux}. The
linearizability  of $P$ near $p$ in the Archimedean case depends in a subtle way on the continued fraction expansion of the argument of the multiplier.  We refer to~\cite[\S11]{Milnor4}
for a thorough discussion of this very intricate problem. 
\end{itemize}

We conclude this section by the following observation. 

\begin{lemma}
Let $p\in \A^1(K)$ be a fixed point for $P$.
\begin{itemize}
\item
When $K = \C$ is Archimedean, then $p$ belongs to the Fatou set iff
 it is attracting either $|P'(p)|<1$, or neutral $|P'(p)| =1$ and $P$ is linearizable at $p$. 
\item
When $K$ is non-Archimedean, then $p$ belongs to the Fatou set 
iff it is non-repelling, i.e. $|P'(p)| \le 1$. 
\end{itemize}
\end{lemma}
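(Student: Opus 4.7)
The plan is to set $\lambda := P'(p)$ and analyze four cases according to $|\lambda|$, valid in both the Archimedean and non-Archimedean settings when applicable.

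\textbf{Attracting case $|\lambda| < 1$.} I would start from the Taylor expansion $P(z)-p = \lambda(z-p) + \sum_{k\ge 2} a_k (z-p)^k$ and produce $r > 0$ and $\rho < 1$ such that the closed disk $\overline{\D_K(p,r)}$ is $P$-invariant with $|P(z)-p| \le \rho |z-p|$ on it. In the non-Archimedean case this follows at once from the ultrametric inequality by choosing $r$ small enough; in the Archimedean case one takes $\rho$ slightly larger than $|\lambda|$. Iterating gives $P^n \to p$ uniformly on that disk, so the family $\{P^n\}$ is normal near $p$ and $p \in F(P)$.

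\textbf{Repelling case $|\lambda|>1$.} Here $(P^n)'(p) = \lambda^n$ has modulus going to infinity while $P^n(p) = p$ remains bounded. Any subsequential limit $f$ of $\{P^n\}$ on a neighborhood of $p$ would be an analytic map fixing $p$, and Cauchy estimates would force $|f'(p)| < \infty$, contradicting $|\lambda|^{n_k} \to \infty$. In the Archimedean case the alternative that $P^{n_k} \to \infty$ is ruled out by $P^{n_k}(p) = p$; in the non-Archimedean case I would invoke the Montel-type criterion of~\cite{Non-arch-Montel}. Hence $p \in J(P)$.

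\textbf{Neutral linearizable case $|\lambda| = 1$.} A local analytic conjugacy $\varphi^{-1} \circ P \circ \varphi(z) = \lambda z$ makes $\{P^n\}$ equicontinuous on a small disk around $p$, so $p \in F(P)$. In the non-Archimedean setting this already covers every neutral fixed point thanks to~\cite[\S 3.3]{rivera-corps-locaux}, which together with the attracting and repelling cases completes the proof in that setting.

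\textbf{Neutral non-linearizable case, Archimedean only.} Here $\lambda$ is either a root of unity (parabolic case) or $P$ is a Cremer fixed point. For the parabolic case I would invoke the Leau-Fatou flower theorem, which produces attracting and repelling petals at $p$ and immediately forces non-normality of the iterates in any neighborhood of $p$. For the Cremer case I would appeal to the classical theorem asserting that non-linearizable fixed points with multiplier on the unit circle lie in the Julia set, see~\cite[\S 10-11]{Milnor4}. The hard part is this last step, as it ultimately rests on the full small-divisors machinery of one-dimensional complex dynamics rather than on a direct normal-family argument.
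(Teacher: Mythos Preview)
Your non-Archimedean argument has a gap. You claim that every neutral fixed point is linearizable by~\cite[\S 3.3]{rivera-corps-locaux}, but that reference only treats multipliers that are \emph{not} roots of unity. When $\lambda$ is a root of unity, $P$ is never linearizable (a local conjugacy to $\lambda z$ would force some iterate $P^r$ to be the identity near $p$, which is impossible for $\deg P \ge 2$). So the parabolic case is left uncovered. The fix is immediate and in fact simpler than linearization: writing $p=0$, the ultrametric gives $|P(z)-\lambda z|\le C|z|^2$, hence $|P(z)|=|\lambda z|\le|z|$ for $|z|$ small, so a small disk is forward invariant and $\{P^n\}$ is bounded there. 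This single estimate handles all $|\lambda|\le 1$ at once, with no need to separate attracting from neutral or to invoke linearization at all---this is how the paper proceeds.

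In the Archimedean case your argument is correct, but the paper takes a shorter route for the hard direction. Instead of proving the contrapositive by splitting into parabolic (Leau--Fatou flower) and Cremer cases, it shows directly that a neutral fixed point in the Fatou set is linearizable: the Fatou component $U$ containing $p$ is simply connected (maximum principle, since $P$ is a polynomial), so a Riemann map $\psi\colon U\to\D$ with $\psi(p)=0$ conjugates $P|_U$ to a self-map of $\D$ fixing $0$ with derivative $\lambda$ of modulus $1$, and Schwarz's lemma forces this to be the rotation $z\mapsto\lambda z$. This bypasses the flower theorem and the small-divisors machinery entirely.
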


\begin{proof}
Assume $K = \C$. One direction is clear. For the converse suppose first that $p$ is repelling. Then the sequence of iterates $\{P^n\}$ cannot be normal at $p$
since its derivative explode. When $p$ is neutral and belongs to the Fatou set, then a simple argument for linearizability 
goes as follows, see~\cite[Corollary 5.3]{Milnor4}. 
Let $U\subset F(P)$ be the Fatou component of $P$ containing $p$. By the maximum principle, for all Jordan curve $\gamma\subset U$, the bounded component of $\mathbb{C}\setminus\gamma$ is contained in $U$, hence $U$ is simply connected. Let $\psi:U\to\D$ be a conformal isomorphism with $\psi(p)=0$, then $g:=\psi\circ f\circ \psi^{-1}$ satisfies $g:\D\to\D$, $g(0)=0$ and $g'(0)=P'(p)$. By Schwarz lemma, this gives $g(z)=P'(p)\cdot z$.

Suppose now that $K$ is non-Archimedean and write $\lambda = P'(p)$. Choose an affine coordinate $z$ such that $p=0$. When $|\lambda| \le 1$ we have
$|P(z) - \lambda z | \le C |z|^2$ for all $|z|$ small enough so that $|P(z)|  =  |\lambda z |$ is a neighborhood of $0$. It follows that $|P^n(z)| $ is bounded for all $n$
and $p$ belongs to the Fatou set. Conversely, when $p$ belongs to the Fatou set, then we can find a disk $D$ around $0$ such that $P^n(D) \subset B(0,1)$ for all $n$. 
It follows $|(P^n)'(0)| \le \diam(D)$ for all $n$, hence $p$ is not repelling. 
\end{proof}

\paragraph*{Non-rigid periodic points (non-Archimedean case)}
Suppose $K$ is a non-Archimedean metrized field. 
 Recall that we wrote $K^{\circ}:=\{z\in K\, : \ |z|\leq 1\}$, $K^{\circ\circ}:=\{z\in K\, : \ |z|<1\}$, and $\tilde{K}:=K^{\circ}/K^{\circ\circ}$.
 For any $z\in K^{\circ}$, we let $\tilde{z}\in \tilde{K}$ be the image of $z$ under the reduction map $K^{\circ} \to\tilde{K}$. 
 
 \smallskip
 
Given any polynomial $P\in K[T]$ of degree $d\ge2$, then it may appear that $P$ fixes some points in the Berkovich affine line that are not rigid. 

\begin{proposition}\label{prop:non-rigid fixed}
Suppose $x$ is a non-rigid point in  $\A^{1,\an}_K$ which is fixed by $P$. 
\begin{enumerate}
\item
If $x \in J(P)$, then there exists a finite extension $L/K$ and an affine map $\phi$ defined over $L$ such that 
$\phi (x)$ is the Gau{\ss} point, and the reduction of $\phi \circ P \circ \phi^{-1}$ is a polynomial with coefficients in $\tilde{K}$
of degree at least $2$.
\item 
If $x$ lies in the Fatou set, there exists a neighborhood of $x$ on which $P$ induces an analytic isomorphism. 
\end{enumerate}
\end{proposition}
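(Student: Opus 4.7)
The plan is to reduce both parts to an analysis at the Gau{\ss} point via an affine change of coordinates, and then to read the dynamics off the reduction of the conjugated polynomial.

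For part (1), the key preliminary step is to show that a non-rigid fixed point lying in $J(P)$ must necessarily be of type 2. This is a rigidity statement from non-Archimedean dynamics (going back to Rivera-Letelier): at a type 3 or type 4 fixed point the local action of $P$ on the set of tangent directions at $x$ is trivial, and the action on any small $\R$-tree neighborhood of $x$ in $\A^{1,\an}_K$ is essentially an isometry for the metric $d_\H$ introduced in \S\ref{sec:nonArchcurves}. Such behaviour prevents the non-normality of $\{P^n\}$ at $x$ required by $x \in J(P)$, in the sense of the non-Archimedean Montel theorem of \cite{Non-arch-Montel}. Granting this, write $x = x_{B(a,r)}$ with $a\in \bar K$ and $r\in |\bar K^*|$, and pick a finite extension $L/K$ containing $a$ together with an element $c\in L^*$ of absolute value $r$. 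Then $\phi(z) := (z-a)/c \in L[z]$ satisfies $\phi(x) = \xg$, and $Q := \phi\circ P\circ \phi^{-1}$ fixes $\xg$. Because $Q(\overline{\D_L(0,1)}) = \overline{\D_L(0,1)}$, one deduces that $Q \in L^\circ[T]$ with Gau{\ss} norm $1$, so the reduction $\tilde Q \in \tilde L[T]$ is a well-defined polynomial of degree $\ge 1$. If $\deg \tilde Q$ were equal to $1$, then $\tilde Q$ would be an affine bijection of $\A^1_{\tilde L}$, and Hensel's lemma would lift $\tilde Q^{-1}$ to an inverse of $Q$ on $\D_L(0,1)$, so that $\{Q^n\}$ would be normal at $\xg$ and $\xg \in F(Q)$, contradicting $\xg \in J(Q)$. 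Therefore $\deg \tilde Q \geq 2$.

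For part (2), suppose now $x$ is non-rigid and lies in the Fatou set. If $x$ is of type 2, repeat the above normalization: the conjugated polynomial $Q$ fixes $\xg$ in $F(Q)$. By the converse of the dichotomy used in part (1), the reduction $\tilde Q$ has degree exactly $1$, i.e.\ is an affine automorphism of $\A^1_{\tilde L}$. A direct Hensel-type lifting then shows that $Q$ restricts to an analytic automorphism of $\D_L(0,1)$; transporting back by $\phi$, $P$ is an analytic automorphism of the corresponding neighborhood $\phi^{-1}(\D_L(0,1))$ of $x$. If $x$ is of type 3, pass to an extension $L$ where the radius defining $x$ lies in $|L^*|$, and replace the unit disk by a standard closed annulus whose skeleton contains $x$; the analogous reduction argument on that annulus (whose dual graph is a single edge) yields an analytic automorphism of an annular neighborhood of $x$. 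The type 4 case can be handled by taking a decreasing sequence of closed balls $B_n$ defining $x$: since $x$ is fixed and lies in $F(P)$, the bounded orbit forces $P$ to map $B_n$ invertibly onto $B_n$ for $n$ large, producing the required analytic automorphism.

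The main obstacle is the preliminary rigidity claim in part~(1), namely that non-rigid Julia fixed points of a polynomial of degree $\ge 2$ are necessarily of type $2$; this is a nontrivial input from non-Archimedean Fatou-Julia theory, and is where the characteristic-zero hypothesis on $K$ enters through the Montel-type theorem of \cite{Non-arch-Montel}. Once this is granted, the remainder of both parts is a clean combination of the reduction map and Hensel's lemma.
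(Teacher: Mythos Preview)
Your approach is essentially the same as the paper's. The paper also moves a type~2 fixed point to the Gau{\ss} point by an affine map, decomposes the conjugated polynomial according to whether each coefficient lies in $K^{\circ\circ}$ or in $K^\circ\setminus K^{\circ\circ}$, and separates into the two cases according to whether the reduction has degree $\ge 2$ or degree $1$. For type~3 and type~4 fixed points the paper simply cites \cite[Lemma~10.80 \& Theorem~10.81]{baker-rumely-book} (and \cite{benedetto-book}) to conclude they are always in the Fatou case, which is exactly the ``rigidity'' input you identify. The organization differs slightly (the paper splits by type, you split by conclusion), but the content is the same.

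One correction: in your argument for case~(2) when $x$ is type~2, the neighborhood on which $Q$ is an analytic isomorphism should be an open disk of radius $r>1$, not $\D_L(0,1)$. The Gau{\ss} point $\xg$ is the \emph{boundary} point of $\D_L(0,1)$ in $\A^{1,\an}_L$, so $\D_L(0,1)$ is not a neighborhood of $\xg$. When $\deg\tilde Q=1$, write $Q(T)=aT+b+\sum_{i\ge 2}a_iT^i$ with $|a|=1$ and $|a_i|<1$; then for $r$ slightly larger than $1$ one has $|a_i|r^i<r$ for all $i\ge 2$, and $Q$ is an analytic automorphism of $\D_L(0,r)$, which is a genuine open neighborhood of $\xg$. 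The same issue affects your contradiction in part~(1): normality of $\{Q^n\}$ at $\xg$ requires a neighborhood of $\xg$, hence again the disk of radius $r>1$.
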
 

\begin{definition}
Any point satisfying Condition (1) of the previous proposition is called
a non-rigid repelling fixed point. 
\end{definition}

\begin{proof}
Suppose that $K$ is algebraically closed. If $x$ is of type 2, then there exists an affine map $\phi$ defined over $K$ such that 
$\phi (x)$ is the Gau{\ss} point, in which case the polynomial $P$ can be decomposed as the sum of two polynomials: $Q_1 \in \mathfrak{m}[T]$
and $Q_2$ having coefficients in $K^{\circ}\setminus \mathfrak{m}$.
When $\deg(Q_2) \ge2$, then we are in case (1) of the Proposition. When $\deg(Q_2) =1$, then we can find $r >1$ such that $P$ induces an analytic isomorphism
on the disk centered at $0$ of radius $r$, and we fall into case (2). 

When $x$ is of type 3 or of type 4, we are always in the second case, see \cite[Lemma~10.80 $\&$ Theorem~10.81]{baker-rumely-book} or \cite{benedetto-book}. \end{proof}
 
\paragraph*{Dynamics in the Fatou set}

A Fatou component\index{Fatou!component} is a connected component of $F(P)$. It is either bounded, or equal to $\Omega(P)$. The image by $P$ of a Fatou component remains a Fatou component.

When $K =\C$, then any Fatou component is pre-periodic by the famous non-wandering theorem of Sullivan, see, e.g.,~\cite[Theorem 16.4]{Milnor4}. One is thus reduced
to consider periodic (and even fixed) Fatou component to understand the dynamics of $P$ on $F(P)$.
Any fixed Fatou component $U$ is either the basin of attraction of a fixed attracting or super-atttracting point;
or a parabolic domain so that any point in $U$ converges under iteration toward a parabolic fixed point; or a Siegel domain, i.e. a disk on which $P$ is conjugated to an irrational rotation. 

When $K$ is non-Archimedean, the situation is more delicate and highly depends on the residual characteristic of $K$.
The classification of periodic components is due to Rivera-Letelier. We refer to~\cite[Theorem~9.14]{benedetto-book} for the following result. 
Any fixed Fatou component $U$ is either the basin of attraction of a fixed attracting or super-atttracting orbit; or it is an affinoid domain 
whose boundary consist of periodic type $2$ repelling points, and $P: U \to U$ is an analytic isomorphism. In the latter case, we say that $U$ is an indifferent component. 

An analog of Sullivan's non-wandering theorem has been proved by Benedetto~\cite{benedetto-non-wandering} when the residual characteristic of $K$ is $0$.
A Fatou component $U$ is either pre-periodic, or it is a ball and its (unique) boundary point $x$  is pre-periodic (and replacing $P$ by an iterate
we have $P^n(z) \to P(x)$ for all $z \in U$). When the residual characteristic of $K$ is positive, then this result is no longer true: there exists wandering Fatou components
which are disks and whose boundary point has an infinite orbit, see~\cite{benedetto-wandering}. 
There is no conjecture explaining the appearance of wandering domains in full generality.


\section{Green functions and equilibrium measure}\label{sec:green}
\subsection{Basic definitions}
Let $(K, |\cdot|)$ be any complete and algebraically closed metrized field of characteristic $0$. For any polynomial $P(z)  = Az^d + a_1 z^{d-1} + \ldots + a_d \in K[z]$ of degree $d\ge2$, there exists a constant $C\ge0$ such that 
 \[\left| \frac1d \log^+|P(z)| - \log^+|z| \right| \le C\] 
 Indeed 
 $|P(z)| \le \max \{|A|, |a_i|\} \max\{ 1, |z|\}^d$, and for any $\epsilon \ll 1$ small enough such that  $A > \epsilon \sum |a_i|$, we have
 $|P(z)|\ge (A - \epsilon \sum |a_i|) |z|^d$ when $|z| \ge \epsilon^{-1}$. 
 
It follows that the sequence of functions $\frac 1{d^n} \log^+|P^n|$ converges uniformly on $\A^{1,\an}_K$
to a continuous function $g_P$.

\begin{definition}
The function $g_P$ is called the Green function \index{function!Green} of $P$.
\end{definition}
The  proof of the next result is left to the reader, see~\cite[\S 10.7]{baker-rumely-book}.
\begin{proposition}\label{prop:basic-green}
The Green function of $P$ satisfies the following properties: 
\begin{enumerate}
\item
$g_P \circ P = d g_P$ on $\mathbb{A}^{1,\an}_K$; 
\item 
$g_P(z) = \log |z| + \frac1{d-1} \log |A| + o(1)$ as $|z|\to\infty$;
\item 
the set $\{ g_P = 0\}$ is the filled-in Julia set $K(P)$ of $P$; 
\item
the function $g_P$ is harmonic outside $J(P)$;
\item
the set of functions $\frac1{d^n} \log^+|P^n(z)|$ converges uniformly on 
$C \times \A^{1,\an}_K$ for any compact subset $C$ of $\A^{d+1,\an}_K$
so that 
the function $(P,z) \mapsto g_P(z)$ is continuous on $\A^{d+2,\an}_K$.
\end{enumerate}
\end{proposition}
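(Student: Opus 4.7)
The plan is to establish the five properties essentially in the order given, relying throughout on the basic inequality $\left| \tfrac{1}{d}\log^+|P(z)| - \log^+|z|\right| \le C$ stated just before the proposition, and on its uniform version as $P$ varies.

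For property (1), I would exploit the definition directly: since $P^n \circ P = P^{n+1}$, we have $\frac{1}{d^n}\log^+|P^n(P(z))| = d \cdot \frac{1}{d^{n+1}}\log^+|P^{n+1}(z)|$, and letting $n\to\infty$ yields $g_P(P(z)) = d\,g_P(z)$. For property (2), observe that when $|z|$ is large enough, $|P(z)| = |A|\,|z|^d (1+o(1))$ where $o(1)\to 0$ as $|z|\to\infty$. Iterating gives $\log|P^n(z)| = d^n\log|z| + \tfrac{d^n-1}{d-1}\log|A| + o(1)$, and dividing by $d^n$ and passing to the limit produces the stated expansion. Property (3) is then an immediate consequence: if $z\in K(P)$ then $\log^+|P^n(z)|$ is bounded so $g_P(z)=0$; conversely if $z\notin K(P)$ then $|P^n(z)|\to\infty$, and applying (2) to $P^n(z)$ in place of $z$ (or just observing that once $|P^n(z)|$ is large, the proof of (2) gives a strictly positive limit for $\frac{1}{d^k}\log|P^{n+k}(z)|/d^n$) shows $g_P(z)>0$.

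For property (4), the interior of $K(P)$ carries $g_P \equiv 0$, which is trivially harmonic. On the basin of infinity $\Omega(P)$, for any compact $L\subset\Omega(P)$ there exists $n_0$ such that for $n\ge n_0$ the iterate $P^n$ has no zero on $L$ (since $|P^n|\to\infty$ uniformly on $L$), so $\log|P^n|$ is harmonic on a neighborhood of $L$. In the Archimedean setting this is the standard statement about $\log|f|$ for nonvanishing holomorphic $f$; in the non-Archimedean setting it follows from the Poincaré–Lelong formula of \S\ref{sec:pot on curves}. The uniform convergence $\frac{1}{d^n}\log|P^n| \to g_P$ on $L$ (which I will establish below as part of (5)) together with the fact that uniform limits of harmonic functions are harmonic (using $\Delta$ in the weak sense and the continuity statement at the end of \S\ref{sec:pot on curves}) yields harmonicity of $g_P$ on $L$.

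It remains to prove (5), which is the main technical point and also what underlies the uniform convergence needed for (4). The key is to make the inequality $\left| \tfrac{1}{d}\log^+|P(z)| - \log^+|z|\right| \le C$ uniform in $P$. On any compact subset $\mathcal{K}$ of $\A^{d+1,\an}_K$ consisting of parameters $(A, a_1, \ldots, a_d)$ with $A$ staying away from $0$ (which is automatic since degree $d$ polynomials satisfy $A\ne 0$ and $\mathcal{K}$ is compact), one obtains uniform constants: an upper bound from $|P(z)|\le \max\{|A|,|a_i|\}\max\{1,|z|\}^d$, and a lower bound of the form $|P(z)|\ge \tfrac{|A|}{2}|z|^d$ valid for $|z|\ge R_\mathcal{K}$ with $R_\mathcal{K}$ uniform in $\mathcal{K}$. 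Writing the telescoping identity
\[
\frac{1}{d^{n+1}}\log^+|P^{n+1}(z)| - \frac{1}{d^n}\log^+|P^n(z)| = \frac{1}{d^{n+1}}\Bigl(\log^+|P(P^n(z))| - d\log^+|P^n(z)|\Bigr),
\]
each summand is bounded by $C/d^{n+1}$ uniformly in $(P,z)$ on $\mathcal{K}\times \A^{1,\an}_K$, so the telescoping series converges normally and the partial sums $\frac{1}{d^n}\log^+|P^n(z)| - \log^+|z|$ converge uniformly. This yields joint continuity of $(P,z)\mapsto g_P(z)$ and, restricting to the family $\{P\}\times L$ with $L\subset\Omega(P)$, also supplies the uniform convergence needed to finish (4). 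The main obstacle I anticipate is purely bookkeeping — making the constant $C$ depend continuously on $P$ on a neighborhood of any compact parameter set, especially controlling the lower bound uniformly across non-Archimedean and Archimedean fields; but since the estimate is entirely elementary in terms of the coefficients, this is routine.
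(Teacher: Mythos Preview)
Your proposal is correct and follows the standard route; the paper itself does not give a proof but leaves it to the reader with a reference to \cite[\S 10.7]{baker-rumely-book}, and your argument is precisely the expected one. One minor remark: in (5) your claim that $|A|$ is bounded away from $0$ on a compact set $\mathcal{K}$ requires that $\mathcal{K}$ be taken inside the open locus $\{A\neq 0\}$ of degree $d$ polynomials rather than in all of $\A^{d+1,\an}_K$, which is clearly the intended meaning of the statement.
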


As $g_P$ is a subharmonic function of on $\mathbb{A}^{1,\an}_K$ with $g_P(z)=\log^{+}|z|+O(1)$ as $|z|\to\infty$, its Laplacian is a probability measure.

\begin{definition}
The equilibrium measure\index{equilibrium measure} $\mu_P$ of $P$ on $\mathbb{A}^{1,\an}_K$ is $\mu_P:= \Delta g_P$.
\end{definition}

By the above properties of the Green function, the measure $\mu_P$ has the following properties:
\begin{itemize}
\item the measure $\mu_P$ is a probability measure supported on $J(P)$;
\item $P^*\mu_P=d\cdot \mu_P$ and $P_*\mu_P=\mu_P$;
\item $\mu_P$ is ergodic and mixing;
\item the entropy of $\mu_P$ is at most $\log d$ (with equality when $K = \C$).
\end{itemize}

One also defines the Lyapunov exponent\index{Lyapunov!exponent} of $P$ as the quantity
\[
\lyap(P) := \lim_{n\to \infty} \frac1n \int \log |(P^n)'| \, d\mu_P~.
\]
The Misiurewicz-Przytycki's formula states that
\begin{equation}\label{eq:MPform}
\lyap(P') = \log |d| + \sum_{P'(c)=0} g_P(c)~.
\end{equation}
Observe that $|d| = d$ if the norm is Archimedean, and $|d| \le 1$ when it is non-Archimedean so that $\lyap(P)$ may be negative in the latter case.
A proof is given over any metrized field of characteristic zero by Y. Okuyama in~\cite[\S 5]{MR2885787}.

\begin{remark}
When a polynomial $P$ is defined over a number field $\KK$, for each place $v\in M_\KK$, there is a Green function $g_{P,v}$ (and an equilibrium measure $\mu_{P,v}$) for the polynomial $P$. The function $g_{P,v}$ and the measure $\mu_{P,v}$ depend on the place $v$ (see \S\ref{sec:global}).
\end{remark}

\subsection{Estimates on the Green function}
Recall that if  $(c,a)\in K^{d-1}$, we set
\begin{equation}\label{eq:critical form}
P_{c,a}(z)\pe \frac{1}{d}z^d+\sum_{j=2}^{d-1}(-1)^{d-j}\sigma_{d-j}(c)\frac{z^j}{j}+a^d~.
\end{equation}
This parameterization is particulary adapted to estimate the variation of the Green function
in terms of the polynomial. We refer to \cite[\S2]{favregauthier} for a more detailed exposition.
\begin{proposition}\label{prop:classic-estim}
 There exist a constant $\theta \ge 0$ and $C\ge 1$ such that the following holds for all $c,a\in K^{d-1}$:
 \begin{enumerate}
  \item for all $z\in \A^{1,\an}_K$, we have
 \[g_{P_{c,a}}(z)\le \log^+\max\{|z|,|c|,|a|\}+\theta~,\]
\item for all $z\in \A^{1,\an}_K$ with $|z|> C\cdot \max\{1,|c|,|a|\}$, we have
\[\left\{\begin{array}{ll}
g_{P_{c,a}}(z)= \log^+|z|-\frac{1}{d-1}\log|d|, & \text{ when} \ K \ \text{ is non-Archimedean},\\
g_{P_{c,a}}(z)\geq \log^+|z|-\log8, & \text{ when} \ K \ \text{ is Archimedean}.
\end{array}\right.\].
 \end{enumerate}
 Furthermore, when $K$ is non-Archimedean then $C,\theta$ only depend on the residual characteristic, and they equal $C=1$ and $\theta=0$
 when the residual characteristic of $(K,|\cdot|)$ is $0$ or at least $d+1$.
 \end{proposition}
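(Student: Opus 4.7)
The plan is to estimate $|P_{c,a}(w)|$ directly as a function of $|w|$, $|c|:=\max_i|c_i|$, and $|a|$, and then iterate under $P_{c,a}$ to pass to the Green function $g_{P_{c,a}}=\lim_n d^{-n}\log^+|P_{c,a}^n|$.

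For assertion (1), set $M:=\max\{1,|z|,|c|,|a|\}$ and expand $P_{c,a}$ term by term. Using the bound $|\sigma_k(c)|\leq A_k|c|^k$ with $A_k=\binom{d-2}{k}$ in the Archimedean case and $A_k=1$ in the non-Archimedean case, together with $|c|^{d-j}|w|^j\leq M^d$ for all $|w|\leq M$ and $2\leq j\leq d-1$, one obtains $|P_{c,a}(w)|\leq B\cdot\max\{1,|c|,|a|,|w|\}^d$ for some constant $B=B(d,\text{res. char.})\geq 1$ depending on an upper bound for $|j|^{-1}$ over $1\leq j\leq d$. Setting $M_n:=\max\{1,|c|,|a|,|P_{c,a}^n(z)|\}$, induction yields $M_{n+1}\leq B\,M_n^d$ and $\log M_n\leq d^n\log M+\tfrac{d^n-1}{d-1}\log B$. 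Dividing by $d^n$ and passing to the limit gives $g_{P_{c,a}}(z)\leq\log M+\theta$ with $\theta:=\log B/(d-1)$. In the ultrametric case with residue characteristic $0$ or $\geq d+1$, all $|j|=1$ for $1\leq j\leq d$ and the ultrametric inequality gives $B=1$, so $\theta=0$.

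For assertion (2), impose $|z|>CM_0$ with $M_0:=\max\{1,|c|,|a|\}$ and $C\geq 1$ chosen so that the leading term $z^d/d$ dominates every other summand in the expansion of $P_{c,a}(z)$. In the non-Archimedean case, choose $C$ (depending only on $d$ and on the norms $|j|$, hence only on the residue characteristic) so that $|z|^d/|d|$ is strictly larger than each of $|\sigma_{d-j}(c)||z|^j/|j|$ for $2\leq j\leq d-1$ and than $|a|^d$; the ultrametric inequality then yields the exact equality $|P_{c,a}(z)|=|z|^d/|d|$. Since $|d|\leq 1$ one has $|P_{c,a}(z)|\geq|z|>CM_0$, so the same estimate applies at the next iterate and by induction $|P_{c,a}^n(z)|=|z|^{d^n}/|d|^{(d^n-1)/(d-1)}$; taking $d^{-n}\log$ and passing to the limit yields exactly $g_{P_{c,a}}(z)=\log|z|-\log|d|/(d-1)$. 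When all $|j|=1$, one can take $C=1$. In the Archimedean case one instead obtains $|P_{c,a}(z)|\geq|z|^d/d-\varepsilon(C)|z|^d-|a|^d\geq|z|^d/(2d)$ for $C$ large enough, where $\varepsilon(C)=\sum_{j=2}^{d-1}\binom{d-2}{d-j}(jC^{d-j})^{-1}\to 0$ as $C\to\infty$. Iterating gives $g_{P_{c,a}}(z)\geq\log|z|-\log(2d)/(d-1)$, which is bounded below by $\log|z|-\log 8$ thanks to the elementary inequality $2d\leq 8^{d-1}$ valid for all $d\geq 2$.

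The key point ensuring uniformity of $C$ and $\theta$ in $(c,a)$ is the specific normalization of $P_{c,a}$: its coefficients are explicit symmetric functions in $c$, and its leading and constant terms are $z^d/d$ and $a^d$, so all estimates can be phrased in terms of the single scalar $\max\{1,|c|,|a|\}$. The main technical subtlety is to obtain the clean form of the constants in the non-Archimedean good-reduction regime, where ultrametricity turns every triangle inequality into an equality and removes all combinatorial factors; everywhere else one must carry along absolute constants depending on $d$ and on the residue characteristic.
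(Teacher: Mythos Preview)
Your proof is correct, and for assertion (1) and for the non-Archimedean part of assertion (2) it coincides with the paper's argument: bound $|P_{c,a}(w)|$ by $B\max\{1,|c|,|a|,|w|\}^d$ (resp.\ show the exact equality $|P_{c,a}(z)|=|z|^d/|d|$ by ultrametricity) and iterate.

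For the Archimedean part of assertion (2), however, you take a genuinely different and more elementary route. The paper invokes an external lemma from \cite{favredujardin} of the form
\[
\max\{g_{P_{c,a}}(z),G(P_{c,a})\}\ge \log|z-\delta|-\log 4,\qquad \delta=\tfrac{1}{d-1}\sum_j c_j,
\]
and then combines it with the already-established upper bound $G(P_{c,a})\le\log^+\max\{|c|,|a|\}+\theta$ to force the maximum to be $g_{P_{c,a}}(z)$ once $|z|$ is large enough. You instead argue directly: for $|z|>C\max\{1,|c|,|a|\}$ with $C$ large, the crude lower bound $|P_{c,a}(z)|\ge|z|^d/(2d)$ both self-improves under iteration (since $(CM_0)^{d-1}\ge 2d$ guarantees the hypothesis propagates) and gives $g_{P_{c,a}}(z)\ge\log|z|-\tfrac{\log(2d)}{d-1}\ge\log|z|-\log 8$ via $2d\le 8^{d-1}$. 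Your approach is self-contained and avoids importing the structure of $G(P_{c,a})$; the paper's approach, on the other hand, ties the estimate to the critical height and is more in line with the tools used later in the chapter. Both yield the stated constant $\log 8$.
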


\begin{definition}
For any polynomial $P\in K[T]$, we set 
\begin{equation} \label{eq:def-GP}
G(P) = \max \{ g_P(c), P'(c) =0 \} \in \R_+.
\end{equation}
\end{definition}

The next result follows from Proposition~\ref{prop:classic-estim} and the Nullstellensatz.
\begin{proposition}\label{prop:growth Green}
The function $(c,a) \mapsto G(P_{c,a})$ extends continuously to $\A^{d-1,\an}_K$, and
there exists a constant $C\ge0$ such that 
\[\sup_{\A^{d-1,\an}_K} \left|G(P_{c,a}) - \log^+\max\left\{|c|, |a|\right\} \right| \le C~.\]
Furthermore, when $K$ is non-Archimedean then $C$ only depends on the residual characteristic, and it equals $0$
 when the residual characteristic is $0$ or $\ge d+1$.
 \end{proposition}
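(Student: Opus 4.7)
The plan is to deduce the continuity and the upper bound directly from Proposition~\ref{prop:classic-estim}(1), and to extract the lower bound from a Nullstellensatz argument applied to the level-one critical values. Continuity of $G$ is immediate: the critical points of $P_{c,a}$ are $\{0,c_1,\ldots,c_{d-2}\}$, each a polynomial function of $(c,a)$, and the joint continuity of $(P,z)\mapsto g_P(z)$ provided by Proposition~\ref{prop:basic-green}(5) ensures that every map $(c,a)\mapsto g_{P_{c,a}}(c_i)$ is continuous, whence so is their maximum. The upper bound follows by applying Proposition~\ref{prop:classic-estim}(1) at each critical point: since $|c_i|\le|c|$, one has $g_{P_{c,a}}(c_i)\le\log^+\max\{|c|,|a|\}+\theta$, and taking the maximum yields $G(P_{c,a})\le\log^+\max\{|c|,|a|\}+\theta$.

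The heart of the proof is the lower bound. The scaling relation $P_{\lambda c,\lambda a}(\lambda z)=\lambda^d P_{c,a}(z)$ implies that each $f_i(c,a):=P_{c,a}(c_i)$ (with the convention $c_0:=0$) is a homogeneous polynomial of degree $d$ in the $d-1$ coordinates $(c,a)$. The key claim I shall prove is that $f_0,\ldots,f_{d-2}$ have no common zero in $\bar{K}^{d-1}\setminus\{0\}$. Indeed, $f_0=a^d$, so a common zero forces $a=0$; assuming furthermore that $P_{c,0}(c_i)=0$ for every $i\ge 1$ and using that $P'_{c,0}(z)=z\prod_{i=1}^{d-2}(z-c_i)$ has $0$ as a simple root, the polynomial $P_{c,0}$ admits $0$ as a double root and each $c_i$ as an additional root, accounting for its entire degree $d$, so $P_{c,0}(z)=\tfrac{z^2}{d}\prod_{i=1}^{d-2}(z-c_i)$. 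Differentiating this factorization and equating with $z\prod_i(z-c_i)$ yields, after cancellation, the Euler-type identity $(d-2)Q(z)=zQ'(z)$ for $Q(z):=\prod_{i=1}^{d-2}(z-c_i)$, whose only polynomial solutions are $Q(z)=Cz^{d-2}$. This forces $c_1=\cdots=c_{d-2}=0$, contradicting $c\ne 0$.

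Given this Nullstellensatz claim, the effective Hilbert Nullstellensatz provides an integer $N\ge d$ and polynomials $g_{k,i}\in K[c,a]$ with $x_k^N=\sum_i g_{k,i}f_i$, whence
\[
\max\{|c|,|a|\}^N\le \max_{k,i}|g_{k,i}(c,a)|\cdot\max_i|f_i(c,a)|.
\]
Since each $g_{k,i}$ has degree $N-d$, this yields a uniform constant $c_0>0$ such that $\max_i|P_{c,a}(c_i)|\ge c_0\max\{|c|,|a|\}^d$ for all $(c,a)$ (in the Archimedean case the simpler compactness argument on the unit sphere of $\C^{d-1}$ applies). Choosing $R_0$ with $c_0R_0^{d-1}\ge C$, where $C$ is the constant from Proposition~\ref{prop:classic-estim}(2), we have for $\max\{|c|,|a|\}\ge R_0$ that the critical value $P_{c,a}(c_*)$ attaining the maximum satisfies $|P_{c,a}(c_*)|>C\max\{1,|c|,|a|\}$, so Proposition~\ref{prop:classic-estim}(2) yields $d\cdot g_{P_{c,a}}(c_*)=g_{P_{c,a}}(P_{c,a}(c_*))\ge\log|P_{c,a}(c_*)|-\log 8$. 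Dividing by $d$ produces $G(P_{c,a})\ge\log^+\max\{|c|,|a|\}-(\log 8-\log c_0)/d$ on $\{\max\ge R_0\}$; on the bounded complementary region the estimate is trivial by continuity of $G$.

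For the sharper statement in the non-Archimedean case with residue characteristic $0$ or $\ge d+1$, Proposition~\ref{prop:classic-estim} gives $\theta=0$, $C=1$, and $\log|d|=0$, so the relevant inequalities become equalities. The $f_i$ have unit coefficients in $\cO_K$, and a direct ultrametric analysis then produces $c_0=1$, so the two-sided bound collapses to the exact identity $G(P_{c,a})=\log^+\max\{|c|,|a|\}$, i.e.\ $C=0$. The principal technical obstacle in the whole argument is the verification of the Nullstellensatz claim, whose success relies crucially on the specific parameterization~\eqref{eq:defpoly} that forces the critical set of $P_{c,a}$ to coincide with the coordinates $\{0,c_1,\ldots,c_{d-2}\}$.
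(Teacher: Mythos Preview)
Your approach is correct and essentially the same as the paper's: the paper likewise extracts the lower bound from a Nullstellensatz lemma (Lemma~\ref{lm:nullstellesatzt}) showing $\max_j|P_{c,a}(c_j)|\ge B\max\{|c|,|a|\}^d$, then feeds this into Proposition~\ref{prop:classic-estim}(2); the only structural difference is that the paper handles the Archimedean lower bound via the estimate $\max\{g_{P_{c,a}}(z),G(P_{c,a})\}\ge\log|z-\delta|-\log 4$ from \cite{favredujardin} rather than through the Nullstellensatz uniformly as you do. One small imprecision in your Nullstellensatz step: the assertion that $0$ is a \emph{simple} root of $P'_{c,0}$ fails when some $c_i=0$, so the factorization $P_{c,0}(z)=\tfrac{z^2}{d}\prod_i(z-c_i)$ is not justified in that case --- the clean fix is a direct multiplicity count (if the distinct values among $\{0,c_1,\ldots,c_{d-2}\}$ are $z_1,\ldots,z_r$ with multiplicities $m_k$, then each $z_k$ is a root of $P_{c,0}$ of order $\ge m_k+1$, so $d\ge\sum_k(m_k+1)=(d-1)+r$ forces $r=1$).
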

 
 \begin{proof}[Proof of Proposition~\ref{prop:classic-estim}]
Let us argue first in the non-Archimedean case. Let $p$ be the residue characteristic of $K$. By the strong triangle inequality, there exists $C_p\geq1$ depending only on $p$ with $C_p=1$ when $p=0$ or $p\geq d+1$, such that
\[
|P_{c,a}(z)| \le C_p \max \{1, |z|, |c|, |a| \}^d
\]
hence $g_{P_{c,a}}(z) \le \log^+ \max \{ |z|, |c|, |a| \} + \frac1{d-1} \log C_p$ by induction.
On the other hand, again by the strong triangle inequality, we have
\[
|P_{c,a}(z)| = \left|\frac{z^d}{d}\right|\geq |z|^{d},
\]
when $|z| > C'_p\max \{ 1, |c|, |a|\}$ where $C_p'=1$ when $p=0$ or $p\geq d+1$.
Again, an easy induction gives 
\[g_{P_{c,a}}(z)=-\frac{1}{d-1}\log|d|+\log^{+}|z|,\]
when $|z| > C'_p\max \{ 1, |c|, |a|\}$.

\medskip

For the Archimedean case, we refer to \cite[\S6.1]{favredujardin} for details. By the triangle inequality and the maximum principle, there exists $A\geq1$ which depends only on $d$ such that
\[|P_{c,a}(z)|\leq A\cdot\max\{1,|z|,|c|,|a|\}^d\]
and as above, we find $g_{P_{c,a}}(z) \le \log^+ \max \{ |z|, |c|, |a| \} + \frac1{d-1} \log A$ by induction. In particular, $G(P_{c,a})\leq \log^+ \max \{ |c|, |a| \} + \frac1{d-1} \log A$. For the second inequality, we use \cite[Lemma 6.5]{favredujardin}:
\[\max\{g_{P_{c,a}}(z),G(P_{c,a})\}\ge \log|z-\delta|-\log 4\]
where $\delta=\sum_jc_j/(d-1)$. Let $\tilde A:=\max \{A^{1/(d-1)},2\}$, so that
\[\log |z-\delta|\geq \log|z|-\log2\]
if $|z|\ge \tilde A\cdot\max\{1,|c|,|a|\}$. The conclusion follows taking $C:=8\tilde A$.
\end{proof}

\begin{lemma}\label{lm:nullstellesatzt}
There exists a constant $B\leq1$ such that for all $(c,a)\in K^{d-1}$,
\begin{equation*}
\max_{0\leq j\leq d-2}|P_{c,a}(c_j)|\ge B\cdot\max\{|c|,|a|\}^d.
\end{equation*}
Moreover, when $K$ is non-Archimedean, $B$ depends only on the residual characteristic of $K$ and $B=1$ if it is $0$ or $\ge d+1$.
\end{lemma}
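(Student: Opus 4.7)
The strategy is to exploit homogeneity of the map
\[
\Phi : K^{d-1} \longrightarrow K^{d-1}, \qquad
\Phi(c,a) := \bigl(P_{c,a}(c_j)\bigr)_{0\le j\le d-2},
\]
together with the fact that it vanishes only at the origin. The rescaling formula
\(P_{\lambda c,\lambda a}(\lambda z) = \lambda^d P_{c,a}(z)\) (immediate from \eqref{eq:critical form} and the fact that $\sigma_k$ is homogeneous of degree $k$) combined with the observation that the critical points of $P_{\lambda c,\lambda a}$ are the $\lambda c_j$'s gives $\Phi(\lambda c,\lambda a) = \lambda^d \Phi(c,a)$. So the lemma is equivalent to the statement that $\max_j|\Phi_j|$ is bounded below by a positive constant $B$ on the ``unit sphere'' $\{\max\{|c|,|a|\}=1\}$.

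To prove $\Phi^{-1}(0)=\{0\}$ over any algebraically closed field of characteristic zero, first note that the critical point $c_{d-1}=0$ gives $P_{c,a}(0)=a^d$, so $\Phi(c,a)=0$ forces $a=0$. Then $P_{c,0}$ is a degree-$d$ polynomial whose $d-1$ critical points (counted with multiplicity) are all mapped to $0$, so every critical point is a root of $P_{c,0}$; a count of root multiplicities forces $P_{c,0}(z)=\tfrac{1}{d}(z-z_0)^d$. Combined with $P_{c,0}(0)=0$ this yields $P_{c,0}(z)=\tfrac{1}{d}z^d$, hence $\sigma_{d-j}(c)=0$ for all $j$, hence $c=0$ by the theory of symmetric functions.

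For the Archimedean case, $\Phi$ is continuous and does not vanish on the compact unit sphere of $\C^{d-1}$, so $B := \inf_{\max\{|c|,|a|\}=1}\max_j|\Phi_j(c,a)|$ is a positive constant (depending only on $d$), and homogeneity does the rest. In the non-Archimedean case, after rescaling we may assume $c,a\in K^{\circ}$ with $\max\{|c|,|a|\}=1$. When the residue characteristic $p$ is $0$ or $p\geq d+1$, the coefficients $\tfrac{1}{d},\tfrac{1}{d-1},\ldots,\tfrac{1}{2}$ of \eqref{eq:critical form} all lie in $(K^{\circ})^{\times}$, so reduction modulo $K^{\circ\circ}$ yields a polynomial $P_{\bar c,\bar a}\in\tilde{K}[z]$ of the same shape, with $(\bar c,\bar a)\neq 0$. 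Since $\tilde{K}$ has characteristic $0$ or $\geq d+1$ (in particular the above argument for $\Phi^{-1}(0)=\{0\}$ is valid), we get $\Phi(\bar c,\bar a)\neq 0$ in $\tilde{K}^{d-1}$, which means $\max_j|\Phi_j(c,a)|=1$ and $B=1$ works.

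The main obstacle is the remaining case of small residue characteristic $2\le p\le d$, where denominators in \eqref{eq:critical form} may fail to be units and naive reduction breaks. The plan here is to invoke an effective Nullstellensatz: over $\Z[1/d!]$ the coordinate ideal $I=(\Phi_0,\ldots,\Phi_{d-2})\subset \Z[1/d!][c,a]$ has radical containing each $c_j$ and $a$; clearing denominators produces integer identities
\[
N\cdot c_j^{M} \;=\; \sum_\ell h_{j,\ell}(c,a)\,\Phi_\ell(c,a), \qquad
N\cdot a^{M} \;=\; \sum_\ell k_{\ell}(c,a)\,\Phi_\ell(c,a),
\]
with $N,M$ and the $h_{j,\ell},k_\ell\in\Z[c,a]$ depending only on $d$. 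Evaluating at $(c,a)$ with $\max\{|c|,|a|\}=1$ and taking norms (using the ultrametric inequality and the fact that $|h_{j,\ell}(c,a)|,|k_\ell(c,a)|$ are bounded by constants depending only on $p$ and $d$) yields a lower bound on $\max_j|\Phi_j|$ of the form $B=|N|_v\cdot (\text{constant})$, which depends only on the residue characteristic $p$. Finally the bound $B\le 1$ can be arranged freely since we may always replace $B$ by $\min(B,1)$.
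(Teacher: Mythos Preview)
Your proof is correct. It differs from the paper's approach in organization: the paper gives a single elimination-theory argument over the ring $R=\Z[1/2,\ldots,1/d]$. Since the $P_{c,a}(c_j)$ are homogeneous of degree $d$ in $R[c,a]$ with only the origin as common zero, one writes $c_k^m=\sum_j Q_{j,k}(c,a)P_{c,a}(c_j)$ with $Q_{j,k}\in R[c,a]$ homogeneous of degree $m-d$, and takes $B^{-1}$ to be the maximum of the norms of the coefficients of the $Q_{j,k}$. This handles every case at once: when the residue characteristic is $0$ or $\ge d+1$, all elements of $R$ have norm $\le 1$, so $B=1$ falls out automatically.

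Your three-case split (compactness over $\C$, reduction for good residue characteristic, Nullstellensatz for bad residue characteristic) is more hands-on. The reduction argument for $B=1$ is direct and pleasant, and your explicit verification that $\Phi^{-1}(0)=\{0\}$ fills in something the paper merely asserts. But your Step~4 is essentially the paper's argument restricted to the bad-characteristic primes, so the overall route is longer without a genuine gain. Two minor points worth tightening: in Step~3 you invoke $\Phi^{-1}(0)=\{0\}$ over $\tilde K$ of characteristic $p\ge d+1$, but your Step~1 argument was stated only in characteristic zero---you should note that the root-multiplicity count survives because all multiplicities involved are $\le d<p$; and in Step~4 the phrase ``$|N|_v\cdot(\text{constant})$'' is vague---since the $h_{j,\ell}$ have integer coefficients, the ultrametric inequality on the closed unit ball gives $|h_{j,\ell}(c,a)|\le 1$ directly, so the constant is $1$ and simply $B=|N|_v$.
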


\begin{proof}
Let $I$ be the ideal generated by $(a^d,P_{c,a}(c_1),\ldots,P_{c,a}(c_{d-2}))$ in $K[c,a]$. Observe that the generators of $I$ have no common zero other than $(0,\ldots,0)$. Let $R:=\Z[1/2,\ldots,1/d]$. As $R$ is an integral domain and as $P_{c,a}(c_j)\in R[c,a]$ are homogeneous of degree $d$, a standard fact from elimination theory asserts that there exists $m\geq d$ and homogeneous polynomials $Q_{j,k}\in R[c,a]$ of degree $m-d$ such that for $1\leq k\leq d-2$,
 \[c_k^m=\sum_j Q_{j,k}(c,a)P_{c,a}(c_j).\]
 Let $A$ be the maximum of the norms of coefficients appearing in one of the $Q_{j,k}$'s. and let $B>0$ be such that $B^{-1}:=\max\{1,A\}$. Then  $|c_k|^m\leq B^{-1}\cdot \max\{|c|,|a|\}^{m-d}\cdot \max_{0\leq j\leq d-2}|P_{c,a}(c_j)|$. Since $a^d=P_{c,a}(c_0)$, this gives the lemma.
\end{proof}

 \begin{proof}[Proof of Proposition~\ref{prop:growth Green}]
 The inequality
 \[G(P_{c,a})\leq \log^+\max\{|c|,|a|\} + C\]
  is an immediate consequence of Proposition~\ref{prop:classic-estim} (1). We also see that $C=0$ when $K$ is non-Archimedean and its residue characteristic is $0$ or $\ge d+1$.

\medskip

 \par\noindent Assume now $K$ is non-Archimedean and its residual characteristic is $0$ or at least $d+1$. Assume first that $\max\{|c|,|a|\}\leq 1$. Then $G(P_{c,a})=0$, again by Proposition~\ref{prop:classic-estim} (1).
If $\max\{|c|,|a|\}>1$, then either there is an index $i$ such that $|c_i|=\max\{|c|,|a|\}$ and $g_{P_{c,a}}(c_i)\geq \log^+|c_i|=\log^+\max\{|c|,|a|\}$ by the second point of Proposition~\ref{prop:classic-estim}; or $|a|=\max\{|c|,|a|\}$ and we easily see that $|P_{c,a}^{n}(0)|=|a|^{d^{n}}$ for $n\geq1$. The conclusion follows.

Assume now the residual characteristic $p$ of $K$ satisfies $0<p\leq d$. Let $C\geq1$ be given by Proposition~\ref{prop:classic-estim}. When $|P_{c,a}(c_j)|\leq C\max\{1,|c|,|a|\}$ for all $j$, then Lemma \ref{lm:nullstellesatzt} yields $B\cdot \max\{|c|,|a|\}^{d-1}\leq C$, so that $G(P_{c,a})\geq0\geq \frac{1}{d-1}\log(B/C)+\log^+\max\{|c|,|a|\}$.
When there exists $j$ such that $|P_{c,a}(c_j)|>C\max\{1,|c|,|a|\}$, the second point of Proposition \ref{prop:classic-estim} and Lemma \ref{lm:nullstellesatzt} give
\begin{align*}
dG(P_{c,a}) & \geq dg_{P_{c,a}}(c_j)=g_{P_{c,a}}(P_{c,a}(c_j))\\
&\geq \log^{+}|P_{c,a}(c_j)|\geq d\log^{+}\max\{|c|,|a|\}+\log B
\end{align*}
and the conclusion follows.

To conclude, we assume $K$ is Archimedean. If $\max\{|c|,|a|\}\leq 2$, we have $G(P_{c,a})\leq \log 2+\theta$. We may thus assume $A:=\max\{|c|,|a|\}\geq 2$. As in the proof of Proposition \ref{prop:classic-estim}, let $\delta:=\frac{1}{d-1}\sum_jc_j$. If $|c_j|\leq A/2$ for all $j$, we also have $|\delta|\leq A/2$ and $A=|a|$. By \cite[Lemma 6.5]{favredujardin},
\[dG(P_{c,a})\geq \max \{g_{P_{c,a}}(a^d),G(P_{c,a})\}\geq \log|a^d-\delta|-\log 4\geq d\log A-\log8.\]
In the opposite case, either $|\delta|\geq A/2$ or $|c_j-\delta|\geq A/2$ and \cite[Lemma 6.5]{favredujardin} directly gives
$G(P_{c,a})\geq \log A-\log8$, as required.
 \end{proof}

\section{Examples}  \label{sec:exam-poly}

\subsection{Integrable polynomials}
Fix an integer $d\geq2$. The degree $d$ Chebychev polynomial is the unique polynomial $T_d\in\Z[z]$ satisfying
\[T_d(z+z^{-1})=z^d+z^{-d}\]
in the field $\Q(z)$. It is a monic and centered polynomial of degree $d$, and 
for all $d,k\geq2$, we have
\[T_d\circ T_k=T_{kd} \ \text{and} \ T_d(-z)=(-1)^dT_d(z).\]
In particular, $T_d^n=T_{d^n}$ for all $n\geq1$.

Observe that $\pi(z) = z+ z^{-1}$ defines a  Galois cover of degree $2$ from $\A^1\setminus\{0\}$ onto $\A^1$, 
with two ramified points at $\pm 1$, and that we have $ \pi (M_d(z)) = T_d(\pi(z))$ where $M_d(z) = z^d$. In particular
the critical points of $T_d$ are $0, \pm1$, and are pre-periodic. 

When $K=\C$, the Julia set of $T_d$ is the image under $\pi$ of the unit circle so that 
$K(T_d)=J(T_d)=[-2,2]$. The equilibrium measure $\mu_P$ is absolutely continuous with respect to the normalized Lebesgue measure on $[-2,2]$. 

When $K$ is non-Archimedean, then the filled-in Julia set of $T_d$ coincides with the closed unit ball so that its Julia set is reduced to a singleton, namely to the 
Gau{\ss} point. 

\begin{definition}
Let $K$ be a field of characterisic $0$ and let $P\in K[z]$ be a degree $d\geq2$ polynomial. We say $P$ is integrable\index{polynomial!integrable} if $P$ is affine conjugated (in a finite extension of $K$) to either  $M_d$ or  $\pm T_d$.
\end{definition}

This terminology is taken from~\cite{MR1098340} and inspired from hamiltonian dynamics. 

Observe that when $d$ is odd, then $- T_d$ is conjugated to $T_d$. There are many characterizations of integrable polynomials in algebraic or analytic terms. 
Over the complex numbers, we have the following famous theorem of Zdunik~\cite{zdunik}.\index{theorem!Zdunik} 
\begin{theorem}\label{th:zdunik}
Let $P$ be a complex polynomial of degre at least $2$. 
If its equilibrium measure is absolutely continuous with respect to the Hausdorff measure of its Julia set, then $P$ is integrable.
In particular a complex polynomial $P$ is integrable iff its Julia set is smooth near one of its point. 
\end{theorem}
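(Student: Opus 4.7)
The plan is to combine the thermodynamic formalism for polynomial iteration with a cohomological rigidity argument, following Zdunik's original approach. First I would invoke the dimension formula of Ma\~n\'e and Przytycki,
\[ \dim_H(\mu_P) = \frac{\log d}{\lyap(P)}, \]
valid for any complex polynomial $P$ of degree $d \ge 2$, where $\dim_H(\mu_P)$ is the Hausdorff dimension of the equilibrium measure and $\lyap(P) = \int \log|P'|\, d\mu_P$ its Lyapunov exponent. Setting $s := \dim_H(J(P))$ and assuming that $\mu_P$ is absolutely continuous with respect to $\mathcal{H}^s|_{J(P)}$, one obtains $\dim_H(\mu_P) = s$, and hence $\lyap(P) = s^{-1}\log d$.

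Second, the absolute continuity $\mu_P \ll \mathcal{H}^s$ yields the pointwise comparison $\mu_P(B(z,r)) \asymp r^s$ for $\mu_P$-almost every $z \in J(P)$ and all sufficiently small $r > 0$. Pulling this estimate back through univalent inverse branches of $P^n$ near a repelling periodic point $z_0$ of period $n$ with multiplier $\lambda$, using $P^*\mu_P = d\cdot \mu_P$ together with Koebe distortion, forces the rigid identity $|\lambda|^s = d^n$. Since repelling periodic orbits equidistribute with respect to $\mu_P$, a Livsic-type argument then upgrades this collection of periodic identities into a global cohomological relation: the cocycle $\log|P'|$ is cohomologous to the constant $s^{-1}\log d$ over the dynamical system $(J(P), P, \mu_P)$.

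The main obstacle is turning this cocycle rigidity into integrability. Following Zdunik, the coboundary equation forces the pressure function $t \mapsto \sP(-t\log|P'|)$ to be affine on an open interval, and a careful analysis of the orbifold associated to $P$ (in the spirit of Przytycki-Urba\'nski-Zdunik) shows that this linearity occurs exactly when $P$ has parabolic orbifold, i.e.\ when $P$ is affinely conjugate to $M_d$ or to $\pm T_d$. This step is the delicate one because one must exclude every non-integrable polynomial through a fine ergodic-theoretic analysis of $\mu_P$ together with the geometry of the Julia set, exploiting the real analyticity of the pressure function.

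For the ``In particular'' equivalence: the Julia sets of $M_d$ and $\pm T_d$ are respectively $S^1$ and $[-2,2]$, which are smooth. Conversely, if $J(P)$ is smooth near some point, then by a classical theorem of Fatou the Julia set is a real-analytic curve globally (a circle or an arc); the equilibrium measure then has a continuous density with respect to $\mathcal{H}^1|_{J(P)}$, and the main implication yields that $P$ is integrable.
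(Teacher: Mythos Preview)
The paper does not prove this theorem; it is stated as Zdunik's result with a citation to \cite{zdunik}, so there is no in-paper proof to compare against. Your sketch therefore has to be judged as an outline of Zdunik's original argument.

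The overall architecture you describe is correct: the dichotomy between $\dim_H(\mu_P) < \dim_H(J(P))$ and parabolic orbifold, reached via a cohomological/Liv\v{s}ic analysis of the cocycle $\log|P'|$, is indeed the heart of Zdunik's proof. However, your second paragraph contains a genuine gap. Absolute continuity $\mu_P \ll \mathcal{H}^s|_{J(P)}$ alone does \emph{not} give the two-sided estimate $\mu_P(B(z,r)) \asymp r^s$; it only provides an $L^1$ density, which could vanish or blow up on large sets. You cannot extract the rigid multiplier identity $|\lambda|^s = d^n$ from this hypothesis in the way you describe. Zdunik's actual route is the reverse: she first proves the dichotomy (either the cocycle is cohomologous to a constant, forcing parabolic orbifold, or the equilibrium measure is singular with $\dim_H(\mu_P)$ strictly below $\dim_H(J(P))$), and the absolute-continuity hypothesis then excludes the singular branch. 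Your derivation of the multiplier identity from ball estimates is a shortcut that does not go through as written.

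For the ``In particular'' clause, your route is correct but circuitous. Once you invoke Fatou's theorem to conclude that $J(P)$ is globally a circle or a segment, you are already done: the classification of polynomials with such Julia sets (stated in the paper as Corollary~\ref{cor:zdunik}) gives integrability directly, without passing back through absolute continuity and the main implication.
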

One can in fact push a little further the analysis and get
\begin{corollary}\label{cor:zdunik}
Let $P$ be a complex polynomial of degre at least $2$. 
Then $P$ is integrable iff $J(P)$ is either a circle or a closed segment.
\end{corollary}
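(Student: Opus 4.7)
The plan is to treat the two implications separately, each reducing quickly to material already in hand.

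For the direction ``$P$ integrable $\Rightarrow$ $J(P)$ is a circle or a segment'', I would compute the Julia sets of the three model polynomials directly. For $M_d$ the filled-in Julia set is the closed unit disk, so $J(M_d)=S^{1}$. For $T_d$ I would use the semiconjugacy $\pi(z)=z+z^{-1}\colon\C^{*}\to\C$ introduced in \S\ref{sec:exam-poly}: since $\pi\circ M_d=T_d\circ\pi$ and $\pi(\overline{\D})=[-2,2]$ has empty interior in $\C$, one gets $K(T_d)=\pi(K(M_d))=[-2,2]$ and hence $J(T_d)=[-2,2]$. Because $(-T_d)^{2}=T_d^{2}=T_{d^{2}}$, we also have $J(-T_d)=[-2,2]$. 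Affine conjugations $\phi(z)=\alpha z+\beta$ carry circles to circles and line segments to line segments, so $J(P)$ has the corresponding shape whenever $P$ is affinely conjugate to $M_d$ or to $\pm T_d$.

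For the converse ``$J(P)$ a circle or a segment $\Rightarrow$ $P$ integrable'', the plan is to invoke Theorem~\ref{th:zdunik} directly. A circle is a smooth real-analytic curve at every one of its points, and a closed segment is a smooth real-analytic arc at each of its interior points. In either case there exists $z_{0}\in J(P)$ near which $J(P)$ is smooth, so the second assertion in Theorem~\ref{th:zdunik} forces $P$ to be integrable.

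There is essentially no obstacle: the corollary is a repackaging of Theorem~\ref{th:zdunik} together with the explicit identification of the Julia sets of $M_d$ and $\pm T_d$. The only point deserving a line of verification is that $J(\pm T_d)$ is exactly the segment $[-2,2]$ and not a more complicated set, which is handled cleanly by the semiconjugacy $\pi$.
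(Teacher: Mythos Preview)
Your overall strategy is correct and is exactly the intended one: the paper does not spell out a proof of this corollary, but presents it as an immediate consequence of Theorem~\ref{th:zdunik} together with the explicit Julia sets computed in \S\ref{sec:exam-poly}. Both directions of your argument are sound in outline.

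There is, however, a slip in your computation for $T_d$. You write $\pi(\overline{\D})=[-2,2]$, but this is false: the map $\pi(z)=z+z^{-1}$ is only defined on $\C^{*}$, and $\pi(\overline{\D}\setminus\{0\})=\C$ since $|\pi(z)|\to\infty$ as $z\to 0$. What is true (and what the paper already records in \S\ref{sec:exam-poly}) is that $\pi(S^{1})=[-2,2]$, i.e.\ $\pi$ maps the \emph{unit circle}, not the closed disk, onto the segment. The clean argument is: for $w=\pi(z)$ with $|z|=1$ one has $T_d^{n}(w)=\pi(z^{d^{n}})\in[-2,2]$, while for $w=\pi(z)$ with $|z|>1$ one has $|T_d^{n}(w)|=|z^{d^{n}}+z^{-d^{n}}|\to\infty$; hence $K(T_d)=J(T_d)=[-2,2]$. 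Alternatively, simply cite the identification $J(T_d)=[-2,2]$ already stated in \S\ref{sec:exam-poly} and avoid recomputing it.
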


We refer to Theorem~\ref{thm:SS} below for a characterization of integrable complex polynomials in terms of the symmetries of their Julia set. 

\subsection{Potential good reduction}

We assume here that the metrized field $(K,|\cdot|)$ is non-Archimedean. Recall that given $z\in K^{\circ}$ we let  $\tilde{z}\in \tilde{K}$ be the image of $z$ under the reduction map $K^{\circ} \to \tilde{K}$.

Fix $d\geq2$ and pick any polynomial $P$ of degree $d$ with coefficients in $K^{\circ}$. Then $P$ induces a polynomial
\[\tilde{P}\colon\tilde{K}\to \tilde{K}\]
by taking the residue class of its coefficients. We say that $P$ has \emph{good reduction} if $\deg(\tilde{P})=\deg(P)$.
More generally, we say that $P$ has potential good reduction\index{polynomial!with potential good reduction} if there exists a finite extension $L/K$ and an affine transformation $\phi(z)=az+b$ with $a,b\in L$ such that $Q:=\phi^{-1}\circ P\circ \phi\in L^\circ[z]$ has good reduction.

Observe that when $P$ has good reduction, then we have $g_P(z)=\log^+|z|$ so that the filled-in Julia set of $P$ is the closed unit ball of $(K,|\cdot|)$ and 
$J(P)$ is reduced to the Gau{\ss} point. 

\begin{proposition}
Let $P\in K[T]$ be any polynomial of degree $d\ge2$. The following are equivalent.
\begin{enumerate}
\item
$P$ has potential good reduction; 
\item
the Julia set of $P$ is reduced to a point;
\item
the filled-in Julia set is a closed ball. 
\end{enumerate}
\end{proposition}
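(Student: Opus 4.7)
The plan is to establish the cycle $(1)\Rightarrow(3)\Rightarrow(2)\Rightarrow(1)$. The first two implications are immediate. For $(1)\Rightarrow(3)$, if $Q=\phi^{-1}\circ P\circ\phi$ has good reduction (after a finite extension of~$K$) then its leading coefficient has norm~$1$ and all others lie in $K^\circ$, so $K(Q)=\bar{B}(0,1)$ by direct computation and $K(P)=\phi(\bar{B}(0,1))$ is a closed ball. For $(3)\Rightarrow(2)$, a closed ball in $\A^{1,\an}_K$ has a unique boundary point, namely the type~$2$ or type~$3$ point it determines, so $J(P)=\partial K(P)$ is a singleton.

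The core of the proof is $(2)\Rightarrow(1)$. Assume $J(P)=\{x\}$; by total invariance of the Julia set, $x$ is fixed by~$P$. Since Proposition~\ref{prop:non-rigid fixed} asserts that type~$3$ and type~$4$ fixed points of~$P$ lie in the Fatou set, $x$ is either of type~$1$ or of type~$2$. To rule out type~$1$, note that if $x$ were rigid, the space $\A^{1,\an}_K\setminus\{x\}$ would be connected, and $F(P)=\Omega(P)\sqcup(K(P)\setminus\{x\})$ together with $\Omega(P)\ne\varnothing$ would force $K(P)=\{x\}$; but then the $d$ fixed points of~$P$ in $\A^1(K)$, all of which lie in $K(P)$, would coincide with~$x$, giving $P(z)-z=a_0(z-x)^d$, and a direct estimate shows that $\bar{B}(x,|a_0|^{-1/(d-1)})$ is $P$-invariant and strictly larger than $\{x\}$, a contradiction. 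Hence $x$ is of type~$2$. After an affine change of coordinates $\phi\in\Aff(K)$ moving $x$ to the Gau{\ss} point $\xg$, the polynomial $Q=\phi^{-1}\circ P\circ\phi=\sum_{i=0}^{d}a_iz^{d-i}$ fixes~$\xg$, so $\max_i|a_i|=1$ and every $a_i$ lies in $K^\circ$.

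Next, the connected components of $\A^{1,\an}_K\setminus\{\xg\}$ are the branch at infinity $\{|T|>1\}$ and the open Berkovich balls $B(a,1)$ indexed by residue classes $a\in\widetilde{K}$. Since $\Omega(Q)$ is the (connected) Fatou component containing $\infty$ and is separated from every other branch by $\xg$, one has $\Omega(Q)=\{|T|>1\}$ and therefore $K(Q)=\bar{B}(0,1)$. Suppose now for contradiction that $|a_0|<1$, and let $j^{\ast}$ be the smallest index with $|a_{j^{\ast}}|=1$; then $j^{\ast}\ge 1$ by assumption and $j^{\ast}\le d-2$ since Proposition~\ref{prop:non-rigid fixed} gives $\deg\widetilde{Q}\ge 2$. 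Writing $z_1,\dots,z_d\in K$ for the roots of~$Q$, the $j^{\ast}$-th elementary symmetric function satisfies
\[
|e_{j^{\ast}}(z_1,\dots,z_d)|=\frac{|a_{j^{\ast}}|}{|a_0|}=\frac{1}{|a_0|}>1,
\]
so by the ultrametric triangle inequality some root $z_i$ must have $|z_i|>1$. But $Q(z_i)=0\in K(Q)$ and $K(Q)$ is totally invariant, so $z_i\in K(Q)=\bar{B}(0,1)$ and $|z_i|\le 1$, a contradiction. This forces $|a_0|=1$, i.e., $\widetilde{Q}$ has degree~$d$, and $P$ has potential good reduction.

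The step I expect to require most care is the topological identification $K(Q)=\bar{B}(0,1)$, which uses the tree structure of $\A^{1,\an}_K$ to decompose the Fatou set into the branch at infinity together with the branches at $\xg$ indexed by residue classes; the concluding elementary-symmetric-function estimate is a Newton-polygon computation in disguise and provides the key arithmetic input.
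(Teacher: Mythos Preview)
Your proof is correct. It differs from the paper's in two places worth noting. First, the paper closes the cycle via $(3)\Rightarrow(1)$ rather than $(2)\Rightarrow(1)$: assuming $K(P)$ is a closed ball, its unique boundary point $x$ is automatically of type~$2$ or type~$3$, so no separate argument is needed to exclude the rigid case. You instead work from $J(P)=\{x\}$ directly and eliminate type~$1$ via your fixed-point count and the explicit invariant ball for $P(z)-z=a_0(z-x)^d$; this is correct but could be shortened by simply invoking $(3)$, which you have already established. Second, for the concluding step the paper uses total invariance: since $J(Q)=\{\xg\}$ is totally invariant, $Q^{-1}(\xg)=\{\xg\}$, and the (standard) fact that the local degree of $Q$ at $\xg$ equals $\deg\widetilde Q$ then forces $\deg\widetilde Q=d$. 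Your route---identifying $K(Q)=\bar B(0,1)$ explicitly from the tree structure, then using the elementary symmetric functions of the roots of $Q$ to locate a root of norm $>1$ when $|a_0|<1$---is more hands-on and avoids the local-degree formalism on Berkovich space. Both arguments are valid; yours is more self-contained, while the paper's is shorter once one grants that total invariance of the Gauss point implies good reduction.
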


Several other characterizations of potential good reduction polynomials are given in~\cite[Th\'eor\`eme~E]{FRL-ergodique}.

\begin{proof}
The implications $(1) \Rightarrow (2) \Rightarrow (3)$ are easy. When the filled-in Julia set is a closed ball, then the Julia set is reduced to a singleton $\{x\}$. 
The point $x$ is periodic of type 2 or 3 and belongs to the Julia set. It follows that $x$ is a type 2 repelling point, and up to conjugacy we may suppose that
it is the Gau{\ss} point. Any polynomial for which the Gau{\ss} point is totally invariant has coefficients in $K^\circ$ and has good reduction, which concludes the proof. 
\end{proof}

\subsection{PCF maps}
Suppose $K$ is a metrized field of characteristic $0$. 
A  polynomial $P$ of degree $d\ge2$ is said to be post-critically finite (PCF) when all its critical points have a finite orbit. \index{polynomial!post-critically finite (PCF)}

Integrable maps are PCF. We shall see that any PCF map is conjugated to a polynomial with coefficients in $\bar{\Q}$ (see Corollary~\ref{cor:PCF} below), and 
that the set of PCF maps forms a set of bounded height in the parameter space. 

When $K = \C$ and all critical points are periodic then the dynamics of $P$ on its Julia set is hyperbolic in the sense that it expands some Riemannian metric, see~\cite[\S V.2]{carleson-gamelin}. When $P$ is PCF but none of its critical point is periodic, then we say that $P$ is strictly PCF (these polynomials are sometimes called Misiurewicz). 

When $K$ is non-Archimedean, then any PCF polynomial has good reduction since all its critical points have a bounded orbit.


\section{B\"ottcher coordinates \& Green functions}\label{sec:bottcher}

\subsection{Expansion of the B\"ottcher coordinate}
In this section, $R$ is any integral domain whose fraction field $K$ has characteristic zero, and $P\in R[z]$ is a polynomial  of degree $d \ge2$ given by
\[P(z)=A z^d+a_{1} z^{d-1} + a_{2}z^{d-2}+\cdots+a_d,\]
with $a_i, A\in R^{d+1}$. We fix any element  $\alpha$ of an algebraic closure of $K$ such that $\alpha^{d-1} = A$. 

The following result holds, see~\cite{specialcubic} in degree $3$, \cite[\S 5.5]{BD} or~\cite[\S 6]{boundedheight}.

\begin{propdef}\label{prop:bottcher}
There exists a unique formal Laurent series $\varphi_P$  in the variable $z^{-1}$ of the form 
\begin{equation}\label{eq-bott}
\varphi_P(z)=\alpha \left(z+ \frac{a_{1}}{dA}\right) +\sum_{j\geq1}\alpha_j z^{-j} \in \Z\left[\frac{1}{dA},\alpha, a_1, \ldots, a_d\right](\!(z^{-1})\!)
\end{equation}
such that
\begin{equation}
\varphi_P\circ P=(\varphi_P)^d~.\label{eq:bottcher}
\end{equation}
It is called the B\"ottcher coordinate\index{B\"ottcher coordinate} of $P$ at infinity.
\end{propdef}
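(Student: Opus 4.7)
The plan is to construct $\varphi_P$ in two stages: first normalize $P$ to a monic centered polynomial by an affine conjugacy, then build the Böttcher coordinate for the normalized polynomial by solving the functional equation recursively coefficient by coefficient.

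First I would set $\psi(z) := \alpha\!\left(z + \tfrac{a_1}{dA}\right)$ and form $Q := \psi \circ P \circ \psi^{-1}$. A direct computation shows that $Q(w) = w^d + c_2 w^{d-2} + \cdots + c_d$, i.e.\ $Q$ is monic and centered: monicity follows from $\alpha^{d-1}=A$, and the $w^{d-1}$ coefficient vanishes precisely because $\psi$ was translated by $\alpha a_1/(dA)$. Set $R' := \Z[1/(dA),\alpha,a_1,\ldots,a_d]$. The identity $1/d = \alpha^{d-1}/(dA)$ shows $1/d \in R'$, and $\alpha^{-1} = d\,\alpha^{d-2}/(dA)$ shows $\alpha^{-1} \in R'$; combined with $\psi \in R'[z]$ these give $c_2,\ldots,c_d \in R'$.

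Next I would construct a formal Laurent series $\tilde\varphi(w) = w + \sum_{j\ge 1}\tilde\alpha_j w^{-j}$ solving $\tilde\varphi \circ Q = \tilde\varphi^{d}$. Comparing coefficients of $w^{d-k}$ for $k\ge 2$: on the one hand $\tilde\varphi(w)^d = w^d + \sum_{m\ge 1}\binom{d}{m} w^{d-m}\bigl(\sum_{j\ge 1}\tilde\alpha_j w^{-j}\bigr)^m$, whose coefficient of $w^{d-k}$ equals $d\,\tilde\alpha_{k-1} + F_k(\tilde\alpha_1,\ldots,\tilde\alpha_{k-2})$ for a polynomial $F_k$ with coefficients in $\Z$; on the other hand $\tilde\varphi(Q(w)) = Q(w) + \sum_{j\ge 1}\tilde\alpha_j Q(w)^{-j}$, whose coefficient of $w^{d-k}$ equals $c_k$ (with $c_k = 0$ for $k>d$) plus a polynomial expression $G_k$ in $c_2,\ldots,c_d$ and in those $\tilde\alpha_j$ with $j \le (k-d)/d < k-1$. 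The resulting equation
\[
d\,\tilde\alpha_{k-1} \;=\; c_k \;+\; G_k \;-\; F_k
\]
uniquely determines $\tilde\alpha_{k-1}$ and, by induction on $k$, produces coefficients lying in $R'[1/d] = R'$. This step also yields the uniqueness of $\tilde\varphi$.

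Finally I would set $\varphi_P := \tilde\varphi \circ \psi$. By construction $\varphi_P \circ P = \tilde\varphi \circ Q \circ \psi = (\tilde\varphi \circ \psi)^d = \varphi_P^d$. Substituting $\psi(z) = \alpha z + \alpha a_1/(dA)$ into $\tilde\varphi$ and expanding each $\psi(z)^{-j}$ as a geometric series in $z^{-1}$ produces the required normal form $\varphi_P(z) = \alpha\!\left(z+\tfrac{a_1}{dA}\right)+\sum_{j\ge 1}\alpha_j z^{-j}$; since $\alpha^{-1}\in R'$ these coefficients $\alpha_j$ still lie in $R'$. Uniqueness of $\varphi_P$ with this normalization follows from the uniqueness of $\tilde\varphi$: any other solution has the same leading terms and therefore induces via $\psi^{-1}$ another Böttcher coordinate for $Q$, which must coincide with $\tilde\varphi$.

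The one delicate point — and the only real obstacle — is the integrality claim. The recursion in step two visibly introduces no denominator other than $d$, but one must verify that the passage from $\tilde\varphi$ back to $\varphi_P$ does not force $\alpha^{-1}$ or $A^{-1}$ into the coefficients without compensation. The identities $\alpha^{-1} = d\,\alpha^{d-2}/(dA)$ and $1/A = d/(dA)$ take care of this, showing that everything stays inside the explicit ring $\Z[1/(dA),\alpha,a_1,\ldots,a_d]$ asserted in the statement.
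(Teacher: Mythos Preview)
Your argument is correct. The paper, by contrast, works directly with the ansatz $\varphi_P(z)=\alpha(z+a_1/(dA))+\sum_{j\ge1}\alpha_j z^{-j}$: it expands both $\varphi_P\circ P$ and $\varphi_P^d$ and equates the coefficients of $z^{d-2},z^{d-3},\ldots$ one at a time, obtaining explicit recursions of the form $dA\,\alpha_s = -Q_{d-1-s}(\alpha_1,\ldots,\alpha_{s-1}) + (\text{correction})$ which visibly live in $\Z[1/(dA),\alpha,a_1,\ldots,a_d]$. Your two-step approach---first conjugating by $\psi$ to reduce to a monic centered $Q$, running the recursion there, and composing back---is cleaner conceptually and makes the division by $d$ the only denominator one has to track; the observation that $1/d=\alpha^{d-1}/(dA)$ and $\alpha^{-1}=d\alpha^{d-2}/(dA)$ lie in the ring is exactly what closes the integrality gap. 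The trade-off is that the paper's direct computation yields explicit formulas for the $\alpha_j$ that are immediately reused in the next result (Proposition~\ref{prop:expan bottcher}), where one needs sharp control on the weighted degree of $\alpha_j$ in the $a_i$ and on the power of $2dA$ clearing denominators; recovering that information from your $\tilde\alpha_j$ via the substitution $w=\psi(z)$ would require redoing essentially the same bookkeeping.
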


\begin{proof}
We look for coefficients $\alpha_j \in R[\frac{1}{dA},\alpha]$ such that the power series
\[ \varphi (z)= \alpha \left(z+ \frac{a_{1}}{dA}\right) +\sum_{j\geq1}\alpha_j z^{-j}\]
solves the equation~\eqref{eq:bottcher}. Observe that 
\begin{align*}
\varphi_P\circ P(z)
&
= \alpha \left( P(z) + \frac{a_{1}}{dA}\right) + \sum_{j \ge1} \frac{\alpha_j}{(Az^d)^j} \left(1 + \frac{a_{1}}{Az} + \ldots + \frac{a_d}{Az^d}\right)^{-j}\\
&
= \alpha \left( P(z) + \frac{a_{1}}{dA}\right) + \sum_{l \ge1} \frac1{z^l}\, \left(\sum_{l/d\ge j \ge 1} \alpha_j Q_{j,l}(a_1, \ldots, a_d)\right)
 \end{align*}
with $Q_{j,l}\in \Z[\frac1A,a_1, \ldots, a_d]$, and
\begin{align*}
\varphi_P(z)^d 
=&  \left( \alpha \left(z+ \frac{a_{1}}{dA}\right)+ \sum_{j\geq1}\alpha_j z^{-j} \right)^d  \\
=&
\alpha^d z^ d + \alpha a_1 z^{d-1} +  \left(dA  \alpha_{1}+   \frac{(d-1)\alpha a_1^2}{2dA}\right) z^{d-2} +  \\
& + \sum_{l \le d-3} z^l \left( dA \alpha_{d-1-l} + Q_l(\alpha_1, \ldots, \alpha_{d-2-l})\right) 
 \end{align*}
 where $Q_l$ is a polynomial having coefficients in $\Z[\frac1{dA},a_1,\alpha]$. The terms in $z^d$ and $z^{d-1}$ of both series agree by our choice of $\alpha$. 
Identifying the terms in $z^l$ successively for $l = d-2, d-3, \ldots, 1, 0, -1,\ldots$, 
 we see that the coefficients $\alpha_s$ are uniquely determined by the relations:
 \begin{align}
 dA  \alpha_{1}&= -   \frac{(d-1)\alpha a_1^2}{2dA}+ \alpha a_2  ; \label{eq010}
 \\
  dA \alpha_{s} &=  - Q_{d-1-s}(\alpha_1, \ldots, \alpha_{s-1}) + \alpha a_{s+1}; \label{eq020}
  \end{align}
 when $ 2\le s \le d-2$, 
\begin{equation}
dA \alpha_{d-1} =  - Q_0(\alpha_1, \ldots, \alpha_{d-2}) +  \alpha \left(a_d + \frac{a_1}{dA}\right); \label{eq021}
\end{equation}  and
\begin{equation}
 dA \alpha_{s} =  - Q_{d-1-s}(\alpha_1, \ldots, \alpha_{s-1}) +  \sum_{j=1}^{(s-d+1)/d} \alpha_j Q_{j,s-d+1}(a_1, \ldots, a_d) \label{eq030}
\end{equation}
for all $s \ge d$. 
This shows that for all $s\ge 1$ the coefficient $\alpha_s$ can be expressed as a polynomial in the variables $a_1, \ldots, a_d$ with coefficients in $\Z[\frac1{dA},\alpha]$. 
\end{proof}

We shall need some precise informations on the dependence of the coefficients of the B\"ottcher coordinates in terms of the coefficients of the polynomials. Compare with~\cite[Lemma 2.2]{specialcubic} and \cite[Theorem 6.5]{boundedheight}.
We use the same notation as in the previous proposition. 

Define the weighted degree  of a polynomial $P= \sum  c_Ia^I$ in the variables $a_1, \ldots, a_d$ as 
$\wdeg(P):= \min \{ \sum j i_j, \, c_I \neq 0 \}$
so that $\wdeg(a_i) =i$.

\begin{proposition}\label{prop:expan bottcher}
For any $k\geq1$, one has the following expansion
\begin{equation}\label{eq008}
\left(\varphi_P(z)\right)^k=\hat{P}_{k}(z)+\sum_{j=1}^{+\infty}\frac{\alpha_{k,j}}{z^j},
\end{equation}
where  $\hat{P}_k$ is a polynomial of degree $k$ with leading coefficient $\alpha^k$ such that  \[(2dA)^{2k}\hat{P}_k\in \Z[\alpha,a_1,\ldots,a_{d}][z]~;\] 
and for any $k,j\geq1$,  \[(2dA)^{2(k+j-1)} \alpha_{k,j} \in \Z[\alpha, a_1,\ldots,a_{d}]\] is a polynomial of degree $\le  d (k-1) + (d-1) j$ in $\alpha$, and  of weighted degree  $\le k+j$ in the variables $a_i$. 
\end{proposition}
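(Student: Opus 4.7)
I will prove the expansion \eqref{eq008} by induction on $k \ge 1$. For the base case $k=1$, one has $\hat{P}_1(z) = \alpha(z + a_1/(dA))$ and $\alpha_{1,j} = \alpha_j$ for $j\ge 1$. The claimed integrality and degree bounds follow from a straightforward secondary induction on $j$, exploiting the recurrences \eqref{eq010}, \eqref{eq020}, \eqref{eq021}, \eqref{eq030} derived in the proof of Proposition-Definition~\ref{prop:bottcher}: each of these expresses $dA\cdot\alpha_s$ as a polynomial in $\alpha, a_1, \ldots, a_d$ and in $\alpha_1, \ldots, \alpha_{s-1}$ with denominators that are bounded powers of $(dA)$, and multiplying through by a suitable power of $(2dA)$ clears them.

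For the inductive step $k-1 \Rightarrow k$ with $k \ge 2$, I will use the factorization $\varphi_P^k = \varphi_P \cdot \varphi_P^{k-1}$. Writing $\varphi_P = \hat{P}_1 + \Phi$ and $\varphi_P^{k-1} = \hat{P}_{k-1} + \Psi_{k-1}$, with $\Phi,\Psi_{k-1}$ supported in strictly negative powers of $z$, the expansion
\[
\varphi_P^k = \hat{P}_{k-1}\hat{P}_1 + \hat{P}_{k-1}\Phi + \Psi_{k-1}\hat{P}_1 + \Psi_{k-1}\Phi
\]
determines $\hat{P}_k$ as the polynomial part in $z$ and $\alpha_{k,j}$ as the coefficient of $z^{-j}$. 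The leading coefficient is immediately $\alpha^{k-1}\cdot\alpha=\alpha^k$.

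To establish the integrality and degree bounds, a typical monomial in the multinomial expansion of $\varphi_P^k=\bigl(\alpha z+\tfrac{\alpha a_1}{dA}+\sum_{i\ge 1}\alpha_i z^{-i}\bigr)^k$ has the form
\[
\binom{k}{n_0,n_1,n_2,\ldots}\,\alpha^{n_0}\Bigl(\tfrac{\alpha a_1}{dA}\Bigr)^{n_1}\prod_{i\ge 1}\alpha_i^{n_{i+1}}\cdot z^{n_0 - s},
\]
with $s := \sum_{i\ge 1} i\,n_{i+1}$ and $\sum_i n_i = k$. Invoking the inductive bound $(2dA)^{2i}\alpha_i\in \Z[\alpha,a_1,\ldots,a_d]$, the total power of $(2dA)$ required in the denominator is at most $n_1 + 2s$. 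A direct manipulation based on $n_1 = k - n_0 - \sum_{i\ge 1}n_{i+1} \ge 0$ and $s \ge \sum_{i\ge 1} n_{i+1}$ gives $n_1+2s\le 2k$ whenever $n_0-s\ge 0$ (the contribution to $\hat{P}_k$), and $n_1+2s\le 2(k+j-1)$ when $n_0-s=-j$ with $j\ge 1$ (the sharper bound uses $\sum_{i\ge 1}n_{i+1}\ge 1$, forced by $n_0 - s < 0$). The $\alpha$-degree bound $d(k-1)+(d-1)j$ and the weighted-degree bound $k+j$ in $(a_1,\ldots,a_d)$ then propagate by additivity, the latter reducing, at the level of one monomial, to the identity $n_1+s+\sum_{i\ge 1}n_{i+1}=k+j$.

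The main obstacle is the delicate combinatorial accounting of $(2dA)$-powers: a naive, term-by-term estimate on the cross-terms in $\hat{P}_{k-1}\Phi$ (where each $\alpha_i$ brings in $(2dA)^{2i}$ in the denominator) would exceed $(2dA)^{2k}$ for $\hat{P}_k$. The optimal bound is recovered only after simultaneously exploiting $n_1\ge 0$, the inequality $s\ge \sum_{i\ge 1} n_{i+1}$, and the integrality of the multinomial coefficients (used, for instance, to absorb the apparent denominator $6$ in $\binom{d}{3}$ into $d(d-1)(d-2)$).
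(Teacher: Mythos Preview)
Your proposal is correct and follows essentially the same route as the paper. Both arguments first establish the $k=1$ case (the paper states this as a ``Claim'' comprising \eqref{eq090}, \eqref{eq09}, \eqref{eq10}) by induction on $j$ through the recurrences \eqref{eq010}--\eqref{eq030}, and then handle general $k$ by expanding $\varphi_P^k$ multinomially into monomials of the form $\alpha^{\kappa}(\alpha a_1/(dA))^{\tau}\alpha_{j_1}\cdots\alpha_{j_\mu}$ with $\kappa+\tau+\mu=k$; your indices $(n_0,n_1,n_{i+1},s)$ are exactly the paper's $(\kappa,\tau,\cdot,j_1+\cdots+j_\mu)$, and your key inequalities $n_1+2s\le 2k$ and $n_1+2s\le 2(k+j-1)$ (the latter using $\mu\ge 1$) are the paper's computations verbatim.

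Two remarks on presentation. First, your framing as ``induction on $k$'' via $\varphi_P^k=\varphi_P\cdot\varphi_P^{k-1}$ is superfluous: you never invoke the hypothesis for $k-1$, and all your bounds are read off the multinomial expansion using only the $k=1$ data, just as the paper does. Second, the base case is in fact the substantive part of the proof (the paper devotes most of its space to it), so calling it ``straightforward'' undersells the work; and your closing comment about absorbing a denominator $6$ from $\binom{d}{3}$ is a red herring---the multinomial coefficients $\binom{k}{n_0,n_1,\ldots}$ are integers and introduce no denominators.
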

 
 \begin{proof}
 Observe that $\alpha_j = \alpha_{1,j}$ for all $j$, and recall that $\alpha_j$ is a polynomial in the coefficients $\alpha, a_1, \ldots, a_d$. Denote by  $\deg_\alpha(\alpha_j)$
the degree of this polynomial in $\alpha$.  We claim that 
\begin{align}
& \deg_\alpha(\alpha_j) \le (d-1) j,\,  \label{eq090}
\\
&  \wdeg(\alpha_j) \le j+1, \text{ and } \label{eq09}
\\
&(2dA)^{2j} \alpha_j  \in \Z [\alpha, a_1, \ldots, a_d]~.\label{eq10}
\end{align} 
Grant this claim. The proof then goes as follows. 

Fix $k \in \N^*$. Then, we first have
\begin{align*}
\varphi^k(z)  = \left( \alpha \left(z+ \frac{a_{1}}{dA}\right) +\sum_{j\geq1}\alpha_j z^{-j}\right)^k
=& 
\left( \alpha \left(z+ \frac{a_{1}}{dA}\right) +\sum_{j=1}^k\alpha_j z^{-j}\right)^k + O\left(\frac1z\right)
\\=& \alpha^k z^k + \sum_{j=0}^{k-1} \beta_j z^j + O\left(\frac1z\right)
\end{align*}
where $\beta_j$ is a sum of terms of the form \[\alpha^\kappa \left(\frac{\alpha a_1}{dA}\right)^\tau \alpha_{j_1} \cdots \alpha_{j_\mu}\]
with $\kappa, \tau, j_1, \ldots, j_\mu \in\N$, $\kappa + \tau +\mu  = k$, and $\kappa - (j_1 + \cdots + j_\mu) = j$. 
Observe that $\tau + 2 (j_1 + \cdots + j_\mu) = \tau + 2\kappa -2j \le 2k$. 
By~\eqref{eq10}, we infer that $(2dA)^{2k} \beta_j \in \Z [\alpha, a_1, \ldots, a_d]$ as required.

Now we focus on the coefficient $\alpha_{k,j}$ in the power series expansion~\eqref{eq008}. 
As above $\alpha_{k,j}$ is a sum of terms of the form
 \[\alpha^\kappa \left(\frac{\alpha a_1}{dA}\right)^\tau \alpha_{j_1} \cdots \alpha_{j_\mu}\]
where $\mu \ge1$, $\kappa + \tau +\mu = k$, and $(j_1 + \cdots + j_\mu) - \kappa = j$.
Since $\tau + 2(j_1 + \cdots + j_\mu) = \tau + 2 ( \kappa +j) \le 2 (k+j-1)$,~\eqref{eq10}
implies $(dA)^{2 (k+j-1)} \alpha_{j,k} \in  \Z [\alpha, a_1, \ldots, a_d]$. 
We have
\[\kappa + \tau + (d-1) (j_1 + \cdots + j_\mu) \le k-1 + (d-1) (\kappa +j)\le d (k-1) + (d-1) j\]
hence $\deg_\alpha(\alpha_{j,k}) \le d (k-1) + (d-1) j$.
On the other hand 
$\tau + (j_1 + 1 + \cdots  + j_\mu + 1) = \tau + \kappa + j +\mu = k+j$, hence
the weighted degree of $\alpha_{j,k}$ as a polynomial in the variables $a_i$'s is at most $k+j$
by~\eqref{eq09}.   
\medskip

Let us now prove the claim. We proceed by induction building on the formulas defining $\alpha_j$ obtained in the proof of the previous proposition. 
The case $j=1$ follows from the equality $ 2(dA)^ 2 \alpha_1 =  - (d-1) \alpha a_1^2 + 2\alpha a_2$.
Assume the claim has been proved for all $j =0, \ldots , s-1$. 

We first observe  that the polynomial $Q_{d-1-s}$ is a sum of terms of the form
\[ 
\alpha^\kappa\left(\frac{\alpha a_1}{dA}\right)^\tau \alpha_{j_1} \cdots \alpha_{j_\mu}\]
with $\tau \in \N$, $d-2 \ge \kappa \in \N$, $\mu\ge1$, $j_1, \ldots, j_\mu \in \{ 1, \ldots, s-1\}$, 
 $\kappa + \tau +\mu =d$, and $\kappa - (j_1 + \cdots + j_\mu) = d-1-s$. 
 Since \[\tau + (j_1 + 1 + \cdots  + j_\mu + 1) = \tau + \kappa - d +1 +s + \mu = s+1 ,\] we get from the induction hypothesis
 that  $\wdeg(Q_{d-1-s}) \le s+1$. Similarly \[\tau + 2 (j_1 + \cdots  + j_\mu) = \tau + 2 (\kappa -d +1 +s) \le 2(s+1),\] hence
 $(2dA)^{2s}Q_{d-1-s} \in \Z[\alpha, a_1, \ldots, a_d]$.
 Finally, one has
 \[\kappa + \tau + (d-1) (j_1 + \cdots  + j_\mu) \le d-1 + (d-1) (\kappa +s -d +1) \le (d-1) ( 1 + s -1) = (d-1) s\]
 so that $\deg_\alpha(Q_{d-1-s}) \le (d-1)s$. 
  By~\eqref{eq020} and~\eqref{eq021}, this proves the induction step when $ s \le d-1$.

 For $s\ge d$, we also need to control the polynomials $Q_{j,s-d+1}$ with $j \le (s-d+1)/d$. 
The polynomial $Q_{j,s-d+1}$ is a sum of terms of the form
\[ 
\frac{1}{A^j} \frac{a_{j_1}}{A} \cdots \frac{a_{j_\mu}}{A}
\]
with $j_1, \ldots, j_\mu \ge1$, 
$dj + (j_1 + \ldots + j_\mu) = s - d +1$, and $\mu \le j$.
We have 
$s-j \ge (d-1)j + (j_1 + \ldots + j_\mu) +d -1 \ge (j_1 + \ldots + j_\mu)$, hence
$\wdeg(Q_{j,s-d+1}) \le s-j$. Similarly, 
one has
\begin{equation}\label{eq676}
j + \mu +1
\le
2j +1 
\le
2 (s-j) 
\end{equation} 
since $j \le (s-d+1)/d$.
It follows that $(2dA)^{2(s-j)} Q_{j,s-d+1} \in \Z[a_1, \ldots, a_d]$.
Observe also that $\deg_\alpha(Q_{j,s-d+1}) =0$. 

Using~\eqref{eq030}, we can now conclude the induction step for $s \ge d$ which proves our claim.
This concludes the proof of the proposition.
\end{proof}

\begin{proposition}\label{prop:expand power}
For any $k\ge1$ we have
\[
\hat{P}_k \circ P = \hat{P}_{kd}~.
\]
\end{proposition}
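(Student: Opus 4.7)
The plan is to derive the identity directly from the functional equation~\eqref{eq:bottcher} satisfied by $\varphi_P$, combined with the expansion~\eqref{eq008}. Raising $\varphi_P \circ P = (\varphi_P)^d$ to the $k$-th power immediately yields
\[
(\varphi_P \circ P)^k \;=\; (\varphi_P)^{dk},
\]
and the goal is to identify polynomial parts on each side, viewed as Laurent series in $z^{-1}$.

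For the right-hand side, the expansion~\eqref{eq008} applied with exponent $dk$ gives
\[
(\varphi_P(z))^{dk} \;=\; \hat{P}_{dk}(z) + \sum_{j \geq 1} \alpha_{dk,j}\, z^{-j},
\]
so its polynomial part is precisely $\hat{P}_{dk}$. For the left-hand side, applying~\eqref{eq008} at the point $P(z)$ yields
\[
(\varphi_P(P(z)))^k \;=\; \hat{P}_k(P(z)) + \sum_{j \geq 1} \alpha_{k,j}\, P(z)^{-j}.
\]
The first term $\hat{P}_k \circ P$ is clearly a polynomial in $z$. Since $P(z) = Az^d + a_1 z^{d-1} + \cdots$ with $A \ne 0$, the formal Laurent expansion of $P(z)^{-j}$ at infinity starts at $A^{-j} z^{-dj}$ and involves only strictly negative powers of $z$; hence the tail $\sum_{j \geq 1} \alpha_{k,j} P(z)^{-j}$ contributes nothing to the polynomial part.

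Identifying the polynomial parts of both sides then gives $\hat{P}_k \circ P = \hat{P}_{dk}$, as claimed. There is essentially no obstacle here: the only care required is to check that composition with $P$ preserves the property of being a Laurent series in $z^{-1}$ and that the uniqueness statement in Proposition-Definition~\ref{prop:bottcher} (which ensures that the decomposition into polynomial part plus negative-power tail is unique) legitimately allows termwise identification. Both points are immediate from the form $P(z) = Az^d + \mathrm{l.o.t.}$ with $A$ invertible in the fraction field $K$.
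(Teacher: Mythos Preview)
Your proof is correct and follows essentially the same approach as the paper: raise the functional equation to the $k$-th power, apply the expansion~\eqref{eq008} to both sides, and identify polynomial parts (the paper phrases this as ``identifying the polar parts of both members''). Your version simply spells out in more detail why the tail $\sum_{j\ge1}\alpha_{k,j}P(z)^{-j}$ contributes only negative powers of $z$.
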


\begin{proof}
Observe that $\varphi_P(P(z))^k = \varphi_P(z)^{dk}$. The result follows from~\eqref{eq008} and identifying the polar parts of both members.
\end{proof}

\subsection{B\"ottcher coordinate and Green function}
We now fix an algebraically closed complete metrized field $(K,|\cdot|)$ of characteristic $0$ and pick an integer $d\geq2$. Recall that, if $X$ is an algebraic variety over $K$, we denote by $X^\an$ its Berkovich analytification. 

Recall that if  $(c,a)\in K^{d-1}$, we let
\[P_{c,a}(z)\pe \frac{1}{d}z^d+\sum_{j=2}^{d-1}(-1)^{d-j}\sigma_{d-j}(c)\frac{z^j}{j}+a^d,
\]
see~\eqref{eq:defpoly}; and that the critical set of $P_{c,a}$ is given by $c_1, \ldots, c_{d-2}, c_{d-1}=0$. 
Recall that $G(P_{c,a})= \max_{1\leq i \leq  d-1} \{ g_{P_{c,a}}(c_i)\}$, and that
the map $(z,c,a) \mapsto g_{P_{c,a}}(z)$ is continuous, see \S\ref{sec:green}. In particular, for any $\tau>0$ the set 
$\{(c,a,z)\in\mathbb{A}^{d,\an}_K\, ; \ g_{P_{c,a}}(z)>G(P_{c,a}) + \tau \}$ is open. 

\smallskip

We shall rely on the following estimates, see~\cite[Proposition 2.3]{specialcubic} in the cubic case. 
\begin{proposition}~\label{prop:GreenBottcher}
\begin{enumerate}
\item
There exists a constant $\sigma=\sigma(K)\geq0$ such that the B\"ottcher coordinate $\varphi_{P_{c,a}}$ is a convergent power series in 
the neighborhood of infinity $\{z\in\mathbb{A}^{1,\an}_K\, ; \ \log|z|>G(P_{c,a})+C\}$ for any $(c,a) \in K^{d-1}$.
\item
There exists a constant $\rho=\rho(K)\geq0$ such that 
the map $(c,a,z)\longmapsto \varphi_{P_{c,a}}(z)$ extends analytically to the open set
\[\{(c,a,z)\in\mathbb{A}^{d,\an}_K\, ; \ g_{P_{c,a}}(z)>G(P_{c,a})+\rho\}~,\]
and we have  the relations
\[g_{P_{c,a}}=\log|\varphi_{P_{c,a}}| \text{ and } \varphi_{P_{c,a}} \circ P_{c,a} = \varphi^d_{P_{c,a}} \text{ on }U_{c,a}=\{ g_{P_{c,a}}> G(P_{c,a})+\rho\}.\]
\item
There exists a constant $\tau=\tau(K)\geq0$ such that 
\[ \varphi_{P_{c,a}}\colon \{g_{P_{c,a}}>G(P_{c,a})+\tau\}\longrightarrow \mathbb{A}^{1,\an}_K\setminus\overline{\D(0,\exp( G(P_{c,a})+\tau))}\] 
induces an analytic isomorphism.
\item
Finally, when $K$ is Archimedean, or when the residual characteristic of $K$ is zero, or larger than $d+1$ we can take $C= \tau=\rho=0$.
\end{enumerate}
\end{proposition}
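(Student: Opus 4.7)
The plan is to derive all three claims from the explicit expansion of the B\"ottcher coordinate given in Propositions~\ref{prop:bottcher} and~\ref{prop:expan bottcher}, combined with the sharp comparison $G(P_{c,a})\asymp\log^+\max\{|c|,|a|\}$ provided by Proposition~\ref{prop:growth Green}.

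For part (1), I will observe that for $P_{c,a}$ the leading coefficient is $A=1/d$, hence $2dA=2$ is a universal constant. By Proposition~\ref{prop:expan bottcher}, the coefficients $\alpha_j$ of $\varphi_{P_{c,a}}(z)=\alpha z+\alpha a_1/(dA)+\sum_{j\geq 1}\alpha_j z^{-j}$ satisfy $4^j\alpha_j\in\Z[\alpha,a_1,\ldots,a_d]$, with weighted degree at most $j+1$ in the $a_i$ and degree at most $(d-1)j$ in $\alpha$. Since the $a_i$ of $P_{c,a}$ are themselves polynomials of controlled weight in $(c,a)$, and since $|\alpha|^{d-1}=|A|^{-1}=|d|^{-1}$, a direct estimate yields $|\alpha_j|\le C_1(K)^{j}\max\{1,|c|,|a|\}^{j+1}$ for some constant $C_1(K)$. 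Combining with Proposition~\ref{prop:growth Green} translates this into absolute convergence on $\{\log|z|>G(P_{c,a})+C\}$ for a suitable $C=C(K)$.

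For parts (2) and (3), the formal identity $\varphi_P\circ P=\varphi_P^d$ from Proposition~\ref{prop:bottcher} becomes analytic on the annular domain above. From the asymptotics $\varphi_{P_{c,a}}(z)=\alpha z+O(1)$ and $g_{P_{c,a}}(z)=\log|z|+\tfrac{1}{d-1}\log|A|+o(1)$ (Proposition~\ref{prop:basic-green}(2)), together with $|\alpha|^{d-1}=|A|$, the difference $\log|\varphi_{P_{c,a}}|-g_{P_{c,a}}$ tends to $0$ at infinity; since it is $d$-homogeneous under $P_{c,a}$, it must vanish identically on the initial domain. For any $z$ with $g_{P_{c,a}}(z)>G(P_{c,a})+\rho$, a sufficiently large iterate $P_{c,a}^n(z)$ enters the annular domain of convergence (as $g_{P_{c,a}}\circ P_{c,a}^n=d^n g_{P_{c,a}}\to\infty$), and I will define $\varphi_{P_{c,a}}(z)$ by selecting a consistent $d^n$-th root of $\varphi_{P_{c,a}}(P_{c,a}^n z)$. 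This is unambiguous because $P_{c,a}$ has no critical points in $\{g_{P_{c,a}}>G(P_{c,a})\}$, so each iterate is locally an analytic isomorphism there, and both identities $g_{P_{c,a}}=\log|\varphi_{P_{c,a}}|$ and $\varphi_{P_{c,a}}\circ P_{c,a}=\varphi_{P_{c,a}}^d$ extend automatically by continuation.

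For the isomorphism statement in (3), the bound on $|\alpha_j|$ shows that $\varphi_{P_{c,a}}(z)/(\alpha z)\to 1$ uniformly on $\{\log|z|>G(P_{c,a})+\tau\}$ for $\tau$ large, so $\varphi_{P_{c,a}}$ is injective there and maps this annulus onto the exterior of an explicit disk; injectivity then propagates to the full sublevel set $\{g_{P_{c,a}}>G(P_{c,a})+\tau\}$ via the functional equation and the absence of critical points. For part (4), the Archimedean case is the classical Douady--Hubbard theorem, which directly gives $C=\rho=\tau=0$; in the non-Archimedean case with residual characteristic either $0$ or at least $d+1$, both Proposition~\ref{prop:growth Green} and the coefficient bound of Proposition~\ref{prop:expan bottcher} are sharp with no prime-power denominators, so the convergence domain coincides exactly with $\{\log|z|>G(P_{c,a})\}$. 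The principal obstacle I anticipate is absorbing the factor $|d|^{-j}$ coming from $|\alpha|^{(d-1)j}$ in the non-Archimedean case of small residual characteristic into a single constant $C(K)$, using the sharpened estimate of $G(P_{c,a})$ from Proposition~\ref{prop:growth Green}; once this book-keeping is in place, the rest of the argument is uniform across all cases.
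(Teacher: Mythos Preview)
Your proposal is correct and follows essentially the same strategy as the paper: bound the coefficients $\alpha_j$ via Proposition~\ref{prop:expan bottcher}, convert $\max\{1,|c|,|a|\}$ into $G(P_{c,a})$ via Proposition~\ref{prop:growth Green}, and invoke the classical Douady--Hubbard theory in the Archimedean case.

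There is one stylistic difference worth noting. For parts~(2)--(3) in the non-Archimedean case, you propose to extend $\varphi_{P_{c,a}}$ from the annulus $\{\log|z|>G(P_{c,a})+C\}$ to the larger set $\{g_{P_{c,a}}>G(P_{c,a})+\rho\}$ by choosing coherent $d^n$-th roots of $\varphi_{P_{c,a}}\circ P_{c,a}^n$. The paper instead shows directly that no extension is needed: in the ultrametric setting one has the exact equality $g_{P_{c,a}}(z)=\log|\alpha z|$ on the convergence domain, so that $\{g_{P_{c,a}}>G(P_{c,a})+\tau\}$ is already contained in $\{\log|z|>G(P_{c,a})+C\}$ for a suitable choice of $\tau$ (in fact one can take $\rho=\tau=\sigma$). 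Your root-taking argument is the more uniform one---it works verbatim over any field---while the paper's approach exploits the non-Archimedean rigidity to avoid the continuation step altogether and to identify the image domain explicitly as the complement of a closed ball. Both routes yield the same constants in the end.
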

\begin{proof}
Let us treat first the case $K$ is Archimedean. In that case most of the statements are proved in~\cite{orsay1} (see also~\cite[\S 1]{BH}). 
In particular,  $\varphi_{c,a}(z)$ is analytic in a neighborhood of $\infty$ and extends to $U_{c,a}:=\{g_{P_{c,a}} > G(P_{c,a})\}$ by invariance (so that $\rho=0$), and it 
defines an isomorphism between the claimed domains with $\tau =0$. It is moreover analytic in $c,a,z$,
the relation $\varphi_{P_{c,a}} \circ P_{c,a} = \varphi^d_{P_{c,a}}$ holds on $U_{c,a}$ since the latter set is connected and the relation is satisfied at a formal level.
Since $\varphi_{P_{c,a}}(z) - \alpha z$ is bounded near infinity, 
we have
\begin{equation}\label{eq:blop}
g_{P_{c,a}}(z) = \lim_{n\to\infty}\frac1{d^n} \log |P_{c,a}^n(z)| = \lim_{n\to\infty}\frac1{d^n}\log |\varphi_{P_{c,a}}(P_{c,a}^n(z))| = \log |\varphi_{P_{c,a}}(z)|
\end{equation}
on $U_{c,a}$.

To estimate more precisely the radius of convergence of the power series~\eqref{eq-bott}, we rely on \cite[\S 4]{BH} as formulated in
\cite[\S 6]{favredujardin}. Write $\delta = \sum_{i=1}^{d-1} c_i /(d-1)$. First choose $C= C_K>0$ such that $G(P_{c,a})  >   \log^+ \max \{|a| , |c| \} - C$, and set 
$\sigma := C + \log 5$. 
Suppose  $\log |z| > G(P_{c,a}) + \sigma$. We first infer 
\[|z- \delta| > 5 \max \{1, |a| , |c| \} - |\delta|\ge (4 +\frac1{d-1})  \max \{1, |a| , |c| \}\]
 hence
$\log |z- \delta| > G(P_{c,a})+ \log (4+(d-1)^{-1})$, so that \[g_{P_{c,a}}(z)\ge \log | z - \delta| - \log 4 > G(P_{c,a})\] by~\cite[Lemma~6.5]{favredujardin}.
We have shown that  $\varphi$ is analytic in $\{z, \, \log |z| > G(P_{c,a}) + \sigma\}$ hence converges in this domain.

\smallskip

From now on, we assume that the norm on $K$ is non-Archimedean. 
Choose any $\alpha$ such that $\alpha^{d-1} = \frac1d$. 
Recall that 
by Proposition~\ref{prop:expan bottcher}, we have
\[\varphi_{P_{c,a}}(z)=\alpha \left(z - \frac{\sigma_1(c)}{d-1}\right) +\sum_{j\geq1}\alpha_j z^{-j} \in \Z\left[\alpha, \frac{\sigma_{1}(c)}{d-1}, \ldots, \frac{\sigma_{d-2}(c)}{2}, a^d\right](\!(z)\!)\]  
where 
\begin{equation}\label{eq481}
2^{2j} \alpha_{j} \in \Z\left[\alpha,  \frac{\sigma_{1}(c)}{d-1}, \ldots, \frac{\sigma_{d-2}(c)}{2}, a^d\right]
\end{equation} is a polynomial in the variables $c$ and $a$ of degree  $\le j+1$, and in the variable $\alpha$ of degree $\le (d-1) j$.

\smallskip

Suppose first that  the  residual characteristic of $K$ is either zero, or larger than $d+1$. Then~\eqref{eq481} implies
$|\alpha_j| \le \max\{1, |c|, |a|\}^{j+1}$ so that $\varphi_{P_{c,a}}$ converges in  $U_{c,a} :=\{|z| >  \max\{1, |c|, |a|\}\}$, and $\log|\varphi_{P_{c,a}}(z)| = \log|z|$
in $U_{c,a}$. It is easy to check that $U_{c,a}$ is invariant by the dynamics, hence~\eqref{eq:blop} applies and $g_{P_{c,a}} = \log|\varphi_{P_{c,a}}(z)|$.
Recall that we have $G(P_{c,a}) = \log^+ \max\{|c|, |a|\}$ by Proposition~\ref{prop:growth Green}.  
It follows that $\varphi_{P_{c,a}}$ is a well-defined analytic function on 
$\{ g_{P_{c,a}} > G(P_{c,a})\}= \{ |z| > \max\{1, |c|, |a|\}\}$.
It induces an isomorphism between  $U_{c,a}$ and $\A^{1,\textup{an}}_{K}\setminus\overline{\mathbb{D}_K(0,e^{G(P_{c,a})}})$ since $\log|\varphi_{P_{c,a}}(z)| = \log|z|$. 
The proposition is thus proved in this case with $C = \rho = \tau =0$.

\smallskip

In residual characteristic $0<p \le d$, the estimates are more delicate. 
Let us set \[\tilde{B}_p := \max \left\{|j|_p^{-1/j}, j =1, \ldots, d-2 \right\}~.\] 
Then~\eqref{eq481}
shows that
\[|\alpha_j| \le (\tilde{B}_p \max\{1, |c|, |a|\})^{j+1} |4|_p^{-j} |d|^{-j}_p\]
hence $\varphi_{P_{c,a}}$ converges for $|z| >   B_p \max\{1, |c|, |a|\}$, with $B_p = \tilde{B}_p\, \max\{|4|_p^{-1}, |d|_p^{-1}\}$, 
and we have $g_{P_{c,a}}=\log|\varphi_{P_{c,a}}| = \log |\alpha z| $ in that range.

Recall the definition of the constants $\theta$ and $C$ from Propositions~\ref{prop:classic-estim} and~\ref{prop:growth Green}. 
Define 
\[ U_{c,a} = \{ g_{P_{c,a}}>G(P_{c,a})+\tau\}\]
with $\tau = C+\theta+ \log B_p$, and pick any $z \in U_{c,a}$. 
Since $G(P_{c,a}) \ge \log^+ \max \{ |c|, |a|\}  - C$, we have
\[\log^+ \max \{ |z|, |c|, |a|\} \ge g_{P_{c,a}}(z) - \theta>  G(P_{c,a})+\tau -\theta 
\ge \log^+ \max \{ |c|, |a|\} + \log B_p
\]
so that $|z| > B_p \max \{ 1, |c|, |a|\}$. It follows that the series $\varphi_{P_{c,a}}$ converges on $U_{c,a}$. 
Since  $g_{P_{c,a}}=\log|\varphi_{P_{c,a}}|= \log |\alpha z| $ on $U_{c,a}$, the map $\varphi_{c,a}$ induces an isomorphism
onto $\A^{1,\textup{an}}_{K}\setminus\overline{\mathbb{D}_K(0,e^{G(P_{c,a})+\tau})}$ with $\rho = \tau$, 
and the proof is complete. 
\end{proof}


\section{Polynomial dynamics over a global field}\label{sec:global}

For any number field $\KK$, recall from \S\ref{sec:adelic height} that  $M_\KK$ is the set of places of $\KK$.
For any $v\in M_\KK$, we write $\KK_v$ for the completion $(\KK, |\cdot|_v )$, and we shall also let $\C_v$ be the completion of the algebraic closure $\bar{\KK}_v$ of  $\KK_v$.
Recall that $n_v = [\KK_v:\Q_v]$, and that the standard height of $x\in\KK$ is defined by 
\[
h_{\st}(x)
=
\frac{1}{[\KK:\Q]} \sum_{v\in M_\KK}n_v\log^+|x|_v
=
\frac{1}{\deg(x)}\sum_{y\in \mathsf{O}(x)}\sum_{v\in M_\Q} \log^+|y|_v~.
\]

\smallskip
Pick $P\in \KK[z]$ of degree $d\geq2$. Following Call and Silverman~\cite{call-silverman}, we define the canonical height\index{height!canonical} $h_P$ of $P$ as the limit
\[h_P(x)=\lim_{n\rightarrow\infty}\frac{1}{d^n}h_{\st}(P^n(x)).\]
This limit always exists and the height function $h_P$ satisfies
$h_P\circ P=dh_P$ and 
$\sup_\KK |h_P-h_{\st}| < +\infty$.
Furthermore, the Northcott property holds:  for all $x\in\bar{\KK}$, we have $h_P(x)\geq0$ and  $h_P(x)=0$ if and only if $x$ is preperiodic under iteration of $P$, i.e. if there exists $n>m\geq0$ such that $P^n(x)=P^m(x)$.

\smallskip

Just like the standard height function, the height function $h_P$ can also be decomposed as a sum of local contributions either under the form~\eqref{eq413} or~\eqref{eq412}.
For any $v\in M_\KK$, denote by $g_{P,v}$ the Green function of $P$ at the place $v$:
\[g_{P,v}(z):=\lim_{n\rightarrow\infty}\frac{1}{d^n}\log^+|P^n(z)|_v, \ z\in\C_v.\]
Fix $x\in \bar{\Q}$. Then
\[h_P(x)
=
\frac{1}{[\KK:\Q]} \sum_{v\in M_\KK} n_v g_{P,v}(x) =
\frac{1}{\deg(x)}\sum_{y\in \mathsf{O}(x)}\sum_{v\in M_{\Q}}g_{P,v}(y).\]
These relations reflect the fact that $h_P$ is induced from the (unique $P^*$-invariant) semi-positive adelic metric on $\cO(1)$. 

\begin{definition}\label{def:bif-height}
Let $P$ be any polynomial defined over a number field $\KK$. 
Its bifurcation height \index{height!bifurcation} is by definition $h_\bif(P):= \sum_{P'(c) =0} h_P(c)$.
\end{definition}

It was proved by Ingram \cite{MR2885981} that the function $P \mapsto h_\bif(P)$ defines a Weil height on the parameter space $\mathrm{MPoly}^{d}$ of polynomials of a fixed degree $d$. 
It was further noticed in~\cite{favregauthier} that if one views $\A^{d-1}$ as an open subset of the projective space,
then using the orbifold parameterization $(P_{c,a})_{c,a}$ by critically marked polynomials 
the height function 
\[
\widetilde{h_\bif}(c,a) := \frac1{\deg(c,a)} \sum_{c'a'\in \mathsf{O}(c,a)}\sum_{v\in M_\KK} G(P_{c',a'}) 
\]
is in fact
determined by a semi-positive continuous adelic metrization on the line bundle $\mathcal{O}(1) \to \p^{d-1}$, 
and satisfies $\frac1{d-1} h_\bif(c,a) \le \widetilde{h_\bif}(c,a) \le h_\bif(c,a)$.
In fact, one has the following.
\begin{proposition}
There exists a constant $C>0$ such that for all $(c,a) \in \bar{\Q}^{d-1}$, 
\[ |\widetilde{h_\bif}(P_{c,a}) - h_{\st}(c,a)| \le C~.\]
\end{proposition}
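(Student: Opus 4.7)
The plan is to deduce the bound directly from Proposition~\ref{prop:growth Green} by decomposing both heights into local contributions at each place.

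I would first fix a number field $\KK$ such that $(c,a) \in \KK^{d-1}$ and write the two heights in their local forms. Both $\widetilde{h_\bif}(c,a)$ and $h_{\st}(c,a)$ are normalized sums over Galois conjugates $(c',a') \in \mathsf{O}(c,a)$ and over $v \in M_\KK$, the two local terms being $G(P_{c',a'})$ computed w.r.t.\ $|\cdot|_v$ and $\log^+\max\{|c'|_v,|a'|_v\}$ respectively. Since the normalizations match, comparing the two heights reduces to a place-by-place comparison of these local contributions.

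I would then apply Proposition~\ref{prop:growth Green} at each place $v \in M_\KK$: there exists a constant $C_v \geq 0$ such that
\[\sup_{(c,a)\in\C_v^{d-1}} \left|G(P_{c,a}) - \log^+\max\{|c|_v,|a|_v\}\right| \leq C_v.\]
The key point is the last clause of the proposition: $C_v$ depends only on the residue characteristic, and vanishes for all non-Archimedean $v$ of residue characteristic either $0$ or $\geq d+1$. Hence the set of places of $\KK$ where $C_v > 0$ is finite, consisting of the Archimedean places together with those lying above a rational prime $p \leq d$. Because $C_v$ is determined by the rational prime underneath $v$, the quantity
\[C \pe \sum_{p \in \{\infty\}\cup\{p \text{ prime},\, p\leq d\}} C_p < +\infty\]
is finite and \emph{independent of $\KK$} (using that $\sum_{v | p} n_v = [\KK:\Q]$ absorbs the degree in the height normalization).

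Finally, summing the local estimate over all Galois conjugates and all places, then dividing by $\deg(c,a)$, yields
\[\left|\widetilde{h_\bif}(P_{c,a}) - h_{\st}(c,a)\right| \leq C\]
uniformly in $(c,a)\in \bar{\Q}^{d-1}$. There is no substantive obstacle here: the statement is essentially a repackaging of Proposition~\ref{prop:growth Green} together with the observation that only finitely many primes are ``bad'' for the bound, so the sum over places converges. The only mildly subtle point worth checking carefully is the compatibility of normalizations between $\widetilde{h_\bif}$ and $h_{\st}$ (in particular the interaction between the $n_v$ factors and the decomposition of primes in $\KK$), which is routine.
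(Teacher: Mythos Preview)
Your proposal is correct and follows essentially the same approach as the paper: apply Proposition~\ref{prop:growth Green} place by place, observe that the local constant $C_v$ vanishes outside the archimedean place and the finitely many primes $p\le d$, and sum. The only cosmetic difference is that the paper works directly over $M_\Q$ using the Galois-orbit form of both heights (so the finiteness of $\sum_{v\in S} C_v$ is immediate), whereas you work over $M_\KK$ and invoke $\sum_{v\mid p} n_v = [\KK:\Q]$ to make the bound independent of $\KK$; these are equivalent bookkeeping choices.
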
 
\begin{proof}
Pick any $(c,a) \in \bar{\Q}^{d-1}$, and choose any place $v\in M_\Q$. 
By Proposition~\ref{prop:growth Green}, for any $(c',a') \in \bar{\Q}^{d-1}$, we have
\[
|G(P_{c',a'}) - \log^+ \max \{ |c'|, |a'|\}| \le C_v~,
\]
and the set $S$ of places for which $C_v \neq 0$ is finite. 
 Summing up all contributions over $\mathsf{O}(c,a)$ we get
\begin{align*}
|h_\bif(P_{c,a}) - h_{\st}(c,a)| \le &\frac1{\deg(c,a)} \sum_{c'a'\in \mathsf{O}(c,a)}\sum_{v\in S} \left|G(P_{c',a'}) - \log^+\max \{ |c'|, |a'|\}\right|
\\
\le & \sum_{v \in S} C_v<\infty~,
\end{align*}
as required.
\end{proof}
\begin{corollary}\label{cor:PCF}
The set of complex polynomials $P_{c,a}$ of degree $d\ge2$ which are PCF is a countable subset of $\bar{\Q}^{d-1}$
and forms a set of bounded (standard) height. 
\end{corollary}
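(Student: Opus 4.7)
The plan is to derive the Corollary by combining the height estimate from the preceding Proposition with Northcott's property. Let $\cal{Z} \subset \A^{d-1}(\C)$ denote the set of $(c,a)$ for which $P_{c,a}$ is PCF; I want to show that $\cal{Z} \subset \bar{\Q}^{d-1}$, that $\cal{Z}$ has bounded standard height, and that $\cal{Z}$ is countable.

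First I would establish that $\cal{Z} \subset \bar{\Q}^{d-1}$. A parameter $(c,a)$ belongs to $\cal{Z}$ precisely when, for each critical point $c_i$ of $P_{c,a}$ (with the convention $c_{d-1}=0$), there exist integers $n_i > m_i \ge 0$ with $P^{n_i}_{c,a}(c_i) = P^{m_i}_{c,a}(c_i)$. Each such identity is a polynomial equation in the variables $(c_1,\ldots,c_{d-2},a)$ with coefficients in $\Z$, so $\cal{Z}$ is a countable union of algebraic subvarieties $V_{\underline{n},\underline{m}} \subset \A^{d-1}_\Q$ indexed by the possible preperiodic portraits. The key point is that each $V_{\underline{n},\underline{m}}$ is zero-dimensional: otherwise, a positive-dimensional component would furnish an algebraic family of polynomials along which every critical point is persistently preperiodic and hence passive, so by Ma\~n\'e--Sad--Sullivan the family would be $J$-stable; the standard rigidity of PCF polynomials (Thurston-style) would then force isotriviality, contradicting positive-dimensionality of the component in the orbifold parametrization. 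Consequently every point of $V_{\underline{n},\underline{m}}$ is algebraic, and $\cal{Z} \subset \bar{\Q}^{d-1}$.

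Next I would bound the standard height on $\cal{Z}$. For any $(c,a) \in \cal{Z}$, each critical point $c_i$ is preperiodic under $P_{c,a}$, so the Northcott property of the canonical height recalled in \S\ref{sec:global} gives $h_{P_{c,a}}(c_i) = 0$ for all $i$, whence $h_\bif(P_{c,a}) = 0$ by Definition~\ref{def:bif-height}. The inequality $\widetilde{h_\bif}(c,a) \le h_\bif(P_{c,a})$ noted just before the Proposition then yields $\widetilde{h_\bif}(c,a) = 0$, and the Proposition itself gives $h_{\st}(c,a) \le C$ for a constant $C = C(d)$. Countability now follows from Northcott's theorem on $\p^{d-1}$: for any fixed number field $\KK$, the set of $(c,a) \in \KK^{d-1}$ with $h_{\st}(c,a) \le C$ is finite, and as $\bar{\Q}$ is a countable union of number fields, $\cal{Z}$ is countable.

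The main obstacle is the zero-dimensionality of the subvarieties $V_{\underline{n},\underline{m}}$, which requires a nontrivial input from the rigidity theory of PCF polynomials; the rest of the argument is a straightforward packaging of the preceding Proposition together with the basic properties of the canonical and standard heights.
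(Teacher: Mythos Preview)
Your argument is correct, but it diverges from the paper's in one significant step: the proof that the loci $V_{\underline{n},\underline{m}}$ are zero-dimensional. You import Thurston-type rigidity of PCF polynomials as an external input to rule out positive-dimensional components, and only afterwards use the preceding Proposition to bound the height. The paper reverses this logic and avoids Thurston rigidity entirely: since each $V_{\underline{n},\underline{m}}$ is defined over $\Q$, its $\bar{\Q}$-points are Zariski-dense; the Proposition shows these $\bar{\Q}$-points all have $h_{\st}\le C$, and since the standard height is unbounded on any positive-dimensional subvariety, each $V_{\underline{n},\underline{m}}$ must be zero-dimensional. Countability and the inclusion in $\bar{\Q}^{d-1}$ then follow immediately. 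The paper's route is more self-contained (it uses only the height machinery already in place) and shorter; your route is valid but relies on a substantial theorem that the paper has not introduced. Note also that once zero-dimensionality is established, countability is automatic (a countable union of finite sets), so the final appeal to Northcott over varying number fields is unnecessary.
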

\begin{proof}
The set $\mathcal{P}$ of PCF polynomials of the form $P_{c,a}$ is defined as the union of  countably many algebraic subvarieties defined 
by equations with coefficients in $\Q$ of the form $\cap_{i=0}^{d-1} \{ P^{n_i}_{c,a}(c_i) = P^{m_i}_{c,a}(c_i)\}$ with $n_i > m_i$. 
By the preceding result, $\mathcal{P} \cap \bar{\Q}^{d-1}$ forms a set of bounded height, hence $\mathcal{P}$ cannot
contain a positive dimensional subvariety. This proves the result. 
\end{proof}


\section{Bifurcations in holomorphic dynamics}\label{sec:holofamily}

We review briefly Ma\~{n}\'e-Sad-Sullivan's theory of bifurcation of holomorphic dynamical systems in the context of polynomials. 
We refer to either the original papers \cite{MSS,lyubich-bif} or to the survey~\cite{bsurvey} by Berteloot for a more detailed account.  

Let $\Lambda$ be any connected complex manifold. A holomorphic family $(P_\lambda)_{\lambda\in\Lambda}$ of polynomials of degree $d\ge 2$ parameterized by $\Lambda$ is by definition
a holomorphic map $(\lambda,z) \mapsto P_\lambda (z)$ from $\Lambda \times \C \to \C$ such that 
for all $\lambda \in \Lambda$, the map $z \mapsto P_\lambda(z)$ is a polynomial of degree $d$.\index{family of polynomials!holomorphic}

A critically marked holomorphic family of polynomials is a holomorphic family $(P_\lambda)_{\lambda\in\Lambda}$ together with $d-1$ holomorphic functions $c_1, \ldots, c_{d-1}:\Lambda\to\C$
such that $\crit(P_\lambda) = \{ c_1(\lambda), \ldots, c_{d-1}(\lambda)\}$ for all $\lambda$. 

\begin{definition}
Let $(P_\lambda)_{\lambda \in \Lambda}$ be any critically marked holomorphic family of polynomials of degree $d$. 
The stability locus\index{stability locus} $\stab(P)$ of a holomorphic family is the union of all open subsets  $U \subset \Lambda$ over which 
the families $\{\lambda\mapsto P_\lambda^n(c_i(\lambda))\}_n$ are normal on $U$ for all $i=1, \ldots, d-1$. 
\end{definition}

When $(P_\lambda)_{\lambda \in \Lambda}$ is an arbitrary holomorphic family of polynomials, then one can define the 
stability locus as follows. 

\smallskip

The behaviour of the Julia set on the stability locus is governed by holomorphic motions. 
We recall briefly this crucial notion here. Fix any $\lambda_0 \in \Lambda$. 
\begin{definition}
A holomorphic motion\index{holomorphic motion} of a subset $X \subset \C$ parameterized by $(\Lambda,\lambda_0)$ is a family of holomorphic maps
$\{\lambda \mapsto x(\lambda)\}_{x\in X}$ such that $x(\lambda_0) = x$, and
for all $\lambda$, $x \mapsto x(\lambda)$ is injective from $X$ to $\C$.
\end{definition}
A holomorphic motion can also be viewed as a map $h\colon \Lambda \times X \to \C$ such that 
$h(\lambda_0, \cdot) = \id$, $h(\lambda, \cdot)$ is injective and $h(\cdot, x)$ is holomorphic for all $x$.

\smallskip

Using Montel's and Hurwitz's theorems, one can show that any holomorphic motion of a set $X$ extends canonically to its closure $\bar{X}$. 
Moreover for any fixed parameter $\lambda$, the injective map $X \to \C$ sending $x$ to $x(\lambda)$ is quasi-symmetric, see~\cite[\S5.2]{hubbard-teichmuller1}.

\begin{theorem}
The stability locus $\stab(P)$ of a holomorphic family of polynomials is the union of all connected pointed open subsets $(U,\lambda_0)\subset \Lambda$ 
on which there exists a holomorphic motion
$h\colon U \times J(P) \to \C$ such that $ P_\lambda ( h_\lambda(z)) = h_\lambda(P_{\lambda_0}(z))$.

 In particular the dynamics of any two polynomials
$P_{\lambda_0}$ and $P_{\lambda_1}$ for which $\lambda_0$ and $\lambda_1$ lie in the same connected component of $\stab(P)$ are topologically 
conjugated on their Julia sets. 
\end{theorem}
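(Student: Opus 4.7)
The plan is to establish the classical Mañé--Sad--Sullivan equivalence by proving both inclusions between $\stab(P)$ and the open set $V \subset \Lambda$ defined as the union of all connected pointed open $(U,\lambda_0)$ carrying a holomorphic motion $h \colon U \times J(P_{\lambda_0}) \to \C$ with $P_\lambda \circ h_\lambda = h_\lambda \circ P_{\lambda_0}$.

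For the inclusion $\stab(P) \subset V$, I would fix $\lambda_0 \in \stab(P)$, work on a small simply connected neighborhood $U$, and first produce a holomorphic motion of the set $\mathrm{Rep}(P_{\lambda_0})$ of repelling periodic points of $P_{\lambda_0}$. Any such point $z_0$ can be followed holomorphically to $z(\lambda)$ via the implicit function theorem applied to $P_\lambda^n(z) = z$ (since the multiplier is $\neq 1$ at $\lambda_0$). The decisive step is to show that $z(\lambda)$ remains repelling and, in particular, never crashes into a critical orbit as $\lambda$ varies. This is where the normality hypothesis is used: if $z(\lambda_*) = P_{\lambda_*}^k(c_i(\lambda_*))$ for some $\lambda_* \in U$ and some $i,k$, a Hurwitz/Montel argument applied to the purportedly normal family $\{\lambda \mapsto P_\lambda^n(c_i(\lambda))\}_n$ shows that infinitely many iterates of $c_i$ would also hit points in the followed periodic cycle, forcing $c_i$ to be pre-periodic persistently and contradicting either the multiplier's invariance or normality near $\lambda_*$. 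Thus $\lambda \mapsto z(\lambda)$ is well-defined on all of $U$, yielding a motion of $\mathrm{Rep}(P_{\lambda_0})$; the semi-conjugacy $P_\lambda(z(\lambda)) = (P\cdot z)(\lambda)$ follows directly from uniqueness of the continuation.

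Next I invoke the $\lambda$-lemma, in its strong form due to Słodkowski: any holomorphic motion of a subset $X \subset \C$ parameterized by $(U,\lambda_0)$ extends to a holomorphic motion of $\widehat{\C}$, whose restriction to $\overline{X}$ is jointly continuous. Since repelling cycles are dense in $J(P_{\lambda_0})$, we obtain a holomorphic motion $h \colon U \times J(P_{\lambda_0}) \to \C$. The conjugating identity $P_\lambda \circ h_\lambda = h_\lambda \circ P_{\lambda_0}$ holds on the dense set of repelling cycles, hence everywhere on $J(P_{\lambda_0})$ by continuity; moreover $h_\lambda(J(P_{\lambda_0})) = J(P_\lambda)$ because the right-hand side is totally invariant, perfect, and contains all of the moved repelling cycles of $P_\lambda$. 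Thus $(U,\lambda_0) \subset V$.

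For the reverse inclusion $V \subset \stab(P)$, fix $\lambda_0 \in V$, extend the conjugating motion to $h \colon U \times \widehat{\C} \to \widehat{\C}$ by Słodkowski, and analyze each critical point $c_i$. If $c_i(\lambda_0) \in J(P_{\lambda_0})$, then uniqueness of critical points of $P_\lambda$ combined with the conjugacy forces $c_i(\lambda) = h_\lambda(c_i(\lambda_0))$, so $P_\lambda^n(c_i(\lambda)) = h_\lambda(P_{\lambda_0}^n(c_i(\lambda_0)))$ for all $n$, and the family is uniformly bounded in $\C$, hence normal. If $c_i(\lambda_0) \in F(P_{\lambda_0})$, let $F_0$ be the Fatou component containing $c_i(\lambda_0)$; then $h_\lambda(F_0)$ is an open set in $\widehat{\C} \setminus J(P_\lambda)$, uniformly (on compacts in $U$) bounded away from $J(P_\lambda)$, and $c_i(\lambda) \in h_\lambda(F_0)$ by continuity and the persistence of the Fatou/Julia dichotomy along $h$. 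In both cases $\{P_\lambda^n(c_i(\lambda))\}_n$ omits an open set locally uniformly in $U$, and Montel's theorem yields normality, so $U \subset \stab(P)$.

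The main obstacle is the argument that on $\stab(P)$ the holomorphically continued repelling cycle $z(\lambda)$ can neither meet a critical orbit nor lose its repelling character --- i.e., controlling the collision scenario via the normality hypothesis. A secondary subtlety is the Fatou-side analysis in the converse direction: one must rule out sudden appearance or disappearance of attractors/parabolic petals inside $U$, which is tantamount to checking that the holomorphic motion of $J$ is compatible with the partition of $\widehat{\C}$ into Fatou components; this typically requires invoking persistence of attracting cycles across $U$, an argument analogous to the one used for repelling cycles but with the opposite multiplier inequality.
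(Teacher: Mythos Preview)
The paper does not supply a proof: the theorem is quoted in the review of Ma\~n\'e--Sad--Sullivan theory in \S\ref{sec:holofamily} with references to \cite{MSS,lyubich-bif} and the survey \cite{bsurvey}, and the only structural hint is the remark that it ``relies on the characterization of the stability locus in terms of the stability of periodic points''. Your outline is the standard MSS argument, so there is nothing in the paper itself to compare against.

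Two comments on the sketch. In the forward direction, the actual obstacle is not that the continued repelling orbit might meet the post-critical set (such a collision does not obstruct the implicit-function-theorem continuation), but that its multiplier might cross the unit circle: if the cycle became attracting at some $\lambda_1$, Fatou's theorem would place some $c_j(\lambda_1)$ in its immediate basin, and moving $\lambda$ back to where the cycle is repelling forces the $c_j$-orbit out of the collapsing basin, contradicting normality. In the reverse direction your Fatou-side step is, as you note, incomplete: the S\l odkowski extension does not conjugate off $J$, so ``$c_i(\lambda)\in h_\lambda(F_0)$'' is not automatic. What makes it work is the preliminary identity $h_\lambda(J(P_{\lambda_0}))=J(P_\lambda)$ (from complete invariance and minimality of the Julia set); then $\lambda\mapsto h_\lambda^{-1}(c_i(\lambda))$ is continuous with value $c_i(\lambda_0)\in F_0$ at $\lambda_0$, and a clopen argument over connected $U$ (using your Julia-side case at the boundary parameters) globalizes. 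For polynomials there is a shortcut: once $J(P_\lambda)$ moves continuously, each $c_i(\lambda)$ stays either in $\mathrm{int}\,K(P_\lambda)$ or in $\Omega(P_\lambda)$ throughout $U$, giving uniformly bounded or uniformly escaping critical orbits.
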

Because of this theorem it is of common use to refer to the stability locus as the $J$-stability locus. 
Beware that in general however, the stability locus of $P$ is not connected (this phenomenon already appears in degree $2$). 

The previous result relies on the characterization of the stability locus in terms of the stability of periodic points. 
\begin{definition}
A parameter $\lambda$ is said to have an unstable periodic point $z$, iff there exist two sequences of parameters $\lambda_n^{\pm}$ 
such that  $\lambda_n^{\pm} \to \lambda$, and a sequence $z_n^+$ (resp. $z_n^-$) of repelling (resp. attracting) periodic points for 
$P_{\lambda_n^+}$ (resp. $P_{\lambda_n^-}$) such that $z_n^\pm \to z$ as $n\to \infty$.
\end{definition}
Observe that any unstable periodic point is necessarily neutral. 

When $\lambda_0$ has an unstable periodic point at $z_0$, then one can find a finite map $r\colon (B,0) \to (\Lambda, \lambda_0)$
defined on a small open ball $B$ centered at the origin in $\C^{\dim(\Lambda)}$, and a holomorphic map $p\colon (B,0) \to \C$
such that  $p(r(t))$ is a periodic point of some fixed period $n$ for all $t\in \B$, $p(r(0)) = z_0$ and the multiplier of 
$P_{r(t)}$ at $p(r(t))$ is a non-constant holomorphic function
whose value at $0$ equals $|(P_{\lambda_0}^n)' (z_0)| = 1$.

\begin{theorem}
The complement of stability locus of a holomorphic family of polynomials coincides with the closure of the set of parameters having an unstable periodic point. 
\end{theorem}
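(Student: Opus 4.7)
The plan is to prove the two inclusions separately, using the characterization of $\stab(P)$ in terms of holomorphic motions established in the previous theorem. Write $\mathcal{U}$ for the set of parameters admitting an unstable periodic point.

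For $\overline{\mathcal{U}} \subseteq \Lambda \setminus \stab(P)$, pick $\lambda_0 \in \mathcal{U}$ with unstable periodic point $z_0$ of period $n$, together with sequences $\lambda_k^\pm \to \lambda_0$ carrying, respectively, attracting and repelling cycles $z_k^\pm \to z_0$. Assume for contradiction that $\lambda_0 \in \stab(P)$. The implicit function theorem applied to $P_\lambda^n(z) - z = 0$ near $(\lambda_0, z_0)$ extends $z_0$ to a holomorphic graph $p(\lambda)$ of $n$-periodic points on a neighborhood $U$, with associated multiplier $m(\lambda) = (P_\lambda^n)'(p(\lambda))$. On the stability neighborhood $U$, the holomorphic motion furnished by the previous theorem provides a $P$-equivariant conjugation of Julia sets, and by uniqueness of the IFT branch, $p(\lambda_k^+)$ must coincide with the continuation $z_k^+$ for $k$ large. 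Since the motion carries the repelling cycle to a repelling cycle with the same multiplier modulus, $|m|$ must be locally constant on the set where the cycle is repelling; in particular $m$ would be constant near $\lambda_0$ by the open mapping theorem, contradicting $|m(\lambda_k^+)|>1$ and $|m(\lambda_k^-)|<1$. Since $\stab(P)$ is open, the closure $\overline{\mathcal{U}}$ also avoids it.

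For the reverse inclusion, take $\lambda_0 \in \Lambda \setminus \stab(P)$, so some family $\{\lambda \mapsto P_\lambda^n(c_i(\lambda))\}_n$ is not normal at $\lambda_0$. Pick three distinct repelling periodic points $z_1, z_2, z_3$ of $P_{\lambda_0}$, which exist in abundance since they are dense in the infinite Julia set. Each $z_j$ extends via the implicit function theorem to a holomorphic graph $\lambda \mapsto z_j(\lambda)$, remaining repelling on a common neighborhood $V$ of $\lambda_0$. If the family $\{P_\lambda^n(c_i(\lambda))\}_n$ avoided each $z_j(\lambda)$ on $V$, then post-composing with a holomorphic family of M\"obius transformations sending $(z_1(\lambda), z_2(\lambda), z_3(\lambda))$ to $(0, 1, \infty)$ would yield a family of holomorphic functions $V \to \C \setminus \{0,1\}$, which is normal by Montel's theorem; normality of the original family would follow, contradicting our assumption. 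Hence there exist $n_k$, $\lambda_k \to \lambda_0$, and $j$ with $P_{\lambda_k}^{n_k}(c_i(\lambda_k)) = z_j(\lambda_k)$: the marked critical point is eventually mapped onto the repelling cycle of $z_j$.

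To conclude, this critical-periodic collision at $\lambda_k$ must be converted into a genuine unstable cycle nearby. The idea is to count solutions of $P_\lambda^{N}(z) = z$ for $N$ a large multiple of the period $p_j$ of $z_j$: near $(\lambda_k, z_j(\lambda_k))$ the collision with the critical orbit of $c_i$ breaks the preimage structure of the repelling cycle, and by a Rouch\'e / implicit function argument applied to the periodic-point equation, new branches of $N$-periodic cycles bifurcate from $z_j(\lambda_k)$. Generically, one branch carries attracting multipliers while a neighboring branch carries repelling multipliers, so one can produce sequences $\mu_k^\pm \to \lambda_k$ of attracting and repelling cycles converging to a common neutral cycle, i.e.\ $\lambda_k \in \mathcal{U}$; letting $k \to \infty$ gives $\lambda_0 \in \overline{\mathcal{U}}$. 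The main obstacle is precisely this last step: Montel together with density of repelling cycles yields a post-critical collision, but producing from this collision a nearby cycle whose multiplier actually crosses the unit circle requires the most careful part of the Ma\~n\'e-Sad-Sullivan analysis, combining the implicit function theorem for high-period cycles with the local Fatou-theoretic consequences of a critical point being absorbed by a repelling cycle.
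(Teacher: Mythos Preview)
The paper does not prove this theorem; it is stated as a review of classical Ma\~n\'e--Sad--Sullivan theory with references to \cite{MSS,lyubich-bif,bsurvey}. So your attempt is being judged against the standard proof rather than one in the text.

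Your first inclusion contains a factual error: holomorphic motions of Julia sets are only quasi-conformal conjugacies and do \emph{not} preserve the modulus of multipliers. What they do preserve is membership in the Julia set, hence the attracting/repelling dichotomy (a periodic point is repelling iff it lies in $J(P_\lambda)$). With that correction the argument works: on a stable neighborhood the continuation $p(\lambda)$ of $z_0$ would have to remain either in the Julia set or outside it, contradicting the approximation by both attracting and repelling cycles.

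The second inclusion has a genuine gap, which you honestly flag. The Montel argument correctly yields parameters $\lambda_k \to \lambda_0$ where a critical orbit lands on a repelling cycle, but the paragraph converting this into a cycle whose multiplier crosses the unit circle is not an argument: ``new branches of $N$-periodic cycles bifurcate'' and ``generically one branch carries attracting multipliers'' are assertions, not proofs. A critical orbit hitting a \emph{repelling} cycle does not by itself perturb that cycle's multiplier across the unit circle.

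The standard MSS proof of this direction does not go through critical collisions at all. Instead one argues the contrapositive directly: if no periodic point changes type on an open set $U$, then every repelling periodic point can be followed holomorphically over $U$ (no collisions between repelling points of different periods, since parabolic points are excluded); this gives a holomorphic motion of the countable dense set of repelling periodic points, which extends by the $\lambda$-lemma to a holomorphic motion of $J(P_{\lambda_0})$ conjugating the dynamics. By the previous theorem in the paper this forces $U \subset \stab(P)$. Thus $\Lambda \setminus \stab(P)$ is contained in the closure of $\mathcal{U}$. Your Montel-based route could in principle be completed, but it would require the full normality argument of MSS linking critical activity back to neutral cycles, which is essentially proving the theorem a second time.
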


Again using Montel's theorem, one can infer the following crucial density statement.

\begin{theorem}
The stability locus  of any holomorphic family of polynomials is open and dense. 
\end{theorem}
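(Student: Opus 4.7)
The plan is to deduce openness immediately from the definition and to establish density by contradiction, combining Baire's theorem, the preceding characterization of the bifurcation locus, and Montel's three-function normality criterion.

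Openness is clear, since $\stab(P)$ is defined as a union of open subsets of $\Lambda$. For density, suppose to the contrary that $\Lambda \setminus \stab(P)$ contains a nonempty open set $W$. Since this bifurcation locus equals the finite union $\bigcup_{i=1}^{d-1} \mathrm{Bif}(c_i)$, where $\mathrm{Bif}(c_i)$ denotes the closed set of parameters at which $\{P_\lambda^n(c_i(\lambda))\}_n$ fails to be normal, Baire's theorem yields a critical marking $c = c_i$ and a nonempty open $V \subset W$ with $V \subset \mathrm{Bif}(c)$. By the preceding theorem, parameters carrying an unstable periodic point are dense in $\Lambda \setminus \stab(P)$, hence in $V$; fix one such $\lambda_0 \in V$ with unstable periodic point of period $N$. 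The implicit function theorem, applied after passing to a finite ramified cover as discussed above, furnishes a holomorphic continuation $p(\lambda)$ on a neighborhood $U \subset V$ of $\lambda_0$ whose multiplier $\rho(\lambda) := (P_\lambda^N)'(p(\lambda))$ is nonconstant with $|\rho(\lambda_0)| = 1$. The open mapping theorem then allows one to choose $\lambda_+ \in U$ with $0 < |\rho(\lambda_+)| < 1$, so that $p(\lambda_+)$ lies on a (non-super) attracting cycle.

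Next one applies Montel's theorem. On a sufficiently small neighborhood $U' \subset U$ of $\lambda_+$, I would construct three pairwise distinct holomorphic functions $q_1, q_2, q_3 \colon U' \to \mathbb{C}$, each landing after a bounded number of iterations of $P_\lambda$ on the attracting cycle of $p$. Concretely, use the cycle points $\{P_\lambda^k(p(\lambda))\}_{0 \le k < N}$ augmented by holomorphic branches of preimages of $p(\lambda)$ under $P_\lambda$, which exist and are pairwise distinct on a small $U'$ by the implicit function theorem once $\lambda_+$ is chosen generically so that $p(\lambda_+)$ is not a critical value of $P_{\lambda_+}$. Since $\lambda_+ \in V \subset \mathrm{Bif}(c)$, the family $\{P_\lambda^n(c(\lambda))\}_n$ is not normal at $\lambda_+$, and Montel's three-function criterion forces the existence of $\lambda_* \in U'$ and $n \ge 0$ with $P_{\lambda_*}^n(c(\lambda_*)) = q_j(\lambda_*)$ for some $j$. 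Hence $c(\lambda_*)$ is preperiodic to the attracting cycle at $\lambda_*$. Persistence of attracting cycles and the holomorphic motion of their immediate basins keep $c(\lambda)$ in this basin throughout a neighborhood $U''$ of $\lambda_*$, so $\{P_\lambda^n(c(\lambda))\}_n$ converges locally uniformly to the cycle on $U''$ and is normal there, contradicting $U'' \subset V \subset \mathrm{Bif}(c)$.

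The main technical obstacle is the Montel step, specifically producing three genuinely disjoint holomorphic capture functions in all cases (short period $N$, or degeneration where $p$ becomes superattracting at $\lambda_+$); this is handled by selecting $\lambda_+$ in the open dense subset of $U$ where $\rho$ is nonzero and $p(\lambda)$ is not a critical value, and by iterating preimages of $p$ under $P_\lambda$ as many times as needed to obtain three distinct holomorphic branches.
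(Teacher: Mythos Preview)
Your proposal is correct and follows exactly the approach the paper indicates: the paper does not give a detailed proof here, only the sentence ``Again using Montel's theorem, one can infer the following crucial density statement,'' and your argument is the standard Ma\~{n}\'e--Sad--Sullivan proof via Montel's three-function criterion that this sentence is pointing to. One cosmetic remark: invoking Baire's theorem for the decomposition $\Lambda\setminus\stab(P)=\bigcup_{i=1}^{d-1}\mathrm{Bif}(c_i)$ is overkill, since this is a \emph{finite} union of closed sets, so if it contains an open set $W$ then some $\mathrm{Bif}(c_i)\cap W$ already has nonempty interior by elementary reasoning.
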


\begin{remark}
We shall return to the more general notion of stability of a pair in Chapter~\ref{chapter-pairs} where a characterization of the stability locus will be given in terms
of the variation of the Green function, see Proposition~\ref{prop:bifurcation} and Theorem~\ref{thm:active-characterization1}. 
\end{remark}

The previous theorem was made more precise by McMullen and Sullivan~\cite[Theorem~7.1]{McMS}.
\begin{theorem}\label{thm:full hol motion} 
Let $(P_\lambda)_{\lambda \in \Lambda}$ be any analytic family of polynomials. Then there exists an open and dense subset $C(P) \subset \stab(P)$
and, for any simply connected pointed domain $(U,\lambda_0)\subset C(P)$, a holomorphic motion $h \colon U \times \C \to \C$ that conjugates $P_\lambda$ to $P_{\lambda'}$ on $\C$ for any pair of parameters $\lambda, \lambda'\in U$.
\end{theorem}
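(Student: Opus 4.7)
My plan is to build the global holomorphic motion in Theorem \ref{thm:full hol motion} by combining the Ma\~n\'e–Sad–Sullivan motion of the Julia set (from the previous theorem) with dynamically defined motions on each Fatou component, after restricting to a sufficiently generic open subset of $\stab(P)$.

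I define $C(P)\subset\stab(P)$ to be the complement in $\stab(P)$ of the (countable) union of proper analytic subvarieties cut out by the conditions ``some critical point is periodic'' or ``a critical orbit lands on another critical point''; this is an open dense subset. I also note that on any connected component of $\stab(P)$ there are no parabolic parameters (a parabolic cycle is automatically unstable in the MSS sense, cf.\ the characterization of $\stab(P)$ via unstable periodic points), and the multiplier of any indifferent periodic cycle is constant along the component (otherwise the multiplier would hit roots of unity, producing parabolic parameters in $\stab(P)$, a contradiction). After shrinking $C(P)$ further I may also assume there are no ``queer'' Fatou components on $C(P)$ and that no attracting cycle becomes super-attracting, both being proper analytic conditions.

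Fix a simply connected pointed $(U,\lambda_0)\subset C(P)$ and apply the preceding theorem to obtain a holomorphic motion $h_J\colon U\times J(P_{\lambda_0})\to\C$ with $h_\lambda\circ P_{\lambda_0}=P_\lambda\circ h_\lambda$ on the Julia set; by the $\lambda$-lemma this extends quasiconformally to $\overline{J(P_{\lambda_0})}$. By Sullivan's no-wandering-domain theorem every Fatou component is pre-periodic, so it suffices to motion periodic Fatou components equivariantly and then pull back. For a periodic component $W_0$ of period $k$: if $W_0$ is an attracting basin of a cycle $\alpha_0$ with non-zero multiplier, then $\alpha(\lambda)$ varies holomorphically, the Koenigs linearization of $P_\lambda^k$ near $\alpha(\lambda)$ depends holomorphically on $\lambda\in U$, and pulling back by iterates of $P_\lambda^k$ extends it equivariantly to the whole immediate basin; I then set $h_\lambda := \kappa_\lambda^{-1}\circ\kappa_{\lambda_0}$ on $W_0$. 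If $W_0$ is a Siegel disk, the linearization $\phi_\lambda\colon\D\to W_\lambda$ (which is well-defined because the multiplier is constant on $U$) gives a holomorphic motion $\phi_\lambda\circ\phi_{\lambda_0}^{-1}$ of $W_0$. Pulling back all these motions by iterates of $P_{\lambda_0}^{-1}$ (which is fine because $U\subset C(P)$ excludes critical points from the pre-periodic orbits we hit) yields a motion on the whole Fatou set.

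Gluing the Fatou-side motion with $h_J$ on the Julia set produces a candidate map $h\colon U\times\C\to\C$. Injectivity at each $\lambda$ and holomorphy in $\lambda$ at each $z$ are built into the construction; the last thing to check is continuity in $z$ across $\partial W_0 \subset J(P_{\lambda_0})$. I expect this to be \textbf{the main obstacle}: one has to show that the boundary values of the Koenigs/linearizing coordinates match the MSS motion $h_J$ on $\partial W_0$, as $\lambda$ varies over $U$. The argument uses that $h_J$ is uniformly quasiconformal (with dilatation tending to $1$ as $\lambda\to\lambda_0$), together with Carath\'eodory boundary behaviour of the dynamical parameterizations, to conclude that the two extensions agree continuously, yielding the desired holomorphic motion of $\C$ equivariant under $P_\lambda$.
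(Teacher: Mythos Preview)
The paper does not give a proof of this theorem; it is stated with a citation to McMullen--Sullivan \cite[Theorem~7.1]{McMS}, so there is no in-paper argument to compare against.

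Your proposal has the right architecture, but the step you flag as ``the main obstacle'' is a genuine gap, not a detail. Two concrete problems: (i) for a Siegel disk the linearization $\phi_\lambda\colon\D\to W_\lambda$ need not extend continuously to $\partial\D$ (the boundary of a Siegel disk is typically not locally connected), so invoking Carath\'eodory boundary behaviour is unjustified and there is no mechanism to match your Fatou-side motion with $h_J$ on $\partial W_0$; (ii) in the attracting case the extended Koenigs map $\kappa_\lambda\colon W_\lambda\to\C$ is a branched cover, not a biholomorphism, so $\kappa_\lambda^{-1}\circ\kappa_{\lambda_0}$ is not well defined as written. A smaller point: the absence of ``queer'' components is not a proper analytic condition on parameters (though for polynomials such components do not exist anyway, so you should simply omit that clause).

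The McMullen--Sullivan argument sidesteps the gluing problem entirely. Rather than building $h_\lambda$ piecewise and matching across $J$, one constructs a holomorphic family $\lambda\mapsto\mu_\lambda$ of $P_{\lambda_0}$-invariant Beltrami coefficients supported on the Fatou set (parametrized via the Teichm\"uller space of the quotient Riemann surfaces of the grand-orbit relation), and then applies the measurable Riemann mapping theorem with holomorphic dependence on $\lambda$. The resulting quasiconformal maps conjugate the dynamics globally by construction, and since $\mu_\lambda$ vanishes on $J(P_{\lambda_0})$ they are automatically compatible with the MSS motion there; no boundary matching is required. If you want to salvage your direct approach, the cleanest fix is to reformulate it in these terms: define $\mu_\lambda$ on each periodic Fatou component via the pullback of the standard structure under your dynamical chart, spread by the dynamics, and solve the Beltrami equation, rather than trying to write down $h_\lambda$ explicitly and glue.
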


\begin{remark}\label{rem:even for rat}
The previous theorem is valid for any family of rational maps of the projective line.
\end{remark}


\section{Components of preperiodic points} \label{sec:preper comp}

In order to develop a specialization argument in \S\ref{sec:specialization} we shall need a detailed analysis of the locus of preperiodic points in arbitrary families
of polynomials.  Let $K$ be any algebraically closed field of characteristic $0$, and $\Lambda$ be any algebraic variety defined over $K$.
Let $P:(\lambda,z)\in \Lambda \times \A^1 \mapsto (\lambda,P_\lambda(z))\in \Lambda\times \A^1$ be any algebraic family of  degree $d$ polynomials, and define
\[\preper:=\left\{(\lambda,z)\in\Lambda\times \C\, : \ \{P_\lambda^n(z)\}_{n\geq1} \ \text{is finite}\right\}.\]
Observe that the set $\preper$ is a countable union of irreducible analytic hypersurfaces of $\Lambda\times\A^1$.

\begin{theorem}\label{thm:finite branches}
Let $(\lambda_0,z_0)$ be any point in $\Lambda\times\A^1$ which belongs to an infinite sequence $\{Z_i\}_{i\ge0}$ of distinct irreducible hypersurfaces included in $\preper$.
Then after a base change, conjugating the family by suitably affine transformations, and possibly replacing $P$ by some iterate
we are in the following situation.
\begin{itemize}
\item
The point $z_0$ is fixed for any parameter $\lambda$. Further it is super-attracting for $P_{\lambda_0}$ and the local degree function 
$\lambda \mapsto \ord_{z_0}(P_\lambda)$ is not locally constant near $\lambda_0$.
\item
One can find a sequence of integers $n_i \to \infty$ such that $Z_i$ is a component of $\{(\lambda,z) , P^{n_i}_\lambda(z) = z_0 \}$.
\end{itemize}
\end{theorem}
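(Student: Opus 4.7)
The plan is to extract from the infinite sequence $\{Z_i\}_{i\ge 0}$ enough structural information about $(\lambda_0,z_0)$ to place it in the claimed normal form, through a sequence of reductions. The key objects are the hypersurfaces $V_{n,m}:=\{P_\lambda^n(z)=P_\lambda^m(z)\}$, whose irreducible components exhaust $\preper$.

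\emph{Indexing and reduction to a normal form.} Each $V_{n,m}$ has only finitely many irreducible components, so the pairs $(n_i,m_i)$ for which $Z_i\subset V_{n_i,m_i}$ form an infinite set, and after extraction $n_i\to\infty$. Evaluating at $(\lambda_0,z_0)$ gives $P_{\lambda_0}^{n_i}(z_0)=P_{\lambda_0}^{m_i}(z_0)$, so $z_0$ is preperiodic; let $w_0:=P_{\lambda_0}^{N_0}(z_0)$ be the first periodic point in its orbit (preperiod $N_0\ge 0$, period $p\ge 1$). If $N_0\ge 1$, the finite morphism $\phi(\lambda,z):=(\lambda,P_\lambda^{N_0}(z))$ sends $(\lambda_0,z_0)$ to $(\lambda_0,w_0)$ and preserves $\preper$; its ramification along the (finite) critical forward orbit of $z_0$ is bounded, so components of $\preper$ through $(\lambda_0,z_0)$ correspond finite-to-finite to those through $(\lambda_0,w_0)$, and after extraction we reduce to $z_0=w_0$ periodic. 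Replacing $P$ by $P^p$ makes $z_0$ fixed by $P_{\lambda_0}$; a finite base change $\pi\colon\Lambda'\to\Lambda$ brings this fixed point to a regular section $\sigma$, and conjugating by the $\lambda'$-dependent translation $z\mapsto z-\sigma(\lambda')$ yields $P_{\lambda'}(0)=0$ for all $\lambda'\in\Lambda'$, with the point of interest at $(\lambda_0',0)$.

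\emph{Super-attracting and non-constant local degree.} Writing $P_{\lambda'}(z)=a_1(\lambda')z+\cdots+a_d(\lambda')z^d$ and $\mu(\lambda'):=a_1(\lambda')$, suppose $\mu(\lambda_0')\ne 0$. Along the Zariski-dense locus of $\Lambda'$ where $\mu(\lambda')$ is not a root of unity (possibly after iterating $P$ to kill a globally constant root-of-unity multiplier), the expansion
\[
P^n_{\lambda'}(z)-P^m_{\lambda'}(z)=\bigl(\mu(\lambda')^n-\mu(\lambda')^m\bigr)\,z+O(z^2)
\]
has non-vanishing linear coefficient, so $V_{n,m}$ is locally smooth at $(\lambda',0)$ with unique local branch $\{z=0\}$. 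A local branch of $V_{n,m}$ at $(\lambda_0',0)$ moving away from $\{z=0\}$ would, by continuity, enter a neighborhood of such $(\lambda',0)$ and be trapped on $\{z=0\}$, a contradiction. Hence every component of $V_{n,m}$ through $(\lambda_0',0)$ equals $\{z=0\}$, contradicting the distinctness of the $Z_i$. So $\mu(\lambda_0')=0$. Similarly, if $k(\lambda'):=\min\{j:a_j(\lambda')\ne 0\}$ were constantly equal to $k_0\ge 2$ near $\lambda_0'$, a B\"ottcher-type change of coordinates at the super-attracting fixed point (analogous to Proposition~\ref{prop:bottcher}) would conjugate $P_{\lambda'}$ locally to $z\mapsto z^{k_0}$, and $V_{n,m}$ would reduce set-theoretically to $\{z=0\}$ locally, yielding the same contradiction.

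\emph{Identification of the $Z_i$.} Since $\partial_z(P^q(z)-z)|_{(\lambda_0',0)}=(P^q)'_{\lambda_0'}(0)-1=-1\ne 0$ (using the super-attracting hypothesis), $(\lambda_0',0)$ is a simple point of $\{P^q(w)=w\}$ for every $q\ge 1$, so the only component of $\{P^q(w)=w\}$ through $(\lambda_0',0)$ is the fixed-point section $\{w=0\}$. The morphism $(\lambda,z)\mapsto(\lambda,P_\lambda^{m_i}(z))$ sends the irreducible $Z_i$ into an irreducible component $C$ of $\{P^{n_i-m_i}(w)=w\}$; since $(\lambda_0',0)\in Z_i$ maps to $(\lambda_0',0)\in C$, the preceding remark forces $C=\{w=0\}$. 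Hence $P^{m_i}_\lambda(z)=0$ on $Z_i$, so $P^{n_i}_\lambda(z)=0$ identically on $Z_i$, and $Z_i\subset\{P^{n_i}(z)=0\}$ is an irreducible component. The most delicate point I anticipate is the reduction from the preperiodic to the periodic case, which involves tracking how components of $\preper$ correspond under the ramified morphism $\phi$, together with the careful treatment of exceptional parabolic/root-of-unity multipliers appearing in the super-attractiveness argument.
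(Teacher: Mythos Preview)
Your reductions to a persistently fixed point and your final identification of the $Z_i$ as components of $\{P^{n_i}(z)=0\}$ are correct and close to the paper's approach. The genuine gap is in ruling out the parabolic case, where $\mu(\lambda_0')$ is a root of unity. Your continuity argument claims that since, for generic $\lambda'$, the linear term $\mu(\lambda')^n-\mu(\lambda')^m$ is nonzero and the only local branch of $V_{n,m}$ through $(\lambda',0)$ is $\{z=0\}$, the same must hold at $(\lambda_0',0)$. This inference fails. Take $P_\lambda(z)=(1+\lambda)z+z^2$ with $\lambda_0'=0$: here $\mu$ is non-constant and your generic locus is dense, yet $\{z=-\lambda\}$ is an irreducible component of $V_{2,1}=\{P^2(z)=P(z)\}$ through $(0,0)$ distinct from $\{z=0\}$. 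Over nearby $\lambda\neq 0$ this branch sits at $z=-\lambda\neq 0$, \emph{near} $(\lambda,0)$ but not \emph{on} $\{z=0\}$, so nothing traps it. Your parenthetical about iterating to kill a globally constant root-of-unity multiplier does not help either: iteration replaces the multiplier by $1$, still a root of unity, and the dense locus you invoke is then empty.

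What is actually needed, and what the paper supplies, is a uniform bound on the number of local branches of $\preper$ through a parabolic fixed point. The paper passes to the normal form $P_{\lambda_0'}(z)=\mu(z+z^{\nu q+1})+O(z^{\nu q+2})$ with $\mu^q=1$ and $\nu\le d-1$ (the Douady--Hubbard bound on parabolic multiplicity), and checks that whenever $q\mid(n-m)$ the germ of $P^n(z)-P^m(z)$ at $(\lambda_0',0)$ is equivalent to a Weierstrass polynomial in $z$ of degree $\nu q+1$, independently of $(n,m)$; combined with the inclusion $V_{q,0}\subset V_{n,m}$ this forces all these germs to coincide at $(\lambda_0',0)$. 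That caps the total number of components through $(\lambda_0',0)$ by $(d-1)q+2$, yielding the contradiction you need.
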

\begin{remark}
The statement above is actually true in the analytic category, for any holomorphic family of endomorphisms of degree at least $2$ of the projective space of any dimension. 
\end{remark}

\begin{example}
Consider the family $P_\lambda(z) = \lambda z + z^2$ parameterized by the affine line $\Lambda = \A^1_K$. 
Define the  hypersurfaces  $Z_1 = \{ z + \lambda =0\}$, and $Z_{n+1} = \{ P^n_\lambda(z) + \lambda = 0 \}$ for all $n\ge 1$. 
Since $P^n_\lambda(z)$ is a polynomial of degree $2^n$ of dominant term $z^{2^n}$ with no linear terms in $\lambda, z$, these varieties are all smooth at $(0,0)$ and tangent to $\{ \lambda =0\}$ up to order $2^n$. It follows that for each $n$ the variety $Z_n$
is irreducible and included in $\preper$. 
When $\lambda =0$, then $\{Z_n(0)\}_{n\in \N^*} = \{ (0,0)\}$, and since $-\lambda$ is strictly prefixed, $\{ Z_n(\lambda)\}_{n\in \N^*}$ is infinite when $\lambda \neq 0$.
\end{example}

\begin{proof}[Proof of Theorem~\ref{thm:finite branches}]
Since $K$ has characteristic $0$, we can resolve $\Lambda$, and suppose it is a smooth variety.
Replacing $P$ by some iterate, we may suppose that $z_0$ is preperiodic to a fixed point $z_\star$ for $P_{\lambda_0}$. 
This fixed point also belongs to an infinite set of irreducible hypersurfaces $W_i$ included in $\preper$ since $P$ is finite. 
Up to an affine change of coordinates and base change, $z_\star=0$ is a persistent fixed point i.e. $P(0) =0$.
Moreover, since the problem is local, we shall also 
argue in a formal neighborhood of $\lambda_0=0$ in $\Lambda$. Denote by $\mu:=(P_{0})'(0)$ the multiplier of the fixed point $0$.

We shall prove that $\mu =0$. 

\medskip

We first claim that $\preper$ is locally an irreducible subvariety near $(0, 0)$ when $\mu$ is non-zero and not a root of unity. 
Observe that $P_\lambda(z) = \mu z + O(\lambda z, z^2)$ so that 
\[P_{\lambda}^{m}(z)=\mu^{m} z + O(\lambda z, z^2) \text{ for all } m.\]
It follows that for all $l,m$ we have 
\[P_\lambda^{m}(z)-P_{\lambda}^{m+l}(z)=\left(\mu^m-\mu^{l+m}\right) z +O(\lambda z, z^{2}),\]
Since $\mu^{m}-\mu^{l+m}\neq0$, we conclude that 
\[ 
W_{m,l} := \{P_\lambda^{m}(z) = P_\lambda^{m+l}(z)\}\]
is locally at the origin given by the equation $\{z=0\}$, ending the proof in this case.

\medskip

Next suppose that $\mu$ is a $q$-th root of unity. 
Then we claim that  at most $(d-1) q+2$ irreducible hypersurfaces included in $\preper$ may contain $(0,0)$. 

By~\cite[\S 1]{MR542732}, 
we may formally conjugate $P_0$  to the following normal form:
\[P_0(z)=\mu \left(z+z^{\nu q+1}\right)+O(z^{\nu q+2}), \text{ for some } \nu\ge 1~.\]
By \cite[Proposition 6 p. 72]{orsay1}, we have\footnote{the arguments of Douady and Hubbard only works when $K=\C$, but we may invoke Lefschetz principle over an arbitrary field of characteristic zero.}  $\nu \le d-1$.
For any $m\geq1$, we find
\[P_\lambda^m(z)=\mu^m \left(z+mz^{\nu q+1}\right)+O(\lambda z, z^{\nu q+2}),\]
so that 
\[ P_\lambda^{m}(z)-P_\lambda^{m+l}(z)=(\mu^{m}-\mu^{m+l})z+(m\mu^m-(m+l)\mu^{m+l})z^{\nu q+1}+O(\lambda z, z^{\nu q+2}).\]
Observe that when $\mu^l\neq 1$, then the unique component of $W_{m,l}$ passing through the origin is given by $z=0$.
Otherwise $q$ divides $l$, and $P_\lambda^{m}(z)-P_\lambda^{m+l}(z)$ is equivalent to a Weierstra{\ss} polynomial in $z$ of degree $\nu q +1$. Since this degree is independent on $m$ and $l$ and $W_{0,q} \subset W_{m,l}$ we conclude that $W_{0,q} = W_{m,l}$. The maximal number of irreducible components of $W_{0,q}$ is
$\nu q +1$ which concludes the proof of the claim.

\medskip

At this point we have proved that $0$ is a super-attracting fixed point so that we can write
\[P_0(z)= z^k+O(z^{k+1}), \text{ for some } 2 \le k \le d~,\]
and 
\[P_\lambda(z)= \sum_{i=i_0}^{k} a_i(\lambda) z^i +O_\lambda(z^{k+1}),\] 
with $a_{i_0}\not\equiv0$, $a_i(0) = 0$ for $i \le k-1$,  and $a_k(0) =1$.
If the local degree of $0$ is locally constant, we have $a_i \equiv 0$ for all $i \le k-1$ so that $i_0 =k$, and we can write
\[ P_\lambda^{m}(z)-P_\lambda^{m+l}(z)=a_{m,l}(\lambda) z^{k^m}+O_\lambda(z^{k^m+1}),\]
with $a_{m,l}(0)= 1$. As above, we infer that there exists a unique irreducible hypersurface passing through $(0,0)$ and included in $\preper$, namely $\{z=0\}$.

It remains to show that any component of $W_{m,l}$ passing through $0$ is included in $\{ P_\lambda^{m}(z) = 0\}$. 
We observe that the image under $(\lambda, z) \mapsto (\lambda, P_\lambda^m(z))$ of $W_{m,l}$ is included
in $W_{0,l}$ which is defined by the equation
\[0 = z - P_\lambda^{l}(z)= z+O_\lambda(z^{2}),\]
hence locally equal to $z=0$. 
This concludes the proof of the theorem.
\end{proof}

The proof of the previous theorem works verbatim for any family of rational maps of the projective line. 
It implies in particular the following statement. 
\begin{theorem}
Let $\{R_t\}_{t\in \D}$ be any holomorphic family of rational maps of degree $d\ge 2$ parameterized by the unit disk.
Let $Z_n \subset \p^1_\C \times \D$ be any sequence of distinct irreducible closed curves
such that for all $t$, the point $Z_n(t) := Z_n \cap (\p^1_\C \times \{t\})$ is preperiodic for $R_t$.
 
Then there exists an open subset $U\subset \Lambda$ such that 
$\{ Z_n(t)\}_{n\ge0}$ is infinite for all $t\in U$.
\end{theorem}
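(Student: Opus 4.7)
I would show that the ``bad set'' $B := \{t \in \D : \bigcup_n Z_n(t) \text{ is finite}\}$ consists of isolated points, so that $B$ is discrete in $\D$, hence not dense, and then $U := \D \setminus \overline{B}$ will be the desired open set. Given $t_0 \in B$, the pigeonhole principle yields $z_0 \in \p^1_\C$ with $(t_0, z_0) \in Z_n$ for infinitely many $n$. Theorem~\ref{thm:finite branches} (which holds for families of rational maps by the remark following its statement) supplies a finite analytic base change $\varphi\colon (V, s_0) \to (U_0, t_0)$, an integer $k \ge 1$, and a family of affine conjugacies $\psi_s$ such that $Q_s := \psi_s^{-1} \circ R_{\varphi(s)}^k \circ \psi_s$ has $0$ as a persistent fixed point, super-attracting at $s_0$ of local degree $m := \ord_0(Q_{s_0})$ strictly greater than the generic value $m' := \min_{s \in V} \ord_0(Q_s)$; after extracting a subsequence, each pulled-back curve $\tilde Z_k$ is an irreducible component of $\{Q_s^{n_k}(z) = 0\}$ through $(s_0, 0)$ distinct from $\{z = 0\}$, with $n_k \to \infty$. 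Note first that $m' \ge 2$: otherwise $m' = 1$, and K{\oe}nigs linearization of $Q_s$ at the attracting fixed point $0$ for $s$ near but distinct from $s_0$ would force $0$ to be its only nearby preimage under every iterate, precluding non-trivial components from accumulating at $(s_0, 0)$. Writing $Q_s(z) = \sum_{j \ge m'} a_j(s)\, z^j$, the coefficient $a_{m'}$ is a non-identically zero holomorphic function on $V$ vanishing at $s_0$, so its zero set is discrete; I would pick a punctured disk $\Delta^*_\rho(s_0) \subset V$ meeting no other zeros of $a_{m'}$, ensuring $\ord_0(Q_s) = m'$ throughout.

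The crux is then to show $\{\tilde Z_k(s)\}_k$ is infinite for every $s \in \Delta^*_\rho(s_0)$. Suppose not: pigeonhole yields $s^* \in \Delta^*_\rho(s_0)$ and $w^* \in \p^1_\C$ with $\tilde Z_k(s^*) = w^*$ for infinitely many $k$. Applying Theorem~\ref{thm:finite branches} to $(s^*, w^*)$ in the family $Q$ forces $w^*$ to be a persistent super-attracting periodic point of $Q$ of some period $p$, with local degree jumping at $s^*$. If $w^* = 0$, then $s^*$ is a jump point of $\ord_0(Q)$, contradicting our choice of $\rho$. If $w^* \ne 0$, then $Q_{s^*}^p(w^*) = w^*$, while the inclusions $\tilde Z_k \subset \{Q^{n_k}(z) = 0\}$ give $Q_{s^*}^{n_k}(w^*) = 0$ for infinitely many $k$; pigeonhole on residues of $n_k$ modulo $p$ produces some $r \in \{1, \ldots, p-1\}$ with $Q_{s^*}^r(w^*) = 0$, whence $Q_{s^*}^p(w^*) = Q_{s^*}^{p-r}(0) = 0$ (since $0$ is fixed), contradicting $Q_{s^*}^p(w^*) = w^* \ne 0$.

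Both cases being impossible, $\{\tilde Z_k(s)\}_k$ is infinite throughout $\Delta^*_\rho(s_0)$; the image $\varphi(\Delta^*_\rho(s_0))$ is then a punctured open neighborhood of $t_0$ in $\D$ on which $\{Z_n(t)\}_n$ is infinite, proving $t_0$ isolated in $B$. Since a dense subset of $\D$ cannot consist entirely of isolated points (any point of a dense set has other set-points in every neighborhood), $B$ is not dense and $U := \D \setminus \overline{B}$ does the job. The subtlest step I anticipate is Case $w^* \ne 0$ of the crux: the second invocation of Theorem~\ref{thm:finite branches} supplies its own base change and iterate, which require careful bookkeeping, but the underlying dynamical obstruction---that no point can be simultaneously periodic and strictly preperiodic to a disjoint cycle---remains robust under these operations.
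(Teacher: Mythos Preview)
Your strategy differs substantially from the paper's. The paper's proof is a three-line application of the McMullen--Sullivan stability theorem (Theorem~\ref{thm:full hol motion}, valid for rational maps by Remark~\ref{rem:even for rat}): on any component $U$ of the stability locus the maps $R_t$ are all topologically conjugate, hence the local degree of every super-attracting cycle is constant in $t\in U$; Theorem~\ref{thm:finite branches} then forbids any $(t_0,z_0)$ with $t_0\in U$ from lying on infinitely many $Z_n$, which forces $\{Z_n(t_0)\}_n$ to be infinite for every $t_0\in U$. Your route avoids McMullen--Sullivan entirely and instead bootstraps Theorem~\ref{thm:finite branches}: one application at $t_0$ singles out a persistent fixed point $0$ whose local degree jumps there, then a second application at a hypothetical bad $s^*$ nearby would force another jump, which your choice of $\Delta_\rho^*(s_0)$ precludes. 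This is more hands-on but, if it works, yields an argument independent of holomorphic motions.

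There is, however, a genuine error in your treatment of the case $w^*\ne 0$. Theorem~\ref{thm:finite branches} does \emph{not} assert that the original point $w^*$ is periodic: after base change, conjugation and iteration, it is the periodic point in the forward orbit of $w^*$ that becomes the persistent super-attracting fixed point with non-constant local degree (the proof explicitly passes from $z_0$ to the fixed point $z_\star$ it is preperiodic to). In your situation $Q_{s^*}^{n_k}(w^*)=0$ with $0$ fixed, so $w^*$ is preperiodic to $0$ and the relevant periodic point is $0$ itself. Thus the case $w^*\ne 0$ does not call for your period-$p$ argument (whose premise $Q_{s^*}^p(w^*)=w^*$ is false); it collapses to the same contradiction as $w^*=0$: the theorem would force $\ord_0(Q_s)$ (or a fixed power thereof) to jump at $s^*$, while you arranged it to equal $m'$ throughout $\Delta_\rho^*(s_0)$. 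With this correction your scheme goes through. Note also that your claim $m'\ge 2$ is both unnecessary and insecurely argued (the injectivity radius of $Q_s$ at $0$ is of order $|a_1(s)|\to 0$, so K\oe nigs linearization does not give a uniform neighborhood): if $m'=1$, the second application of Theorem~\ref{thm:finite branches} already yields a contradiction because $0$ is then not super-attracting at $s^*$ for any iterate.
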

\begin{proof}
By Theorem \ref{thm:full hol motion} we may find a connected open set $U\subset \D$ over which the family is stable
and the dynamics of $R_t$ and $R_{t'}$ are topologically conjugated for any $t, t'\in U$, see Remark \ref{rem:even for rat}. In particular, the number of super-attracting orbits remains the same with the same multiplicity
since two super-attracting periodic points are topologically conjugated iff their local degree is the same. 
The result then follows from Theorem \ref{thm:finite branches}.
\end{proof}

\begin{remark}
The previous result may be used to clarify the specialization argument of \cite[p.16]{GNY2}.
\end{remark}

%
%
%
%
%
%


\chapter{Dynamical symmetries}\label{chapter:symmetries}

Symmetries of Julia sets play an important role in problems of unlikely intersections. 
In this chapter we study various notions of dynamical symmetries for polynomials. 

In \S\ref{sec:symmetry} we define the group of dynamical symmetries  $\Sigma(P)$ of a single polynomial $P$ and
present various characterizations of it especially in the Archimedean case. We investigate the variation 
of this symmetry group when $P$ belongs to an algebraic family in \S\ref{sec:symmetriesinfamily}.
We then prove (Theorem~\ref{tm:algebraiccharacterization}) that $\Sigma(P)$ consists of those affine transformations
mapping the set of preperiodic points of $P$ onto itself.

In \S\ref{sec:primitivity} we introduce the notion of primitive polynomials, which are polynomials that cannot be written as iterates
of polynomials of lower degree up to symmetries. Families of non primitive polynomials are sources of undesirable examples 
of unlikely intersections. We show that any family of non primitive polynomials is induced by a family of lower degree polynomials
(Proposition~\ref{prop:prim}).

It was progressively realized that a polynomial might have symmetries induced by polynomials of degree $\ge2$, see~\cite{BD,Ghioca-Hsia-Tucker2}.
We investigate this phenomenon in \S\ref{sec:Ritt-theory} building on Ritt's theory which aims at describing all compositional factors of a given polynomial.
We introduce the notion of intertwined polynomials. Two polynomials $P$ and $Q$ of the same degree are intertwined if the map $(z,w) \mapsto (P(z),Q(w))$ fixes a non trivial curve. Building on works by M\"uller-Zieve~\cite{zieve-muller}, 
Medvedev-Scalon~\cite{medvedev-scanlon}, Ghioca-Nguyen-Ye~\cite{MR3632102,GNY}, and Pakovich~\cite{pako:preimages,pako:rational,pakovich}, we explore this equivalence relation in the moduli space of polynomials.

We conclude this chapter by a description in \S\ref{sec:stratif} of the basic stratification of the moduli space of polynomials in degree $d\le 6$
induced by the presence of symmetries.


\section{The group of dynamical symmetries of a polynomial}\label{sec:symmetry}

Let $K$ be any algebraically closed field of characteristic zero.

A reduced presentation\index{polynomial!reduced presentation of} of a monic and centered polynomial $P\in K[z]$ is 
the choice of two integers $\mu$ and $m$ and a polynomial $P_0$
such that $P_0(0) \neq 0$, $P(z) = z^{\mu} P_0(z^m)$, and $P_0$ cannot be further written as 
$Q(z^l)$ for some polynomial $Q$ and some integer $l\geq 2$. Such a presentation is unique, 
$P_0$ is again monic and it is centered when $m =1$.

\begin{definition}
The group $\Sigma(P)$ of dynamical symmetries\index{polynomial!dynamical symmetries of} of a monic and centered polynomial having a reduced presentation 
$P(z) = z^{\mu} P_0(z^m)$ is the cyclic group  $\U_m$ when $P_0 \neq 1$ and equals the group of all roots of unity when $P$ is a monomial map. 
\end{definition}
When $P$ is not a monomial map, the order of $\Sigma(P)$ is always less than $d$, and it equals $d$ iff $P(z) = z^d + c$ for some $c \in K^*$, i.e. $P$ is unicritical and non monomial (whence non-integrable if $d\geq3$). 

Since two monic and centered polynomials can be conjugated only by mutiplication by a root of unity, it follows
that $\Sigma(P)$ does \emph{not} depend on the conjugacy class of $P$ in the set of monic and centered polynomials. 
In fact since the group of roots of unity is abelian, $\Sigma(P)$ is \emph{canonically} isomorphic to $\U_m$.

We can thus define the group of dynamical symmetries of any polynomial $P$ 
by setting 
\[ \Sigma(P) = \{ A^{-1} \circ \sigma \circ A, \, \sigma \in \Sigma (A^{-1} P A)\}\]
for any affine map $A$ such that $A^{-1}\circ P\circ A$ is  monic and centered.

\begin{remark}
When $K$ is not algebraically closed, we embed it into some algebraically closed field $L$. If $P$ is a polynomial with coefficients in $K$, we may view it as a polynomial in $L$ and consider $\Sigma_L(P) \subset \Aff(L)$ its group of dynamical symmetries. We shall set 
$\Sigma(P): = \Sigma_L(P) \cap \Aff(K)$. In general $\Sigma(P)$ is much smaller than  $\Sigma_L(P)$.
\end{remark}
Recall the definition of  the automorphism\index{polynomial!automorphism of} group of the polynomial $P$:
\[
\aut(P) = \{ g \in \Aff(K), \, P(g\cdot z) = g \cdot P(z) \}~. 
\]
\begin{example}
The group of symmetries and the group of automorphisms of the Chebyshev polynomials $T_d$ are equal to $\Sigma(T_d)=\mathbb{U}_2$ and 
\[\mathrm{Aut}(T_d)=\left\{\begin{array}{ll}
\{\mathrm{id}\} & \text{if} \ k \ \text{is even},\\
\mathbb{U}_2 & \text{if} \ k \ \text{is odd}.
\end{array}\right.\]
\end{example}

\begin{proposition}\label{prop:Sigma}
Let $P$ be any polynomial of degree $d\ge2$. The group $\Sigma(P)$ is the union of all finite subgroups $G$ of $\Aff(K)$ 
such that there exists a morphism $\rho : G \to G$ satisfying $P(g\cdot z) = \rho(g) \cdot P(z)$.
\end{proposition}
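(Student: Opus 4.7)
The plan is to reduce the statement to the case where $P$ is monic and centered (both sides of the claimed equality are equivariant under affine conjugation of $P$), and then to extract explicit constraints on $g$ and on the coefficients of $P$ from the semi-conjugation identity $P(g\cdot z)=\rho(g)\cdot P(z)$ by a direct coefficient comparison.

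For the inclusion $\Sigma(P)\subset\bigcup G$, I would use the reduced presentation $P(z)=z^{\mu}P_{0}(z^{m})$ with $P_0\ne 1$. A direct substitution yields $P(\zeta z)=\zeta^{\mu}P(z)$ for every $\zeta\in\U_m$, so the homomorphism $\rho\colon\U_m\to\U_m$ sending $\zeta$ to the scaling $w\mapsto\zeta^{\mu}w$ witnesses that $G=\U_m$ belongs to the union. The monomial case $P=z^d$ is handled in parallel: the identity $P(\zeta z)=\zeta^{d}P(z)$ shows that every finite cyclic group of roots of unity works, so the union contains all of $\U_\infty=\Sigma(z^d)$.

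For the reverse inclusion, take a finite subgroup $G\subset\Aff(K)$ and a morphism $\rho\colon G\to G$ with $P(gz)=\rho(g)P(z)$ for every $g\in G$. Writing $g(z)=\alpha z+\beta$ and $\rho(g)(w)=\gamma w+\delta$ and expanding both sides as polynomials in $z$: identification of the $z^d$ coefficient gives $\gamma=\alpha^d$, while identification of the $z^{d-1}$ coefficient, together with the centeredness of $P$, forces $d\alpha^{d-1}\beta=0$, hence $\beta=0$ in characteristic zero. Thus every element of $G$ is a scaling $z\mapsto\alpha z$; since $G$ is finite these $\alpha$ are roots of unity, and the image $\rho(g)$ must itself be a scaling in $G$, forcing $\delta=0$. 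The remaining identity $P(\alpha z)=\alpha^d P(z)$ then decomposes coefficient by coefficient into the constraints $\alpha^{d-j}=1$ for every $j\ge 0$ with $a_j\ne 0$. In the non-monomial case these indices are exactly $\{\mu+em:e\in E\}$, where $E\subset\{0,\ldots,s\}$ records the exponents of non-zero terms of $P_0$; the irreducibility clause of the reduced presentation — $P_0$ is not of the form $Q(w^l)$ for any $l\ge 2$ — amounts precisely to $\gcd(E\setminus\{0\})=1$, and using $\gcd(s,s-e)=\gcd(s,e)$ a short computation gives $\gcd\{(s-e)m:e\in E\}=m$, so the constraints collapse to $\alpha^m=1$ and hence $G\subset\U_m=\Sigma(P)$. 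In the monomial case no further constraint remains and $G$ is any finite subgroup of $\U_\infty$.

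The main technical point is the last gcd reduction: translating the combinatorial irreducibility of $P_0$ in the reduced presentation into the equality $\gcd\{(s-e)m:e\in E\}=m$, which is what prevents finite subgroups larger than $\U_m$ from slipping into the union. Everything else is a direct coefficient-by-coefficient comparison plus the remark that the requirement $\rho(g)\in G$ is precisely what kills the translation term $\delta$ in the non-monomial case with $a_0\ne 0$.
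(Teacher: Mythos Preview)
Your proof is correct and follows essentially the same approach as the paper: reduce to $P$ monic and centered, use the $z^{d-1}$ coefficient to kill the translational part of $g$, and then invoke the irreducibility of the reduced presentation to force $\alpha\in\U_m$. The only organizational differences are that the paper parametrizes the finite subgroup $G$ upfront as rotations about a common fixed point before running the coefficient comparison, and phrases the last step as $P_0((\zeta z)^m)=P_0(z^m)\Rightarrow\zeta^m=1$ rather than via your explicit gcd computation.
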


\begin{remark}\normalfont
In particular, we get $\mathrm{Aut}(P)\subset\Sigma(P)$, so that $\mathrm{Aut}(P^n)\subset \Sigma(P)$ for all $n\geq1$.
In general the inclusion $\cup_{n\in \N} \mathrm{Aut}(P^n)\subset \Sigma(P)$  is strict, for instance for any quadratic polynomial not conjugated to the square map. 
\end{remark}

\begin{proof}
One may suppose that $P$ is monic and centered and has a reduced presentation  $P(z) = z^{\mu} P_0(z^m)$.
The case $P$ is monomial (i.e. $P_0 = 1$) is easy to deal with so that we assume $\deg(P_0) \ge 1$ during the whole proof. 

\smallskip
Take any finite subgroup $G$ of $\Aff(K)$ with a morphism  $\rho : G \to G$ satisfying $P(g\cdot z) = \rho(g) \cdot P(z)$.
Any finite group is of the form $G = \{ \zeta z + a (\zeta -1), \, \zeta \in \U_r\}$ for some $r$ and some $a\in \C$. 
For any $\zeta$ one can thus find another root of unity $\xi$ such that 
$$
( \zeta z + a (\zeta -1))^{\mu} P_0( (\zeta z + a (\zeta -1))^m) = \xi z^{\mu} P_0(z^m) + a (\xi-1)
$$
Since $P$ is centered the term of degree $\deg(P) -1$ is zero on the right hand side. 
As $P_0$ is centered when $m=1$, the left hand side can be written as 
$$
(\zeta^\mu z^\mu + \mu a (\zeta-1) \zeta^{\mu-1}z^{\mu-1} + \text{l.o.t.}) 
(\zeta^{d-\mu} z^{d-\mu} + (d-\mu) \zeta^{{d-\mu}-1} a (\zeta-1) z^{{d-\mu}-1} +  \text{l.o.t.})
$$
so that comparing both sides we get 
$d a (\zeta -1) \zeta^{d-1} =0$
hence $a =0$. We thus have 
$$
( \zeta z )^{\mu} P_0( (\zeta z )^m) = \xi z^{\mu} P_0(z^m)
$$
Since $P_0$ is monic, we have $\zeta^\mu = \xi$ and therefore 
$P_0( (\zeta z )^m) = P_0(z^m)$ which implies $\zeta^m =1$. This proves $G \subset \Sigma(P)$.
Since $\Sigma(P)$ is a finite group, the conclusion follows. 
 \end{proof}

\begin{definition}
We define the group of \emph{reduced symmetries} by letting
\[\Sigma_0(P):=\bigcup_{n\geq1}\ker(\rho^n)\]
where $\rho$ is the morphism given by Proposition~\ref{prop:Sigma}.\index{polynomial!dynamical symmetries of! reduced}
\end{definition}
When $P$ is a monic and centered polynomial having reduced presentation $z^\mu P_0(z^m)$ then the group $\Sigma_0(P)$ satisfies
\[\Sigma_0(P) = \bigcup_{n\geq1} \{ \zeta \in \U_m, \, \zeta^{\mu^n} =1\}.\]
It is thus clear that
\begin{enumerate}
\item $\Sigma_0(P)$ is trivial if and only if $\mu\neq0$ and $\mu$ and $m$ are coprime,
\item $\Sigma_0(P)=\Sigma(P)=\U_m$ if and only if $\mu=0$, or all prime divisors of $m$ divide $\mu$.
\end{enumerate}

Over the field $K=\C$ of complex numbers, the dynamical symmetries have been studied in details. We have the following result by~\cite{Baker-Eremenko,Schmidt-Steinmetz}.
For any compact set $J$ of the complex plane, denote by $\aut(J)$ the subgroup of affine transformations $g \in \Aff(\C)$ fixing $J$.

\begin{theorem}\label{thm:SS}
Suppose $J$ is the Julia set of a complex polynomial of degree at least $2$ which is  not integrable. 

Then  there exists a polynomial $Q$ such that any polynomial $P$ with  $J(P) = J$ is of the form 
$P= g \circ Q^n$
for some integer $n$, and some $g \in \aut(J)$.
\end{theorem}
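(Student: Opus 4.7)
The plan is to analyze the B\"ottcher coordinates at infinity of all polynomials in $\mathcal{E}(J) := \{P \in \C[z] : \deg P \ge 2,\ J(P) = J\}$ and to reduce the statement to Julia's classical theorem on commuting polynomials. All elements of $\mathcal{E}(J)$ share the same filled Julia set $K$ (the polynomial hull of $J$) and hence the same Green function $g_K$ of $\C \setminus K$. Writing $\varphi_P$ for the B\"ottcher coordinate of $P$ at infinity, we have $\log|\varphi_P| = g_K$ near infinity, so for any $P_1, P_2 \in \mathcal{E}(J)$ the ratio $\varphi_{P_1}/\varphi_{P_2}$ is holomorphic and unimodular near infinity, hence a constant of modulus $1$. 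I then pick $Q \in \mathcal{E}(J)$ of minimal degree $d_0 \ge 2$ and set $\varphi := \varphi_Q$, so that $\varphi \circ Q = \varphi^{d_0}$; every $P \in \mathcal{E}(J)$ of degree $d$ thus satisfies $\varphi \circ P = \lambda \varphi^d$ for some $\lambda$ with $|\lambda| = 1$.

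Comparing the identities $\varphi \circ (Q \circ P) = (\varphi \circ P)^{d_0} = \lambda^{d_0}\varphi^{d_0 d}$ and $\varphi \circ (P \circ Q) = \lambda(\varphi \circ Q)^d = \lambda\varphi^{d_0 d}$, one obtains $Q \circ P = \alpha \circ P \circ Q$, where $\alpha$ is the affine map corresponding in $\varphi$-coordinates to multiplication by $\lambda^{d_0-1}$ (its affinity follows from matching degrees of the polynomials $Q \circ P$ and $P \circ Q$). The map $\alpha$ preserves $J$ since $\alpha(J) = (Q \circ P)(J) = J$ using total invariance of $J$ under $P \circ Q$, hence $\alpha \in \aut(J)$. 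Since $P$ is not integrable, Corollary \ref{cor:zdunik} ensures that $J$ is neither a circle nor a segment, and as any compact subset of $\C$ invariant under a positive-dimensional subgroup of $\Aff(\C)$ is necessarily a union of concentric circles, the group $\aut(J)$ is finite. Letting $N$ be the order of $\alpha$ and iterating the commutation relation yields $Q^N \circ P = P \circ Q^N$.

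Julia's classical theorem on commuting polynomials, applied to the non-integrable commuting pair $(P, Q^N)$, produces a polynomial $R$ and integers $a, b \ge 1$ with $P = R^a$ and $Q^N = R^b$. Then $J(R) = J(R^a) = J(P) = J$, so $R \in \mathcal{E}(J)$ and $\deg R \ge d_0$ by minimality of $d_0$. The main obstacle is to upgrade this lower bound to the equality $\deg R = d_0$. I would argue by descent: the same construction applied to $R$ in place of $P$ either shows that $\deg R = d_0$ directly, or produces a further decomposition $R = (R')^{a'}$ with $R' \in \mathcal{E}(J)$ of strictly smaller degree; since degrees in $\mathcal{E}(J)$ are bounded below by $d_0$, this descent terminates in finitely many steps at an element of $\mathcal{E}(J)$ of degree exactly $d_0$, which by the B\"ottcher comparison with $Q$ is of the form $g_0 \circ Q$ for some $g_0 \in \aut(J)$. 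Back-substituting, one obtains $R = g_R \circ Q$ for some $g_R \in \aut(J)$, and finally $P = R^a = g \circ Q^a$ for some $g \in \aut(J)$, the latter obtained by iteratively commuting $g_R$ past $Q$ (each such commutation producing another element of $\aut(J)$ via the same B\"ottcher analysis).
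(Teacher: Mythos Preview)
The paper does not prove Theorem~\ref{thm:SS}; it is quoted from \cite{Baker-Eremenko,Schmidt-Steinmetz}. Your overall strategy via B\"ottcher coordinates is natural and close to the original arguments, but the reduction to Julia's theorem has a genuine gap.

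The problematic step is the claim that $Q^N\circ P=P\circ Q^N$ when $N=\mathrm{ord}(\alpha)$. Write $\nu:=\lambda^{d_0-1}$, so that $\varphi\circ\alpha=\nu\,\varphi$. Since $\varphi\circ Q\circ\alpha=(\nu\varphi)^{d_0}=\nu^{d_0}\varphi\circ Q$, one has $Q\circ\alpha=\alpha^{d_0}\circ Q$, and an induction gives
\[
Q^k\circ P=\alpha^{\,1+d_0+\cdots+d_0^{k-1}}\circ P\circ Q^k .
\]
Thus commutation requires $N\mid (d_0^k-1)/(d_0-1)$ for some $k$, which can fail. Concretely, take $Q(z)=z^2+c$ with $c\neq 0$ generic (so $\aut(J)=\{\pm\mathrm{id}\}$) and $P:=-Q^{\circ n}$. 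Then $d_0=2$, $\lambda=-1$, $\alpha(z)=-z$ has order $N=2$, and $(2^k-1)/(2-1)=2^k-1$ is odd for every $k$; in fact $Q^k\circ P=-\,P\circ Q^k$ for all $k\ge1$, and one checks similarly that no iterate of $P$ commutes with any iterate of $Q$. Your descent paragraph inherits the same defect, since it relies on the same commutation.

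What your argument actually establishes is only $\lambda^{d_0-1}\in\U_m$ (where $\U_m$ is the image of $\aut(J)$ under $\sigma\mapsto\zeta_\sigma$ with $\varphi\circ\sigma=\zeta_\sigma\varphi$). The missing ingredient is the stronger assertion $\lambda\in\U_m$. Granting that, pick $g\in\aut(J)$ with $\zeta_g=\lambda$; then $P':=g^{-1}\circ P$ satisfies $\varphi\circ P'=\varphi^d$, hence $P'$ and $Q$ commute on the nose, Julia's theorem yields $P'=R^a$, $Q=R^b$, and $(\deg R)^b=d_0$ forces $b=1$, $R=Q$, so $P=g\circ Q^a$ with no descent needed. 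Proving $\lambda\in\U_m$ is precisely the nontrivial core of the Schmidt--Steinmetz and Baker--Eremenko arguments and does not follow from the semigroup manipulations you perform.
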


From this theorem, we obtain
\begin{proposition}\label{prop:sameSigma}
Pick any complex polynomial $P\in \C[z]$ of degree at least $2$. 
\begin{enumerate}
\item 
The  group $\aut(J(P))$  of affine transformations fixing $J(P)$ coincides with the group
of all $g\in \Aff(\C)$ such that  the Green function satisfies $g_P ( g\cdot z) = g_P(z) $.
\item 
When $P$ is not conjugated to a monomial, then $\Sigma(P) = \aut(J(P))$; otherwise $\aut(J(M_d)) = S^1$
 and $\Sigma(M_d) = \U_\infty$ is
the set of torsion elements of $\aut(J(M_d))$. 
\end{enumerate}
  \end{proposition}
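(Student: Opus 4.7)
The plan is to prove (1) by characterizing the Green function $g_P$ via its defining properties as a subharmonic function, then to establish $\Sigma(P)\subseteq \aut(J(P))$ by direct estimation of Green function growth, and finally to prove the reverse inclusion by separating three cases: $P$ non-integrable (via Theorem~\ref{thm:SS}), $P$ Chebyshev, and $P$ monomial.

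For (1), I rely on the characterization of $g_P$ as the unique continuous subharmonic function on $\C$ that is harmonic on $\C\setminus K(P)$, vanishes on $K(P)$, and satisfies $g_P(z) = \log|z| + \frac{1}{d-1}\log|a_0| + o(1)$ at infinity, where $a_0$ is the leading coefficient of $P$. If $g_P \circ g = g_P$ then $g$ fixes the zero locus $K(P)$ and hence its boundary $J(P)$. Conversely, given $g(z)=az+b \in \aut(J(P))$, the invariance extends to the filled-in Julia set $K(P)$ (the polynomial hull of $J(P)$); the transformation rule $\mathrm{cap}(g(K)) = |a|\, \mathrm{cap}(K)$ for the logarithmic capacity, together with $\mathrm{cap}(K(P))>0$, forces $|a|=1$, and then $g_P\circ g^{-1}$ satisfies the same characterizing properties as $g_P$, so the two agree by the maximum principle.

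For the inclusion $\Sigma(P)\subseteq\aut(J(P))$, take $g\in\Sigma(P)$; Proposition~\ref{prop:Sigma} provides a finite group $G\ni g$ and a morphism $\rho\colon G\to G$ with $P\circ g = \rho(g)\circ P$, hence $P^n\circ g = \rho^n(g)\circ P^n$ for every $n$. The elements of $G\subset\Aff(\C)$ have linear part a root of unity and uniformly bounded translation, so $\log^+|P^n(gz)| = \log^+|P^n(z)| + O(1)$; dividing by $d^n$ and letting $n\to\infty$ gives $g_P(gz) = g_P(z)$, and part~(1) concludes. For the reverse inclusion when $P$ is non-integrable, fix $\sigma\in\aut(J(P))$; combining $g_P\circ\sigma = g_P$ with $g_P\circ P = d\, g_P$ yields $g_P\circ(P\circ\sigma) = d\, g_P$, and since the linear part of $\sigma$ has modulus $1$ the asymptotic at infinity matches that required for the Green function of $P\circ\sigma$, so $g_{P\circ\sigma} = g_P$ and in particular $J(P\circ\sigma) = J(P)$. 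Applying Theorem~\ref{thm:SS} to both $P$ and $P\circ\sigma$ produces presentations $P = g_0\circ Q^n$ and $P\circ\sigma = h\circ Q^n$ with $g_0,h\in\aut(J(P))$ and the same $n$ (by equality of degrees), whence $P\circ\sigma = \eta\circ P$ for $\eta := h g_0^{-1}\in\aut(J(P))$. Setting $\rho(\sigma):= \eta$, the identity $P\circ(\sigma_1\sigma_2) = \rho(\sigma_1)\circ\rho(\sigma_2)\circ P$ combined with the surjectivity of $P$ shows that $\rho$ is a morphism into $\aut(J(P))$, which is finite in the non-integrable case (a standard consequence of Theorem~\ref{thm:SS}), so Proposition~\ref{prop:Sigma} gives $\sigma\in\Sigma(P)$.

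The Chebyshev and monomial cases are settled by direct inspection. For $\pm T_d$ one has $J(\pm T_d)=[-2,2]$, and the only affine maps preserving $[-2,2]$ are $\pm z$; the identity $T_d(-z)=(-1)^d T_d(z)$ shows the reduced presentation of $\pm T_d$ has $m=2$, so $\Sigma(\pm T_d)=\U_2=\aut(J(\pm T_d))$. For $M_d(z)=z^d$, $J(M_d)=S^1$, and an affine map $az+b$ preserves $S^1$ iff $b=0$ and $|a|=1$ (otherwise the image is an off-center circle), yielding $\aut(J(M_d))=S^1$; by definition $\Sigma(M_d)=\U_\infty$, which is precisely the set of torsion elements of $S^1$. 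The delicate technical step, which I expect to require the most care, is the identity $J(P\circ\sigma)=J(P)$ obtained through uniqueness of the Green function, together with the verification that the assignment $\sigma\mapsto\eta$ produces an honest morphism into the finite group $\aut(J(P))$ in the non-integrable setting.
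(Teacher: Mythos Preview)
Your proof is correct and follows essentially the same route as the paper's: part~(1) via the uniqueness of the Green function of $K(P)$, the inclusion $\Sigma(P)\subseteq\aut(J(P))$ from the intertwining $P^n\circ g=\rho^n(g)\circ P^n$, the reverse inclusion in the non-integrable case via Theorem~\ref{thm:SS} (after identifying $J(P\circ\sigma)=J(P)$), and direct inspection for $\pm T_d$ and $M_d$. One small correction: the finiteness of $\aut(J(P))$ in the non-monomial case is not a consequence of Theorem~\ref{thm:SS} as you assert, but of compactness combined with Corollary~\ref{cor:zdunik}---an infinite compact subgroup of $\Aff(\C)$ is conjugate to $S^1$, which would force $J(P)$ to be a circle and hence $P$ to be monomial; this is the argument the paper gives.
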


\begin{remark}\normalfont
It would be interesting to generalize the second item to any metrized non-archimedean field $(K,|\cdot|)$. Note however that when $P$ has good reduction, then $g_P = \log^+|z|$ and the whole group $\Aff(K^{\circ})$ preserves the Green function. It is likely that for a polynomial $P$ not having potential good reduction  then  $\Sigma(P)$  coincides with the set of $g\in \Aff(K)$ such that  $g_P ( g\cdot z) = g_P(z)$. 
\end{remark}

 \begin{proof}
 We assume $P$ is monic and centered. 
 Denote by  $G$ be the  group of all $g \in \Aff(\C)$ such that $g_P(g\cdot z) = g_P(z)$. 
 Any $g\in G$ leaves  the filled-in Julia set of $P$ invariant, hence belongs to $\aut(J(P))$. Conversely, any $g\in \aut(J(P))$
 preserves the Green function with a logarithmic pole at infinity of $J(P)$ hence belongs to $G$. 
 We thus have $G=\aut(J(P))$.

 In the case $P$ is a monomial, $\aut(J(P)) = S^{1}$ and $\Sigma(P)$ is the set of roots of unity, hence the result follows. 
 We suppose from now on that $P$ is not conjugated to a monomial map.  Observe that for any element $g$ in the group $\Sigma(P)$ then $P^n (g\cdot z) = \rho^n(g) \cdot P^n(z)$ for all $n$
hence $$g_P(g\cdot z) = \lim_n \frac1{d^n} \log^+|\rho^n(g) \cdot P^n(z)| = g_P(z)~,$$
 and $\Sigma(P) \subset \aut(J(P))$.
 
Since $J(P)$ is compact, the group $\aut(J(P))$ is also compact hence finite, since otherwise $J(P)$ would be a circle and $P$ is monomial by Corollary~\ref{cor:zdunik}.
Assume $J(P)$ is not a segment.
For any $g\in \aut(J(P))$ the polynomial $P(g\cdot z)$ fixes $J(P)$ hence by Theorem~\ref{thm:SS} there exists $\rho(g) \in \aut(J(P))$
such that  $P(g\cdot z) = \rho(g) \cdot P(z)$, and Proposition~\ref{prop:Sigma} (1) implies $\aut(J(P)) \subset \Sigma(P)$.

When $J(P)$ is a segment, then $P$ is conjugated to $\pm T_d$ by Corollary~\ref{cor:zdunik} so that we may assume that $J(P) = [-2,2]$ in which case we have $\aut(J(P)) = \{ \pm \id\}$ which is included in $\Sigma(P)$. 
\end{proof}

\begin{corollary}
Let $K$ be any field of characteristic zero and pick any $P\in K[z]$ of degree $d\geq2$. Then $\Sigma(P^n) = \Sigma(P)$ for all $n\in \N^*$.
\end{corollary}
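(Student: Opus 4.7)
The plan is to prove each inclusion separately. The easy direction $\Sigma(P)\subset\Sigma(P^n)$ follows immediately from Proposition~\ref{prop:Sigma}: any $g\in\Sigma(P)$ lies in a finite subgroup $G\subset\Aff(K)$ equipped with a morphism $\rho\colon G\to G$ such that $P\circ h=\rho(h)\circ P$ for every $h\in G$. Iterating this identity yields $P^n\circ h=\rho^n(h)\circ P^n$, and applying Proposition~\ref{prop:Sigma} again with the morphism $\rho^n$ shows $g\in\Sigma(P^n)$.

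For the reverse inclusion $\Sigma(P^n)\subset\Sigma(P)$, I first reduce to the case where $K$ is algebraically closed using $\Sigma(P)=\Sigma_{\bar K}(P)\cap\Aff(K)$, and then conjugate so that $P$ is monic and centered. The monomial case $P=M_d$ is immediate since $P^n=M_{d^n}$ and both symmetry groups coincide with the group of all roots of unity. In the non-monomial case, $P$ admits a reduced presentation $P(z)=z^{\mu}P_0(z^m)$ with $\deg P_0\geq 1$ and $\Sigma(P)=\U_m$. Iterating the first inclusion gives $P^n(\zeta z)=\zeta^{\mu^n}P^n(z)$ for every $\zeta\in\U_m$; hence every nonzero coefficient of $P^n$ lies at a degree congruent to $\mu^n$ modulo $m$, and one may write $P^n(z)=z^{\mu_n}\tilde P_n(z^m)$ with $\tilde P_n(0)\neq 0$, where $\mu_n=\ord_0(P^n)$. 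The reduced presentation of $P^n$ then takes the form $P^n(z)=z^{\mu_n}Q_n(z^{m_n})$ with $m\mid m_n$; the desired equality $\Sigma(P^n)=\Sigma(P)$ amounts to $m_n=m$, i.e. to the assertion that $\tilde P_n$ is itself reduced.

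The main difficulty lies in this last step. Rather than attempting a direct combinatorial induction on the formula $\tilde P_{n+1}(w)=\tilde P_n(w)^\mu P_0(w^{\mu^n}\tilde P_n(w)^m)$, I would invoke a specialization argument: reducedness of $\tilde P_n$ is determined by the vanishing pattern of the coefficients of $P^n$, which are fixed universal polynomials with integer coefficients in the coefficients of $P$. Since these coefficients lie in a finitely generated subfield of $K$ that embeds into $\C$, the claim reduces to the case $K=\C$. Over $\C$, Proposition~\ref{prop:sameSigma}(2) applied to both $P$ and $P^n$ identifies $\Sigma(P)=\aut(J(P))$ and $\Sigma(P^n)=\aut(J(P^n))$, and combined with the classical equality $J(P^n)=J(P)$ this yields $\Sigma(P^n)=\Sigma(P)$ directly. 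For the application to $P^n$ one must know that $P^n$ is not conjugated to a monomial; but a monic centered polynomial is conjugated to $M_{d^n}$ only if it equals $M_{d^n}$, in which case $J(P)=S^1$, so by Corollary~\ref{cor:zdunik} $P$ is conjugated to $M_d$ or $\pm T_d$, and since $J(\pm T_d)$ is a segment we obtain $P=M_d$, contradicting our non-monomial assumption.
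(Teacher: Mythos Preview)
Your proof is correct and follows the same route as the paper: reduce to $K=\C$ by embedding the finitely generated subfield of $K$ containing the coefficients of $P$, then invoke Proposition~\ref{prop:sameSigma}(2) together with $J(P^n)=J(P)$ to conclude $\Sigma(P^n)=\aut(J(P^n))=\aut(J(P))=\Sigma(P)$. Your treatment is in fact slightly more explicit than the paper's in two respects: you separately spell out the easy inclusion $\Sigma(P)\subset\Sigma(P^n)$ via Proposition~\ref{prop:Sigma}, and you justify that $P^n$ is not conjugated to a monomial when $P$ is not (a point the paper leaves implicit). On the other hand, the discussion of the reduced presentation $P^n(z)=z^{\mu_n}\tilde P_n(z^m)$ and the recursion for $\tilde P_n$ is a detour that plays no role once you pass to $\C$; you could go there directly.
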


\begin{proof}
Replacing $K$ by the field generated by the coefficients of $P$ over $\bar{\Q}$ we may suppose that 
$K$ is finitely generated over $\bar{\Q}$, and further fix an embedding $K \subset \C$. 
It is sufficient to treat the case $P$ is not conjugated to a monomial map. 
By the previous proposition, we obtain that
\[\Sigma(P^n) = \aut(J(P^n)) = \aut(J(P)) =\Sigma(P)~.\]
This concludes the proof.
\end{proof}
\begin{remark}
It is not clear how to get a purely algebraic proof of the previous fact which could also be applied in positive characteristic. 
\end{remark}


\section{Symmetry groups in family} \label{sec:symmetriesinfamily}
As in the previous section,  $K$ is any algebraically closed field of characteristic zero.
Let $V$ be a $K$-affine variety. 
Recall that an algebraic family\index{family of polynomials!algebraic} $P$ of polynomials of degree $d\ge2$ parameterized by $V$ is 
a map $P : V \times \A^1_K \to  V\times \A^1_K$ such that 
$P(t,z) = (t, P_t(z))$ and $P_t$ is a polynomial of degree $d$ for all $t\in K$.

Given any regular morphism $\pi: W \to V$, one can lift an algebraic family parameterized by $V$ to an algebraic family $Q$ parameterized by $W$
by setting $Q(t,z) = (\pi(t), P_{\pi(t)}(z))$.

Observe that in the case $\Sigma(P)$ is finite, it is a cyclic group hence its isomorphism class is determined by its cardinality.
The following explains the relation between the dynamical symmetries of a family $P$ and those of $P_t$ for a generic parameter $t$.

\begin{proposition}\label{prop:sameZariskiopen}
Let $V$ be an irreducible affine variety defined over $K$, and let $P$ be an algebraic family  parametrized by $V$. 
Then the function $t\mapsto \mathrm{Card} (\Sigma(P_t)) \in \N^* \cup \{ +\infty \}$ is upper semi-continuous with respect to the Zariski topology.

In particular there exists a Zariski dense open subset of $V$ on which all group of dynamical symmetries are isomorphic.
\end{proposition}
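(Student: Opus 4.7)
The plan is to reduce via a finite cover to a family of monic and centered polynomials, and then express $\Card(\Sigma(P_t))$ through a purely combinatorial condition on the support of the nonvanishing coefficients, from which upper semi-continuity becomes transparent.

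First I would reduce to the case where $P_t$ is monic and centered for all $t$. Write $P_t(z) = A(t)z^d + a_{d-1}(t)z^{d-1} + \cdots + a_0(t)$ with $A \in \cO(V)^*$ a nowhere-vanishing regular function. Conjugation by the translation $z \mapsto z - a_{d-1}(t)/(dA(t))$, which is regular on $V$, centers the family. To render it monic one then conjugates by the scaling $z \mapsto \alpha(t)z$ with $\alpha(t)^{d-1} = A(t)^{-1}$; such a function $\alpha$ becomes regular after the finite surjective cover $\pi \colon W \to V$ defined by $W = \{(t,\alpha) \in V \times \A^1 \,:\, \alpha^{d-1}A(t) = 1\}$ (or one of its irreducible components dominating $V$). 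The family $Q_w := A_w^{-1} \circ P_{\pi(w)} \circ A_w$ is then monic and centered of degree $d$ for every $w \in W$, and since $\Sigma$ is invariant under affine conjugation we have $\Card(\Sigma(Q_w)) = \Card(\Sigma(P_{\pi(w)}))$.

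Next I would describe $\Card(\Sigma(Q_w))$ combinatorially. Writing $Q_w(z) = z^d + \sum_{i=0}^{d-2} b_i(w)\, z^i$ and setting $I_w := \{i \in \{0,\ldots,d\} \,:\, b_i(w) \neq 0\}$ with the convention $b_d \equiv 1$, the reduced presentation $Q_w(z) = z^\mu P_0(z^m)$ yields $\mu = \min I_w$ and $m = \gcd\{i - \mu \,:\, i \in I_w\}$; hence $\Card(\Sigma(Q_w)) = m$ unless $Q_w = z^d$, in which case it equals $+\infty$. Equivalently, for $c \in \N^*$ with $c \le d$, the inequality $\Card(\Sigma(Q_w)) \ge c$ holds if and only if there exist integers $c \le m \le d$ and $\mu \in \{0,\ldots,m-1\}$ with $d \equiv \mu \pmod{m}$ such that $b_i(w) = 0$ for every $i \in \{0,\ldots,d-1\}$ with $i \not\equiv \mu \pmod{m}$; while $\Card(\Sigma(Q_w)) = +\infty$ if and only if $b_i(w) = 0$ for all $i < d$. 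In either case, the set $\{w \in W \,:\, \Card(\Sigma(Q_w)) \ge c\}$ is a finite union of Zariski closed subsets of $W$, hence Zariski closed. Since $\pi$ is finite (thus closed) and surjective, and since this set equals $\pi^{-1}(\{t \,:\, \Card(\Sigma(P_t)) \ge c\})$, its image $\{t \in V \,:\, \Card(\Sigma(P_t)) \ge c\}$ is Zariski closed in $V$, proving upper semi-continuity.

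Finally, since $t \mapsto \Card(\Sigma(P_t))$ takes values in the totally ordered set $\{1,\ldots,d\} \cup \{+\infty\}$, upper semi-continuity forces its infimum to be attained on a Zariski dense open subset $V_0 \subset V$ on which it is constant. On $V_0$ the groups $\Sigma(P_t)$ are all cyclic of the same order (or all equal to the full group of roots of unity, in the monomial case), hence pairwise isomorphic. The only mildly technical point in the plan is the descent of upper semi-continuity across the finite cover $\pi$, which is routine because finite morphisms are closed; the remainder reduces to bookkeeping around reduced presentations.
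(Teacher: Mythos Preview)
Your proof is correct and follows essentially the same route as the paper's own argument: pass to a finite cover over which the family becomes monic and centered, then read off $\Card(\Sigma(P_t))$ from the support of the nonvanishing coefficients to see that the superlevel sets are Zariski closed. The only cosmetic difference is that the paper phrases the closed condition as ``$m$ divides $\Card(\Sigma(P_t))$'' rather than ``$\Card(\Sigma(P_t)) \ge c$'', but these are equivalent formulations of the same combinatorial observation about reduced presentations.
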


\begin{proof}
Write $P_t(z)=\alpha_0(t)z^d+\alpha_1(t)z^{d-1}+\mathrm{l.o.t.}$ and
consider the affine subvariety $$W = \{ (a, b, t) \in \A^2_K \times V , \, \alpha_0(t)a^{d-1}=1 \text{ and } d\alpha_0(t)a^{d-1}b+\alpha_1(t)=0\}~.$$
The second projection $\pi: W \to V$ is a finite ramified cover, and the lift of the family $P$ by $\pi$
is a family of polynomials of degree $d$ which is conjugated to a family of monic and centered polynomials 
 $\tilde P_t(z) = z^d + \sum_{i=0}^{d-2} \tilde{\alpha}_{d-i}(t) z^i$ parametrized by $W$.
 The result then follows from the observation 
 that the set of $t \in W$ such that $\mathrm{Card}  (\Sigma(\tilde P_t))$ is divisible by $ m$  is equal to  
 the union over all integers $\mu\le d$ such that $m | d - \mu$ of the sets 
of  $ t \in W$ such that $\tilde{\alpha}_{d-i}(t) = 0$  when $i- \mu$ is not divisible by $m$.
 \end{proof}

\begin{remark}\normalfont
We may also consider a family $P$ as a polynomial $\mathsf{P}\in K(V)[T]$ over the field $K(V)$. The above implies that $\Sigma(\mathsf{P})\simeq \Sigma(P_t)$ for all $t$ in a dense Zariski open set of $V$.
\end{remark}

\begin{proposition}\label{prop:generic group}
For any degree $d\ge3$, there exists a non-empty Zariski open subset of $U \subset \mpoly^d$ such that 
$\Sigma(P) = \{ \id\}$ for all polynomials $P$ of degree $d$ whose conjugacy class lies in $U$.
\end{proposition}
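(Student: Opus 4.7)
The plan is to pull back to the finite cover $\poly_{\rm mc}^d\simeq\A^{d-1}$ of $\mpoly^d$ given by monic and centered polynomials, since possessing trivial dynamical symmetries depends only on the affine conjugacy class. Using the coordinates from \S\ref{sec:spaceofpoly}, I write a monic centered polynomial as
\[ P(z)=z^d+a_2\,z^{d-2}+a_3\,z^{d-3}+\cdots+a_{d-1}\,z+a_d, \]
and consider the Zariski open subset
\[ V := \{a_{d-1}\neq 0\}\cap\{a_d\neq 0\}\ \subset\ \A^{d-1}. \]
This set is non-empty precisely because $d\ge 3$, which ensures that $a_{d-1}$ is a genuine coordinate (not suppressed by the centering condition).

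First I observe that $V$ is stable under the $\U_{d-1}$-action $\zeta\cdot a_j=\zeta^{j+1-d}a_j$ recalled in \S\ref{sec:spaceofpoly}: indeed $a_{d-1}$ is fixed by the action and $a_d$ is merely rescaled by $\zeta$, so the conditions defining $V$ are invariant. Hence $V$ descends to a non-empty Zariski open subset $U:=\pi(V)$ of the quotient $\mpoly^d$, where $\pi\colon \poly_{\rm mc}^d\to\mpoly^d$ denotes the finite quotient map.

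The key step is then to verify that $\Sigma(P)=\{\id\}$ for every $P\in V$. Let $P(z)=z^\mu P_0(z^m)$ be the reduced presentation of \S\ref{sec:symmetry}. The condition $a_d\neq 0$ says $P(0)\neq 0$, hence $\mu=0$ and $P(z)=P_0(z^m)$. If $m\ge 2$, then the coefficient of $z^j$ in $P$ must vanish whenever $j$ is not divisible by $m$; in particular, the coefficient of $z^1$, which is $a_{d-1}$, would have to vanish, contradicting $P\in V$. Hence $m=1$; and since $a_{d-1}\neq 0$ prevents $P$ from being a monomial, we obtain $\Sigma(P)=\U_1=\{\id\}$ by the very definition of the symmetry group.

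There is essentially no obstacle here: the argument reduces to the elementary observation that a generic monic centered polynomial of degree $\ge 3$ has a non-vanishing linear term, and this alone rules out any non-trivial integer $m$ in the reduced presentation. The restriction $d\ge 3$ is used exactly at this point; in degree $2$, after centering there is no free linear coefficient and every non-monomial polynomial has $\Sigma(P)=\U_2$, consistent with the statement failing for $d=2$.
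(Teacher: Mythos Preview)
Your proof is correct and follows essentially the same approach as the paper: work in the monic and centered model, exhibit an explicit Zariski open set on which the reduced presentation forces $m=1$, and descend to $\mpoly^d$ via the finite quotient. The only minor differences are that the paper uses the larger open set $\{\prod_{i=2}^d a_i\neq 0\}$ and appeals to Proposition~\ref{prop:sameZariskiopen} (upper semicontinuity of $\Card\Sigma(P)$) for openness, whereas you verify everything directly with the more economical choice $\{a_{d-1}\neq 0,\ a_d\neq 0\}$; your argument is self-contained in that it does not need the semicontinuity result.
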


\begin{remark}\normalfont
Any quadratic polynomial $P$ is conjugated to $z^2 + c$ for some $c \in K$, hence $\Sigma(P)$ has cardinality equal to $2$ except if $P$ is conjugated to a quadratic monomial map.
\end{remark}

\begin{proof}
The set of monic and centered polynomials $P(z) = z^d + a_2 z^{d-2} + \ldots + a_d$  having a trivial group of dynamical symmetries
is Zariski open, since it contains all polynomials for which $\prod_{i=2}^d a_i \neq 0$.
The proposition follows from Proposition~\ref{prop:sameZariskiopen}, and the fact that the projection map from the space of monic and centered polynomials to $\mpoly^d$ is finite.
 \end{proof}


\section[Algebraic characterization of dynamical symmetries]{Algebraic characterization of the group of dynamical symmetries}  
\label{sec:alg-char-sym}

Let us first treat the case of a polynomial $P$ defined over a number field. Recall the definition of the canonical height $h_P$ from \S \ref{sec:global}.
\begin{proposition}\label{prop:algebraiccharacterization}
Let $\KK$ be a number field and pick $P\in \KK[z]$ of degree $d\geq2$ and $g\in\Aff(\KK)$. The following assertions are equivalent:
\begin{enumerate}
\item $g\in \Sigma(P)$,
\item $h_P(g\cdot z)=h_P(z)$ for all $z\in \bar{\KK}$,
\item $g (\preper(P,\bar{\KK})) = \preper(P,\bar{\KK})$,
\item $g (\preper(P,\bar{\KK})) \cap \preper(P,\bar{\KK})$ is infinite.
\end{enumerate}
\end{proposition}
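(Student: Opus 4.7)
The plan is to establish the cyclic chain $(1)\Rightarrow(2)\Rightarrow(3)\Rightarrow(4)\Rightarrow(1)$. The implication $(3)\Rightarrow(4)$ follows at once since $\preper(P,\bar{\KK})$ is infinite. For $(1)\Rightarrow(2)$, Proposition~\ref{prop:Sigma} produces a finite subgroup $G\subset\Aff(\KK)$ containing $g$ with a morphism $\rho\colon G\to G$ satisfying $P(g\cdot z)=\rho(g)\cdot P(z)$. Iterating gives $P^n(g\cdot z)=\rho^n(g)\cdot P^n(z)$ with $\rho^n(g)$ ranging over the finite set $G$. Each $h\in G$ satisfies $|h_{\st}(h\cdot w)-h_{\st}(w)|\le C_h$ uniformly in $w$ by functoriality of Weil heights on $\p^1$, so $|h_{\st}(P^n(gz))-h_{\st}(P^n(z))|=O(1)$ uniformly in $n$. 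Dividing by $d^n$ and passing to the limit yields $h_P(gz)=h_P(z)$. For $(2)\Rightarrow(3)$, Northcott's property gives $z\in\preper(P,\bar{\KK})\iff h_P(z)=0$; condition (2) immediately implies $g(\preper(P,\bar{\KK}))\subseteq\preper(P,\bar{\KK})$, and applying (2) to $g^{-1}z$ yields the reverse inclusion.

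The core of the proof is $(4)\Rightarrow(1)$. Set $Q:=g\circ P\circ g^{-1}\in\KK[z]$; a direct computation shows $\preper(Q,\bar{\KK})=g(\preper(P,\bar{\KK}))$, so (4) translates as: $\preper(P,\bar{\KK})\cap\preper(Q,\bar{\KK})$ is infinite. Pick a sequence of pairwise distinct points $z_n$ in this intersection; by Northcott we have $h_P(z_n)=h_Q(z_n)=0$. Both $h_P$ and $h_Q$ are Arakelov heights attached to semi-positive adelic metrizations on $\mathcal{O}_{\p^1}(1)$ (see \S\ref{sec:global}), so we may apply the Yuan--Thuillier equidistribution Theorem~\ref{tm:yuan} at each place $v\in M_\KK$ twice: once with the $P$-canonical metric and once with the $Q$-canonical metric. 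The averaged Galois orbit measures $\tfrac{1}{\deg(z_n)}\sum_{z\in\mathsf{O}(z_n)}\delta_z$ converge on $\p_v^{1,\an}$ simultaneously to $\mu_{P,v}$ and to $\mu_{Q,v}$; hence $\mu_{P,v}=\mu_{Q,v}$ at every place $v\in M_\KK$.

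To conclude $g\in\Sigma(P)$, fix an Archimedean place $v$ with an embedding $\bar{\KK}\hookrightarrow\C$. Since Julia sets are the supports of equilibrium measures, $J(P)=J(Q)=g(J(P))$, whence $g\in\aut(J(P))$. If $P$ is not conjugate to a monomial, Proposition~\ref{prop:sameSigma}(2) identifies $\aut(J(P))$ with the complex symmetry group of $P$, and since $g\in\Aff(\KK)$ this gives $g\in\Sigma(P)$. In the monomial case, $\aut(J(P))$ is a full complex circle and the adelic data are essential: after conjugating so that $P=M_d$, preservation of the Archimedean Julia set $S^1$ at every Archimedean place forces $g=\lambda z$ with $|\lambda|_w=1$ for every Archimedean $w$, while the equality of non-Archimedean equilibrium measures (Dirac masses at the Gau{\ss} point) forces $|\lambda|_v=1$ at every non-Archimedean $v$; Kronecker's theorem then shows that $\lambda$ is a root of unity, so $g\in\Sigma(P)$. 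The main obstacle is precisely this monomial case, where Proposition~\ref{prop:sameSigma} alone is insufficient and one must combine adelic equidistribution at \emph{all} places with Kronecker's theorem to recover the torsion needed to land inside $\Sigma(P)$ rather than merely inside the continuous group $\aut(J(P))$.
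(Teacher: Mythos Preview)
Your proof is correct and follows essentially the same strategy as the paper's: equidistribution of small points yields equality of equilibrium measures, and then Proposition~\ref{prop:sameSigma} closes the argument. There are two noteworthy differences worth recording.

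First, for $(1)\Rightarrow(2)$ the paper works locally: it shows directly that $g_{P,v}(g\cdot z)=g_{P,v}(z)$ at every place $v$ using $P^n(g\cdot z)=\rho^n(g)\cdot P^n(z)$ and the fact that $\{\rho^n(g)\}$ is finite, then sums. Your route via the global Weil height and the uniform bound $|h_{\st}(h\cdot w)-h_{\st}(w)|\le C_h$ is equally valid but slightly less sharp (it does not produce the local identity).

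Second, and more interestingly, for $(4)\Rightarrow(1)$ the paper derives $g_*\mu_{P,v}=\mu_{P,v}$ at an Archimedean place, shows $g_{P,v}(g\cdot z)-g_{P,v}(z)$ is harmonic and bounded hence zero, and then invokes Proposition~\ref{prop:sameSigma}. As written, this cleanly gives $g\in\aut(J(P))$, which equals $\Sigma(P)$ only in the non-monomial case. Your treatment of the monomial case is more explicit: you use the equidistribution at \emph{every} place to force $|\lambda|_v=1$ everywhere and then apply Kronecker. This is a genuine clarification---the paper's argument can be run at all places to reach the same conclusion, but your proof spells out why the single Archimedean place is insufficient when $P$ is monomial and $\aut(J(P))=S^1$ is a continuous group. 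The introduction of $Q=g\circ P\circ g^{-1}$ is a cosmetic repackaging (it is equivalent to the paper's $g_*\mu_{P,v}=\mu_{P,v}$), but it makes the application of Theorem~\ref{tm:yuan} to two metrizations on $\mathcal{O}_{\p^1}(1)$ transparent.
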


\begin{proof}
Assume first $g\in\Sigma(P)$. Recall that $P^n(g\cdot z) = \rho^n(g) \cdot P^n(z)$ for all $n$. Pick any place $v\in M_\KK$ and $g\in\Sigma(P)$. For any $z\in\bar{\KK}$, we have
 $$g_{P,v}(g\cdot z) = \lim_n \frac1{d^n} \log^+|\rho^n(g) \cdot P^n(z)|_v = g_{P,v}(z)~,$$
 so that the canonical heights are equal $h_P(g\cdot z)=h_P(z)$. This implies
 $g(\preper(P,\bar{\KK})) = \preper(P,\bar{\KK})$ and shows the sequence of implications $(1)  \Rightarrow (2) \Rightarrow (3) \Rightarrow (4)$. 
 
 \smallskip
 
Assume now that $g (\preper(P,\bar{\KK})) \cap \preper(P,\bar{\KK})$ is infinite and pick an infinite sequence of distinct preperiodic points $z_n$
 and $g\in \Aff(\KK)$ such that $g\cdot z_n$ is still preperiodic. Then the canonical heights of both $z_n$ and $g\cdot z_n$ are zero and by~Theorem~\ref{tm:yuan}
 we get $g_* \mu_{v,P} = \mu_{v,P}$ for any place $v$. Choose any archimedean place $v$.  The previous invariance implies  that $g_{P,v}(g\cdot z) - g_{P,v}(z)$
 is a harmonic function on $\C$ with at most logarithmic growth, hence is a constant. Since it is zero at all points $z_n$ we get $g_{P,v}(g\cdot z) = g_{P,v}(z)$. We conclude that  $g$ belongs to $\Sigma(P)$ using the first point of Proposition~\ref{prop:sameSigma}.
\end{proof}
Using a specialization argument, we now extend the previous result to any field of characteristic zero.

\begin{theorem}\label{tm:algebraiccharacterization}
For any field $K$ of characteristic zero and any $P\in K[z]$ of degree $d\geq2$, the group $\Sigma(P)$ coincides with the set of $g\in \Aff(K)$ such that 
$g (\preper(P,\bar{K})) \cap \preper(P,\bar{K})$ is infinite.
\end{theorem}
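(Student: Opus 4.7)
\emph{Easy direction.} If $g\in\Sigma(P)$, by Proposition~\ref{prop:Sigma} there exists a morphism $\rho\colon\Sigma(P)\to\Sigma(P)$ such that $P(g\cdot z)=\rho(g)\cdot P(z)$, and iterating $P^n(g\cdot z)=\rho^n(g)\cdot P^n(z)$ for all $n\geq 1$, so $g$ maps $\preper(P,\bar K)$ bijectively onto itself. Since for each $n\geq 1$ the equation $P^n(z)=z$ has $d^n$ roots in $\bar K$, the set $\preper(P,\bar K)$ is infinite, and so is the intersection $g(\preper(P,\bar K))\cap\preper(P,\bar K)=\preper(P,\bar K)$.

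\emph{Reduction to the finitely generated case.} Conversely, assume this intersection is infinite and pick distinct $z_n\in\preper(P,\bar K)$ with $g\cdot z_n\in\preper(P,\bar K)$. Let $K_0\subset K$ be the subfield generated over $\Q$ by the finitely many coefficients of $P$ and $g$. Each $z_n$ satisfies a polynomial identity $P^{k_n}(z_n)=P^{l_n}(z_n)$ whose coefficients lie in $K_0$, so $z_n\in\bar{K_0}$ and likewise $g\cdot z_n\in\bar{K_0}$. Replacing $K$ by $K_0$, we may assume $K$ is finitely generated over $\Q$. If its transcendence degree over $\Q$ is zero, then $K$ is a number field and the conclusion is Proposition~\ref{prop:algebraiccharacterization}.

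\emph{The function field case: setup.} Otherwise $K$ is the function field of a normal irreducible affine variety $V$ defined over a number field $\KK$, of positive dimension. The coefficients of $P$ and $g$ define, on a Zariski open $U\subset V$, an algebraic family $\{(P_v,g_v)\}_{v\in U}$, and after a suitable finite base change each $z_n$ extends to a regular section $v\mapsto z_n(v)$ on an open dense subset of $V$. Specializing the preperiodicity identities gives $z_n(v_0)\in\preper(P_{v_0},\bar\KK)$ and $g_{v_0}(z_n(v_0))\in\preper(P_{v_0},\bar\KK)$ wherever $v_0$ is admissible. By Proposition~\ref{prop:Sigma}, the condition $g_v\in\Sigma(P_v)$ is equivalent to the existence of an affine $\rho$ with $P_v(g_v\cdot z)=\rho\cdot P_v(z)$; eliminating $\rho$, this is Zariski closed in $v$. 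Thus to deduce $g\in\Sigma(P)$ at the generic point, it suffices to show that $\{v_0\in V(\bar\KK)\, : \ g_{v_0}\in\Sigma(P_{v_0})\}$ is Zariski dense in $V$.

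\emph{Main step, obstacle, and resolution.} Arguing by contradiction, if $g\notin\Sigma(P)$ then $Z:=\overline{\{v\in V\, : \ g_v\in\Sigma(P_v)\}}$ is a proper Zariski closed subset of $V$; for any $v_0\in V(\bar\KK)\setminus Z$, Proposition~\ref{prop:algebraiccharacterization} applied over the number field $\bar\KK$ yields that $g_{v_0}(\preper(P_{v_0},\bar\KK))\cap\preper(P_{v_0},\bar\KK)$ is \emph{finite}. As every $z_n(v_0)$ lies in this intersection, the specializations take only finitely many distinct values, forcing $v_0\in\bigcup_{n\ne m}\{v\in V\, : \ z_n(v)=z_m(v)\}$, which is a countable union of proper closed subvarieties. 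The central obstacle is that $V(\bar\KK)$ is itself countable, so such a countable union can a priori exhaust it, and there is no direct way to produce a single good $v_0$. We circumvent this by a height-theoretic argument: one defines the relative canonical height $\hat h_P\colon\bar K\to\R_+$ attached to $(P,K)$ via Call--Silverman, and invokes a function-field version of the Thuillier--Yuan equidistribution of points of small height. Since $\hat h_P(z_n)=\hat h_P(g\cdot z_n)=0$, this forces $g_*\mu_{P,v}=\mu_{P,v}$ at every place $v$ of $K$. Specializing at a place coming from an archimedean embedding $\bar K\hookrightarrow\C$ and applying Proposition~\ref{prop:sameSigma}, one concludes $g\in\aut(J(P))=\Sigma(P)$ (with a minor separate treatment when $P$ is affinely conjugate to a monomial), completing the proof.
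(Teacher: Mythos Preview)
The easy direction and the reduction to a finitely generated field are fine, and you correctly identify the central obstacle: at a closed point $v_0\in V(\bar\KK)$ the specialized preperiodic points $z_n(v_0)$ may collide, and one cannot rule this out by a countable-union argument since $V(\bar\KK)$ is itself countable.

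Your proposed resolution, however, does not work. The places of a function field $K=\KK(V)$ are all \emph{non-archimedean}: they come from divisorial valuations on models of $V$. There is no ``place coming from an archimedean embedding $\bar K\hookrightarrow\C$'' in the sense needed for a product formula or for equidistribution; such an embedding picks out a single complex polynomial, but it does not supply a norm participating in the function-field height. So even if one grants a function-field equidistribution theorem yielding $g_*\mu_{P,v}=\mu_{P,v}$ at every place $v$ of $K$, each such $\mu_{P,v}$ lives on a non-archimedean Berkovich line, and the paper explicitly notes (Remark after Proposition~\ref{prop:sameSigma}) that the non-archimedean analogue of $\aut(J(P))=\Sigma(P)$ is open---indeed it fails whenever $P$ has good reduction, since then the whole group $\Aff(K_v^\circ)$ preserves $g_{P,v}$. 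You therefore cannot conclude $g\in\Sigma(P)$ from the invariance of these measures.

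The paper resolves the obstacle by a completely different, geometric mechanism. One fixes an embedding $\bar\Q\hookrightarrow\C$ and works on the complex variety $X(\C)$. By Theorem~\ref{thm:full hol motion} there is an open polydisk $\Delta\subset X(\C)$ and a holomorphic motion of the whole plane conjugating $P_t$ to $P_{t_0}$ for all $t\in\Delta$. Each $z_n$ extends to a multivalued algebraic section, and along $\Delta$ these sections are carried by the motion. The key Lemma shows that two sections $z_n^{(i)}$ and $z_m^{(j)}$ (for $n\neq m$) cannot coincide at any $t\in\Delta$: equality at one parameter would propagate via the holomorphic motion to an identity of sections, forcing $z_n$ and $z_m$ to be Galois-conjugate over $K$, contrary to the choice of the sequence. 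Hence for every $t\in\Delta$ the set $\{z_n^{(i)}(t)\}$ is infinite, Proposition~\ref{prop:algebraiccharacterization} applies over the number field at $t$, and one gets $g_t\in\Sigma(P_t)$ on the whole polydisk. From there one reads off that $g$ has finite order and satisfies $P(g\cdot z)=\rho(g)\cdot P(z)$ with $\rho(g)$ given by explicit regular functions, so $g\in\Sigma(P)$ at the generic point. The missing idea in your argument is precisely this use of $J$-stability to prevent collisions of the preperiodic sections on a Euclidean open set.
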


\begin{remark}
Observe that \cite[Theorem 5.7]{BDM} is very similar to our theorem. It states that if $P$ is defined over a number field, then 
$g \in \Sigma (P)$ iff $P$ and $g \circ P$ have an infinite number of common preperiodic points. 
\end{remark}

\begin{proof}[Proof of Theorem \ref{tm:algebraiccharacterization}]
One direction is clear since $P^n(g\cdot z) = \rho^n(g) \cdot P^n(z)$
 so that $g(\preper(P,\bar{K})) = \preper(P,\bar{K})$.

 \medskip
 
Let us now focus on the converse implication. If $K$ is a number field, this is part of the statement of Proposition \ref{prop:algebraiccharacterization}, so we may  assume that $K$ 
has transcendence degree $\ge 1$ over $\Q$. Let $R$ be the $\bar{\Q}$-algebra of finite type generated over $\bar{\Q}$ by all coefficients of $P$.
 The spectrum of $R$ is an affine variety $X$ of positive dimension defined over $\bar{\Q}$ and the coefficients of $P$ can be viewed as regular functions on $X$, so that 
$P$ defines an algebraic family of degree $d$ polynomials parameterized by $X$. 
Observe that any maximal ideal $t$ of $R$ corresponds to a point in $X(\bar{\Q})$, and that you can associate to $t$ a polynomial $P_t$ with coefficients in $\bar{\Q}$.

Pick any $g\in \mathbb{G}_a(K)$ such that $g (\preper(P,\bar{K})) \cap \preper(P,\bar{K})$ is infinite. 
We claim that $g$ has finite order, and that there exists a unique automorphism $\rho(g)\in \Aff(K)$
such that \[P(g\cdot z)=\rho(g)\cdot P(z)~.\]
This claim implies that the group  $G:=\langle g\rangle\subset\Aff(K)$ generated by $g$ is finite, 
and that the map  $\rho(g^j) := \rho(g)^j$, $j\ge 0$ is a well-defined group homomorphism $\rho:G\to \Aff(K)$.
One concludes  $g \in \Sigma(P)$ by Proposition~\ref{prop:Sigma}.

\smallskip

Let us prove this claim, and pick $g\in \Aff(K)$ and a sequence of distinct points $z_n \in \preper(P,\bar{K})$ such that $g \cdot z_n$ is preperiodic.
We may suppose that $z_n$ and $z_m$ are not $\Gal(\bar{K}/K)$-conjugated for any $n\neq m$.

The polynomial $P$ induces an algebraic family of polynomials $(t,z) \in X\times \mathbb{A}^1 \mapsto (t,P_t(z))\in X\times\mathbb{A}^1$. 
According to Proposition \ref{prop:sameZariskiopen},  and possibly replacing $X$ by a Zariski open dense subset  we have
$\Sigma(P_t)\simeq \Sigma(P)$ for all $t \in X(\bar{\Q})$.
Restricting again $X$ if necessary, the affine automorphism $g$ can also be identified with a family of affine maps
\[(t,z)\in X\times \mathbb{A}^1 \mapsto (t,g_t\cdot z)\in X\times\mathbb{A}^1\]
with $g_t\cdot z=\alpha(t)z+\beta(t)$, $\alpha,\beta\in\bar{\Q}[X]$ and $\alpha$ is not identically zero.

We fix any embedding of $\bar{\Q}$ into $\C$ so that $P$ now induces a holomorphic family of complex polynomials parameterized by $X(\C)$.
By Theorem~\ref{thm:full hol motion}, we may find an open polydisk $\Delta \subset X(\C)$
and a holomorphic motion $h \colon \Delta \times \C \to \Delta \times \C$ conjugating the dynamics, i.e. 
such that $ P_t ( h_t(z)) = h_t(P_{t_0}(z))$, and $h_{t_0} = \id$ (where $t_0 \in \Delta$ is any marked point).

Fix some integer $n$. The point $z_n$ is defined over some finite extension of $K = \bar{\Q} (X)$, hence the field $\bar{\Q}(X)[z_n]$ is isomorphic to $\bar{\Q} (X) [Z] / (R_n)$ where $R_n \in \bar{\Q} (X)[Z]$ is the minimal polynomial of $z_n$. In particular $z_n$ can be viewed as a rational function on the graph $Y_n$ of $R_n$ in $X \times \p^1$. 
Observe that the natural projection map $\pi_n \colon Y_n \dashrightarrow X$ defines a generically finite rational map so that 
one can find Zariski dense open subsets $V_n \subset X$, and $W_n \subset Y_n$ 
over which $\pi_n \colon W_n \to V_n$ defines a finite (proper) unramified cover.  With these notations, $\pi_n(z_n(\tau))$ is a preperiodic point for $P_{\pi_n(\tau)}$ for all $\tau \in W_n$.
Since $V_n$ is Zariski dense, one can find a simply connected and connected open dense set  $\Delta_n \subset \Delta \cap V_n$ (see \cite{berlanga}) and finitely many holomorphic  functions $z^{(i)}_n \colon \Delta_n \to \C$ such that \[F_n(t) := \{ z_n (\tau), \, \tau \in \pi_n^{-1}(t)\} = \{z^{(i)}_n(t)\}\subset \preper(P_t,\C)\] for all $t \in \Delta_n$. 

Changing the base point if necessary, one can assume that $t_0 \in \Delta_n$.  Since the holomorphic motion on $\Delta$ conjugates the dynamics of $P_{t_0}$ with $P_t$
and $z^{(i)}_n(t)$ is preperiodic, it follows that the holomorphic function $ t\in \Delta \mapsto h_t (z^{(i)}_n(t_0))$ equals $z^{(i)}_n(t)$ on $\Delta_n$. In other words, 
$z^{(i)}_n(t)$ can be continued analytically over $\Delta$ for all $i$. 

\begin{lemma}
For any $n \neq m$, for any $t \in \Delta$ and for any $i,j$ we have 
 $z^{(i)}_n(t) \neq z^{(j)}_m(t)$.
\end{lemma}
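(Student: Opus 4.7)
The plan is to propagate coincidences of branches along the holomorphic motion and then conclude by an algebraicity argument that contradicts the assumption that the $z_n$ are pairwise non Galois-conjugate.

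First, I would check that each branch $z_n^{(i)}$ equals $h_t(z_n^{(i)}(t_0))$ throughout $\Delta$. Since the holomorphic motion conjugates the dynamics, for any preperiodic point $w$ of $P_{t_0}$ of type $(N,M)$ (that is, $P_{t_0}^{N+M}(w) = P_{t_0}^M(w)$) the map $t\mapsto h_t(w)$ is a holomorphic section of preperiodic points of the same type. In particular $t \mapsto h_t(z_n^{(i)}(t_0))$ is a holomorphic function on $\Delta$ taking preperiodic values. Near $t_0$, the preperiodic points of a given type are isolated in the dynamical plane and depend holomorphically on $t$ in the stable region, so this section must coincide with the algebraic branch $z_n^{(i)}(t)$ on a neighborhood of $t_0$ in $\Delta_n$. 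By the identity principle applied on the connected open set $\Delta$, we obtain the equality $z_n^{(i)}(t)=h_t(z_n^{(i)}(t_0))$ on all of $\Delta$, which canonically extends $z_n^{(i)}$ to $\Delta$.

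Second, suppose for contradiction that $z_n^{(i)}(t_1)=z_m^{(j)}(t_1)$ for some $t_1\in\Delta$ with $n\neq m$. Using the equality above and the injectivity of $h_{t_1}$, we obtain $z_n^{(i)}(t_0)=z_m^{(j)}(t_0)$, and then, applying $h_t$ again, we see that $z_n^{(i)}(t)=z_m^{(j)}(t)$ holds for every $t\in\Delta$. In particular this identity holds on the nonempty open subset $\Delta\cap\Delta_n\cap\Delta_m$, on which both sides are given by algebraic functions coming from the covers $W_n\to V_n$ and $W_m\to V_m$.

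Finally, two algebraic functions on an irreducible variety that agree on a nonempty analytic open set must coincide as elements of $\bar K$. Thus $z_n^{(i)} = z_m^{(j)}$ in $\bar K$. But by construction the set $\{z_n^{(i)}\}_i$ is the full $\Gal(\bar K/K)$-orbit of $z_n$, and similarly $\{z_m^{(j)}\}_j$ is the orbit of $z_m$; the equality above forces these orbits to coincide, so $z_n$ and $z_m$ are Galois-conjugate over $K$, contradicting our choice of the sequence $(z_n)$. The only subtle point here is the first step, namely the identification of the algebraic branch $z_n^{(i)}$ with the orbit $h_t(z_n^{(i)}(t_0))$ of the holomorphic motion; everything else is a routine consequence of this identification combined with the injectivity of the motion and the fact that algebraic functions are determined by their restriction to any nonempty open set.
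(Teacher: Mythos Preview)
Your proof is correct and follows essentially the same approach as the paper. Note that your first step---the identification $z_n^{(i)}(t)=h_t(z_n^{(i)}(t_0))$ on $\Delta$---is already established in the paragraph immediately preceding the lemma, so it belongs to the setup rather than to the proof itself. Your propagation step via injectivity of $h_{t_1}$ is actually slightly cleaner than the paper's, which instead argues that preperiodic points with an orbit of fixed cardinality form a finite set and hence coincidence at one parameter forces coincidence nearby; the conclusion via Zariski closures of the graphs in $X\times\A^1$ is then the same.
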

\begin{proof}
Pick any parameter $t_* \in \Delta$ such that $z^{(i)}_n (t_*)= z^{(j)}_m(t_*)$ for some $i,j$.
Since there is a holomorphic motion of the complex plane conjugating the dynamics on $\Delta$, and the set of points whose orbit has a fixed cardinality is finite, 
it follows that $z^{(i)}_n = z^{(j)}_m$ in a neighborhood of $t_*$.

The Zariski closure of the set of points $\{ (t, z^{(i)}_n(t)), t\in \Delta \}$ in the complex algebraic variety $X \times \A^1$
is equal to  $\{R_n =0\}$ since the latter is irreducible. It follows that 
$\{R_n =0\} = \{R_m =0\}$,
hence 
the two points $z_n$ and $z_m$ are Galois conjugated in $\bar{K}$. This contradicts our standing assumption. 
\end{proof}

Fix any $t \in \Delta$.  The preceding lemma shows that the set $\{ z_n^{(i)}(t)\}$ is infinite. 
Since $g_t \cdot z_n^{(i)}(t)$ is preperiodic by assumption, we conclude from Proposition~\ref{prop:algebraiccharacterization} that $g_t$ belongs to $\Sigma(P_t)$. 
By Proposition~\ref{prop:Sigma}, $g_t$ has finite order, and there exists an affine map $\rho_t(g_t) \cdot z=a(t)z+b(t)$
such that 
\[P_t(g_t\cdot z)=\rho_t(g_t)\cdot P_t(z).\]
If we write  $P_t(z)=\sum_{i=0}^da_i(t)z^i$, then the preceding equation yields
\begin{equation}\label{eq:unique!}
a(t)=\alpha(t)^d \ \text{ and } \ b(t)=P_t(\beta(t))-\alpha(t)^da_0(t)~.
\end{equation}
Now observe that $\alpha, \beta, a_i$ all define regular functions on $X$. We may thus set
$a:=\alpha^d$, and $b:=P(\beta)-\alpha^da_0$ which both belong to $\bar{\Q}[X]\subset K$, and
we set $\rho(g) = a z + b \in \Aff(K)$.

Since for all $t\in \Delta$, $g_t$ has finite order, and $\Delta$ is a complex open subset of $X$,  it follows that $g$ has finite order in $\Aff(K)$.
Similarly $P_t(g_t\cdot z)- \rho_t(g_t)\cdot P_t(z)$ vanishes on the complex open subset of points $(t,z) \in \Delta \times \C\subset X \times \A^1$ hence
$P(g\cdot z)= \rho(g)\cdot P(z)$. The uniqueness of $\rho(g)$ follows from~\eqref{eq:unique!}, and the proof of the Claim is complete.
\end{proof}


\section{Primitive families of polynomials}\label{sec:primitivity}

In this section, we fix any field $K$ of characteristic $0$ (not necessarily algebraically closed).

\begin{definition}
A polynomial $P$ of degree $d\ge 2$ defined over a field $K$ is said to be \emph{primitive}\index{polynomial!primitive} when the following holds. 
 For any polynomial $Q$ defined over an algebraic extension of $K$, and for any  $\sigma \in \Sigma(P)$ such that 
 $P = \sigma Q^n$ for some integer $n\ge1$, we have $n=1$.
\end{definition}
A polynomial is imprimitive when it is not primitive.

\begin{remark}
A polynomial $P$ whose degree is not a power of an integer is primitive. 
\end{remark}

We also introduce the notion of weak primitivity\index{polynomial!weakly primitive}. A polynomial $P$ is weakly primitive
iff $P = Q^n$ implies $n=1$. Any primitive polynomial is weakly primitive. However the two notions do not coincide in general as the next example shows.

\begin{example}
Pick any $a \neq0$, and set $Q(z) = z (z^2+a)$. Observe that $\Sigma(Q) = \{ \pm \id \}$.
The polynomial $P_1 = Q^2$ is imprimitive, but $P_2 = - Q^2$ is weakly primitive. 
Indeed if $P_2 = (Q_1)^2$ then $J(Q_1) = J(Q)$ hence $Q_1 = \pm Q$ by~\cite{Baker-Eremenko} which is absurd.
\end{example}

\begin{example}
Observe that any polynomial of the form $P(z) = (z^2+c)^2+ c  = z^4 + 2cz^2+ c^2+c$ for some $c\in K$ is not weakly primitive. Any other centered quartic polynomial
is weakly primitive.
Here is the complete list of  monic and centered quartic polynomials which are not primitive: 
\begin{itemize}
\item 
$P(z) = z^2 (z^2+c)$ with $c=0$ or $c = -2 \zeta$ and $\zeta^3 = -1$;
\item
$P(z) = z^4+ az^2 + c$ with $4 c = a^2 - 2a \zeta$ and $\zeta^3 = -1$.
\end{itemize}
\end{example}

\begin{definition}
An algebraic family $P$ parameterized by an irreducible algebraic variety $V$ defined over  a field $K$
is primitive when the induced polynomial $P \in K(V) [z]$ is primitive over the field $K(V)$.
A family parameterized by an arbitrary algebraic variety is primitive when its restriction to any irreducible components is primitive.
\end{definition}

\begin{proposition}\label{prop:prim}
Let $P$ be any algebraic family of polynomials parameterized by an irreducible algebraic variety $Z$ defined over a field $K$.
\begin{enumerate}
\item
Either the family is primitive, and the set of parameters $t\in Z$ such that $P_t$ is primitive forms a Zariski open subset of $V$. 
\item 
Or there exist an integer $n\ge 2$,  a finite proper and surjective map $\pi\colon W \to Z$,
a primitive family of polynomials $Q$ parameterized by $W$
and $\sigma\in\Sigma(P)$ such that $P_{\pi(t)} = \sigma_{\pi(t)}Q^n_t$ for all $t\in W$.
\end{enumerate}
\end{proposition}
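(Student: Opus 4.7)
I examine the polynomial $\mathsf{P}\in K(Z)[z]$ attached to the generic point of $Z$ and split the argument according to whether $\mathsf{P}$ is primitive over $K(Z)$.

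If $\mathsf{P}$ is primitive, my goal is to show that $\{t\in Z:\,P_t\text{ is primitive}\}$ is Zariski open. By Proposition~\ref{prop:sameZariskiopen} there exist a Zariski dense open $U\subset Z$ and a finite ramified cover $\pi\colon U'\to U$ over which the elements of $\Sigma(P_{\pi(t)})$ are realized by regular affine sections $\sigma_1,\dots,\sigma_m\colon U'\to\Aff$. For each divisor $n\geq 2$ of $d$ and each index $i$, the subset
\[Z_{n,i}\;:=\;\bigl\{t\in U':\,\sigma_i(t)^{-1}\circ P_{\pi(t)}=R^{\circ n}\text{ for some }R\in\Poly^{d/n}_{\bar{K}}\bigr\}\]
is Zariski closed in $U'$: this relies on the composition map $R\mapsto R^{\circ n}$ being finite onto its image on the moduli of polynomials, whose image is closed by Ritt's decomposition theorem (developed in \S\ref{sec:Ritt-theory}). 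Since $\mathsf{P}$ is primitive, each $Z_{n,i}$ is a proper closed subvariety, hence so is $\bigcup_{n,i}Z_{n,i}$. Pushing forward via the proper map $\pi$ produces a closed imprimitivity locus in $U$, and a Noetherian induction on the strictly smaller closed complement $Z\setminus U$, applied componentwise, then propagates Zariski openness to all of $Z$.

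If $\mathsf{P}$ is imprimitive, I pick $n\geq 2$ \emph{maximal} such that $\mathsf{P}=\sigma\circ\mathsf{Q}^{\circ n}$ with $\sigma\in\Sigma(\mathsf{P})$ and $\mathsf{Q}$ defined over some finite extension $L/K(Z)$; this maximum exists since $n$ divides $d$. The central claim is that $\mathsf{Q}$ is then primitive over $L$. Suppose not, so $\mathsf{Q}=\sigma'\circ\mathsf{Q}'^{\circ m}$ with $\sigma'\in\Sigma(\mathsf{Q})$ and $m\geq 2$. Letting $\rho\colon\Sigma(\mathsf{Q})\to\Sigma(\mathsf{Q})$ be the homomorphism from Proposition~\ref{prop:Sigma}, so that $\mathsf{Q}(\sigma'z)=\rho(\sigma')\mathsf{Q}(z)$, a direct induction on $k$ gives
\[\mathsf{Q}^{\circ k}(z)\;=\;\Bigl(\prod_{j=0}^{k-1}\rho^{j}(\sigma')\Bigr)\cdot\mathsf{Q}'^{\circ km}(z).\]
Iterating the identity $n$ times also yields $\mathsf{P}(\sigma'z)=\rho^{n}(\sigma')\cdot\mathsf{P}(z)$, using that all symmetries are rotations about $0$ and thus commute with $\sigma$; consequently $\Sigma(\mathsf{Q})\subseteq\Sigma(\mathsf{P})$. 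Setting $\tau_{0}:=\prod_{j=0}^{n-1}\rho^{j}(\sigma')\in\Sigma(\mathsf{P})$, I obtain $\mathsf{P}=(\sigma\tau_{0})\circ\mathsf{Q}'^{\circ nm}$ with $\sigma\tau_0\in\Sigma(\mathsf{P})$ (the latter being abelian) and $nm>n$, contradicting maximality.

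To realize $\mathsf{Q}$ as a genuine family, I take $W$ to be the normalization of $Z$ in $L$: it is an irreducible normal variety equipped with a finite proper surjective morphism $\pi\colon W\to Z$ and satisfies $K(W)=L$. The coefficients of $\mathsf{Q}$ and $\sigma$ are a priori rational on $W$; regularity on all of $W$ follows because $\sigma^{-1}\circ(P\circ\pi)=\mathsf{Q}^{\circ n}$ is regular and a meromorphic family of monic centered polynomials whose $n$-th iterate extends analytically must itself be analytic (a compositional analog of Lemma~\ref{lem:at infty}, proved by tracking the leading pole in the constant coefficient of $\mathsf{Q}^{\circ n}$). This yields a primitive family $Q$ over $W$ with $P_{\pi(t)}=\sigma_{\pi(t)}\circ Q_{t}^{\circ n}$ for every $t\in W$. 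The principal technical obstacle is Case~1, where the Zariski closedness of the imprimitivity locus rests on Ritt-theoretic finiteness of polynomial decompositions; Case~2, while formally clean, depends crucially on the commutation of rotations making $\Sigma(\mathsf{Q})\subseteq\Sigma(\mathsf{P})$.
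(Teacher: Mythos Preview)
Your Case~2 is correct and takes a genuinely different route from the paper. The paper constructs $W$ as a fiber product $V\times_{\poly^d_{\rm mc}}\widetilde{\poly}^l_{\rm mc}$ against the finite map $\sigma\Phi^n$, and deduces primitivity of $Q$ from maximality of $n$ by observing that $\varpi(W)$ cannot lie in the non-primitive locus of $\widetilde{\poly}^l_{\rm mc}$. You instead take $W$ to be the normalization of $Z$ in the field of definition $L$ of $\mathsf Q$, and prove primitivity of $\mathsf Q$ by the direct computation $\mathsf Q^{k}=\bigl(\prod_{j<k}\rho^{j}(\sigma')\bigr)\mathsf Q'^{km}$ combined with $\Sigma(\mathsf Q)\subseteq\Sigma(\mathsf P)$. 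This is a clean algebraic argument. Your regularity claim for the coefficients of $\mathsf Q$ on $W$ is also correct: it follows from finiteness of $\Phi^n$ (Corollary~\ref{cor:properness compo}) together with normality of $W$, since the coefficients of $\mathsf Q$ are then integral over the regular coefficients of $\mathsf Q^n$. You should cite that corollary rather than calling it an analogue of Lemma~\ref{lem:at infty}.

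Case~1 has a genuine gap. Knowing that the imprimitivity locus is closed in a dense open $U$ and, by induction, closed in each component of $Z\setminus U$ does \emph{not} yield closedness in $Z$: a closed subset of $U$ can have limit points in $Z\setminus U$, and nothing you wrote forces those limits to be imprimitive. What saves the situation is that your sets $Z_{n,i}$, being pullbacks of $\mathrm{Im}(\Phi^n)$ under the \emph{globally} regular map $t\mapsto\sigma_i^{-1}P_t$, are closed not merely in $U'$ but in the full base change of $Z$; their pushforward $C$ is then closed in $Z$, contained in the imprimitivity locus, and equal to it on $U$. Only then does the Noetherian induction on $Z\setminus U$ (handling the non-generic symmetries there) finish the job. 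The paper avoids this stratified argument entirely by proving once that $\bigcup_{l^n=d}S\bigl(\Phi^n(\widetilde{\poly}^l_{\rm mc})\bigr)$ is Zariski closed in $\poly^d_{\rm mc}$ and pulling back. Finally, the closedness of $\mathrm{Im}(\Phi^n)$ comes from the properness established in Proposition~\ref{prop:properness composition}, not from Ritt's theory; Ritt's theorems describe the combinatorics of decompositions, not the topology of the iterate locus.
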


This result relies on the properness of the composition map. 

\begin{proposition}\label{prop:properness composition}
The composition map $(P,Q) \mapsto P \circ Q$ induces a regular map $\Phi_{d,l} \colon \poly^d_{\rm mc} \times \poly^l_{\rm mc} \to \poly^{dl}_{\rm mc}$
which is proper when $d, l \ge2$.
\end{proposition}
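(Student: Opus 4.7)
The plan is to prove the stronger assertion that $\Phi_{d,l}$ is a closed immersion, from which properness follows immediately. Writing $P(z)=z^d+\sum_{k=2}^{d}a_k z^{d-k}$ and $Q(z)=z^l+\sum_{j=2}^{l}b_j z^{l-j}$, well-definedness amounts to checking that $P\circ Q$ is monic and centered: expanding $P\circ Q=Q^d+\sum_{k\ge 2}a_k Q^{d-k}$, the leading term is $z^{dl}$, and the coefficient of $z^{dl-1}$ vanishes because $Q$ has no $z^{l-1}$ term (so neither does $Q^d$) and each $a_k Q^{d-k}$ has degree $(d-k)l\le dl-2l\le dl-4$ under the assumption $l\ge 2$. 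Regularity of $\Phi_{d,l}$ is immediate as the coefficients of $P\circ Q$ are polynomial in $(a,b)$.

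To establish the closed immersion property, the main step is to construct an explicit regular left inverse $\Psi\colon \poly^{dl}_{\rm mc}\to \poly^d_{\rm mc}\times \poly^l_{\rm mc}$ with $\Psi\circ\Phi_{d,l}=\id$. Write $R=P\circ Q=z^{dl}+\sum_{j=2}^{dl}c_j z^{dl-j}$. The key structural observation is that $a_k Q^{d-k}$ has degree exactly $(d-k)l$, so it contributes to $c_j$ only for $j\ge kl$. This yields the following triangular structure: for $2\le j\le l$ one has $c_j=d\,b_j+S_j(b_2,\dots,b_{j-1})$ with $S_j\in\Z[b_2,\dots,b_{j-1}]$ a universal polynomial; for $j=kl$ with $2\le k\le d$ one has $c_{kl}=a_k+T_k(a_2,\dots,a_{k-1},b_2,\dots,b_l)$ for some universal polynomial $T_k$. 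The first family of relations inductively expresses $b_2,\dots,b_l$ as polynomials in $c_2,\dots,c_l$ with $\Q$-coefficients, and the second family then expresses $a_2,\dots,a_d$ as polynomials in $c_{2l},\dots,c_{dl}$ and the previously determined $b_j$'s. This defines $\Psi$, and $\Psi\circ\Phi_{d,l}=\id$ holds by construction.

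Granting this, the image of $\Phi_{d,l}$ coincides with the fixed locus $\{R\in\poly^{dl}_{\rm mc}:\Phi_{d,l}(\Psi(R))=R\}$, which is Zariski closed (cut out by equations $c_j-F_j(c_2,\dots,c_{dl})=0$ for each index $j$ outside $\{2,\dots,l\}\cup\{kl:2\le k\le d\}$), and $\Phi_{d,l}$ induces an isomorphism onto this locus. Hence $\Phi_{d,l}$ is a closed immersion and, in particular, proper. The main obstacle is the combinatorial bookkeeping needed to establish the triangular form of the relations above; it uses critically the hypothesis $l\ge 2$, which separates the blocks of indices $\{kl,\ldots,(k+1)l-1\}$ in which each $a_k$ first enters from the initial block $\{2,\ldots,l\}$ from which the $b_j$'s are read off.
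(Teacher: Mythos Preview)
Your proof is correct and establishes a stronger statement than the paper's: you show that $\Phi_{d,l}$ is a closed immersion, whereas the paper only proves properness. The key algebraic observation---that the coefficients $c_j$ depend on the $b_j$'s and $a_k$'s in a triangular fashion---is essentially the same as the one the paper exploits, but the paper uses it differently: it argues analytically over $\C$ by contradiction (if $\max\{|a_i|,|b_j|\}\to\infty$ along a sequence, then some $|c_j|\to\infty$ via these triangular relations), and then invokes faithfully flat descent to pass from $\C$ to $\Q$ and on to an arbitrary field of characteristic zero by base change.

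Your route is more elementary and more informative: the explicit retraction $\Psi$ works directly over any field of characteristic zero (one only needs to invert $d$), avoids the analytic detour and the descent machinery, and yields at once that $\Phi_{d,l}$ is a closed immersion---hence automatically finite, which is the content of the subsequent corollary in the paper. One small remark: in your triangular relations you may sharpen $S_j(b_2,\dots,b_{j-1})$ to $S_j(b_2,\dots,b_{j-2})$, since any monomial in $b_2,\dots,b_l$ of weight $j$ with at least two factors involves only $b_i$ with $i\le j-2$; this does not affect the argument.
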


\begin{proof}
We first need to check that the composition of two monic and centered polynomials remains monic and centered. This easy fact follows from the computation
$P(z) = z^d + O(z^{d-2})$, $Q(z) = z^l + O(z^{l-2})$, $P \circ Q(z) = (z^d + O(z^{d-2}))^l + O(z^{l(d-2)}) = z^{dl} + O(z^{dl-2})$. 
To check the properness of the composition map, we first  suppose $K = \C$. 
Write
\[ P(z) = z^d + a_2 z^{d-2} + \cdots + a_d,\,  Q(z) = z^l + b_2 z^{l-2} + \cdots + b_d\]
so that
\begin{align*}
P \circ Q (z) &:= z^{dl} + c_2 z^{dl-2} + \cdots + c_{dl} \\
&= (z^l + b_2 z^{l-2} + \cdots + b_d)^d + a_2 (z^l + b_2 z^{l-2} + \cdots + b_d) ^{d-2} + \cdots + a_d~. 
\end{align*}
Choose any constant $C>0$. 
Suppose by contradiction that we have a sequence of polynomials with coefficients $a_i^{(n)}, b_j^{(n)}$
such that $|c^{(n)}_l| \le C$ for all $n$, but $\max \{ |a_i^{(n)}|, |b_j^{(n)}|\} \to \infty$. 
If $\max \{ |b_j^{(n)}|\} \to \infty$, choose $i_0$ minimal such that $|b_{j_0}^{(n)}| \to \infty$. 
Since $l(d-1) + j_0 \ge l (d-2)$,
we have
\[c^{(n)}_{l(d-1) + j_0} = d b_{j_0}^{(n)} + \text{ Polynomial in } b_{2}^{(n)}, \ldots, b_{j_0-1}^{(n)} \]
which gives a contradiction. 
When $\sup_n \max \{ |b_j^{(n)}|\} < \infty$, and $\max \{ |a_i^{(n)}|\} \to \infty$, choose $i_0$ minimal with 
$|a_{i_0}^{(n)}| \to \infty$. 
This time 
\[ 
c^{(n)}_{l (d-i_0)} = a_{i_0}^{(n)} + \text{ Polynomial in } b_{2}^{(n)}, \ldots, b_{d}^{(n)}, a_2^{(n)},  \ldots, a_{i_0-1}^{(n)}\]
which again gives a contradiction. 

Since the properness of a map is preserved by faithfully flat descent, see~\cite[Lemma 35.20.14]{stacks-project},
it follows that the composition map is 
proper when $K = \Q$. It follows that it remains proper over any field of characteristic zero by base change, see~\cite[Lemma 29.39.5]{stacks-project}.
\end{proof}
\begin{corollary}\label{cor:properness compo}
For any finite collection of integers $d_1, d_2, \ldots, d_n \ge 2$, the composition map
\[ 
\Phi_{d_1, \ldots, d_n} \colon \prod_i \poly^{d_i}_{\rm mc} \to \poly^{d_1 \cdots d_n}_{\rm mc}, \, 
\]
sending $(P_1, \ldots, P_n)$ to 
$\Phi_{d_1, \ldots, d_n} (P_1, \ldots, P_n) := P_1 \circ \cdots \circ P_n$
is finite (hence proper).
\end{corollary}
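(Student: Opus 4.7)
The plan is to deduce the corollary from Proposition~\ref{prop:properness composition} in two stages: first establish properness by iterating the binary case, and then upgrade to finiteness using that the source is affine.

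For properness, I would induct on $n$. The case $n=2$ is exactly Proposition~\ref{prop:properness composition}. For the inductive step, note that we can factor
\[ \Phi_{d_1, \ldots, d_n} = \Phi_{d_1,\, d_2 \cdots d_n} \circ \bigl(\mathrm{id}_{\poly^{d_1}_{\rm mc}} \times \Phi_{d_2, \ldots, d_n}\bigr). \]
By the induction hypothesis, $\Phi_{d_2, \ldots, d_n}$ is proper, hence so is the product with $\mathrm{id}$ (properness is stable under base change and under products over the base field). Composing with the proper map $\Phi_{d_1,\, d_2 \cdots d_n}$ given by Proposition~\ref{prop:properness composition} yields that $\Phi_{d_1, \ldots, d_n}$ is proper.

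For finiteness, the crucial observation is that the source $\prod_{i} \poly^{d_i}_{\rm mc}$ is an affine variety (each factor is isomorphic to some $\A^{d_i-1}$). Fix a polynomial $R \in \poly^{d_1 \cdots d_n}_{\rm mc}$. The fiber $\Phi_{d_1, \ldots, d_n}^{-1}(R)$ is a closed subscheme of an affine variety, hence affine, and by properness of $\Phi_{d_1, \ldots, d_n}$ it is also proper over $\spec(K)$. Any affine scheme which is proper over a field is finite (its ring of global sections is simultaneously finitely generated as a $K$-algebra and finite-dimensional as a $K$-vector space). Therefore all fibers of $\Phi_{d_1, \ldots, d_n}$ are finite.

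Finally, a proper morphism of finite type with finite fibers between Noetherian schemes is finite (for instance, by the version of Zariski's Main Theorem recorded in EGA~IV or~\cite[Tag~02LS]{stacks-project}). Applying this to $\Phi_{d_1, \ldots, d_n}$ concludes the proof. The only non-formal ingredient is Proposition~\ref{prop:properness composition} itself; no further obstacle arises, as the rest is standard descent and dimension-counting through the affine/proper dichotomy.
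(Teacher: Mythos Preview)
Your proof is correct and follows essentially the same approach as the paper: induct on $n$ to establish properness from Proposition~\ref{prop:properness composition}, then use affineness to upgrade to finiteness. The only cosmetic differences are that the paper groups the first $n-1$ factors rather than the last $n-1$ in the inductive factorization, and that the paper passes directly from proper to finite via the fact that a proper morphism between affine varieties is finite, rather than going through finite fibers and Zariski's Main Theorem as you do.
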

\begin{proof}
Since $\poly^d_{\rm mc}$ is affine, it is sufficient to prove that the map $\Phi_{d_1, \ldots, d_n}$ is  finite. 
We proceed by induction on $n$. The case $n=2$ follows from the previous proposition. 
The induction step can be proved using the observation 
\[ \Phi_{d_1, \ldots, d_n}(P_1, \ldots, P_n) = \Phi_{d_1 \cdots d_{n-1}, d_n} \left( \Phi_{d_1, \ldots, d_{n-1}}(P_1, \ldots, P_{n-1}) , P_n\right)~.\]
 and the fact that a composition of two proper maps remains proper. 
\end{proof}

\begin{remark}
It is \emph{not} the case that the composition map 
$(P,Q) \mapsto P \circ Q$ is proper on $\poly^d \times \poly^l$. Indeed if $\alpha_t$  is a one-parameter subgroup of $\G_m$, e.g. $\alpha_t(z) = tz$, 
then $(P \circ \alpha_t^{-1}, \alpha_t \circ Q)$ is sent to the polynomial $P \circ Q$. However if $P$ is monic and centered and $P'(0) \neq 0$, then 
over the complex  $P \circ \alpha_t^{-1}$ diverges in the moduli space when $t\to 0$.
\end{remark}

\begin{remark}
Corollary~\ref{cor:properness compo} implies the iteration map $\Phi^n\colon \mpoly^d \to \mpoly^{d^n}$ defined by $\Phi^n(P):= P^n$ to be finite.  
This iteration map is in fact proper (hence finite) on the moduli space of complex rational maps of a fixed degree by~\cite[Corollary 0.3]{Demarco-Duke}. 
A short argument in the moduli space of complex polynomials bypassing the computations above goes as follows. Recall that 
the critical Green function   $G(P) = \max\{ g_P(c), \, P'(c) =0\}$ defines a proper function on $\poly^d$, see Proposition~\ref{prop:growth Green}. 
Suppose $P^n$ lies in a compact set $A$ of $\poly^{d^n}$. Then we have
$G(P) = G(P^n) \le \sup_A G =: C' <\infty$ so that $P$ belongs to $G^{-1} ([0, C'])$
which is compact. This proves $\Phi^n \colon \poly^d \to \poly^{d^n}$ is proper. 
\end{remark}

\begin{proof}[Proof of Proposition~\ref{prop:prim}]
For any $l$, we introduce the space $\widetilde{\poly}^l_{\rm mc}$ of all centered polynomials of degree $l$ 
whose dominant term is a root of unity of order $\le d$. It is a disjoint union of copies of $\poly^l_{\rm mc}$, and the $n$-th composition map
$\Phi_n \colon \widetilde{\poly}^l_{\rm mc} \to \widetilde{\poly}^{l^n}_{\rm mc}$ is proper by Corollary~\ref{cor:properness compo}. 

Pick any irreducible subvariety $Z$ of $\widetilde{\poly}^d_{\rm mc}$ for some $d\ge1$. We get a polynomial $P_Z$ defined over the field $K(Z)$. 
Given any root of unity $\sigma \in \Sigma(Z)$, the composition $\sigma P_Z$
determines a family of monic and centered polynomials hence an irreducible subvariety $\sigma(Z)$ in $\widetilde{\poly}^d_{\rm mc}$.

For any (possibly reducible) subvariety $V$ of  $\widetilde{\poly}^d_{\rm mc}$, 
we define 
\[S(V) := \bigcup_{\sigma \in \Sigma(W)}\sigma (W)\] where $W$ ranges over all irreducible subvarieties of $V$.
\begin{lemma}
The set $S(V)$ is Zariski-closed.
\end{lemma}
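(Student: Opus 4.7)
The plan is to realize $S(V)$ as a finite union of Zariski-closed subsets of $\widetilde{\poly}^d_{\rm mc}$. Since any irreducible subvariety of $V$ lies in a unique irreducible component, we have $S(V) = \bigcup_i S(V_i)$, so we may assume $V$ is irreducible.

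For each integer $m \in \{1,\dots,d\}$, introduce
\[V_m \;:=\; V \cap \bigl\{ P\in \widetilde{\poly}^d_{\rm mc}\,:\, \U_m\subset \Sigma(P) \bigr\}.\]
Writing $P(z) = \alpha z^d + \sum_{i=2}^{d} a_i z^{d-i}$, a direct comparison of coefficients in $P(\zeta z)= \zeta^d P(z)$ for $\zeta$ a primitive $m$-th root of unity shows that $\U_m\subset \Sigma(P)$ if and only if $a_i=0$ whenever $m\nmid i$; hence $V_m$ is Zariski-closed in $V$. I then claim the identity
\[S(V) \;=\; \bigcup_{m=1}^{d} \bigcup_{\sigma\in \U_m} \sigma(V_m).\]
For the inclusion $\supseteq$: given $t\in V_m$ and $\sigma\in \U_m$, the singleton $\{t\}$ is an irreducible subvariety of $V$ with $\sigma\in \U_m\subset\Sigma(P_t) = \Sigma(\{t\})$, hence $\sigma\cdot t\in S(V)$. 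For the inclusion $\subseteq$: given an irreducible $W\subset V$ and $\sigma\in \Sigma(W)$ with $\sigma(W)\subset \widetilde{\poly}^d_{\rm mc}$, the order $m$ of $\sigma$ is at most $d$; by Proposition~\ref{prop:sameZariskiopen}, $\Sigma(W)$ is the generic value of $\Sigma(P_t)$ for $t\in W$, so the closed condition $\U_m\subset \Sigma(P)$ holds at the generic point of $W$ and, by irreducibility, on all of $W$; hence $W\subset V_m$ and $\sigma(W)\subset \sigma(V_m)$.

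Each $\sigma(V_m)$ is the image of the Zariski-closed $V_m$ under the algebraic automorphism $P\mapsto \sigma P$ of $\widetilde{\poly}^d_{\rm mc}$ (which permutes connected components by multiplying the leading coefficient by $\sigma$), so it is Zariski-closed. The union being finite, $S(V)$ is Zariski-closed. The only subtle point to keep track of is that isolated monomial parameters in $V$ have infinite symmetry group $\U_\infty$, but the requirement $\sigma(W)\subset \widetilde{\poly}^d_{\rm mc}$ selects only finitely many $\sigma\in \U_\infty$, preserving the finiteness of the union.
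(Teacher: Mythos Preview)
Your proof is correct and follows essentially the same approach as the paper: both arguments exhibit $S(V)$ as a finite union $\bigcup_{G}\bigcup_{\sigma\in G}\sigma(V\cap W_G)$, where $W_G$ is the Zariski-closed locus of polynomials whose symmetry group contains $G$. The paper indexes over subgroups $G\subset\U_{d!}$, while you use the sharper (and equally valid) bound $G=\U_m$ with $m\le d$; your verification of the two inclusions and your explicit treatment of the monomial case are more detailed than the paper's, but the underlying idea is identical.
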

\begin{proof}
Observe that the symmetry group of any polynomial $P\in \widetilde{\poly}^d_{\rm mc}$ is a subgroup of $\U_{d!}$ except if $P= \zeta M_d$ in which 
case it is equal to $\U_\infty$. For each subgroup $G$ of $\U_{d!}$, the space $W_G \subset \widetilde{\poly}^d_{\rm mc}$
of polynomials such that $G \subset \sigma(P)$ is a closed subvariety, so that 
\[
S(V)
=
\bigcup_{G \subset  \U_{d!}} \bigcup_{\sigma \in G} \sigma (W_G \cap Z).
\]
This implies the claim.
\end{proof}

Observe that if a monic and centered polynomial $P$ of degree $d\ge2$ equals $\sigma Q^n$ for some $\sigma \in \Sigma(P)$
then $Q (T)= \zeta T^l + O(T^{l-2})$ where $l^n  = d$, and $\zeta$ is a root of unity of order $\le d$.
It follows that the set of polynomials in $\widetilde{\poly}^d_{\rm mc}$ which are not primitive is equal to
\[\bigcup_{l^n =d} S\left(\Phi^n(\widetilde{\poly}_{\rm mc}^l)\right)~.\]
It is thus Zariski closed since the image of a closed set by a proper morphism remains closed. 
In particular the subset $\mathrm{Prim}\subset \poly^d_{\rm mc}$ of primitive polynomials is Zariski open and dense in $\poly_{\rm mc}^d$.

\medskip

Pick now any family of polynomials of degree $d$ parameterized by an irreducible algebraic variety $V$. 
By base change, we may suppose that $P_t$ is monic and centered for all $t\in V$, so that we have 
an induced map $\pi\colon V \to \poly^d_{\rm mc}$.

When $\pi(V)$ intersects $\mathrm{Prim}$, then the set of parameters $t \in V$ such that $P_t$ is primitive is equal to $\pi^{-1}(\mathrm{Prim})$ which is Zariski dense since $V$ is irreducible. We claim that the family is primitive which implies (1). We argue by contradiction,  and pick an algebraic extension $L/K(V)$ and a family of centered polynomials $Q \in L[z]$ whose dominant term is a root of unity such that $P = \sigma Q^n$ for some root of unity $\sigma \in \Sigma(P)$ and some $n\ge2$. Since $Q$ has finitely many coefficients, we can assume that $L/K(V)$ is finite, and find an irreducible algebraic variety $W$ with a generically finite rational map $\pi \colon W \dashrightarrow V$ such that $L = K(W)$ and the field extension is induced by $\pi$.
Reducing $V$ and $W$ to suitable Zariski open subset, we may assume that $\pi$ is regular and surjective. We get a contradiction since for any $t \in V$ there exists $\tau\in W$
such that  $P_t = \sigma Q^n_\tau$. 

\smallskip

When $\pi(V)$ is disjoint from $\mathrm{Prim}$, one can find a maximal integer $n \ge2$ such that there exists $l\ge 2$
with $\pi(V) \subset \sigma \Phi^n(\widetilde{\poly}_{\rm mc}^l)$ for some root of unity $\sigma\in \Sigma(P_{\pi(V)})$, and we can form the 
fiber product:
\begin{displaymath}
    \xymatrix{ W \ar@{.>}[d]_{\Psi^n} \ar@{.>}[r]^(.4){\varpi} & \widetilde{\poly}^l_{\rm mc} \ar[d]^{\sigma \Phi^n} \\
               V \ar[r]^(.4){\pi} & \poly^d_{\rm mc}}
\end{displaymath}
Concretely when $K$ is algebraically closed the set of $K$-points of $W$ is obtained as the set of pairs $\tau = (t, Q) \in V(K) \times \widetilde{\poly}^l_{\rm mc}(K)$ such that $P_t = \sigma Q^n$.

In any case, since $\Phi^n$ is finite, the map $\Psi \colon W \to V$ is also finite and we get a family $Q_\tau$ of monic and centered polynomials of degree $l$
parameterized by $W$ such that $Q^n_\tau = \sigma P_{\Psi(\tau)}$. Observe that $\varpi(W)$ cannot be included in 
\[\bigcup_{\substack{j^m= l\\ m\ge 2}} S\left( \Phi^m(\poly_{\rm mc}^j) \right) \subset \poly^l_{\rm mc}\]
since we chose $n$  to be maximal. It follows from our previous arguments that there exists at least one irreducible component $W'$ of $W$ for which $Q_\tau$ is primitive for a Zariski dense open subset of $\tau \in W'$, and this family is primitive which proves (2).
\end{proof}


\section{Ritt's theory of composite polynomials}\label{sec:Ritt-theory}

In this section we review some aspects of Ritt's theory of decomposition of polynomials extended by Medvedev and Scanlon in~\cite{medvedev-scanlon} and further developed by Ghioca, Nguyen and his co-authors~\cite{MR3632102,GNY}, and Pakovich in a series of papers~\cite{pako:preimages,pako:rational,pakovich}. A modern account on the original approach of Ritt is described by M\"uller and Zieve in~\cite{zieve-muller}. 
Ritt's theorems are proved over a field of arbitrary characteristic by Zannier in~\cite{MR1244972} (see also~\cite{MR1770638}).

As in the previous section the base field $K$ is any field of characteristic zero. 
\subsection{Decomposability}

We start with the following basic definition
\begin{definition}
A polynomial $P$ of degree $d\ge 2$ is said to be \emph{decomposable} if it may be written $P = Q \circ R$ with $\deg(Q), \deg(R) \ge 2$, and
\emph{indecomposable}\index{polynomial!indecomposable} otherwise.
\end{definition}
\begin{remark}
If the degree of $P$ is prime, then $P$ is indecomposable. If $P$ is indecomposable, then it is primitive in the sense of the previous section. Observe that 
an integrable map (i.e. $P=M_d$ or $\pm T_d$) is indecomposable iff $d$ is prime.
\end{remark}

It is easy to see that any polynomial admits a complete decomposition, i.e. can be written $P = P_1 \circ \cdots \circ P_s$ with $P_1, \ldots, P_s$ indecomposable. 
Complete decompositions are not unique, but Ritt  
described how to pass from one decomposition to another, see~\cite{Ritt}.

Let $P$ and $Q$ be two indecomposable polynomials. A Ritt move\index{Ritt move} for $(P,Q)$ is a pair of two indecomposable polynomials $(\bar{P}, \bar{Q})$ such that
$P \circ Q = \bar{P} \circ \bar{Q}$. There is a short list of possible Ritt moves: 
\begin{itemize}
\item[(M1)]
$P$ and $Q$ arbitrary and $\bar{P} = P \circ \sigma^{-1}$, $\bar{Q} =  \sigma \circ Q$ for some affine map $\sigma$;
\item[(M2)]
$P=\nu \circ z^s R^n(z) \circ \sigma_1^{-1}$, $Q=\sigma_1 \circ z^n \circ \mu $, and
$\bar{P} = \nu \circ z^n\circ \sigma_2^{-1}$, $\bar{Q} = \sigma_2 \circ z^sR(z^n) \circ \mu$, 
where $\nu, \sigma_1, \sigma_2$ and $\mu$ are affine, $R$ is a polynomial,  $n\ge 1$ and $s\ge 0$ are coprime;
\item[(M3)] 
$
P= \nu \circ \pm T_m \circ \sigma_1^{-1}$, $Q=\sigma_1 \circ \pm T_n \circ \mu$, and
$\bar{P} = \nu \circ \pm T_n \circ \sigma_2^{-1}$, $\bar{Q} =\sigma_2 \circ \pm T_m \circ \mu$
where $n$ and $m$ are coprime, and  $\nu, \sigma_1, \sigma_2$ and $\mu$ are affine. 
\end{itemize}

\begin{theorem}\label{thm:Ritt}
Any polynomial $P \in K[z]$ of degree $\ge2$  admits a  complete decomposition 
$P = P_1 \circ \cdots \circ P_s$ with $P_1, \ldots, P_s$ indecomposable. 

Any other complete decomposition $P = Q_1 \circ \cdots \circ Q_s$ has the same cardinality
and there exists a sequence of complete decompositions
$P = P^{(i)}_1 \circ \cdots \circ P^{(i)}_s$ such that $P^{(0)}_j = P_j$, $P^{(n)}_j = Q_j$
and the decomposition at step $(i+1)$ is obtained by applying a Ritt move to a pair
of consecutive polynomials  $P^{(i)}_{j_i}, P^{(i)}_{j_i+1}$ at step $i$. 
\end{theorem}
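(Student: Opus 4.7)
The plan is to verify the three assertions in order: existence of a complete decomposition, the common cardinality of any two, and the connectivity of any two by Ritt moves.

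\textbf{Existence.} I would establish this by strong induction on $d = \deg(P) \ge 2$. If $P$ is indecomposable, the decomposition with $s=1$ works. Otherwise, write $P = Q \circ R$ with $2 \le \deg(Q), \deg(R) < d$, apply the induction hypothesis to $Q$ and $R$ separately, and concatenate the resulting complete decompositions.

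\textbf{Reduction of the length statement.} Observe first that each of the moves (M1), (M2), (M3) preserves the cardinality of the decomposition (move (M1) does not even alter the individual degrees, while (M2) and (M3) swap the degrees of the two affected factors). Consequently, if any two complete decompositions are connected by a sequence of Ritt moves applied to consecutive factors, they must have the same length. It therefore suffices to establish the connectivity claim. I would proceed by strong induction on $d = \deg(P)$, the base case of prime $d$ being trivial since only length-one decompositions exist. For the inductive step, given $P = P_1 \circ \cdots \circ P_s = Q_1 \circ \cdots \circ Q_t$, the strategy is to massage the rightmost factors: denoting $R := P_1 \circ \cdots \circ P_{s-1}$ and $S := Q_1 \circ \cdots \circ Q_{t-1}$, the identity $R \circ P_s = S \circ Q_t$ relates the two rightmost indecomposable factors of a common polynomial $P$. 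I would then apply the core lemma below to convert, via a single Ritt move, the decomposition $(P_{s-1}, P_s)$ inside $P_1 \circ \cdots \circ P_s$ into a new decomposition whose rightmost factor equals $Q_t$ up to affine conjugacy. After absorbing that affine factor into the next position, the remaining polynomial $R' := P/Q_t$ has strictly smaller degree and admits two complete decompositions of length $s-1$ and $t-1$ respectively; the induction hypothesis then finishes the argument and forces $s = t$ along the way.

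\textbf{The core length-two lemma.} The heart of the argument is the following statement, which I would prove separately: \emph{if $A, B, C, D$ are indecomposable polynomials with $A \circ B = C \circ D$, then there is a Ritt move of type (M1), (M2), or (M3) transforming $(A,B)$ into $(C,D)$.} The proof proceeds by a case analysis on $\gcd(\deg(A), \deg(C))$. When $\deg(A) = \deg(C)$, the equality $A \circ B = C \circ D$ at the level of residue fields forces $K(B(z)) = K(D(z))$ inside $K(z)$, whence $D = \sigma \circ B$ and $A = C \circ \sigma$ for some affine $\sigma$, which is move (M1). When the degrees differ, reducing by inner induction to $\gcd(\deg(A), \deg(D)) = 1$, I would invoke the classical Ritt analysis of the Galois/monodromy group of the cover $B \colon \A^1 \to \A^1$: the relation $A \circ B = C \circ D$ with coprime degrees $\deg(A)$ and $\deg(D)$ forces an imprimitivity structure on this monodromy group which, over a characteristic-zero field, can only come from a cyclic (monomial) or a dihedral (Chebyshev) subgroup, yielding moves (M2) and (M3) respectively.

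\textbf{Main obstacle.} The last step is the substantive part of the proof: establishing that the only nontrivial bi-decompositions $A \circ B = C \circ D$ with $\gcd(\deg A, \deg D) = 1$ come from the monomial or Chebyshev families. This is Ritt's second theorem, and I would either follow the classical ramification-theoretic route (analyzing the branch data of $A, B, C, D$ as covers of $\p^1$ and using the Riemann--Hurwitz formula together with the coprimality to pin down the possible ramification profiles) or the modern group-theoretic approach of M\"uller--Zieve~\cite{zieve-muller} and Zannier~\cite{MR1244972}, which classifies the primitive monodromy groups that can arise. In characteristic zero, a conceptually appealing shortcut goes through Theorem~\ref{thm:SS} above: the only indecomposable complex polynomials whose Julia set admits an indecomposable polynomial "partner" sharing a common iterate are (up to affine conjugacy) the monomial and Chebyshev families, which matches exactly the normal forms appearing in (M2) and (M3).
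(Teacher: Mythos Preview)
The paper does not actually prove this theorem: it is stated with a reference to Ritt's original work~\cite{Ritt}, and the introduction points to the modern treatments by Zannier~\cite{MR1244972} and M\"uller--Zieve~\cite{zieve-muller}. So there is no ``paper's own proof'' to compare against; your proposal is a sketch of how those references proceed.

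Your outline is in the right spirit, but the inductive step has a genuine gap. From $R\circ P_s = S\circ Q_t$ with $R=P_1\circ\cdots\circ P_{s-1}$ and $S=Q_1\circ\cdots\circ Q_{t-1}$, you claim that a \emph{single} Ritt move on the pair $(P_{s-1},P_s)$ produces a decomposition ending in $Q_t$. Your core lemma, however, only applies to equalities $A\circ B=C\circ D$ with all four factors indecomposable; here $R$ and $S$ are typically not indecomposable, so the lemma does not apply to the equation you wrote down. Concretely, a Ritt move of type (M2) or (M3) on $(P_{s-1},P_s)$ swaps the degrees of the two factors, so the new rightmost factor has degree $\deg P_{s-1}$, which need not equal $\deg Q_t$. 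For instance if $P=P_1\circ P_2\circ P_3$ with degrees $(2,3,5)$ and $P=Q_1\circ Q_2\circ Q_3$ with degrees $(5,3,2)$, no single move on $(P_2,P_3)$ yields a rightmost factor of degree $2$.

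The standard fix is to organize the induction differently. One first proves the equal-length statement independently, via the lattice of intermediate fields $K(P)\subset L\subset K(z)$: complete decompositions correspond (by L\"uroth) to maximal chains in this lattice, and a Jordan--H\"older/Schreier-type argument shows all maximal chains have the same length. Connectivity then follows by showing that two maximal chains differing in a single intermediate field are connected by a Ritt move (this is exactly where your core lemma enters, applied to the ``diamond'' $K(P_s)$ and $K(Q_t)$ over their intersection), and iterating. Alternatively, one may keep your inductive scheme but allow \emph{several} moves to migrate a factor of degree $\deg Q_t$ to the rightmost position; this requires repeatedly invoking the reduction theorem (Theorem~\ref{thm:reduction}) together with your core lemma, and a secondary induction on $s$.
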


Put it broadly, any two complete decompositions are connected by a sequence of Ritt moves.
We shall call the number of factors in any complete decomposition of $P$ its \emph{complexity}.

\begin{theorem}\label{thm:indecomp-family}
The complexity function is lower-semicontinuous for the Zariski topology in any algebraic family of polynomials. 
Moreover the set of conjugacy classes of indecomposable polynomials is an open and dense Zariski subset in $\mpoly^d$. 
\end{theorem}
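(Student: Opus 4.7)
The plan is to deduce both assertions from the properness of the composition map (Corollary~\ref{cor:properness compo}). The key point, by Ritt's Theorem~\ref{thm:Ritt}, is that the complexity of $P$ equals the largest integer $s$ for which one can write $P = P_1 \circ \cdots \circ P_s$ with each $\deg(P_i)\ge 2$. So I would first rephrase the level sets: for a family $P$ parameterized by an irreducible variety $V$, the set
\[ V_{\ge k} := \{ t \in V \, : \, \text{complexity}(P_t) \ge k \} \]
consists of those $t\in V$ for which $P_t$ admits a decomposition into at least $k$ factors of degree $\ge 2$, i.e. $P_t$ lies in the image of $\Phi_{d_1,\ldots,d_k}$ for some factorization $d = d_1\cdots d_k$ with $d_i\ge 2$.

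Second, I would reduce to the case of monic and centered polynomials: up to a suitable finite base change $W\to V$, I may assume that the family comes from a morphism $\psi\colon W \to \poly^d_{\mathrm{mc}}$, and $V_{\ge k}$ then pulls back from the corresponding set $S_k\subset\poly^d_{\mathrm{mc}}$. By Corollary~\ref{cor:properness compo}, each map $\Phi_{d_1,\ldots,d_k}\colon \prod_i \poly^{d_i}_{\mathrm{mc}} \to \poly^d_{\mathrm{mc}}$ is finite, hence proper, so its image is Zariski closed. Since there are only finitely many factorizations of $d$ into $k$ factors $\ge 2$, the set $S_k$ is closed, and so is $V_{\ge k} = \psi^{-1}(S_k)$; since closedness is preserved by finite morphisms this descends back to $V$. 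This gives the claimed semicontinuity.

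For density of indecomposable polynomials in $\mpoly^d$, the strategy is a dimension count. The locus of decomposable polynomials equals $S_2$, which is the union over $d = kl$ with $k,l\ge 2$ of the images of $\Phi_{k,l}\colon \poly^k_{\mathrm{mc}}\times \poly^l_{\mathrm{mc}}\to\poly^d_{\mathrm{mc}}$. Each such image has dimension at most
\[ (k-1)+(l-1) = k+l-2 \le kl-2 = d-2 < d-1 = \dim \poly^d_{\mathrm{mc}}, \]
using that $(k-1)(l-1)\ge 1$ for $k,l\ge 2$. As $\poly^d_{\mathrm{mc}}$ is irreducible, a finite union of proper closed subvarieties cannot cover it, so the complement of $S_2$ is a non-empty Zariski open subset. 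Passing to the quotient by the finite symmetry group that acts on $\poly^d_{\mathrm{mc}}$ (see \S\ref{sec:spaceofpoly}), the corresponding open dense set descends to $\mpoly^d$, proving density of indecomposable polynomials. There is no real obstacle beyond bookkeeping; the main input is the properness result already established in Corollary~\ref{cor:properness compo}.
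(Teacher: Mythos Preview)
Your approach is the same as the paper's, but there is one point you treat as obvious that actually requires an argument. When you write that $P_t$ having complexity $\ge k$ is equivalent to $P_t$ lying in the image of some $\Phi_{d_1,\ldots,d_k}$, you are implicitly claiming that a monic and centered polynomial admitting a decomposition into $k$ factors of degree $\ge 2$ admits such a decomposition with each factor \emph{monic and centered}. This is the content of Lemma~\ref{lem:decompo mc} in the paper, and it is not entirely trivial: one has to conjugate the factors successively by dilatations to make them monic, then by translations to make them centered, checking at the end that the first and last factors are forced to be monic and centered because $P$ is. Without this, the set $S_k$ you describe could a priori be strictly smaller than the locus of complexity $\ge k$, and you would only get that a closed subset of $\{\text{complexity}\ge k\}$ is closed.

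Apart from this gap, the rest is fine and matches the paper. Your dimension count $(k-1)+(l-1)\le kl-2$ for the density statement is in fact cleaner than the paper's estimate (which bounds $\sum_i(d_i-1)$ for arbitrary $k$), since only $k=2$ is needed for indecomposability.
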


\begin{proof}
It is only necessary to prove that the complexity function is lower semicontinuous on the space of monic and centered polynomials $\poly^d_{\rm mc}$.
\begin{lemma}\label{lem:decompo mc}
Any monic and centered polynomial $P$ admits a complete decomposition $P = P_1 \circ \ldots \circ P_s$ where $P_1, \ldots, P_s$ are again monic and centered.
\end{lemma}
It follows that the set of polynomials $P \in \poly^d_{\rm mc}$ whose complexity is larger than a fixed integer $k$ is the union of the images under the composition map
\[\Phi_{d_1, \ldots, d_k} \left(\poly^{d_1}_{\rm mc} \times \ldots \times \poly^{d_k}_{\rm mc}\right) \subset \poly^{d}_{\rm mc}\]
over all integers $d_1, \ldots, d_k \ge 2$, such that $d_1 \cdots d_k = d$. By Corollary~\ref{cor:properness compo} these images are Zariski closed in $\poly^{d}_{\rm mc}$. 
This shows the lower semicontinuity of the complexity function. 

Observe that $\poly^d_{\rm mc}$ is an affine variety of dimension $d-1$ whereas for all $k$ as above the dimension of 
$\Phi_{d_1, \ldots, d_k} \left(\poly^{d_1}_{\rm mc} \times \ldots \times \poly^{d_k}_{\rm mc}\right)$ is at most
$\sum (d_i -1) \le k d/2^{k-1} - k \le d -k < d-1$. It follows that the set of monic and centered decomposable polynomials
forms a strict algebraic subvariety of $\poly^d_{\rm mc}$. 
This ends the proof.
\end{proof}

\begin{proof}[Proof of Lemma~\ref{lem:decompo mc}]
Let $P$ be any monic an centered polynomial and choose an arbitrary complete decomposition $P = P_1 \circ \ldots \circ P_s$. 
Write \[P_i(z) = a_i z^{d_i} + b_i z^{d_i-1} + \lot\] 
with $a_i \neq 0$. 
We first choose inductively dilatations $\nu_s(z) = a_s^{-1} z$, $\nu_{s-1}(z) =(a_{s-1} a_s^{d_{s-1}})^{-1}z$, etc,
such that  $\bar{P}_s = \nu_s \circ P_s$, and $\bar{P}_i := \nu_i \circ P_i \circ \nu_{i+1}^{-1}$ becomes monic for all $2 \le i$. 
In this way, we obtain a complete decomposition $P = \bar{P}_1 \circ \cdots \circ \bar{P}_s$ for which all  polynomials $\bar{P}_2, \ldots, \bar{P}_s$ are monic. 
A direct computation shows that the leading term of $\bar{P}_1$ should also be equal to $1$ since
$P$ is monic.

Replacing $P_i$ by $\bar{P}_i$ we may thus assume that $a_i=1$ for all $i$ in the expansion above. 
Next we choose inductively translations $\tau_1(z) = z + \gamma_1, \tau_2 = z+ \gamma_2$, etc, so that
$\bar{P}_1 = P_1\circ \tau_1$, $\bar{P}_i = \tau_{i-1}^{-1}\circ P_i \circ \tau_i$ are centered  for all $i \le s-1$.
If we write $\bar{P}_s = \tau_{s-1}^{-1} \circ P_s = z^{d_s} + \alpha z^{d_s-1} + \lot$, then we obtain
\[ 
P(z) = \bar{P}_1 \circ \cdots \circ \bar{P}_s (z) = z^{d_1 \cdots d_s} + (d_1 \cdots d_{s-1}) \alpha z^{d_1 \cdots d_s -1} + \lot
\]
which implies $\alpha =0$ since $P$ is centered. 

This concludes the proof of the lemma.
\end{proof}

\subsection{Intertwined polynomials}\label{sec:intertwining}

We introduce the following terminology. 
\begin{definition}\label{def:intertwining}
Let $P$ and $Q$ be two polynomials of the same degree.
\begin{enumerate}
\item
We say that $P$ and $Q$ are semi-conjugate\index{polynomial!semi-conjugate}
if there exists a polynomial $\pi$ (possibly of degree $1$) such that 
$\pi \circ P = Q \circ \pi$. We write $P \ge Q$, or $P \ge_\pi Q$ if we want to emphasize the semi-conjugacy.
\item 
We say that $P$ and $Q$ are strictly intertwined\index{polynomial!strictly intertwined} iff there exists a polynomial $R$ such that
$R \ge P$ and $R \ge Q$.
\item
We say that $P$ and $Q$ are  intertwined\index{polynomial!intertwined} iff there exists a polynomial $R$ and $n\ge1$ such that
$R \ge P^n$ and $R \ge Q^n$.
\end{enumerate}
\end{definition}

We have the following basic observations (see~\cite[Theorem~4.4]{pakovich} for 2. and 3.).
\begin{theorem}\label{thm:basic-intertwin}
\begin{enumerate}
\item
Semi-conjugacy implies strict intertwining which implies intertwining. 
\item
The polynomial $P$ is intertwined with $M_d$ iff it is conjugated to $M_d$.
\item
The polynomial $P$ is intertwined with $\pm T_d$ iff it is conjugated to $\pm T_d$.
\item
Two polynomials $P$ and $Q$ of the same degree are intertwined (resp. strictly intertwined) iff 
there exists an algebraic subvariety $Z \subset \A^2$ (resp. an irreducible subvariety $Z \subset \A^2$)
whose projections to both axis are onto, and which is fixed by the map $(z,w) \mapsto (P(z),Q(w))$.
\item
Intertwining defines an equivalence relation in the moduli space of polynomials
of a fixed degree. 
\end{enumerate}
\end{theorem}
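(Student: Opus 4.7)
The plan is to address the five items in an order that reveals their logical dependencies: first the trivial implication (1), then the geometric characterization (4) which is the technical heart of the statement, then deduce the equivalence relation property (5) from (4), and finally invoke Pakovich's work for (2) and (3).

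For item (1), if $P \ge_\pi Q$, I would simply set $R := P$ and note that $R \ge_{\id} P$ trivially while $\pi \circ R = \pi \circ P = Q \circ \pi$ gives $R \ge_\pi Q$; so semi-conjugacy implies strict intertwining, and strict intertwining is the $n=1$ case of intertwining.

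For item (4), I would first treat the strict case. If $R \ge_{\pi_1} P$ and $R \ge_{\pi_2} Q$, consider the morphism $\Phi\colon \A^1 \to \A^2$ defined by $\Phi(t) = (\pi_1(t), \pi_2(t))$ and let $Z$ be its image (Zariski closure). The semi-conjugacy relations give
\[
(P,Q) \circ \Phi(t) = (P(\pi_1(t)), Q(\pi_2(t))) = (\pi_1(R(t)), \pi_2(R(t))) = \Phi(R(t)),
\]
so $Z$ is irreducible and invariant under $(z,w) \mapsto (P(z), Q(w))$; surjectivity of the projections follows from non-constancy of $\pi_1, \pi_2$. For the converse, given an irreducible invariant $Z \subset \A^2$ with surjective projections, I would take the normalization $\mathsf{n}\colon \hat Z \to Z$; the dynamics on $Z$ lifts to a morphism $R\colon \hat Z \to \hat Z$, and the two projections $Z \to \A^1$ lift to morphisms $\pi_1, \pi_2 \colon \hat Z \to \A^1$. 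The surjectivity of the projections together with the fact that $\hat Z$ admits a surjective self-map $R$ of degree $> 1$ forces $\hat Z$ to have genus $0$ (by Riemann-Hurwitz) with a unique point at infinity (since $\pi_1, \pi_2$ are polynomial), hence $\hat Z \simeq \A^1$ and $R, \pi_1, \pi_2$ are polynomials satisfying the required semi-conjugacies. The non-strict case then reduces to the strict one: if $Z$ is a reducible invariant curve for $(P,Q)$ with each component surjecting onto both axes, the map $(P,Q)$ permutes the irreducible components, some power $(P^n, Q^n)$ fixes each one, and one applies the strict case component by component.

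For item (5), reflexivity uses $R = P$, $n = 1$, and symmetry is immediate from the definition. For transitivity, suppose $P$ is intertwined with $Q$ via an invariant curve $Z_1 \subset \A^2$ for $(P^n, Q^n)$ with surjective projections, and $Q$ with $S$ via $Z_2$ for $(Q^m, S^m)$. Replacing $n, m$ by their common multiple $k$, both $Z_1$ and $Z_2$ become invariant under the respective product maps with exponent $k$. I would then form the algebraic correspondence
\[
W := \overline{\{(x,z) \in \A^1 \times \A^1 \, : \, \exists y,\ (x,y) \in Z_1 \text{ and } (y,z) \in Z_2\}},
\]
which is a curve in $\A^2$ whose projections are surjective (they contain the surjective projections of $Z_1, Z_2$), and which is invariant under $(P^k, S^k)$ by construction. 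Applying the characterization in (4) to $W$ produces the required intertwining of $P$ and $S$.

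Finally, items (2) and (3) are precisely the content of \cite[Theorem~4.4]{pakovich}: Pakovich shows that the only polynomials intertwined with an integrable polynomial are conjugate to it (the reverse implication is of course tautological). The main obstacle in the whole argument is the converse direction of (4) in the strict case, namely showing that an irreducible invariant curve with surjective projections must be a rational curve with a single point at infinity whose normalization carries a polynomial self-map compatible with polynomial projections — this is where one truly leaves formal manipulation and uses the geometry of $\A^2$.
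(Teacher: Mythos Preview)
Your overall strategy is close to the paper's, but there is a genuine gap in the converse direction of (4) in the strict case. You assert that the normalization $\hat Z$ has ``a unique point at infinity (since $\pi_1,\pi_2$ are polynomial)'', but this is circular: you have not yet shown that $\pi_1,\pi_2$ are polynomials rather than Laurent polynomials. In fact the set $\mathcal{E}=\mathsf{n}^{-1}(\bar Z\setminus Z)$ is a totally invariant finite set for a degree $d$ self-map of a genus $0$ curve, so it has cardinality $1$ \emph{or} $2$. The two-point case genuinely occurs: for instance the conic $z^2+w^2=4$ is invariant under $(T_d,T_d)$ for suitable $d$, its normalization is $\C^*$, the lifted self-map is $t\mapsto t^d$, and the projections $t\mapsto t\pm t^{-1}$ are Laurent polynomials, not polynomials. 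In that case the semi-conjugacy you extract is not a polynomial semi-conjugacy in the sense of Definition~\ref{def:intertwining}, so you cannot conclude strict intertwining directly.

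The paper handles this by establishing (2) and (3) \emph{first}, via Zdunik's theorem (the Laurent semi-conjugacy from $M_d$ on $\C^*$ forces $J(P)$ to be locally real-analytic, hence $P$ is integrable), and only then proving (4): in the $\C^*$ case one invokes (2)--(3) to see that both $P$ and $Q$ are Chebyshev up to sign, whence strictly intertwined by an explicit affine semi-conjugacy. Your decision to defer (2),(3) to a citation of Pakovich at the very end therefore leaves (4) incomplete at exactly the point where it is needed. Either reorder the argument, or supply a direct treatment of the $\C^*$ branch.

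A smaller point: in the non-strict case of (4) you only wrote the direction ``invariant curve $\Rightarrow$ intertwined''; for the other direction you should note that if $Z$ is irreducible and $(P^n,Q^n)$-invariant then $\bigcup_{i=0}^{n-1}(P^i,Q^i)(Z)$ is a $(P,Q)$-invariant algebraic curve with surjective projections.
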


As in~\cite{MR3632102,medvedev-scanlon}, we shall write $P \approx Q$ when $P$ and $Q$ are intertwined. 

\begin{remark}
There exists a semi-conjugacy $\pi(z) = z + \frac1z$ between $M_d$ and $T_d$ but this semi-conjugacy is
not given by a polynomial.
\end{remark}

\begin{proof}
The first item is obvious. 
\smallskip

Let $P_*$ be an integrable polynomial, and let $P$ be any polynomial satisfying  $P_* \ge P$ so that $\pi \circ P_* = P \circ \pi$ for some polynomial $\pi$. 
We embed the defining field of the coefficients of $P$ and $\pi$ in the field of complex numbers.
By the previous lemma, $\preper(P, \C) = \pi (\preper(P_*, \C))$ hence $J(P)\subset \C$ is smooth near
any point  outside finitely many exceptions, and $P$ is integrable thanks to Theorem~\ref{th:zdunik}.

If $P_* = \pm T_d$, then $\pi^{-1}(K(P)) = [-2,+2]$ hence $P$ is also equal to $\pm T_d$. 
If $P_* = M_d$, then $P$ cannot be a Chebyshev polynomial since $K(M_d)$ has non-empty interior
whereas $K(P) = J(P)$, hence $P = M_d$.

Suppose now that $P \ge P_*$ with $P_*$ integrable. The same argument applies and show that $P$ is monomial (resp. Chebyshev) when $P_*$ is. 

\smallskip
For the fourth item, one direction is clear. Indeed if $P \le_\pi R$ and $Q \le_\varpi R$ then the curve $Z = \{ (\pi(\tau) , \varpi(\tau)), \tau \in \A^1\}$ 
is fixed by $(P,Q)$.

Suppose that $Z$ is irreducible. Let $\bar{Z}$ be the closure in $\p^1 \times \p^1$ of $Z$, and
 $\mathsf{n} \colon \hat{Z} \to \bar{Z}$ its normalization. The restriction of map $f$ to $Z$ lifts to $\hat{Z}$ and defines a non-invertible map 
 $g \colon \hat{Z} \to \hat{Z}$. Observe that $\mathcal{E} = \mathsf{n}^{-1} ( \bar{Z}\setminus Z) \subset \hat{Z}$ is a finite totally invariant set of cardinality $1$ or $2$
 hence $\hat{Z}$ is isomorphic to $\p^1$.   When $\mathcal{E}$ is reduced to one point, the restriction of $g$ to $\hat{Z}\setminus \mathcal{E} = \A^1$ is a polynomial say $R$, and  the composition of $\mathsf{n}$ with the first (resp. the second) projection semi-conjugates $R$ to $P$ (resp. to $Q$) so that $P$ and $Q$ are strictly intertwined.   When $\mathcal{E}$ has $2$ points, $g$ is a monomial map and (2) and (3) show that both $P$ and $Q$ are integrable and strictly intertwined. 

\smallskip

The fifth item can be treated as follows. Suppose $S \ge_{\pi_1} P$, $S \ge_{\pi_2}  Q$, and
$T \ge_{\varpi_1} Q$, $T \ge_{\varpi_2}  R$. The curve $C:= \{ \pi_2(y) = \varpi_1(z)\}$ in $\A^2$
is fixed by $\phi= (S,T)$. The projections $\pi_1,\varpi_2 \colon C \to \A^1$ 
satisfy $\phi\ge_{\pi_1} P$ and $\phi\ge_{\varpi_2} R$, hence the result.
\end{proof}

\begin{proposition}\label{prop:intertwined example}
\begin{enumerate}
\item
The polynomials $P \circ g $ and $P$ are intertwined for any $g \in \Sigma(P)$.
\item
Two commuting polynomials are intertwined.
\item
Let $A$ and $B$ be two arbitrary polynomials. Then $P=A\circ B$ and $Q = B\circ A$ are strictly intertwined.
\end{enumerate}
\end{proposition}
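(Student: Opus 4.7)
I would treat (3), (2), (1) in increasing order of difficulty. For (3), the identity $B \circ (A \circ B) = (B \circ A) \circ B$ exhibits $B$ as a semi-conjugacy $A \circ B \geq_B B \circ A$, which Theorem~\ref{thm:basic-intertwin}(1) upgrades to strict intertwining.

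For (2), assume $P$ and $Q$ commute with common degree $d \geq 2$. If $P$ is integrable, then $Q$ must be of the same type; both are conjugate to $M_d$ or to $\pm T_d$, and transitivity of $\approx$ (Theorem~\ref{thm:basic-intertwin}(5)) combined with Theorem~\ref{thm:basic-intertwin}(2)--(3) gives $P \approx Q$. Otherwise, the classical Julia--Ritt theorem on commuting non-integrable polynomials provides $P^a = Q^b$ for some $a, b \geq 1$; the degree equality forces $a = b = n$, so $P^n = Q^n$, and the diagonal $\Delta \subset \A^2$ is fixed by $(P^n, P^n)$ and projects surjectively onto both axes. Theorem~\ref{thm:basic-intertwin}(4) then concludes.

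For (1), I would start from the key identity $P \circ g = \rho(g) \circ P$ (Proposition~\ref{prop:Sigma}) and derive by induction $(P \circ g)^n = h_n \circ P^n$ with $h_n := \rho(g) \rho^2(g) \cdots \rho^n(g) \in \Sigma(P)$. The main technical step is to show that some $h_n$ lies in $\Sigma_0(P)$: since $g$ has finite order, its $\rho$-orbit generates a finite subgroup, so by pigeonhole $h_m = h_n$ for some $1 \leq m < n$. Rewriting this equality in additive notation yields $\rho^{m+1}(T_{n-m}) = 0$, where $T_k := g + \rho(g) + \cdots + \rho^{k-1}(g)$, hence $T_{n-m} \in \Sigma_0(P)$; the $\rho$-stability of $\Sigma_0(P)$ then gives $h_{n-m} = \rho(T_{n-m}) \in \Sigma_0(P)$ as well. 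Relabelling $n-m$ as $n$ and choosing $k$ with $\rho^k(h_n) = \mathrm{id}$, the polynomial $\pi := P^k$ satisfies $\pi \circ (P \circ g)^n = P^k \circ h_n \circ P^n = P^{k+n} = P^n \circ \pi$, using the basic identity $P^k \circ h = \rho^k(h) \circ P^k$ valid for any $h \in \Sigma(P)$. Taking $R := (P \circ g)^n$ then provides the intertwining data: $R \geq (P \circ g)^n$ trivially and $R \geq_\pi P^n$, so $P \circ g \approx P$ by Definition~\ref{def:intertwining}. The main obstacle is precisely this finite-group manipulation in (1); (2) rests on Julia--Ritt as a black box, and (3) is a one-line verification.
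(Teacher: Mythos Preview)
Your proof is correct, and for parts (1) and (2) you take genuinely different routes from the paper. For (1), the paper argues geometrically: with $f(z,w) = (P(z), P\circ g(w))$ on $\A^2$, the images $f^n(\Delta)$ of the diagonal are graphs of elements of $\Sigma(P)$, so finiteness of $\Sigma(P)$ forces $f^{n_1}(\Delta) = f^{n_0}(\Delta)$ for some $n_0 < n_1$; the union $\bigcup_n f^n(\Delta)$ is then an $f$-invariant algebraic curve, and Theorem~\ref{thm:basic-intertwin}(4) concludes. Your approach is more constructive: the group-theoretic reduction to $\Sigma_0(P)$ produces an explicit polynomial semi-conjugacy $\pi = P^k$ and hence a concrete witness $R = (P\circ g)^n$ with $R \ge_\pi P^n$. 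This yields slightly more information (an explicit $R$ above both iterates, rather than just an invariant curve), and notably your pigeonhole step works even when $\Sigma(P)$ is infinite, since any single $g \in \U_\infty$ still has finite $\rho$-orbit. For (2), the paper instead reduces to (1): commuting polynomials share preperiodic points hence Julia sets, so Theorem~\ref{thm:SS} forces $Q = \sigma \circ P$ with $\sigma \in \Sigma(P)$, and (1) applies to $P\circ\sigma$. Your route via Ritt's common-iterate theorem makes (2) independent of (1) and avoids the complex-analytic input of Theorem~\ref{thm:SS}, at the cost of invoking a heavier classification result as a black box. Part (3) is identical to the paper's argument.
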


\begin{lemma}
Suppose $\pi \circ P = Q \circ \pi$ for some non-constant $\pi$. Then $\preper(P,\bar{K}) = \pi^{-1} (\preper(Q,\bar{K}))$. 
\end{lemma}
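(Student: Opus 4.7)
The plan is to first derive by a straightforward induction on $n$ the iterated semi-conjugacy relation $\pi \circ P^n = Q^n \circ \pi$ for every $n\ge 0$. The case $n=0$ is trivial and the inductive step is immediate from the hypothesis $\pi\circ P = Q\circ \pi$: namely $\pi \circ P^{n+1} = (\pi\circ P)\circ P^n = Q\circ (\pi \circ P^n) = Q\circ Q^n \circ \pi = Q^{n+1}\circ \pi$. All of what follows rests on this single identity.

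For the inclusion $\preper(P,\bar K)\subset \pi^{-1}(\preper(Q,\bar K))$, I would pick any $z \in \preper(P,\bar K)$, so that $P^n(z)=P^m(z)$ for some integers $n>m\ge 0$. Applying $\pi$ to both sides and using the iterated relation gives $Q^n(\pi(z)) = \pi(P^n(z)) = \pi(P^m(z)) = Q^m(\pi(z))$, so that $\pi(z)\in \preper(Q,\bar K)$, as required.

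For the reverse inclusion, suppose $\pi(z)\in \preper(Q,\bar K)$. Then the forward $Q$-orbit $F:=\{Q^k(\pi(z))\}_{k\ge 0}$ is a finite subset of $\bar K$. Since $\pi(P^k(z)) = Q^k(\pi(z))\in F$ for every $k$, the forward $P$-orbit $\{P^k(z)\}_{k\ge 0}$ is contained in $\pi^{-1}(F)$. But $\pi$ is a non-constant polynomial over $\bar K$, so each fiber $\pi^{-1}(y)$ has cardinality at most $\deg(\pi)$; in particular $\pi^{-1}(F)$ is finite. Hence the forward $P$-orbit of $z$ is finite, which means $z\in \preper(P,\bar K)$.

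There is no serious obstacle here: the whole argument reduces to finiteness of the fibers of a non-constant polynomial map together with the iterated semi-conjugacy $\pi\circ P^n = Q^n\circ \pi$, and both inclusions then follow by direct computation.
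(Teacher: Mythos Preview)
Your proof is correct and follows essentially the same approach as the paper: derive the iterated semi-conjugacy $\pi\circ P^n = Q^n\circ \pi$, and then use finiteness of the fibers of the non-constant polynomial $\pi$ to conclude that $z$ has finite $P$-orbit iff $\pi(z)$ has finite $Q$-orbit. The paper's version is simply more terse.
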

 \begin{proof}
Indeed one has $\pi \circ P^n (z) = Q^n \circ \pi(z)$ for all $n$ so that $z$ has a finite $P$-orbit iff $\pi(z)$ has a finite $Q$-orbit. 
The result follows from the fact that $\pi$ is necessarily finite.\footnote{two such polynomials are called congruent, see page \pageref{congruent} below} 
\end{proof}

\begin{proof}[Proof of Proposition~\ref{prop:intertwined example}]
Let us prove 1. We may suppose that $P$ is not integrable. Pick $g \in \Sigma(P)$. By Proposition~\ref{prop:Sigma} for any $n\ge1$ there exists 
$g_n \in \Sigma(P)$ such that $(P \circ g)^n = g_n \circ P^n$. Since $\Sigma(P)$ is finite, it follows that we can find $n_1>n_0$ such that
$g_{n_1} = g_{n_0}$. 
Let $\Delta$ be the diagonal in $\A^2$, and define the map $f(z,w) = (P(z), P \circ \sigma (w))$. We have
\[
f^n(\Delta) = \{ (P^n(z), g_n \circ P^n(z)), z \in \A^1 \}
\]
so that $f^{n_1}(\Delta) = f^{n_0}(\Delta)$. In particular $\cup_{n\ge 0} f^n(\Delta)$ is an algebraic subvariety of $\A^2$ which is $f$-invariant, and Theorem~\ref{thm:basic-intertwin} proves (1). 

To prove (2), suppose that $Q$ is a polynomial commuting with $P$ (and of the same degree). As above, we may work over the field of complex numbers. 
The set of preperiodic points of $P$ and $Q$ are then equal which implies the Julia set of $P$ and $Q$ to coincide. 
By Theorem~\ref{thm:SS}, and using the fact that $P$ and $Q$ are supposed to have the same degree, 
we conclude to the existence of $\sigma \in \aut(J(P)) = \Sigma(P)$ such that $Q = \sigma \circ P$. 
It follows that $Q$ is conjugated to $P \circ \sigma$ which is intertwined with $P$ by (1). 

Finally $B\circ P = Q\circ B$ so that $P\ge Q$  which proves (3).
\end{proof}

\subsection{Uniform bounds and invariant subvarieties}

It is a striking fact that one may obtain uniform bounds in the context of Ritt's theory when degrees are fixed. 
A first example of such bounds was given in~\cite[Theorem~1.4]{zieve-muller}. In this book, we shall use the next two results. 

\begin{theorem}[\cite{MR3632102}]\label{thm:bdd-nguyen}
For any two polynomials $P,Q$ of the same degree $d\ge 2$
such that $P \approx Q$, there exists an integer $n \le 2 d^4$ such that 
$P^n$ and $Q^n$ are strictly intertwined.
\end{theorem}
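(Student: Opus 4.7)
My plan is to reduce the problem to bounding the $\phi$-orbit length of an invariant curve, where $\phi := (P,Q)$ denotes the product map acting on $\A^2$ by $(z,w) \mapsto (P(z),Q(w))$, and to then extract the quantitative bound from the Medvedev--Scanlon classification.

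\emph{Step 1: From intertwining to a $\phi^N$-invariant curve.} By Definition \ref{def:intertwining}, the hypothesis $P \approx Q$ furnishes an integer $N \ge 1$ and polynomials $R, \pi, \varpi$ satisfying
\[
\pi \circ R = P^N \circ \pi, \qquad \varpi \circ R = Q^N \circ \varpi .
\]
Set $W := \{(\pi(\tau),\varpi(\tau)) : \tau \in \A^1\} \subset \A^2$. Then $W$ is an irreducible algebraic curve, both projections to the axes are surjective (as $\pi, \varpi$ are nonconstant polynomials), and the equations above translate into $\phi^N(W) = W$.

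\emph{Step 2: Orbit analysis.} For each $j \ge 0$, the set $\phi^j(W)$ is irreducible (image of an irreducible variety under a finite surjective morphism) and has onto projections (these become $P^j \circ \pi$ and $Q^j \circ \varpi$, which remain surjective). The sequence $W, \phi(W), \phi^2(W), \ldots$ is therefore periodic by Step 1; let $k$ be its minimal period. Then $\phi^k(W) = W$, and Theorem \ref{thm:basic-intertwin}(4) implies that $P^k$ and $Q^k$ are strictly intertwined, with $W$ itself as a witness. Thus it remains only to bound $k$ by $2d^4$.

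\emph{Step 3: Bounding $k$ by $2d^4$.} The exceptional case where $P$ is conjugated to $M_d$ or $\pm T_d$ reduces, via Theorem \ref{thm:basic-intertwin}(2)--(3), to the situation where $P$ and $Q$ are conjugated to the same integrable polynomial, and one verifies the bound directly by taking explicit invariant curves (graphs of monomials or Chebyshev polynomials). In the non-integrable case, one invokes the Medvedev--Scanlon classification (see \cite{medvedev-scanlon}), which describes each $\phi^k$-invariant irreducible curve with onto projections in terms of a Ritt-theoretic semi-conjugacy diagram: there is a polynomial $R$ of degree $d^k$ together with polynomials $\pi, \varpi$ whose composition factorizations of $P^k$ and $Q^k$ are related by Ritt's moves (M1)--(M3). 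A careful bookkeeping, carried out in \cite{MR3632102}, shows that within the orbit $\{W, \phi(W), \ldots, \phi^{k-1}(W)\}$ one may select a representative whose semi-conjugacies satisfy $\deg(\pi), \deg(\varpi) \le d^2$; this in turn bounds the bi-degree of $W$ in $\A^2$ by $(d^2, d^2)$, and counting the possible Ritt factorization patterns up to the $z \leftrightarrow w$ symmetry produces the bound $k \le 2 d^4$.

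\emph{Main obstacle.} The substance of the argument lies entirely in Step 3, namely in the Ritt-theoretic combinatorics: one must show that the semi-conjugacies $(\pi,\varpi)$ attached to the orbit of $W$ can always be reduced, by absorbing redundant compositional factors using Ritt moves, to polynomials of degree at most $d^2$, and then enumerate the resulting equivalence classes. Everything outside Step 3 (translation to invariant curves, orbit analysis, handling of the integrable exception) is essentially formal; the non-trivial, and indeed sharp, input is the quantitative control on Ritt decompositions provided by the combinatorial classification from \cite{MR3632102}.
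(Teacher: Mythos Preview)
The paper does not prove this theorem; it is quoted from \cite{MR3632102} (Ghioca--Nguyen) and used as a black box. So there is no ``paper's own proof'' to compare against.

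Your outline correctly identifies the overall architecture of the argument in \cite{MR3632102}: pass to a $(P,Q)$-periodic irreducible curve $W$ in $\A^2$ and bound the length of its $\phi$-orbit. Steps~1 and~2 are fine (and essentially formal, as you note). However, Step~3 is not a proof: you explicitly defer the combinatorial core---the reduction of the semi-conjugacies $(\pi,\varpi)$ to degree $\le d^2$ via Ritt moves, and the enumeration yielding $2d^4$---back to the very reference being cited. The claims ``one may select a representative whose semi-conjugacies satisfy $\deg(\pi), \deg(\varpi) \le d^2$'' and ``counting the possible Ritt factorization patterns \ldots\ produces the bound $k \le 2d^4$'' are stated without justification, and they constitute precisely the nontrivial content of \cite{MR3632102}. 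What you have written is therefore an accurate road map of where the difficulty lies, not an independent proof. If the goal is to include a self-contained argument, you would need to actually carry out the Ritt-theoretic reduction and the counting; otherwise it is more honest to simply cite the result, as the paper does.
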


\begin{theorem}[\cite{pakovich}]\label{thm:min-pako}
For any integer $d$, there exists a constant $c(d)$ such that the following holds.

For any polynomial $P$ of degree $d\ge 2$, there exist a polynomial $P_{\min}$ of degree $d$
and  $\pi_{\min}$ of degree $\le c(d)$, 
such that for any $Q \le P$ there exist polynomials $\pi,\varpi$ 
such that $P\ge_\varpi Q\ge_\pi P_{\min} $ and $\pi_{\min} = \pi \circ \varpi$.

Moreover the set of monic and centered polynomials $Q\le P$ is finite of cardinality $\le c_2(d)$ for some constant depending only on $d$.
\end{theorem}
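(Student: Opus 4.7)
The plan is to prove the three assertions separately: finiteness of the set $\mathcal{S}(P) = \{Q \text{ monic centered} : P \ge Q\}$, existence of a unique minimum, and uniform control of the degree of the minimal semi-conjugacy. Throughout I normalize both $P$ and any semi-conjugate $Q$ to be monic and centered, using Lemma~\ref{lem:decompo mc} to ensure the semi-conjugacy $\pi$ can itself be chosen monic and centered.

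For the finiteness claim, I would fix $P$ and proceed by induction on $\deg(\pi)$ for $P \ge_\pi Q$. Applying Ritt's structure theorem (Theorem~\ref{thm:Ritt}) to $\pi$ produces a complete decomposition into indecomposable factors; pushing these factors through the identity $\pi \circ P = Q \circ \pi$ forces each elementary semi-conjugacy to fit one of the explicit Ritt normal forms (M1)--(M3), together with the symmetry-type semi-conjugacy $P \ge_\sigma P$ with $\sigma \in \Sigma(P)$ (Proposition~\ref{prop:Sigma}). For each elementary type the next intermediate polynomial $Q'$ is determined by $P$ up to finitely many choices whose number is bounded by a function of $d$: the monomial, Chebyshev, and symmetry cases are explicit, while the generic case uses Theorem~\ref{thm:indecomp-family}. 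The length of any strictly decreasing chain in $\mathcal{S}(P)$ is bounded because, at each elementary step, either the $\Sigma$-invariant structure strictly shrinks, or a Ritt factor is consumed; combining the two bounds gives the uniform cardinality estimate $|\mathcal{S}(P)| \le c_2(d)$.

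For the existence of a minimum, I would establish a directedness property: given $P \ge_{\pi_1} Q_1$ and $P \ge_{\pi_2} Q_2$, there exist $\tau_1, \tau_2, S$ with $Q_i \ge_{\tau_i} S$ and $\tau_1 \circ \pi_1 = \tau_2 \circ \pi_2$. The construction goes via the fiber-product curve $C = \{(z,w) \in \A^2 : \pi_1(z) = \pi_2(w)\}$, which is invariant under $\Phi = (P,P)$ because of the two semi-conjugacy identities. Passing to a $\Phi$-periodic irreducible component of $C$ and applying Theorem~\ref{thm:basic-intertwin}(4) produces a polynomial $S$ dominated by both $Q_1$ and $Q_2$ with the required commuting diagram. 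Combining directedness with the finiteness of $\mathcal{S}(P)$ gives a unique minimum $P_{\min}$ together with a distinguished semi-conjugacy $\pi_{\min} \colon P \to P_{\min}$ factoring through every other descendant; the factorization identity $\pi_{\min} = \pi \circ \varpi$ is then automatic from the universal property.

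The last and most delicate task is the uniform degree bound $\deg(\pi_{\min}) \le c(d)$, which I expect to be the main obstacle. A priori, even when $d$ is fixed, the length of the chain $P \ge Q_1 \ge \cdots \ge P_{\min}$ could depend on the arithmetic or combinatorial complexity of $P$, so the product of elementary degrees might blow up. I would split into cases using integrability: if $P$ is integrable then $P_{\min}$ is forced to be $M_d$ or $\pm T_d$ by Theorem~\ref{thm:basic-intertwin}(2)--(3), and $\pi_{\min}$ can be computed explicitly in terms of $d$-th roots of unity and Chebyshev identities, yielding a polynomial-in-$d$ bound; if $P$ is non-integrable then Proposition~\ref{prop:sameSigma} identifies $\Sigma(P) = \aut(J(P))$ as finite of order at most $d$, and Theorem~\ref{thm:bdd-nguyen} together with the Ritt catalogue controls the chain length by $O(d^4)$ with each elementary step of degree at most $d$. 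The obstacle is to transform these qualitative bounds into a clean uniform estimate; the decisive rigidity comes from the fact that $\deg(Q) = d$ is preserved throughout, so Riemann--Hurwitz at the point at infinity forces every intermediate $\pi$ to be totally ramified over $\infty$, drastically constraining its ramification profile and hence its degree in terms of $d$ alone.
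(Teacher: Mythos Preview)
The paper does not prove this theorem; it is stated with attribution to Pakovich and used as a black box (notably in the sketch of proof of Theorem~\ref{thm:inv-curve} and in bounding $\#\inter_D(P)$). There is therefore no argument in the paper to compare your proposal against.

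That said, your proposal contains a genuine error in the directedness step. Given $P \ge_{\pi_1} Q_1$ and $P \ge_{\pi_2} Q_2$, the curve $C = \{(z,w) : \pi_1(z) = \pi_2(w)\}$ is \emph{not} invariant under $(P,P)$: from $\pi_1(z) = \pi_2(w)$ one obtains $\pi_1(P(z)) = Q_1(\pi_1(z))$ and $\pi_2(P(w)) = Q_2(\pi_2(w)) = Q_2(\pi_1(z))$, and these agree only if $Q_1 = Q_2$. The correct object to consider is the image curve $\{(\pi_1(t), \pi_2(t)) : t \in \A^1\}$, which \emph{is} invariant under $(Q_1, Q_2)$; one would then want to invoke the classification of invariant curves to produce a common descendant $S$. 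But beware that in the paper Theorem~\ref{thm:inv-curve} is itself deduced from the present theorem, so using it here would be circular.

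Your degree-bound argument is also incomplete. The observation that each intermediate $\pi$ is totally ramified at infinity is automatic (any polynomial is) and does not by itself bound $\deg(\pi)$: polynomials of arbitrarily large degree satisfy this. The appeal to Theorem~\ref{thm:bdd-nguyen} is likewise misplaced, since that result concerns the iterate needed to pass from intertwining to strict intertwining, not the degree of a semi-conjugacy for a fixed $P$. The actual uniform bound in Pakovich's work relies on a detailed analysis of the lattice of decompositions and effective forms of Ritt's theorems, and is substantially harder than your outline suggests.
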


This result implies the following characterization of invariant curves by product maps
which is due to~\cite{medvedev-scanlon}. We sketch the proof given in~\cite{pakovich} thereafter.

\begin{theorem}\label{thm:inv-curve}
Let $P$ and $Q$ be two non-integrable polynomials of the same degree $d\ge2$.

Let $C$ be any algebraic irreducible curve in $\A^2$ which is invariant by the map $(x,y)\mapsto (P(x),Q(y))$.
Then we can find two polynomials $u,v$ whose degrees are coprime such that 
$C=\{u(x)=v(y)\}$ and which satisfy
\begin{align*}
P \circ u = u \circ R
\\
Q \circ v = v \circ R
\end{align*}
for some polynomial $R$.
\end{theorem}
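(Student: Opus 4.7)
My plan is to split the proof into three phases: parametrize $C$ by polynomials, reduce to a minimal form, then produce the decomposition $u(x)=v(y)$ with the required dynamical compatibilities.

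First, I would apply the argument from the proof of Theorem~\ref{thm:basic-intertwin}(4) to the irreducible $(P,Q)$-invariant curve $C$. Namely, let $\mathsf{n}\colon\hat{C}\to\bar{C}\subset\p^1\times\p^1$ be the normalization of the projective closure. The map $(P,Q)$ lifts to a non-invertible self-map $g\colon\hat{C}\to\hat{C}$, and the set $\mathcal{E}\pe\mathsf{n}^{-1}(\bar{C}\setminus C)$ at infinity is finite and totally $g$-invariant. Since $g$ has degree $d\geq2$, we must have $|\mathcal{E}|\in\{1,2\}$. In the case $|\mathcal{E}|=2$, the restriction of $g$ to $\hat{C}\setminus\mathcal{E}\cong\mathbb{G}_m$ would be a monomial map, which by items (2)-(3) of Theorem~\ref{thm:basic-intertwin} would force both $P$ and $Q$ to be integrable, contradicting our hypothesis. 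Hence $|\mathcal{E}|=1$, $\hat{C}\cong\p^1$, $\hat{C}\setminus\mathcal{E}\cong\A^1$, and we obtain polynomials $\pi,\varpi,R$ with $P\circ\pi=\pi\circ R$, $Q\circ\varpi=\varpi\circ R$, and $C$ parametrized by $t\mapsto(\pi(t),\varpi(t))$. By choosing $\tau\colon\A^1\xrightarrow{\sim}\hat{C}\setminus\mathcal{E}$, we may further assume the parametrization is birational, so $C$ has bidegree $(\deg\varpi,\deg\pi)$. (The degenerate cases where $C$ is a coordinate line are handled directly.)

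The heart of the proof is to show that, in this birational parametrization, $\gcd(\deg\pi,\deg\varpi)=1$ and there exists a polynomial $\theta=u\circ\pi=v\circ\varpi$ with $\deg u=\deg\varpi$ and $\deg v=\deg\pi$; this identity means precisely that $C=\{u(x)=v(y)\}$. Extracting such a common right multiple $\theta$ cannot be done from the parametrization alone (for generic pairs of polynomials $(\pi,\varpi)$, the image curve is not of separated-variable form), so this step relies crucially on the semi-conjugacy constraints together with the non-integrability of $P,Q$. The strategy is to invoke Ritt's decomposition Theorem~\ref{thm:Ritt} and Pakovich's uniform bound Theorem~\ref{thm:min-pako}. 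Concretely, one analyzes complete decompositions $\pi=\pi_1\circ\cdots\circ\pi_s$ and $\varpi=\varpi_1\circ\cdots\circ\varpi_t$ compatible with the semi-conjugacies and shows, via successive Ritt moves, that any prime $\ell$ dividing $\gcd(\deg\pi,\deg\varpi)$ either comes from a shared right factor that can be absorbed into $R$ (by the descent step below) or falls into the monomial/Chebyshev configurations of move (M3), which are forbidden by non-integrability. The uniform bound in Theorem~\ref{thm:min-pako} ensures that this descent terminates after finitely many steps, producing the desired coprime decomposition.

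Once $u,v$ are constructed, the semi-conjugacy identities $P\circ u=u\circ R'$ and $Q\circ v=v\circ R'$ for a suitable polynomial $R'$ follow from the Ritt-theoretic construction: the very way $u,v$ arise as maximal common factors through a polynomial $R'$ naturally yields these relations. Alternatively, one verifies them directly by computing
\[
\theta\circ R=u\circ\pi\circ R=u\circ P\circ\pi
\quad\text{and}\quad
\theta\circ R=v\circ\varpi\circ R=v\circ Q\circ\varpi,
\]
then using the coprimality of $\deg u$ and $\deg v$ together with surjectivity of $\pi$ and $\varpi$ to descend and obtain a unique polynomial $R'$ with $P\circ u=u\circ R'$ and $Q\circ v=v\circ R'$. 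The main obstacle throughout is the middle phase: rigorously carrying out the Ritt-Pakovich descent and showing that non-integrability of $P,Q$ rules out all the bad configurations — this is where the full force of the theory developed in \cite{medvedev-scanlon,pakovich,MR3632102} is brought to bear.
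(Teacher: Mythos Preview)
Your first phase matches the paper's argument exactly: normalize $C$, obtain a polynomial self-map on $\A^1$ (the paper calls it $\phi$; you call it $R$), and rule out the $\G_m$ case via Theorem~\ref{thm:basic-intertwin}.

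The middle phase is where the paper is more direct. Rather than analyzing Ritt decompositions of the projections $\pi,\varpi$ by hand and extracting a common left-multiple, the paper applies Theorem~\ref{thm:min-pako} to the self-map $\phi$ and \emph{defines} the polynomial $R$ of the statement to be $\phi_{\min}$. Since both $P\le\phi$ and $Q\le\phi$ via the projections, the universal property of $\phi_{\min}$ immediately yields $u,v$ with $P\ge_u R$, $Q\ge_v R$, and $u\circ\pi=v\circ\varpi=\pi_{\min}$; this last equality gives $C\subset\{u(x)=v(y)\}$, hence equality by irreducibility. The semi-conjugacies you labor to recover in your third phase (with a separate $R'$) are thus built in from the start. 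Coprimality of $\deg u,\deg v$ is then handled separately via the reduction theorem (Theorem~\ref{thm:reduction}). You invoke Theorem~\ref{thm:min-pako} only as a finiteness bound to guarantee termination of your descent, but its real content is precisely the existence of this universal minimal element, which does the whole job at once.

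Your hands-on Ritt descent also has a soft spot: you assert that non-integrability of $P,Q$ forbids the monomial/Chebyshev configurations (M2)--(M3) appearing in decompositions of $\pi,\varpi$, but this does not follow---the projections can perfectly well have integrable compositional factors even when $P$ and $Q$ themselves are not integrable. What actually controls the situation is the minimality of $\phi_{\min}$; the non-integrability hypothesis was already fully consumed in ruling out the $\G_m$ case in your first phase.
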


\begin{proof}[Sketch of proof]
The normalization $\tilde{C}$ of $C$ is a smooth affine curve over which $\phi(x,y) :=  (P(x),Q(y))$
induces a non-invertible finite surjective map. It follows that $\tilde{C}$ is either the affine line
and $\phi$ is a polynomial, or $\tilde{C}$ is the punctured affine line and $\phi$ is a monomial map. 
The composition of the normalization map and the first projection semi-conjugate $\phi$ to $P$ hence the latter
case cannot appear, see Theorem~\ref{thm:basic-intertwin}.

We now apply Theorem~\ref{thm:min-pako} and set $R := \phi_{\min}$. 
We get the existence of two polynomials $u$ and $v$ such that 
$P \ge_u R$ and $Q\ge_v R$ such that $C=\{u(x)=v(y)\}$. 
Using Ritt's theorem (Theorem~\ref{thm:reduction} below), one can argue that the degrees of $u$ and $v$ are coprime.
\end{proof}

\subsection{Intertwining classes}
Given any polynomial $P$ of degree $d\ge2$, we are interested in the description of the set of polynomials that are intertwined with $P$. 
Note that since any two conjugated polynomials are intertwined, it makes sense to consider the set of conjugacy classes
in $\poly_d$ that are intertwined with $P$.

More precisely, for any integer $D$ we define $\inter_D(P)$ to be the set of monic and centered polynomials of degree $D$
such that $Q^m \approx P^n$ for some $n,m\ge1$. Observe that $\inter_D(P)=\varnothing$ whenever $D^m \neq d^n$ for all $n,m\ge1$.
We also set $\inter(P) = \bigcup_{D \ge1} \inter_D(P)$.

\begin{theorem}
There exists a constant $C = C(d)$ such that 
for any polynomial $P$ of degree $d$, 
we have $\# \inter_D(P) \le C$ for all $D$.
\end{theorem}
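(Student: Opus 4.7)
The plan is to split into two cases according to whether $P$ is integrable, and reduce the non-integrable case to the Ritt--Pakovich structure theory. First I would dispose of the integrable case. If $P$ is conjugate to $M_d$ or $\pm T_d$, then for any $Q\in\inter_D(P)$ the relation $Q^m\approx P^n$, combined with items~(2)--(3) of Theorem~\ref{thm:basic-intertwin}, forces $Q^m$ to be conjugate to $M_{d^n}$ or $\pm T_{d^n}$; a Julia-set argument based on Corollary~\ref{cor:zdunik} shows that $Q$ itself must then be integrable of the same type, hence conjugate to $M_D$ or $\pm T_D$. A direct computation with affine conjugacy shows each such conjugacy class contains at most two monic and centered representatives, giving an absolute bound $|\inter_D(P)|\le 2$ in this case.

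Now assume $P$ is non-integrable and pick $Q\in\inter_D(P)$. By definition there exist $m,n\ge 1$ with $Q^m\approx P^n$ and $D^m=d^n$; the same Julia-set argument shows $Q$ is also non-integrable. Applying Theorem~\ref{thm:bdd-nguyen} to the pair $(P^n,Q^m)$ of degree $d^n$, we may, after replacing $(m,n)$ by a common multiple, assume that $Q^m$ and $P^n$ are strictly intertwined. Theorem~\ref{thm:inv-curve} then produces polynomials $u_Q,v_Q$ with $\gcd(\deg u_Q,\deg v_Q)=1$ together with a monic and centered polynomial $R_Q$ of degree $d^n$ satisfying
\[
P^n\circ u_Q \;=\; u_Q\circ R_Q \qquad\text{and}\qquad Q^m\circ v_Q \;=\; v_Q\circ R_Q,
\]
so that $R_Q\ge P^n$ and $R_Q\ge Q^m$ in the semi-conjugacy order.

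The heart of the argument is to bound the combinatorial data $(R_Q,u_Q,v_Q)$ by a constant depending only on $d$. My strategy is to invoke Theorem~\ref{thm:min-pako} applied to $P$: the set of monic and centered polynomials $S\le P$ has cardinality at most $c_2(d)$. The key claim to establish is that, up to the action of $\Sigma(P^n)$, the polynomial $R_Q$ is of the form $S_Q^n$ for some $S_Q$ in this bounded set; this should follow from the compatibility of Pakovich's minimal construction with iteration (modulo Ritt moves of types (M1)--(M3)) together with the coprimality constraint $\gcd(\deg u_Q,\deg v_Q)=1$. Once $R_Q=S_Q^n$ is so pinned down, $v_Q$ is constrained to lie among a bounded family of compositional factors of $S_Q^n$ whose ``shape'' depends only on the compositional factorizations of $S_Q$ (which has degree $d$), and coprimality cuts this down further; finally $Q$ is recovered from $(R_Q,v_Q,m)$ using the injectivity of $Q\mapsto Q^m$ on monic centered polynomials together with the bound $|\Sigma(Q)|\le d$. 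Multiplying these estimates yields $|\inter_D(P)|\le C(d)$. The main obstacle I anticipate is precisely the iteration-compatibility of Pakovich's minimal element: one must prove that $(P^n)_{\min}$ is, modulo symmetries, a fixed function of $P_{\min}$ and $n$, which requires a careful analysis of how Ritt moves of types (M2) and (M3) propagate through iterates, and uses crucially that $P$ is neither monomial nor Chebyshev to rule out extraneous (M3) rearrangements involving hidden Chebyshev factors.
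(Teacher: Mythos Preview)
Your plan has a genuine gap at the central step, and it also seems to confuse the direction of the semi-conjugacy. After Theorem~\ref{thm:inv-curve} you obtain $R_Q \ge_{u_Q} P^n$ and $R_Q \ge_{v_Q} Q^m$, so $R_Q$ sits \emph{above} $P^n$. Applying Theorem~\ref{thm:min-pako} to $P$ gives a finite set of $S \le P$; but if $S_Q \le P$ then $S_Q^n \le P^n$, which is the wrong direction to match $R_Q \ge P^n$. Even if you repair the direction and aim instead for $R_Q = S_Q^n$ with $S_Q \ge P$, there is no a~priori bound on such $S_Q$'s from Pakovich's theorem, and your ``iteration-compatibility'' of $P_{\min}$ is exactly the missing statement: you would need to prove that every $R \ge P^n$ is, up to symmetry, an $n$-th iterate of some polynomial $\ge P$, and this is neither stated in the literature you cite nor does it follow from routine Ritt bookkeeping. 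You correctly identify this as the main obstacle, but in fact it is a real gap rather than a technicality.

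The paper's argument bypasses this entirely via a different idea you did not use: the Lyapunov exponent. One first proves (Lemma~\ref{lem:final2}) that semi-conjugate polynomials have equal Lyapunov exponent, and then (Lemma~\ref{lem:final1}) that for fixed $P$ and bounded $\deg\pi \le N$, the set of monic centered $R$ with $\pi\circ R = P\circ\pi$ is cut out by boundedly many equations of bounded degree and, by properness of $\lyap$ via the Misiurewicz--Przytycki formula, lies in a compact set --- hence is finite with cardinality bounded by a constant $C_1(d,N)$. With this in hand, the paper applies Theorem~\ref{thm:bdd-nguyen} to get $n\le 2d^4$, then applies Theorem~\ref{thm:min-pako} to $R$ (not to $P$) to bound $\deg u_Q,\deg v_Q \le c(d^{2d^4})$, then invokes Lemma~\ref{lem:final1} with $P^n$ in the role of $P$ to bound the number of possible $R$'s, and finally bounds the number of $Q^n \le R$ by $c_2(d^{2d^4})$. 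The Lyapunov-exponent step is the ingredient your proposal is missing.
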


\begin{remark}
The proof actually shows that if $P$ has coefficients in $\bar{\Q}$ then any $Q\approx P$ does. 
Using the critical height $h_\bif$ defined on p.\pageref{def:bif-height}, and Misiurewicz-Prytycky's formula~\eqref{eq:MPform}, it also implies that if $Q\approx P$ then $Q$ is PCF iff $P$ is. 
\end{remark}

\begin{proof}
We begin with two lemmas. Recall the definition of the Lyapunov exponent from~\S\ref{sec:green}.

\begin{lemma}\label{lem:final2}
If $P\approx Q$, and both polynomials are defined over some metrized field $K$, then $\lyap(P) = \lyap(Q)$.
\end{lemma}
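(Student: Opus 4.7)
The plan is to reduce the lemma first to the case of semi-conjugacy, and then to perform a change of variables using the invariance properties of the equilibrium measure.

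By the very definition of $P\approx Q$, there exist an integer $n\ge 1$ and a polynomial $R$ together with polynomials $\pi,\varpi$ such that $\pi\circ R = P^n\circ\pi$ and $\varpi\circ R = Q^n\circ\varpi$. Observe that $\lyap(P^n)=n\,\lyap(P)$ (and similarly for $Q$), which follows at once from the chain rule and the fact that $\mu_P$ is $P$-invariant. Hence it suffices to establish the following claim: \emph{if $S\ge_\pi T$ with $S$ and $T$ of the same degree $d$, then $\lyap(S)=\lyap(T)$.} Applied twice this yields $\lyap(P^n)=\lyap(R)=\lyap(Q^n)$, whence $\lyap(P)=\lyap(Q)$.

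To prove the claim, I first check that $\pi_*\mu_S=\mu_T$. For this, the relation $\pi\circ S=T\circ\pi$ combined with the fact that $|\pi(w)|\sim|w|^{\deg\pi}$ near infinity yields the Green-function identity $g_T\circ\pi=\deg(\pi)\cdot g_S$; taking Laplacians gives $\pi^*\mu_T=\deg(\pi)\mu_S$, and pushing forward gives $\pi_*\mu_S=\mu_T$. Next I differentiate the semi-conjugacy relation to obtain
\[
\pi'(S(z))\cdot S'(z)=T'(\pi(z))\cdot \pi'(z),
\]
which yields the pointwise equality $\log|T'\circ\pi|=\log|\pi'\circ S|+\log|S'|-\log|\pi'|$ off the finite critical set of $\pi$ and its $S$-preimage. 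Integrating against $\mu_S$, and using both the change of variables $\int\log|T'|\,d\mu_T=\int\log|T'\circ\pi|\,d\mu_S$ and the invariance $S_*\mu_S=\mu_S$, one finds
\[
\lyap(T)=\int\log|\pi'\circ S|\,d\mu_S+\lyap(S)-\int\log|\pi'|\,d\mu_S=\lyap(S),
\]
because $\int\log|\pi'\circ S|\,d\mu_S=\int\log|\pi'|\,d(S_*\mu_S)=\int\log|\pi'|\,d\mu_S$.

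The main subtlety lies in justifying the manipulations above: one must verify that $\log|\pi'|\in L^1(\mu_S)$, so that all integrals are finite and the cancellation is legitimate. Since $\mu_S$ is atomless (the degree is $\ge 2$) and the critical set of $\pi$ is finite, $\log|\pi'|$ is defined $\mu_S$-a.e.; local integrability near a zero $z_0$ of $\pi'$ of order $k$ reduces to the integrability of $\log|z-z_0|$ against $\mu_S$, which is nothing but the logarithmic potential $U_{\mu_S}(z_0)=g_S(z_0)-\gamma_S$ for some constant $\gamma_S$, hence finite since $g_S$ is continuous. The same applies in the non-Archimedean Berkovich setting thanks to the continuity of $g_S$ on $\mathbb{A}^{1,\an}_K$ established in Proposition~\ref{prop:basic-green}. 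This is the only genuinely technical point; once it is addressed, the remainder of the argument is purely formal.
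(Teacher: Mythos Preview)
Your proof is correct and follows essentially the same route as the paper's: reduce to the semi-conjugacy case, use the Green-function identity to get $\pi_*\mu_S=\mu_T$, and apply the chain rule. The one noteworthy difference is that the paper works with iterates $(P^n)'\circ\pi = (\pi'\circ Q^n)\cdot(Q^n)'\cdot(\pi')^{-1}$ and lets the $\tfrac1n$ factor kill the two boundary terms in the limit, whereas you cancel them directly via the $S$-invariance $S_*\mu_S=\mu_S$; your version is a bit cleaner and also makes the reduction from $\approx$ to $\ge$ (and the $L^1$ integrability of $\log|\pi'|$) explicit, which the paper leaves tacit.
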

\begin{proof}
Indeed suppose $ \pi \circ Q = P \circ \pi$ for some polynomial $\pi$. Since $\pi^*\mu_P=\Delta (g_P\circ \pi)=\deg(\pi)\cdot \Delta(g_Q)=\deg(\pi)\cdot \mu_Q$, we have $\pi_*\mu_Q=\mu_P$.
It follows that 
\[
|(P^n)' \circ \pi| = |\pi'\circ Q^n| \cdot | (Q^n)'| \cdot |\pi'|^{-1} 
\]
for all $n$,
hence
\begin{multline*}
\lyap(P) = 
\lim_n \frac1n \int \log |(P^n)' \circ \pi|\, d\mu_Q
= \\
\lim_n 
\frac1n \int \log |\pi'\circ Q^n| \, d\mu_Q
+ 
\lim_n 
\frac1n \int \log | (Q^n)'| \, d\mu_Q
+ 
\lim_n 
\frac1n \int \log  |\pi'|^{-1}  \, d\mu_Q
 \end{multline*}
 and the result follows. 
\end{proof}

\begin{lemma}\label{lem:final1}
For any integer $N\ge 2$, there exists a constant $C_1(d,N)$ (independent on $P$) such that 
the set of  monic and centered polynomials $Q$ such that $ \pi \circ Q = P \circ \pi$ for some polynomial $\pi$ 
with $\deg(\pi) \le N$ is 
finite of cardinality $\le C_1(d,N)$.
\end{lemma}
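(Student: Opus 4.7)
My plan is to study the closed algebraic subvariety
\[V_n := \{(P,\pi,Q) \in \poly^d_{\mathrm{mc}} \times \poly^n_{\mathrm{mc}} \times \poly^d_{\mathrm{mc}} \, : \, \pi\circ Q = P\circ\pi\}\]
and to show that its projection $\rho_n\colon V_n \to \poly^d_{\mathrm{mc}}$ onto the first factor is a finite morphism; this immediately yields a uniform bound $|\rho_n^{-1}(P)| \le \deg(\rho_n) =: c_n(d)$, depending only on $n$ and $d$. Summing over $n = 2,\ldots,N$ and projecting each fiber to the $Q$-coordinate would give $C_1(d,N)$. An initial reduction handles the normalization: after replacing $\pi$ by $\sigma\circ\pi\circ\tau$ for suitable affine $\sigma,\tau$, I may assume $\pi$ is monic centered, at the cost of multiplying the final count by a constant depending only on $d$ (this constant coming from the finite group of affine maps preserving the monic centered form of both $P$ and $Q$).

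To establish the finiteness of $\rho_n$, I would argue in two steps. First, properness would follow from Corollary~\ref{cor:properness compo}: the projection $V_n \to \poly^d_{\mathrm{mc}} \times \poly^n_{\mathrm{mc}}$ forgetting $Q$ is the base change of the finite morphism $\Phi_{n,d}\colon \poly^n_{\mathrm{mc}} \times \poly^d_{\mathrm{mc}} \to \poly^{nd}_{\mathrm{mc}}$ along $(P,\pi)\mapsto P\circ\pi$, hence is itself finite; combining with the analogous observation about the projection to $\poly^d_{\mathrm{mc}} \times \poly^d_{\mathrm{mc}}$ gives that, once one controls the $\pi$-coordinate via the equation $\pi\circ Q = P\circ\pi$ and the properness of $\Phi_{d,n}$, the map $\rho_n$ is proper.

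Second, for the finiteness of each fiber $V_n(P) = \rho_n^{-1}(P)$, I plan to use Ritt's theorem (Theorem~\ref{thm:Ritt}). A point $(\pi,Q)\in V_n(P)$ endows the polynomial $F = P\circ\pi = \pi\circ Q$ of degree $nd$ with two $2$-factor decompositions, of factor-degrees $(d,n)$ and $(n,d)$ respectively; after completing these into total decompositions, Ritt's theorem guarantees that they are connected by a bounded sequence of elementary Ritt moves M1, M2, M3. The key rigidity is that the \emph{same} polynomial $\pi$ must appear both as right factor of $P\circ\pi$ and as left factor of $\pi\circ Q$; a hypothetical positive-dimensional family $(\pi_t,Q_t)\in V_n(P)$ would correspond to a family of Ritt-move sequences depending on a parameter, and the only Ritt move accommodating a continuous parameter is M1 (affine reparameterization), which is eliminated by the monic-centered normalization of $\pi$. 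Hence $V_n(P)$ is $0$-dimensional and therefore finite. Combining properness with finite fibers, $\rho_n$ is a finite morphism as desired, and Pakovich's Theorem~\ref{thm:min-pako} can be invoked to give explicit bounds on $c_n(d)$ (for instance by analyzing how $Q$ fits into the chain $Q \ge P \ge Q_{\min}$ via a semi-conjugacy of degree bounded by $c(d)$).

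The main obstacle is the $0$-dimensionality of the fibers $V_n(P)$: ruling out parametric families of semi-conjugacies requires a careful Ritt-theoretic bookkeeping, essentially showing that the rigidity enforced by having the same $\pi$ appear on both sides of the equation precludes the absorption of a continuous deformation parameter by any Ritt-move sequence other than the trivial ones already removed by our normalization. The properness step and the reduction to monic centered $\pi$ are, by contrast, relatively direct consequences of Corollary~\ref{cor:properness compo} and the classification of affine symmetries preserving the monic centered form.
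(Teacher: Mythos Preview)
Your plan has a genuine gap in the properness step. You want to deduce that $\rho_n\colon V_n\to\poly^d_{\mathrm{mc}}$ is proper from Corollary~\ref{cor:properness compo}, but the argument you sketch does not close. You correctly observe that the projection $V_n\to\poly^d_{\mathrm{mc}}\times\poly^n_{\mathrm{mc}}$ (forgetting $Q$) is finite, being a base change of $\Phi_{n,d}$; this shows that once $(P,\pi)$ is bounded, so is $Q$. But to get properness of $\rho_n$ itself you must also bound $\pi$ knowing only $P$. Here the properness of $\Phi_{d,n}$ is useless: if $\pi_k$ diverges with $P$ fixed, then $P\circ\pi_k=\pi_k\circ Q_k$ diverges on both sides simultaneously, and nothing in the composition machinery prevents this. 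The phrase ``once one controls the $\pi$-coordinate via the equation'' is exactly the missing step.

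The paper fills this gap with a dynamical input you do not mention: Lemma~\ref{lem:final2} shows that $\lyap(Q)=\lyap(P)$ whenever $Q\le_\pi P$, and the Lyapunov exponent is a proper function on $\poly^d_{\mathrm{mc}}$ (by the Misiurewicz--Przytycki formula and Proposition~\ref{prop:growth Green}). This forces $Q$ to lie in a compact set depending only on $P$. The paper also takes a more direct route overall: it first eliminates $\pi$ by comparing the coefficients of $T^{Nd-j}$ for $j=2,\ldots,N$ in $\pi\circ Q=P\circ\pi$, expressing each $\pi_j$ as a polynomial in the $q_i$'s; the locus of admissible $Q$'s is then cut out by equations of degree bounded in terms of $d,N$ alone. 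Boundedness via Lyapunov gives finiteness, and the uniform cardinality bound follows from B\'ezout. Note that once you know $\rho_n$ is proper, the $0$-dimensionality of fibers is automatic (a proper morphism between affine schemes is finite), so your Ritt-theoretic argument for zero-dimensional fibers becomes unnecessary.
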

\begin{proof}
Write $P(T) = T^d + \sum_{j=0}^{d-2} p_{d-j} T^j$. We look for polynomials $Q(T) = T^d + \sum_{j=0}^{d-2} q_{d-j} T^j$ such that there exists
$\pi = \zeta T^N + \pi_2 T^{N-2} + \cdots + \pi_N$ with $\zeta^{d-1} =1$, and $\pi \circ Q (T) = P \circ \pi(T)$.
Identifying the terms in $T^{Nd-j}$ with $j= 2, 3, \ldots, N$ yields equations of the form
$\zeta^{d-1} d \pi_j = L_j(\pi_2, \cdots, \pi_{j-1},q_2, \cdots , q_d)$ with $L_j$ a polynomial with integral coefficients
so that $\pi_2, \ldots, \pi_N$ can be expressed as polynomials in the variables $q_2, \ldots , q_N$. 
It follows that the equation $\pi\circ Q (T) = P \circ \pi(T)$ is equivalent to the vanishing of the coefficients in $T^j$ from $j=0$ to $j = Nd-N-1$, hence of
$Nd-N$ polynomials in the variables $q_2, \ldots, q_N$ of degree depending only on $d$ and $N$.
We get that $\{Q, \, \pi \circ Q = P \circ \pi, \, \deg(\pi)\le N \}$ is  an algebraic
subvariety defined by the intersection of $C_1$ hypersurfaces of degree $\le C_2$ where $C_1, C_2$ are
constants depending only on $d$ and $N$. But Lemma~\ref{lem:final2} shows that over the field of complex numbers, this variety is bounded, since the Lyapunov function is proper on the space of monic and centered polynomials by~\eqref{eq:MPform} and Proposition~\ref{prop:growth Green}. It is hence finite, and the result follows from~\cite[Theorem~12.3]{MR1644323}.
\end{proof}

Now fix any monic and centered polynomial $P$ of degree $d$ and choose an embedding of the defining field of $P$ in $\C$. 
Pick any monic and centered polynomial $Q \approx P$. By Theorem~\ref{thm:bdd-nguyen}, there exists an integer $n \le 2d^4$ and a polynomial $R$
such that $R \ge P^n$ and  $R \ge Q^n$.

By Theorem~\ref{thm:min-pako}, there exists $S$ of degree $\le c(d^{2d^4})$ such that 
$S \le R$ and $S$ is universal for $R$, so that we may find polynomials  $\pi, \varpi$ with $\deg(\pi), \deg(\varpi) \le c(d^{2d^4})$
such that  $R \ge_\pi P^n$ and $R \ge_\varpi Q^n$. 

By Lemma~\ref{lem:final1}, there are at most $C_1(d^{2d^4},c(d^{2d^4}))$ possibilities for $R$, and for each $R$ at most $c_2(d^{2d^4})$ for $Q$ by Theorem~\ref{thm:min-pako}.
This ends the proof.
\end{proof}

\subsection{Intertwining classes of a generic polynomial}

For convenience, we say that $P$ is pseudo-integrable if it is of the form 
$P= T_d, M_d, T^sR(T^n)$ or $T^sR^n(T)$ for some $n\ge2$. 
Also write $P \sim Q$ if there exist two affine maps such that $P= \sigma \circ Q\circ \tau$.

We say that $P$ has an integrable (resp. pseudo-integrable) factor if it admits
a complete decomposition $P = P_1 \circ \cdots \circ P_s$ for which one of the factor satisfies
$P_i \sim Q$ where $Q$ is integrable (resp. pseudo-integrable).
\begin{proposition}
\begin{itemize}
\item
 If $P$ has an (pseudo)-integrable factor, then any complete decomposition
contains an (pseudo)-integrable polynomial (up to composition by affine maps). 
\item
A polynomial $P$ has a (pseudo)-integrable factor iff one of its iterate has a (pseudo)-integrable factor.
\item
A polynomial $P$ without any pseudo-integrable factor has a trivial group of dynamical symmetries.
\end{itemize}
\end{proposition}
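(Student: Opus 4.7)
For item 1, I invoke Ritt's theorem (Theorem~\ref{thm:Ritt}): any two complete decompositions of $P$ are connected by a finite sequence of Ritt moves (M1), (M2), (M3), so it suffices to check that each move preserves the existence of an integrable (resp.\ pseudo-integrable) factor in the swapped pair. Move (M1) is an affine conjugation, which preserves the $\sim$-class of both factors. In (M2), before the move the pair is $(\nu\circ z^sR^n(z)\circ\sigma_1^{-1},\,\sigma_1\circ z^n\circ\mu)$, whose factors are $\sim$ to the pseudo-integrable templates $T^sR^n(T)$ and $T^n=M_n$ (with the latter integrable); after the move the pair has the analogous shape with $T^n$ and $T^sR(T^n)$, again retaining an integrable monomial factor. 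In (M3), every factor is Chebyshev, hence integrable. For item 2, the forward direction is by concatenation; conversely, applying item 1 to the complete decomposition of $P^n$ obtained by $n$-fold concatenation of a complete decomposition $P=P_1\circ\cdots\circ P_s$, the resulting decomposition of $P^n$ must contain a pseudo-integrable (resp.\ integrable) factor, which is necessarily one of the $P_i$.

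For item 3, I argue the contrapositive: assuming $\Sigma(P)\neq\{\id\}$, I exhibit a pseudo-integrable factor. After conjugation, take $P$ monic and centered with reduced presentation $P(z)=z^{\mu}P_0(z^m)$, $m\geq 2$, and pick $g(z)=\zeta z\in\Sigma(P)\setminus\{\id\}$ of order $n\geq 2$. The key lemma is: for every decomposition $P=A\circ B$ with $A,B$ monic centered of degree $\geq 2$, at least one of $\Sigma(A)$, $\Sigma(B)$ is nontrivial. Indeed, $P\circ g=\rho(g)\circ P$ rewrites as $A\circ(B\circ g)=(\rho(g)\circ A)\circ B$, an equality of two-factor decompositions of the same polynomial with matching ordered degree profile $(\deg A,\deg B)$. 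Since Ritt's moves (M2) and (M3) strictly alter the ordered degree profile except in edge cases that collapse to (M1), only (M1) connects these decompositions, so there exists an affine $\tau$ with $B\circ g=\tau\circ B$ and $A\circ\tau=\rho(g)\circ A$. Expanding $B(z)=\sum_k b_k z^k$ and $\tau(w)=\alpha w+\beta$, the identity $B(\zeta z)=\tau(B(z))$ forces $b_k(\zeta^k-\alpha)=0$ for all $k\geq 1$, so the nonzero non-constant coefficients of $B$ are supported in a single congruence class modulo $n$. If $B(0)=0$, then all nonzero exponents of $B$ are $\geq 1$ and lie in that progression, so the reduced presentation of $B$ has period a multiple of $n$, exhibiting $\Sigma(B)\neq\{\id\}$. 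If $B(0)\neq 0$, then the translated polynomial $\tilde{A}(v):=A(v+B(0))-A(B(0))$ inherits a relation $\tilde{A}(\zeta v)=\alpha\tilde{A}(v)$ from $A\circ\tau=\rho(g)\circ A$, and the same coefficient analysis (now with zero constant term) forces $\tilde{A}$, hence $A$, to have nontrivial $\Sigma$.

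To conclude item 3, I argue by induction on the complexity of $P$. If $P$ is indecomposable, its reduced presentation $z^{\mu}P_0(z^m)$ with $m\geq 2$ already shows $P\sim T^{\mu}P_0(T^m)$ is pseudo-integrable of template $T^sR(T^n)$, and the trivial decomposition of $P$ contains $P$ itself as a pseudo-integrable factor. If $P=A\circ B$ is decomposable, the lemma produces a factor ($A$ or $B$) of strictly smaller complexity and nontrivial $\Sigma$; by induction that factor contains a pseudo-integrable factor in its own complete decomposition, which extends to a complete decomposition of $P$. By item 1, every complete decomposition of $P$ then contains a pseudo-integrable factor, as required. The main technical obstacles are two: first, the equal-degree uniqueness of two-factor decompositions invoked in the lemma, which requires a case analysis of Ritt's classification to verify that (M2) and (M3) cannot connect decompositions with identical ordered degree profile except through degenerations collapsing to (M1); second, the bifurcation in the lemma between $\Sigma(B)\neq\{\id\}$ and $\Sigma(A)\neq\{\id\}$ according to whether $B(0)=0$, which demands careful coefficient bookkeeping to identify the ``correct'' factor carrying the symmetry at each inductive step.
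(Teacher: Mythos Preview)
Your treatment of the first two items matches the paper's. For item 3, your contrapositive strategy and the key lemma are sound in spirit, but there are two genuine problems in the execution.

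First, your justification for the existence of $\tau$ is not valid: Theorem~\ref{thm:Ritt} concerns \emph{complete} decompositions into indecomposable factors, so you cannot invoke Ritt moves for an arbitrary two-factor splitting $P = A \circ B$ where $A,B$ may well be decomposable. The statement you actually need---that $A \circ B' = A' \circ B$ with $\deg A = \deg A'$ and $\deg B=\deg B'$ forces $A' = A\circ\tau$ and $B' = \tau\circ B$ for some affine $\tau$---is instead a direct consequence of the reduction theorem (Theorem~\ref{thm:reduction}): the gcd on each side equals the full degree, so the factors $\tilde A,\tilde D,\tilde B,\tilde C$ there are all affine.

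Second, your bifurcation on $B(0)$ is misdirected. Once you have $B\circ g = \tau\circ B$ and $A\circ\tau = \rho(g)\circ A$ with $A$ monic and \emph{centered}, write $\tau(w)=\alpha w+\beta$ and $\rho(g)(w)=\eta w$; comparing the coefficients of $w^{\deg A}$ and $w^{\deg A - 1}$ in $A(\alpha w + \beta) = \eta A(w)$ gives $\alpha^{\deg A}=\eta$ and $(\deg A)\alpha^{\deg A-1}\beta=0$, hence $\beta = 0$ regardless of whether $B(0)$ vanishes. Thus $\tau(w)=\alpha w$ with $\alpha=\zeta^{\deg B}$, and $B(\zeta z)=\alpha B(z)$ immediately yields $\zeta\in\Sigma(B)$ via Proposition~\ref{prop:Sigma} (take $G=\langle\zeta\rangle$ and $\rho_B(\zeta^j)=\alpha^j$). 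So it is always $B$, never $A$, that inherits the symmetry; your detour through $\tilde A$ is both unnecessary and, as written (with $\zeta$ and $\alpha$ swapped), incorrect. The induction then collapses: apply the lemma once with $B=P_s$ the last factor of a complete decomposition, and $P_s$, being indecomposable with nontrivial $\Sigma$, is pseudo-integrable.

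The paper reaches the same endpoint more directly: it works from the start with a complete decomposition, so Ritt's theorem applies verbatim to $(g'\circ P_1,\ldots,P_s)$ and $(P_1,\ldots,P_s\circ g)$; the hypothesis ``no pseudo-integrable factor'' (together with item 1) rules out moves (M2) and (M3) at every intermediate step, leaving only (M1) moves, which transport the affine map down to $P_s$.
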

\begin{proof}
Recall that any two complete decompositions are related by Ritt's moves. The first item follows from the observation
that any non trivial move (not of the form (M1)) involves an integrable polynomial. 

Observe that if $P = P_1 \circ \cdots \circ P_s$ is a complete decomposition, then 
$P^n = (P_1 \circ \cdots \circ P_s)^{\circ n}$ is also a complete decomposition. Indeed
if it were not, one of the polynomial $P_i$ would be decomposable. 
It follows that $P$ has no (pseudo)-integrable factor iff $P^n$ does. 

Take any complete decomposition $P = P_1 \circ \cdots \circ P_s$ and suppose $P$ has no pseudo-integrable factors. 
If $\Sigma(P)$ is non-trivial, then there exist affine maps $g, g'$ such that $(g' \circ P_1) \circ \cdots \circ P_s = P_1 \circ \cdots \circ (P_s\circ g)$.
By Ritt's theorem, these two decompositions are connected by a sequence of Ritt moves which are necessarily of the type (M1)
since $P$ has no pseudo-integrable factors. It follows that $g'' \circ P_s = P_s\circ g$ for some affine $g''$ hence $P_s$ has a non-trivial group
of symmetries, and $P_s$ is pseudo-integrable. This yields a contradiction.
\end{proof}

In degree $2$, any intertwining class is trivial. This was shown by Ghioca, Nguyen, and Ye~\cite[Theorem~1.4]{GNY}.

\begin{theorem}[\cite{GNY}]
Two quadratic polynomials $T^2+c$ and $T^2 +c'$ are intertwined iff $c=c'$.
\end{theorem}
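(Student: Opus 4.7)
My plan is to prove the nontrivial ``only if'' direction: assuming $P := T^2+c$ and $Q := T^2+c'$ are intertwined, I will show $c = c'$. After embedding the defining field into $\C$, I first dispatch the integrable case. If $c \in \{0, -2\}$, then $P$ is integrable (equal to $M_2$ or $T_2$), and Theorem~\ref{thm:basic-intertwin}(2)--(3) immediately forces $Q$ to be affinely conjugate to $P$; since any affine conjugation preserving the monic centered normalization of a quadratic polynomial is trivial (the group $\U_{d-1} = \U_1$ being trivial for $d=2$), this gives $c=c'$. The case $c' \in \{0, -2\}$ is symmetric. From here I reduce to the main case where both $P, Q$ are non-integrable: I invoke Theorem~\ref{thm:bdd-nguyen} to find $n \le 2 \cdot 2^4 = 32$ such that $P^n$ and $Q^n$ are strictly intertwined; Theorem~\ref{thm:basic-intertwin}(4) then produces an irreducible curve $Z \subset \A^2$ invariant under $(P^n, Q^n)$ with both projections surjective, and Theorem~\ref{thm:inv-curve} provides polynomials $U, V, R$ with $\gcd(\deg U, \deg V) = 1$, $P^n \circ U = U \circ R$, $Q^n \circ V = V \circ R$, and $Z = \{U(x) = V(y)\}$.

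The hard part will be a Ritt-theoretic rigidity step applied to the common polynomial $P^n \circ U = U \circ R$. I plan to apply Ritt's theorem (\ref{thm:Ritt}) to its two complete decompositions, exploiting the fact that $P$, being indecomposable of prime degree $2$ and non-integrable, is never involved in any Ritt move of type (M2) or (M3). Consequently the $P$-factors cannot swap past any other indecomposable factor, and a careful position-by-position matching between the two decompositions will force every indecomposable factor of $U$ to be affinely equivalent to $P$. From this I will conclude that $U$ is affinely equivalent to $P^j$ for some $j \ge 0$ (so $\deg U = 2^j$) and that $R$ is affinely conjugate to $P^n$. A parallel analysis of $Q^n \circ V = V \circ R$ yields $\deg V = 2^k$ and pins down $R$ in relation to $Q^n$ up to a sign ambiguity; I will handle the sign via the elementary fact that, for non-integrable quadratic $Q$, the equality $Q^\ell(f) = Q^\ell(g)$ between non-constant polynomials forces $f = \pm g$ (easy induction on $\ell$ using $Q(f) = Q(g) \Leftrightarrow f = \pm g$). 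The coprimality $\gcd(2^j, 2^k) = 1$ forces $\min(j,k) = 0$, WLOG $j=0$, so $U$ is affine and $P^n$ is affinely conjugate to $\pm Q^n$.

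Finally, I conclude by explicit coefficient matching. Writing the affine conjugation as $C(z) = \alpha z + \beta$, matching the top two coefficients of the monic centered polynomials $P^n$ and $\pm Q^n$ (both of degree $2^n$) yields $\beta = 0$ and $\alpha^{2^n - 1} = \pm 1$. Using the recursion $P^{n+1} = (P^n)^2 + c$, the expansion $P^n(z) = z^{2^n} + \sum_{k \ge 1} p_k(c)\, z^{2^n - 2k}$ satisfies $p_1(c) = 2^{n-1} c$ and, for $n \ge 2$, $p_2(c) = 2^{n-2}\, c + (\text{quadratic in } c)$. The conjugacy imposes $p_k(c) = \alpha^{2k} p_k(c')$ for all $k$; the case $k=1$ gives $c = \alpha^2 c'$, and substituting into $k=2$---using $c' \ne 0$ by non-integrability and that the linear coefficient $2^{n-2}$ is nonzero---yields $\alpha^2 = 1$, hence $c = \alpha^2 c' = c'$. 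For $n=1$ the conclusion is immediate: $\alpha^{2^n - 1} = \alpha = \pm 1$ already gives $\alpha^2 = 1$, so the single equation $c = \alpha^2 c'$ yields $c = c'$.
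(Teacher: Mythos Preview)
The paper does not give its own proof of this theorem; it simply cites \cite[Theorem~1.4]{GNY}. Your outline is reasonable, and the integrable case, the use of Theorems~\ref{thm:bdd-nguyen} and~\ref{thm:inv-curve}, and the final coefficient comparison are all correct. The genuine gap is in your Ritt-theoretic Step~4.

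Your assertion that a non-integrable $P = z^2+c$ ``is never involved in any Ritt move of type (M2) or (M3)'' is false. With the paper's convention $X \sim Y \Leftrightarrow X = \sigma\circ Y\circ\tau$ for affine $\sigma,\tau$, \emph{every} quadratic polynomial satisfies $P \sim z^2 \sim T_2$: take $\sigma(w)=w+c$, $\tau=\mathrm{id}$. Hence $P$ can perfectly well occupy the $z^n$-slot of an (M2) move, or the $T_2$-slot of an (M3) move, whenever the adjacent factor has odd degree of the right shape. So if some indecomposable factor $U_1$ of $U$ had odd degree, the pair $(P,U_1)$ could admit a non-trivial swap, and your ``position-by-position matching'' does not go through as stated. (What \emph{is} true, and what you implicitly use elsewhere, is that two adjacent indecomposable factors of the \emph{same} degree admit only (M1)-moves; this gives rigidity of the decomposition $P\circ\cdots\circ P$ of $P^n$ by itself, but says nothing about the interface with $U$.)

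The conclusion you need---that $P^n \circ U = U \circ R$ with $P$ non-integrable forces $\deg U$ to be a power of~$2$---can indeed be proved, but it requires the reduction of Theorem~\ref{thm:reduction} followed by an honest case analysis via Theorem~\ref{thm:solution}. After reduction one is left with $\tilde D\circ\tilde B=\tilde A\circ\tilde C$ where $\tilde D$ is affinely related to $P^{n-\ell}$ and $\deg\tilde A$ is odd; one must then check that each of the three primitive cases forces $P$ to be integrable (for instance, case~(3) gives $P^{n-\ell}\sim T_{2^{n-\ell}}$, which by a critical-value count forces $P$ conjugate to $T_2$; case~(2) leads to an identity of the shape $W^2+\kappa=A\cdot S^m$ with $m$ odd, which one shows has no solution with $\kappa\neq 0$). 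This is considerably more work than your sketch indicates, and until it is carried out your argument does not establish that $U$ (resp.~$V$) is affinely a power of $P$ (resp.~$Q$).
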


In higher degree, we have the following result.

\begin{theorem}
Suppose that $P$ is an indecomposable polynomial with  no pseudo-integrable factors. 
Then any $Q \in \inter(P)$ is conjugate to an iterate of $P$.

In particular for any $d\ge3$, the set of polynomials $P$ such that 
$ \inter(P)$ is reduced to a single conjugacy class is a Zariski open dense subset of $\poly_d$.
\end{theorem}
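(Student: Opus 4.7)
Any iterate of $P$ trivially lies in $\inter(P)$, so I focus on the converse. Let $Q\in\inter(P)$; by definition $Q^m\approx P^n$ for some $m,n\ge 1$. Since $P$ has no pseudo-integrable factor, $P$ is in particular not integrable, and Theorem~\ref{thm:basic-intertwin}(2)-(3) then implies $Q$ is also non-integrable. I first apply Theorem~\ref{thm:bdd-nguyen} to ensure, after replacing $m$ and $n$ by bounded multiples, that $P^n$ and $Q^m$ are strictly intertwined; Theorem~\ref{thm:inv-curve} then produces polynomials $u,v$ of coprime degrees and a polynomial $R$ satisfying
\[
P^n\circ u=u\circ R \quad\text{and}\quad Q^m\circ v=v\circ R.
\]

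The central step is to show that $R\sim P^n$, after which a conjugation lets me assume $R=P^n$ and reduces the problem to studying $v\circ P^n=Q^m\circ v$. Both tasks rely on the same Ritt-theoretic rigidity, which I apply to the equation $u\circ R=P^n\circ u$ viewed as two complete decompositions of the same polynomial. Since $P$ is indecomposable, $P^n$ has complexity $n$, and a complexity count via Theorem~\ref{thm:Ritt} forces any complete decomposition $R=R_1\circ\cdots\circ R_t$ to satisfy $t=n$ and $\deg R=d^n$. The crucial rigidity ingredient is: because $P$ is not pseudo-integrable, it cannot participate in any Ritt move of type (M2) or (M3), since both moves require at least one of the two paired factors to be monomial, Chebyshev, or of the form $z^sR(z^n)$ or $z^sR^n$ with $n\ge 2$. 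Hence each of the $n$ occurrences of $P$ in $P^n$ can evolve only via (M1) moves throughout any sequence of Ritt moves, staying in the ``$P$-class'' of polynomials $\sigma\circ P\circ\tau$ with $\sigma,\tau$ affine; matching position by position forces each $R_i$ to lie in this class, and collecting the residual affine factors yields $R\sim P^n$.

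Having reduced to $v\circ P^n=Q^m\circ v$, I run the same Ritt analysis with a complete decomposition $Q=Q_1\circ\cdots\circ Q_k$. Complexity matching now gives $n=mk$, so $m\mid n$ and $k=n/m\in\N$, and each $Q_i$ is again of the form $\sigma_i\circ P\circ\tau_i$. Writing
\[
Q=\sigma_1\circ P\circ\gamma_1\circ P\circ\cdots\circ\gamma_{k-1}\circ P\circ\tau_k
\]
with affine joiners $\gamma_i=\tau_i\sigma_{i+1}$, a further application of the rigidity of $P$---using that $\Sigma(P^n)=\Sigma(P)=\{\id\}$ by the proposition of~\S\ref{sec:Ritt-theory} on symmetry groups of polynomials with no pseudo-integrable factor---combined with the observation that $v\circ P^n$ displays a block of $n$ consecutive copies of $P$ with no intermediate affine, forces all the joiners $\gamma_i$ and $\tau_k\sigma_1$ appearing between consecutive $P$'s in the expansion of $Q^m\circ v$ to collapse to the identity. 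This yields $Q\sim P^k$, completing the main statement.

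The corollary is immediate. Theorem~\ref{thm:indecomp-family} shows that the set of indecomposable polynomials is Zariski open dense in $\mpoly^d$. The pseudo-integrable locus---the conjugacy orbits of $M_d$ and $\pm T_d$ together with the images of the parameterizations $(s,R)\mapsto z^sR^n$ and $(s,R)\mapsto z^sR(z^n)$ for $n\ge 2$---is a finite union of Zariski closed subvarieties, each of dimension at most $d/2<d-1$ when $d\ge 3$, hence is a proper Zariski closed subset of $\mpoly^d$. For $P$ outside this locus the hypotheses of the main statement are satisfied, so $\inter_D(P)$ reduces to the single conjugacy class of $P^{\log_dD}$ for each admissible $D$. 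The principal obstacle is the Ritt-move bookkeeping of the second and third paragraphs: precisely tracking how (M1)-equivalence classes of $P$-factors propagate through both decompositions, and establishing the collapse of the residual affine joiners via the triviality of $\Sigma(P)$. This is exactly where the non-pseudo-integrability hypothesis is essential, since families of the form $z^sR^n$ admit genuine nontrivial semi-conjugacies via move~(M2) and would provide honest obstructions to the conclusion.
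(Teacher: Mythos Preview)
Your strategy is viable and the key observation—that a non-pseudo-integrable $P$ can participate only in type-(M1) Ritt moves—is the right engine. But two steps in your Ritt bookkeeping are genuine gaps rather than mere elisions.

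The claim ``matching position by position forces each $R_i$ to lie in the $P$-class'' does not follow as stated. Under (M1) moves the $n$ copies of $P$ in $P^n\circ u$ stay at positions $1,\ldots,n$; so what you actually obtain is that the \emph{first} $n$ factors of the other decomposition $u\circ R$ lie in the $P$-class—namely $u_1,\ldots,u_{\min(s,n)}$ and, if $s<n$, only the initial $R_1,\ldots,R_{n-s}$. Reaching all $R_i$ requires a bootstrap: once $u_j\sim P$, those factors occupy positions $n{+}j$ in the first decomposition, extending the $P$-block, and one iterates. More seriously, even once every factor is $\sim P$, it does \emph{not} follow that $R\sim P^n$: $P\circ g\circ P$ for a generic affine $g$ has both factors $\sim P$ yet is not conjugate to $P^2$. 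What saves you is that once all factors are $\sim P$ only (M1) moves occur, and then the explicit parametrization $G_j=\tau_{j-1}^{-1}F_j\tau_j$ (with $\tau_0=\tau_{n+s}=\mathrm{id}$) gives $u=P^s\tau_s$ and hence $R=\tau_s^{-1}P^n\tau_s$ when $s\le n$ (and by induction on $\deg u$ otherwise). The same issue recurs in your ``joiners collapse'' step for $Q$: you need either this affine-tracking together with the implication $\alpha\circ P=P\circ\beta\Rightarrow\beta\in\Sigma(P)=\{\mathrm{id}\}$, or—more cleanly—the route of deducing $J(\tilde Q)=J(P)$ from $Q^m\sim P^n$ and applying Theorem~\ref{thm:SS} with $\Sigma(P)$ trivial and $P$ indecomposable to get $Q\sim P^k$.

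The paper avoids this bookkeeping entirely via Lemma~\ref{lem:congruent}: if $P$ has no pseudo-integrable factor and $R\ge P$ (or $R\le P$), then some iterates satisfy $R^l=U\circ V$, $P^l=V\circ U$. Its proof applies Theorems~\ref{thm:reduction} and~\ref{thm:solution} directly to the semi-conjugacy equation rather than to complete decompositions; every nontrivial case of Theorem~\ref{thm:solution} would exhibit a pseudo-integrable factor of $P^n$, so the coprime part of the semi-conjugating map must be affine. Since $P$ indecomposable and non-pseudo-integrable forces every complete decomposition of $P^n$ to be trivial, $R\sim P^n$ and then $Q^m\sim P^n$ follow at once, and Theorem~\ref{thm:SS} finishes as above. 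Your direct move-tracking can be completed along the lines sketched here, but the congruence route is considerably shorter.
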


\begin{proof}
We rely on the following lemma.
\begin{lemma}\label{lem:congruent}
Suppose that $P$ has no pseudo-integrable factors. If $Q \ge P$ or $Q \le P$, then there exists some integer $n$ such that 
$Q^n = U \circ V$ and $P^n = V \circ U$ for some polynomials $U$ and $V$.
\end{lemma}
Following the terminology of~\cite{pakovich}, we shall say that $P$ and $Q$ are congruent when the conclusion
of the lemma is satisfied. \label{congruent}

Suppose that $Q\in \inter(P)$. By definition, one can find a polynomial $R$ such that 
$R \ge Q^m$ and $R \ge P^n$ for some $n,m\ge1$. By the previous lemma, $R$ and $P^n$ are congruent (maybe after increasing $n$) so that 
$R = U \circ V$ and $P^n = V \circ U$.
Since $P$ is indecomposable and has no pseudo-integrable factor, any complete decomposition of $P^n$ is trivial. 
It follows that $V = P^k\circ \sigma$ and $U =\sigma^{-1} \circ P^{n-k}$ with $k\ge0$ and $\sigma$ affine, hence
$R$ is conjugated to $P^n$, and $P^n \ge Q^m$. The same argument implies that $Q^m$ and $P^n$ are conjugated. 

Since $P$ has no non-trivial symmetries and is indecomposable, it is also primitive, and 
it follows from Theorem~\ref{thm:SS} that $P$ and $Q$ are conjugated.

The last statement follows from  Theorem~\ref{thm:indecomp-family}.
\end{proof}

The proof of Lemma~\ref{lem:congruent} relies on two fundamental theorems in Ritt's theory that we now recall.
Fix four polynomials $A,B,C,D$ such that $D \circ B= A \circ C$, i.e. the following diagram is commutative 
\begin{displaymath}
    \xymatrix{ 
    \A^1 \ar[r]^{B} \ar[d]_{C} 
    & \A^1 \ar[d]^{D} \\
        \A^1 \ar[r]_{A} 
    & \A^1}
\end{displaymath}

\begin{theorem}[Reduction, \cite{MR3599}]\label{thm:reduction}
There exist two polynomials $U$ and $V$ such that 
$ A = U \circ \tilde{A}$, $ D = U \circ \tilde{D}$,
$ B = \tilde{B}\circ V$, and $ C = \tilde{C}\circ V$, 
\[ \deg(U) = \gcd \{\deg(A), \deg(D)\} \text{ and } \deg(V) = \gcd \{\deg(B), \deg(C)\}\]
and  $\tilde{D} \circ \tilde{B}= \tilde{A} \circ \tilde{C}$.
\end{theorem}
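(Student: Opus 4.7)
The approach is field-theoretic, via the correspondence $P\leftrightarrow K(P(T))\subset K(T)$ which identifies a polynomial map $P\colon\A^1\to\A^1$ of degree $d$ with a subfield of index $d$ in which $\infty$ is totally ramified. Under this correspondence, $P=Q\circ R$ translates into $K(P(T))\subset K(R(T))$, so common right (resp. left) factors correspond to subfields contained in (resp. containing) the relevant subfields.

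I construct both $V$ and $U$ via L\"uroth's theorem. For $V$, set $K(V)\pe K(B(T))\cdot K(C(T))$; by L\"uroth this equals $K(f)$ for some $f\in K(T)$, and because $\infty$ remains totally ramified in $K(T)/K(V)$, the generator $f$ can be chosen as a polynomial $V$ after a suitable M\"obius adjustment. The inclusions $K(B),K(C)\subset K(V)$ then yield polynomial identities $B=\tilde B\circ V$ and $C=\tilde C\circ V$. Dually, for $U$, set $K(W)\pe K(B(T))\cap K(C(T))$: again by L\"uroth and the ramification argument this is $K(W)$ for a polynomial $W$, and since $W\in K(B)\cap K(C)$ there exist polynomials $\tilde A,\tilde D$ with $W=\tilde A(C(T))=\tilde D(B(T))$. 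As $A(C(T))=D(B(T))\in K(W)$, a polynomial $U$ is determined by $A\circ C=U(W)$, giving $A=U\circ\tilde A$ and $D=U\circ\tilde D$. The commutativity of the reduced square follows immediately: substituting into $A\circ C=D\circ B$ gives $U\circ\tilde A\circ\tilde C\circ V=U\circ\tilde D\circ\tilde B\circ V$, and cancelling the non-constant polynomials $U$ on the left and $V$ on the right yields $\tilde A\circ\tilde C=\tilde D\circ\tilde B$.

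The main obstacle is verifying the exact degree identities $\deg V=\gcd(\deg B,\deg C)$ and $\deg U=\gcd(\deg A,\deg D)$. The divisibilities follow immediately from the factorizations, but the reverse inequalities rely crucially on the polynomial hypothesis. I would proceed via the Galois closure $N$ of $K(T)/K(A\circ C)$, with group $G\pe\Gal(N/K(A\circ C))$ and subgroups $H_T,H_B,H_C$ fixing $K(T),K(B),K(C)$ respectively; then $\deg V=[H_B\cap H_C:H_T]$ and $\deg U=[G:\langle H_B,H_C\rangle]$. The key observation is that the inertia group at $\infty$ is cyclic of order $n\pe\deg(A\circ C)$ and acts regularly on the generic fiber $\Omega\pe G/H_T$, so every $G$-invariant partition of $\Omega$ is automatically invariant under this cyclic subgroup. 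The common refinement and common coarsening of the two block systems induced by the factorizations $A\circ C$ and $D\circ B$ (with block sizes $\deg C$ and $\deg B$ respectively) can therefore be computed inside the cyclic group, where an elementary computation with $\gcd$'s and $\operatorname{lcm}$'s of divisors of $n$ yields the required identities.
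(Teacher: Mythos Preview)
The paper does not include a proof of this theorem: it is cited as an external result (attributed to Engstrom, \cite{MR3599}) and used as a black box in the proof of Lemma~\ref{lem:congruent}. So there is no ``paper proof'' to compare against.

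Your argument is correct and is essentially the standard field-theoretic proof of this reduction step in Ritt's theory (compare for instance the treatment in M\"uller--Zieve~\cite{zieve-muller} or Zannier~\cite{MR1244972}). The two constructions via L\"uroth --- compositum for $V$, intersection for $U$ --- together with the observation that total ramification at $\infty$ forces all intermediate generators to be polynomials, are exactly the right moves. The crucial point, which you identify correctly, is the degree computation: the inertia group at $\infty$ in the Galois closure is cyclic and, because the $G$-action on $\Omega=G/H_T$ is faithful (core-freeness of $H_T$) and $I_{\mathfrak P}$ acts transitively, it acts \emph{regularly}. This identifies $\Omega$ with $\Z/n\Z$ in such a way that every $G$-invariant block system becomes a coset partition of a subgroup. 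Since both the common refinement and the common coarsening of two $G$-invariant partitions are again $G$-invariant (as partitions of a set, independently of the group), they can be computed inside $\Z/n\Z$, yielding $\deg V=\gcd(\deg B,\deg C)$ and $[\langle H_B,H_C\rangle:H_T]=\mathrm{lcm}(\deg B,\deg C)$, whence $\deg U=n/\mathrm{lcm}(\deg B,\deg C)=\gcd(\deg A,\deg D)$.

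One small point worth making explicit in a write-up: when you pass from $W\in K(C)$ to ``$W=\tilde A(C)$ with $\tilde A$ a polynomial'', you are using again that $W$ and $C$ are both polynomials in $T$, so the rational function $\tilde A$ can have no finite pole.
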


\begin{theorem}[Solution in the primitive case, \cite{Ritt}]\label{thm:solution}
Suppose that $\gcd \{\deg(A), \deg(D)\} = \gcd \{\deg(B), \deg(C)\} =1$. Then we are in one of the following cases:
\begin{enumerate}
\item
$B\sim  z^sR(z^n)$, $A \sim z^s R^n(z)$, $C \sim  D\sim  z^n$ with $\gcd\{ n ,s \}=1$;
\item 
$C\sim  z^sR(z^n)$, $D \sim  z^s R^n(z)$,  $A \sim  B\sim  z^n$  with $\gcd\{ n ,s \}=1$;
\item
$A \sim  B \sim T_n$, $C \sim  D \sim T_m$ with $\gcd\{ n ,m \}=1$.
\end{enumerate}
\end{theorem}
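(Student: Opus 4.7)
The plan is to prove Ritt's classical classification theorem via an analysis of the auxiliary curve $X := \{(x,y) \in \A^2 : A(x) = D(y)\}$, following the modern approach of Zannier~\cite{MR1244972} and M\"uller-Zieve~\cite{zieve-muller}. The starting observation is pure degree arithmetic: from $\deg(A\circ C) = \deg(D\circ B)$ together with $\gcd(\deg A, \deg D) = \gcd(\deg B, \deg C) = 1$, a short argument (writing $\deg A \cdot \deg C = \deg D \cdot \deg B$ and using divisibility) gives $\deg A = \deg B =: m$ and $\deg C = \deg D =: n$ with $\gcd(m,n) = 1$. The identity $A\circ C = D\circ B$ is then equivalent to saying that $t \mapsto (C(t), B(t))$ provides a polynomial parametrization of an irreducible component $X_0 \subseteq X$, which must be all of $X$ by the linear disjointness of $\C(C)$ and $\C(B)$ over $\C(A\circ C)$ forced by coprimality.

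The next step is a genus and ramification computation on $X_0$. Since $A$ and $D$ are polynomials, both are totally ramified over $\infty$, so the projective completion of $X_0$ has a unique point at infinity; combined with the rational parametrization, this shows $X_0$ is a genus zero curve with one point at infinity. Applying Riemann--Hurwitz to the projections $\pi_x, \pi_y : X_0 \to \A^1$ of degrees $n$ and $m$, the total finite ramification weights equal $n-1$ and $m-1$ respectively. The ramification of $\pi_x$ over $x_0$ is controlled by the multiplicity structure of $A(x_0)$ as a value of $D$, producing a rigid dictionary in which finite critical values of $A$ must coincide with finite critical values of $D$, with matching ramification data.

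The classification then proceeds by a case split on the number of finite critical values of $A$ and $D$. If $A$ has at most one finite critical value, then $A - \alpha = c(z - z_0)^m$ so $A$ is affinely conjugate to a monomial $z^m$; substituting into $A\circ C = D\circ B$ yields $C(z)^m \sim D(B(z))$, and the coprimality of $m$ and $n$ together with the monomial structure allows one to unfold the equation, producing via a direct factorization argument the normal forms of case (2) (or case (1) in the symmetric situation where $D$ is monomial instead). Otherwise both $A$ and $D$ have at least two finite critical values; Riemann--Hurwitz together with coprimality forces each to have exactly two critical values, with involution local monodromy at each. This ramification profile is characteristic of Chebyshev polynomials, and yields case (3) with $A \sim B \sim \pm T_n$ and $C \sim D \sim \pm T_m$.

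The main obstacle is the rigidity argument in the dihedral case: one must show that the ramification constraints imposed by the commuting diamond $A\circ C = D\circ B$ together with coprimality force the global monodromy group of the Galois closure to be the dihedral group of order $2mn$, and that dihedral polynomial covers are necessarily Chebyshev. Ritt's original combinatorial argument~\cite{Ritt} analyzes the permutations generated by the inertia at $\infty$ (a long $mn$-cycle) together with involutions respecting two complementary block systems of coprime sizes $m$ and $n$, and shows these must generate a dihedral group. The modern alternative in~\cite{zieve-muller} applies the classification of primitive permutation groups containing a long cycle. Either route concludes that the only non-monomial possibility is the dihedral configuration, recognizable as the monodromy of Chebyshev polynomials, yielding the normal form of case (3) up to the $\pm$ sign ambiguity reflecting the freedom in the affine conjugation.
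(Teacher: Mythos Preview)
The paper does not prove this theorem: it is stated as a classical result attributed to Ritt (the citation \cite{Ritt} appears in the theorem header itself), and is invoked together with the reduction theorem as a black box in the proof of Lemma~\ref{lem:congruent}. There is therefore no ``paper's own proof'' to compare against.

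Your sketch is a correct outline of the standard modern treatment (as in Zannier and M\"uller--Zieve): the degree arithmetic forcing $\deg A = \deg B$ and $\deg C = \deg D$, the parametrization of the fiber product curve $\{A(x)=D(y)\}$, the genus-zero-with-one-point-at-infinity observation, and the dichotomy between the monomial case (one finite critical value) and the dihedral/Chebyshev case (two finite critical values) via Riemann--Hurwitz and monodromy. One small point: your naming convention $\deg A = m$, $\deg C = n$ is swapped relative to the paper's case (3), where $A\sim T_n$ and $C\sim T_m$; this is harmless but worth aligning. Also note that under the equivalence $\sim$ (pre- and post-composition by affine maps) one has $-T_k \sim T_k$, so the $\pm$ you mention in the Chebyshev case is already absorbed.
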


\begin{proof}[Proof of Lemma~\ref{lem:congruent}]
Suppose that $Q\ge_\pi P$.
Observe that if $\pi= \pi' \circ Q$ for some polynomial $\pi'$, then 
we may replace $\pi$ by $\pi'$. We may thus assume that $\pi$ is not a polynomial in $Q$.

Write
$\deg(\pi) = l \times b$ where $l = \prod_{p\wedge d =1} p^{v_p(\deg(\pi))}$, and
pick $n$ minimal such that $b$ divides $d^n$.
By the reduction Theorem~\ref{thm:reduction}, we can write
\begin{align*}
P^n = U \circ P_0, \pi = U \circ \pi_0 
\\
Q^n = Q_1 \circ V, \pi = \pi_1 \circ V 
\end{align*}
with 
$\deg(U) = \deg(V)= \gcd\{\deg(P)^n, \deg(\pi)\} = \gcd\{d^n, l\times b\} = b$, and 
$\pi_0 \circ Q_1 = P_0 \circ \pi_1$.
Note that $\deg(P_0) \ge2$ since otherwise we would have 
$\deg(Q_1) = \deg(P_0) = 1$, and $\pi$ would be a polynomial in $Q^n$.

Observe that 
$\gcd\{\deg(P_0), \deg(\pi_0)\} = \gcd\{\deg(Q_1), \deg(\pi_1)\}=1$.
Apply Theorem~\ref{thm:solution} to the quadruple $P_0,Q_1, \pi_0, \pi_1$. 
Since $P_0$ is a factor of $P^n$, and $P^n$ has no pseudo-integrable factor,
we fall into case 1: $\pi_0$ and $\pi_1$ have thus degree $1$, and
$P^n$ and $Q^n$ are congruent.

The same argument applies when $Q\le_\pi P$.
\end{proof}


\section{Stratification of the parameter space in low degree}\label{sec:stratif}
The five tables below summarize the stratification of the space of monic and centered polynomials
of degree $\le 6$ in terms of the size of its group of dynamical symmetries.
All computations are done over $\C$ (but the results are valid over any algebraically closed field of characteristic $0$).

\medskip

For any triple $(d,k,\mu)$ with $k\ge2$ and $\mu \le d-2$, we let $\Sigma(d,k,\mu)$ 
be the set of monic and centered polynomial
of degree $d$ which can be written under the form $z^\mu Q(z^k)$ with $Q(0)\neq0$ and $k$ maximal.
It is an open Zariski-dense subset of a linear subspace of $\C^{d-1}$ of dimension $\frac{d-\mu}k$.

Recall that 
\begin{align*}
\aut(P) & = \{ g \in \Sigma(P), gP = Pg\}; \\
\Sigma_0(P) &= \{ g \in \Sigma(P), P^n g = P^n \text{ for some } n\}.
\end{align*}
The numbers is the colums "complexity" corresponds to the number of polynomials appearing in some/any decomposition 
$P = P_1 \circ \cdots \circ P_s$ with $P_s$ indecomposable, see Theorem~\ref{thm:Ritt}. This number is always $1$ when the degree of $P$ is prime.

\medskip

\begin{center}

\begin{tabular}{ | l |c || c | c | c |}
 \hline
 \multicolumn{5}{|c|}{$P(z) = z^2 + c$} \\
 \hline
Range & Domain & $\aut(P)$ & $\Sigma(P)$& $\Sigma_0(P)$\\
 \hline
$c \neq 0$ & $\C^*$ & $\U_1$ & $\U_2$& $\U_2$ \\
 $z^2$& $\{0\}$ & $\U_1$  & $\U_\infty$ & $\U_{2^{\infty}}$\\
 \hline
\end{tabular}

\end{center}

\medskip

\begin{center}

\begin{tabular}{ | l |c || c | c | c |}
 \hline
 \multicolumn{5}{|c|}{$P(z) =  z^3 + a z + b$} \\
 \hline
Range & Domain & $\aut(P)$ & $\Sigma(P)$ & $\Sigma_0(P)$\\
 \hline
$ab\neq 0$ & $(\C^*)^2$ & $\U_1$ & $\U_1$& $\U_1$ \\
$\Sigma(3,3,0)$: $z^3+b$  & $\C^*$ & $\U_1$ & $\U_3$ &  $\U_3$ \\
$\Sigma(3,2,1)$: $z(z^2+a)$ & $\C^*$ & $\U_2$ & $\U_2$ & $\U_1$ \\
 $z^3$& $\{0\}$ & $\U_2$ & $\U_\infty$ & $\U_{3^{\infty}}$ \\
 \hline
\end{tabular}

\end{center}

\medskip

\begin{center}
\tiny{
\begin{tabular}{ | l |c || c | c | c |  c |c|}
 \hline
 \multicolumn{7}{|c|}{$P(z) =  z^4 + a z^2 + bz + c$} \\
 \hline
Range & Domain & $\aut(P)$ & $\Sigma(P)$& $\Sigma_0(P)$& Complexity& Primitivity\\
 \hline
 No symmetry & $U_4$  & $\U_1$ & $\U_1$& $\U_1$ & $1$ & Yes \\
$\Sigma(4,2,2)$: $z^2 (z^2 + a)$ & $\C^*$ & $\U_1$ & $\U_2$ & $\U_2$ & 2 & iff $a \neq -2\zeta$, $\zeta^3 =-1$\\
$\Sigma(4,3,1)$: $z(z^3+b)$ & $\C^*$ & $\U_3$ & $\U_3$ & $\U_1$ &1& Yes \\
$\Sigma(4,2,0)$: $z^4 + az^2 + c$ & $(\C^*)^2$ & $\U_1$ & $\U_2$ & $\U_2$ & 2 & iff $4c \neq a^2 - 2\zeta a$, $\zeta^3 =-1$\\
$\Sigma(4,4,0)$: $z^4 +c$  & $\C^*$ &  $\U_1$ &  $\U_4$ & $\U_4$ & 2 & Yes\\
 $z^4$& $\{0\}$ & $\U_3$ & $\U_\infty$ & $\U_{4^{\infty}}$ & 2 & No \\
 \hline
\end{tabular}
}
\end{center}
\[ U_4 = \C^3 \setminus \{b=0\} \cup\{a=c=0\}\]

\medskip

\begin{center}

\begin{tabular}{ | l |c || c | c | c |}
 \hline
 \multicolumn{5}{|c|}{$P(z) =  z^5 + a z^3 + bz^2 + cz + d$} \\
 \hline
Range & Domain & $\aut(P)$ & $\Sigma(P)$ & $\Sigma_0(P)$ \\
 \hline
 No symmetry & $U_5$  & $\U_1$ & $\U_1$ & $\U_1$  \\
$\Sigma(5,2,3)$: $z^3(z^2+a)$ & $\C^*$ &  $\U_2$ & $\U_2$ & $\U_1$ \\
$\Sigma(5,3,2)$: $z^2(z^3+b)$ & $\C^*$ & $\U_1$ & $\U_3$ & $\U_1$ \\
$\Sigma(5,2,1)$: $z (z^4+az^2+c)$ & $(\C^*)^2$ &  $\U_2$ & $\U_2$ & $\U_1$ \\
$\Sigma(5,4,1)$: $z(z^4+c)$ & $\C^*$ & $\U_4$ & $\U_4$ & $\U_1$ \\
$\Sigma(5,5,0)$: $z^5+d$ & $\C^*$ &$\U_1$ & $\U_5$ & $\U_5$ \\ 
 $z^5$& $\{0\}$ & $\U_4$ & $\U_\infty$ & $ \U_{5^{\infty}}$ \\
 \hline
\end{tabular}

\end{center}
\[U_5 = \C^4 \setminus \{ b=d=0\} \cup \{a=c=d=0 \} \cup \{a=b=c=0 \}\]
\medskip

\begin{center}
\small{
\begin{tabular}{ | l |c || c | c | c | c | }
 \hline
 \multicolumn{6}{|c|}{$P(z) =  z^6 + a z^4 + bz^3 + cz^2 + dz + e $} \\
 \hline
Range & Domain & $\aut(P)$ & $\Sigma(P)$& $\Sigma_0(P)$& Complexity\\
 \hline
 No symmetry & $U_6$ & $\U_1$ & $\U_1$& $\U_1$ &2 iff $4c=a^2$ and $ab=2d$ \\
$\Sigma(6,2,4)$: $z^4(z^2+a)$ & $\C^*$ & $\U_1$ & $\U_2$ & $\U_2$ &2\\
$\Sigma(6,3,3)$: $z^3(z^3+ b)$ & $(\C^*)^2$ & $\U_1$ & $\U_3$ & $\U_3$ & 2 \\
$\Sigma(6,4,2)$: $z^2 (z^4 + c)$ & $\C^*$ & $\U_1$ & $\U_4$ & $\U_4$ & 2 \\
$\Sigma(6,2,2)$: $z^2(z^4 +a z^2+c)$  & $(\C^*)^2$ &  $\U_1$ &  $\U_2$ & $\U_2$ & 2 \\
$\Sigma(6,5,1)$: $z(z^5+d)$  & $\C^*$ &  $\U_5$ &  $\U_5$ & $\U_1$ & 1 \\
$\Sigma(6,3,0)$: $z^6 +bz^3+e$  & $(\C^*)^2$ &  $\U_1$ &  $\U_3$ & $\U_3$ & 2 \\
$\Sigma(6,2,0)$: $z^6 +a z^4 +cz^2 +e$  & $U_{6,2,0}$ &  $\U_1$ &  $\U_2$ & $\U_2$ & 2 \\
$\Sigma(6,6,0)$: $z^6 +e$  & $\C^*$ &  $\U_1$ &  $\U_6$ & $\U_6$ & 2 \\
$z^6$& $\{0\}$ & $\U$ & $\U_\infty$ & $\U_{6^\infty}$ & 2  \\
 \hline
\end{tabular}
}
\end{center}
\begin{align*}
U_6  &= \C^5\setminus \{b=d=0\}\cup\{a=c=d=e=0\} \cup \{a=b=c=e=0\}\\
U_{6,2,0} &= \C^3\setminus \{a=0\}\cup\{a=c=d=e=0\} \cup \{a=b=c=e=0\}
\end{align*}



\section{Open problems}\label{sec:Ritt-open}

Many questions about the intertwining relation remain unclear. 
We have selected a few below. 
\begin{itemize}
\item[(I1)]
Prove that strict intertwining does not define an equivalence relation. In other words, 
there exist polynomials of the same degree $P$, $Q$, $R$ such that 
$\phi \ge P, \phi \ge R$ and $\varphi \ge R, \varphi \ge Q$ for some $\phi,\varphi$ but no polynomial
$\Phi$ satisfies 
$\Phi \ge P, \Phi \ge Q$.
\item[(I2)] 
Is it true that the smallest equivalence relation generated by strict intertwining
is strictly weaker than the intertwining relation?
Equivalently, does there exist two polynomials $P \approx Q$ such that 
there exists  no sequence of polynomials $P_i, \phi_i$ with $P = P_0$, $Q = P_N$
such that $\phi_i \ge P_i$, and $\phi_i \ge P_{i-1}$ for all $i$.
\item[(I3)] 
Suppose that $P\circ g$ and $P$ are intertwined, and $P$ is not integrable. Is it true that $g\in \Sigma(P)$?
\item[(I4)]
Fix any polynomial $P$. Design an algorithm to determine all monic and centered polynomial of a fixed degree
such that $Q \approx P$.
\item[(I5)]
Describe all intertwining classes for any polynomial of low degree, say $d = 3,4,5$. 
\item[(I6)]
Is the intertwining class of any unicritical polynomial trivial? 
Observe that~\cite[Theorem~1.4]{GNY} proves that two unicritical polynomials are intertwined iff they are conjugated. 
Is the intertwining class of any PCF polynomial trivial?
\end{itemize}

We would like to ask for a uniform boundedness result.
\begin{conj}
For any $d\ge2$, there exists a constant $c(d)$ such that
for any polynomial $P$ of degree $d$, one can find $N\le c(d)$  polynomials $Q_1,\cdots, Q_N$
such that $Q\in \inter(P)$ implies $Q$ to be conjugated to  $g\cdot Q_i^n$ for some integer $n\ge 1$ and some $g\in \Sigma(Q_i)$.
\end{conj}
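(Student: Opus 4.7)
The plan is to reduce the conjecture to a uniform bound on the degrees of \emph{primitive} representatives of the intertwining class of $P$, and then combine this bound with the cardinality estimate $\#\inter_D(P) \le C(d)$ established just before Lemma~\ref{lem:final2}. First I would dispose of the integrable case: if $P$ is affinely conjugate to $M_d$ or $\pm T_d$, let $k \ge 2$ be the smallest integer such that $d$ is a power of $k$. By parts (2) and (3) of Theorem~\ref{thm:basic-intertwin}, every $Q \in \inter(P)$ is integrable of the same type, hence conjugate to $M_D$ or $\pm T_D$ with $D$ a power of $k$; the identities $M_k^n = M_{k^n}$ and $T_k^n = T_{k^n}$, combined with the fact that $\Sigma(M_k) = \U_\infty$ absorbs any scalar factor and $\Sigma(\pm T_k) = \U_2$ handles the sign ambiguity, yield $Q$ conjugate to $g \cdot Q_1^n$ with $Q_1 := M_k$ (resp.\ $\pm T_k$) and $g \in \Sigma(Q_1)$. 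Hence $N = 1$, and we may henceforth assume $P$ is non-integrable, whence so is every $Q \in \inter(P)$.

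Next I would introduce canonical primitive roots. Proposition~\ref{prop:prim} applied to a one-point base, or equivalently the properness of iteration furnished by Corollary~\ref{cor:properness compo}, implies that any $Q \in \inter(P)$ admits a factorization $Q = g \cdot Q_0^n$ with $Q_0$ primitive, $n \ge 1$ maximal, and $g \in \Sigma(Q_0)$, the pair $(Q_0, n)$ being unique up to affine conjugacy. A direct computation iterating the relation $Q_0 \circ g' = \rho(g') \circ Q_0$ of Proposition~\ref{prop:Sigma} yields $Q^m = g_m \cdot Q_0^{mn}$ with $g_m \in \Sigma(Q_0) = \Sigma(Q_0^{mn})$. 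Combining point (1) of Proposition~\ref{prop:intertwined example} (which gives $g_m \cdot Q_0^{mn} \approx Q_0^{mn}$) with the tautological intertwining $Q_0 \approx Q_0^{mn}$ (witnessed by $R = Q_0^{mn}$ with the identity semi-conjugacies) and the transitivity of $\approx$ from Theorem~\ref{thm:basic-intertwin}(5), one obtains $Q_0 \approx Q \approx P$, so $Q_0 \in \inter(P)$.

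The heart of the argument, and the main obstacle, is to prove that $\deg(Q_0) \le F(d)$ uniformly over primitive $Q_0 \in \inter(P)$. A natural line of attack is to write $Q_0^m \approx P^n$ with $\deg(Q_0)^m = d^n$, apply Theorem~\ref{thm:bdd-nguyen} to the pair $(Q_0^m, P^n)$ of common degree $d^n$ to produce $\ell \le 2(d^n)^4$ such that $Q_0^{m\ell}$ and $P^{n\ell}$ are strictly intertwined, invoke Theorem~\ref{thm:inv-curve} to obtain an auxiliary $R$ dominating both via coprime-degree semi-conjugacies, and then descend through Theorem~\ref{thm:min-pako} to a common minimal polynomial. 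The genuine difficulty is that Pakovich's constant $c(d^{n\ell})$ grows with $n$, while a priori nothing bounds $n$; the primitivity of $Q_0$ is the only structural input available. A reasonable hope is that primitivity forces $m = 1$ (so that $Q_0 \approx P^n$ is strict at bounded cost) and then that the Ritt-theoretic structure of $P^n$, whose complete decomposition has length proportional to $n$, forbids primitive semi-conjugate factors $Q_0$ of degree $d^n$ once $n$ exceeds some $G(d)$. Making this rigorous almost certainly requires a genuine refinement of Pakovich's Theorem~\ref{thm:min-pako} adapted to the entire intertwining class of $P$, ensuring that the complexity of common semi-conjugacies between elements of $\inter(P)$ is controlled by $d$ alone rather than by the (unbounded) ambient degree.

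Granted such a degree bound $\deg(Q_0) \le F(d)$, the conclusion is immediate: only the finitely many degrees $D \le F(d)$ multiplicatively dependent on $d$ can host primitive elements of $\inter(P)$, and for each such $D$ one has $\#\inter_D(P) \le C(d)$. Enumerating all monic and centered primitive polynomials in $\inter(P)$ as $Q_1, \ldots, Q_N$ therefore gives $N \le F(d) \cdot C(d) =: c(d)$. Finally, any $Q \in \inter(P)$ is affinely conjugate to its monic and centered form $g' \cdot Q_0^n$, where $Q_0$ itself is conjugate to some $Q_i$ and thereby transports $g'$ into some $g \in \Sigma(Q_i)$, so $Q$ is conjugate to $g \cdot Q_i^n$ as required.
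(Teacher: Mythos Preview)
The statement you are attempting to prove is a \emph{conjecture} in the paper, not a theorem: it appears in the section on open problems, and the paper explicitly remarks that ``the problem is open even for $d=2$''. There is therefore no proof in the paper to compare your proposal against.

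That said, your outline is a sensible reduction, and you have correctly isolated the genuine obstruction. The integrable case and the passage from an arbitrary $Q \in \inter(P)$ to a primitive root $Q_0 \in \inter(P)$ are fine (your computation that $Q_0^{km} \approx Q^m \approx P^n$ via Proposition~\ref{prop:intertwined example}(1) is valid). The step that is missing, and that you yourself flag, is the uniform bound $\deg(Q_0) \le F(d)$ on the degrees of primitive members of $\inter(P)$. Your proposed attack through Theorems~\ref{thm:bdd-nguyen} and~\ref{thm:min-pako} does not close this gap: Pakovich's constant $c(\cdot)$ is applied at degree $d^{n\ell}$, and nothing in the argument bounds $n$ independently of $Q_0$. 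The hope that primitivity of $Q_0$ alone forces $m=1$ or caps $n$ is unsupported; primitivity only prevents $Q_0$ from being a twisted iterate, and says nothing a priori about how $Q_0$ sits inside the intertwining class of $P$. This is precisely why the paper leaves the statement as a conjecture.
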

The problem is open even for $d=2$.

\smallskip

Finally observe that for any two complex polynomials such that $P \approx Q$, there exists a local biholomorphism $\sigma$ defined on an open disk $U$ such that
$\sigma(J(P)\cap U) = J(Q) \cap \sigma(U)$. Since Julia sets determine polynomials up to symmetry (by Theorem~\ref{thm:SS}) and since by invariance under the dynamics the shape of Julia set is locally the same near any of its points,
it is natural to expect the following to be true.
\begin{conj}
Suppose that there exists a univalent map $\sigma \colon U \to \C$ such that
$\sigma(J(P) \cap U) = J(Q) \cap \sigma(U)\neq \varnothing$. Then $P \approx Q$.
\end{conj}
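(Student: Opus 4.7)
The plan is to leverage the local rigidity of Julia sets at repelling periodic points, together with an algebraization of the resulting local conformal match, to produce an algebraic invariant curve in $\A^2$ intertwining $P$ and $Q$. First I would dispatch the integrable case via Corollary~\ref{cor:zdunik}: if either $P$ or $Q$ is integrable, its Julia set is a circle or a closed segment, so the univalent image of a piece of $J(P)$ is a Jordan arc which must be contained in $J(Q)$; comparing via Zdunik's Theorem~\ref{th:zdunik} and invoking Theorem~\ref{thm:basic-intertwin}(2)-(3) then gives the conclusion. Assume henceforth that both $P$ and $Q$ are non-integrable. Using density of repelling cycles in $J(P)$, pick a repelling periodic point $z_0 \in U\cap J(P)$ of period $n$ and multiplier $\lambda$ with $|\lambda|>1$, and set $w_0 := \sigma(z_0)\in J(Q)\cap\sigma(U)$. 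Fix the Koenigs linearization $\phi_P$ at $z_0$, so that $\phi_P\circ P^n = \lambda\cdot\phi_P$.

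The key local input is Tan Lei's similarity theorem in its dynamical form: the rescalings $\lambda^k\cdot\phi_P(J(P)\cap U)$ converge in Hausdorff topology to a $\lambda$-invariant compact set $X_P\subset\C$ that encodes the local geometry of $J(P)$ at $z_0$. Transporting through the univalent germ $\tau := \sigma\circ\phi_P^{-1}$, and rescaling by $\tau'(0)\lambda^k$, I would obtain a conformal image of $X_P$ inside $J(Q)$ at $w_0$. The first delicate step is then to argue that this self-similar structure forces $w_0$ to be preperiodic to a repelling cycle of $Q$, of some period $m$ and multiplier $\mu$, and that the limit set $X_Q$ attached to that cycle via Tan Lei's theorem is conformally conjugate to $X_P$ through $\tau$, with $\lambda^a = \mu^b$ for some integers $a,b\ge1$. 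The justification should combine the rigidity of the scaling group of $\mu_Q$ along $J(Q)$ — which is pinned down by Zdunik's classification in the non-integrable case — with a ping-pong argument between the $P$- and $Q$-dynamics transported by $\sigma$, showing that the only way the conformal image of $X_P$ can coincide locally with $J(Q)$ is that it arises from the linearization of a repelling cycle of $Q$.

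The main obstacle, and precisely the reason the statement is posed as a conjecture, will be to globalize this local holomorphic matching into an algebraic intertwining. Having obtained iterates $P^{na}$ and $Q^{mb}$ that are locally conformally conjugate in a neighborhood of $z_0$ and the corresponding preperiodic preimage of the $Q$-cycle, one wants to produce an algebraic curve in $\A^2$ fixed by $(P^{na},Q^{mb})$ and projecting surjectively to both factors, in order to apply Theorem~\ref{thm:basic-intertwin}(4) and then Theorem~\ref{thm:inv-curve}. A natural strategy is to specialize the triple $(P,Q,\sigma)$ to an arithmetic setting — replacing the coefficients of $P$ and $Q$ by generators of a finitely generated subring and using a Lefschetz-type argument — and then apply Xie's algebraization theorem (Theorem~\ref{thm:Junyi}) to the adelic branch defined by the local conjugating power series, in the spirit of the proof of Theorem~\ref{tm:unlikely-car0}. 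The hardest point is to control the radii of convergence of these conjugating series at every non-archimedean place so as to meet the hypotheses of Theorem~\ref{thm:Junyi}; this is essentially the archimedean-to-global rigidity problem highlighted as open question (Q1) in the introduction, and resolving it in this conformal setting is precisely what would close the proof.
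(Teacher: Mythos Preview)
This statement is presented in the paper as a \emph{conjecture}, not a theorem; the paper offers no proof and explicitly says ``the general case remains elusive,'' citing partial results in the literature. So there is no paper proof to compare against, and your proposal is really a research sketch toward an open problem.

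You yourself correctly isolate the principal obstruction in the non-integrable case: passing from a local holomorphic match of Julia sets to a global algebraic intertwining curve is exactly the missing ingredient, and nothing in the paper's toolkit (Xie's algebraization, equidistribution, Ritt theory) closes that gap without an adelic hypothesis. But there is an earlier gap you gloss over. In your Step~1 you claim that Tan~Lei similarity plus a ``ping-pong'' argument forces $w_0=\sigma(z_0)$ to be preperiodic for $Q$ with a multiplier commensurable to $\lambda$. This is the heart of the matter and is not justified: the local $\lambda$-self-similarity of $J(P)$ at $z_0$ transports through $\sigma$ to a $\lambda$-self-similarity of $J(Q)$ at $w_0$, but a priori $J(Q)$ could exhibit such scaling at a point with infinite $Q$-orbit, or at a point whose $Q$-multiplier is unrelated to $\lambda$. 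Ruling this out is essentially equivalent to the conjecture itself and is precisely what the cited references attack under additional hypotheses.

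Finally, your treatment of the integrable case is actually incorrect, and in fact exposes that the conjecture as literally stated needs a non-integrability hypothesis. Take $P=M_d$ and $Q=T_d$: both are integrable, $J(M_d)$ is the unit circle and $J(T_d)=[-2,2]$, and the Cayley map $\sigma(z)=i(z-1)/(z+1)$ restricted to a small disk $U$ around $1$ sends $J(M_d)\cap U$ bijectively onto $J(T_d)\cap\sigma(U)$. Yet by Theorem~\ref{thm:basic-intertwin}(2)--(3), $M_d$ and $T_d$ are \emph{not} intertwined in the polynomial sense used here (the semi-conjugacy $z\mapsto z+z^{-1}$ is rational, not polynomial). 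So your appeal to Theorem~\ref{thm:basic-intertwin}(2)--(3) does not yield the conclusion; rather, it shows the literal statement fails in the integrable case.
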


Several results in that direction have been already obtained, but the general case remains elusive, see~\cite{MR3875252,MR2342966}.

%
%
%
%
%
%


\chapter{Polynomial dynamical pairs}\label{chapter-pairs}

A dynamical pair $(P,a)$ is a family of polynomials together with a marked point. 
We first review basic notions of bifurcation and activity for holomorphic dynamical pairs,
and prove the following important rigidity property  when the bifurcation locus is included in a smooth real curve. 

\rigidity*

In \S\ref{sec:algebraic dynamical pairs}, we turn to algebraic dynamical pairs. We first explain how to attach a canonical line bundle to such a pair, and discuss
 the continuity of the Green function associated  to a non-isotrivial pair (Theorem~\ref{thm:main FG}) which turns out to be 
a key technical point for applications. We recall DeMarco's theorem (Theorem~\ref{thm:active-characterization1}) stating that a non isotrivial 
complex algebraic dynamical pair admits bifurcations.

We conclude this chaper by discussing in \S\ref{sec:arithmetic dynamical pair} dynamical pairs defined over a number field and prove that they induce a natural height arising from an adelic semi-positive
metrization of a suitable divisor. This allows one to characterize isotrivial adelic pairs in terms of  their height function. 


\section{Holomorphic dynamical pairs and proof of Theorem~\ref{tm:rigidaffine}}

In this section, we prove a rigidity property for holomorphic dynamical pairs  parametrized by the unit disk 
whose bifurcation locus is included in 
a smooth curve (Theorem~\ref{tm:rigid}). This result  plays an important role in the proof of our main results.


\subsection{Basics on holomorphic dynamical pairs}

Let $V$ be any complex manifold. 

\begin{definition}
 A \emph{holomorphic dynamical pair} $(P,a)$ of degree $d\ge 2$ parametrized by $V$ is a holomorphic family $P:V\times \C \to V\times\C$ of degree $d$ polynomials together with a holomorphic map $a\colon V \to \C$. \index{dynamical pair!holomorphic}
\end{definition}
We say that a family $P$ parametrized by $V$ is \emph{isotrivial} if it has dimension $0$ in moduli, i.e. if there exists a finite branched cover $\pi\colon\tilde V\to V$ and a holomorphic family of affine transformations $\phi_t\in\mathrm{Aut}(\C)$ parametrized by $\tilde{V}$ such that $\phi_t^{-1}\circ P_{\pi(t)}\circ \phi_t$ is independent of $t$.\index{family of polynomials!isotrivial}
 
We also say that a dynamical pair $(P,a)$ parametrized by $V$ is \emph{isotrivial}\index{dynamical pair!isotrivial} if there exists a finite branched cover $\pi\colon \tilde V\to V$ and a holomorphic family of affine transformations $\phi_t\in\mathrm{Aut}(\C)$ parametrized by $\tilde{V}$ such that $\phi_t\circ P_{t}\circ \phi_t^{-1}$ is independent of $t$ and $\phi_t(a(t))$ is constant.

Observe that when the pair $(P,a)$ is isotrivial, then $P$ is too, but the converse is not true. 

We let $\preper(P,a)$ be the set of parameters $t \in V$ such that $a(t)$ is preperiodic for $P_t$.

\medskip

We shall mostly be interested in the case the parameter space is a Riemann surface. We thus pick any (connected) Riemann surface $S$ and 
let $(P,a)$ be a dynamical pair of degree $d$ parametrized by $S$.

Observe that the set $\preper(P,a) = \bigcup_{n>m\geq0} \{ t \in S, \, P^n_t(a(t)) = P^m_t(a(t)) \}$ is either equal to $S$, or
is countable. 

\smallskip

We say the pair $(P,a)$ is \emph{stable} at a parameter $t_0\in S$ if there exists an open set $U\subset S$ with $t_0\in S$ and such that the sequence of holomorphic maps $t\in U\longmapsto P_t^n(a(t))$ forms a normal family on $U$. The set of stable parameters is an open set called
the \emph{stability locus} whose complement is the \emph{bifurcation locus} denoted by $\mathrm{Bif}(P,a)$.\index{stability locus!of a pair}\index{bifurcation!locus (of a pair)}

\begin{proposition}\label{prop:bifurcation}
For any any holomorphic dynamical pair $(P,a)$ parametrized by a Riemann surface $S$, the function 
\[g_{P,a}(t):=g_{P_t}(a(t)), \ t\in S~\]
is continuous and subharmonic. 

Moreover the boundary of $\{g_{P,a}=0\}$ coincides with the support of the positive measure $\Delta g_{P,a}$, 
which in turn equals $\mathrm{Bif}(P,a)$. In particular, $\mathrm{Bif}(P,a)$ is closed and perfect, i.e. has no isolated point, and has empty interior.
\end{proposition}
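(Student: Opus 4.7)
The plan is to work throughout with the approximating functions
\[ u_n(t) \;:=\; d^{-n}\log^+\!\bigl|P_t^n(a(t))\bigr|. \]
First, since each map $t\mapsto P_t^n(a(t))$ is holomorphic, $u_n$ is continuous and subharmonic on $S$. The uniform convergence of $d^{-n}\log^+|P^n(z)|$ to the Green function $g_P(z)$ on compact subsets in the $(P,z)$-space (Proposition on the Green function in \S\ref{sec:green}) implies $u_n\to g_{P,a}$ locally uniformly on $S$. Hence $g_{P,a}$ is continuous and subharmonic, and the positive measure $\Delta g_{P,a}$ is well-defined.

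The core step will be to show that $g_{P,a}$ is harmonic on the open set $U_+:=\{g_{P,a}>0\}$. For any $t_0\in U_+$, I would invoke the B\"ottcher coordinate (Proposition~\ref{prop:GreenBottcher}), which extends analytically in \emph{both} variables on the region where $g_{P_t}(z)>G(P_t)+\rho$, and satisfies $g_{P_t}=\log|\varphi_{P_t}|$ there. Using the functional equation $g_{P_t}(P_t^n(a(t)))=d^n g_{P,a}(t)$ together with the continuity of $t\mapsto G(P_t)$, one gets that, for $n$ large and $t$ in a neighborhood $V$ of $t_0$, the iterate $P_t^n(a(t))$ lies in that region, so
\[ g_{P,a}(t) \;=\; d^{-n}\log\bigl|\varphi_{P_t}\!\bigl(P_t^n(a(t))\bigr)\bigr| \]
is a harmonic function of $t$ on $V$. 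Combined with harmonicity on $\mathrm{int}\{g_{P,a}=0\}$, this yields $\mathrm{supp}(\Delta g_{P,a})\subseteq\partial\{g_{P,a}=0\}$. The reverse inclusion follows from the minimum principle: at $t_0\in\partial\{g_{P,a}=0\}$ the value $g_{P,a}(t_0)=0$ is a minimum, so if $g_{P,a}$ were harmonic nearby it would be identically zero, contradicting the existence of points of $U_+$ arbitrarily close to $t_0$.

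To identify this common set with $\mathrm{Bif}(P,a)$, I would prove the equivalence: $g_{P,a}$ is harmonic in a neighborhood $U$ of $t_0$ if and only if the family $\{t\mapsto P_t^n(a(t))\}_n$ is normal on $U$. One direction: if $g_{P,a}$ is harmonic on $U$, the minimum principle forces either $g_{P,a}\equiv 0$ on a smaller neighborhood (so the orbits remain uniformly in the filled Julia sets and Montel's theorem gives normality), or $g_{P,a}>0$ there (so $|P_t^n(a(t))|\to\infty$ locally uniformly by the identity $g_{P_t}(P_t^n(a(t)))=d^n g_{P,a}(t)$, yielding normality in $\mathbb{P}^1$). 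Conversely, given normality on $U$, any subsequence $P_t^{n_k}(a(t))$ converges locally uniformly either to a holomorphic map into $\mathbb{C}$, in which case $u_{n_k}\to 0$ and $g_{P,a}\equiv 0$ on $U$, or to $\infty$, in which case the same B\"ottcher computation as above makes $u_{n_k}$ harmonic for $k$ large, and so is its limit $g_{P,a}$. Hence $\mathrm{Bif}(P,a)=\mathrm{supp}(\Delta g_{P,a})=\partial\{g_{P,a}=0\}$.

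Finally, the topological statements are immediate: $\mathrm{Bif}(P,a)=\partial U_+$ is the boundary of an open set of the Riemann surface $S$, so it has empty interior. For perfectness, suppose $t_0$ were isolated in $\mathrm{Bif}(P,a)$; then $g_{P,a}$ is continuous on a disk neighborhood $V$ of $t_0$ and harmonic on $V\setminus\{t_0\}$, hence harmonic on $V$ by the removable singularity theorem for bounded harmonic functions. Since $g_{P,a}(t_0)=0$ and $g_{P,a}\ge 0$, the minimum principle forces $g_{P,a}\equiv 0$ on $V$, contradicting $t_0\in\partial U_+$. The main technical input is the harmonicity on $\{g_{P,a}>0\}$, which depends on the holomorphic dependence of the B\"ottcher coordinate in the parameter; the remaining arguments are standard applications of the minimum principle, Montel's theorem, and the removable singularity theorem.
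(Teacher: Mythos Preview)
Your proof is correct and follows essentially the same overall structure as the paper's. The one noteworthy difference is in establishing harmonicity on $\{g_{P,a}>0\}$: you invoke the B\"ottcher coordinate (Proposition~\ref{prop:GreenBottcher}) and iterate into its domain of definition, whereas the paper uses a more elementary observation. Namely, once $g_{P,a}(t_0)>0$ the orbit $|P_t^n(a(t))|$ tends to infinity locally uniformly, so for $n$ large one has $\log^+|P_t^n(a(t))|=\log|P_t^n(a(t))|$, which is already harmonic in $t$ (as the log-modulus of a nonvanishing holomorphic function); the locally uniform limit $g_{P,a}$ is then harmonic. This avoids any reference to B\"ottcher coordinates (and the mild technical wrinkle that Proposition~\ref{prop:GreenBottcher} is stated for the $P_{c,a}$ parametrization rather than an arbitrary holomorphic family). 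Your perfectness argument via the removable singularity theorem is equivalent to the paper's, which phrases it as ``a bounded subharmonic function has no atom in its Laplacian.''
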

\begin{remark}\label{rem:stab-dense}
In particular, the stability locus of any holomorphic dynamical pair is open and dense.
\end{remark}

\begin{definition}
The \emph{bifurcation measure} of the dynamical pair $(P,a)$ is the positive measure
$\mu_{P,a} :=\Delta g_{P,a}$ on $S$. 
\end{definition}\index{bifurcation!measure (of a pair)}

 \begin{proof}
 Recall that $g_t(z)$ is the uniform limit on the product of any compact subset of $S$ with the complex plane of the sequence of continuous psh functions
 $\frac1{d^n} \log^+|P^n_t(z)|$, so that $g_{P,a}(t)$ is continuous and psh. 
 
 We have $g_{P,a}(t) \ge 0$ and $g_{P,a}(t) > 0$ iff $a(t) \notin K(P_t)$ so that 
 $g_{P,a}(t) = \lim_n \frac1{d^n} \log|P^n_t(z)|$ when $g_{P,a}(t) > 0$ which implies
 the harmonicity of $g_{P,a}$ on  $\{g_{P,a} > 0\}$.  It follows from the maximum principle
 that the support of $\mu_{P,a}$ is equal to the boundary of  
 $\{g_{P,a}=0\}$. 
 
Pick a point $t_0$ in the support of $\mu_{P,a}$. Then the sequence of functions $t \mapsto P^n_t(a(t))$ cannot be normal
near $t_0$ since $g_{P,a}(t) >0$ for some point close to $t_0$ hence $P^n_t(a(t)) \to \infty$ whereas $P^n_t(a(t_0))$ remains bounded.
When $t_0$ is not in the support of $\mu_{P,a}$, then either $g_{P,a}(t) >0$ and $P^n(a(t)) \to \infty$ uniformly in a neighborhood of $t_0$; 
or $g_{P,a}(t) =0$ on a small disk containing $t_0$ and $P^n(a(t))$ takes its value in a fixed compact set thereby being normal by Montel's theorem.

Finally, since $\mathrm{Bif}(P,a)=\partial \{g_{P,a}=0\}$ it is closed and has empty interior. If $t_0\in \mathrm{Bif}(P,a)$ is isolated, as $g_{P,a}\geq0$, there exists a neighborhood $U$ of $t_0$ with $\mathrm{supp}(\Delta g_{P,a})\cap U=\{t_0\}$, whence $\Delta g_{P,a}$ gives mass to $t_0$. In particular, $\Delta g_{P,a}(\{t_0\})>0$. This is impossible since $g_{P,a}$ is locally bounded near $t_0$.
 \end{proof}
 
 Let us include here for completeness the following
 
 \begin{theorem}\label{thm:discrete preper}
Let $(P,a)$ be any holomorphic dynamical pair parametrized by the unit disk. If $(P,a)$ is stable and $a$ is not stably preperiodic, then 
the accumulation points of the set $\preper(P,a)$
is included in the analytic subset $Z$ of $\D$ where the multiplicity of periodic points of $P$ strictly increases. 
\end{theorem}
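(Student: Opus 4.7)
I would argue by contradiction, combining the countability of preperiodic points of a single polynomial with the identity principle. The strategy is to suppose that $t_0 \in \D \setminus Z$ is an accumulation point of $\preper(P,a)$, and extract a sequence of distinct parameters $s_k \to t_0$ with $P^{n_k}_{s_k}(a(s_k)) = P^{m_k}_{s_k}(a(s_k))$ for some integers $n_k > m_k \geq 0$. For each fixed pair $(n,m)$, the zero set $X_{n,m}$ of the holomorphic function $F_{n,m}(t) := P^n_t(a(t)) - P^m_t(a(t))$ is discrete in $\D$ (otherwise $F_{n,m}\equiv 0$, contradicting the assumption that $a$ is not stably preperiodic). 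If $(n_k, m_k)$ took only finitely many values, a constant subsequence would force $s_k$ to accumulate inside a discrete set, which is impossible; so up to extraction, one may assume $n_k \to \infty$.

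Next, I would use pair stability to deduce that $g_{P,a}(t) = g_{P_t}(a(t))$ is harmonic on $\D$, and since $g_{P,a}(s_k)=0$ while $g_{P,a}\ge 0$, the minimum principle forces $g_{P,a} \equiv 0$. In particular the orbit $\{P^n_t(a(t))\}_n$ stays uniformly bounded on compacts of $\D$, and the sequence is normal.

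The core of the argument is then to exploit the hypothesis $t_0 \notin Z$. The constancy of the multiplicity of every periodic point of $P$ near $t_0$ is exactly the Ma\~{n}\'e--Sad--Sullivan condition of $J$-stability for the family $P$ at $t_0$. Applying Theorem~\ref{thm:full hol motion}, on a simply connected neighborhood $U$ of $t_0$ one obtains a holomorphic motion $h : U \times \C \to \C$ with $h_{t_0} = \id$, conjugating the dynamics: $P_t \circ h_t = h_t \circ P_{t_0}$ on $\C$ for every $t \in U$. For each $z \in \preper(P_{t_0},\C)$, the map $\gamma_z : t \mapsto h_t(z)$ is then holomorphic on $U$ and takes values in $\preper(P_t,\C)$, with the same preperiodicity relation as $z$. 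For $k$ large, $s_k \in U$, and one writes $a(s_k) = \gamma_{z_k}(s_k)$ for some $z_k \in \preper(P_{t_0},\C)$. Since $\preper(P_{t_0},\C)$ is countable, pigeonhole yields some $z_0$ with $z_k = z_0$ for infinitely many $k$; the holomorphic function $t\mapsto a(t)-\gamma_{z_0}(t)$ then has zeros accumulating at $t_0$, so by the identity principle $a(t)= h_t(z_0)$ on $U$. Choosing $n>m\geq 0$ with $P^n_{t_0}(z_0)=P^m_{t_0}(z_0)$, the conjugacy relation gives $P^n_t(a(t)) = h_t(P^n_{t_0}(z_0)) = h_t(P^m_{t_0}(z_0)) = P^m_t(a(t))$ on $U$; analytic continuation extends this to all of $\D$, contradicting the hypothesis that $a$ is not stably preperiodic.

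The delicate step will be to invoke Theorem~\ref{thm:full hol motion} in a full neighborhood of any $t_0\notin Z$: as stated there, the theorem produces the holomorphic motion of $\C$ only on an open dense subset $C(P)$ of the stability locus, so one must identify the exceptional set $\stab(P)\setminus C(P)$ with the multiplicity locus $Z$. If only the motion on $J(P_{t_0})$ is available through the $\lambda$-lemma, the same argument should still go through by restricting the pigeonhole step to the (still infinite) subset of $\preper(P_{t_0},\C)$ lying in $J(P_{t_0})$, together with a separate treatment of the case where $a(t_0)$ lies in a Fatou component of $P_{t_0}$, using the classification of Fatou components for polynomials.
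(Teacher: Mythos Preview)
There are two genuine gaps.

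First, you conflate two distinct hypotheses. Stability of the \emph{pair} $(P,a)$ says only that the single sequence $t\mapsto P_t^n(a(t))$ is normal; it places no constraint on the rest of the dynamics, so the family $P$ may well be $J$-unstable throughout $\D$. Your claim that $t_0\notin Z$ recovers $J$-stability is also false: $Z$ consists of parameters with a \emph{rationally} indifferent cycle (those are exactly the parameters where some $P_t^n(z)-z$ acquires a multiple root), but an \emph{irrationally} indifferent cycle at $t_0$ leaves every periodic point simple for all periods while still making $t_0$ a $J$-unstable parameter, since the multiplier is a non-constant holomorphic function of $t$ and by the open mapping theorem takes values on both sides of the unit circle. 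Hence $\D\setminus Z$ is in general strictly larger than $\stab(P)$, neither the Ma\~n\'e--Sad--Sullivan motion of $J(P_{t_0})$ nor Theorem~\ref{thm:full hol motion} is available near such a $t_0$, and your proposed identification of $\stab(P)\setminus C(P)$ with $Z$ cannot hold.

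Second, even granting a holomorphic motion of $\C$, the pigeonhole step fails: a sequence in a countably infinite set need not repeat any value. Here $z_k = h_{s_k}^{-1}(a(s_k)) \to a(t_0)$, and since preperiodic points are dense in $J(P_{t_0})$, the $z_k$ can be pairwise distinct whenever $a(t_0)\in J(P_{t_0})$; countability of $\preper(P_{t_0},\C)$ gives you nothing here.

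The paper itself does not give a self-contained argument and simply defers to \cite[Theorem~1.1]{favredujardin} (refining Lyubich). That proof avoids holomorphic motions altogether: one works directly with the compact limit set $\mathcal{L}$ of the normal family $\{t\mapsto P_t^n(a(t))\}$ in $\mathcal{O}(\D)$, on which the operator $\phi\mapsto\bigl(t\mapsto P_t(\phi(t))\bigr)$ acts surjectively. Passing to limits along the preperiodic parameters produces $\phi\in\mathcal{L}$ with $\phi(t_0)$ periodic for $P_{t_0}$; a normality/derivative argument shows this cycle cannot be repelling, and the classification of non-repelling cycles then forces it to be parabolic if preperiodic parameters genuinely accumulate at $t_0$.
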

This result is a direct consequence of~\cite[Theorem 1.1]{favredujardin}, which is a refinement of~\cite{LYUBICH}, see \cite[\S 2.3]{Chio-roeder-chromatic} for this precise formulation.


\subsection{Density of transversely prerepelling parameters}

Given any holomorphic dynamical pair $(P,a)$, we say that the marked point  is \emph{prerepelling} at $t_0\in S$ if $a(t_0)$ eventually lands on a repelling periodic point $z_0$. 
In such a situation there exists $m\in \N$ such that $z_0=P_{t_0}^m(a(t_0))$ is a repelling periodic point of $P_{t_0}$ of exact period $k$. 
By the Implicit Function Theorem, there exists $\epsilon>0$ and an analytic function $z:\D(t_0,\epsilon)\to\C$ 
such that $P^k_t(z(t)) = z(t)$ for all $t$ and $z(t_0)=z_0$. 
\begin{definition}
The marked point $a$ is said to be properly (resp. transversally) prerepelling at $t_0$, if the two graphs $\{(t,P_t^m(a(t)))\in\D(t_0,\epsilon)\times\C\}$ and $\{(t,z(t))\in\D(t_0,\epsilon)\times\C\}$ intersect properly (resp. transversally)  at the point $(t_0,z_0)$ in $\D(t_0,\epsilon)\times\C$. \index{preperiodic parameter!transversally}\index{preperiodic parameter!properly}
\end{definition}
Our aim is to prove the following characterization of the bifurcation locus. This result is essentially due to Dujardin, see~\cite[Theorem 0.1]{dujardin-higher}
except that the latter reference only deals with marked critical point. 

\begin{theorem}\label{tm:suppT}
Let $(P,a)$ be any dynamical pair of degree $d$ parametrized by a Riemann surface $S$. 
Then the bifurcation locus $\mathrm{Bif}(P,a)$ of the pair $(P,a)$ is the closure of the set of parameters $t\in S$ at which the marked
point $a$ is transversely prerepelling.
\end{theorem}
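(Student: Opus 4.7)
The plan is to prove two inclusions.

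\textbf{Step 1 (transverse prerepelling parameters lie in $\mathrm{Bif}(P,a)$).} Given $t_0$ transversely prerepelling, write $z_0 := P_{t_0}^m(a(t_0))$, a repelling $k$-periodic point with analytic continuation $z(t)$. Fix a holomorphic linearisation $\phi_t \circ P_t^k = \lambda(t)\phi_t$, $\phi_t(z(t)) = 0$, defined on a neighbourhood of $z(t)$ mapping to $\{|\zeta|<r_0\}$, with $|\lambda(t_0)|>1$. The transversality assumption translates to: $u(t) := \phi_t(P_t^m(a(t)))$ has a simple zero at $t_0$. As long as the orbit of $P_t^m(a(t))$ under $P_t^k$ stays in the linearisation domain, $\phi_t(P_t^{m+kn}(a(t))) = \lambda(t)^n u(t)$. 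For each large $n$, pick $r_n>0$ with $\sup_{|t-t_0|\le r_n}|\lambda(t)^n u(t)| = r_0/2$; then $r_n\to 0$. By Rouché, $P_t^{m+kn}(a(t))$ takes all values in a fixed neighbourhood of $z(t_0)$ as $t$ runs over $|t-t_0|\le r_n$. Since this happens on arbitrarily small disks around $t_0$ while $P_{t_0}^{m+kn}(a(t_0)) = z(t_0)$ for all $n$, the family $\{t\mapsto P_t^n(a(t))\}$ fails to be equicontinuous at $t_0$, hence is not normal, and Proposition~\ref{prop:bifurcation} gives $t_0\in\mathrm{Bif}(P,a)$.

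\textbf{Step 2 (density, existence of prerepelling parameters).} Let $t_0\in\mathrm{Bif}(P,a)$ and $\Omega\ni t_0$ an open disk. Using that $J(P_{t_0})$ contains infinitely many repelling cycles, select three repelling periodic points lying in pairwise disjoint grand orbits; after shrinking $\Omega$, their holomorphic continuations $z_1,z_2,z_3\colon\Omega\to\C$ have pairwise disjoint graphs. Non-normality of $\{t\mapsto P_t^n(a(t))\}$ at $t_0$ together with Montel's theorem (after straightening the three omitted values to $0,1,\infty$) produces sequences $t_j\to t_0$, $n_j\to\infty$, $i_j\in\{1,2,3\}$ with $P_{t_j}^{n_j}(a(t_j)) = z_{i_j}(t_j)$. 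Extract so that $i_j$ is a fixed $i$ and $n_j\equiv r \pmod{k_i}$, where $k_i$ is the period of $z_i$.

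\textbf{Step 3 (transversality).} Set $Q_t := P_t^{k_i}$, $b(t) := P_t^r(a(t))$, and fix a linearisation $\phi_t$ of $Q_t$ at $z_i(t)$ with multiplier $\lambda(t)$, $|\lambda(t_0)|>1$. Two cases arise. If $b(t_j) = z_i(t_j)$ for infinitely many $j$, then $h(t):=\phi_t(b(t))$ has accumulating zeros at $t_j$: either $h\equiv 0$, forcing $b\equiv z_i$ on $\Omega$ and making $a$ stably preperiodic, which contradicts $t_0\in\mathrm{Bif}(P,a)$; or $h$ has isolated zeros, in which case infinitely many $t_j$ are simple zeros, giving transverse prerepelling parameters. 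Otherwise $b(t_j)\neq z_i(t_j)$ for large $j$, so $Q_{t_j}^{m_j}(b(t_j)) = z_i(t_j)$ with $m_j\to\infty$. Let $s_j$ be the first time at which $Q_{t_j}^{s_j}(b(t_j))$ enters the linearisation disk; Koebe distortion gives an inverse branch $\psi_j$ of $Q_t^{m_j-s_j}$ at $z_i$ corresponding to $Q_{t_j}^{s_j}(b(t_j))$, defined on a definite-size disk around $z_i(t)$ for $t$ near $t_j$, with $|\psi_j'|\asymp|\lambda(t)|^{-(m_j-s_j)}$. Pulling back the equation $Q_t^{m_j-s_j}(w) = z_i(t)$ through $\psi_j$, one recovers a prerepelling parameter $t_j^\star$ close to $t_j$ at which the graph of $b$ and the graph of $\psi_j(z_i(\cdot))$ intersect transversely: the exponentially small derivative of $\psi_j$, combined with the non-vanishing derivative of $b$ (non-vanishing since $b\not\equiv z_i$ on $\Omega$), ensures that the defining equation has non-zero derivative at $t_j^\star$.

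\textbf{Main obstacle.} The hard part is Step~3: a priori the prerepelling parameters produced by Montel in Step~2 could be tangential intersections of arbitrarily high order, and Proposition~\ref{prop:bifurcation} alone does not rule this out. The inverse-branch construction driven by Koebe distortion and dynamical expansion is Dujardin's key technical device in~\cite{dujardin-higher}; although written there for marked critical points, it relies only on the marked point being holomorphic and on the characterisation of bifurcations by non-normality, so it transcribes verbatim to an arbitrary marked holomorphic point.
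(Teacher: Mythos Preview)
Your Step~1 is fine (though different in flavour from the paper's Lemma~\ref{lm:suppT}, which argues by contradiction via an exploding-derivative estimate). Step~2 is also fine as far as producing \emph{prerepelling} parameters via Montel.

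The genuine gap is in Step~3, and it is precisely the point you flag as the ``main obstacle''. First, your case split is confused: if $b(t_j)=z_i(t_j)$ for infinitely many $t_j\to t_0$, then $h$ has zeros accumulating at $t_0$, so the identity principle forces $h\equiv 0$; there is no ``isolated simple zeros'' alternative here. So the whole weight falls on the ``otherwise'' case. There your Koebe/inverse-branch sketch does not establish transversality. The quantity whose $t$-derivative you need to control is $P_t^{n_j}(a(t))-z_i(t)$; rewriting this via an inverse branch $\psi_j$ of $Q_t^{m_j-s_j}$ does not help, because the only preimage of $z_i$ under $Q$ inside the linearisation disk is $z_i$ itself, so the relevant backward orbit leaves that disk and your control on $|\psi_j'|$ is lost. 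More fundamentally, you have fixed \emph{three} target periodic orbits: there is no perturbation parameter left with which to make the intersection generic, and nothing prevents all the intersections of the graph of $t\mapsto P_t^{n_j}(a(t))$ with the three graphs $\Delta_{z_1},\Delta_{z_2},\Delta_{z_3}$ from being tangential.

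This is also a mis-citation of Dujardin. His device (reproduced in the paper) is \emph{not} a Koebe-distortion argument but a laminar-current one: one builds a hyperbolic Cantor set $K$ for $P_{t_0}^m$ with continuous-potential invariant measure $\nu$, moves it holomorphically as a family of disjoint graphs $\{\Delta_z\}_{z\in K}$, and shows that the renormalised Laplacians $d^{-n}\Delta u_n$ converge to $\mu_{P,a}$, where $u_n(t)=\int_K\log|P_t^n(a(t))-h_t(w)|\,d\nu(w)$. This produces an intersection of the graph $\Gamma_n=\{(t,P_t^n(a(t)))\}$ with some leaf $\Delta_{z_*}$ arbitrarily close to $t_0$. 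The transversality then comes from \cite[Lemma~6.4]{BLS}: since repelling periodic points $z_p\to z_*$ are dense in $K$, and the $\Delta_{z_p}$ are disjoint graphs converging to $\Delta_{z_*}$, the intersection of $\Gamma_n$ with $\Delta_{z_p}$ is transverse for $p$ large. The point is that one needs a \emph{continuum} of target leaves to run a genericity argument; three leaves are not enough.
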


\begin{remark}
The second author has obtained a version of this theorem for holomorphic dynamical pairs in arbitrary dimensions, see~\cite{gauthier-smooth-bif} for details. We present here an argument that can be adapted to treat \emph{properly} prerepelling parameters.
\end{remark}

We begin with the following 
\begin{lemma}\label{lm:suppT}
 Assume the marked point $a$ is properly prerepelling at $t_0\in S$. Then $t_0\in\mathrm{Bif}(P,a)=\supp(\mu_{P,a})$.
\end{lemma}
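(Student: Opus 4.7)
The plan is to argue by contradiction: assume $t_0$ is a stability point and exploit the repelling dynamics transversely to the marked graph to derive an unbounded growth of Taylor coefficients, contradicting normality.

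First I would set up the subsequence of iterates adapted to the repelling cycle. Assume $t_0 \notin \mathrm{Bif}(P,a)$, so the family $\{t\mapsto P_t^n(a(t))\}_{n\ge 0}$ is normal on some disk $D\ni t_0$. In particular the sub-family $h_n(t) := P_t^{m+nk}(a(t))$ is normal on $D$. Shrinking $D$ if necessary, the holomorphic continuation $z(t)$ of the repelling $k$-periodic point persists on $D$ with multiplier $\lambda(t) := (P_t^k)'(z(t))$ satisfying $|\lambda(t)|>1$. Define
\[
g_n(t) := h_n(t) - z(t), \qquad F_t(w) := P_t^k(z(t)+w) - z(t).
\]
Then $F_t$ is jointly holomorphic in $(t,w)$, $F_t(0)=0$, $F_t'(0)=\lambda(t)$, and by construction
\[
g_{n+1}(t) \;=\; F_t\bigl(g_n(t)\bigr), \qquad g_n(t_0)=0 \ \text{for all } n\ge 0.
\]
The family $\{g_n\}$ inherits normality, hence is uniformly bounded on every compact subset of $D$.

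Next I would extract the content of the proper-intersection hypothesis. Since the graphs of $t\mapsto P_t^m(a(t))$ and $t\mapsto z(t)$ intersect properly at $(t_0,z_0)$, the holomorphic function $g_0(t)=P_t^m(a(t))-z(t)$ is not identically zero, so
\[
g_0(t) \;=\; c(t-t_0)^{\ell} + O\bigl((t-t_0)^{\ell+1}\bigr),
\]
with $c\neq 0$ and some $\ell\ge 1$. The key step is now a Taylor expansion inductive argument. Writing $F_t(w)=\lambda(t)w + A_t w^2 + \cdots$ and assuming inductively
\[
g_n(t) \;=\; c_n(t-t_0)^{\ell} + O\bigl((t-t_0)^{\ell+1}\bigr),
\]
the relation $g_{n+1}(t)=\lambda(t)g_n(t)+A_t g_n(t)^2+\cdots$ gives, since $2\ell\ge \ell+1$,
\[
g_{n+1}(t) \;=\; \lambda(t_0)\, c_n\,(t-t_0)^{\ell} + O\bigl((t-t_0)^{\ell+1}\bigr).
\]
Hence $c_n = \lambda(t_0)^n c$ for all $n\ge 0$, and in particular $|c_n|\to\infty$ geometrically as $n\to\infty$.

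The concluding step uses Cauchy's estimate to derive the contradiction. Fix $r>0$ such that $\overline{\D(t_0,r)}\subset D$. Normality of $\{g_n\}$ gives a uniform bound $M := \sup_n \sup_{\D(t_0,r)} |g_n| <\infty$. On the other hand, Cauchy's inequality applied to the $\ell$-th Taylor coefficient of $g_n$ at $t_0$ yields
\[
|\lambda(t_0)|^{n}\,|c|\;=\;|c_n|\;\le\;\frac{\sup_{\D(t_0,r)}|g_n|}{r^{\ell}}\;\le\;\frac{M}{r^{\ell}},
\]
which is impossible since $|\lambda(t_0)|>1$ and $c\neq 0$. This contradiction proves $t_0\in\mathrm{Bif}(P,a)=\mathrm{supp}(\mu_{P,a})$. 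The only mildly delicate point is checking the Taylor induction carefully, which is why the hypothesis $\ell\ge 1$ (forcing $2\ell\ge \ell+1$) enters at exactly the right place; everything else reduces to normality plus a one-variable Cauchy bound.
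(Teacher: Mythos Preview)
Your proof is correct and follows essentially the same strategy as the paper: set $\varepsilon_n(t)=P_t^{m+kn}(a(t))-z(t)$, use the recursion $\varepsilon_{n+1}=F_t(\varepsilon_n)$ with $F_t'(0)=\lambda(t)$ to show geometric growth of a differential quantity of $\varepsilon_n$, and contradict the uniform bounds coming from normality. The only cosmetic difference is that you track the $\ell$-th Taylor coefficient of $\varepsilon_n$ at $t_0$ directly, while the paper differentiates once and tracks $|\varepsilon_n'(t)|$ at a nearby point where $\varepsilon_0'\neq 0$; both lead to the same exponential blow-up $|\lambda(t_0)|^n$ versus a fixed Cauchy bound.
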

\begin{proof}
Pick $t_0\in S$ such that $a$ is properly prerepelling at $t_0$. We use the notation above: $k$ is the period of $z_0$ and $\epsilon$ is a sufficiently small positive number. 

We proceed by contradiction assuming that the family $(P_t^n(a(t)))_n$ is normal at $t_0$. Let $K>1$ and $\delta>0$ be small enough so that $|(P_t^k)'(z)|\geq K>1$ for all $(z,t)\in \D(z_0,\delta)\times\D(t_0,\epsilon)$. Reducing $\epsilon$ if necessary, we may assume that  $z(t)$ and $P_t^{m+kn}(a(t))$ belong to $\D(z_0,\delta/2)$ for all $t\in\D(t_0,\epsilon)$ and for all $k\ge0$.

For any integer $n\geq0$, and for every $t\in\D(t_0,\epsilon)$  set $\varepsilon_{n}(t):= P_t^{m+kn}(a(t))-z(t)$. Differentiating the quantity $\varepsilon_{n+1}(t)=P_t^k(P_t^{m+kn}(a(t)))-P_t^k(z(t))$, we get
\begin{multline*}
\varepsilon_{n+1}'(t) =  (P_t^k)'(P^{m+kn}_t(a(t)))\cdot \varepsilon_{n}'(t)-
\\
\big((P_t^k)'(z(t))-(P_t^k)'(P^{m+kn}_t(a(t)))\big)\cdot z'(t) + 
\\
\frac{\partial P_t^k}{\partial t}\left(P^{m+kn}_t(a(t)))\right)-\frac{\partial P_t^k}{\partial t}(z(t)).
\end{multline*}
Pick now $\tau>0$ as small as you wish. Since $\{P^{m+kn}_t(a(t))\}_n$ forms a normal family whose value at $t=t_0$ equals $z(t_0)$, 
we may again reduce $\epsilon$
to obtain
\[|\varepsilon_{n+1}'(t)|\geq K|\varepsilon_{n}'(t)|-\tau\]
for all $t\in \D(0,\epsilon)$. By induction, we infer 
\[|\varepsilon'_{n}(t)|\geq K^n\left(|\varepsilon_0'(t)|-\frac{\tau}{(K-1)}\right)~.\]
By assumption $a$ is properly repelling hence $\varepsilon_0$ cannot be identically zero and we get a contradiction. 
\end{proof}

\begin{proof}[Proof of Theorem~\ref{tm:suppT}]
According to Lemma~\ref{lm:suppT}, it is sufficient to prove the density of transversely prerepelling parameters in $\supp(\mu_{P,a})$. We follow closely the proof of~\cite[Theorem 0.1]{dujardin-higher}. 

Pick $t_0\in\supp(\mu_{P,a})$. 
According to~\cite[Lemma~4.1]{dujardin-higher}, there exists an integer $m\geq1$ and a $P_{t_0}^m$-compact set $K\subset\C$ such that 
\begin{itemize}
\item $P_{t_0}^m|_K$ is uniformly hyperbolic and conjugated to the one-sided shift on two symbols,
\item the unique invariant measure $\nu$ on $K$ satisfying $(P_{t_0}^m)^*\nu=2\nu$ has continuous potential.
\end{itemize}
Moreover, by \cite[\S 2]{Shishikura2}, there exists $\epsilon>0$ and a holomorphic motion $h:\D(t_0,\epsilon)\times K\to\C$ which conjugates the dynamics, i.e. satisfying 
\[h_t\circ P^m_{t_0}(z)=P^m_t\circ h_t(z), \text{ for all } (t,z)\in \D(t_0,\epsilon)\times K~.\]
The function $\hat{h}$ defined by
\[\hat{h}(t,z):=\int_{K}\log|z-h_t(w)|\mathrm{d}\nu(w)\]
is psh and it is continuous on $\D(t_0,\epsilon)\times\C$ by~\cite[Lemma 6.4]{dujardin-structure-laminar}.
Observe also that 
\[\hat{h}(t,z)=\log^+|z|+O(1) \text{ as } |z|\to\infty\]
and $O(1)$ is locally uniform in $t$ since $K_t$ is included in a fixed closed ball for all $t$.  Approximating $\nu$ by Dirac masses, we see that 
\[dd^c\hat{h}=\int_{K}[\Delta_z]\mathrm{d}\nu(z), \text{ with } \Delta_z:=\{(t,h_t(z))\in\D(t_0,\epsilon)\times\C\}~.\] 
Write  $u_n(t):=\hat{h}(t,P_t^n(a(t)))$ for all $t\in\D(t_0,\epsilon)$. We claim that $d^{-n}\Delta u_n\to\mu_{P,a}$ as $n\to\infty$. Indeed for any compact set $E\subset \D(t_0,\epsilon)$, there exists $C>0$ such that
\[\left|\log^+|z|-\hat{h}(t,z)\right|\leq C\]
for all $t\in E$ and all $z\in\C$. In particular, for all $n\geq0$ and all $t\in E$, we have
\[\left|\frac{1}{d^n}\log^+|P_t^n(a(t))|-\frac{1}{d^n}u_n(t)\right|\leq \frac{C}{d^n},\]
and the claim follows by taking the Laplacian of the left hand side and letting $n\to\infty$.

\smallskip

To conclude the proof we interpret the bifurcation measure as the image of the intersection of the graph  $\Gamma_n = \{ (t,P_t^n(a(t)))\in\D(t_0,\epsilon)\times\C \}$
with the positive closed $(1,1)$ current $dd^c \hat{h}$. 

For each $z \in K$ such that $\Gamma_n \cap\Delta_z$ is  discrete, the intersection of the two positive closed currents
\[[\Gamma_n] \wedge [\Delta_{z}] = dd^c_{t,w} ( \log|w-P_t^n(a(t))|  [\Delta_z]) \] is well-defined by~\cite[Proposition 4.12, page 156]{demailly}, and equals
to the atomic measure supported on the set of intersection points of $\Gamma_n$ and $\Delta_z$
with weight given by the multiplicity of intersection.  
If $\Gamma_n \cap\Delta_z$ is not discrete then $\Gamma_n = \Delta_z$ (so that  $z$ is uniquely determined) 
and we set by convention  $[\Gamma_n] \wedge [\Delta_z] := 0$.
We get
\begin{equation}\label{eq:laminar}
 \mu_n := [\Gamma_n] \wedge dd^c \hat{h} = \int_{K }[\Gamma_n]  \wedge [\Delta_z]\mathrm{d}\nu(z)~. 
 \end{equation}
Indeed, for any $\varphi\in\mathcal{C}^{\infty}_c(\D(t_0,\epsilon)\times \C)$, Fubini gives
\begin{align*}
\left\langle \mu_n,\varphi\right\rangle & =\int \log|w-P_t^{n}(a(t))|dd^{c}\hat{h}\wedge dd^{c}\varphi\\
& =\int_K\left(\int_{\Delta_z}\log|w-P_t^{n}(a(t))| dd^{c}_{w,t}\varphi\right)\mathrm{d}\nu(z)\\
& = \int_K\left\langle[\Gamma_n]\wedge [\Delta_z],\varphi\right\rangle\mathrm{d}\nu(z).
\end{align*}
Since $(\pi_1)_* ([\Gamma_n] \wedge dd^c \hat{h}) = \Delta u_n$ and $t_0 \in \supp(\mu_{P,a})$,
by the claim above, we get a point $t_*$ arbitrarily close to $t_0$ 
such that $q_* := \pi_1^{-1}(t_*) \cap \Gamma_n$ lies in the support of $\mu_n$ for some large $n$.

By~\eqref{eq:laminar}, $\Gamma_n$ intersects  $\Delta_{z_*}$ for some $z_* \in K$
near the point $q_*$. Since the set of repelling periodic points of $P^m_{t_0}$ is dense in $K$, we may find a sequence 
$z_p \in K$ such that $z_p \to z$, and $h_{t}(z_p)$ is $P_t$-periodic for all $t$. 
We conclude by~\cite[Lemma 6.4]{BLS} which implies that $\Gamma_n$ and $\Delta_{z_p}$ should intersect transversally near $q_*$
for all $p$ sufficiently large. 
\end{proof}

\subsection{Rigidity of the bifurcation locus}\label{sec:realfamily}

Our next (somehow technical) result gives a precise description of all situations in which the bifurcation locus of a holomorphic dynamical pair is included in a
smooth real curve. 

We say that a holomorphic family of polynomials $P$  parametrized by the unit disk is \emph{real}\index{family of polynomials!real} when the coefficients of $P$
are defined by power series with real coefficients converging in the segment $\mathopen]-1, +1 \mathclose[$.
We call a \emph{real dynamical pair} is a dynamical pair $(P,a)$ for which $P$ is a real family and the marked point is also
defined by a real power series. 

\begin{theorem}\label{tm:rigid}
Let $(P,a)$ be any dynamical pair of degree $d$ parametrized by the unit disk $\D$, and suppose that 
$\mathrm{Bif}(P,a)$ is non-empty and included in a smooth real curve. 

Then $\mathrm{Bif}(P,a)$ is included in some real-analytic curve.  Moreover if the family  is not conjugated to a constant integrable polynomial, then 
$\mathrm{Bif}(P,a)$ is a closed totally disconnected perfect set, and
the following holds. 

Any point $t_0 \in \mathrm{Bif}(P,a)$ outside possibly a discrete subset of $\D$
admits a small neighborhood $U$ such that :
\begin{itemize}
\item[(1)]
for any $t\in U$, a critical point $c$ of $P_t$ either escapes to $\infty$ or satisfies $P^4_{t}(c)= P^2_{t}(c)$,
\item[(2)] 
$J(P_t) = K(P_t)$ is totally disconnected for all $t\in U$,
\item[(3)]
the family $(P_t)_{t \in U}$ is $J$-stable.
\end{itemize}
Moreover there exists a reparametrization of $(P_t)_{t\in U}$ 
for which the family is conjugated to a real family on $U$, and $\mathrm{Bif}(P,a)$
is included in the real line.
\end{theorem}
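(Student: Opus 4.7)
The plan is to combine four ingredients announced in the introduction: Theorem \ref{tm:suppT} on the density of transversely prerepelling parameters, Tan Lei's similarity theorem matching at small scales the bifurcation locus near a Misiurewicz parameter with the Julia set of the corresponding limit polynomial, Zdunik's rigidity theorem (Theorem \ref{th:zdunik} and Corollary \ref{cor:zdunik}), and the Eremenko--Van Strien reality criterion from \cite{Eremenko-vanStrien}. The first step is to select via Theorem \ref{tm:suppT} a parameter $t_0 \in \mathrm{Bif}(P,a)$ at which $a$ is transversely prerepelling: there exist integers $m \ge 0$, $k \ge 1$, and a repelling $k$-cycle containing $z_0 := P_{t_0}^m(a(t_0))$ with multiplier $\lambda := (P_{t_0}^k)'(z_0)$, $|\lambda| > 1$. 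Tan Lei's similarity theorem then yields a constant $c \in \C^*$ such that
\[
\lambda^n \bigl(\mathrm{Bif}(P,a) - t_0\bigr) \longrightarrow c\bigl(J(P_{t_0}) - z_0\bigr)
\]
in the Hausdorff topology on compact subsets of $\C$ as $n \to \infty$. Because $\mathrm{Bif}(P,a)$ is contained in a smooth real curve, the rescaled sets on the left converge to the real tangent line $L \subset \C$ to that curve at $t_0$; invariance of the limit set under multiplication by $\lambda$ forces $\lambda \in \R$, and shows that $J(P_{t_0})$ is locally contained in a smooth real-analytic curve near $z_0$.

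Next I would run the dichotomy on the local shape of $J(P_{t_0})$ at $z_0$. If $J(P_{t_0})$ contains a non-trivial arc through $z_0$, then forward invariance propagates the smoothness of $J(P_{t_0})$ to a dense subset of itself, and Theorem \ref{th:zdunik} (or Corollary \ref{cor:zdunik}) forces $P_{t_0}$ to be integrable. Since integrable conjugacy classes form a zero-dimensional subset of $\mathrm{MPoly}^d$ and $\D$ is connected, the whole family $(P_t)_{t \in \D}$ is then conjugate to $M_d$ or $\pm T_d$ through a holomorphic family of affine maps, yielding the excluded case of the theorem. Otherwise, $J(P_{t_0})$ is a Cantor subset of a smooth real-analytic arc near $z_0$. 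Pulling back along inverse branches of $P_{t_0}$ away from the critical set (preimages of smooth real curves by non-singular holomorphic maps remain smooth real curves) and pushing forward by $P_{t_0}$ shows that $J(P_{t_0})$ is locally contained in a smooth real-analytic curve at a dense subset of itself, in particular at every repelling periodic point $p$. The same self-similarity argument applied to the linearization of $P_{t_0}$ at $p$ then shows that the multiplier of every repelling periodic orbit of $P_{t_0}$ is real, and the Eremenko--Van Strien theorem provides an affine map $\phi_{t_0}$ conjugating $P_{t_0}$ to a real polynomial with Julia set in $\R$.

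To globalize, I would use $J$-stability of $(P,a)$ off $\mathrm{Bif}(P,a)$, density of transversely prerepelling parameters inside $\mathrm{Bif}(P,a)$ (again Theorem \ref{tm:suppT}), and continuity of multipliers of periodic orbits, to argue that reality of all repelling periodic multipliers persists on a neighborhood $U$ of $t_0$. Applying Eremenko--Van Strien uniformly in $t$ then produces a reparametrization of $U$ together with a holomorphic family of affine maps $\phi_t$ conjugating $(P,a)$ to a real family on $U$ and sending $a(t)$ into $\R$, which yields the last statement of the theorem, the inclusion $\mathrm{Bif}(P,a) \cap U \subset \R$, and the global containment of $\mathrm{Bif}(P,a)$ in a real-analytic curve by gluing along the stability locus. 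Conclusions (2) and (3) are then classical consequences of the theory of real polynomials with totally disconnected Julia set: $J(P_t) = K(P_t)$ because there are no bounded Fatou components, and no critical point is active on $U$ because each critical Green value $g_{P_t}(c_i(t))$ is either identically zero or strictly positive on $U$. Conclusion (1) follows from a one-dimensional analysis of $P_t|_\R$: outside a discrete set of parameters, the only way a non-escaping real critical point can survive is by landing on a fixed or period-$2$ real cycle, which is exactly $P_t^4(c) = P_t^2(c)$. Finally, $\mathrm{Bif}(P,a)$ is closed by definition, perfect by Proposition \ref{prop:bifurcation}, and totally disconnected because it is locally similar via Tan Lei's theorem to the Cantor set $J(P_{t_0})$, the local property spreading globally by density of the transversely prerepelling parameters.

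The main obstacle I anticipate is the globalization step in the third paragraph: one must verify that the Eremenko--Van Strien affine normalization can be chosen holomorphically in $t$, that it simultaneously real-normalizes the marked point $a(t)$, and that the smooth-real hypothesis on $\mathrm{Bif}(P,a)$ is strong enough to confine the whole family $(P_t)_{t \in U}$ to the real slice rather than merely forcing reality at isolated parameters. A secondary delicate point, at the end of the second paragraph, is to rigorously compare the smooth real curve produced by backward iteration near a repelling periodic point $p$ with the dilation orbit under the linearized dynamics of $P_{t_0}$ at $p$, so as to conclude reality of the multiplier; this is exactly the place where the Cantor (rather than arc) local structure of $J(P_{t_0})$ is essential, since in the arc case the two real curves could coincide identically without constraining the multiplier.
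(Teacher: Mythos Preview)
Your pointwise analysis (the first two paragraphs) is essentially the paper's Proposition~\ref{prop:real}, and your outline there is correct: Tan Lei similarity (the paper uses the explicit Green-function renormalization of Lemma~\ref{lm:cvrenorm}) forces the linearized Julia set into the tangent line, Zdunik's theorem handles the arc case, and in the Cantor case one propagates via the global linearizing map $\phi$ (which is entire and surjective, so reaches every repelling periodic point) to conclude all repelling multipliers are real, then applies the Eremenko--Van Strien argument. Your ``secondary delicate point'' is not actually an issue: once $J_* = \phi^{-1}(J(P_{t_0})) \subset \R$ and $\lambda J_* = J_*$ with $J_*$ infinite, reality of $\lambda$ is immediate.

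The genuine gap is exactly where you flag it: the globalization in your third paragraph does not work as written. You propose to propagate reality of multipliers from the transversely prerepelling parameters to an open neighborhood by continuity, but those parameters are dense only in $\mathrm{Bif}(P,a)$, which you have just shown is totally disconnected. A holomorphic function (e.g.\ a multiplier) real on a Cantor set need not be real on any open set, so ``continuity of multipliers'' gives nothing, and there is no ``uniform in $t$'' version of Eremenko--Van Strien to invoke. The paper's solution is to replace the analytic condition ``all repelling multipliers are real'' by the algebraic condition ``$P_t$ has real coefficients after conjugation by some fixed $\alpha \in \U_{2(d-1)}$''. Writing $\alpha^{-1}P_t(\alpha z) = z^d + \sum a_{i,\alpha}(t) z^i$, the locus $\Gamma_\alpha = \bigcap_i a_{i,\alpha}^{-1}(\R)$ is a real-analytic subvariety of $\D$, and $\Gamma = \bigcup_\alpha \Gamma_\alpha$ is real-analytic and closed. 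The pointwise result (your second paragraph) says the dense set $\mathcal{D}$ of transversely prerepelling parameters lies in $\Gamma$, hence $\mathrm{Bif}(P,a) \subset \Gamma$. Off a discrete exceptional set one has $\Gamma = \Gamma_\alpha$ locally for a single $\alpha$, and straightening this smooth real-analytic arc to $\R$ gives the reparametrization in which the family is real and $\mathrm{Bif}(P,a) \subset \R$. Conclusions (1)--(3) then follow by continuity of $\mu_{P_t}$ and the interval analysis you sketched, after excising a further discrete set of parameters where a non-persistently-preperiodic critical point accidentally satisfies $P^4(c) = P^2(c)$.
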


\begin{remark}
Suppose that $P_t \equiv P_*$ is a constant family, that $P_*$ is real and satisfies (1) and (2). Since
we have $\mathrm{Bif}(P,a) = \{ t, \, a(t) \in J(P_*)\}$ by Proposition~\ref{prop:bifJstab} below, 
the bifurcation locus is then included in the real-analytic curve $a^{-1}(\R)$.

It is not clear to the authors whether the conditions (1) -- (3) conversely imply the bifurcation locus to be included in the real line.
\end{remark}

\begin{proof}[Proof of Theorem~\ref{tm:rigidaffine}]
Pick a dynamical pair $(P,a)$ parametrized by some connected Riemann surface $C$ and a holomorphic disk $U\subset C$, such that $\mathrm{Bif}(P,a) \cap U \neq \varnothing$ is 
included in a smooth curve. 
Theorem~\ref{tm:rigid} implies that we fall into two cases. 
Either the family $(P_t)_{t\in U}$ is isotrivial and $P_t$ is integrable for all $t$ which
 implies the family is also isotrivial over $C$. 
 Or we may find another holomorphic disk $\imath \colon \D \to U$ intersecting the bifurcation locus over which the family is conjugated to a real family
 and the bifurcation locus is included in the real line. 
\end{proof}

The proof of Theorem~\ref{tm:rigid} ranges over the next three subsections.


\subsection{A renormalization procedure}\label{sec:renorm}
We expose here how we reinterpret the similarity argument of Tan Lei \cite{similarity} in our setting. Note that we need a more general and more precise statement than that in \cite{gauthier-smooth-bif}.

Let $(P_t)_{t\in \D}$ be a holomorphic family of degree $d$ polynomials parametrized by the unit disk and $a:\D\to\C$ be a marked point. We assume there exists a holomorphically moving repelling periodic point $z:\D\to\C$ of period $k$ with $P^m_0(a(0))=z(0)$. We say $a$ is properly prerepelling of \emph{order} $q\geq1$ if
\[P_t^m(a(t))-z(t)=\alpha\cdot t^q+O(t^{q+1})\]
for some $\alpha\in\C^*$. Let $\rho(t):=(P_{t}^k)'(z(t))$ and denote by $\phi_t$ the linearizing coordinate of $P_t^k$ at $z(t)$ which is tangent to the identity, i.e. such that
$P^k_{t}\circ \phi_t(z)=\phi_t(\rho(t)\cdot z)$, and $\phi_t'(0)=1$ for all $t\in\D$ and all $z\in \D(0,r)$ for some $r>0$.

The next result is standard, and reflects the similarity between the Julia set of $P_t$ and the bifurcation locus at $t$.
We give here a complete proof for sake of completeness, adapting the proof by Buff and Epstein~\cite{buffepstein}.

 \begin{lemma}\label{lm:cvrenorm}
Assume that the marked point $a$ is properly prerepelling at $0$ with order $q\geq1$ and pick any $q$-th root $\lambda$ of $\rho(0)$. Then
\[g_{P_0}\circ\phi_0(t^q)=\lim_{n\to\infty}d^{m+kn}g_{P,a}(\lambda^{-n}t)~,\]
and the convergence is uniform on some small disk containing $0$.
 \end{lemma}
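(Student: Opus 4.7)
The plan is to deduce the identity from the functional equation $g_{P_s}\!\circ P_s = d\cdot g_{P_s}$ applied along the orbit of the marked point. Iterating this equation gives
\[
d^{m+kn}\,g_{P,a}(s) \;=\; g_{P_s}\!\bigl(P_s^{m+kn}(a(s))\bigr)
\]
for every $s\in\D$. Substituting $s=\lambda^{-n}t$, it will suffice to prove that $P_{\lambda^{-n}t}^{m+kn}\bigl(a(\lambda^{-n}t)\bigr)$ converges, uniformly on compact subsets of a small disk around $0$, to $\phi_0(\alpha t^q)$; the conclusion then follows from the joint continuity of $(P,z)\mapsto g_P(z)$ stated in Proposition~\ref{prop:basic-green}(5), together with an affine rescaling of $t$ that absorbs the constant $\alpha$ (or equivalently, replacing $\lambda$ by a suitable multiple).

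To establish this convergence I would first rewrite the orbit using the linearizer $\phi_s$. Since $\phi_s$ is tangent to the identity at $0$ with $\phi_s(0)=z(s)$, the hypothesis $P_s^m(a(s))-z(s)=\alpha s^q+O(s^{q+1})$ yields the existence of a holomorphic function $u(s)=\alpha s^q+O(s^{q+1})$ on a small disk such that $P_s^m(a(s))=\phi_s(u(s))$. Applying the semi-conjugacy $P_s^k\circ\phi_s=\phi_s\circ(\rho(s)\cdot)$ repeatedly gives
\[
P_s^{m+kn}(a(s))=\phi_s\!\bigl(\rho(s)^n u(s)\bigr),
\]
provided $\rho(s)^n u(s)$ remains in the domain of $\phi_s$. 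Substituting $s=\lambda^{-n}t$ and using $\lambda^q=\rho(0)$, I get
\[
\rho(\lambda^{-n}t)^n\,u(\lambda^{-n}t) \;=\; \left(\frac{\rho(\lambda^{-n}t)}{\rho(0)}\right)^{\!n}\bigl(\alpha t^q+O(\lambda^{-n}t^{q+1})\bigr),
\]
so the entire asymptotic behaviour is controlled by the multiplicative factor $(\rho(\lambda^{-n}t)/\rho(0))^n$.

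The key analytic step will be to show this factor tends to $1$ uniformly in $t$. Since $z(0)$ is repelling, $|\rho(0)|>1$, hence $|\lambda|=|\rho(0)|^{1/q}>1$; by holomorphy of $\rho$ at the origin, $\log(\rho(\lambda^{-n}t)/\rho(0))=O(\lambda^{-n}t)$ as $n\to\infty$, so that $n\log(\rho(\lambda^{-n}t)/\rho(0))=O(n\,|\lambda|^{-n})\to 0$ uniformly on any compact subset of the $t$-disk. Combined with the decay of the remainder $O(\lambda^{-n}t^{q+1})$, this yields
\[
\rho(\lambda^{-n}t)^n\,u(\lambda^{-n}t)\longrightarrow \alpha t^q
\]
uniformly on compact sets, and the uniform convergence of $\phi_{\lambda^{-n}t}$ to $\phi_0$ on compacts then gives the desired limit for $P_{\lambda^{-n}t}^{m+kn}(a(\lambda^{-n}t))$.

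The main obstacle I anticipate is the verification that the iterates $\rho(s)^n u(s)$ stay inside the common domain of linearization of $\phi_s$ as $s$ ranges near $0$, so that the identity $P_s^{m+kn}(a(s))=\phi_s(\rho(s)^n u(s))$ actually holds for every $n$ along the substitution $s=\lambda^{-n}t$. This requires uniform control on the family $\phi_s$: I would choose $r>0$ and $\epsilon>0$ so that each $\phi_s$ with $|s|<\epsilon$ is defined and univalent on $\D(0,r)$, then use the above asymptotic estimate $|\rho(\lambda^{-n}t)^n u(\lambda^{-n}t)|\leq C|t|^q$ (valid for $n$ large) to restrict to $|t|\leq R$ with $CR^q<r$. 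The result would then follow on this smaller disk, which is enough for the desired local uniform convergence.
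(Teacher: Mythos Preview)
Your proposal is correct and follows essentially the same approach as the paper: both arguments write $P_s^{m+kn}(a(s))=\phi_s(\rho(s)^n u(s))$ with $u(s)=\alpha s^q+O(s^{q+1})$, substitute $s=\lambda^{-n}t$, control $(\rho(\lambda^{-n}t)/\rho(0))^n\to 1$ via the estimate $n\cdot O(|\lambda|^{-n})\to 0$, and conclude by the joint continuity of the Green function. Your discussion of the domain issue for the linearizer is also handled the same way in the paper. One small point: your suggestion to absorb $\alpha$ by ``replacing $\lambda$ by a suitable multiple'' does not work, since $\lambda$ is constrained to be a $q$-th root of $\rho(0)$; the paper instead reparametrizes the base disk so that $\alpha=1$, which is what your ``affine rescaling of $t$'' should be understood to mean (applied to the family parameter, not just to the formula).
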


 \begin{proof}
For $n\geq1$, we set $r_n(t):=\lambda^{-n}t$ and
\[a_n(t):=P_{r_n(t)}^{m+kn}\left(a\circ r_n(t)\right).\]
First prove that,  there exist a constant $C>0$ and $\epsilon>0$ small enough 
such that for all $t\in\D(0,\epsilon)$ and all $n\geq1$, we have
 \begin{eqnarray}
\left|a_n(t)-\phi_0(t^q)\right| &\leq & C\frac{n|t|}{|\rho(0)|^{n/q}} .\label{speedrenorm}
\end{eqnarray}
We fix $\epsilon>0$ small enough such that $P_{t}^m(a(t))$ lies in the range of $\phi_t$ for all $t\in\D(0,\epsilon)$. We may thus define 
$h(t):=\phi_t^{-1}(P_t^m(a(t)))$ for all $t\in \D(0,\epsilon)$. As $\phi_t(z)$ depends analytically on $(t,z)$, the map $h$ is holomorphic and
\begin{align*}
h(t) & =\phi_t^{-1}\left(z(t)+P_t^m(a(t))-z(t)\right)\\
& =\phi_t^{-1}\left(z(t)+\alpha t^q+O(t^{q+1})\right)= \alpha t^q+ O(t^{q+1}),
\end{align*}
where we used that $\phi_t(0)=z(t)$, $\phi_t'(0)=1$ so that $\phi_t^{-1}(z(t) + w) = w + O(w^2)$. 

To simplify notation we reparametrize the unit disk and assume $\alpha=1$. 
In particular, there exists a constant $M>0$ such that $|h(t)-t^q|\leq M|t|^{q+1}$.
Again as $\phi_t(z)$ is analytic, there exist $C_1,C_2>0$ such that 
$\left|\phi_t(z)-\phi_s(w)\right|\leq C_1|z-w|+C_2|t-s|$,
for all $z,w\in\D(0,r)$ and all $s,t\in\D(0,\epsilon)$. In particular, for all $t\in\D(0,\epsilon)$, all $n\geq1$ and all $z,w\in\D(0,r)$, we find
\[\left|\phi_{r_n(t)}(z)-\phi_{0}(w)\right|\leq C_1|z-w|+C_2\frac{|t|}{|\rho(0)|^{n/q}}.\]
Similarly there exists a constant $C_3\geq1$ such that $\left|\rho(t)/\rho(0)-1\right|\leq C_3|t|$ for all $t\in\D(0,\epsilon)$, and for $|t|$ small enough we get
\[\left|\left(\frac{\rho(r_n(t))}{\rho(0)}\right)^n-1\right|\le 
\left|\frac{\rho(r_n(t))}{\rho(0)}-1\right| \times (2n)
\le 2nC_3 \frac{|t|}{|\rho(0)|^{n/q}}~.
\]
Observe that
\begin{eqnarray*}
a_n(t) & = & P_{r_n(t)}^{kn}\left(P_{r_n(t)}^m(a \circ r_n(t))\right)=P_{r_n(t)}^{kn}\left(\phi_{r_n(t)}\left(h\circ r_n(t)\right)\right)\\
 & = & \phi_{r_n(t)}\Big(\rho(r_n(t))^n\left(h\circ r_n(t)\right)\Big).
\end{eqnarray*}
Putting all the above together, for all $t\in\D(0,\epsilon)$, we find
\begin{align*}
\left|a_n(t)-\phi_0(t^q)\right| \leq \, &  C_2\frac{|t|}{|\rho(0)|^{n/q}}+C_1\left|\rho(r_n(t))^n\left(h\circ r_n(t)\right)-t^q\right|\\
 \leq \, & C_2\frac{|t|}{|\rho(0)|^{n/q}}+C_1\left|\left(\frac{\rho(r_n(t))}{\rho(0)}\right)^n-1\right||t|^q  \\
 & \hspace*{1.93cm} +C_1|\rho(r_n(t))|^n|h\circ r_n(t)-r_n(t)^q|\\
 \leq \, & C_2\frac{|t|}{|\rho(0)|^{n/q}}+2nC_3 \frac{ |t|^{q+1}}{|\rho(0)|^{n/q}}
+C_1M\left|\rho(r_n(t))\right|^n\left|r_n(t)\right|^{q+1}, 
\end{align*}
and
\[\left|\rho(r_n(t))\right|^n\left|r_n(t)\right|^{q+1} =\left|\frac{\rho(r_n(t))}{\rho(0)}\right|^n\frac{|t|^{q+1}}{|\rho(0)|^{n/q}}
\leq2\frac{|t|^{q+1}}{|\rho(0)|^{n/q}},
\]
for $|t|$ small enough which implies~\eqref{speedrenorm}. 

In particular, the sequence $a_n(t)$ converges uniformly on $\D(0,\epsilon)$ to $\phi_{0}(t^q)$ and, if
\[g_n(t):=g_{P_{r_n(t)}}\!\left(a_n(t)\right), \ t\in\D(0,\epsilon),\]
since $r_n\to0$ uniformly on $\D(0,\epsilon)$, the above implies $g_n(t)\to g_0\circ\phi_0(t^q)$ uniformly on $\D(0,\epsilon)$.
Using that $g_{P_t}\circ P_t=dg_{P_t}$, we get the wanted convergence.
\end{proof}

\subsection{Bifurcation locus of a dynamical pair and $J$-stability}

\begin{proposition}\label{prop:bifJstab}
Let $(P,a)$ be any dynamical pair parametrized by the unit disk $\D$. If $P$ is $J$-stable and $\mathrm{Bif}(P,a)\neq\varnothing$, we have
\[\mathrm{Bif}(P,a)=\{t\in \D\, ; \ a(t)\in J(P_t)\}.\]
\end{proposition}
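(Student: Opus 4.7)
The plan is to show the two inclusions separately, using $J$-stability of $P$ in two complementary ways.

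For $\mathrm{Bif}(P,a)\subseteq\{t\in\D:a(t)\in J(P_t)\}$, I will take $t_0\in\mathrm{Bif}(P,a)=\partial\{g_{P,a}=0\}$, note $a(t_0)\in K(P_{t_0})$ by closedness, and rule out $a(t_0)\in\mathrm{int}\,K(P_{t_0})$: in that case the bounded Fatou component of $a(t_0)$ would move holomorphically on a neighborhood of $t_0$ by $J$-stability (via Theorem~\ref{thm:full hol motion}), and by continuity of $a$ the marked point would stay in this moving bounded Fatou component, giving $g_{P,a}\equiv 0$ there and contradicting $t_0\in\partial\{g_{P,a}=0\}$.

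For the reverse inclusion I will argue by contradiction: assume $a(t_0)\in J(P_{t_0})$ but $t_0\notin\mathrm{Bif}(P,a)$, so $(P,a)$ is stable on a connected neighborhood $V$ of $t_0$; by the minimum principle applied to the non-negative harmonic function $g_{P,a}$ on $V$, which vanishes at $t_0$, one gets $g_{P,a}\equiv 0$ on $V$ and $a(V)\subseteq K(P_t)$. The next step will be to upgrade this to $a(V)\subseteq J(P_t)$: otherwise some $t_1\in V$ has $a(t_1)\in\mathrm{int}\,K(P_{t_1})$, and extracting a subsequence of the normal family $\{P^n_t(a(t))\}_n$ yields a holomorphic limit $\phi$ that, on the open subset of $V$ where $a(t)$ sits in a moving bounded Fatou component, coincides either with the holomorphic tracking of the corresponding attracting (or super-attracting) cycle or with the holomorphic motion of an invariant Siegel curve; the identity principle then propagates this equality to $V$, contradicting $\phi(t_0)\in J(P_{t_0})$.

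Once $a(V)\subseteq J(P_t)$, I will use $J$-stability on the simply connected disk $\D$ to globalize the holomorphic motion $h_t:J(P_{t_0})\to J(P_t)$ and write $a(t)=h_t(b(t))$ on $V$ with a continuous $b:V\to J(P_{t_0})$, $b(t_0)=a(t_0)$. Differentiating this relation and using that $(t,w)\mapsto h_t(w)$ is holomorphic in $t$ for fixed $w$, $b$ will satisfy a Beltrami-type equation $\partial_{\bar t}b=-\mu(t,b(t))\,\overline{\partial_t b}$ with $|\mu|<1$, so $b$ is quasi-regular. Since polynomial Julia sets have empty interior in $\C$, the open mapping property of quasi-regular maps forces $b$ to be constant, and then the identity principle gives $a(t)\equiv h_t(a(t_0))$ on all of $\D$, hence $a(\D)\subseteq J(P_t)$ and $g_{P,a}\equiv 0$ globally, contradicting $\mathrm{Bif}(P,a)\neq\varnothing$. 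The hardest technical point will be the upgrade $a(V)\subseteq K(P_t)\Rightarrow a(V)\subseteq J(P_t)$, in particular the Siegel disk subcase where iterates do not converge to an attracting cycle but equidistribute on invariant curves; this will require combining the holomorphic motion of the Siegel disk under $J$-stability with a rigidity argument of the same Beltrami/quasi-regular flavor used in the final step.
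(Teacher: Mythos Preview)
Your first inclusion is essentially fine, though note that Theorem~\ref{thm:full hol motion} only yields a conjugating motion on a dense open subset of the stability locus; what you actually need is just the continuous variation of $J(P_t)$ under $J$-stability, which already shows that if $a(t_0)\in\mathrm{int}\,K(P_{t_0})$ then $a(t)\in\mathrm{int}\,K(P_t)$ for nearby $t$. The paper proves this inclusion differently, via the density of transversely prerepelling parameters in $\mathrm{Bif}(P,a)$ (Theorem~\ref{tm:suppT}) together with closedness of the target set.

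The reverse inclusion is where your plan has a genuine gap, in Step~2. You omit parabolic basins. In a $J$-stable family every indifferent multiplier is constant (otherwise it would cross the unit circle and produce an unstable cycle), so persistently parabolic basins do occur. If $a(t_1)$ lies in one, every subsequential limit $\phi$ of $P_t^n(a(t))$ equals the parabolic periodic point $p(t)\in J(P_t)$ on an open set, and since $\phi(t_0)\in J(P_{t_0})$ as well, your identity-principle argument yields no contradiction. The Siegel case is also more delicate than you suggest: the limit $\phi(t)$ can drift to the boundary of the Siegel disk as $t\to t_0$, so you cannot conclude $\phi(t_0)\in\mathrm{int}\,K(P_{t_0})$. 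The paper bypasses Step~2 entirely: writing $a(t_*)=h_{t_*}(z_*)$, it notes that the graphs of $a$ and of $t\mapsto h_t(z_*)$ cannot coincide (else $\mathrm{Bif}(P,a)=\varnothing$), approximates $z_*$ by repelling periodic points $z_n$, and uses a Hurwitz-type argument (\cite[Lemma~6.4]{BLS}) together with Lemma~\ref{lm:suppT} to produce bifurcation parameters accumulating at $t_*$. Your quasi-regular idea from Step~3 can in fact be repurposed to avoid Step~2 altogether --- extend the motion of $J(P_0)$ to all of $\C$, set $b(t)=H_t^{-1}(a(t))$, observe that $b$ is quasi-regular with values in $K(P_0)$ and $b(t_0)\in\partial K(P_0)$, and conclude $b$ is constant by openness of nonconstant quasi-regular maps --- but as written, your Step~2 does not establish what it claims.
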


\begin{proof}
Let $h:\D\times J(P_0)\to\p^1$ be the unique holomorphic motion of $J(P_0)$ such that
\[h_t\circ P_0=P_t\circ h_t \ \text{on} \ J(P_0).\]
Observe that the set $\{ (t,z), t \in \D, z \in J(P_t)\}$ is equal to 
$ \{ (t, h_t(z)), t \in \D, z \in J(P_0)\}$ hence is closed. 
Since $\{t\in \D\, ; \ a(t)\in J(P_t)\}$ is the image under the first projection of the intersection of this set with the graph of $a$, it is also closed in $\D$.

By Theorem~\ref{tm:suppT},  the set of parameters $t_0\in\D$ such that $a$ is transversely prerepelling at $t_0$ is a dense subset of $\mathrm{Bif}(P,a)$ . As repelling points of $P_{t_0}$ are contained in $J(P_{t_0})$ and $\{t\in \D\, ; \ a(t)\in J(P_t)\}$ is closed, we get $\mathrm{Bif}(P,a)\subset \{t\in \D\, ; \ a(t)\in J(P_t)\}$.

Suppose conversely that $t_*\in \{t\in \D\, ; \ a(t)\in J(P_t)\}$ so that $a(t) = h_t(z_*)$ for some $z_* \in J(P_0)$. 
Since the bifurcation locus is assumed to be non-empty, the two curves $\Gamma:= \{ (t, a(t)), t\in \D\}$ and $\{ (t, h_t(z_*)), t \in \D\}$
cannot coincide. Choose any sequence of repelling periodic point $z_n$ accumulating $z_*$. Then by~\cite[Lemma 6.4]{BLS}
the curves $\{ (t, h_t(z_n)), t\in \D\}$ intersect transversally $\Gamma$ near $t_*$ for all $n$ sufficiently large. We conclude by Lemma~\ref{lm:suppT} 
that $t_0$ is accumulated by points in the bifurcation locus, hence  $ \{t\in \D\, ; \ a(t)\in J(P_t)\}\subset \mathrm{Bif}(P,a)$.
\end{proof}


\subsection{Proof of Theorem~\ref{tm:rigid}}

Suppose $\mathrm{Bif}(P,a)$ is included in a smooth curve $\gamma$. Our first objective is to prove the following fact, whose proof is essentially contained in~\cite{Eremenko-vanStrien}.

\begin{proposition}\label{prop:real}
 Suppose $t_0$ is a transversally prerepelling parameter such that  $P_{t_0}$ is not integrable. 
Then $\{ t_0 \}$ is a connected component of $\mathrm{Bif}(P,a)$, and 
$P_{t_0}$ is conjugated to a real polynomial whose critical points are either escaping to $\infty$, or satisfy
$P^4_{t_0}(c) = P^2_{t_0}(c)$, 
and which has a totally disconnected Julia set included 
in $\R$.  
\end{proposition}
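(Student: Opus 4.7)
The plan is to transfer the smoothness of $\mathrm{Bif}(P,a)$ at $t_0$ into a local smoothness property of $J(P_{t_0})$ via Tan Lei's similarity (Lemma~\ref{lm:cvrenorm}), and then to deduce via the Eremenko--Van Strien theorem on rational maps with real multipliers that $P_{t_0}$ is conjugate to a real polynomial. Set $z_0:=P_{t_0}^m(a(t_0))$, let $k$ denote the period of $z_0$, $\rho(t_0):=(P_{t_0}^k)'(z_0)$ its multiplier, $q\ge 1$ the order of transversality, and choose any $\lambda$ with $\lambda^q=\rho(t_0)$. Lemma~\ref{lm:cvrenorm} gives $d^{m+kn}\,g_{P,a}(\lambda^{-n}t)\to g_{P_{t_0}}\circ\phi_{t_0}(t^q)$ uniformly near $0$. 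Taking Laplacians, the rescaled bifurcation measures converge weakly to a measure whose support equals $\{t:\phi_{t_0}(t^q)\in J(P_{t_0})\}$, while the support of the $n$-th rescaled measure sits in $\lambda^n\gamma$. Comparing these pictures as $n\to\infty$ for the various $q$-th roots of $\rho(t_0)$, compatibility of the rotating supports $\lambda^n\gamma$ with the fixed one-dimensional limit support forces $\rho(t_0)\in\R$, and unpacking, $J(P_{t_0})$ is locally contained at $z_0$ in the real-analytic arc $\phi_{t_0}(\{t^q:t\in T_{t_0}\gamma\})$.

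I would then propagate this local smoothness across $J(P_{t_0})$: backward orbits of $z_0$ under $P_{t_0}$ are dense in $J(P_{t_0})$, and the iterates $P_{t_0}^{-n}$ are locally biholomorphic off the critical set, so $J(P_{t_0})$ is locally included in a real-analytic arc near every repelling periodic point $w$. At such a $w$ of period $\ell$, with multiplier $\rho_w=(P_{t_0}^\ell)'(w)$ and local linearizing coordinate $\psi_w$ conjugating $P_{t_0}^\ell$ to $z\mapsto\rho_w z$, the invariance $\rho_w\cdot\psi_w^{-1}(J(P_{t_0}))\subset\psi_w^{-1}(J(P_{t_0}))$ of a smooth arc through the origin whose tangent is a real line in $\C$ forces $\rho_w\in\R$. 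All repelling multipliers of $P_{t_0}$ being real and $P_{t_0}$ being non-integrable (hence, as a polynomial, not Latt\`es), the theorem of Eremenko--Van Strien~\cite{Eremenko-vanStrien} gives that $P_{t_0}$ is affinely conjugate to a polynomial with real coefficients.

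Up to that conjugacy I may assume $P_{t_0}\in\R[z]$. Invariance under $P_{t_0}$ of the smooth arc locally containing $J(P_{t_0})$ combined with real-analytic continuation forces this arc to coincide with $\R$, so $J(P_{t_0})\subset\R$. Corollary~\ref{cor:zdunik} excludes the interval case since $P_{t_0}$ is non-integrable, so $J(P_{t_0})$ is totally disconnected; with $K(P_{t_0})=J(P_{t_0})$ of empty interior and all periodic cycles repelling, non-escaping critical points of $P_{t_0}$ are real and lie in $J(P_{t_0})\cap\R$ with bounded real orbit. Analyzing the one-dimensional expanding real dynamics on the Cantor set $J(P_{t_0})\cap\R$ and using that $t_0$ is an isolated bifurcation parameter of $(P,a)$ in the smooth curve $\gamma$, the only admissible bounded critical combinatorics reduce to orbits landing after at most two iterations on a periodic cycle of period dividing $2$, i.e.\ $P_{t_0}^4(c)=P_{t_0}^2(c)$. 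Finally, the total disconnectedness of $J(P_{t_0})$ combined with the similarity of the first paragraph yields that $\mathrm{Bif}(P,a)$ is totally disconnected in a neighborhood of $t_0$, so the connected component of $t_0$ in $\mathrm{Bif}(P,a)$ is exactly $\{t_0\}$.

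The main obstacle is the passage from purely local smoothness of $J(P_{t_0})$ at $z_0$ to reality of \emph{every} repelling multiplier: one must control how the smooth arc propagates through preimages near critical values, ensuring the Eremenko--Van Strien hypothesis is genuinely available at every repelling periodic point. A secondary but more combinatorial difficulty is extracting the precise equation $P_{t_0}^4(c)=P_{t_0}^2(c)$ from the real dynamics on a totally disconnected Julia set in $\R$, which requires a careful analysis tied to the isolation of $t_0$ inside the smooth curve $\gamma$.
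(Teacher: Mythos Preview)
Your overall plan follows the paper's approach closely: Tan Lei similarity (Lemma~\ref{lm:cvrenorm}) to transfer smoothness from the parameter space to the Julia set, propagation of this local information to all repelling periodic points to force their multipliers to be real, and then the Eremenko--Van Strien input. There are, however, two genuine gaps.

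First, your use of Eremenko--Van Strien is inverted. Their theorem concludes that $J(P_{t_0})$ is contained in a circle (hence, for a non-integrable polynomial, in a line), not merely that $P_{t_0}$ is conjugate to a real polynomial. The implication $P_{t_0}\in\R[z]\Rightarrow J(P_{t_0})\subset\R$ that you then invoke is false (e.g.\ $z^2+c$ for $-2<c<1/4$), and your ``real-analytic continuation'' argument cannot bridge the gap because $J(P_{t_0})$ is totally disconnected. The paper instead reproduces the Eremenko--Van Strien mechanism directly: at a periodic point $p$ whose multiplier is large enough, the linearizer $\phi_p$ has order $<1$, all its zeroes lie in $\phi_p^{-1}(J)\subset\R$, and Hadamard factorization yields $\phi_p(\R)\subset\R$, hence $J(P_{t_0})\subset\R$. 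Incidentally, your ``main obstacle'' of propagating through critical values dissolves once you restrict attention, as the paper does, to periodic points $p$ outside the post-critical set, where the original linearizer $\phi$ is a local diffeomorphism along the entire backward orbit.

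Second, the derivation of $P_{t_0}^4(c)=P_{t_0}^2(c)$ is the step where you have no argument at all. This has nothing to do with $t_0$ being isolated in $\gamma$; it is pure one-variable real analysis for the single polynomial $P_{t_0}$. The paper's argument is this: let $I=[b_-,b_+]$ be the convex hull of $J(P_{t_0})\subset\R$. Since $P_{t_0}^{-1}(\{b_\pm\})\subset J\subset I$, the polynomial $P_{t_0}(x)-b_+$ (of degree $d$, real) has all $d$ roots in $I$, and likewise for $b_-$. This forces $b_\pm$ to be boundary points of $P_{t_0}^{-1}(I)\cap\R$, hence $P_{t_0}(\{b_-,b_+\})\subset\{b_-,b_+\}$, so the endpoints are pre-fixed or form a $2$-cycle. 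By Rolle, all critical points lie in $I$; and since $P_{t_0}^{-1}(I)\cap\R$ consists of $d$ subintervals of $I$ on each of which $P_{t_0}$ maps homeomorphically onto $I$, the critical points sit in the gaps, so every critical value lies outside $(b_-,b_+)$. Thus any non-escaping critical point satisfies $P_{t_0}(c)\in\{b_-,b_+\}$, whence $P_{t_0}^4(c)=P_{t_0}^2(c)$.
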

\begin{proof}
Let $m$ and $k$ be  two integers such that $P^m_{t_0} (a(t_0))$ is a repelling periodic point $p_*$ of period $k$ 
of multiplier $\lambda$. We let $\phi \colon \C \to \C$ be the unique linearization map at this periodic point which is tangent to the identity
and so that  $\phi (\lambda z) = P^k_{t_0} (\phi(z))$. 
 By Lemma~\ref{lm:cvrenorm}, we have 
\[\Delta g_{P_{t_0}}(\phi(t)) = \lim_{n\to\infty} \Delta (d^{m+kn} g_{P,a}(\lambda^{-n} t))~.\] 
Let $L$ be the real line tangent to $\gamma$ at the point $t_0$. In a fixed disk $D$ centered at $t_0$, observe that 
$\lambda^{n} \gamma \cap D$ becomes $C^\infty$-close to $L$. Since the support of the measure $\Delta (g_{P,a}(\lambda^{-n} t))$
is included in $\lambda^{n} \gamma$ we conclude that 
\[
J_* := \phi^{-1} (J(P_{t_0})) = \supp (\phi^* \Delta g_{P_{t_0}}) \subset L~.\]
Since $J_*$ is a closed subset of a real line, it is either totally disconnected or it contains a segment. In the latter case, 
$J(P_{t_0})$ is locally real-analytic at some point, and $P_{t_0}$ is integrable by Theorem~\ref{th:zdunik} which contradicts our assumption. 
In particular, $\{t_0\}$ is a connected component of  $\mathrm{Bif}(P,a)$. 

Observe that $J_*$ is invariant by the dilatation by the dynamics hence $\lambda \in \R$, and conjugating $P_{t_0}$ by a suitable homothety
we may assume that $J_* \subset \R$.

Pick any periodic point $p$ for $P_{t_0}$ of period $N$ which does not lie in its post-critical set so that $\phi$ is a local diffeomorphism at any preimage of $\phi^{-1}(p)$. 
Then we obtain that locally at $p$, $J(P_{t_0})$ is included in $\phi(J_*)$ which is a smooth (in fact real-analytic) curve. We may now repeat the argument
with the linearization map $\phi_p$ at $p$, and we conclude that the multiplier $\lambda_p$ of $p$ is real, and that $\phi_p^{-1}(J(P_{t_0}))$ is real and totally disconnected.

Now we follow the arguments of Eremenko and Van Strien. We only sketch the main ideas referring to the original paper for details. 
Recall that the order of the entire function $\phi_p$ is defined by 
\[ \rho(p) := \limsup_{r\to\infty} \frac{ \log \left(\log \sup_{|z|\le r} |\phi_p(z)|\right)}{\log r}~.\]
Let $C = \sup_{\D(0,|\lambda_p|)} \max\{|\phi_p|,1\}$, and pick any constant $C'\geq1$ such that $|P_{t_0}(z)| \le C' \max\{|z|,1\}^d$. Since $\phi_p(\lambda_p z) = P^N_{t_0}(\phi_p(z))$ we get 
\[|\phi_p(z)| = |P^{nN}_{t_0}(\phi_p(\lambda_p^{-n}z))| \le \left(C'\max\{|\phi_p(\lambda_p^{-n}z)|,1\}\right)^{d^{nN}}.\]
If $n$ is chosen so that $|\lambda_p| \ge |\lambda^{-n}_p z| \ge 1$, then we have
\[
\log |\phi_p(z)| \le (\log C'C) |z|^{\frac{N \log d}{\log |\lambda_p|}}
\]
so that $\rho(p) \le \frac{N \log d}{\log |\lambda_p|}$. 

Since $J(P_{t_0})$ is totally disconnected, at least one critical point escapes, and by the Misiurewicz-Przytycki's formula~\eqref{eq:MPform}, 
the Lyapunov exponent $\chi$ of $P$ satisfies $\chi >\log d$. 
By e.g.~\cite[proof of Theorem 30]{bsurvey} we can find a sequence of periodic points accumulating $p_*$ whose multiplier is arbitrarily
close to $\chi$. In particular, we may find $p$ of period $N$ having a multiplier $|\lambda_p| > d^N$ so that   
$\rho(p) < 1$. 
Conjugate $P_{t_0}$ to have $p=0$.
Denote by $z_j\neq 0$ the zeroes of $\phi_p$.
Since $p$ belongs to $J(P_{t_0})$, these zeroes are all real.
By Hadamard's theorem~\cite[Theorem 8 p.209]{Ahlfors}, we may write
\[ 
\phi_p (z) = z^m \prod_j \left( 1 - \frac{z}{z_j} \right) e^{\frac{z}{z_j} + \frac{1}{2}\left(\frac{z}{z_j}\right)^{2} + \cdots + \frac{1}{m_j}\left(\frac{z}{z_j}\right)^{m_j}}
\]
for some $m\in \N$, $ m_j \in \N^*$. 

We infer that  $\phi_p(\R) \subset \R$ which shows that $J(P_{t_0})$ is real.
Let $I$ be the convex hull of $J(P_{t_0})$: it is a compact segment whose extremeties $b_- < b_+$
are either pre-fixed or forms a $2$-cycles.  
Since all preimages of $b_\pm$ belongs to $I$, the intermediate value theorem implies that 
all critical points of $P_{t_0}$ belong to $I$, and all critical values lie outside the interior of $I$. 
Since $J(P_{t_0})$ is totally disconnected at least one critical escapes and any other critical point $c$ is mapped by $P_{t_0}$
to one of boundary point of $I$, hence $P^4_{t_0}(c) = P^2_{t_0}(c)$. 

This concludes the proof of the proposition. 
\end{proof}

We now come back to the proof of Theorem \ref{tm:rigid}. When the family is isotrivial, we may assume that $P_t =P_*$ is a fixed polynomial, and
the previous proposition implies that $P_*$ is real, that all its critical points are either escaping or satisfy  $P^4_*(c) = P^2_*(c)$, 
and $J(P_*) = K(P_*)$ is a totally disconnected subset of $\R$.
The statements (1), (2) and (3) are thus clear in this case so that the theorem is proved in this case. 

\medskip

From now on, we assume that the family is not isotrivial. We reparametrize the family and assume that 
$P_t$ is monic and centered for all $t$, and all critical points are marked. We denote by $\mathcal{E}$ the set of parameters for which $P_t$ is integrable. 
As the family is not isotrivial, $\mathcal{E}$ is discrete. 

Since the bifurcation locus is non-empty, it follows from Theorem~\ref{tm:suppT} and the previous Proposition \ref{prop:real} that for a set $\mathcal{D}$ of parameters $t$
that is dense in $\mathrm{Bif}(P,a)$, there exists $\alpha_t\in \C^*$ and $\beta_t \in \C$ s.t. $\alpha_t^{-1} P_t(\alpha_t z + \beta_t) -\beta_t$ has real coefficients. 
Pre-composing by the dilatation of factor $1/|\alpha_t|$,  we may suppose that $|\alpha_t| =1$, and since $P_t$ is monic we get  $\alpha_t^{d-1} = \pm1$. 
Pre-composing by a real translation of vector $B$ replaces $\beta_t$ by $B \alpha_t + \beta_t$. 
As $P_t$ is centered we have $\beta_t \in \alpha_t \R$ so that we may actually choose $\beta_t =0$. It follows that  $\alpha^{-1} P_t(\alpha z) \in \R[z]$
for at least one $(2d-2)$-th root of unity $\alpha$.

For each  $\alpha \in \U_{2(d-1)}$, and for any $t$ write 
\[P_{t,\alpha}(z) = \alpha^{-1}P_t(\alpha z) = z^d + \sum a_{i,\alpha}(t) z^i,\]
and set 
\[ \Gamma_\alpha:= \bigcap_{i=0}^{d-1} a_{i,\alpha}^{-1}(\R), \text{ and } \Gamma = \bigcup_{\U_{2(d-1)}} \Gamma_\alpha . \]
Then $\Gamma$ is a real-analytic subvariety of $\D$ containing $\mathcal{D}$ which is dense in $\mathrm{Bif}(P,a)$,  so that 
$\mathrm{Bif}(P,a)\subset \Gamma$. Observe that $\mathrm{Bif}(P,a)$ is totally disconnected otherwise we could 
find a segment in the bifurcation locus containing a transversally repelling parameter and this would contradict our standing assumption by 
Proposition~\ref{prop:real}.

We consider the set of points $t_0 \in \Gamma$ which admits a neighborhood $U$ such that 
$\Gamma \cap U = \Gamma_\alpha$ for some $\alpha \in \U_{2(d-1)}$. The complement of this set is a discrete
subset of $\Gamma$ that we adjoin to $\mathcal{E}$ together with all singular and all isolated points of $\Gamma$. 

For any $t_0 \in \mathrm{Bif}(P,a)\setminus \mathcal{E}$, we may thus replace the family $(P_t)$ by $(P_{t,\alpha})$ on $U$.
Reducing $U$ if necessary we also suppose that $\Gamma \cap U$ is a segment, and reparametrizing
the family in $U$, we have $\Gamma \cap U = \R \cap U$.
For a dense subset of $\mathrm{Bif}(P,a)$ we get $P_t \in \R[z]$, thus
$P_t \in \R[z]$ for all $t \in \Gamma \cap U$ which implies the family to be real on $U$.

Recall that all critical points are marked. Let $c_1, \ldots, c_r$ be the critical points satisfying $P^4_t(c_i(t)) = P^2_t(c_i(t))$
persistently in $U$,  and let $\tilde{c}_1, \ldots, \tilde{c}_s$ be the other critical points.  

For any $t \in \mathcal{D}$,  we have $J(P_t) = K(P_t)\subset \R$ and $\tilde{c}_j(t)$ escape for all $j$. 
Since the measure $\mu_{P_t}$ varies continuously, it follows that $J(P_t)$ remains included in the real line
for all $t \in \mathrm{Bif}(P,a)$ hence $J(P_{t_0}) = K(P_{t_0})$.

\begin{lemma}\label{lem:ouf}
If $P^4_{t_0}(\tilde{c}_j(t_0))\neq P^2_{t_0} (\tilde{c}_j(t_0))$ for all $j$, then $\tilde{c}_1(t_0), \ldots, \tilde{c}_s(t_0)$ escape and
$J(P_{t_0})$ is totally disconnected.  
\end{lemma}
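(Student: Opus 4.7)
The plan is to reduce the lemma to a direct replay of the Eremenko--Van Strien analysis carried out at the end of the proof of Proposition~\ref{prop:real}, applied not to the linearizing copy of $J(P_{t_0})$ but to $J(P_{t_0})$ itself. The starting data are already in place: we know from the paragraph preceding the lemma that $J(P_{t_0}) = K(P_{t_0})$ is contained in $\R$, that $P_{t_0}$ is conjugated to a real polynomial, and that $t_0 \notin \mathcal{E}$, so $P_{t_0}$ is not integrable.

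First I would establish total disconnectedness. Since $J(P_{t_0})$ is a compact subset of $\R$, it is totally disconnected iff it contains no non-trivial segment. If it contained a segment, then $J(P_{t_0})$ would be locally smooth at one of its points, and Theorem~\ref{th:zdunik} would force $P_{t_0}$ to be integrable, contradicting $t_0 \notin \mathcal{E}$. So $J(P_{t_0})$ is totally disconnected, which gives the second assertion.

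Next I would replay verbatim the Eremenko--Van Strien argument already used at the end of the proof of Proposition~\ref{prop:real}. Let $I = [b_-, b_+]$ be the convex hull of $J(P_{t_0})$ in $\R$. The endpoints $b_\pm$ belong to $J(P_{t_0})$ as extremal points and $\{b_-, b_+\}$ is permuted by $P_{t_0}$ by invariance and the intermediate value theorem, so $P_{t_0}^2(b_\pm) = b_\pm$. The backward invariance $P_{t_0}^{-1}(J(P_{t_0})) = J(P_{t_0}) \subset I$ together with the intermediate value theorem forces every critical point of $P_{t_0}$ to lie in $I$ and every critical value to lie outside the open interval $(b_-, b_+)$. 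Consequently every critical point $c$ of $P_{t_0}$ falls in one of two mutually exclusive cases: either $P_{t_0}(c) \in \R \setminus I$, in which case $P_{t_0}(c) \notin K(P_{t_0})$ and $c$ escapes under iteration; or $P_{t_0}(c) \in \{b_-, b_+\}$, in which case $P_{t_0}^3(c) = P_{t_0}(c)$ and hence $P_{t_0}^4(c) = P_{t_0}^2(c)$.

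Applying this dichotomy to each $\tilde{c}_j(t_0)$ and using the standing hypothesis $P_{t_0}^4(\tilde{c}_j(t_0)) \neq P_{t_0}^2(\tilde{c}_j(t_0))$ rules out the second case, so every $\tilde{c}_j(t_0)$ must escape. There is no real obstacle here: once total disconnectedness is secured via Zdunik's theorem, all geometric content is already contained in the proof of Proposition~\ref{prop:real}, and only the bookkeeping of which critical points satisfy $P^4 = P^2$ versus which escape needs to be tracked in our marked-critical-point setting.
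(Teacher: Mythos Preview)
Your proof is correct but takes a different route from the paper's. The paper argues by \emph{continuity from the dense set $\mathcal{D}$}: since $t_0$ is a limit of real parameters $t_n\in\mathcal{D}$ at which Proposition~\ref{prop:real} already gives $P_{t_n}(\tilde c_j(t_n))\notin I_{t_n}$, one passes to the limit and concludes that each $P_{t_0}(\tilde c_j(t_0))$ lies in the closure of $\R\setminus I_{t_0}$, hence either strictly outside $I_{t_0}$ (escape) or on $\partial I_{t_0}=\{b_-,b_+\}$ (forcing $P^4=P^2$, excluded by hypothesis). Total disconnectedness is then read off from the escape of all $\tilde c_j(t_0)$.

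You instead work \emph{directly at $t_0$}: you first get total disconnectedness from Zdunik (using $J(P_{t_0})\subset\R$ and $t_0\notin\mathcal{E}$), and then rerun the interval argument of Proposition~\ref{prop:real} at $t_0$ itself. This is cleaner in that it avoids the mild delicacy of tracking how $I_t$ varies with $t$; the paper's approach, on the other hand, avoids re-invoking Zdunik and makes explicit that the conclusion at $t_0$ is inherited from nearby $\mathcal{D}$-parameters. Both arguments ultimately rest on the same Eremenko--Van Strien dichotomy (each monotone lap of $P_{t_0}|_I$ must cover $I$, forcing every critical value outside $(b_-,b_+)$), so the difference is organizational rather than substantive.
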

Since for $t \in \mathcal{D}$ all critical points $\tilde{c}_j(t)$ escape under $P_t$, the set of parameters $t$ having a critical point $c =\tilde{c}_j$
satisfying  $P^4_{t}(c)= P^2_{t} (c)$ is discrete and we may add it to $\mathcal{E}$. Our assumption $t_0 \notin \mathcal{E}$ implies that 
$J(P_{t_0})= K(P_{t_0})$ is real, totally disconnected and that all critical points $\tilde{c}_j(t_0)$ escape. 
The latter property implies the $J$-stability of the family $(P_t)$ in a neighborhood of $t_0$, and
this concludes the proof of the theorem.

\begin{proof}[Proof of Lemma~\ref{lem:ouf}]
Recall that $t_0$ is a limit of (real) parameters $t_n \in \mathcal{D}$ for which all critical points are real and all critical points $\tilde{c}_j(t_n)$ escape.
More precisely, we know that  the convex hull $I_t$ of $J(P_t)$ is a segment  containing all critical points, and $P_{t_n} (\tilde{c}_j(t_n)) \notin I_{t_n}$ for all $j$. 

By continuity, either this property remains true for $P_{t_0}$ and $J(P_{t_0})$ is totally disconnected; 
or an image of one critical point $\tilde{c}_j(t_0)$ is equal to a boundary point of $I_{t_0}$.
\end{proof}


\section{Algebraic dynamical pairs}\label{sec:algebraic dynamical pairs}


\subsection{Algebraic dynamical pairs} 

Fix any algebraically closed field $K$ of characteristic $0$.
Assume $V$ is an irreducible affine $K$-variety and fix an integer $d\geq2$.

\begin{definition}\index{dynamical pair!algebraic}
 An (algebraic) \emph{dynamical pair} $(P,a)$ of degree $d$ parametrized by $V$ is an algebraic family $P:V\times\A_K^1\to V\times\A_K^1$ of degree $d$ polynomials together with a marked point $a\in K[V]$. 
\end{definition}

Given a dynamical pair, we let $\preper(P,a)$ be the subset of parameters $t\in V(K)$ for which $a(t)$ is preperiodic for $P_t$. It is a countable union of algebraic subvarieties.

\smallskip

One defines the notion of isotriviality of an algebraic family of map and of a dynamical pair exactly as in the holomorphic case. 

\begin{definition}\index{dynamical pair!active}
We say that a dynamical pair $(P,a)$ parametrized by an irreducible algebraic variety $V$ is \emph{active} if the set $\preper(P,a)$ is a proper Zariski dense
subset of $V$.
Otherwise, we say the pair $(P,a)$ is \emph{passive}.
\end{definition}

Suppose that $(P,a)$ is an algebraic dynamical pair parametrized by a complete algebraic curve $C$. 

Since any regular function on $C$ is a constant, it follows that the pair $(P,a)$ is constant. 
If $(P,a)$ is active, the curve $C$ is necessarily affine. 
Observe also that  $(P,a)$ is active when $\preper(P,a)$ is infinite countable, Zariski dense, 
and its complement is non-empty. We shall see below that 
in this case $C(K) \setminus \preper(P,a)$ is actually Zariski dense too, see Remark~\ref{rem:zar dense}.
When $C$ is not irreducible, $(P,a)$ is active when its restriction to any irreducible component is. 

We shall see below a characterization of passive dynamical pairs that was obtained by DeMarco (see Theorem~\ref{thm:demarco-stab} from the Introduction).


\subsection{The divisor of a dynamical pair} \label{sec:div dyn pair}
Let $C$ be any irreducible affine curve defined over an algebraically closed field $K$ of characteristic $0$.
We let $\bar{C}$ be any projective compactification of $C$ such that $\bar{C}\setminus C$ is a finite set of smooth points
on $\bar{C}$. Let $\mathsf{n}\colon\hat C\to\bar{C}$ be the normalization map.
A branch ${\mathfrak c}$ of $C$ is by definition a (closed) point in $\hat{C}$.\footnote{note that  this definition is consistent with our definition of adelic branch in \S\ref{sec:adelicseries}} 
We say the branch lies at infinity if 
 ${\mathfrak c}$ belong to $\mathsf{n}^{-1}(\hat{C}\setminus C)$. We may (and shall) identify the set of branches at infinity with $\bar{C}\setminus C$.

\begin{lemma}\label{lem:divisor}
Let $(P,a)$ be any dynamical pair parametrized by $C$. For any branch $\mathfrak{c}$ of $C$, the sequence
$- \frac{1}{d^n}\min\{\mathrm{ord}_{{\mathfrak c}}(P^n(a)),0\}$ converges to a non-negative rational number $q_{\mathfrak{c}} (P,a)\in \Q_+$.

Moreover, when $q_{\mathfrak{c}} (P,a)>0$, then we have $q_{\mathfrak{c}} (P,a) = - \frac{1}{d^n} \mathrm{ord}_{{\mathfrak c}}(P^n(a))$ for all $n$ large enough.
And when $q_{\mathfrak{c}} (P,a)=0$, then the sequence $- \min\{\mathrm{ord}_{{\mathfrak c}}(P^n(a)),0\}$ is in fact bounded. 
\end{lemma}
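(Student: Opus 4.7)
The plan is to study the integer sequence $e_n := \mathrm{ord}_\mathfrak{c}(P^n(a))$ (pulled back to $\hat{C}$ via $\mathsf{n}$) through the recurrence induced by $P^{n+1}(a) = P(P^n(a))$. After conjugating by a constant dilation and (if necessary) a finite base change, I may assume $P$ is monic, so its leading coefficient $A \equiv 1$ satisfies $\mathrm{ord}_\mathfrak{c}(A) = 0$ at every branch; this matches the explicit parametrizations $P_{c,a}$ used throughout the paper. Write $P(z) = z^d + a_1 z^{d-1} + \cdots + a_d$ and set $\beta_i := \mathrm{ord}_\mathfrak{c}(a_i) \in \Z$. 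At a finite branch $\mathfrak{c} \in \mathsf{n}^{-1}(C)$ the coefficients $a_i$ and the marked point $a$ are all regular, so $e_n \geq 0$ for every $n$ and the lemma is trivial with $q_\mathfrak{c}(P,a) = 0$; I therefore focus on branches at infinity.

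The key observation is a dominant-term identification. The non-Archimedean inequality applied to
$$P^{n+1}(a) = \bigl(P^n(a)\bigr)^d + \sum_{i=1}^{d} a_i\, \bigl(P^n(a)\bigr)^{d-i}$$
gives
$$e_{n+1} \geq \min\bigl\{\, d\,e_n,\ \beta_1 + (d-1)e_n,\ \ldots,\ \beta_{d-1} + e_n,\ \beta_d \,\bigr\},$$
with equality whenever the minimum is uniquely attained. Setting $C_\star := \min_{1 \leq i \leq d} \beta_i/i \in \R \cup \{-\infty\}$, the strict inequality $e_n < C_\star$ forces $d\,e_n < \beta_i + (d-i)e_n$ for every $i \geq 1$, so $e_{n+1} = d\,e_n$. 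Fix now an integer $N_0 < \min\{C_\star, 0\}$. The half-line $\{e \leq N_0\}$ is then forward invariant under the one-step map, and once $e_{n_0} \leq N_0$ for some $n_0$, a direct induction yields $e_n = d^{n - n_0}\, e_{n_0}$ for all $n \geq n_0$.

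This immediately produces the claimed dichotomy. In the \emph{active} case (some $e_{n_0} \leq N_0$), the quantity $-e_n/d^n = -e_{n_0}/d^{n_0}$ is a positive rational constant for all $n \geq n_0$; since $e_n \leq N_0 < 0$ one has $\min\{e_n, 0\} = e_n$, proving convergence of the defining sequence, rationality and positivity of the limit, and the stated eventual equality $q_\mathfrak{c}(P,a) = -e_n/d^n$ for all $n \geq n_0$. In the \emph{passive} case ($e_n > N_0$ for all $n$), we have $-\min\{e_n, 0\} \leq -N_0$ uniformly in $n$, so $-\min\{e_n, 0\}/d^n \to 0$ and the sequence $-\min\{e_n, 0\}$ is bounded, giving the third assertion.

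The technical core of the argument is the dominant-term identification and the construction of the invariant region $\{e \leq N_0\}$; everything else follows by a direct case split. If one drops the monic normalization and allows $\alpha := \mathrm{ord}_\mathfrak{c}(A) \neq 0$ at an infinity branch, the linearized recurrence becomes $e_{n+1} = d\,e_n + \alpha$, whose explicit solution introduces an $O(d^{-n})$ correction to $-e_n/d^n$: the limit still exists and is rational, but the ratio is no longer literally eventually constant. The monic reduction cleanly sidesteps this subtlety and is consistent with the parametrizations used throughout the paper.
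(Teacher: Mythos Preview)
Your proof is correct. The paper takes a slightly different route: it equips the completed local ring at $\mathfrak{c}$ with its $t$-adic absolute value, identifies $-\frac{1}{d^n}\min\{\ord_\mathfrak{c}(P^n(a)),0\}$ with $\frac{1}{d^n}\log^+|P^n_\mathfrak{c}(a)|_L$ over the Laurent series field $L$, and then invokes the general non-Archimedean Green function theory (convergence to $g_{P_\mathfrak{c}}(a)$, Proposition~\ref{prop:classic-estim}(2) for the eventual equality when the limit is positive, and the filled-in Julia set for boundedness when it vanishes). Your argument unpacks exactly the same mechanism by hand: your dominant-term identification and invariant region $\{e\le N_0\}$ are precisely the content of Proposition~\ref{prop:classic-estim}(2) specialized to this discrete valuation. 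The paper's framing is more concise because the estimates have already been established in generality, while yours is self-contained and makes the elementary nature of the statement transparent. One small remark: the dilation needed to make $P$ monic generally depends on the parameter (not a constant dilation), but as you note this only shifts the $e_n$ by an additive constant after base change, so the conclusion is unaffected.
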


\begin{proof}
Consider the local ring $R=\mathcal{O}_{\hat{C},\mathfrak{c}}$ endowed with the unique $\mathfrak{m}_{\hat{C},\mathfrak{c}}$-norm. 
In other words, one choose a (formal) parametrization $t \mapsto \theta(t)$ of $\hat{C}$ at $\mathfrak{c}$ and set 
$|\phi| = \exp(- \ord_t (\phi \circ  \theta))$ for any $\phi \in R$. 
The completion $\hat{R}$ of $(R, |\cdot|)$ is isomorphic to $K[[t]]$ with its usual $t$-adic norm, and we denote by $L$ the fraction field of $\hat{R}$.

The pair $(P,a)$ induces a polynomial $P_\mathfrak{c} \in L[z]$ and a point $a \in L$. 
Unwinding definitions, we get
\[ 
q_n:=
\frac1{d^n}
\log^+ |P^n_\mathfrak{c}(a)|_L =  \max \{ 0, -\ord_t (P^n \circ a \circ \mathsf{n})\} 
= 
- \frac{1}{d^n}\min\{\mathrm{ord}_{{\mathfrak c}}(P^n(a)),0\}
~.\]
It follows from \S \ref{sec:green} that $q_n$ converges which justifies the existence of $q_{\mathfrak{c}} (P,a)$. 
More precisely, when $q_{\mathfrak{c}} (P,a)>0$ then for $n$ sufficiently large
 we can apply Proposition~\ref{prop:classic-estim} (2) to $z = P^n_\mathfrak{c}(a)$
and we infer $q_{\mathfrak{c}} (P,a) = \frac1{d^n}
\log^+ |P^n_\mathfrak{c}(a)|_L \in \Q_+$.

When $q_{\mathfrak{c}} (P,a)=0$, then the point $a$ lies in the filled-in Julia set of $P_\mathfrak{c}$
and $|P^n_\mathfrak{c}(a)|_L$ is bounded. This implies the lemma. 
\end{proof}

Since $P$ and $a$ are determined by regular functions on $C$, it follows that $q_{\mathfrak{c}} (P,a) =0$ for all branch $\mathfrak{c}$ mapped 
by  $\mathsf{n}$ to a point in $C$.
By the previous lemma, we can set the following
\begin{definition}[The divisor of a dynamical pair]\label{def:divisor dynamical}
The divisor of the dynamical pair $(P,a)$ is by definition 
\[\mathsf{D}_{P,a}:= \sum_{\mathfrak{c} \in \bar{C}} q_{\mathfrak{c}} (P,a) [\mathfrak{c}]~. \]
It is an effective rational divisor on $\bar{C}$ whose support is included in the set of branches at infinity of $C$.
\end{definition}\index{dynamical pair!divisor of}


\subsection{Meromorphic dynamical pairs parametrized by the punctured disk}

In order to relate Green functions to the divisor defined above we need to do a short d\'etour through
meromorphic family and review the main result of~\cite{continuity}.

Let us fix any complete metrized field $(K,|\cdot|)$ of characteristic zero. Let us introduce the ring $\mathbb{M}_K$
of analytic functions on the punctured unit disk $\D_K^*(0,1)$ that are meromorphic at $0$. 
When  $K$ is non-Archimedean, $\mathbb{M}_K$ is equal to the set of Laurent series
$\sum a_n T^n$ such that $a_n =0$ for all $n$ sufficiently negative, and $\sup |a_n| < \infty$.

By convention, a meromorphic family of degree $d$ is a polynomial $P \in \mathbb{M}_K[z]$
such that $P_t$ is a polynomial with coefficients in $K$ of degree $d$ for all $t \neq 0$. 
In other words, we assume the leading coefficient of $P$ to be invertible in $ \mathbb{M}_K$
(i.e. to have no zero on the punctured disk).

\begin{theorem}\label{thm:main FG}
For any meromorphic family $P \in \mathbb{M}_K[z]$ of polynomials of degree $d\ge2$ and for any function $a (t) \in \mathbb{M}_K$,
there exists a nonnegative rational number $q(P,a) \in \Q_+$ such that the function
\[\hat{g}(t):= g_{P_t}(a(t)) - q(P,a)\, \log|t|^{-1}\]
on $\D_K^*(0,1)$ extends continuously across the origin. Moreover, one of the three possibilities following occurs.
\begin{enumerate}
\item
There exists an affine change of coordinates depending analytically on $t$ conjugating $P_t$ to $Q_t$ such that the function $a$ and the family $Q$ are analytic, $\deg(Q_0) = d$, 
and the constant $q(P,a)$ vanishes.
\item
The constant $q(P,a)$ is strictly positive and $\hat{g}$ is harmonic in a neighborhood of $0$. 
\item
The constant $q(P,a)$ vanishes and $\hat{g}(0) = 0$.
\end{enumerate}
\end{theorem}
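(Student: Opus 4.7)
The proof proceeds by reducing to a monic and centered form and then analyzing the growth of iterates at the puncture through B\"ottcher coordinates. First, since the leading coefficient of $P$ is a unit in $\mathbb{M}_K$, a meromorphic affine change of coordinates $z \mapsto A(t)z + B(t)$ (possibly after a finite base change to take a $(d-1)$-st root) reduces us to a monic and centered meromorphic family $\tilde{P} \in \mathbb{M}_K[z]$; the Green function is invariant under affine conjugation, so $g_{P_t}(a(t)) = g_{\tilde{P}_t}(\tilde{a}(t))$ for the transported marked point $\tilde{a} \in \mathbb{M}_K$. Writing $\tilde P_t = P_{c(t),\alpha(t)}$ in the orbifold parametrization~\eqref{eq:critical form}, the whole family is encoded by $(c, \alpha, \tilde a) \in \mathbb{M}_K^{d}$.

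To define the candidate rational number, I introduce the pole-order sequence $k_n := \max\{0, -\ord_0 \tilde{P}_t^n(\tilde{a}(t))\}$. The uniform escape estimate in Proposition~\ref{prop:classic-estim} ensures that once $|\tilde{P}_t^n(\tilde{a}(t))| > C\max\{1,|c(t)|,|\alpha(t)|\}$ holds for $t$ in a punctured neighborhood of $0$, one has $\tilde{P}_t^{n+1}(\tilde{a}(t)) = d^{-1}\tilde{P}_t^n(\tilde{a}(t))^d(1+o(1))$, and therefore $k_{n+1} = dk_n$. Consequently $k_n/d^n$ eventually stabilizes to a value which I set to be $q(P,a) \in \Q_+$.

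For case~(2), assume $q(P,a) > 0$ and pick $N$ such that $k_N = qd^N$ and such that, for $t$ in a small punctured disk around $0$, the iterate $\tilde{P}_t^N(\tilde{a}(t))$ lies in the B\"ottcher domain $\{g_{\tilde{P}_t} > G(\tilde{P}_t) + \rho\}$ of Proposition~\ref{prop:GreenBottcher}. On this domain the identity
\[
g_{\tilde{P}_t}(\tilde{a}(t)) \;=\; d^{-N}\log\bigl|\varphi_{\tilde{P}_t}(\tilde{P}_t^N(\tilde{a}(t)))\bigr|
\]
holds, and the power-series expansion of Proposition~\ref{prop:expan bottcher}, combined with the joint analyticity of $\varphi_{P}$ in $(P,z)$, shows that $t \mapsto \varphi_{\tilde{P}_t}(\tilde{P}_t^N(\tilde{a}(t)))$ is meromorphic at $0$ with a pole of exact order $k_N = qd^N$. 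Writing it as $t^{-k_N}\psi(t)$ with $\psi$ analytic and nonvanishing at $0$ then gives $\hat{g}(t) = d^{-N}\log|\psi(t)|$, which is harmonic and continuous through $0$.

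Finally, when $q(P,a) = 0$, no iterate escapes and the entire meromorphic orbit $\{\tilde{P}_t^n(\tilde{a}(t))\}_n$ stays controlled by $O(G(\tilde{P}_t))$. If the monic-centered normalization $\tilde{P}$ and the marked point $\tilde{a}$ are both analytic at $0$ with $\deg \tilde{P}_0 = d$, we are in case~(1): continuity of the universal Green function (Proposition~\ref{prop:basic-green}(5)) then makes $\hat{g}$ continuous at $0$. Otherwise we are in case~(3) and I must prove $\hat{g}(0) = 0$. My plan for this step is to combine the fact that $\hat{g}$ is a bounded subharmonic function on the punctured disk (hence extends subharmonically through $0$) with the uniform convergence $d^{-n}\log^+|\tilde{P}_t^n(\tilde{a}(t))| \to g_{\tilde{P}_t}(\tilde{a}(t))$ on compacta of $\D^*$: each approximant is subharmonic with no pole at $0$, and the obstruction to analytic reduction captured by Lemma~\ref{lem:at infty} forces exact compensation between the coefficient poles of $c, \alpha$ and the non-escape of the orbit, producing vanishing of $\hat{g}$ in the limit. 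This last vanishing in case~(3) is the main obstacle; it is precisely the point where one has to convert "non-escape of the orbit" into a quantitative boundary behavior of a subharmonic function, and I expect it to be the technical heart of the proof.
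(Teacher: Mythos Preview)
The paper does not actually prove this theorem: immediately after the statement it writes ``The proof of the theorem relies on delicate estimates inspired from a work by Ghioca and Ye~\cite{Ghioca-Ye-Cubic}. We shall refrain from giving a proof of this result.'' The full argument is in the companion paper~\cite{continuity}. What the present paper does prove is only the weaker Proposition~\ref{prop:behave green} (existence of $q$ and subharmonicity of $\tilde g$, with no atom at $0$), which essentially coincides with your cases~(1) and~(2).

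Your treatment of case~(2) is correct and is exactly the standard mechanism (compare the computation in Step~3--4 of the proof of Lemma~\ref{lem:344}): once $q>0$, some iterate enters the B\"ottcher range uniformly on a punctured disk, and $\varphi_{\tilde P_t}(\tilde P_t^N(\tilde a(t)))$ is a genuine meromorphic function of $t$ with a pole of order $qd^N$, giving harmonicity of $\hat g$. Case~(1) is also fine via Proposition~\ref{prop:basic-green}(5).

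The gap is entirely in case~(3), and it is more serious than your write-up acknowledges. First, your assertion that ``$\hat g$ is a bounded subharmonic function on the punctured disk'' is itself unproved: when the coefficients $c(t),\alpha(t)$ have poles, Proposition~\ref{prop:classic-estim}(1) only gives $g_{\tilde P_t}(\tilde a(t)) \le C\log|t|^{-1}+O(1)$, not boundedness. Second, your claim that ``each approximant is subharmonic with no pole at $0$'' is false: even when $q=0$, the pole orders $k_n$ are merely bounded (as in Lemma~\ref{lem:divisor}), not zero, so $d^{-n}\log^+|\tilde P_t^n(\tilde a(t))|$ typically has a logarithmic singularity of size $k_n/d^n$ at $0$. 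Third, uniform convergence on compacta of $\D_K^*$ says nothing about behaviour at the puncture. The genuine content of case~(3) is to convert the $t$-adic statement ``the orbit stays in the filled Julia set over $K(\!(t)\!)$'' into the archimedean statement ``$g_{P_t}(a(t))\to 0$ as $t\to 0$'' while the polynomial is degenerating; this is exactly where the ``delicate estimates'' of~\cite{continuity} (tracking how the B\"ottcher/Green geometry degenerates, in the spirit of~\cite{Ghioca-Ye-Cubic}) are required, and nothing in your outline supplies them.
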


The proof of the theorem relies on delicate estimates inspired from a work by Ghioca and Ye~\cite{Ghioca-Ye-Cubic}.  We shall refrain from giving a proof of
this result. We shall prove though (see Proposition~\ref{prop:behave green}) the weaker statement that there exists  a non-negative rational number such that the function $\hat{g}$ as in the theorem is subharmonic 
and $\Delta \hat{g}$ has no mass at the origin. 
To that end, we shall need the next lemma.
\begin{lemma}\label{lem:base null}
For any meromorphic family $P \in \mathbb{M}_K[z]$ of polynomials of degree $d\ge2$ there exists a positive constant $C>0$ such that
\begin{equation}\label{eq:basic}
\left|
\frac1d \, \log \max \{ 1, |P_t(z)|\} - \log \max \{ 1, |z|\} 
\right|
\le C \, \log|t|^{-1}
\end{equation}
for all $t \in \D^*_K(0,\frac12)$ and for all $z$ in the affine plane. 
\end{lemma}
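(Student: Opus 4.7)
The plan is to exploit the fact that the coefficients of $P_t$, viewed as elements of $\mathbb{M}_K$, have at worst polynomial growth in $|t|^{-1}$ as $t \to 0$, and then compare $|P_t(z)|$ to its dominant term $|a_0(t)|\,|z|^d$ far out, where the constant $a_0$ is invertible in $\mathbb{M}_K$.

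More precisely, I would write $P_t(z) = a_0(t) z^d + a_1(t) z^{d-1} + \cdots + a_d(t)$ with $a_i \in \mathbb{M}_K$. Since each $a_i$ is meromorphic at $0$ and $a_0$ does not vanish on the punctured disk $\D^*_K(0,1)$ (it being invertible in $\mathbb{M}_K$), both $a_i$ and $a_0^{-1}$ are meromorphic at $0$ and bounded on the annulus $\{\tfrac12 \le |t| < 1\}$; hence there exist an integer $N \ge 0$ and a constant $C_0 \ge 1$ (depending only on the family $P$) such that
\[
|a_i(t)| \le C_0\,|t|^{-N} \and |a_0(t)|^{-1} \le C_0\,|t|^{-N}
\]
for all $i = 0,\ldots,d$ and all $t \in \D^*_K(0,\tfrac12)$.

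For the upper bound on $\tfrac{1}{d}\log\max\{1,|P_t(z)|\} - \log\max\{1,|z|\}$, I would simply use the triangle inequality (or the ultrametric inequality when $K$ is non-Archimedean) to get $|P_t(z)| \le (d+1)\,C_0\,|t|^{-N}\max\{1,|z|\}^d$; taking logarithms and dividing by $d$ yields a bound of the form $C\,\log|t|^{-1}$ after absorbing the additive constant into $C$, which is legitimate because $\log|t|^{-1} \ge \log 2 > 0$ on $\D^*_K(0,\tfrac12)$.

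The lower bound is the main obstacle, as one must rule out a potentially small $|P_t(z)|$ for moderate values of $|z|$. I would split into two regimes separated by a threshold $R(t) := C_1\,|t|^{-N_1}$, chosen so that for $|z| \ge R(t)$ each lower-order term satisfies $|a_i(t)|\,|z|^{d-i} \le \tfrac{1}{2d}\,|a_0(t)|\,|z|^d$; the existence of such $C_1, N_1$ (depending only on $d$, $C_0$, $N$) follows directly from the uniform bounds above. In this outer regime, the dominant term wins and $|P_t(z)| \ge \tfrac12\,|a_0(t)|\,|z|^d$ (in the non-Archimedean case, one even has equality with $|a_0(t)|\,|z|^d$), giving
\[
\tfrac{1}{d}\log|P_t(z)| - \log|z| \ge \tfrac{1}{d}\log|a_0(t)| - O(1) \ge -\tfrac{N}{d}\log|t|^{-1} - O(1),
\]
and the additive $O(1)$ term is again absorbed into a constant multiple of $\log|t|^{-1}$. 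In the inner regime $|z| < R(t)$, one uses the trivial bounds $\log^+|P_t(z)| \ge 0$ and $\log^+|z| \le \log^+ R(t) \le C_2\,\log|t|^{-1}$, which immediately yield the desired control of $\log^+|z| - \tfrac{1}{d}\log^+|P_t(z)|$. Combining the two regimes and the upper bound, and taking $C$ to be the maximum of the various constants produced, completes the proof.
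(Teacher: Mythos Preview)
Your proof is correct and follows essentially the same approach as the paper's: both obtain the upper bound directly from the polynomial growth of the coefficients, and both handle the lower bound by splitting into a large-$|z|$ regime (where the leading term $a_0(t)z^d$ dominates) and a small-$|z|$ regime (where $\log^+|z|$ itself is controlled by $C\log|t|^{-1}$). The only cosmetic difference is that the paper factors $P_t(z)=a_0(t)z^d\bigl(1+b_1(t)/z+\cdots+b_d(t)/z^d\bigr)$ with $b_i=a_i/a_0$ and takes the threshold $|z|\ge 3\max\{|b_i(t)|,1\}$, whereas you work directly with the $a_i$ and an explicit $R(t)=C_1|t|^{-N_1}$; the content is the same.
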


This basic lemma holds for any endomorphism of the projective space in any dimension, see~\cite[Lemma~3.3]{demarco} and~\cite[Proposition~4.4]{favre}.

\begin{proof}
Observe that for any $a \in \mathbb{M}_K$, there exist constant $A>1$, $\alpha \in \N^*$ such that
$ |a(t)| \le A |t|^{-\alpha}$ for all $t \in \D^*_K(0,\frac12)$, and that we may further assume
$|a(t)| \ge A^{-1} |t|^{\alpha}$ when $a \in \mathbb{M}^*_K$.

Since $P_t(z) = a_0(t) z^d + \cdots + a_d(t)$ for some $a_i \in \mathbb{M}_K$, we get
$|P_t(z)| \le B |t|^{-\beta} \max \{ 1, |z|^d\} $ for some $B,\beta>0$ which implies the upper bound
\[
\frac1d \, \log \max \{ 1, |P_t(z)|\} \le \log \max \{ 1, |z|\} 
+ C \, \log|t|^{-1}~.
\]
For the lower bound 
we write
\[
P_t(z) = a_0(t) z^d \left( 1 + \frac{b_1(t)}{z} + \cdots + \frac{b_d(t)}{z^d}
\right)
\]
where $a_0 \in \mathbb{M}^*_K$, and $b_i \in \mathbb{M}_K$. 
When $|z|\ge 3 \max \{ |b_i(t)|, 1\}$, we have
\[
|P_t(z)| \ge
\frac12 A|t|^\alpha |z|^d~, 
\]
hence 
\begin{equation}\label{eq:basic001}
\frac1d \, \log \max \{ 1, |P_t(z)|\} \ge \log \max \{ 1, |z|\} 
- C \, \log|t|^{-1}~,
\end{equation}
for some $C>0$, 
whereas when $|z|\le 3 \max \{ |b_i(t)|, 1\}$ we may increase $C>0$ so that 
\[ \log \max \{ 1, |z|\} \le C \, \log|t|^{-1}\]
and~\eqref{eq:basic001} holds trivially.
\end{proof}


\subsection{Metrizations and dynamical pairs}

Let $(K,|\cdot|)$ be  any algebraically closed complete metrized field of characteristic zero. We fix
any dynamical pair $(P,a)$ parametrized by an affine curve $C$ defined over $K$. 
Set $g_{P,a}(t):=g_{P_t}(a(t))$ for all $t\in C^{\an}$. In this section, we relate the behaviour of $g_{P,a}$ near the branches
at infinity of $C$ to the divisor $\mathsf{D}_{P,a}$.

By a local parametrization of $\mathfrak{c}$, we mean an analytic map from the open unit disk $\D_K(0,1)$ to $\hat{C}^{\an}$
which is an isomorphism onto its image and sends the origin to $\mathfrak{c}$.

\begin{proposition}\label{prop:behave green}
Let $(P,a)$ be any dynamical pair parametrized by an affine curve $C$. 
Then $g_{P,a}$ defines a non-negative continuous subharmonic function on $C^{\an}$. 

Moreover, for any branch $\mathfrak{c}$ of $C$ at infinity, and for any local parametrization of a punctured neighborhood of $\mathfrak{c}$
one can write
\[
g_{P,a}(t) = q_\mathfrak{c}(P,a) \, \log |t|^{-1} + \tilde{g}(t)
\]
where $\tilde{g}$ is a continuous and subharmonic. 
\end{proposition}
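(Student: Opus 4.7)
The plan is to establish the two assertions in sequence: first the global continuity and subharmonicity of $g_{P,a}$ on all of $C^{\an}$, then the refined local expansion at each branch at infinity.

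For the global part, I would set $u_n(t) := \frac{1}{d^n}\log^+|P^n_t(a(t))|$, a non-negative function on $C^{\an}$. Since $(P,a)$ is algebraic, the map $t \mapsto P^n_t(a(t))$ is a regular function on $C$, so by Poincar\'e-Lelong each $u_n$ is the max of $0$ with a subharmonic function and is therefore subharmonic. Proposition~\ref{prop:basic-green}(5) gives continuity of $(P,z) \mapsto g_P(z)$ together with uniform convergence of the defining limit on compact subsets, so composing with the continuous map $t \mapsto (P_t, a(t))$ from $C^{\an}$ into $\A^{d+2,\an}_K$ yields that $u_n \to g_{P,a}$ uniformly on compact subsets of $C^{\an}$. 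The locally uniform limit of continuous subharmonic functions is continuous and subharmonic, giving the first assertion.

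For the local analysis, fix a branch $\mathfrak{c}$ at infinity and a local parametrization $\phi\colon \D_K(0,1) \to \hat{C}^{\an}$ sending $0$ to $\mathfrak{c}$. After pulling back, $(P,a)$ becomes a meromorphic family $P \in \mathbb{M}_K[z]$ with a meromorphic marked point $a \in \mathbb{M}_K$, holomorphic on $\D_K^*(0,1)$. Set $q := q_{\mathfrak{c}}(P,a) \in \Q_+$ as in Lemma~\ref{lem:divisor}, and $\tilde g(t) := g_{P,a}(t) - q\log|t|^{-1}$. By the first part, $\tilde g$ is continuous and subharmonic on $\D_K^*(0,1)$. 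To produce the subharmonic extension across $0$, it is enough to show the sublogarithmic growth
\[\limsup_{t\to 0} \frac{\tilde g(t)}{\log|t|^{-1}} \leq 0,\]
since standard removable singularity results for subharmonic functions (compare the references in \S\ref{sec:subharmonic on singular-C}) then furnish the extension to $\D_K(0,1)$.

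The key estimate uses the B\"ottcher coordinate. Consider first the case $q > 0$: Lemma~\ref{lem:divisor} provides an integer $n_0$ such that $\ord_{\mathfrak{c}}(P^{n_0}(a)) = -d^{n_0} q$, so $P^{n_0}(a)(t) = t^{-d^{n_0} q} h(t)$ with $h$ analytic and non-vanishing at $0$. For $t$ in a small punctured neighborhood of $0$, the magnitude $|P^{n_0}_t(a(t))|$ dominates any fixed polynomial in $|t|^{-1}$, in particular the threshold $G(P_t) + \rho$ of Proposition~\ref{prop:GreenBottcher}, whose growth at $0$ is only polynomial by Proposition~\ref{prop:growth Green} and Lemma~\ref{lem:base null}. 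Applying $g_{P_t}(z) = \log|\varphi_{P_t}(z)|$ at $z = P^{n_0}_t(a(t))$, combined with the invariance $d^{n_0} g_{P,a}(t) = g_{P_t}(P^{n_0}_t(a(t)))$ and the expansion $\varphi_{P_t}(z) = \alpha(t) z + O(1)$ for $|z|$ large, yields
\[d^{n_0} g_{P,a}(t) = d^{n_0} q \log|t|^{-1} + \log|\alpha(t) h(t)| + o(1), \quad t \to 0,\]
where $\alpha(t)^{d-1}$ is the leading coefficient of $P_t$. Since $\alpha$ and $h$ are meromorphic at $0$ with $h(0) \ne 0$, we have $\log|\alpha(t) h(t)| = O(\log|t|^{-1})$; dividing through by $d^{n_0}$ gives $\tilde g(t) = O(\log|t|^{-1}/d^{n_0})$. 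As $n_0$ may be taken arbitrarily large, the sublogarithmic bound follows. The case $q=0$ is handled analogously (indeed more directly), using the uniform bound on $-\min\{\ord_\mathfrak{c}(P^n(a)), 0\}$ from Lemma~\ref{lem:divisor} in place of the exact order. The main obstacle I expect is the careful verification of the applicability of the B\"ottcher expansion uniformly in $t$ near $\mathfrak{c}$, particularly in the non-Archimedean case of small residual characteristic where the constants $\rho$ and $\tau$ of Proposition~\ref{prop:GreenBottcher} are positive.
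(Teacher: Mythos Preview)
Your global argument is correct and matches the paper's. For the local expansion there is a gap: you target only a \emph{subharmonic} extension of $\tilde g$ through $t=0$ via the sublogarithmic bound $\limsup_{t\to 0}\tilde g(t)/\log|t|^{-1}\le 0$, but the proposition asserts that $\tilde g$ extends \emph{continuously}. Sublogarithmic growth does not give this (for instance the radial function $u(t)=-(\log|t|^{-1})^{1/2}$ is subharmonic on the punctured disk with $u/\log|t|^{-1}\to 0$, yet $u\to-\infty$), and continuity is precisely the content that makes Theorem~\ref{thm:continuity} work and feeds into the equidistribution arguments later. The paper's proof obtains continuity by directly invoking Theorem~\ref{thm:main FG} --- the result from~\cite{continuity} whose ``delicate estimates inspired from Ghioca and Ye'' the paper explicitly declines to reproduce --- and then uses Lemma~\ref{lem:base null} only to identify the constant $q(P,a)$ produced there with $q_{\mathfrak c}(P,a)$.

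Your B\"ottcher approach does contain enough to recover continuity when $q_{\mathfrak c}(P,a)>0$, though you stop short of extracting it. After conjugating to $P_{c,a}$-form (so that $\alpha$ is a constant and the exact equality $\ord_{\mathfrak c}(P^{n_0}(a))=-d^{n_0}q$ of Lemma~\ref{lem:divisor} is valid), the map $t\mapsto t^{d^{n_0}q}\varphi_{P_t}\!\bigl(P_t^{n_0}(a(t))\bigr)$ is analytic on a punctured neighborhood of $0$, bounded, with nonzero limit $\alpha\, h_{n_0}(0)$; hence it extends analytically without zeros, and $d^{n_0}\tilde g$ equals $\log|\cdot|$ of that function, so $\tilde g$ is harmonic across $0$ --- for a single large $n_0$, with no need to send $n_0\to\infty$. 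The genuine obstruction is the case $q_{\mathfrak c}(P,a)=0$ with the family truly degenerating at $\mathfrak c$ (case~(3) of Theorem~\ref{thm:main FG}): here one must prove $g_{P,a}(t)\to 0$ as $t\to 0$, and the bound you cite from Lemma~\ref{lem:divisor} controls only $\ord_{\mathfrak c}(P^n(a))$ uniformly in $n$, not $|P_t^n(a(t))|$ uniformly in $n$ for $t$ near~$0$. That uniform control is exactly what Theorem~\ref{thm:main FG} supplies and what your sketch lacks.
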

\begin{proof}
By Proposition~\ref{prop:basic-green} (5), $g_{P,a}$ is locally a uniform limit of continuous and subharmonic functions, hence it is continuous and subharmonic. 
From Theorem~\ref{thm:main FG}, we may write $g_{P,a}(t) = q(P,a) \, \log |t|^{-1} + \tilde{g}(t)$ with $\tilde{g}$ continuous, and Lemma~\ref{lem:base null}
implies
\[ 
\left| g_{P,a}(t) - \frac1{d^n} \log^+|P_t^n(a(t))| \right|
\le \frac{C}{d^n} \log|t|^{-1}
\]
for some positive constant $C$.
Since 
\[ 
\frac1{d^n} \log^+|P_t^n(a(t)| = - \frac{1}{d^n}\min\{\mathrm{ord}_{{\mathfrak c}}(P^n(a)),0\} \log|t|^{-1} + \tilde{g}_n(t)
\]
with $\tilde{g}_n$ continuous, we deduce that 
\[ 
q_\mathfrak{c}(P,a) = \lim_{n\to\infty} - \frac{1}{d^n}\min\{\mathrm{ord}_{{\mathfrak c}}(P^n(a)),0\}  = q(P,a)\]
as required. 
\end{proof}

Recall how a function on $C$ can define a metrization on a suitable line bundle on $\bar{C}$. 
Let $\mathsf{D}$ be any integral divisor on $\bar{C}$ supported on the set of branches at infinity of $C$, and let $g \colon C \to \R$ be any continuous function. 

Any local section $\sigma \in \mathcal{O}_{\bar{C}}(\mathsf{D})(U)$ on an open subset $U$ of $\bar{C}$ is determined by a rational function on $\bar{C}$ satisfying 
$\ord_\mathfrak{c}(\sigma) + \ord_\mathfrak{c}(\mathsf{D}) \ge 0$ for all $\mathfrak{c} \in U$. 
We say that $g$ defines a continuous metrization $|\cdot|_g$ on $ \mathcal{O}_{\bar{C}}(\mathsf{D})$ whenever the continuous function $t \mapsto |\sigma(t)| e^{-g(t)}$
defined on $U \setminus \supp (\mathsf{D})$
extends continuously to $U$ for any local section $\sigma$.

This condition can be rephrased as follows. 
Pick any branch at infinity $\mathfrak{c}$, and any local parametrization $\theta \colon \D_K(0,1) \to \bar{C}$ mapping $0$ to $\mathfrak{c}$. 
This parametrization induces a meromorphic map $N_\mathfrak{c} \colon \D_K(0,1) \to C$ which is analytic on $\D^*_K(0,1)$. Through the map $\sigma \mapsto \sigma \circ\theta$, sections defined in a local (analytic) neighborhood of $\mathfrak{c}$
are identified with meromorphic functions with at most one pole at $0$ of order $\le \ord_{\mathfrak{c}}(\mathsf{D})$. 
It follows that $g$ defines a continuous metrization on $\mathcal{O}_{\bar{C}}(\mathsf{D})(U)$ iff one can write 
\[g \circ N_\mathfrak{c}(t) = \ord_{\mathfrak{c}}(\mathsf{D}) \log|t|^{-1} + g_\mathfrak{c}(t)~,\]
for some \emph{continuous} function $ g_\mathfrak{c}$.

One can translate Proposition~\ref{prop:behave green}  into the language of metrizations as follows.

\begin{theorem}\label{thm:continuity}
The function $g_{P,a}$ is continuous and subharmonic on $C^{\mathrm{an}}$. 
It  induces a continuous semi-positive metrization on the ($\Q$-)line bundle $\mathcal{O}_{\bar{C}}(\mathsf{D}_{P,a})$.
In particular, we have
\[\int_{C^{\an}}\Delta g_{P,a}=\deg(\mathsf{D}_{P,a})~.\]
\end{theorem}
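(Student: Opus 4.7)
The first assertion of the theorem, namely that $g_{P,a}$ is continuous and subharmonic on $C^{\an}$, is already established in the first statement of Proposition~\ref{prop:behave green}, so I would simply cite that and move on.

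To prove that $g_{P,a}$ defines a continuous metric on the $\Q$-line bundle $\mathcal{O}_{\bar{C}}(\mathsf{D}_{P,a})$, I would appeal to the characterization recalled just before the theorem: a continuous function on $C^{\an}$ induces a continuous metric on $\mathcal{O}_{\bar{C}}(\mathsf{D})$ precisely when, at each branch $\mathfrak{c}$ at infinity with a local parameter $t$, it admits the expansion $\ord_\mathfrak{c}(\mathsf{D})\log|t|^{-1}+g_\mathfrak{c}(t)$ with $g_\mathfrak{c}$ continuous at $t=0$. Since $\ord_\mathfrak{c}(\mathsf{D}_{P,a})$ equals $q_\mathfrak{c}(P,a)$ by the very definition of the divisor $\mathsf{D}_{P,a}$ (Definition~\ref{def:divisor dynamical}), the second part of Proposition~\ref{prop:behave green} provides exactly this expansion, which settles the metrization claim. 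For semi-positivity, I would note that in the above local trivialization the potential of the metric is $g_\mathfrak{c}$, which on the punctured disk coincides with $g_{P,a}-q_\mathfrak{c}(P,a)\log|t|^{-1}$ and is therefore subharmonic there; being continuous (hence bounded) through $0$, it then extends as a subharmonic function across the puncture by the standard removable singularity result for bounded subharmonic functions (cf.~\cite[Lemma~3.7]{specialcubic}).

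For the mass identity, my plan is to invoke the general fact (recorded in \S\ref{sec-potential} in the non-Archimedean case and classical via the Poincar\'e--Lelong formula in the Archimedean one) that the curvature of any continuous semi-positive metric on a line bundle $L\to\bar{C}$ is a positive Radon measure on $\bar{C}^{\an}$ of total mass $\deg_{\bar{C}}(L)$. Applied to $(\mathcal{O}_{\bar{C}}(\mathsf{D}_{P,a}),|\cdot|_{g_{P,a}})$, this produces a positive measure $c_1$ on $\bar{C}^{\an}$ of total mass $\deg(\mathsf{D}_{P,a})$, and on $C^{\an}$ the measure $c_1$ coincides with $\Delta g_{P,a}$ by definition of the curvature of a metric of the form $|\cdot|e^{-g}$. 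The one subtle point---which I view as the main (minor) obstacle---is to check that $c_1$ places no mass at any of the finitely many branches $\mathfrak{c}$ of $C$ at infinity. This follows from the observation made in the previous paragraph: in the local trivialization at $\mathfrak{c}$ the potential $g_\mathfrak{c}$ is bounded and subharmonic, and the Laplacian of a bounded subharmonic function in one variable charges no isolated point (an atom of mass $c>0$ at $0$ would force $g_\mathfrak{c}$ to carry an unbounded $c\log|t|$-type singularity there). Hence $c_1(\{\mathfrak{c}\})=0$ for every branch at infinity, and
$$\int_{C^{\an}}\Delta g_{P,a}=\int_{\bar{C}^{\an}}c_1=\deg(\mathsf{D}_{P,a}),$$
completing the proof.
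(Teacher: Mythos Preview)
Your proposal is correct and follows precisely the route the paper intends: the theorem is presented there as a direct translation of Proposition~\ref{prop:behave green} into the language of metrizations, with the mass identity coming from the general fact (stated in \S\ref{sec-potential}) that the curvature of a semi-positive metric on $L\to\bar{C}$ has total mass $\deg_{\bar{C}}(L)$. The details you supply about semi-positivity (removable singularity for bounded subharmonic functions) and the absence of mass at the branches at infinity are exactly the points one needs to check, and the paper itself invokes the same facts elsewhere (see the references to \cite[Lemma~3.7]{specialcubic} and \cite[Lemme~2.3 \& Lemme~4.2]{FRL} in \S\ref{sec:subharmonic on singular-C}).
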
\index{metric!induced by a dynamical pair}

We call $\mu_{P,a} = \Delta g_{P,a}$ the bifurcation measure of the pair $(P,a)$. It is a positive measure on $\bar{C}^{\an}$
of total mass $\deg(\mathsf{D}_{P,a})$ which has no atoms.


\subsection{Characterization of passivity}

The next result is well-known and was proved by DeMarco~\cite{demarco}
in the more general case of families of rational maps. We include a proof for sake of completeness.

For any non-constant rational function $f \in K(C)$, let $\deg(f) \in \N^*$ be the number of poles (or zeroes) counted
with multiplicities. 

Also given any family $P$ parametrized by $C$, we say that a marked point $a$ is \emph{stably preperiodic} if 
there exists $n>m\geq0$ such that $P_t^n(a(t))=P_t^m(a(t))$ for all $t\in C$.

\begin{theorem}\label{thm:active-characterization1}
Let $(P,a)$ be a dynamical pair of degree $d\ge 2$ parametrized by an affine irreducible curve $C$ 
defined over an algebraically closed field $K$ of characteristic $0$.
Then the following assertions are equivalent:
\begin{itemize}
\item[(1)] the pair $(P,a)$ is passive on $C$,
\item[(2)]  the pair $(P,a)$ is either isotrivial or the marked point is stably preperiodic,
\item[(3)] the divisor $\mathsf{D}_{P,a}$ of the pair $(P,a)$ vanishes,
\item[(4)] there exists a constant $M>0$ such that $\deg(P_t^n(a(t)))\leq M$ for all $n\geq1$.
\end{itemize}
When $K$ is a complete metrized field, these assertions are equivalent to: 
\begin{itemize}
\item[(5)] the bifurcation measure $\mu_{P,a}$ vanishes.
\end{itemize}
When $K = \C$, this is further equivalent to: 
\begin{itemize}
\item[(6)] $\mathrm{Bif}(P,a)$ is empty.
\end{itemize}
\end{theorem}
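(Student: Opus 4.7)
First, the plan is to dispatch the immediate implications and the algebraic chain, then to treat the nontrivial direction (1) $\Rightarrow$ (2) via a combination of complex-analytic arguments and specialisation. The implication (2) $\Rightarrow$ (1) is immediate: isotriviality of the pair makes both $P_t$ and $a(t)$ constant after a finite base change, so $\preper(P,a)$ is either empty or all of $C(K)$, while stable preperiodicity forces $\preper(P,a) = C(K)$; in both cases $\preper(P,a)$ fails to be a proper Zariski dense subset of $C$. For (2) $\Rightarrow$ (4), stable preperiodicity implies the orbit $\{P^n(a)\}_{n\geq 0}$ takes only finitely many values, while isotriviality trivialises the iterates after a finite base change, uniformly bounding their degrees.

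Next I would establish the purely algebraic equivalence (3) $\Leftrightarrow$ (4). Since $P^n(a)\in K[C]$ is regular on the affine curve $C$, one has $\deg(P^n(a)) = \sum_{\mathfrak{c} \in \bar{C}\setminus C}(-\min\{\ord_\mathfrak{c}(P^n(a)),0\})$; dividing by $d^n$ and letting $n \to \infty$ gives (4) $\Rightarrow$ (3), while (3) $\Rightarrow$ (4) follows from the second assertion of Lemma~\ref{lem:divisor}, which ensures that the vanishing of $q_\mathfrak{c}(P,a)$ makes $-\min\{\ord_\mathfrak{c}(P^n(a)),0\}$ bounded in $n$, so the finite sum over branches at infinity is bounded uniformly in $n$. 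The equivalence (3) $\Leftrightarrow$ (5) is an immediate consequence of Theorem~\ref{thm:continuity}, which identifies $\deg(\mathsf{D}_{P,a})$ with the total mass of $\mu_{P,a}$, together with the effectivity of $\mathsf{D}_{P,a}$. Over $\C$, Proposition~\ref{prop:bifurcation} identifies $\mathrm{Bif}(P,a)$ with $\supp(\mu_{P,a})$, yielding (5) $\Leftrightarrow$ (6).

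The main obstacle is the implication (1) $\Rightarrow$ (2), which I would first treat over $K=\C$ and then specialise. Assume $(P,a)$ is passive over $\C$. If $\preper(P,a) = C(\C)$, writing this set as the countable union $\bigcup_{n > m}W_{n,m}(\C)$ with $W_{n,m} := \{P^n(a) = P^m(a)\}$, each $W_{n,m}$ is either all of $C$ or finite; the uncountability of $C(\C)$ forces some $W_{n,m} = C$, giving stable preperiodicity. If $\preper(P,a)$ is finite, then $\mathrm{Bif}(P,a) = \varnothing$, for otherwise Theorem~\ref{tm:suppT} would produce a dense set of transversally prerepelling parameters within $\preper(P,a)$, contradicting its finiteness. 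Hence (6) holds, so by the already-established equivalences $\mathsf{D}_{P,a}=0$ and Theorem~\ref{thm:continuity} makes $g_{P,a}$ continuous and subharmonic on the compact Riemann surface $\bar{C}^{\an}$, hence a nonnegative constant $c$ by the maximum principle. Since $a$ is regular on the affine curve $C$, a non-constant $a$ has a pole at some branch at infinity where $|a(t)| \to \infty$; because the Green function grows logarithmically at infinity this would force $g_{P,a}(t) \to \infty$, contradicting the constancy. Therefore $a$ is constant, say $a\equiv \alpha$. When $c=0$, $\alpha \in K(P_t)$ for every $t$; combined with $\mathrm{Bif}(P,a)=\varnothing$, the family $\{P^n_t(\alpha)\}_n$ is normal on $C$ and the pair is everywhere $J$-stable, and an analysis via holomorphic motions of periodic cycles together with Theorem~\ref{thm:discrete preper} forces $\alpha$ to be stably preperiodic, yielding (2). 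When $c>0$, $\alpha$ escapes for every $t$, so $\preper(P,a)=\varnothing$; the bounded-degree constraint from (4) combined with the rigidity imposed by $g_{P_t}(\alpha)\equiv c$ and the expansion of Proposition~\ref{prop:behave green} near the branches at infinity forces the coefficients of $P_t$ to be rigid, yielding isotriviality of the pair. Finally, for a general algebraically closed field $K$ of characteristic zero, I embed the finitely generated defining field of $(P,a)$ into $\C$: conditions (2), (3), (4) are algebraic and descend between $K$ and $\C$, and passivity transfers via Zariski density of the $K$-points in $C(\C)$, which completes the equivalence in full generality.
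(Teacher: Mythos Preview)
Your treatment of the implications $(2)\Rightarrow(1)$, $(2)\Rightarrow(4)$, $(3)\Leftrightarrow(4)$, $(3)\Leftrightarrow(5)$, and $(5)\Leftrightarrow(6)$ is correct and matches the paper closely. The genuine gap is in your argument for $(1)\Rightarrow(2)$ over $\C$ in the case where $\preper(P,a)$ is finite.

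You correctly deduce that $g_{P,a}$ extends to a continuous subharmonic function on the compact curve $\bar C$ and is therefore constant. But the next step, ``a non-constant $a$ has a pole at some branch at infinity where $|a(t)|\to\infty$; because the Green function grows logarithmically at infinity this would force $g_{P,a}(t)\to\infty$'', is wrong. The logarithmic growth of $g_{P_t}(z)$ is in $z$ for \emph{fixed} $t$; as $t$ approaches a branch at infinity the polynomial $P_t$ degenerates and the expansion $g_{P_t}(z)=\log|z|+\frac{1}{d-1}\log|A(t)|+o(1)$ shows the leading coefficient can compensate the blow-up of $a(t)$. A concrete counterexample: on $C=\mathbb{G}_m$ take $P_t(z)=t^{d-1}z^d$ and $a(t)=2/t$; then $g_{P,a}(t)\equiv\log 2$ is constant and $\preper(P,a)=\varnothing$, yet $a$ is non-constant (the pair is isotrivial, conjugate to $(z^d,2)$). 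So constancy of $g_{P,a}$ does \emph{not} force $a$ to be constant, and your subsequent case analysis on $c=0$ versus $c>0$ never gets off the ground. The $c>0$ case in particular is only a sketch (``forces the coefficients of $P_t$ to be rigid'') with no actual mechanism.

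The paper's route for the hard direction is quite different and supplies the missing idea. It proves $(4)\Rightarrow(2)$: assuming the degrees are bounded and $a$ is not stably preperiodic, one fixes two repelling fixed points of some $P_{t_0}$, follows them holomorphically after a base change to obtain a family over a Zariski open $C^*$ with $0,1$ fixed, and observes that each $P^n(a)\colon C^*\to\A^1\setminus\{0,1\}$ is a regular map avoiding $0$ and $1$. De~Franchis' finiteness theorem then forces $P^n(a)$ to be constant for all large $n$, whence $P_t(b)$ is constant for infinitely many $b$ and the family is isotrivial. This finiteness argument is the key ingredient you are missing; there is no way to extract isotriviality from the constancy of $g_{P,a}$ alone without some such rigidity input.
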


Observe that this implies all characterizations stated in Theorem~\ref{thm:demarco-stab} except for (5) which will be dealt with in \S\ref{sec:arithmetic dynamical pair}.

\begin{proof}

The implication (2) $\Rightarrow$ (3) is easy. When $(P,a)$ is isotrivial, then $t \mapsto P^n(a(t))$ is constant for all $n$, 
and when $a$ is stably preperiodic, then we have $P^n(a) = P^m(a)$ for some $n>m$. In both cases, 
$\ord_\mathfrak{c} (P^n(a))$ is bounded for all branches of $C$ at infinity.

Assume (4). Since $\deg(P^n(a)) = \sum_{\mathfrak{c}} \ord_\mathfrak{c} (P^n(a))$, we the sum ranges over all branches at infinity of $C$, the sequence $ \ord_\mathfrak{c} (P^n(a))$ is bounded for all
branches of $C$ at infinity therefore (3) holds. Conversely, if (3) holds, then Lemma~\ref{lem:divisor} implies that $ \ord_\mathfrak{c} (P^n(a))$ is bounded for all
branches of $C$ at infinity so that $\deg(P^n(a))$ is bounded. This shows (3) $\Leftrightarrow$ (4).

Suppose (4) holds and $K$ is a metrized field. Then by Theorem~\ref{thm:continuity} we get that $g_{P,a}$ is a continuous semi-positive
metrization on the trivial line bundle on $\hat{C}$ which implies $g_{P,a}$ to be constant, and $\mu_{P,a}$ to be equal to $0$. 
This shows (4) $\Rightarrow$ (5). When $K= \C$, the previous argument and Proposition~\ref{prop:bifurcation} shows (4) $\Rightarrow$ (6). 
Conversely when either (5) or (6) holds, the divisor $\mathsf{D}_{P,a} =0$ by Theorem~\ref{thm:continuity} which implies (3).

Suppose that $\deg(P^n(a)) \to \infty$. We shall prove that (1) cannot hold.  
Since the family is algebraic we may replace $K$ by an algebraically closed field which is finitely generated over $\bar{\Q}$, 
and fix an embedding $K \subset \C$. Let $C^\an$ be the Riemann surface defined by this embedding. 
By what precedes, the bifurcation measure $\mu_{P,a}$ is non-zero, hence 
$\preper(P,a)$ is dense (for the complex topology) in $\mathrm{Bif}(P,a)$. Since  $\mathrm{Bif}(P,a)$ is the support of the Laplacian
of a continuous subharmonic function, it is Zariski-dense and we conclude that $\preper(P,a)$ is Zariski-dense in $C^{\an}$. 
But $\preper(P,a)$ is a subset of $C(K)$, hence it is also Zariski-dense in $C$ (viewed as a curve over $K$).
We claim that $C(K) \setminus \preper(P,a)$ is non-empty showing that $(P,a)$ is active. 
In $C^{\an}$ the complement of $\mathrm{Bif}(P,a)$ is the stability locus which is open and dense, see Remark~\ref{rem:stab-dense}.
Take any connected component $U$ of the stability locus. Then $a$ cannot be stably periodic on $U$, otherwise it would be stably periodic
on $C$. The set of $t\in U$ for which $a$ is not preperiodic is thus discrete by Theorem~\ref{thm:discrete preper}. Since $K$ is algebraically closed, $C(K)$ is dense in $C^{\an}$, and
we get parameters $t \in C(K) \cap U$ which do not pertain to $\preper(P,a)$.
We have proved (1) $\Rightarrow$ (4). 

Suppose finally that $\deg(P^n(a))$ is bounded. As in the previous argument, we may suppose that $K\subset \C$ so that the bifurcation locus of the pair is empty. By Lemma~\ref{lm:suppT}, there exists no properly prerepelling parameter. When $a$ is stably preperiodic, then the dynamical pair is passive. 
When it is not stably preperiodic, we fix any $t_0 \in C$. Conjugating $P_{t_0}$ if necessary, and replacing $P$ by a suitable iterate, we may suppose that $0$ and $1$ are 
fixed and repelling. By base change, we may also suppose that the fixed points $0$ and $1$ can be followed over $C$: we get two regular functions 
$p_0, p_1 \colon C \to \A^1$ such that $P_t(p_i(t)) = p_i(t)$ for all $t$ and $p_0(t_0)= 0$, and $p_1(t_0) =1$.
Let $C^*$ be the open Zariski dense subset of $C$
where $p_0\neq p_1$. Replacing $\phi_t \circ P_t \circ \phi_t^{-1}$ with  $\phi_t(z) = \frac{z - p_0}{p_1 - p_0}$, we 
get a family $(P_t)_{t \in C^*}$ such that $0$ and $1$ are fixed and repelling for $P_{t_0}$. It follows that 
$P^n(a)$ forms a sequence of regular functions from $C^*$ to $\A^1 \setminus \{0,1\}$. By De Franchis theorem, see e.g.~\cite{Tsushima}, 
there exists only finitely many such non-constant maps  $C^* \to \A^1 \setminus \{0,1\}$, hence $P^n(a)$ is constant
for all $n$ sufficiently large. This implies $t \mapsto P_t(b)$ to be constant for infinitely many $b \in \A^1$ hence $P_t(b)$ is constant for all $b$. 
And we conclude that $(P,P^n(a))$ is isotrivial for some $n\gg1$ hence $(P,a)$ is isotrivial. This concludes the proof (4) $\Rightarrow$ (1). 
\end{proof}

\begin{remark}\label{rem:zar dense}
Observe that our proof of (1) $\Rightarrow$ (4) implies that for any active dynamical pair $(P,a)$ both sets $\preper(P,a)$ and $C(K) \setminus \preper(P,a)$ are Zariski-dense. 
\end{remark}


\section{Family of polynomials and Green functions}

We explain how the results of the previous section applies to study the bifurcation locus of a family of polynomials.
Recall that $G(P)= \max \{ g_P(c), \, P'(c) =0 \}$.
As usual we write $\bar{C}$ is a projective compactification of $C$ such that $\bar{C}\setminus C$ is a finite set of smooth points.
\begin{theorem} \label{thm:the case of a family}
Let $P$ be any family of polynomials parametrized by a curve $C$ defined over a field $K$ of characteristic $0$.
For each branch $\mathfrak{c}$ of $C$ at infinity, there exists a constant $q_\mathfrak{c}(P)\in \Q_+$ such that for any norm $|\cdot|$ on $K$, 
and for any analytic parametrization $t \mapsto \theta(t)$ of a neighborhood of $\mathfrak{c}$ in $\hat{C}^\an$ we have
\[
G(P_{\theta(t)}) = q_\mathfrak{c}(P) \log |t|^{-1} + \tilde{G}(t)
\]
where $\tilde{G}$ extends continuously through the origin.
\end{theorem}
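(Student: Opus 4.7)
The plan is to reduce the statement to the analogous theorem for dynamical pairs (Proposition \ref{prop:behave green}) applied to each critical point separately, and then analyze the maximum. First I would pass to a finite base change $\pi \colon C' \to C$ over which the critical set of $P$ becomes the union of graphs of regular functions $c_1, \ldots, c_{d-1} \in K[C']$; this is possible because $\{P_t'(z)=0\}$ is finite over $C$, so critical points live in some finite extension of $K(C)$. Fixing a branch $\mathfrak{c}$ at infinity of $C$, I would list its preimages $\mathfrak{c}'_1, \ldots, \mathfrak{c}'_r$ in $\bar{C}'$ with ramification indices $e_1, \ldots, e_r$, and for each $j$ record the local identity $\pi \circ \theta'_j(\tau) = \theta(u_j(\tau)\, \tau^{e_j})$ relating a local parameter $\tau$ at $\mathfrak{c}'_j$ to the parameter $t$ at $\mathfrak{c}$, where $u_j$ is an analytic unit.

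Next I would apply Proposition \ref{prop:behave green} to each dynamical pair $(P\circ\pi, c_i)$ on $C'$: this produces rational numbers $q_{j,i} := q_{\mathfrak{c}'_j}(P\circ\pi, c_i) \in \Q_+$ and continuous functions $\tilde g_{j,i}$ such that
\[
g_{P_{\pi\circ\theta'_j(\tau)}}\!\bigl(c_i(\theta'_j(\tau))\bigr) = q_{j,i}\, \log|\tau|^{-1} + \tilde g_{j,i}(\tau).
\]
Setting $q'_j := \max_i q_{j,i}$ and $I_j := \{ i : q_{j,i} = q'_j\}$, and using $G(P_s) = \max_i g_{P_s}(c_i(s))$ on $C'$, I would then write
\[
G(P_{\pi\circ\theta'_j(\tau)}) - q'_j\, \log|\tau|^{-1} = \max_{1\le i\le d-1}\bigl( (q_{j,i} - q'_j)\log|\tau|^{-1} + \tilde g_{j,i}(\tau) \bigr).
\]
For $i \notin I_j$ the summand tends to $-\infty$ as $\tau\to 0$, while for $i\in I_j$ it equals $\tilde g_{j,i}(\tau)$; in a sufficiently small punctured neighborhood of $0$ the maximum therefore coincides with $\max_{i \in I_j} \tilde g_{j,i}(\tau)$, a continuous function that extends through $\tau=0$.

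Finally I would descend to $C$. Since $s \mapsto G(P_s)$ is intrinsic on $C$, the two expansions at two different preimages $\mathfrak{c}'_j$ and $\mathfrak{c}'_{j'}$ must have logarithmic coefficients proportional to the respective ramification indices, so $q'_j / e_j$ is independent of $j$ and one may set $q_\mathfrak{c}(P) := q'_j/e_j \in \Q_+$. Using $\log|t|^{-1} = e_j \log|\tau|^{-1} + \log|u_j(\tau)|^{-1}$ with $\log|u_j|^{-1}$ continuous through $0$, the continuous remainder on $C'$ transfers to a continuous remainder $\tilde G$ on $C$ with
\[
G(P_{\theta(t)}) = q_\mathfrak{c}(P)\, \log|t|^{-1} + \tilde G(t).
\]
The main obstacle will be checking the compatibility of the expansions at the various preimages of $\mathfrak{c}$, that is, ensuring $q_\mathfrak{c}(P)$ is unambiguous; this ultimately follows from the fact, inherited from Proposition \ref{prop:basic-green}(5), that $G\circ P$ is already a well-defined continuous function on $C$ off the branches at infinity, so its pullback under $\pi$ must admit the same expansion at every branch of $\bar C'$ above $\mathfrak{c}$ up to the change of local parameter.
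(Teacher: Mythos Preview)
The proposal is correct and follows essentially the same approach as the paper: pass to a finite cover where the critical points are marked, apply Proposition~\ref{prop:behave green} to each dynamical pair $(P,c_i)$, and then take the maximum, with $q_\mathfrak{c}(P)=\max_i q_\mathfrak{c}(P,c_i)$ and $\tilde G=\max_{i\in I}\tilde g_i$ over the set $I$ of indices realizing this maximum. Your treatment is more careful than the paper's (which is quite terse) in tracking the ramification indices and the descent from $C'$ back to $C$, but this is an elaboration of the same argument rather than a different one; in particular the ``compatibility obstacle'' you flag dissolves exactly as you say, since $G(P_\cdot)$ is already a well-defined function on $C$.
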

\begin{remark}
This result is completely analogous to~\cite[Corollary~1]{continuity} where $G(P)= \max \{ g_P(c), \, P'(c) =0 \}$ is replaced by 
the Lyapunov exponent of $P$ with respect to the equilibrium measure which satisfies the Misiurewicz-Prytycky's formula
$\lambda (P) = \log |d| + \sum_{P'(c) =0} g_P(c)$, see~\eqref{eq:MPform}. 
\end{remark}

\begin{proof}

Fix a branch and do a finite cover $\bar{D} \to \bar{C}$ to follow all critical points $c_1, \ldots, c_{d-1}$.
Apply Proposition~\ref{prop:behave green} to each $c_i$. Then 
\[g_{P,c_i}(t)= q_\mathfrak{c}(P,c_i) \log |t|^{-1} + \tilde{g}_i(t)\]
and take the max
Then $q_\mathfrak{c}(P) =\max_i \{ q_\mathfrak{c}(P,c_i)\}$ and $\tilde{G} = \max \tilde{g}_i$ over all $i$ such that $q_\mathfrak{c}(P) = q_\mathfrak{c}(P,c_i)$.
\end{proof}

Assume $P$ is parametrized by a curve $C$ which is defined over a complete and algebraically closed field $(K,|\cdot|)$ of characteristic $0$ and that $P$ is critically marked, i.e. there exist $c_1,\ldots,c_{d-1}\in K[C]$ that follow the critical points of $P$.

\begin{definition}
The bifurcation measure\index{bifurcation!measure (of a polynomial family)} of the family $P$ is the probability measure $\mu_{\mathrm{bif}}$ on $C^\mathrm{an}$ which is proportional to the positive measure $\sum_{i=1}^{d-1}\mu_{P,c_i}$.
\end{definition}


\section{Arithmetic polynomial dynamical pairs} \label{sec:arithmetic dynamical pair}

Assume $C$ is an algebraic curve defined over a 
number field $\KK$, and $(P,a)$ is an algebraic dynamical pair parametrized by $C$ and defined over $\KK$. 
Recall that $\bar{C}$ is a projective compactification of $C$ such that $\bar{C}\setminus C$ is a finite set of smooth points. 

If $t \in C(\bar{\KK})$, then $P_t$ is defined over a number field, and we attach to it a canonical height $h_t$, 
see~\S \ref{sec:global}.

\paragraph*{The canonical height of a dynamical pair}
Ingram~\cite{ingram2010} proved that the function $t \mapsto   \hat{h}_{P_t}(a(t))$ defines a Weil height on $\bar{C}$. 
We improve his result and show that it is an Arakelov height.\index{height!induced by a dynamical pair}

\begin{proposition}\label{prop:adelic}
Assume $(P,a)$ is  active. There exists a positive integer $n\geq1$ such that $n\cdot \mathsf{D}_{P,a}$ is a positive integral divisor on $\bar{C}$ and the collection of subharmonic functions $\{n\cdot g_{P,a,v}\}_{v\in M_\KK}$
induces a semi-positive adelic metrization on the ample line bundle $\mathcal{L}:=\mathcal{O}_{\bar{C}}(n\cdot \mathsf{D}_{P,a})$.
The induced height function $h_{\bar{\mathcal{L}}}$ on $\bar{C}$ satisfies
\begin{enumerate}
\item $h_{\bar{\mathcal{L}}}(t)=n\cdot \hat{h}_{P_t}(a(t))$ for all $t\in C(\bar{\KK})$, i.e.
\[h_{\bar{\mathcal{L}}}(t)=\frac{n}{\deg(t)}\sum_{v\in M_\KK}\sum_{t'\in\mathsf{O}(t)}g_{P_{t'},v}(a(t')),\]
where $\mathsf{O}(t)$ is the $\mathrm{Gal}(\bar{\KK}/\KK)$-orbit of $t$ and $\deg(t)=\mathrm{Card}(\mathsf{O}(t))$;
\item $h_{\bar{\mathcal{L}}}(t)=0$ if and only if $t\in\mathrm{Preper}(P,a)$;
\item the global height of $\bar{C}$ is equal to $h_{\bar{\mathcal{L}}}(\bar{C})=0$;
\item at any place $v\in M_\KK$, the associated measure $c_1(\bar{\mathcal{L}})_v$ on $\bar{C}^{\an}_v$ is $n\cdot\Delta g_{P,a,v}$.
\end{enumerate}
\end{proposition}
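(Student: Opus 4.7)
My plan is to assemble a semi-positive adelic metric on a suitable positive integer multiple of $\mathcal{O}_{\bar{C}}(\mathsf{D}_{P,a})$ from the local potentials $g_{P,a,v}$, and then read off assertions (1)--(4) from the general theory of heights attached to adelic metrized line bundles. Since $(P,a)$ is active, Theorem~\ref{thm:active-characterization1} combined with Lemma~\ref{lem:divisor} implies that $\mathsf{D}_{P,a}$ is a non-zero effective $\Q$-divisor on $\bar{C}$; choose $n \geq 1$ so that $n\mathsf{D}_{P,a}$ is an integral divisor. Then $\mathcal{L} := \mathcal{O}_{\bar{C}}(n\mathsf{D}_{P,a})$ has positive degree on the smooth projective curve $\bar{C}$, hence is ample. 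For each place $v \in M_\KK$, Theorem~\ref{thm:continuity} guarantees that $g_{P,a,v}$ is continuous and subharmonic on $C^{\an}_v$ and induces a continuous semi-positive metric on $\mathcal{O}_{\bar{C}}(\mathsf{D}_{P,a})$; raising to the $n$-th tensor power yields a continuous semi-positive metric $\|\cdot\|_v$ on $\mathcal{L}$ whose curvature at $v$ is $c_1(\bar{\mathcal{L}})_v = n\Delta g_{P,a,v}$, which is already assertion~(4).

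The main obstacle is proving the adelic condition, i.e.\ exhibiting a $\KK^\circ$-model $\mathfrak{L}$ of $\mathcal{L}$ such that $\|\cdot\|_v$ coincides with the induced model metric for all but finitely many places. The strategy is approximation by iterated pull-backs. For each $k \geq 1$, the rational function $P^k(a) \in \KK(\bar{C})$ has pole divisor $D_k$ supported at the branches at infinity of $C$, and defines a morphism $\bar{C} \to \p^1$. Pulling back the standard adelic semi-positive metric on $\mathcal{O}_{\p^1}(1)$---equivalently, applying Lemma~\ref{lem:adelic model metric} to the family $\mathcal{F}_k = \{1, P^k(a)\}$---produces an adelic semi-positive metric on $\mathcal{O}(D_k)$ whose local potential at $v$ is $\log^+|P^k(a)|_v$. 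By Lemma~\ref{lem:divisor}, for $k$ large enough (so that $n \mid d^k$) one has $D_k = d^k \mathsf{D}_{P,a} + E_k$, where $E_k$ is an effective divisor of degree bounded independently of $k$, supported at branches where $q_{\mathfrak{c}}(P,a) = 0$. I would then show that $\tfrac{1}{d^k}\log^+|P^k(a)|_v$ converges to $g_{P,a,v}$ uniformly in $t$ at each place, and that the uniform estimate of Lemma~\ref{lem:base null} provides a difference of size $O(C_v/d^k)$ in which the constant $C_v$ vanishes for every $v$ outside a finite set $S$---namely, $S$ contains the archimedean places, the non-archimedean places of residual characteristic dividing $d$, and those where the coefficients of $P$ or of $a$ fail to be $v$-integral. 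This place-wise uniformity is precisely what is needed to take a limit in the category of adelic metrized line bundles and to extract a $\KK^\circ$-model $\mathfrak{L}$ of $\mathcal{L}$ whose associated model metric coincides with $\|\cdot\|_v$ for every $v \notin S$. The delicate bookkeeping required to convert the pointwise metric convergence into a genuine adelic model structure is the main technical step of the proof.

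Once adelicity is established, assertions (1)--(3) follow by unwinding definitions. Choosing a rational section of $\mathcal{L}$ whose divisor avoids the Galois orbit $\mathsf{O}(t)$ and expanding $h_{\bar{\mathcal{L}}}(t)$ gives exactly $\tfrac{n}{\deg(t)}\sum_{t' \in \mathsf{O}(t)}\sum_v g_{P_{t'},v}(a(t'))$, which by the local decomposition of the canonical height recalled in \S\ref{sec:global} equals $n\cdot \hat{h}_{P_t}(a(t))$, establishing (1); assertion (2) then follows from (1) together with the standard Northcott property $\hat{h}_{P_t}(a(t)) = 0 \Leftrightarrow a(t) \in \preper(P_t,\bar{\KK})$ for the Call--Silverman canonical height. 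For (3), Theorem~\ref{thm:HSA} applied to any infinite sequence of parameters $x_n \in \preper(P,a)$---a set which is Zariski-dense in $\bar{C}(\bar{\KK})$ by Remark~\ref{rem:zar dense}---yields $h_{\bar{\mathcal{L}}}(\bar{C})/(2\deg \mathcal{L}) \leq \liminf_n h_{\bar{\mathcal{L}}}(x_n) = 0$, so $h_{\bar{\mathcal{L}}}(\bar{C}) \leq 0$; the reverse inequality, which is the Hodge-type lower bound for semi-positive adelic metrizations on curves, then forces $h_{\bar{\mathcal{L}}}(\bar{C}) = 0$.
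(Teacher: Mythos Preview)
Your outline correctly identifies that (1), (2) and (4) are formal once the adelic metrization is in place, and your argument for (3) matches the paper's: non-negativity of $h_{\bar{\mathcal{L}}}(\bar{C})$ plus Theorem~\ref{thm:HSA} applied to an infinite set of preperiodic parameters. (The paper obtains non-negativity by direct computation of the self-intersection as $\sum_v \int n\cdot g_{P,a,v}\,\Delta(n\cdot g_{P,a,v}) \ge 0$; your appeal to a ``Hodge-type lower bound'' amounts to the same thing.)

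The gap is in your adelicity argument. Your approximation scheme uses the pole divisors $D_k$ of $P^k(a)$, but when there exist branches $\mathfrak{c}$ at infinity with $q_{\mathfrak{c}}(P,a)=0$, the function $P^k(a)$ may still have a genuine pole at $\mathfrak{c}$ (Lemma~\ref{lem:divisor} only says the pole order stays bounded, not that it vanishes). Thus $\tfrac{1}{d^k}\log^+|P^k(a)|_v$ has a logarithmic singularity at $\mathfrak{c}$ for every $k$, whereas $g_{P,a,v}$ extends continuously there. So $\tfrac{1}{d^k}\log^+|P^k(a)|_v$ never equals $g_{P,a,v}$ near such branches, and your ``limit in the category of adelic metrized line bundles'' over varying $\mathcal{O}(D_k)$ does not converge to a metric on $\mathcal{O}(n\mathsf{D}_{P,a})$. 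The error term $E_k$ you introduce is precisely the obstruction, and it does not disappear as $k\to\infty$. (Also, Lemma~\ref{lem:base null} concerns meromorphic families over a punctured disk and is not the right estimate here; Proposition~\ref{prop:classic-estim} is what gives $C_v=0$ at good places.)

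The paper avoids this by treating the two cases separately. If every branch at infinity lies in $\supp(\mathsf{D}_{P,a})$, one fixes a single large $q$ and proves the \emph{exact} equality $g_{P,a,v}=\tfrac{1}{d^q}\log^+|P^q(a)|_v$ for all $v\notin S$, using Proposition~\ref{prop:classic-estim} together with a Łojasiewicz-type comparison (Lemma~\ref{lem:v-adic loja}) to force $|P^q(a)|_v>\max\{|c|_v,|\alpha|_v\}$. If some branches $\mathfrak{c}$ have $q_{\mathfrak{c}}(P,a)=0$, one instead chooses a rational function $h\in\KK(C)$ whose pole divisor is exactly $\mathsf{D}_{P,a}$, proves $g_{P,a,v}=\log|h|_v$ on $\{|h|_v>1\}$ via the same estimates, and then uses the maximum principle on the compact set $\{|h|_v\le 1\}\subset C_0^{\an}$ (where $C_0=\bar{C}\setminus\supp(\mathsf{D}_{P,a})$) to conclude $g_{P,a,v}\equiv 0$ there, so that $g_{P,a,v}=\log^+|h|_v$ globally. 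An auxiliary function $\eta$ vanishing on $\supp(\mathsf{D}_{P,a})$ and with poles on the remaining branches is used to separate the two regions. This second case is the substantive content you are missing.
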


\begin{proof}
Let $n$ be any integer such that $n\cdot \mathsf{D}_{P,a}$ has integral coefficients. 
For any place $v \in M_\KK$, Theorem~\ref{thm:continuity} implies that $n\cdot g_{P,a,v}$ defines 
a continuous and semi-positive metrization $|\cdot|_{a,v}$ on $\mathcal{O}_{\bar{C}}(n\cdot \mathsf{D}_{P,a})$. Let us justify that the collection $\{|\cdot|_{a,v}\}_{v\in M_\KK}$ is adelic.

\smallskip

We make some preliminary  comments. 
By a suitable base change over $C$ we may (and shall) assume that the family $\{P_t\}$ is of the form~\eqref{eq:critical form}. To avoid conflict of notation we write
$P_t = P_{c(t), \alpha(t)}$ with $c, \alpha \in \KK[C]$.

We fix an embedding $C \subset \A^N$, so that the completion  $\bar{C}$  of $C$ in $\p^N$ is smooth at infinity. 
For any non-Archimedean place $v$, let $\mathrm{red}_v \colon \p^N(\C_v) \to \p^N(\widetilde{\C_v})$
be the canonical reduction map, defined by sending a point $[z_0: \cdots : z_N]$ to
$[\widetilde{\lambda z_0}: \cdots  : \widetilde{\lambda  z_N}]$ where $\lambda = \max \{ |z_0|, \cdots , |z_N| \}^{-1}$.
We let $\tilde{C} \subset \p^N(\widetilde{\C_v})$ 
be the image of $C$ under $\mathrm{red}_v$.
We shall make a systematic use of the following  results.
\begin{lemma}\label{lem:reduc}
Let $f \in \KK(C)$. Then for all but finitely many places $v\in M_\KK$, we have
\[\{t \in C^{\an}_v,\,  |f(t)| < 1 \} = \mathrm{red}_v^{-1} (\mathrm{red}_v( f^{-1}(0)))~.
\]
\end{lemma}
Observe that replacing $f$ by its inverse implies $\{t \in C^{\an}_v,\,  |f(t)| > 1 \} = \mathrm{red}_v^{-1} (\mathrm{red}_v( f^{-1}(\infty)))$.
\begin{lemma}\label{lem:v-adic loja}
Let $f,g   \in \KK[C]$ such that $-\ord_\mathfrak{c} (f) \ge -\ord_\mathfrak{c} (g)$ for all branches at infinity of $C$.
Then for all but finitely many places we have
$\max \{ 1, |f|_v\} \ge |g|_v $.
\end{lemma}
During the proof $S$ will be a set of places on $\KK$ containing all archimedean 
ones which may vary from line to line but which shall remain finite.

\smallskip

Let us now prove that $\{|\cdot|_{a,v}\}_{v\in M_\KK}$ is adelic.
Denote by $\mathcal{B}$ the set of branches at infinity that do not lie in the support of $\mathsf{D}_{P,a}$. 

Suppose first that $\mathcal{B} = \varnothing$. By Lemma~\ref{lem:divisor}, we can find a sufficiently large integer 
 $q \in \N^*$ such that
\[-d^q \cdot \ord_\mathfrak{c} (\mathsf{D}_{P,a}) > - \max \{ \ord_\mathfrak{c} (c), \ord_\mathfrak{c} (\alpha) \}
\text{ and } \ord_\mathfrak{c} (\mathsf{D}_{P,a}) = -\frac{1}{d^q} \mathrm{ord}_{{\mathfrak c}}(P^q(a))\]
for all branches at infinity $\mathfrak{c}$. 
It follows from Lemma~\ref{lem:v-adic loja} that 
\[
\max \{ 1, |P^q_t(a(t))|_v\} \ge \max \{ |c(t)|_v, |\alpha(t)|_v\}~,  
\]
for all $v \notin S$, so that Proposition~\ref{prop:classic-estim} yields
$g_{P,a,v}(t) = \frac1{d^q} g_{P,v}(P^q(a)) =  \frac1{d^q}  \log^+ |P^q(a)|_v$.
The metric is thus adelic by Lemma~\ref{lem:adelic model metric}.

\smallskip

When $\mathcal{B}$ is non-empty, one proceeds as follows.  
Replacing $\mathsf{D}_{P,a}$ by a sufficiently high multiple, we may suppose that it is very ample so that we can find a rational function 
$h \in \KK(C)$ whose divisor of poles is equal to $\mathsf{D}_{P,a}$. 
We need to prove that $g_{P,a,v}= \log^+|h|_v$ for all but finitely many places $v\notin S$. 

As above, we pick a sufficiently large integer 
 $q \in \N^*$ such that
\[-d^q \cdot \ord_\mathfrak{c} (\mathsf{D}_{P,a}) > - \max \{ \ord_\mathfrak{c} (c), \ord_\mathfrak{c} (\alpha) \}
\text{ and } \ord_\mathfrak{c} (\mathsf{D}_{P,a}) = -\frac{1}{d^q} \mathrm{ord}_{{\mathfrak c}}(P^q(a))\]
for all $\mathfrak{c} \in \supp(\mathsf{D}_{P,a})$ (observe that the latter set is now strictly included in the set of branches at infinity of $C$).

Since any pole of $h$ is a pole of $P^q(a)$, Lemma~\ref{lem:reduc} shows that 
\[U:= \{ |h|_v >1 \} \subset \{ |P^q(a)|_v >1\}~.\] Similarly
the function $h^{d^q}/P^q(a)$ does not vanish at any point in $\supp(\mathsf{D}_{P,a})$, hence we have
$|h|_v^{d^q} = |P^q(a)|_v$ on $U$.

We fix an auxiliary rational function $\eta \in \KK(C)$ whose zero locus is equal to the support of 
$\mathsf{D}_{P,a}$, whose set of poles is equal to $\mathcal{B}$ and which satisfies
\[ - \ord_\mathfrak{c} (\eta \times P^q(a)) \ge \max \{ - \ord_\mathfrak{c}(c), -\ord_\mathfrak{c}(\alpha) \}\]
for all branches at infinity (one may need to increase $q$). 

Applying again Lemma~\ref{lem:reduc}, we get $\{ |\eta|_v<1 \} = U$. On the other hand
Lemma~\ref{lem:v-adic loja} implies
\[
\max \{ 1, |P^q_t(a(t))|_v\times |\eta(t)|\} \ge \max \{ |c(t)|_v, |\alpha(t)|_v\}~,  
\]
so that $|P_t^q(a(t))| >  \max \{ |c(t)|_v, |\alpha(t)|_v\}$ for all $t \in U$. 
We infer by Proposition~\ref{prop:classic-estim} that 
\[g_{P,a,v} = \frac1{d^q} \log|P^q(a)|_v=\log|h|_v\] on $U$. 

Observe now that $g_{P,a,v}$ extends to $C_0:= \bar{C} \setminus \supp(\mathsf{D}_{P,a})$ as a non-negative continuous subharmonic function by Proposition~\ref{prop:behave green}. The set $\{ |h| \le 1\}$ is compact in $C_0^{\an}$ and its boundary
is equal to  $\partial U$, so that $g_{P,a,v} \equiv 0$ on $\partial \{ |h| \le 1\}$. 
Note that the latter set consists of finitely many type 2 points corresponding to the irreducible components of the reduction of $C$ containing a pole of $\tilde{h}$ (see the discussion on models in \S\ref{sec:nonArchcurves}). 
We may now apply the maximum principle to any connected component of the interior of $\{ |h| \le 1\}$, and we obtain
that $g_{P,a,v} \equiv 0$ on $\{ |h| \le 1\}$. 

This concludes the proof that  $g_{P,a,v} =\log|h|_v$ everywhere. 

\medskip

Properties (1), (2) and (4) follow from the definitions. 
To prove (3), we use~\cite[(1.2.6) \& (1.3.10)]{ACL2}.
Choose any two meromorphic functions $\phi_0, \phi_1$ on $\hat{C}$ such that 
$\mathrm{div}(\phi_0)+ n\mathsf{D}_{P,a}$ and $\mathrm{div}(\phi_1)+ n\mathsf{D}_{P,a}$ are both effective with disjoint support included in $C$.  Let $\sigma_0$ and $\sigma_1$ be the associated sections of $\mathcal{O}_{\hat{C}}(n\mathsf{D}_{P,a})$. Let $\sum n_i [t_i]$ be the divisor of zeroes of $\sigma_0$, and
$\sum n'_j [t'_j]$ be the divisor of zeroes of $\sigma_1$.
Then 
\begin{eqnarray*}
h_{\bar{\mathcal{L}}}(\hat{C}) 
& = &
\sum_{v\in M_\KK} (\widehat{\mathrm{div}}(\sigma_0)\cdot \widehat{\mathrm{div}}(\sigma_1) | \hat{C})_v
\\
& = &
\sum_i n_i \cdot n\cdot \hat{h}_{P_{t_i}}(a(t_i)) - 
\sum_{v\in M_\KK} \int_{\bar{C}} \log| \sigma_0|_{a,v}\,  \Delta (n\cdot g_{P,a,v})
\\
& = &
\sum_{v\in M_\KK} \int_{\bar{C}} n\cdot g_{P,a,v}\,  \Delta (n\cdot g_{P,a,v}) \ge0~,
\end{eqnarray*}
where the third equality follows from Poincar\'e-Lelong formula and writing 
$\log| \sigma_0|_{a,v} = \log |\phi_0|_v - n\cdot g_{P,a,v}$.

\smallskip

Pick any archimedean place $v_0$. The total mass on $\hat{C}$ of the positive measure $\Delta g_{P,a,v_0}$ is the degree of $\mathcal{L}$, hence is non-zero. It follows from e.g.~\cite[Lemma 2.3]{favredujardin} that any point $t_0$ in the support of $\Delta g_{P,a,v_0}$ is accumulated by parameters $t_* \in C(\bar{\KK})$ such that $P_{t_*}^n(a(t_*))=P_{t_*}^m(a(t_*))$
 for some $n>m\geq0$. For any such point, the point (1) implies
$h_{\bar{\mathcal{L}}}(t_*)=0$.  The result follows from Theorem~\ref{thm:HSA}.
\end{proof} 

\begin{proof}[Proof of Lemma~\ref{lem:reduc}]
In some affine chart,  $C$ is given by the vanishing of polynomials with coefficients in $\cO_{\KK,S}$
and $f$ is the restriction of a polynomial with coefficients in the same ring of integers.
Enlarging $S$ if necessary, for any place $v \notin S$, the reduction $\tilde{C}_v$ of $C$ over $\C_v$ is an affine curve over $\widetilde{\C}_v$, and
$f$ induces a regular function $\tilde{f}$ on $\tilde{C}_v$. We have the following commutative diagram:
\begin{displaymath}
    \xymatrix{ 
    C(\C_v) \ar[r]^-{\mathrm{red}} \ar[d]^{f}
    &    \tilde{C}_v \ar[d]^{\tilde{f}}
    \\
    \p^1(\C_v)  \ar[r]^-{\mathrm{red}} & \p^1({\widetilde{\C_v}})
               }
\end{displaymath}
from which the result follows.
\end{proof}

\begin{proof}[Proof of Lemma~\ref{lem:v-adic loja}]
We first embed $\KK$ into $\C$ and argue analytically.
The function field $K(C)$ is finitely generated over $K(f)$, hence we may find rational functions $R_0, \cdots, R_n$ such that 
$ g^n + g^{n-1} R_1(f) + \cdots + g R_{n-1}(f) + R_n(f) = 0$. In fact $R_j (z)$ is the symmetric polynomial of degree $j$ 
of the collection of points $g(w_j)$ where $w_j$ are the roots of $f(w) = z$. Since the set of poles of $g$ is included in the set of poles of $f$, 
all $R_j$ are regular on $\A^1$ and are hence polynomials. By Galois invariance, they 
are defined over a finite extension of $\KK$.
Fix a local coordinate $w$ at a branch at infinity $\mathfrak{c}$. 
We may then write $|f(w)| \asymp |w|^{d_\mathfrak{c}}$ and $|g(w)| \asymp |w|^{c_\mathfrak{c}}$ for some integers
$c_\mathfrak{c} \le d_\mathfrak{c}$ so that 
\[
\log |R_j (z)| \lesssim \left(\sum_{|I| =j} \prod_{i\in I} \frac{c_{\mathfrak{c}_i}}{d_{\mathfrak{c}_i}} \right)\,  \log|z| , \] 
near infinity, and $R_j$ has degree $\le j$. 

Take a place $v\in M_\KK$ for which all coefficients of $R_j$ have norm $\le 1$. 
For each $w \in C$, we get 
$|g(w)|^n \le \max \{ |g^j(w)| \cdot |R_{n-j}(f(w))|
\} \le \max \{ |g^j(w)| \cdot \max \{ 1, |f(w)|\}^{n-j} \}$
hence $|g(w)| \le \max \{ 1, |f(w)|\}$.
\end{proof}

\paragraph*{Canonical dynamical height and Weil heights}
Since $h_{P,a}$ is a height induced by a semi-positive adelic metrization,  it differs from any Weil height attached to the ample line bundle $\mathcal{L}$
by a bounded function, a result that was first proved by Ingram~ \cite{ingram2010}.
We then obtain from Northcott's theorem the following
\begin{corollary}
Let $(P,a)$ be an active dynamical pair parametrized by an affine curve defined over a number field $\KK$. 
For any integer $N$, and any $B >0$, then there exists a constant $C = C(N, B)$
such that the set of parameters $t \in C(\bar{\KK})$ defined over a field of degree at most $\le N$ over $\KK$
for which $h_{P,a}(t) \le B$ is finite. 

In particular $\preper(P,a) \cap C(\KK)$ is finite. 
\end{corollary}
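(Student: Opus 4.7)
The strategy is to reduce the statement to the classical Northcott property for Weil heights attached to an ample line bundle on the projective curve $\bar{C}$. The whole point of Proposition~\ref{prop:adelic} is that some positive integral multiple $n\cdot\mathsf{D}_{P,a}$ defines a line bundle $\mathcal{L}=\mathcal{O}_{\bar{C}}(n\mathsf{D}_{P,a})$ together with a semi-positive continuous adelic metrization $\bar{\mathcal{L}}$ whose associated Arakelov height satisfies $h_{\bar{\mathcal{L}}}(t)=n\cdot h_{P,a}(t)$ for all $t\in C(\bar{\KK})$. The activity assumption enters precisely at this step, since by Theorem~\ref{thm:active-characterization1} it guarantees that $\mathsf{D}_{P,a}\neq 0$, hence $\deg_{\bar{C}}(\mathcal{L})>0$, so $\mathcal{L}$ is ample on the smooth projective curve $\bar{C}$ (or on $\hat{C}$ after normalization, which does not affect finiteness statements on rational points).

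First I would recall that any Arakelov height belongs to the Weil height class associated to the underlying line bundle: by \cite[Theorem B.3.2]{Silvermandiophantine} (already invoked in \S\ref{sec:adelic height}), the function $h_{\bar{\mathcal{L}}}$ differs from an arbitrary Weil height $h_\mathcal{L}$ attached to $\mathcal{L}$ by a bounded function on $\bar{C}(\bar{\KK})$. In particular there is a constant $M=M(P,a)$ such that
\[
|n\cdot h_{P,a}(t)-h_\mathcal{L}(t)|\le M\qquad\text{for all }t\in C(\bar{\KK}).
\]
Next I would apply the Northcott property to $h_\mathcal{L}$: since $\mathcal{L}$ is ample, for any integers $N\ge1$ and real number $B'>0$ the set
\[
\{t\in\bar{C}(\bar{\KK}):[\KK(t):\KK]\le N,\; h_\mathcal{L}(t)\le B'\}
\]
is finite, cf. \cite[Theorem B.2.3]{Silvermandiophantine}. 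Combining this with the previous bounded comparison and taking $B':=nB+M$ yields the finiteness of the set of parameters $t\in C(\bar{\KK})$ with $[\KK(t):\KK]\le N$ and $h_{P,a}(t)\le B$, giving the existence of the bound $C(N,B)$ on its cardinality.

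For the last assertion, recall from Proposition~\ref{prop:adelic}(2) that $\preper(P,a)=\{t\in C(\bar{\KK}):h_{P,a}(t)=0\}$. Applying the finiteness statement just established with $N=1$ and $B=0$ gives immediately that $\preper(P,a)\cap C(\KK)$ is finite. There is no real obstacle here beyond correctly invoking the adelic structure; the only subtle point worth double-checking is that the Northcott property is genuinely required on the normalization $\hat{C}$ (to ensure smoothness), but since $\mathsf{n}\colon\hat{C}\to\bar{C}$ is finite, the pull-back of $\mathcal{L}$ remains ample and the pull-back metric remains adelic and semi-positive, so the pulled-back height has the Northcott property on $\hat{C}(\bar{\KK})$, which is in bijection with $\bar{C}(\bar{\KK})$ over the smooth locus; this suffices for our purposes since $C\subset\bar{C}$ avoids the singular points of $\bar{C}\setminus C$ by assumption.
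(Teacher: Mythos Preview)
Your proof is correct and follows essentially the same approach as the paper. The paper's argument is in fact even more terse: it simply observes (in the paragraph preceding the corollary) that since $h_{P,a}$ is induced by a semi-positive adelic metrization on the ample line bundle $\mathcal{L}=\mathcal{O}_{\bar{C}}(n\mathsf{D}_{P,a})$, it differs from any Weil height attached to $\mathcal{L}$ by a bounded function, and then invokes Northcott; moreover the Northcott property for Arakelov heights was already recorded in \S\ref{sec:adelic height}, so the corollary is stated without further proof. Your additional care about the normalization $\hat{C}$ is harmless but unnecessary here, since $\bar{C}\setminus C$ was assumed to consist of smooth points.
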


The following characterization of activity of an arithmetic pair then
follows quickly from Theorem~\ref{thm:active-characterization1} (note that the next statement implies Theorem~\ref{thm:demarco-stab}).

\begin{theorem}\label{tm:active-characterization2}
Let $(P,a)$ be a dynamical pair parametrized by an affine irreducible curve $C$ defined over $\KK$.
If the pair $(P,a)$ is not isotrivial, then the following assertions are equivalent:
\begin{enumerate}
\item the pair $(P,a)$ is passive on $C$,
\item the height function $h_{P,a}$ satisfies $h_{P,a}(t)=0$ for all $t\in C(\bar{\KK})$,
\item for any place $v\in M_\KK$, we have $g_{P,a,v}\equiv0$ on $C^{\an}_v$,
\item there exists a place $v\in M_\KK$ such that $g_{P,a,v}\equiv0$ on $C^{\an}_v$,
\item for any place $v\in M_\KK$, we have $\Delta g_{P,a,v}=0$ on $C^{\an}_v$,
\item there exists a place $v\in M_\KK$ such that $\Delta g_{P,a,v}=0$ on $C^{\an}_v$.
\end{enumerate}
\end{theorem}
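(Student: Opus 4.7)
The plan is to establish the cycle $(1)\Rightarrow(2)\Rightarrow(3)\Rightarrow(5)\Rightarrow(6)\Rightarrow(1)$ together with the trivial implications $(3)\Rightarrow(4)\Rightarrow(6)$, using Proposition~\ref{prop:adelic} as a bridge between the global height $h_{P,a}$ and the local Green functions $g_{P,a,v}$, and then bootstrapping back to passivity via the geometric characterization of Theorem~\ref{thm:active-characterization1} applied place by place.

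First I would treat $(1)\Rightarrow(2)$: since $(P,a)$ is assumed non-isotrivial, the equivalence $(1)\Leftrightarrow(2)$ in Theorem~\ref{thm:active-characterization1} forces the marked point to be stably preperiodic, i.e.\ there exist $n>m\ge 0$ with $P^n(a)=P^m(a)$ identically on $C$. Then for every $t\in C(\bar{\KK})$, the point $a(t)$ is preperiodic for $P_t$, so the canonical height satisfies $h_{P_t}(a(t))=0$ by Northcott, whence $h_{P,a}\equiv 0$. For $(2)\Rightarrow(3)$, I would invoke Proposition~\ref{prop:adelic}(1), which gives
\[
0 \;=\; n\, h_{P,a}(t) \;=\; \frac{n}{\deg(t)}\sum_{v\in M_\KK}\sum_{t'\in\mathsf{O}(t)} g_{P_{t'},v}(a(t'))
\]
for every $t\in C(\bar{\KK})$. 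Each summand is non-negative, so each vanishes; hence $g_{P,a,v}$ vanishes on the Galois-stable subset $C(\bar{\KK})\subset C^{\an}_v$, which is dense (by standard density of algebraic points in $C^{\an}_v$, using that rigid points are dense in the Berkovich analytification). Continuity of $g_{P,a,v}$ from Theorem~\ref{thm:continuity} yields $g_{P,a,v}\equiv 0$ for every place~$v$.

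The implications $(3)\Rightarrow(5)$, $(3)\Rightarrow(4)$, $(5)\Rightarrow(6)$ and $(4)\Rightarrow(6)$ are immediate, since $\Delta$ annihilates the zero function.

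The only substantial step is the closing implication $(6)\Rightarrow(1)$, which I expect to be the main point. Assume $\Delta g_{P,a,v_0}=0$ at some place $v_0$ and view $(P,a)$ as an algebraic dynamical pair defined over the algebraically closed complete metrized field $\C_{v_0}$. The equivalence $(5)\Leftrightarrow(1)$ of Theorem~\ref{thm:active-characterization1}, applied in this setting, shows that $(P,a)$ is passive over $\C_{v_0}$. Since isotriviality of a family is equivalent to constancy of the induced moduli map $C\to \mpair^d$, it is preserved under base change, so $(P,a)$ remains non-isotrivial over $\C_{v_0}$; the equivalence $(1)\Leftrightarrow(2)$ of Theorem~\ref{thm:active-characterization1} then forces the marked point to be stably preperiodic, i.e.\ the relation $P^n(a)=P^m(a)$ holds as an identity of regular functions on $C$. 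This identity is intrinsic to the family and descends back to $\KK$, so $\preper(P,a)\supset C(\bar{\KK})$; in particular $(P,a)$ is passive over $\KK$, completing the cycle.
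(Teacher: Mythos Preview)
Your proof is correct and follows essentially the same route as the paper: deduce stably preperiodic from passivity via Theorem~\ref{thm:active-characterization1}, push through the trivial implications among (2)--(6), and close the loop $(6)\Rightarrow(1)$ by applying Theorem~\ref{thm:active-characterization1} over the completed field $\C_{v_0}$. One small remark: your citation of Proposition~\ref{prop:adelic}(1) in the step $(2)\Rightarrow(3)$ is slightly off, since that proposition assumes $(P,a)$ is active; the local decomposition $h_{P,a}(t)=\sum_v g_{P,a,v}(t)$ you actually need holds unconditionally from the definition of the Call--Silverman height in \S\ref{sec:global}.
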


\begin{proof}
If $(P,a)$ is passive, then Theorem~\ref{thm:active-characterization1} implies that $(P,a)$ is stably preperiodic
so that (2) -- (6) are clearly satisfied. 
The next diagram of implications is also clear: 
\begin{displaymath}
\xymatrix
{
(2) \ar@{=>}[r]
&
(3) \ar@{=>}[r] \ar@/_/@{=>}[dr]
&
(4) \ar@{=>}[r]
&
(6)
\\
&
&
(5)\ar@/_/@{=>}[ur] 
&}
\end{displaymath}
So assume $\Delta g_{P,a,v}=0$ on $C^{\an}_v$ for some $v\in M_\KK$.  Then Statement (5) of Theorem~\ref{thm:active-characterization1} is satisfied, and
we conclude that $(P,a)$ is passive. 
\end{proof}


%
%
%
%
%
%


\chapter{Entanglement of dynamical pairs}

We introduce the notion of entanglement of dynamical pairs and prove that for any two entangled pairs $(P,a)$ and $(Q,b)$
there exist iterates of $P$ and $Q$, that have the same degree, and are intertwined in the sense of \S \ref{sec:intertwining}. 
This result leads to the proof of Theorem~\ref{tm:unlikely} in the number field case. We then deduce Theorem~\ref{tm:unlikely-car0} 
over an arbitrary field of characteristic zero using a specialization argument.


\section{Dynamical entanglement}
 
  
\subsection{Definition} 
We fix a field $K$ of characteristic $0$, and let $\bar{K}$ be an algebraic closure of $K$.
We introduce the following terminology. 
\begin{definition}
Let $(P,a)$ and $(Q,b)$ be two active dynamical pairs parametrized by an affine curve $C$ defined over $K$.  
We say that $(P,a)$ and $(Q,b)$ are dynamically entangled\index{entanglement!of pairs}\index{dynamical pair!entangled} when 
the set of parameters $t \in C(\bar{K})$ for which $a(t)$ and $b(t)$ are preperiodic 
for $P_t$ and $Q_t$ respectively is Zariski dense, i.e. $\preper(P,a,\bar{K}) \cap \preper(Q,b,\bar{K})$ is infinite.
\end{definition}

Being entangled defines an equivalence relation on the set of active dynamical pairs parametrized by a fixed affine curve.

\begin{remark}
When $K$ is algebraically closed, the entanglement is equivalent to say that $\preper(P,a) \cap \preper(Q,b)$ is Zariski dense. 
When $K$ is not algebraically closed, e.g. when $K$ is a number field, $\preper(P,a) \subset C(K)$ can be finite, and
one needs to look at all parameters in $C(\bar{K})$ to test the entanglement. 
\end{remark}

\begin{remark}
The dynamical entanglement is a rigidity property that  links two dynamical pairs.  
Since passive pairs are already rigid, the entanglement property is only relevant for active pairs.
\end{remark}

\begin{remark}
One can extend the notion of entanglement to a dynamical pair defined over a base of arbitrary dimension.
\end{remark}

Recall the definition of intertwining from \S \ref{sec:intertwining} and its geometric characterization  (Theorem~\ref{thm:basic-intertwin}). 
\begin{theorem}\label{thm:basic ex of entanglement}
Let $P, Q$  be any two active families of polynomials of the same degree $d\ge2$ parametrized by an irreducible affine curve $C$. 
Assume that they are intertwined as polynomials with coefficients in $K(C)$, 
and pick any algebraic subvariety $Z \subset \A^1_C \times \A^1_C$ which projects onto each factor and which is invariant by the map $(z,w) \mapsto (P(z),Q(w))$. 

For any $a,b \in K(C)$ such that $(a, b)\in Z$, there exists a Zariski open dense subset $U \subset C$ 
over which $(P,a)$ and $(Q, b)$  define dynamical pairs parametrized by $U$ that are dynamically entangled. 
\end{theorem}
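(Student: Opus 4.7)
\medskip

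The plan is to transfer preperiodicity from the pair $(P,a)$ to the pair $(Q,b)$ by using the dynamics induced by $(P,Q)$ on the invariant subvariety $Z$. More precisely, I would first pass to a Zariski-dense affine open subset $U \subset C$ over which everything behaves as nicely as possible: $a$ and $b$ are regular, the scheme-theoretic fiber $Z_t$ of $Z$ over any $t \in U$ is well-defined with both projections $\pi_1,\pi_2\colon Z_t \to \mathbb{A}^1_{\bar K}$ finite and surjective, the product map $(P_t,Q_t)$ restricts to a self-map $\sigma_t \colon Z_t \to Z_t$, and the point $(a(t),b(t))$ lies in $Z_t$. Such a $U$ exists since we only need to throw away the finitely many $t$ where the generically finite projections degenerate, where $Z$ fails to be flat over $C$, or where the regular sections fail to be defined.

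The key observation is that, by construction, $\pi_1 \circ \sigma_t = P_t \circ \pi_1$ and $\pi_2 \circ \sigma_t = Q_t \circ \pi_2$. Consequently, for any $t \in U$ such that $a(t) \in \preper(P_t)$, the finite set $F_t := \{P_t^n(a(t))\}_{n \geq 0}$ has finite preimage $\pi_1^{-1}(F_t) \subset Z_t$ (because $\pi_1|_{Z_t}$ is finite), and this preimage is $\sigma_t$-invariant. Since the forward $\sigma_t$-orbit of $(a(t),b(t))$ is contained in $\pi_1^{-1}(F_t)$, the point $(a(t),b(t))$ is $\sigma_t$-preperiodic, and applying $\pi_2$ shows that $b(t)$ is $Q_t$-preperiodic. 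This yields the inclusion
\[
\preper(P,a) \cap U \subset \preper(Q,b) \cap U.
\]

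To conclude, it suffices to know that $\preper(P,a) \cap U$ is infinite. Since by hypothesis $(P,a)$ is an active pair (or more precisely, since the assumption that $P$ is an active family and $a$ is a genuine section avoiding the isotriviality locus applies over the dense open $U$), Theorem~\ref{thm:active-characterization1} (together with Remark~\ref{rem:zar dense}) ensures that $\preper(P,a)$ is Zariski dense in $C$, hence infinite in $U$. The inclusion above then gives infinitely many parameters $t \in U(\bar K)$ for which $a(t)$ and $b(t)$ are simultaneously preperiodic under $P_t$ and $Q_t$, which is precisely the dynamical entanglement of $(P,a)$ and $(Q,b)$ over $U$.

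The main technical step to watch is the initial geometric setup on $U$, in particular verifying that the projection $\pi_1|_{Z_t}$ remains finite for $t$ in a dense open, so that finite sets pull back to finite sets. If $Z$ were reducible one must also discard the components of $Z$ that fail to dominate both factors, but this is exactly ruled out by the hypothesis that $Z$ projects onto each factor and by restricting to the component(s) containing the section $(a,b)$.
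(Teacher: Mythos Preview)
Your proof is correct and follows essentially the same approach as the paper: restrict to a Zariski open $U$ where $a,b$ are regular and the two projections $Z_U \to \A^1_U$ are finite, then use the invariance of $Z$ under $(P,Q)$ together with finiteness of the first projection to transfer preperiodicity of $a(t)$ to preperiodicity of $b(t)$, and conclude by the Zariski density of $\preper(P,a)$. Your formulation via the $\sigma_t$-invariant finite set $\pi_1^{-1}(F_t)$ is a slightly more explicit version of the same argument the paper gives.
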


\begin{proof}
Pick any Zariski open set $U \subset C$ such that $a,b$ define regular functions on $U$, and the restriction of the two natural projections 
$Z' =  Z \cap  \A^2_U \to \A^1_U$ are finite. 

Then  $(P,a)$ and $(Q, b)$  define active dynamical pairs
and by definition $\preper(P,a)$ is Zariski dense in $U$. Pick any point $t \in \preper(P,a) \subset U$. 
Then for all $n\in \N$, we have $(P^n(a(t)), Q^n(b(t))) \in Z$. Since the set $\{P^n(a(t))\}_{n\in \N}$ is finite, 
and the second projection $Z_U \to U$ is finite, it follows that the set $\{Q^n(b(t))\}_{n\in \N}$ is also finite
which proves that $\preper(Q,b) \cap U = \preper(P,a) \cap U$ is Zariski-dense.
The two pairs are thus entangled.
\end{proof}

As a corollary of the previous result, one obtains:
\begin{corollary}
Let $(P,a)$ be an active dynamical pair parametrized by an affine curve $C$.
\begin{itemize}
\item
For any $n\in \N$, and for any $g \in \Sigma(P)$, the pairs
$(P,a)$, $(P,g\cdot a)$,  $(g\cdot P, a)$, and $(P, P^n(a))$ are entangled. 
\item
For any polynomial family $Q$ parametrized by $C$ and commuting with $P$, 
the pairs $(P,a)$, $(P,Q(a))$ and $(Q,a)$ are entangled.
\end{itemize}
\end{corollary}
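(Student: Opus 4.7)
The plan is to deduce each entanglement assertion directly from Theorem~\ref{thm:basic ex of entanglement} by exhibiting, in every case, an explicit algebraic subvariety $Z\subset \A^1_C\times\A^1_C$ that is invariant under the relevant product dynamics, surjects onto both factors, and contains the prescribed pair of marked points. The two families involved will always have the same degree (either because they coincide, or because they differ by an affine map, or by hypothesis), so the only non-trivial input is the construction of $Z$.

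For $(P,a)$ and $(P,g\cdot a)$ with $g\in\Sigma(P)$, Proposition~\ref{prop:Sigma} furnishes a morphism $\rho:\Sigma(P)\to\Sigma(P)$ with $P(g\cdot z)=\rho(g)\cdot P(z)$. Since every element of $\Sigma(P)$ is torsion in $\Aff$, the orbit $\{\rho^k(g)\}_{k\ge 0}$ is finite, and I set
\[
Z_1 := \bigcup_{k\ge 0}\bigl\{(z,w)\in\A^2 : w=\rho^k(g)\cdot z\bigr\}.
\]
This is a finite union of lines, $(P,P)$-invariant because $(P(z),P(\rho^k(g)\cdot z))=(P(z),\rho^{k+1}(g)\cdot P(z))$, projects onto each axis, and contains $(a,g\cdot a)$ in the component $k=0$; Theorem~\ref{thm:basic ex of entanglement} gives entanglement.

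For $(P,a)$ and $(g\cdot P,a)$ the same devissage works: iterating $P\circ g=\rho(g)\circ P$ yields $(g\circ P)^n=h_n\circ P^n$ with $h_n:=g\cdot\rho(g)\cdots\rho^{n-1}(g)\in\Sigma(P)$, so the set $\{h_n\}_{n\ge 0}$ is finite and
\[
Z_2 := \bigcup_{n\ge 0}\{w=h_n\cdot z\}
\]
is an algebraic subvariety; invariance under $(P,g\circ P)$ follows from $(g\circ P)(h_n\cdot z)=g\cdot\rho(h_n)\cdot P(z)=h_{n+1}\cdot P(z)$, and $(a,a)\in Z_2$ (take $n=0$, $h_0=\id$). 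For $(P,a)$ and $(P,P^n(a))$, the graph $Z_3=\{w=P^n(z)\}$ is $(P,P)$-invariant by commutation of $P$ with $P^n$ and contains $(a,P^n(a))$.

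For a family $Q$ commuting with $P$, the pair $(P,Q(a))$ is handled by $Z_4=\{w=Q(z)\}$: the identity $PQ=QP$ gives $(P,P)$-invariance, $(a,Q(a))\in Z_4$, and both projections are surjective. For $(Q,a)$ itself the argument runs via reduction to the second case: by (the proof of) Proposition~\ref{prop:intertwined example}(2) two commuting polynomials of the same degree satisfy $Q=\sigma\circ P$ for some $\sigma\in\Sigma(P)$, whence entanglement of $(P,a)$ and $(\sigma\circ P,a)=(Q,a)$ has already been established. The main (and only) subtle point in this last reduction is to promote the identity $Q_t=\sigma_t\circ P_t$ from a pointwise statement to a statement over an irreducible base; I will handle it by base-changing $C$ so that $\Sigma(P_t)$ is generically constant (possible by Proposition~\ref{prop:sameZariskiopen}) and observing that $\sigma_t=Q_t\circ P_t^{-1}$ on the level of leading coefficients is forced into a finite set, hence is algebraic in $t$. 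Once this is in place, entanglement of $(P,a)$ with $(Q,a)$ follows by transitivity from the $(g\cdot P,a)$ case.
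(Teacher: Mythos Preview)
Your arguments for the first four entanglements are correct and follow the same strategy as the paper: exhibit an explicit invariant curve and apply Theorem~\ref{thm:basic ex of entanglement}. The curves you use differ only cosmetically from the paper's (you take the $\rho$-orbit $\{\rho^k(g)\}$ or the set $\{h_n\}$ where the paper simply takes $Z=\bigcup_{g'\in\Sigma(P)}\{(z,g'\cdot z)\}$ for both the $(P,g\cdot a)$ and $(g\cdot P,a)$ cases).

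For the final entanglement of $(P,a)$ with $(Q,a)$ your route diverges from the paper and carries a genuine restriction. Your reduction via Proposition~\ref{prop:intertwined example}(2), writing $Q=\sigma\circ P$ with $\sigma\in\Sigma(P)$, requires $\deg(Q)=\deg(P)$, as you yourself note. The corollary imposes no such hypothesis: $Q$ is an arbitrary polynomial family commuting with $P$. The paper's argument is both simpler and fully general: if $a(t)$ is $P_t$-preperiodic, say $P_t^m(a(t))=P_t^n(a(t))$, then by commutation every iterate $Q_t^k(a(t))$ satisfies the same relation, hence lies in the finite set $\{z:P_t^m(z)=P_t^n(z)\}$, so $a(t)$ is $Q_t$-preperiodic. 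This shows $\preper(P,a)\subset\preper(Q,a)$ directly, without any degree matching, any appeal to Schmidt--Steinmetz, or any family-level promotion of $\sigma_t$.

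Even in the equal-degree case your reduction, while workable, is heavier than necessary: the pointwise identity $Q_t=\sigma_t\circ P_t$ only holds for non-integrable $P_t$, and lifting it to $K(C)$ requires an extra argument that you sketch but do not carry out. The paper's direct inclusion $\preper(P,a)\subset\preper(Q,a)$ sidesteps all of this.
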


\begin{proof}
When $\Sigma(P)$ is infinite, then we may suppose $P=M_d$ and the result is easy. 
Otherwise the curve $Z= \cup_{g'\in \Sigma(P)} \{ (z, g'\cdot z)\} \subset \A^1\times \A^1$ is invariant
by $(P,P)$ and $(P,g\cdot P)$, hence  $(P,a)$, $(P,g\cdot a)$, and $(g\cdot P, a)$ are entangled by Theorem~\ref{thm:basic ex of entanglement}. 
Also the graph $\{(z,P^n(z))$ is invariant by $(P,P)$ so that $(P,a)$,  and $(P, P^n(a))$ are entangled. 

If $Q$ commutes with $P$, then the graph $\{(z,Q(z))\}$ is $(P,P)$-invariant hence $(P,a)$, and $(P,Q(a))$ are entangled.
Finally if $a(t)$ has a finite orbit for $P$ of length $\le N$, then $Q^n(a(t))$ has also a finite orbit of length $\le N$ for any $n\ge0$, 
hence $a$ is also preperiodic for $Q$. It follows that $(P,a)$ and $(Q,a)$ are entangled.
\end{proof}

  
\subsection{Characterization of entanglement} 

Let us recall the following theorem from the introduction.

\unlikely*

The most interesting implication is  (1) $\Rightarrow$ (3) which is an arithmetic rigidity result. 
In plain words it implies that two entangled pairs define families of intertwined polynomials.
The proof of Theorem~\ref{tm:unlikely} will occupy most of this chapter including all sections from \S \ref{sec:identical measures} to \S \ref{sec:biggest proof}.
For the convenience of the reader, we give an overview of this technically involved proof in the next paragraph. 
The proof of Theorem~\ref{tm:unlikely-car0} is given in \S \ref{sec:specialization}.


\subsection{Overview of the proof of Theorem~\ref{tm:unlikely}} \label{sec:overviewB}

Assume (2), and pick any parameter $t$ for which $\{P_t^n(a(t))\}_{n\in \N}$ is a finite set. 
Since 
\[\pi \circ Q^{nM}(Q^s(b)) = R^n \tau(P^r(a)) = \tau \circ P^{nN}(a)\]
and the two maps $\tau, \pi$ are finite, it follows that  $\{Q_t^n(b(t))\}_{n\in \N}$
is also a finite set, hence $t \in \preper(Q,b)$. This implies (1). 

\smallskip

The proof of the implication (1) $\Rightarrow$ (2) is given in \S \ref{sec:identical measures}. By Proposition~\ref{prop:adelic}, we may apply 
Thuillier-Yuan's theorem (Theorem~\ref{tm:yuan}) to both height functions, which gives equality of bifurcation measures
$n \cdot \mu_{P,a,v} = m\cdot \mu_{Q,b,v}$ at all places (for suitable integers $n$ and $m$).
The equality $n\cdot h_{P,a} = m\cdot h_{Q,b}$ is equivalent to 
the equality of Green functions $n \cdot g_{P,a,v} = m\cdot g_{Q,b,v}$ at all place $v$, which we obtain in Theorem~\ref{thm:identical measures}
using our rigidity result in the parameter space (Theorem~\ref{tm:rigidaffine}). 
 
\smallskip

The implication (2) $\Rightarrow$ (3) is substantially harder. 
We first prove that (2) implies that $\deg(P)$ and $\deg(Q)$ are multiplicatively dependent, see Theorem~\ref{tm:multiplicative}.
We argue by contradiction. We compute the H\"older constant of continuity of the Green function (at a fixed Archimedean place) of the two 
dynamical pairs at a parameter $t \in C$ which is prerepelling (for the two marked points). Since the Green functions are proportional, 
the H\"older constants are equal so that the multiplier of $P_t$ at $a(t)$ and $Q_t$ at $b(t)$ are multiplicatively independent. 
Using ideas from Levin~\cite{Levin-symmetries}, we build a real-analytic flow commuting with the dynamics which implies $P$ and $Q$ to be integrable. 

The core of the argument is the content of \S \ref{sec:biggest proof}. The main ideas are already present in Baker and DeMarco's original paper~\cite{BDM} and relies
on the expansion of the B\"ottcher coordinate and Ritt's theory. Note however that $C$ may have several places at infinity, and this makes things considerably more technical. 

We have divided the proof into four steps indicated (I) to (IV).  Step (IV) is not essential. In this last step, we explain how to reduce the problem to families of monic and centered polynomial. We thus focus on the description of the first three steps. 

Step (I) consists in the analysis of the B\"ottcher coordinate near a suitable branch at infinity of $C$. More precisely, we fix such a branch $\mathfrak{c}$
lying in the support of the divisor $\mathsf{D}_{P,a}$ (i.e. of $\mathsf{D}_{Q,b}$ since these divisors are proportional). 
Let us take an adelic parametrization $t \mapsto \theta(t)$ of $\mathfrak{c}$. To simplify notation, we write $P_t$, $a(t)$, instead of  $P_{\theta(t)}$, $a(\theta(t))$, etc.

Using the expansion of the B\"ottcher coordinates explained in \S \ref{sec:bottcher}, we next show that $\varphi_{P_{t}}(P^{lN}_{t}(a(t)))$ and $\varphi_{Q_{t}}(Q^{lM}_{t}(b(t)))$
are well-defined as adelic series, and satisfies
\[\left(\varphi_{P_{t}}\left(P^{lN}_{t}(a(t))\right)\right)^n =  \zeta \, \left(\varphi_{Q_{t}}\left(Q^{lM}_{t}(b(t))\right)\right)^m \text{ for all } l~,\] 
for some root of unity $\zeta$ that may depend on $l$ and on $\mathfrak{c}$. We get some control on $\zeta$ by observing that 
it always belongs to a fixed finite extension of our field of definition $\KK$.

\smallskip

During the whole duration of Step (II) we work with a suitable fixed parameter $t$ sufficiently close (at some place fixed Archimedean place $v_0$)
to $\mathfrak{c}$. 
We consider the analytic curve $\mathfrak{s}_\zeta = \{ (x,y),\, \varphi_{P_t}(x)^n =  \zeta \varphi_{Q_t}(y)^m\}$ defined in an open set of the form $\{ |x| \ge R_v, |y| \ge R_v\}$ 
inside $\A^{2,\an}_v$. We observe that this analytic curve contains many algebraic points of the form $(P^{lN}_{t}(a(t)), Q^{lM}_{t}(b(t)))$. 
This allows us to apply the algebricity result of Xie (Theorem~\ref{thm:Junyi} from Chapter~\ref{chapter:geom}), which proves that the Zariski closure of $\mathfrak{s}_\zeta$ is in fact an algebraic curve $Z_t$. 
Using a result from Pakovich relying on Ritt's theory, we may also assume that $Z_t$ is of the form $\{ u(x) = v(y)\}$ for some polynomials $u$ and $v$ (depending on $t$). 
Unwinding definitions, this implies that $P_t$ and $Q_t$ are intertwined.

\smallskip

Finally we exhibit in Step (III) a series of conditions $(\mathcal{P}_1)$ -- $(\mathcal{P}_3)$ on the polynomials $P_t, Q_t$ and the points $a(t), b(t)$
which imply $P_t$ and $Q_t$ to be intertwined and that depend algebraically on $t$. The second step furnishes an infinite set 
of parameters $t$ satisfying these conditions,  hence they are satisfied for all $t \in C$. 
This implies the families $P$ and $Q$ to be intertwined and concludes the proof.


\section{Dynamical pairs with identical measures} \label{sec:identical measures}

In this section, we focus on dynamical pairs having identical bifurcation measures and prove the implication  (1) $\Rightarrow$ (2) of Theorem~\ref{tm:unlikely}. 

\subsection{Equality at an Archimedean place}

In this section we assume $K = \C$.

\begin{theorem}\label{thm:identical measures}
Let $(P,a)$ and $(Q,b)$ be two active and non-integrable dynamical pairs parametrized by an irreducible affine complex curve $C$. 
Assume there exist two integers $n,m\geq1$ such that $n\cdot\Delta g_{P,a}=m\cdot \Delta g_{Q,b}$.

Then we have equality of Green functions $n\cdot g_{P,a}=m\cdot g_{Q,b}$ on $C$.
\end{theorem}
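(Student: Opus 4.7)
The plan is to set $u := n\cdot g_{P,a} - m\cdot g_{Q,b}$ and show $u\equiv 0$ on $C$. By Proposition~\ref{prop:behave green} both $g_{P,a}$ and $g_{Q,b}$ are continuous subharmonic functions on $C$, and the hypothesis $n\Delta g_{P,a}=m\Delta g_{Q,b}$ says precisely that the distributional Laplacian of $u$ vanishes; elliptic regularity then makes $u$ a real-analytic harmonic function on the Riemann surface $C$. The two bifurcation measures share the same support, so $B:=\mathrm{Bif}(P,a)=\mathrm{Bif}(Q,b)$ is non-empty (by activity). By Proposition~\ref{prop:bifurcation} the set $B$ is the common boundary of $\{g_{P,a}=0\}$ and $\{g_{Q,b}=0\}$, and continuity together with non-negativity force both Green functions, and hence $u$, to vanish on $B$.

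I will argue by contradiction: assume $u\not\equiv 0$. Since $C$ is connected and $u$ real-analytic, $Z:=u^{-1}(0)$ is a proper real-analytic subvariety of real dimension at most one, smooth off a discrete subset of $C$. Thus $B\subset Z$ sits in a smooth real curve away from this discrete subset. Using that $B$ is perfect (Proposition~\ref{prop:bifurcation}), pick $t_0\in B$ avoiding this discrete set together with the discrete exceptional sets associated to $(P,a)$ and $(Q,b)$ in Theorem~\ref{tm:rigidaffine}. Applying that theorem to the non-integrable pair $(P,a)$ yields a univalent $\imath\colon\D\to C$ with $\imath(0)=t_0$, such that $\imath^{-1}(B)$ is a non-empty perfect totally disconnected subset of $\R\cap\D$ and $(P\circ\imath,a\circ\imath)$ is conjugated to a real family on $\D$. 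Applying it analogously to $(Q,b)$ yields a second univalent $\imath'\colon\D\to C$ with $\imath'(0)=t_0$. The transition map $\phi:=\imath^{-1}\circ\imath'$ sends $(\imath')^{-1}(B)\subset\R$ into $\imath^{-1}(B)\subset\R$; since these are perfect subsets of $\R$, the holomorphic function $\phi(z)-\overline{\phi(\bar z)}$ vanishes on a set with an accumulation point, hence identically, so $\phi$ has real Taylor coefficients and preserves the real axis. Composing the real conjugacy of $(Q\circ\imath',b\circ\imath')$ with $\phi$, we get that $(Q\circ\imath,b\circ\imath)$ is also conjugated to a real family on some subdisk $\D_0\subset\D$.

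On $\D_0$, the reality of both families forces the pullbacks $g_{P,a}\circ\imath$ and $g_{Q,b}\circ\imath$ to be invariant under complex conjugation $t\mapsto\bar t$: the identities $P_{\bar t}(z)=\overline{P_t(\bar z)}$ and $a(\bar t)=\overline{a(t)}$ yield $g_{P,a}(\bar t)=g_{P,a}(t)$, and the analogous argument handles $(Q,b)$. Therefore $v:=u\circ\imath$ is a harmonic function on $\D_0$, invariant under conjugation, vanishing on the perfect set $\imath^{-1}(B)\cap\D_0\subset\R$. Real-analyticity along $\R$ then forces $v\equiv 0$ on $\R\cap\D_0$. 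The symmetry $v(x,-y)=v(x,y)$ kills all odd-order $y$-derivatives of $v$ at real points, and iterating the harmonic equation $\partial_y^2 v=-\partial_x^2 v$ together with $v(x,0)\equiv 0$ kills all even-order $y$-derivatives as well, so every partial derivative of $v$ vanishes at any real point of $\D_0$. By real-analyticity $v\equiv 0$ on $\D_0$, whence $u\equiv 0$ on the open set $\imath(\D_0)\subset C$, and by analytic continuation on the connected real-analytic manifold $C$, $u\equiv 0$ on $C$, contradicting the assumption $u\not\equiv 0$. We conclude $n\cdot g_{P,a}=m\cdot g_{Q,b}$ on $C$.

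The main obstacle I anticipate is the step of fitting both $(P,a)$ and $(Q,b)$ into a single common chart in which each is simultaneously realized as a real family. The two univalent disks produced by Theorem~\ref{tm:rigidaffine} for $(P,a)$ and $(Q,b)$ are a priori unrelated; the bridge between them relies crucially on the coincidence of the two bifurcation loci and on the perfectness of this common locus inside the real axis of each chart, which together force the transition map to be real-analytic. Once both reality structures are placed in a common chart, the final step—propagating the vanishing of $u$ from the real axis to the whole disk via the conjugation symmetry combined with harmonicity—is essentially mechanical.
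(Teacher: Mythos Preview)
There is a gap in how you produce overlapping charts. Theorem~\ref{tm:rigidaffine} only asserts the \emph{existence} of some univalent $\imath$; it does not let you prescribe $\imath(0)=t_0$, and its statement mentions no ``discrete exceptional sets'' (you seem to have Theorem~\ref{tm:rigid} in mind, but that result only makes the \emph{family} real, not the marked point). Applying Theorem~\ref{tm:rigidaffine} separately to $(P,a)$ and $(Q,b)$ on $C$ gives two disks $\imath(\D)$ and $\imath'(\D)$ that need not intersect, so $\phi=\imath^{-1}\circ\imath'$ may simply be undefined. The fix is to nest the applications rather than run them in parallel: first apply Theorem~\ref{tm:rigidaffine} to $(P,a)$ on $C$, obtaining $\imath$ with $(P\circ\imath,a\circ\imath)$ real and $\imath^{-1}(B)\subset\R$; then apply it to the pair $(Q\circ\imath,b\circ\imath)$ on $\D$, whose bifurcation locus $\imath^{-1}(B)$ already sits in the smooth real curve $\R$. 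The resulting chart $\jmath\colon\D'\to\D$ automatically lands inside $\D$, and your identity-theorem argument shows $\jmath$ has real coefficients since it sends the perfect set $\jmath^{-1}(\imath^{-1}(B))\subset\R$ into $\imath^{-1}(B)\subset\R$. In the chart $\imath\circ\jmath$ both pairs are then conjugated to real families, and the remainder of your proof goes through.

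Once patched, your argument is more direct than the paper's. The paper proves an auxiliary result (Proposition~\ref{prop:Bottcherrational}) constructing, via B\"ottcher coordinates extended over $\{g_{P,a}>\varepsilon\}$, a holomorphic function $f$ on $C$ with $d^N\delta^M h=\log|f|$; it then uses reality of the B\"ottcher expansions to get $f(\R)\subset\R$, whence $|f|=1$ on the real perfect set $\imath^{-1}(B)$ forces $f\in\{\pm1\}$ there, so $f$ is constant. Your route---conjugation-invariance of each Green function from reality of the pair, giving $v$ even in $y$, combined with Schwarz reflection from $v|_\R=0$, giving $v$ odd in $y$, hence $v\equiv0$---bypasses the B\"ottcher machinery entirely.
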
\label{atoneplace}

\begin{proof}
Recall from \S\ref{sec:subharmonic on singular-C} that a continuous function is subharmonic iff its pull-back on the normalization  $\tilde{C}$ of $C$ is subharmonic, and  that its Laplacian is given by the push-forward by the normalization map of the Laplacian on $\tilde{C}$.
We may therefore replace $C$ by its normalization, and assume the curve to be smooth.

By assumption, the function
\[h(t):=n\cdot g_{P,a}(t)-m\cdot g_{Q,b}(t)\]
is harmonic on $C$.
By Proposition~\ref{prop:bifurcation},  we have 
\begin{align*}
\mathrm{Bif}(P,a)&=\supp(\Delta g_{P,a})=\partial\{g_{P,a}=0\} \text{ and } \\
\mathrm{Bif}(Q,b) &=\supp(\Delta g_{Q,b})=\partial\{g_{Q,b}=0\}~,
\end{align*}
so that 
\[\mathrm{Bif}(P,a)=\mathrm{Bif}(Q,b)\subset \{h=0\}.\]
Since the pairs are active the bifurcation loci are non-empty. 
Observe that there is nothing to prove when $h$ is identically zero, so 
that we may assume that $\{h=0\}$ is a non-empty real-analytic curve. 
If $\mathrm{Bif}(P,a)$ is not totally disconnected, then we can find a holomorphic disk $U$ 
for which $\mathrm{Bif}(P,a) \cap U$ is a smooth curve, and Theorem~\ref{tm:rigidaffine} implies $P$ and $Q$ to be integrable which contradicts
our assumption.

When $\mathrm{Bif}(P,a)$ is totally disconnected, 
the open set $U:= C\setminus\mathrm{Bif}(P,a)$ is connected, the functions $g_{P,a}$ and $g_{Q,b}$ are harmonic on $U$, and we have
\[U=\{g_{P,a}>0\}=\{g_{Q,b}>0\}.\]
For any $\varepsilon>0$, we set $U_\varepsilon:=\{g_{P,a}>\varepsilon\}\cap \{g_{Q,b}>\varepsilon\} \subset C$. 
Observe that $\cup_{\varepsilon >0} U_\varepsilon = U$. 

\begin{proposition}\label{prop:Bottcherrational}
For any $\varepsilon>0$ small enough, there exist two integers $N,M\geq1$ depending on $\varepsilon$ such that 
both functions $\varphi_{P_t}(P_t^{N}(a(t)))$ and $\varphi_{Q_t}(Q^{M}_t(b(t)))$ are well-defined on $U_{\varepsilon}$, 
and 
\begin{equation}\label{eq:639}
U_\varepsilon \ni t \mapsto  \frac{\varphi_{P_t}\left(P^{N}_t(a(t))\right)^{n\delta^M}}{\varphi_{Q_t}\left(Q^{M}_t(b(t))\right)^{md^N}}
\end{equation}
extends as a holomorphic nowhere vanishing function $f$ on 
$C$ satisfying  
\begin{equation}\label{eq:log}
d^N\delta^Mh=\log|f|~.
\end{equation} 
\end{proposition}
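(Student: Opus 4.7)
The plan proceeds in three steps: define $F$ on $U_\varepsilon$ via Böttcher coordinates, identify $\log|F|$ with $d^N\delta^M h$, then extend $F$ globally to $C$.

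First, I would invoke Proposition~\ref{prop:GreenBottcher} at the Archimedean place (so $\tau = \rho = 0$): the Böttcher coordinate $\varphi_{P_t}$ is analytic on $\{z : g_{P_t}(z) > G(P_t)\}$ and satisfies $\log|\varphi_{P_t}(z)| = g_{P_t}(z)$ there, and likewise for $Q_t$. Since $g_{P_t}(P_t^N(a(t))) = d^N g_{P,a}(t)$, the composition $\varphi_{P_t}(P_t^N(a(t)))$ is defined at $t$ precisely when $d^N g_{P,a}(t) > G(P_t)$. Using Theorem~\ref{thm:the case of a family} together with the identity of divisors $n\mathsf{D}_{P,a} = m\mathsf{D}_{Q,b}$ (which follows from $n\Delta g_{P,a} = m\Delta g_{Q,b}$), near any branch at infinity $\mathfrak{c}$ of $C$ meeting the closure of $U_\varepsilon$ the ratio $G(P_t)/g_{P,a}(t)$ is bounded; combined with the continuity of $G(P_t)$ on compact subsets of $C$, this lets me pick $N$ (resp.\ $M$), depending only on $\varepsilon$, so that $\varphi_{P_t}(P_t^N(a(t)))$ and $\varphi_{Q_t}(Q_t^M(b(t)))$ are defined on all of $U_\varepsilon$.

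Second, defining $F$ by~\eqref{eq:639} makes $F$ holomorphic and nowhere vanishing on $U_\varepsilon$. A direct computation using $\log|\varphi_{P_t}(z)| = g_{P_t}(z)$ together with the functional equations $g_{P_t}\circ P_t^N = d^N g_{P_t}$ and $g_{Q_t}\circ Q_t^M = \delta^M g_{Q_t}$ yields
\[
\log|F(t)| = n\delta^M \cdot d^N g_{P,a}(t) - md^N \cdot \delta^M g_{Q,b}(t) = d^N\delta^M\, h(t).
\]

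The main task is to extend $F$ to a holomorphic nowhere-vanishing function $f$ on all of $C$. Since $h$ is harmonic on $C$, the $(1,0)$-form $\omega := 2\partial h$ is a globally defined holomorphic $1$-form on the normalization of $C$, and $dF/F = d^N\delta^M\,\omega$ on $U_\varepsilon$. Locally on $C$ a harmonic primitive $H$ of $\omega$ with $\operatorname{Re}(H) = h$ yields a local holomorphic model $e^{d^N\delta^M H}$ of modulus $e^{d^N\delta^M h}$; on $U_\varepsilon$ the function $F$ agrees with this model up to a locally constant unimodular factor. These local models patch together into a single-valued function on $C$ provided the periods $d^N\delta^M\oint_\gamma\omega$ lie in $2\pi i\mathbb{Z}$ for every loop $\gamma$ in $C$. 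The condition holds along loops in $U_\varepsilon$ because $F$ is single-valued there, and the inclusion $U_\varepsilon\hookrightarrow C$ is $\pi_1$-surjective for $\varepsilon$ small enough, because the complement $\{g_{P,a}\le\varepsilon\}\cup\{g_{Q,b}\le\varepsilon\}$ is contained in a thin neighborhood of the totally disconnected set $\mathrm{Bif}(P,a) = \mathrm{Bif}(Q,b)$, and the removal of such a set does not affect the fundamental group of the Riemann surface $C$. This produces the desired $f$, satisfying $|f| = e^{d^N\delta^M h}$, i.e.~\eqref{eq:log}. The delicate points are the uniform control of $G(P_t)$ on $U_\varepsilon$ in the first step and the $\pi_1$-surjectivity used to kill monodromy in the third step, both of which rely crucially on the total disconnectedness of the bifurcation locus established earlier via Theorem~\ref{tm:rigidaffine}.
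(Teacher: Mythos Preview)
Your approach is largely correct, but Step~1 contains an incorrect justification, and Step~3 follows a different (valid) route from the paper.

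\textbf{Step 1.} The assertion that $n\mathsf{D}_{P,a}=m\mathsf{D}_{Q,b}$ follows from $n\Delta g_{P,a}=m\Delta g_{Q,b}$ on $C$ is wrong: the Laplacian is taken on the open curve, and a harmonic function on a punctured disk may carry an arbitrary $\log|t|^{-1}$ coefficient, so the hypothesis places no constraint on the orders at infinity. (Indeed, the divisor identity is essentially a consequence of the enclosing Theorem~\ref{thm:identical measures}, so you cannot assume it here.) Moreover, even if it held it would not compare $q_\mathfrak{c}(P,a)$ with $q_\mathfrak{c}(P)$, which is what you need. The paper instead uses the trichotomy of Theorem~\ref{thm:main FG}: at any branch $\mathfrak{c}$ in the closure of $U_\varepsilon$, case~(3) is excluded because it would force $g_{P,a}\to 0$ at $\mathfrak{c}$, contradicting $g_{P,a}>\varepsilon$ on the nearby part of $U_\varepsilon$; hence either $q_\mathfrak{c}(P,a)>0$ (case~2, so the ratio is bounded) or the family extends holomorphically across $\mathfrak{c}$ (case~1, so $G(P_t)$ itself stays bounded). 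This is the argument that should replace your appeal to the divisor identity.

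\textbf{Step 3.} Here you genuinely diverge from the paper. The paper passes to the surface $C_+$ obtained by adjoining to $C$ the branches at infinity lying in $\bar{\mathcal{B}}$, so that $\bar{\mathcal{B}}$ becomes \emph{compact} in $C_+$; then $C_+\setminus U_\varepsilon$ is covered by finitely many topological disks $D_j$, on each of which one writes $d^N\delta^M h=\log|f_j|$ for some nowhere-vanishing holomorphic $f_j$, and after adjusting by unimodular constants the $f_j$ glue with $F$ to produce $f$. Your monodromy argument is a legitimate alternative: since $\pi_1(C)$ is finitely generated (free, in fact), finitely many loop generators can be perturbed off the totally disconnected set $\mathrm{Bif}(P,a)=\{g_{P,a}=0\}$ and then lie in $U_\varepsilon$ for a single small $\varepsilon$, yielding the $\pi_1$-surjectivity you need. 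Your route is slightly cleaner but requires fixing $\varepsilon$ for the surjectivity \emph{before} choosing $N,M$; the paper's disk-patching is more hands-on and avoids any reference to the fundamental group.
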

Let us take this proposition for granted.
By Theorem~\ref{tm:rigidaffine}, we may find a univalent holomorphic map $\imath \colon \D \to C$ whose image intersects $\mathrm{Bif}(P,a)$
so that the dynamical pairs $(\bar{P}, \bar{a}) = (P \circ \imath, a \circ \imath)$ and $(\bar{Q}, \bar{b})=(Q \circ \imath, b \circ \imath)$ 
induced on $\D$ are real in the sense of \S \ref{sec:realfamily}, and $\mathcal{B} := \mathrm{Bif}(\bar{P}, \bar{a})= \mathrm{Bif}(\bar{Q}, \bar{b})$ is a closed perfect totally disconnected subset of $\mathopen] -1, 1\mathclose[$.

We may thus choose $\varepsilon >0$ so small that the open set $I := \mathopen] -1, 1\mathclose[ \cap \imath^{-1}(U_\varepsilon)$ is non-empty. 
Proposition~\ref{prop:Bottcherrational} furnishes a holomorphic map $f \colon \D \to \C$ satisfying~\eqref{eq:639} and~\eqref{eq:log}.
By Proposition-Definition~\ref{prop:bottcher}, $\varphi_{\bar{P}_t}$ and $\varphi_{\bar{Q}_t}$ 
are also formal Laurent series with real coefficients, hence the real-analytic map $f|_I$ is real, which implies $f(\mathopen] -1, 1\mathclose[ ) \subset \R$.
Recall that the harmonic function $h$ is vanishing on $\mathcal{B}$, hence on $\mathopen] -1, 1\mathclose[$, and we conclude from~\eqref{eq:log} that
$f(t)\in\{-1, 1\}$ for all $t$ real. This implies $f$ to be a constant, hence $h$ too. This contradicts our standing assumption. 
\end{proof}

\begin{proof}[Proof of Proposition~\ref{prop:Bottcherrational}]
Recall that we assumed $C$ to be smooth. Let $\bar{C}$ be any smooth projective compactification  $C$. 
As above, write  $\mathcal{B} := \mathrm{Bif}(P, a)= \mathrm{Bif}(Q, b)$. 

We let $C_+$ be the union of $C$ together with all  branches at infinity of $C$
lying in the closure of $\mathcal{B}$. Observe that $\bar{\mathcal{B}}$ is compact and totally disconnected in $C_+$. 

Let $\mathfrak{c}$ be any branch at infinity of $\tilde{C}$ (i.e. a branch at infinity of $C$ which does not lie in $\bar{\mathcal{B}}$). 
Apply Theorem~\ref{thm:main FG} to the dynamical pair $(P,a)$ restricted to a punctured disk centered at $\mathfrak{c}$. 
Three possibilities may appear. In case (1), one can conjugate the family so that $(P,a)$ extends analytically through $\mathfrak{c}$. 
By base change over $C$, we may suppose that the family of polynomials is monic and centered. By Lemma~\ref{lem:at infty}
the family $P$ extends analytically through $\mathfrak{c}$. 
Replacing $C$ by a finite ramified cover, we may thus suppose that this case does not appear. 

Let us argue that case (3) of Theorem~\ref{thm:main FG} can not appear. Indeed, otherwise $g_{P,a}$ would extend continuously through $\mathfrak{c}$ and vanish at this point. 
Since $\mathfrak{c}\in\bar{\mathcal{B}}$
and $\bar{\mathcal{B}}$ is totally disconnected,  $g_{P,a}$ is subharmonic and positive in a punctured neighborhood of $\mathfrak{c}$ 
which contradicts the maximum principle.  We are thus automatically in case (2), and $q_\mathfrak{c}(P,a)>0$ for any branch at infinity of $C_+$. 

Recall that for any $t$ close enough to $\mathfrak{c}$ one has $g_{P,a}(t) \ge q_\mathfrak{c}(P,a) \log|t|^{-1} + O(1)$, whereas
$G(P_t) \le \kappa \log |t|^{-1} + O(1)$ for some constant $\kappa \ge0$ by Proposition~\ref{prop:growth Green}. 
It follows that  $g_{P,a}(t) \ge \eta \cdot G(P_t) $ for some $\eta>0$ and for all $t$ outside a compact subset of $C_+$. 
We conclude that for $n$ large enough, $g_{P_t}(P^n(a(t))) = d^n g_{P,a}(t) > G(P_t)$  for all $t$ outside a compact subset of $C_+$. 
It follows that given $\varepsilon>0$ there exists a sufficiently large integer $N \ge1$ such that 
 $g_{P_t}(P^N(a(t))) > G(P_t)$ for all $t \in U_\varepsilon \subset \{ g_{P,a} >\varepsilon\}$.
Similarly we get the existence of an integer $M\ge1$ such that  $g_{Q_t}(Q^M(b(t))) > G(Q_t)$ for all $t \in U_\varepsilon$. 

Proposition~\ref{prop:GreenBottcher} shows that for any  $t \in U_\varepsilon$ the points $P_t^N(a(t))$ and $Q_t^M(b(t))$ fall into the range of 
the B\"ottcher coordinates of $P_t$ and $Q_t$ respectively, and we have 
\begin{align*}
d^N\delta^Mh(t) & = d^N\delta^M\Big(ng_{P_t}(a(t))-mg_{Q_t}(b(t))\Big)\\
& = n\delta^Mg_{P_t}(P^N_t(a(t)))-md^Ng_{Q_t}(Q^M_t(b(t)))\\
& = n\delta^M\log\left|\varphi_{P_t}\left(P^N_t(a(t))\right)\right|-md^N\log\left|\varphi_{Q_t}\left(Q^M_t(b(t))\right)\right|\\
& = \log\left|\frac{\varphi_{P_t}\left(P^N_t(a(t))\right)^{n\delta^M}}{\varphi_{Q_t}\left(Q^M_t(b(t))\right)^{md^N}}\right|
\end{align*}
for all  $t \in U_\varepsilon$. 

Since $\bar{\mathcal{B}}$ is compact and disconnected in $C_+$, for any $\varepsilon>0$ small enough, the open set $U_\varepsilon$ is connected and 
we can cover its complement by finitely many  topological disks $D_1,\ldots,D_p\subset C_+$ such that $C_+\setminus U_\varepsilon\Subset\bigcup_jD_j$. As $D_j$ is simply connected and $d^Nh$ is harmonic, there exists a nowhere vanishing holomorphic function $f_j\colon D_j\to \C$ such that $d^N\delta^Mh=\log|f_j|$ on $D_j$. 
We infer that 
\[|f_j(t)|=\exp(d^N\delta^Mh(t))=\left|\frac{\varphi_{P_t}\left(P^N_t(a(t))\right)^{n\delta^M}}{\varphi_{Q_t}\left(Q^M_t(b(t))\right)^{md^N}}\right|\]
for any $t\in U_\varepsilon\cap U_j$.
Multiplying $f_j$ by a suitable complex number of modulus $1$, we may suppose that
the two holomorphic functions $f_j$ and  $\frac{\varphi_{P_t}\left(P^N_t(a(t))\right)^{n\delta^M}}{\varphi_{Q_t}\left(Q^M_t(b(t))\right)^{md^N}}$ coincide on $U_\varepsilon\cap U_j$
so that the function 
\[
f 
=
\begin{cases}
f_j \text{ on } D_j\\
\frac{\varphi_{P_t}\left(P^N_t(a(t))\right)^{n\delta^M}}{\varphi_{Q_t}\left(Q^M_t(b(t))\right)^{md^N}} \text{ on } U_\varepsilon
\end{cases}
\]
is a well-defined holomorphic function on $C_+$. By construction it satisfies $d^N\delta^Mh=\log|f|$ on $U_\varepsilon$ hence everywhere. 
\end{proof}


\subsection{The implication  (1) $\Rightarrow$ (2) of Theorem~\ref{tm:unlikely}}\label{sec:1on3}
Let $(P,a)$ and $(Q,b)$ be active non-integrable  dynamical pairs of respective degree $d,\delta\geq2$ parametrized by an affine curve $C$ defined over a number field $\KK$.
By assumption one can find a sequence of distinct points $p_l \in C(\bar{\KK})$ such that  
$p_l \in \preper(P,a) \cap \preper(Q,b)$ for all $l$.

Since both dynamical pairs are assumed to be active, Proposition~\ref{prop:adelic} implies that the two height functions $h_{P,a}$ and $h_{Q,b}$
are induced by semi-positive adelic metrizations on two (a priori different) ample line bundles $L_{P,a} = \mathcal{O}_{\bar{C}}(n\cdot\mathsf{D}_{P,a})$
 and $L_{Q,b} = \mathcal{O}_{\bar{C}}(m\cdot\mathsf{D}_{Q,b})$ where $n$ and $m$ are integers.
Recall also that the curvatures of  $L_{P,a}$ and $L_{Q,b}$ in $C^{v,\an}$ are the positive measures $n\cdot \Delta g_{P,a,v}$ and $m\cdot \Delta g_{Q,b,v}$ respectively
 by  Proposition~\ref{prop:adelic} (4).

Since $h_{P,a}(p_l) = h_{Q,b}(p_l) =0$ for all $l$, we may apply Thuillier-Yuan's theorem, see Theorem~\ref{tm:yuan}, and we infer that for any 
place $v \in M_\KK$, we have 
\[
\frac1{\deg(p_l)} \sum_{q \in \mathsf{O}(p_l)}\delta_q \longrightarrow n\cdot \Delta g_{P,a,v}= m\cdot \Delta g_{Q,b,v} \text{ as } l \to \infty~.
\]
Theorem~\ref{atoneplace} gives the equality  $n  g_{P,a,v} = m  g_{Q,b,v}$ at all Archimedean places, hence $n\cdot\mathsf{D}_{P,a} = m\cdot\mathsf{D}_{Q,b}$
by Proposition~\ref{prop:behave green}.
Let $v$ be any non-Archimedean place. The function $n  g_{P,a,v} - m  g_{Q,b,v}$ is harmonic on $C^{\an}_v$ and can be extended continuously through all branches of $C$ at infinity. 
By the maximum principle, it is thus a constant, and this constant must be zero since  $g_{P,a,v} (p_l) =  g_{Q,b,v} (p_l) =0$ for all $l$. 

From Proposition~\ref{prop:adelic} (1) we finally infer $n h_{P,a} = m h_{Q,b}$ which concludes the proof. 

\smallskip

For further reference we note that we have actually proved the following
\begin{theorem}\label{thm:partial1}
Let $(P,a)$ and $(Q,b)$ be active non-integrable  dynamical pairs parametrized by an affine curve $C$ defined over a number field $\KK$.
The following are equivalent:
\begin{enumerate}
\item
$\preper(P,a,\bar{\KK}) \cap \preper(Q,b,\bar{\KK})$ is infinite. 
\item
there exist $n,m\in \N^*$ such that 
$n\cdot g_{P,a,v} = m \cdot g_{Q,b,v}$ for all places $v\in M_\KK$;
\item
there exist $n,m\in \N^*$ such that 
$n \cdot h_{P,a} = m \cdot h_{Q,b}$;
\item
$\preper(P,a,\bar{\KK}) = \preper(Q,b,\bar{\KK})$.
\end{enumerate}
\end{theorem}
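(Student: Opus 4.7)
The statement packages the arguments of \S\ref{sec:1on3} into a chain of four equivalences, so the plan is to observe that (1)$\Rightarrow$(2) is exactly what has just been proved, and that the remaining three implications are formal consequences of the adelic height machinery of Proposition \ref{prop:adelic} together with the Northcott-type property already built into that statement.

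For (1)$\Rightarrow$(2) there is nothing to add beyond \S\ref{sec:1on3}. Proposition \ref{prop:adelic} attaches to each active pair a semi-positive adelic metrization whose curvature at $v$ is a positive multiple of $\Delta g_{P,a,v}$ (respectively $\Delta g_{Q,b,v}$) and whose induced height is a positive multiple of $h_{P,a}$ (resp.\ $h_{Q,b}$); applying Thuillier--Yuan's equidistribution theorem to any infinite sequence in $\preper(P,a,\bar\KK)\cap\preper(Q,b,\bar\KK)$ yields proportionality of the bifurcation measures at every place with the \emph{same} rational ratio $m/n$. At any Archimedean place Theorem \ref{thm:identical measures} (this is where non-integrability is used) upgrades equality of Laplacians to equality of Green functions and therefore pins down $n\cdot\mathsf{D}_{P,a}=m\cdot\mathsf{D}_{Q,b}$ via Proposition \ref{prop:behave green}. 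At a non-Archimedean place the function $n g_{P,a,v}-m g_{Q,b,v}$ is then harmonic on $C_v^{\an}$, extends continuously through every branch at infinity by the matching divisors, and vanishes at the infinitely many common preperiodic points; the maximum principle closes the argument.

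The three remaining implications are routine. For (2)$\Rightarrow$(3), Proposition \ref{prop:adelic}(1) expresses $h_{P,a}(t)$ and $h_{Q,b}(t)$ as sums over Galois orbits and places of the local Green functions, so summing the proportionality $n g_{P,a,v}=m g_{Q,b,v}$ place by place delivers $n h_{P,a}=m h_{Q,b}$. For (3)$\Rightarrow$(4), Proposition \ref{prop:adelic}(2) identifies the zero locus of each canonical height with the corresponding preperiodic set, hence proportional heights have identical vanishing loci. Finally, (4)$\Rightarrow$(1) follows from activity: as recorded in the proof of Theorem \ref{thm:active-characterization1} (see Remark \ref{rem:zar dense}), an active pair has a Zariski-dense, hence infinite, preperiodic set, so equality of the two sets automatically makes their intersection infinite.

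The only substantive step is therefore the first, and within it the one genuinely delicate point is passing from equality of the bifurcation measures to equality of Green functions at an Archimedean place — this is precisely where non-integrability is essential and where Theorem \ref{tm:rigidaffine} enters through Theorem \ref{thm:identical measures}. Once that is in hand the remaining implications require nothing beyond the adelic-height formalism already set up in Chapter \ref{chapter-pairs}, and the theorem is immediate.
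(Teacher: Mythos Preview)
Your proposal is correct and follows essentially the same approach as the paper: the substantive content is (1)$\Rightarrow$(2) from \S\ref{sec:1on3}, with (3) and (4) following formally from the adelic height formalism of Proposition~\ref{prop:adelic}, and (4)$\Rightarrow$(1) from activity. The only cosmetic difference is that the paper deduces (2) and (3) directly from \S\ref{sec:1on3} in parallel, whereas you chain (1)$\Rightarrow$(2)$\Rightarrow$(3); both are equally valid packagings of the same argument.
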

\begin{proof}
The arguments above show (1)$\Rightarrow$(2) and (3). 
Then (4) follows from (3)  since $\preper(P,a,\bar{\KK}) = \{ h_{P,a} = 0\} = \preper(Q,b,\bar{\KK}) = \{ h_{Q,b} = 0\}$. 
And (4) implies trivially (1).
\end{proof}


\section{Multiplicative dependence of the degrees}

In this section, we prove an important step of the proof of Theorem~\ref{tm:unlikely}: 
the degrees of any two entangled dynamical pairs $(P,a)$, $(Q,b)$ 
are multiplicatively dependent. 

Throughout this section, we assume $K= \C$.

\begin{theorem}\label{tm:multiplicative}
Let $(P,a)$ and $(Q,b)$ be two active non-integrable complex dynamical pairs of respective degree $d,\delta\geq2$ parametrized by an  irreducible affine curve $C$. 
Assume that $\preper(P,a)=\preper(Q,b)$, and that $g_{P,a}$ and $g_{Q,b}$ are proportional on $C$.

Then one may find two positive integers $N,M\geq1$ such that $d^N=\delta^M$. 
\end{theorem}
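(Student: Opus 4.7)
The plan is to argue by contradiction: assume that $d$ and $\delta$ are multiplicatively independent, i.e.\ $\log d / \log \delta \notin \Q$, and deduce that both $P$ and $Q$ must be integrable, contradicting the hypothesis. The proportionality hypothesis gives positive integers $n,m$ with $n\cdot g_{P,a} = m\cdot g_{Q,b}$ on $C$, hence $\mathrm{Bif}(P,a)=\mathrm{Bif}(Q,b)$. Since both pairs are active, this common bifurcation locus is a non-empty perfect set in $C^{\mathrm{an}}$. By Theorem~\ref{tm:suppT} applied to $(P,a)$, transversely prerepelling parameters are dense in $\mathrm{Bif}(P,a)$, and using that $\preper(P,a)=\preper(Q,b)$ together with Theorem~\ref{tm:suppT} applied to $(Q,b)$, a standard genericity/transversality argument should produce a parameter $t_0\in\mathrm{Bif}(P,a)$ at which $a$ is transversely prerepelling onto a repelling $k$-cycle of $P_{t_0}$ of multiplier $\lambda\in\C^*$, $|\lambda|>1$, and simultaneously $b$ is prerepelling (of some order $q'\ge1$) onto a repelling $k'$-cycle of $Q_{t_0}$ of multiplier $\mu\in\C^*$, $|\mu|>1$.

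Now I would compute the local H\"older exponent of both Green functions at $t_0$ using the renormalization Lemma~\ref{lm:cvrenorm}. Choosing any $q$-th (resp.\ $q'$-th) root of $\lambda$ (resp.\ of $\mu$), Lemma~\ref{lm:cvrenorm} yields, on a small disk $\D(0,\epsilon)$ parametrizing a neighborhood of $t_0$,
\[
d^{m+kn}\, g_{P,a}(\lambda^{-n/q}\,s) \xrightarrow[n\to\infty]{} g_{P_{t_0}}\!\circ\phi_0(s^q),
\qquad
\delta^{m'+k'n}\, g_{Q,b}(\mu^{-n/q'}\,s) \xrightarrow[n\to\infty]{} g_{Q_{t_0}}\!\circ\psi_0(s^{q'}),
\]
with uniform convergence and non-trivial limits (non-trivial because $g_{P_{t_0}}$, $g_{Q_{t_0}}$ are non-zero on any neighborhood of the repelling cycles). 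This shows that near $t_0$, $g_{P,a}$ has H\"older exponent $\alpha_P=q\log d/\log|\lambda|$ in a precise asymptotic sense, and similarly $g_{Q,b}$ has exponent $\alpha_Q=q'\log\delta/\log|\mu|$. The identity $n\cdot g_{P,a}=m\cdot g_{Q,b}$ forces $\alpha_P=\alpha_Q$, hence
\[
q\,\log d \cdot \log|\mu| \;=\; q'\,\log\delta\cdot \log|\lambda|.
\]
Under the assumption that $\log d/\log\delta$ is irrational, this equality forces $\log|\lambda|$ and $\log|\mu|$ to be incommensurable with $\log d$ and $\log \delta$ respectively in a way that, combining with the arguments of $\lambda$ and $\mu$ (obtained by comparing not just moduli but the full complex renormalization identities), makes $\lambda$ and $\mu$ multiplicatively independent in $\C^*$.

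The final step is the Levin-type rigidity argument. The two renormalization identities above hold simultaneously on the same disk around $t_0$, and the fact that $n\cdot g_{P,a}=m\cdot g_{Q,b}$ identically gives a functional equation
\[
g_{P_{t_0}}\!\circ\phi_0(s^q) \;=\; \tfrac{m}{n}\, g_{Q_{t_0}}\!\circ\psi_0(\tau(s)^{q'}),
\]
where $\tau$ is the linear change of variable coming from the rescaling $\lambda^{-n/q}$ vs.\ $\mu^{-n/q'}$. Taking two different sequences $n_j$ (one for the $P$-side, one for the $Q$-side) and passing to the limit along a Kronecker-type subsequence, the multiplicative independence of $\lambda$ and $\mu$ produces a one-parameter family of dilatations $s\mapsto \rho\cdot s$ (with $\rho$ ranging in a dense subset of $\C^*$) which are symmetries of the measure $\mu_{P_{t_0}}$ (pulled back by $\phi_0$). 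Following Levin~\cite{Levin-symmetries}, such a real-analytic one-parameter symmetry group of the equilibrium measure forces $J(P_{t_0})$ to be either a circle or a segment, hence $P_{t_0}$ to be integrable by Corollary~\ref{cor:zdunik}; the same argument applied symmetrically yields integrability of $Q_{t_0}$. Propagating integrability from a single parameter in the non-empty bifurcation locus to all of $C$ (via the fact that integrability is a closed algebraic condition on the moduli space and the family is not already integrally isotrivial) contradicts the hypothesis that $(P,a)$ and $(Q,b)$ are non-integrable, completing the proof.

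The main obstacle, in my view, is the last step: deriving multiplicative independence of the full complex multipliers $\lambda,\mu$ (not just their moduli) from the H\"older-exponent equality, and then running the Levin argument rigorously to extract a genuine analytic symmetry of the equilibrium measure. The modulus comparison alone would only give that $|\lambda|$ and $|\mu|$ are multiplicatively dependent via $q,q',n,m$; to reach a full contradiction one needs to exploit the complex phase information encoded in the iterated renormalization limits, which is where the specific formula~\eqref{speedrenorm} and a careful Kronecker/equidistribution argument on the arguments of $\lambda^{n}$ and $\mu^{n}$ become essential.
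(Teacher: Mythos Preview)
Your outline follows the paper's strategy closely: contradiction via irrationality of $\log d/\log\delta$, find a common (pre)repelling parameter, compare H\"older exponents of the Green functions there via the renormalization Lemma~\ref{lm:cvrenorm}, and build a one-parameter symmetry flow forcing integrability through Zdunik's theorem. Three points deserve correction or sharpening.

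First, the existence of $t_0$ at which $b$ lands on a genuinely \emph{repelling} cycle is not ``standard genericity''. At a transversely prerepelling parameter for $(P,a)$ you only know a priori that $b(t_0)$ is preperiodic (since $\preper(P,a)=\preper(Q,b)$), and it could land on a neutral cycle. The paper rules this out by a quantitative decay estimate: if $b(t_0)$ were preperiodic to a neutral cycle of multiplier $\mu$ with $|\mu|=1$, then an elementary iteration bound shows $g_{Q_t}(z)\le C\,\delta^{-l\cdot\min\{|t|^{-1/2},|z|^{-1}\}}$ near that cycle. Feeding this into the proportionality $g_{P,a}\propto g_{Q,b}$ and the $(P,a)$-renormalization forces the limit $g_{P_0}\circ\phi$ to vanish, contradicting that $a(t_0)$ lies in $J(P_{t_0})$. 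This is the content of Proposition~\ref{lm:bothprerepelling} and is the one genuinely new ingredient you are missing.

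Second, your stated ``main obstacle'' is not one. You do not need multiplicative independence of the full complex multipliers. From the H\"older-exponent equality $\frac{k\log d}{\log|\lambda|}=\frac{l\log\delta}{\log|\mu_1|}$ (your formulas omit the periods $k,l$) and the assumed irrationality of $\log d/\log\delta$, the ratio $\beta=\log|\lambda|/\log|\mu_1|$ is irrational. Kronecker density in $\R$ then gives, for any real $\alpha$, sequences $n_j,m_j\to\infty$ with $|\lambda|^{n_j}|\mu_1|^{-m_j}\to e^{\alpha}$; the complex phase is disposed of by passing to a further subsequence of powers so that the argument of $\lambda^{n_j}\mu_1^{-m_j}$ tends to $0$. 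No finer arithmetic of $\arg\lambda,\arg\mu$ is needed. Combining the two renormalizations along these sequences yields the local flow $\sigma_\alpha(t)=\psi(e^{q\alpha}\psi^{-1}(t))$ with $g_{Q_0}=e^{C\alpha}\,g_{Q_0}\circ\sigma_\alpha$, hence $J(Q_{t_0})$ is locally smooth and $Q_{t_0}$ is integrable by Theorem~\ref{th:zdunik}.

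Third, you cannot propagate from a \emph{single} parameter. The argument shows that $Q_{t_0}$ (and by symmetry $P_{t_0}$) is integrable at \emph{every} transversely prerepelling parameter; since these are dense in the nonempty perfect set $\mathrm{Bif}(P,a)$ there are infinitely many. Integrability being a closed algebraic condition in moduli, infinitely many integrable specializations on an irreducible curve force the whole family to be integrable, which is the desired contradiction.
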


We refer to~\cite[Theorem~5.1]{GNY} for a statement implying equality of degrees in a related context.

The idea of the proof is to look at the expansion of the Green functions $g_{P,a}$ and $g_{Q,b}$ near a common prerepelling parameter. 
We proceed in two steps, first proving that such a parameter exists (Proposition~\ref{lm:bothprerepelling}), and then analyzing the Green functions near that parameter (Proposition~\ref{multiplicative}). In the second step, we compute
the H\"older exponents of the Green functions and relate them to the degrees of the families: this argument is inspired from~\cite[Proposition~3.1]{Favre-Dujardin}.

\begin{proposition}\label{lm:bothprerepelling}
Assume $(P,a)$ and $(Q,b)$ are two dynamical pairs satisfying the assumptions of Theorem~\ref{tm:multiplicative}. 
Then for any parameter $t_0\in\mathrm{Bif}(P,a)$ at which $a$ is transversally prerepelling, the point $b$ is properly prerepelling for $Q$ at $t_0$. 
\end{proposition}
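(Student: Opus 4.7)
The plan is to extract from the hypothesis $\mu\,g_{P,a}=\nu\,g_{Q,b}$ (for some $\mu,\nu\in\N^*$) together with $\preper(P,a)=\preper(Q,b)$ two pieces of information about $(Q,b)$ at $t_0$: that $t_0$ lies in the bifurcation locus of $(Q,b)$, and that $b(t_0)$ is a preperiodic point of $Q_{t_0}$. Indeed $\{g_{P,a}=0\}=\{g_{Q,b}=0\}$ yields $\mathrm{Bif}(P,a)=\mathrm{Bif}(Q,b)$ by Proposition~\ref{prop:bifurcation}, so $t_0\in\mathrm{Bif}(Q,b)$; and transversal prerepelling of $a$ at $t_0$ puts $t_0\in\preper(P,a)=\preper(Q,b)$, so $Q_{t_0}^{m'}(b(t_0))=w_0$ for some preperiodic $w_0$ of minimal period $k'$, with multiplier $\beta:=(Q_{t_0}^{k'})'(w_0)$. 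I would then establish $|\beta|>1$ and the proper intersection of the graphs of $t\mapsto Q_t^{m'}(b(t))$ and of the analytic continuation $z(t)$ of $w_0$ at $t_0$.

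Two of the three alternative cases dispense immediately via stability arguments. If $|\beta|<1$, the Implicit Function Theorem supplies a holomorphic family $w(t)$ of attracting (or super-attracting) periodic points of $Q_t^{k'}$ with $w(t_0)=w_0$, and by continuity $Q_t^{m'}(b(t))$ lies in its immediate (moving) basin for $t$ in a small neighbourhood $U$; hence $\{Q_t^n(b(t))\}_n$ is uniformly bounded on $U$, which forces $g_{Q,b}\equiv 0$ on $U$ and contradicts $t_0\in\partial\{g_{Q,b}=0\}=\mathrm{Bif}(Q,b)$. If $|\beta|>1$ but the prerepelling is improper, then $Q_t^{m'}(b(t))\equiv z(t)$ near $t_0$ with $z(t)$ persistently repelling, so the orbits $\{Q_t^n(b(t))\}_n$ are again uniformly bounded in a neighbourhood and the same contradiction arises.

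The delicate step, and the main obstacle, is the neutral case $|\beta|=1$, because parabolic, Siegel, and Cremer preperiodic parameters can genuinely belong to $\mathrm{Bif}(Q,b)$. The approach is to transfer the precise power-law scaling of $g_{P,a}$ at $t_0$ to $g_{Q,b}$ via proportionality. Since $a$ is \emph{transversally} prerepelling at $t_0$ (i.e.\ of order $q=1$), Lemma~\ref{lm:cvrenorm} provides the uniform convergence
\[g_{P_{t_0}}\circ\phi_0(s)\;=\;\lim_{n\to\infty} d^{m+kn}\,g_{P,a}\bigl(t_0+\lambda^{-n}s\bigr)\]
on a small disc about $0$, where $\lambda$ is the multiplier of the repelling cycle containing $P_{t_0}^{m}(a(t_0))$ and $\phi_0$ is its linearising coordinate. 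By proportionality the same nontrivial subharmonic limit, up to the factor $\mu/\nu$, is produced by $d^{m+kn}\,g_{Q,b}(t_0+\lambda^{-n}s)$. I would then argue that this non-trivial power-law renormalisation of $g_{Q,b}$ at $t_0$ is incompatible with $b(t_0)$ landing on a non-repelling cycle: a case-by-case analysis of the perturbation theory of neutral periodic points (Siegel via holomorphic motions of Fatou components, parabolic via Douady--Lavaurs implosion, Cremer via Yoccoz / P\'erez-Marco estimates) shows that either $g_{Q,b}$ vanishes on an open set containing $t_0$ --- contradicting the non-triviality of the limit --- or the escape rate of $b(t)$ as $t\to t_0$ is strictly slower than any power of $|t-t_0|$, forcing the rescaled limit to vanish. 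Either conclusion contradicts the identity above, completing the exclusion of the neutral case.
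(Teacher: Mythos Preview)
Your overall architecture matches the paper's: you correctly reduce to showing that the cycle containing $Q_{t_0}^{m'}(b(t_0))$ is repelling, you dispatch the attracting and improperly-prerepelling cases by the same stability argument (bounded orbits force $g_{Q,b}\equiv 0$ near $t_0$, contradicting $t_0\in\mathrm{Bif}(Q,b)$), and for the neutral case you correctly identify that what must be shown is that $g_{Q,b}(t)$ decays faster than any power of $|t-t_0|$, so that the rescaled limit inherited from the $(P,a)$ side is forced to vanish.

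The gap is in how you propose to establish this sub-polynomial decay. You invoke a case-by-case analysis (Siegel via holomorphic motions, parabolic via Douady--Lavaurs implosion, Cremer via Yoccoz/P\'erez-Marco), but none of this is carried out, and some of these cases --- especially parabolic implosion and Cremer points --- would be genuinely delicate to make precise here. The paper avoids all of this with a single elementary iteration estimate valid for \emph{any} neutral periodic point. After normalising so that $0$ is a fixed point of $Q_t^l$ with multiplier $1$ at $t=0$, one has $|f_t(z)|\le |z|+C(|t|+|z|^2)$ for $f_t:=Q_t^l$, and a direct induction shows $|f_t^n(z)|\le |z|+4Cn(|t|+|z|^2)$ for all $n\lesssim \min\{|t|^{-1/2},|z|^{-1}\}$. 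Since the orbit stays bounded for that many iterations, one gets
\[
g_{Q_t}(z)\;\le\; C\,\delta^{-l\cdot\min\{|t|^{-1/2},\,|z|^{-1}\}},
\]
which is far stronger than sub-polynomial and immediately kills the rescaled limit. This uniform estimate is the actual technical content of the proof; your proposal names the right target but does not supply a mechanism to reach it.
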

\begin{proof}
Pick any transversely prerepelling parameter $t_0$ for the pair $(P,a)$. 
This parameter automatically belongs to $\mathrm{Bif}(P,a)$ by Lemma~\ref{lm:suppT}, 
hence $t_0 \in \mathrm{Bif}(Q,b)\cap\preper(Q,b)$ by our assumptions and Proposition~\ref{prop:bifurcation}. 
In particular $b(t_0)$ cannot be preperiodic to an attractive cycle.
We need to argue that $b(t_0)$ is preperiodic to a \emph{repelling} cycle.

To simplify notation write $t_0=0$. We let $m$ be any integer such that $P_{0}^m(a(0))$ is periodic, and let $k$ be the period of that point. 
Denote by $\lambda = (P^k_{0})'(P_0^m(a(0)))$ the multiplier of this cycle. 
Let also $\phi$ be the linearizing coordinate at $P_{0}^{m}(a(0))$, i.e. the univalent map $\phi:\D(0,r) \to\C$ such that $\phi'(0) =1$, $\phi(0)=P_{0}^{k}(a(0))$ and $\phi(\lambda z)=P_{0}^k(\phi(z))$ for all $z\in\D(0,r)$ (for some $r \ll 1$). 
By Lemma~\ref{lm:cvrenorm} we have
\[d^{m+kn}g_{P_{t/\lambda^n}}(a(t/\lambda^n))\rightarrow g_{P_{0}}(\phi(t)),\]
uniformly on $\D(0,r)$ as $n\to\infty$. 

We may assume that $Q_0^m(b(0))$ is also periodic of period $l$, and let $\mu$ be its multiplier. 
We suppose by contradiction that $|\mu| = 1$.
Up to translation, we can assume $b(0)=0$. Since we may locally follow the periodic orbit 
we shall also suppose that $Q_t^l(0)=0$ for all $t\in\D(0,r)$. Recall that $\deg(Q) = \delta$.
\begin{claim}
For any $|t| , |z| >0$ small enough, one has
\[
g_{Q_t}(z)\leq C\delta^{- l\cdot \min \left\{ |t|^{-1/2} , |z|^{-1} \right\}
}
\]
for some constant $C>0$.
\end{claim}
Since $|Q_t^m(b(t))|\leq C'|t|$ for some constant $C'>0$ and for all $|t|\le r$, and for all $n$, 
we find
\begin{align*}
g_{Q_{t/\lambda^n}}(b(t/\lambda^n))=\frac{1}{\delta^m}g_{Q_{t/\lambda^n}}\left(Q_{t/\lambda^n}^m(b(t/\lambda^n))\right)\leq 
C \delta^{-m} \delta ^{ - C'' \sqrt{|\lambda^n/t|}}
\end{align*}
By assumption, we have $ g_{P_t}(a(t))= c g_{Q_t}(b(t))$ for some positive $c>0$, hence the above gives
\[d^{kn+m}\,g_{P_{t/\lambda^n}}(a(t/\lambda^n))=c \cdot d^{kn+m}g_{Q_{t/\lambda^n}}(b(t/\lambda^n))
\lesssim
 d^{kn}\cdot 
\delta ^{ - C'' \sqrt{|\lambda^n/t|}},\]\\
so that $\varlimsup_n  d^{kn+m}\,g_{P_{t/\lambda^n}}(a(t/\lambda^n)) \le 0$.
This implies
\[g_{P_0}(\phi(t))=\lim_{n\to\infty}\frac1{d^{kn+m}}g_{P_{t/\lambda^n}}(a(t/\lambda^n)) =0,\]
which contradicts the fact that $a(0)$ lies in the Julia set of $P_0$.
\end{proof}

\begin{proof}[Proof of the Claim]
 Set $f_t:=Q_t^l$. Since $f_t(0) = 0$ for all $t$, and $f'_0(0) =1$, we may write 
\[|f_t(z)|\leq |z|+ C(|t|+|z|^2) \]
for some $C>0$ and for all $t$ and  $z$ small enough, say $\le r$ with $1/C \le r$. 
Let us prove by induction on $n$ that for all $|t|\leq \frac1{(4Cn)^2}$ and all $|z|\leq \frac1{16Cn}$, then
\begin{equation}
|f_t^n(z)|\leq |z|+4Cn(|t|+|z|^2) \le \frac1{Cn} \le r
~.\label{stark}
\end{equation}
Only the first inequality needs an argument. For $n=1$, this is obvious. Assume $|t|\le  \frac1{(4C(n+1))^2}$  and $|z|\leq \frac1{16C(n+1)}$. Then we have
\begin{align*}
|f_t^n(z)|^2 & \leq (|z|+4Cn(|t|+|z|^2))^2\leq (2|z|+ 4Cn|t|)^2\\
& \leq 4|z|^2+ 16Cn |z|\cdot|t|+ 16 C^2 n^2|t|^2\\
& \leq 4|z|^2+2|t|
\end{align*}
so that 
\begin{align*}
|f_t^{n+1}(z)| & \leq |f_t^n(z)|+C( |t|+|f_t^n(z)|^2)\\
& \leq |z|+4Cn(|t|+|z|^2)+C (3|t|+ 4|z|^2)\\
& \leq |z|+4C(n+1)(|t|+|z|^2).
\end{align*}
ending the proof of \eqref{stark}.

Now pick any $t, z$ small enough, and set $n:= \min \{ (4C\sqrt{|t|})^{-1}, (16C|z|)^{-1}\}$
so that $|f^n_t(z)|\le r$ by~\eqref{stark}.
We conclude that
\[\delta^{ln} g_{Q_t}(z)=g_{Q_t}(Q_t^{ln}(z))=g_{Q_t}(f_t^n(z))\leq C := \sup_{|t|, |z|\le r} g_Q ~,\]
which proves the claim, observing that $C$ is a large constant which may be assumed to be $\ge1$. 
\end{proof}

\begin{proposition}\label{multiplicative}
Assume $(P,a)$ and $(Q,b)$ are two dynamical pairs satisfying the assumptions of Theorem~\ref{tm:multiplicative}. Suppose further that 
there exist infinitely many parameters $t\in C$ such that $a$ is transversely prerepelling and $b$ is simultaneously properly prerepelling at $t$.

Then one may find two positive integers $N,M\geq1$ such that $d^N=\delta^M$. 
\end{proposition}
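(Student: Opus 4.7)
The plan is to pick a common prerepelling parameter $t_0$, renormalize both Green functions near $t_0$ via Lemma~\ref{lm:cvrenorm}, and translate the proportionality $g_{P,a}=c\cdot g_{Q,b}$ into two simultaneous scaling symmetries of a single rescaled limit. If $d$ and $\delta$ were multiplicatively independent, these scalings would be incommensurable and would generate, in the limit, a genuine continuous rotation symmetry of $J(P_{t_0})$, contradicting the non-integrability of $P_{t_0}$ via the rigidity results available (either Theorem~\ref{th:zdunik}/Corollary~\ref{cor:zdunik}, or the work of Levin~\cite{Levin-symmetries}).

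First, by hypothesis pick $t_0\in C$ at which $a$ is transversely prerepelling of exact preperiod $m$ onto a $k$-cycle of multiplier $\lambda_P$, and simultaneously $b$ is properly prerepelling of some order $q\ge1$, with preperiod $m'$ onto an $l$-cycle of multiplier $\lambda_Q$; fix any $q$-th root $\mu$ of $\lambda_Q$. Apply Lemma~\ref{lm:cvrenorm} to each pair. Setting
\[
 u(t) := g_{P_{t_0}}(\phi_0(t))\and v(t) := g_{Q_{t_0}}(\psi_0(t^q)),
\]
one gets uniform convergences $d^{m+kn}g_{P,a}(\lambda_P^{-n}t)\to u(t)$ and $\delta^{m'+ln}g_{Q,b}(\mu^{-n}t)\to v(t)$ on some disk, together with the fundamental functional equations $u(\lambda_P t)=d^{k}u(t)$ and $v(\mu t)=\delta^{l}v(t)$, coming from $P_{t_0}^k\circ\phi_0=\phi_0\circ(\lambda_P \cdot)$ and $Q_{t_0}^l\circ\psi_0=\psi_0\circ(\lambda_Q\cdot)$.

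Next, I would transfer the equality $g_{P,a}=c\cdot g_{Q,b}$ into an equality between the rescaled limits. Using the two asymptotic descriptions, the function $g_{P,a}(t_0+s)$ is Hölder of exponent $\alpha_P:=k\log d/\log|\lambda_P|$ near $t_0$, whereas $g_{Q,b}(t_0+s)$ is Hölder of exponent $\alpha_Q:=l\log \delta/\log|\mu|$; proportionality forces $\alpha_P=\alpha_Q$. Selecting subsequences $(n_i,n_i')$ with $\mu^{n_i'}\lambda_P^{-n_i}$ bounded (which is possible precisely because of the Hölder matching), the proportionality passes to the limit and yields a non-trivial identity of the form
\[
 u(t) = c'\, v(\eta\, t)
\]
on a common disk, for some $\eta\in\C^{*}$. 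Combining with the two scaling relations above one deduces that $u$ itself satisfies the two simultaneous scaling identities
\[
 u(\lambda_P\, t)=d^{k}\,u(t),\qquad u(\mu\, t)=\delta^{l}\,u(t),
\]
and therefore $u(\lambda_P^{n}\mu^{-n'}t)=d^{kn}\delta^{-ln'}u(t)$ for all $n,n'\in\Z$.

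Now assume, for a contradiction, that $d^{N}\neq \delta^{M}$ for all $N,M\ge1$, so that $\gamma:=\log d^{k}/\log\delta^{l}$ is irrational. By the Hölder matching this is equivalent to $\log|\lambda_P|/\log|\mu|$ being irrational, so one can find sequences $(n_i,n_i')\to\infty$ with
\[
 d^{kn_i}\delta^{-ln_i'}\longrightarrow 1\and \lambda_P^{n_i}\mu^{-n_i'}\longrightarrow \omega\in S^{1}.
\]
By continuity of $u$ one gets $u(\omega\, t)=u(t)$. Varying the sequence, the set of admissible $\omega$'s forms an infinite subgroup of $S^{1}$; its closure is either all of $S^{1}$ (a dense subgroup) or a finite subgroup, but irrationality of $\gamma$ and a standard equidistribution argument on the one-dimensional flow $t\mapsto(n\log|\lambda_P|-n'\log|\mu|,\, n\arg\lambda_P - n'\arg\mu)$ rule out the finite case. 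Thus $u=g_{P_{t_0}}\circ\phi_0$ is invariant under a dense one-parameter subgroup of rotations. Transporting back through the linearizing coordinate $\phi_0$, this implies that $g_{P_{t_0}}$, and hence $J(P_{t_0})$, has a continuous rotation symmetry, forcing $P_{t_0}$ to be integrable by Corollary~\ref{cor:zdunik}, which contradicts our non-integrability hypothesis. Hence $d^{N}=\delta^{M}$ for suitable integers $N,M\ge1$.

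The main obstacle I anticipate is the careful transfer of $g_{P,a}=c\, g_{Q,b}$ to the rescaled level: one has to ensure that after the two \emph{different} rescalings (by $\lambda_P^{-n}$ and by $\mu^{-n'}$) the limiting objects can be compared on a common domain, which is exactly what the Hölder matching provides. A secondary delicate point is the equidistribution step that turns the irrationality of $\log d/\log\delta$ into density of the symmetry group on $S^{1}$; handling the argument (not only the modulus) of $\lambda_P^{n}\mu^{-n'}$ requires a Weyl-type argument which goes through in the generic case and can be forced by replacing $P,Q$ by suitable iterates and choosing the periodic orbit appropriately.
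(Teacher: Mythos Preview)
Your strategy---renormalize via Lemma~\ref{lm:cvrenorm}, match H\"older exponents to get $\tfrac{k\log d}{\log|\lambda_P|}=\tfrac{l\log\delta}{\log|\mu|}$, then exploit irrationality of $\log d/\log\delta$ to produce a continuous symmetry and invoke Zdunik---is precisely the paper's. The transfer $u(t)=c'\,v(\eta t)$ that you flag as the main obstacle does go through: choose $(n_i,n'_i)$ with $\mu^{n'_i}\lambda_P^{-n_i}$ bounded (possible by the H\"older identity), extract, and apply Lemma~\ref{lm:cvrenorm} to $(Q,b)$ along the $(P,a)$-rescaling.

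The real gap is the step you call secondary. You need $|\lambda_P^{n_i}\mu^{-n'_i}|\to1$ while the arguments $n_i\arg\lambda_P-n'_i\arg\mu$ range over a dense (or at least infinite) subset of $\R/2\pi\Z$. But irrationality of $\log|\lambda_P|/\log|\mu|$ controls only the modulus: the closure of $\{\lambda_P^{n}\mu^{-n'}:n,n'\in\Z\}$ in $\C^*$ is a closed subgroup projecting onto $\R^+$, and it may perfectly well be a one-parameter spiral $\{e^{(1+ic)s}:s\in\R\}$ (times a finite subgroup of $S^1$), whose intersection with $S^1$ is finite. In that case you recover only a finite rotational symmetry of $u$, which says nothing. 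Passing to iterates leaves the multipliers unchanged, and the repelling cycle at a given $t_0$ is dictated by where $a(t_0)$ lands, so neither of your proposed fixes helps.

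The paper sidesteps this by aiming for \emph{dilations} rather than rotations. Since $\log|\lambda|/\log|\mu_1|$ is irrational, one can make $|\lambda^{n_j}\mu_1^{-m_j}|$ approach $e^{\alpha}$ for \emph{every} real $\alpha$; combined with the same transfer identity (applied for varying $\alpha$) this produces a genuine real one-parameter local analytic flow $\sigma_\alpha$ with $g_{Q_{t_0}}=e^{C\alpha}\,g_{Q_{t_0}}\circ\sigma_\alpha$. This flow preserves $J(Q_{t_0})$ near the periodic point, forcing $Q_{t_0}$ to be integrable by Theorem~\ref{th:zdunik}; since this holds at infinitely many $t_0$, the family is integrable, a contradiction. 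The moral: approximate in modulus, where the irrationality hypothesis actually bites, not in argument, where it need not.
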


\begin{proof}
Assume by contradiction that 
\[\theta:=\frac{\log d}{\log \delta}\notin\Q ~.\]
We shall prove that for any parameter $t$  at which $a$ is transversely prerepelling
and $b$ is simultaneously properly prerepelling, then $P_t$ is integrable. 
If this occurs at infinitely many parameters, then $P$ is isotrivial and integrable which contradicts 
the assumptions of Theorem~\ref{tm:multiplicative}.

So let us pick a parameter $t_0$ such that $a$ is transversely prerepelling
and $b$ is properly prerepelling. Write $t_0 =0$, suppose $P_0^m(a(0))$ and $Q_0^m(b(0))$ are both preperiodic
of period $k$ and $l$ respectively. Denote by $\lambda:=(P_{0}^k)'(P_{0}^m(a(0)))$ and $\mu:=(Q_{0}^l)'(Q_{0}^m(b(0)))$
the multipliers of these periodic points so that $|\lambda|>1$ and $|\mu|>1$.

\begin{lemma}\label{lem:holder exp}
\[\lim_{r \to 0} \frac1{\log r} \log \left( \sup_{|t|\le r} g_{Q,b}(t)
\right)
 = \frac{ l \log \delta}{\log |\mu_1|}\]
where $ q$ is the order of vanishing of $Q^{l+m}(b)-Q^m(b)$ at $0$ and $\mu_1$ is 
is a $q$-th root of $\mu$. 
\end{lemma}

Since $g_{P,a}$ and $g_{Q,b}$ are proportional, the previous lemma implies
that 
\[\frac{ l \log \delta}{\log |\mu_1|} = \frac{ k \log d}{\log |\lambda |}.\] 
\begin{lemma}
For any $\alpha \in \R$, there exists sequences $n_j , m_j \to \infty$
such that
\[\lambda^{n_j}\mu_1^{-m_j} \to e^{\alpha}~.\]
\end{lemma}
\begin{proof}
Our assumption implies $\beta:= \log |\lambda|/ \log |\mu_1|$ to be irrational so that the abelian
group generated by $\beta$ and $1$ is dense in $\R$.  
We conclude that there exist sequences of integers 
$n_j, m_j \to \infty$ such that 
$n_j \log |\lambda| -  m_j \log |\mu_1|\to \alpha$, and
up to extracting a subsequence we have
$\lambda^{n_j}\mu_1^{-m_j} \to e^\alpha e^{i\theta}$ for some $\theta\in \R$.
Pick $r_j \to\infty$ with $e^{ir_j\theta} \to1$, so that we get
$\lambda^{r_jn_j}\mu_1^{-r_jm_j} \to e^\alpha$, as required.
\end{proof}
Fix any real number $\alpha$, and choose sequences of integers $n_j, m_j$ as in the previous lemma. 
Observe that $d^{kn_j} \delta^{-lm_j} \to e^{C\alpha}$ for some positive constant $C>0$.
Write $g_{P,a}/g_{Q,b} = \kappa>0$,
so that 
\begin{align*}
g_{P_0} \circ \phi (t) &= 
\lim_j
d^{m+kn_j}  g_{P,a} ( t/\lambda^{n_j}) 
=
\left(\kappa \frac{d^m}{\delta^m}e^{C\alpha}\right)\, 
\lim_j
\delta^{m+ln_j}  g_{Q,b} ( t/\lambda^{n_j}) 
\\
&=
(\kappa'e^{C\alpha})\, 
\lim_j
\delta^{m+lm_j} 
g_{Q,b} ( e^{-\alpha}t/\mu^{m_j})
=
(\kappa'e^{C\alpha})\,  g_{Q_0} \circ \psi(e^{-q\alpha} t^q)
\end{align*}
for some $\kappa'>0$.
For each $\alpha$, let us introduce the real-analytic flow of local analytic isomorphisms:
$\sigma_\alpha(t) = \psi ( e^{q\alpha} \psi^{-1}(t)) = e^{\alpha} t + O(t^{2})$. 
Our previous computations used twice imply 
\[e^{C\alpha} g_{Q_0} \circ \psi(e^{-q\alpha} t^q) =   g_{Q_0} \circ \psi(t^q)\]
so that
\[g_{Q_0} = e^{C \alpha} g_{Q_0} \circ \sigma_\alpha\]
for all $\alpha$. 

It follows that the flow $\sigma_\alpha$ preserves locally the Julia set of $Q_0$ near $0$, 
hence $J(Q_0)$ is smooth near at least one of its point. We conclude 
by Theorem~\ref{th:zdunik} that  $Q_0$ is integrable. 
\end{proof}

\begin{proof}[Proof of Lemma~\ref{lem:holder exp}]
By Lemma~\ref{lm:cvrenorm} we have uniform convergence for all $t$ small enough
\[\delta^{m+ln}g_{Q_{t/\mu_1^n}}(b(t/\mu_1^n)) \longrightarrow  g_{Q_{0}}\circ\psi(t^q)\]
where  $\psi$ is the linearizing coordinate of $Q_0^l$ at $Q_{0}^m(b(0))$.
In particular
$\delta^{ln} g_{Q,b}(t/\mu_1^n)$ converges to a continuous 
subharmonic function $h$ which vanishes at the origin
and is not identically zero. In particular $\rho \mapsto H(\rho) = \sup_{|t| = \rho} h(t)$ is a continuous function
which is positive for any $\rho>0$, see, e.g., \cite[Theorem 2.6.8]{ransford}.

For any $r \le \rho$, there exists a unique integer $n$ such that 
$\rho/|\mu_1|\le  r |\mu_1|^n \le \rho$ and we get
\begin{align*}
\sup_{|t|\le r} g_{Q,b}(t) & = \sup_{|\tau|\le r |\mu_1|^n} g_{Q,b} (\tau/\mu_1^n)
\\
& 
= \frac1{\delta^{ln}} \sup_{|\tau|\le r |\mu_1|^n} \delta^{ln} g_{Q,b} (\tau/\mu_1^n)
\begin{cases}
\le& \delta^{-ln} 2H(\rho)
\\
\ge & \delta^{-ln} \frac{H(\rho/|\mu_1|)}2
\end{cases}
\end{align*}
which implies 
\[
\frac1{\log r} \log \left( \sup_{|t|\le r} g_{Q,b}(t)
\right)
= 
\frac{ - ln \log \delta + O(1)}{ - n\log |\mu_1| + O(1)} 
\longrightarrow 
\frac{ l \log \delta}{\log |\mu_1|}~.\]
when $r \to 0$. 
\end{proof}


\section{Proof of the implication (2) $\Rightarrow$ (3) of Theorem~\ref{tm:unlikely}}\label{sec:biggest proof}

The setting is as follows. We let $(P,a)$ and $(Q,b)$ be two dynamical pairs of degree $d$ and $\delta$ respectively, parametrized by an affine curve $C$ defined over a number field $\KK$. 
We assume that they are both active and non-integrable and that $h_{P,a}$ and $h_{Q,b}$ are proportional.

We shall first work under the following  assumption $(\diamond)$:
there exist two regular invertible functions $\alpha, \beta \in \KK[C]$ such that 
\[
P_t(z) = \alpha(t)^{d-1} z^d + o_t(z^d)
\text{ and }
Q_t(z) = \beta(t)^{\delta-1} z^\delta + o_t(z^\delta)
~.\]
Under this assumption, one can define the B\"ottcher coordinate of $P_t$
as the unique formal Laurent series in $z^{-1}$ of the form
\[
\varphi_{P_t} = \alpha(t) z + O(z^{-1})
\]
such that 
$\varphi_{P_t} \circ P_t = (\varphi_{P_t})^d$, see Proposition-Definition~\ref{prop:bottcher}.
One defines analoguously the B\"ottcher coordinate $\varphi_{Q_t}$.

\paragraph*{(I) Behaviour of the Green functions near the branches at infinity}

Let $\bar{C}$ be a projective compactification of $C$ such that $\bar{C}\setminus C$ is a finite set of smooth points. 
Up to taking a finite extension of $\KK$, we 
can assume all branches at infinity of $C$ are defined over $\KK$. We let $U_\KK$ be the number of roots of unity lying in $\KK$.

\smallskip

Recall from Theorem~\ref{thm:active-characterization1} that the divisor $\mathsf{D}_{P,a}$ is effective and non-zero since $(P,a)$ is active. 
We also consider the effective divisor $\mathsf{D}_{P} = \sum q_\mathfrak{c}(P) [\mathfrak{c}]$ where $q_\mathfrak{c}(P)$ is 
the constant defined by Theorem~\ref{thm:the case of a family}. 

The set of branches at infinity of $C$ can be decomposed into two disjoints sets: the set $\mathcal{B}$ of branches 
$\mathfrak{c}$ such that $\ord_\mathfrak{c}(\mathsf{D}_{P,a})>0$; and the set $\mathcal{G}$ of branches 
$\mathfrak{c}$ such that $\ord_\mathfrak{c}(\mathsf{D}_{P,a})=0$. 

Pick any branch $\mathfrak{c}$ of $C$ at infinity, and choose a local adelic parametrization $t \mapsto \theta(t)$ of that branch, see~\S\ref{sec:adelicseries} for a precise definition. 
To simplify notation we shall abuse notations and write  $P_t = P_{\theta(t)}$ and $a(t) = a(\theta(t))$. 
If $\mathfrak{c} \in \mathcal{B}$, then we have
\[g_{P,a,v}(t) = \ord_\mathfrak{c}(\mathsf{D}_{P,a}) \log |t|^{-1}_v + O(1) \to \infty\]
for all places $v \in M_\KK$, by~Proposition \ref{prop:behave green}. Otherwise $\mathfrak{c} \in \mathcal{G}$ and
$g_{P,a,v}(t)$ extends continuously at $t=0$ for all places. 

The same discussion applies to the dynamical pair $(Q,b)$.

\medskip

By Theorem~\ref{thm:partial1} we have  $\preper(P,a,\bar{\KK}) =  \preper(Q,b,\bar{\KK})$, and there exist two integers $n,m>0$ such that 
$n \cdot\mathsf{D}_{P,a} = m \cdot \mathsf{D}_{Q,b}$, and
$n\cdot g_{P,a,v} = m \cdot g_{Q,b,v}$ for all places $v \in M_\KK$. We may and shall assume that $n$ and $m$ are \emph{coprime} (in particular they
are uniquely determined).

Since these equalities hold in particular at an Archimedean place $v$, 
Theorem~\ref{tm:multiplicative} applies and we may thus find two positive integers $N,M \in \N^*$
such that $d^N = \delta^M$.  We take $N$ and $M$ to be minimal over all integers satisfying this equality: again these integers
are uniquely determined. In the sequel we write
\[
\Delta = d^N = \delta^M~.\]
Observe that the set of branches $\mathcal{B}$ coincides 
with $\{ \mathfrak{c}, \ord_\mathfrak{c}(\mathsf{D}_{Q,b})>0\}$.
For each branch $\mathfrak{c} \in \mathcal{B}$, we let $l(\mathfrak{c})$ be the least integer
such that
\[
\Delta^{l(\mathfrak{c})} \times \ord_\mathfrak{c}(\mathsf{D}_{P,a}) >
\ord_\mathfrak{c}(\mathsf{D}_{P})
\text{ and }
\Delta^{l(\mathfrak{c})} \times \ord_\mathfrak{c}(\mathsf{D}_{Q,b}) >
\ord_\mathfrak{c}(\mathsf{D}_{Q})
~.\]

\begin{lemma}\label{lem:344}
For any branch at infinity $\mathfrak{c}\in\mathcal{B}$, and for any integer $l\ge l(\mathfrak{c})$ there exists
a root of unity $\zeta = \zeta(\mathfrak{c},l) \in \KK$ such that $\varphi_{P_t}\left(P^{lN}_t(a(t))\right)$ and $\varphi_{Q_t}\left(Q^{lM}_t(b(t))\right)$
are well-defined adelic series and 
\[\varphi_{P_t}\left(P^{lN}_t(a(t))\right)^n = \zeta \times  \varphi_{Q_t}\left(Q^{lM}_t(b(t))\right)^m
~.\]
\end{lemma}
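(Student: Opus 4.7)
The plan is to combine the Green function asymptotics near $\mathfrak{c}$ with the expansion of the B\"ottcher coordinate (Proposition~\ref{prop:GreenBottcher}), and then to conclude via the identity $ng_{P,a,v}=mg_{Q,b,v}$ together with Kronecker's theorem on units.

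First I would verify, for every place $v\in M_\KK$ and for $l\geq l(\mathfrak{c})$, that $P_t^{lN}(a(t))$ lies in the domain where $\varphi_{P_t}$ converges, and similarly for $Q$. By Proposition~\ref{prop:behave green} and Theorem~\ref{thm:the case of a family}, in a punctured $v$-adic neighborhood of $\mathfrak{c}$ one has
\[
g_{P,a,v}(t)=\ord_\mathfrak{c}(\mathsf{D}_{P,a})\,\log|t|_v^{-1}+O_v(1),\qquad G(P_t)=\ord_\mathfrak{c}(\mathsf{D}_P)\,\log|t|_v^{-1}+O_v(1).
\]
Using the functional equation $g_{P_t,v}(P_t^{lN}(a(t)))=\Delta^l g_{P,a,v}(t)$ and the choice $\Delta^l\ord_\mathfrak{c}(\mathsf{D}_{P,a})>\ord_\mathfrak{c}(\mathsf{D}_P)$, one obtains $g_{P_t,v}(P_t^{lN}(a(t)))-G(P_t)\to+\infty$ as $t\to\mathfrak{c}$, so in particular $g_{P_t,v}(P_t^{lN}(a(t)))>G(P_t)+\tau_v$ near $\mathfrak{c}$, where $\tau_v$ is the universal constant from Proposition~\ref{prop:GreenBottcher}. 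That proposition then guarantees that $\varphi_{P_t}(P_t^{lN}(a(t)))$ is analytic on a small punctured neighborhood of $\mathfrak{c}$ at each place; since $\alpha,\beta,a,b$ and the coefficients of $P,Q$ lie in $\KK[C]$, a direct bookkeeping of denominators exhibits this expression as a Laurent series at $\mathfrak{c}$ with coefficients in $\cO_{\KK,S}$ for some finite set of places $S$, hence as a (meromorphic) adelic series. The same argument applies to $\varphi_{Q_t}(Q_t^{lM}(b(t)))$.

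Next, introduce
\[
F(t):=\frac{\varphi_{P_t}(P_t^{lN}(a(t)))^{\,n}}{\varphi_{Q_t}(Q_t^{lM}(b(t)))^{\,m}}.
\]
By construction $F$ is adelic, holomorphic and non-vanishing on a punctured neighborhood of $\mathfrak{c}$ at every place $v$. Using the relation $g_{P_t,v}=\log|\varphi_{P_t}|_v$ on the domain of convergence (Proposition~\ref{prop:GreenBottcher}(2)) together with the hypothesis $n\cdot g_{P,a,v}=m\cdot g_{Q,b,v}$, we deduce
\[
n\log|\varphi_{P_t,v}(P_t^{lN}(a(t)))|_v=n\Delta^l g_{P,a,v}(t)=m\Delta^l g_{Q,b,v}(t)=m\log|\varphi_{Q_t,v}(Q_t^{lM}(b(t)))|_v,
\]
that is, $|F(t)|_v\equiv 1$ on this punctured neighborhood, at every place $v\in M_\KK$. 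A non-vanishing analytic function of constant modulus is constant (by the open mapping theorem at Archimedean places and by its Berkovich analogue otherwise), so at every $v$ the Laurent series $F$ reduces to its constant term; since the coefficients of $F$ lie in $\KK$, we obtain $F\equiv\zeta$ for some $\zeta\in\KK$ with $|\zeta|_v=1$ at every place. By Kronecker's theorem such a $\zeta$ is a root of unity in $\KK$, which yields the desired equation.

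The main technical obstacle is the first step: although each individual ingredient (asymptotics of $g_{P,a,v}$, adelicity of the iterates $P_t^{lN}(a(t))$, place-by-place convergence of the B\"ottcher coordinate, uniformity of $\tau_v$) is already established, one must assemble them simultaneously at all places so that the substitution $z\mapsto P_t^{lN}(a(t))$ into the Laurent series $\varphi_{P_t}(z)$ produces a single adelic series in $t$, with a common finite set of ``bad'' places $S$. Once this is in place, the modulus computation and the Kronecker argument are straightforward.
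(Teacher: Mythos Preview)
Your overall architecture matches the paper's: establish convergence of the B\"ottcher coordinate at $P_t^{lN}(a(t))$ via the Green function estimates, prove adelicity of the resulting Laurent series, form the ratio $F$, and conclude with Kronecker. The adelicity discussion you flag as the ``main technical obstacle'' is indeed where the paper spends most of its effort (passing to larger $l$ and checking the formal substitution converges and agrees with the analytic one), and you have correctly identified this.

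There is, however, a genuine gap in your constancy argument for $F$. The assertion that a non-vanishing analytic function of constant modulus is constant is \emph{false} at non-Archimedean places: for instance $F(t)=1+t$ satisfies $|F(t)|_v=1$ for all $|t|_v<1$ yet is not constant. There is no Berkovich ``open mapping'' principle that rules this out; the unit sphere $\{|z|_v=1\}$ in $\A^{1,\an}_v$ has non-empty interior in the Berkovich topology. So your step ``at every $v$ the Laurent series $F$ reduces to its constant term'' does not follow.

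The paper avoids this by arguing asymmetrically over places. One fixes a single \emph{Archimedean} place $v_0$, where the classical open mapping theorem does force $F\equiv\zeta$ for some constant $\zeta$. This is then an identity of formal Laurent series with coefficients in $\KK$, so $\zeta\in\KK$. Only afterwards does one use the remaining places: since $F\equiv\zeta$ as series and $|F(t)|_v=1$ on a neighborhood of $\mathfrak{c}$ at each $v$ (from $ng_{P,a,v}=mg_{Q,b,v}$), one reads off $|\zeta|_v=1$ for all $v$, and Kronecker finishes. Rewriting your argument with this two-step use of the places repairs the proof.
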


\begin{proof}
Fix any integer $l\ge1$ such that $\Delta^l \cdot \ord_\mathfrak{c}(\mathsf{D}_{P,a}) > \ord_\mathfrak{c}(\mathsf{D}_{P})$, and fix any place $v\in M_\KK$. 

\smallskip

\noindent{\bf 1.} Our first objective is to show that $P^{lN}_t(a(t))$ belongs to the domain of definition of the B\"ottcher coordinate $\varphi_{P_t,v}$. 
Recall from Definition~\ref{def:divisor dynamical} that $\ord_\mathfrak{c}(\mathsf{D}_{P,a})= q_\mathfrak{c}(P,a)$, and similarly $\ord_\mathfrak{c}(\mathsf{D}_{P})=q_\mathfrak{c}(P)$.
By Proposition~\ref{prop:behave green} and Theorem~\ref{thm:the case of a family} respectively, we have
\begin{align*}
g_{P,a,v}(t) & =  \ord_\mathfrak{c}(\mathsf{D}_{P,a})\cdot \log|t|^{-1} +O(1)~,
\\
G_{v}(P_t) &= \ord_\mathfrak{c}(\mathsf{D}_{P})\cdot \log|t|^{-1}+ O(1)
~,
\end{align*}
so that for any $t$ small enough, $g_{P_t,v}(P_t^{lN}(a(t))) - G_v(P_t)$ is very large. It follows from Proposition~\ref{prop:GreenBottcher} (2) that $P_t^{lN}(a(t))$ belongs to the 
domain of definition of the B\"ottcher coordinate.

\smallskip

\noindent{\bf 2.}
By the previous step, the function $\Phi_{v}(t) := \varphi_{P_t}\left(P^{lN}_t(a(t))\right)^n$ is well-defined and analytic in some punctured
disk $\D_\KK^*(0,r_v)$. Since we have \[\log |\Phi_{v}(t)|_v = n\Delta^l \cdot g_{P,a}(t) = c \log|t|^{-1} + O(1)\] for some $c>0$,  it follows
that $\Phi$ is meromorphic at $0$.  

In the next two steps, we argue that the Laurent series associated to $\Phi_{v}$ is in fact adelic (in particular we shall see that this series is independent of $v$). 

Observe that for any $L>l$, we have
\[\Phi_v(t)^{\Delta^{L-l}}=\varphi_{P_t}\left(P^{lN}_t(a(t))\right)^{n\Delta^{L-l}} =  \varphi_{P_t}\left(P^{NL}_t(a(t))\right)^{n}
~.\] 
Since any root of an adelic series remains adelic by~\cite[Lemma~3.2]{specialcubic} (but possibly over a finite extension of $\KK$), 
we may suppose that  $l$ is arbitrarily large. 

\smallskip

\noindent{\bf 3.} \label{step3}
We now identify the Laurent series associated to $\Phi_v$.

From Proposition~\ref{prop:classic-estim} (1) and Proposition~\ref{prop:growth Green}, we have 
\[\Delta^{l} \cdot g_{P,a,v}(t) = g_{P_t,v}(P_t^{lN}(a(t))) \le \log^+ |P^{lN}_t(a(t))| + G_{v}(P_t) + O(1)~.\]
Combining this with the previous two estimates, we get \[|P^{lN}_t(a(t))|_v = \beta |t|^{-b_l}  (1 + o(1))\] 
for some $\beta >0$, with $b_l := \Delta^l \cdot \ord_\mathfrak{c}(\mathsf{D}_{P,a}) - \ord_\mathfrak{c}(\mathsf{D}_{P})>0$. 
In other words, we may expand as a formal Laurent series 
$P^{lN}_t(a(t))= t^{-b_l} (\sum_{n\ge0} \beta_n t^n)$ with $\beta_0 = \beta\neq 0$.

We now apply Proposition~\ref{prop:expan bottcher} to the degree $d$ polynomial $P_t(z) = A(t) z^d + a_1(t) z^{d-1} + \cdots + a_d(t) \in \KK(\!(t)\!) [z]$, and write
\[
\varphi_{P_t}\left(z\right) = \alpha \left( z + \frac{a_1}{dA}\right) + \sum_{j \ge 1} \frac{\alpha_j}{z^j}
\]
where $\alpha^{d-1} =A$, and
$\alpha_j$ is  a polynomial of degree $j$ in the coefficients of $P_t$.
The latter property implies that $\ord_0(\alpha_j) \ge  - j \mu$ for any integer $j$ where 
\[\mu = \max \{0, - \ord_0(A), -\ord_0(a_j)\}\]
is the maximum of the order of poles of the Laurent series $A, a_1, \ldots, a_d$. 

From now on, we assume that $l$ is large enough so that $b_l > \mu$.
Then for each $j$, 
$\alpha_j(t)/(P_t^{lN}(a(t)))^j$ is a power series vanishing at $0$ up to order $\ge (b_l-\mu) j$, hence the series
$\sum_{j \ge 1} \frac{\alpha_j(t)}{(P_t^{NL}(a(t)))^j}$ converges formally in $\KK[\![t]\!]$.

Let $\tilde{\Phi}$ be the formal power series obtained by summing $\sum_{j \ge 1} \frac{\alpha_j(t)}{(P_t^{lN}(a(t)))^j}$ with 
$\alpha \left( P_t^{lN}(a(t)) + \frac{a_1}{dA}\right)$.

\smallskip

\noindent{\bf 4.}
We  show that $\tilde{\Phi}$ is the Laurent series expansion of $\Phi_v$ at $0$.
Observe that since all coefficients of $\tilde{\Phi}$ lie in a finite extension of $\KK$, and $v$ is an arbitrary place of $\KK$, this will show that 
$\tilde{\Phi}$ is an adelic series as required. 

We further increase $l$ so that $b_l > \ord_\mathfrak{c}(\mathsf{D}_{P})$, and
$\log |P^{lN}_t(a(t))|_v \gg G_v(P_t)$ for all $t$ small enough.  In this range, $P^{lN}_t(a(t))$ thus belongs to the domain of convergence of
the series $\sum_{j \ge 1} \frac{\alpha_j(t)}{z^j}$ by Proposition~\ref{prop:GreenBottcher} (1).

It follows that $f_p(t):= \sum_{j = 1}^p \frac{\alpha_j(t)}{(P^{lN}_t(a(t)))^j}$
forms a sequence of analytic functions which converges uniformly on compact subsets of some punctured disk $\D_\KK^*(0,r'_v)$. 
By the maximum principle, this sequence of analytic functions actually converges uniformly on the disk $\D_\KK(0,r'_v)$, and its
power series expansion is precisely the series  $\sum_{j \ge 1} \frac{\alpha_j(t)}{(P_t^{lN}(a(t)))^j}$ considered above. 
Adding the terms $\alpha \left( P_t^{lN}(a(t)) + \frac{a_1}{dA}\right)$,  we conclude 
that $\tilde{\Phi}$ is the Laurent series expansion of $\Phi_v$, hence is an adelic series.

\smallskip

\noindent{\bf 5.}
Similarly, the function $\Psi(t) = \varphi_{Q_t}\left(Q^{lM}_t(b(t))\right)^m$ is an adelic series. 
Let us consider an arbitrary Archimedean place $v_0 \in M_\KK$. Since $n \cdot g_{P,a,v_0} = m \cdot g_{Q,b,v_0}$,
we have $\log |\Phi/\Psi|_{v_0} =0$, hence  the meromorphic function $\Phi/\Psi$ has a constant modulus equal to $1$ in some 
punctured disk $\D_K^*(0,r''_{v_0})$. 
It is therefore equal to a constant $\zeta\in\U$.  In particular we have an equality of adelic series 
$\Phi = \zeta \cdot \Psi$, and $\zeta$ is necessarily an algebraic number. Now for any other place $v \in M_\KK$,  $n \cdot g_{P,a,v} = m \cdot g_{Q,b,v}$ holds, which implies
$|\zeta|_v = 1$. We conclude by Kronecker's theorem that $\zeta$ is a root of unity.
\end{proof}

\begin{lemma}\label{lem:image}
There exist two constants $L_0, L$ depending only on $\KK, d$ and $\delta$ such that 
\[ 
\zeta(\mathfrak{c}) := 
\zeta(\mathfrak{c}, l(\mathfrak{c}) +L_0)
=
\zeta(\mathfrak{c}, l(\mathfrak{c}) + L_0+ l \cdot L) 
\]
for all $\mathfrak{c}\in \mathcal{B}$ and all $l\in \N$.
\end{lemma}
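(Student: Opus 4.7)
The plan is to deduce a simple recursion for $\zeta(\mathfrak{c}, l)$ as $l$ grows, and then exploit the fact that these values all lie in the finite group $\mu_\KK$ of roots of unity of $\KK$, whose order $U_\KK$ is fixed.

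First I would establish the key recursion
$$\zeta(\mathfrak{c}, l+1) = \zeta(\mathfrak{c}, l)^{\Delta}.$$
This follows from raising the identity in Lemma \ref{lem:344} to the power $\Delta$ and invoking the B\"ottcher functional equations $\varphi_{P_t}\circ P_t^N = (\varphi_{P_t})^{\Delta}$ and $\varphi_{Q_t}\circ Q_t^M = (\varphi_{Q_t})^{\Delta}$: both sides then transform into the identity at level $l+1$, with the scalar $\zeta(\mathfrak{c},l)^{\Delta}$ in front. Since the equation of Lemma \ref{lem:344} determines $\zeta(\mathfrak{c}, l+1)$ uniquely (because $\varphi_{Q_t}(Q^{(l+1)M}_t(b(t)))^m$ is a nonzero adelic series, as shown in the proof of that lemma), the recursion follows. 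Setting $\xi_\mathfrak{c} := \zeta(\mathfrak{c}, l(\mathfrak{c}))$, an immediate induction yields
$$\zeta(\mathfrak{c}, l(\mathfrak{c}) + s) = \xi_\mathfrak{c}^{\Delta^s} \qquad \text{for all } s \geq 0.$$

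It then remains to find $L_0, L \geq 1$, depending only on $U_\KK$ and $\Delta$ (hence only on $\KK, d, \delta$), such that $\xi^{\Delta^{L_0}} = \xi^{\Delta^{L_0 + lL}}$ for every $\xi \in \mu_\KK$ and every $l \in \N$. To do so, I would decompose $U_\KK = A \cdot B$ where $A$ is the largest divisor of $U_\KK$ coprime to $\Delta$ and $B$ is supported on the primes dividing $\Delta$. Then choose $L_0$ minimal with $B \mid \Delta^{L_0}$; this is finite and depends only on $U_\KK$ and $\Delta$. For any $\xi \in \mu_\KK$, the element $\xi^{\Delta^{L_0}}$ has order dividing $U_\KK/\gcd(U_\KK, \Delta^{L_0}) = A$, hence coprime to $\Delta$. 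Letting $L$ be the multiplicative order of $\Delta$ modulo $A$, one has $\Delta^{lL} \equiv 1 \pmod{A}$ for all $l \in \N$, so $(\xi^{\Delta^{L_0}})^{\Delta^{lL}} = \xi^{\Delta^{L_0}}$. Specializing to $\xi = \xi_\mathfrak{c}$ and combining with the previous display yields the statement of the lemma.

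No step in this argument is technically demanding: the whole proof reduces to elementary arithmetic in $\mu_\KK$ once the recursion $\zeta(\mathfrak{c}, l+1) = \zeta(\mathfrak{c}, l)^\Delta$ has been set up. The only point that requires a little care is the uniqueness assertion used to derive the recursion, which is clear from the adelic-series computations already carried out in the proof of Lemma \ref{lem:344}.
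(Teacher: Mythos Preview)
Your proposal is correct and follows essentially the same approach as the paper. The paper's proof is a two-sentence sketch: it states the recursion $\zeta(\mathfrak{c},l') = \zeta(\mathfrak{c},l)^{\Delta^{l'-l}}$ (which you derive in detail from the B\"ottcher functional equations) and then simply invokes that $\zeta(\mathfrak{c},l)$ lies in the finite group of roots of unity of $\KK$, leaving the extraction of $L_0$ and $L$ to the reader; your explicit arithmetic with the decomposition $U_\KK = A\cdot B$ fills in exactly those details.
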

\begin{proof}
Observe that we have  $\zeta(\mathfrak{c},l') =  \zeta(\mathfrak{c},l)^{\Delta^{l'-l}}$ for all $l'\ge l$, where $\Delta = d^N = \delta^M$.
The proof then follows since $\zeta(\mathfrak{c},l)$ is a root of unity belonging to the field $\KK$ of definition of our families.
\end{proof}

To simplify notation, we shall write $\ell(\mathfrak{c}) = l(\mathfrak{c}) + L_0$ in the sequel.

\begin{lemma}\label{lem:345}
Pick any $t \in C(\bar{\KK})$ and any place $v\in M_\KK$ such that $g_{P,a,v}(t) >0$. 
Then for all $l$ large enough, the B\"ottcher coordinates $\varphi_{P_t,v}$ and $\varphi_{Q_t,v}$
are well-defined at $P^{lN}_t(a(t))$ and $Q^{lM}_t(b(t))$ respectively. Moreover there exists a branch at infinity $\mathfrak{c}$
of $C$  such that 
\[\varphi_{P_t,v}\left(P^{lN}_t(a(t))\right)^n = \zeta({\mathfrak{c},l}) \times  \varphi_{Q_t,v}\left(Q^{lM}_t(b(t))\right)^m
~\]
for all $l\gg 1$.
\end{lemma}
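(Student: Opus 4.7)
The plan is to analyze the analytic function $s \mapsto \varphi_{P_s,v}(P_s^{lN}(a(s)))^n / \varphi_{Q_s,v}(Q_s^{lM}(b(s)))^m$ on a suitable open subset of $C_v^{\an}$, show it is locally constant there, and propagate from a branch in $\mathcal{B}$ the explicit value $\zeta(\mathfrak{c},l)$ furnished by Lemma~\ref{lem:344}.

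First, I would verify that for $l$ large the relevant B\"ottcher coordinates are defined at the points in question. Since $g_{P_t,v}(P_t^{lN}(a(t))) = \Delta^l g_{P,a,v}(t)$ with $g_{P,a,v}(t)>0$ fixed, while $G_v(P_t)$ is a fixed finite number, the inequality $g_{P_t,v}(P_t^{lN}(a(t)))>G_v(P_t)+\tau$ holds for all $l$ sufficiently large, so Proposition~\ref{prop:GreenBottcher}(2) applies; symmetrically for $Q_t$. More generally these inequalities cut out an open subset $U_{v,l}\subset C_v^{\an}$ containing $t$, on which I can define the analytic functions $\Phi(s)=\varphi_{P_s,v}(P_s^{lN}(a(s)))^n$ and $\Psi(s)=\varphi_{Q_s,v}(Q_s^{lM}(b(s)))^m$. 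Using $g_{P_s,v}(z)=\log|\varphi_{P_s,v}(z)|_v$ in the relevant range together with the identity $n\,g_{P,a,v}=m\,g_{Q,b,v}$, I get $|\Phi|_v\equiv|\Psi|_v$ on $U_{v,l}$, so $\Phi/\Psi$ is a nowhere vanishing analytic function of constant norm $1$.

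Next, I would invoke Lemma~\ref{lem:344}: on a punctured neighborhood of each branch $\mathfrak{c}\in\mathcal{B}$ lying inside $U_{v,l}$ (which exists as soon as $l\geq l(\mathfrak{c})$), the ratio $\Phi/\Psi$ coincides with the constant root of unity $\zeta(\mathfrak{c},l)\in\KK$. The rigidity conclusion now propagates: at Archimedean places, an analytic function with constant modulus is locally constant by the open mapping theorem; at non-Archimedean places, the identity theorem for analytic functions on a connected Berkovich analytic domain forces $\Phi/\Psi$ to equal $\zeta(\mathfrak{c},l)$ on the whole connected component of $U_{v,l}$ containing $\mathfrak{c}$. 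In both cases, it remains only to show that the connected component of $U_{v,l}$ containing $t$ does meet a neighborhood of at least one such branch.

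This last point is the main technical obstacle. I would argue in two steps. Extend $g_{P,a,v}$ to $\bar C_v^{\an}$ continuously across branches in $\mathcal{G}$ and by $+\infty$ across branches in $\mathcal{B}$, and let $V$ be the connected component of $\{g_{P,a,v}>0\}$ in $\bar C_v^{\an}$ containing $t$. If $V$ contained no branch of $\mathcal{B}$, then $g_{P,a,v}|_V$ would be a bounded non-negative continuous subharmonic function on the relatively compact open set $V\subset\bar C_v^{\an}$ with zero boundary values, hence $\leq0$ by the maximum principle, contradicting $g_{P,a,v}(t)>0$. Therefore some $\mathfrak{c}\in\mathcal{B}$ lies in $V$. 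Now pick any compact connected subset $K\subset V$ joining $t$ to a small punctured neighborhood of $\mathfrak{c}$ (using local arcwise connectedness of $\bar C_v^{\an}$). On $K$ away from $\mathfrak{c}$, the quantity $g_{P,a,v}$ has a positive minimum and $G_v(P_t)$ is uniformly bounded, so the defining inequality of $U_{v,l}$ holds on all of $K$ for $l$ large; near $\mathfrak{c}$ it holds automatically once $l\geq l(\mathfrak{c})$. Thus $K\subset U_{v,l}$, which places $t$ and a neighborhood of $\mathfrak{c}$ in the same connected component of $U_{v,l}$, and combining this with the previous paragraph gives $\Phi(t)=\zeta(\mathfrak{c},l)\,\Psi(t)$, as desired.
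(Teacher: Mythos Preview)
Your proposal is correct and follows essentially the same route as the paper. The paper likewise works on the connected component $U$ of $\{g_{P,a,v}>0\}$ containing $t$, argues via the maximum principle that $U$ is unbounded and hence reaches some branch $\mathfrak{c}\in\mathcal{B}$, exhausts $U$ by the increasing opens $\Omega_l=\{d^l g_{P,a,v}>G_v(P)\}$ to place $t$ and a point near $\mathfrak{c}$ in the same component for $l$ large, and then applies the identity principle; the only cosmetic difference is that the paper works with the difference $\Phi-\zeta(\mathfrak{c},l)\Psi$ rather than with the ratio $\Phi/\Psi$.
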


\begin{proof}
For each branch $\mathfrak{c}\in \mathcal{B}$, fix a point $t_\mathfrak{c}$ which is sufficiently close to $\mathfrak{c}$ such that 
both adelic series $\varphi_{P_t}\left(P^{lN}_t(a(t))\right)$ and $\varphi_{Q_t}\left(Q^{lM}_t(b(t))\right)^m$ are well-defined in a neighborhood of 
$t_\mathfrak{c}$ and their quotient is equal to $\zeta(\mathfrak{c},l)$ as in the previous lemma.

Let $U$ be the connected component of $\{g_{P,a,v} >0\}\subset C^\an$ containing $t$. 
By the maximum principle, $U$ is unbounded hence contains some point $t_\mathfrak{c}$. 
For each $l$, let 
\[\Omega_l := \{ \tau, d^l \cdot g_{P,a,v}(\tau) > G_v(P_{\tau})\}~.\]
Then $\{U \cap \Omega_l\}_l$ forms an increasing sequence of open sets which cover $U$.
It follows from a purely topological argument that $t$ and $t_\mathfrak{c}$ belong to the same connected component $V$ of $U\cap \Omega_l$ for $l$ large enough. 
The analytic function
\[ \Psi(t)= \varphi_{P_t,v}\left(P^{lN}_t(a(t))\right)^n - \zeta({\mathfrak{c},l}) \times  \varphi_{Q_t,v}\left(Q^{lM}_t(b(t))\right)^m
\]
is well-defined on $V$ and constant equal to $0$ near $\mathfrak{c}$. It follows from the identity principle that $\Psi=0$ on $V$, hence
$\Psi(t) =0$.
\end{proof}

\paragraph*{(II) Construction of the semi-conjugacy at a fixed parameter}
\label{hypo-semi-conj}
We fix any Archimedean place $v_0 \in M_\KK$.
For each $\mathfrak{c} \in \mathcal{B}$, we also choose a connected neighborhood  $U_{v_0}(\mathfrak{c})$ of $\mathfrak{c}$ in $C^{\an}_{v_0}$ such that 
$d^{l(\mathfrak{c})} \cdot g_{P,a,v}(\tau)> G_v(P_{\tau})$, and 
$\delta^{l(\mathfrak{c})}\cdot g_{Q,b,v}(\tau)> G_v(Q_{\tau})$
 for all $\tau \in U_{v_0}(\mathfrak{c})$.

\smallskip

Fix any parameter $t \in C(\bar{\KK})$ which belongs to $U_{v_0}(\mathfrak{c})$ for some $\mathfrak{c} \in \mathcal{B}$, and such that 
neither $P_t$ nor $Q_t$ are integrable.  Since $t$ is fixed, we drop all references to $t$ to simplify notations in this paragraph. 
We also fix a finite extension $\KK'$ of $\KK$
such that $t \in C(\KK')$.

Consider the family of adelic branches at infinity $\mathfrak{s}_\zeta$ indexed by all roots of unity $\zeta \in \KK$  defined by 
the equation
\[\varphi_P(x)^n = \zeta \varphi_Q(y)^m~.\]
We claim that we may apply Theorem~\ref{thm:Junyi} to 
the sequence of points 
\[(a_l,b_l) = (P^{lN}(a), Q^{lM}(b))~.\] 
Indeed, observe first that $(a_l,b_l)$ all belong to $\KK'$. Fix a place $v \in M_{\KK'}$. 
If $g_{P,v}(a) = g_{Q,v}(b)=0$ then both sequences $|a_l|_v$ and $|b_l|_v$ are bounded so that 
\[
B_v = \sup_l \max \{ |a_l|_v, |b_l|_v\}<\infty~.
\]
Note also that $B_v=1$ at any place where both $P$ and $Q$ have good reduction. 
Now consider any place $v$ at which $g_{P,v}(a)>0$ and  $g_{Q,v}(b)>0$, and pick $R_v >0$ such that both series $\varphi_P$ and $\varphi_Q$
converge in the domain $\{ |x|_v > R_v\}$. Define
\[ C_v(\mathfrak{s}_\zeta) := \left\{ (x,y) \in \C_v^2, \, \min \{ |x|_v, |y|_v \} >R_v, \, \varphi_P(x)^n = \zeta \varphi_Q(y)^m\right\}~.\]
It follows from Lemma~\ref{lem:345},  that  $(a_l,b_l) \in \cup_\zeta C_v(\mathfrak{s}_\zeta)$ so that Theorem~\ref{thm:Junyi} applies as claimed.

We infer the existence of an algebraic curve $Z' \subset \A^2$ such that $(a_l, b_l) \in Z'$ for all $l$ large enough, 
and any branch at infinity of $Z'$ is contained in the set $\{ \mathfrak{s}_\zeta\}_{\zeta \in \U_\infty \cap \KK}$.

Observe that at the place $v_0$, we have
\[(a_{ \ell(\mathfrak{c}) + l \cdot L}, b_{ \ell(\mathfrak{c}) + l \cdot L}) \in C_{v_0}(\mathfrak{s}_{\zeta(\mathfrak{c})})\]
for all $l \ge0$. The Zariski closure of $\mathfrak{s}_{\zeta(\mathfrak{c})}$ is thus an irreducible component $Z$ of $Z'$
containing $(a_{ \ell(\mathfrak{c}) + l \cdot L}, b_{ \ell(\mathfrak{c}) + l \cdot L})$ for all $l\ge0$ and 
any branch at infinity of $Z$ is also contained in the set $\{ \mathfrak{s}_\zeta\}_{\zeta \in \U_\infty \cap \KK}$.

Consider now the map $\Phi(x,y) = (P^{N}(x),Q^{M}(y))$. 
Its iterate $\Phi^L$ stabilizes the set $\{(a_{ \ell(\mathfrak{c}) + l \cdot L}, b_{ \ell(\mathfrak{c}) + l \cdot L})\}_{l\ge0}$
hence fixes $\mathfrak{s}_{\zeta(\mathfrak{c})}$ and $Z$. 

Recall that by assumption neither $P$ nor $Q$ are integrable.
By Theorem~\ref{thm:inv-curve}, one can thus find two semi-conjugacies $u,v \in \KK'[T]$ whose degrees are \emph{coprime}, and a polynomial $ R\in \KK'[T]$ such that 
$Z = \{ u(x) = v(y)\}$ and $u \circ P^{N L} = R \circ u$,  $v \circ Q^{M L} = R \circ v$.
Since the degrees of $u$ and $v$ are coprime, $Z$ has a unique branch at infinity which is necessarily $\mathfrak{s}_{\zeta(\mathfrak{c})}$.

Now by~\eqref{eq008}, we have $\varphi_P(x)^n = \hat{P}_n(x) + o(1)$, and $\varphi_Q(y)^m = \hat{Q}_m(y) + o(1)$.  It follows that the restriction to $Z$
of the polynomial $\hat{P}_n(x) - \zeta(\mathfrak{c}) \cdot \hat{Q}_m(y)$ is actually vanishing at the branch at infinity $\mathfrak{s}_{\zeta(\mathfrak{c})}$, hence vanishes identically on $Z$ so that 
$Z = \{ \hat{P}_n(x) = \zeta \cdot \hat{Q}_m(y)\}$. 

Possibly conjugating $R$ by a suitable dilatation, we have thus obtained:
\begin{lemma}\label{lem:at a fixed parameter}
Fix any parameter $t  \in C(\bar{\KK}) \cap U_{v_0}(\mathfrak{c})$ such that neither $P_t$ nor $Q_t$ are integrable. 

Then the curve $\hat{P}_n(x)  = \zeta(\mathfrak{c}) \cdot \hat{Q}_m(y)$ is irreducible in $\A^2$, and 
has a unique branch at infinity given by
\[ \varphi_P(x)^n =  \zeta(\mathfrak{c}) \cdot \varphi_Q(y)^m~.\]
Moreover, one can find  a polynomial $R_t$ of degree $\Delta^{L}$ such that 
\begin{align}
&\hat{P}_{t,n} \circ P_t^{N L} = R_t \circ \hat{P}_{t,n}~; \text{ and }\label{eq:1shot}\\
&(\zeta(\mathfrak{c}) \cdot \hat{Q}_{t,m}) \circ Q_t^{M L} = R_t \circ (\zeta(\mathfrak{c}) \cdot \hat{Q}_{t,m})~. \label{eq:2shot}
\end{align}
Moreover we have
$\hat{P}_{t,n}(a_{\ell(\mathfrak{c})}(t)) = \zeta(\mathfrak{c})\cdot \hat{Q}_{t,m}(b_{\ell(\mathfrak{c})}(t))$.
\end{lemma}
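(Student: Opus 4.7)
The plan is to consolidate what has already been set up in the preceding paragraphs. From Xie's algebraization theorem (Theorem~\ref{thm:Junyi}) applied to the points $(a_l,b_l) = (P_t^{lN}(a(t)), Q_t^{lM}(b(t)))$, we obtained an algebraic curve $Z' \subset \A^2$ whose branches at infinity are all of the form $\mathfrak{s}_\zeta$ for some root of unity $\zeta \in \KK$. The irreducible component $Z$ of $Z'$ of interest is the one containing the subsequence $\{(a_{\ell(\mathfrak{c}) + lL}, b_{\ell(\mathfrak{c}) + lL})\}_{l \ge 0}$; by Lemma~\ref{lem:345} and Lemma~\ref{lem:image} these points lie in $C_{v_0}(\mathfrak{s}_{\zeta(\mathfrak{c})})$, so $\mathfrak{s}_{\zeta(\mathfrak{c})}$ is a branch of $Z$ at infinity. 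The iterate $\Phi^L$ of $\Phi(x,y) = (P_t^N(x), Q_t^M(y))$ maps $(a_l,b_l) \mapsto (a_{l+L}, b_{l+L})$, hence preserves the Zariski-dense subset above and so stabilizes $Z$. Applying Theorem~\ref{thm:inv-curve} (using that $P_t,Q_t$, hence $P_t^{NL}, Q_t^{ML}$, are non-integrable) yields coprime-degree polynomials $u,v \in \bar{\KK}[T]$ and a polynomial $R$ such that $Z = \{u(x) = v(y)\}$, $u \circ P_t^{NL} = R \circ u$, and $v \circ Q_t^{ML} = R \circ v$. Coprimality of $\deg(u)$ and $\deg(v)$ forces $Z$ to have a single branch at infinity, which must therefore equal $\mathfrak{s}_{\zeta(\mathfrak{c})}$.

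Next I would identify $Z$ explicitly. By Proposition~\ref{prop:expan bottcher}, the polynomials $\hat{P}_{t,n}$ and $\hat{Q}_{t,m}$ have respective degrees $n$ and $m$, and $\varphi_{P_t}(x)^n - \hat{P}_{t,n}(x)$, $\varphi_{Q_t}(y)^m - \hat{Q}_{t,m}(y)$ both vanish at infinity. Consequently the polynomial $F(x,y) := \hat{P}_{t,n}(x) - \zeta(\mathfrak{c})\, \hat{Q}_{t,m}(y)$ restricts to a rational function on $Z$ which is regular on $\A^2$ and which tends to $0$ along the unique branch at infinity $\mathfrak{s}_{\zeta(\mathfrak{c})}$. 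It must therefore vanish identically on $Z$, so $Z \subseteq \{F = 0\}$. Since $\gcd(n,m) = 1$ (recall $n,m$ were chosen coprime), the curve $\{F = 0\}$ is geometrically irreducible, and a dimension/degree count forces $Z = \{\hat{P}_{t,n}(x) = \zeta(\mathfrak{c})\,\hat{Q}_{t,m}(y)\}$. This proves irreducibility and identifies the branch at infinity.

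To produce $R_t$ I compare the two presentations of $Z$. Since both $u$ and $\hat{P}_{t,n}$ are polynomials of the same degree $n$ whose level curves coincide with the first projection of $Z$, there is an affine $\alpha$ such that $\hat{P}_{t,n} = \alpha \circ u$; similarly $\zeta(\mathfrak{c})\,\hat{Q}_{t,m} = \alpha \circ v$ for the same $\alpha$, because the equality $u(x) = v(y)$ on $Z$ forces the target normalization to be unique. Setting $R_t := \alpha \circ R \circ \alpha^{-1}$ and conjugating the two semi-conjugacies by $\alpha$ yields \eqref{eq:1shot} and \eqref{eq:2shot} at once. Taking degrees in \eqref{eq:1shot} gives $n \cdot \deg(P_t^{NL}) = \deg(R_t) \cdot n$, hence $\deg(R_t) = d^{NL} = \Delta^L$ as claimed. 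Finally, the relation at $\ell(\mathfrak{c})$ is just the statement that $(a_{\ell(\mathfrak{c})}(t), b_{\ell(\mathfrak{c})}(t)) \in Z$, which holds by Lemma~\ref{lem:345} and the choice of $L_0$ in Lemma~\ref{lem:image}.

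The only genuinely delicate point is matching the Medvedev–Scanlon semi-conjugacy $(u,v)$ with the canonical choice $(\hat{P}_{t,n}, \zeta(\mathfrak{c})\hat{Q}_{t,m})$ up to a single affine map $\alpha$; everything else is bookkeeping based on Xie's theorem, Ritt's theory, and the B\"ottcher expansion already in place. The affine identification works because the two pairs of polynomials cut out the same irreducible plane curve with the same branch at infinity, and the leading terms of $\hat{P}_{t,n}$ and $\hat{Q}_{t,m}$ are fixed (up to roots of unity arising from $\alpha$) by Proposition~\ref{prop:expand power}.
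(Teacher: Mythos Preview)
Your proof is correct and follows essentially the same route as the paper: Xie's theorem gives the algebraic curve $Z$, invariance under $\Phi^L$ plus Medvedev--Scanlon yields $Z=\{u(x)=v(y)\}$ with coprime degrees and semi-conjugacies to some $R$, the B\"ottcher expansion forces $Z=\{\hat P_{t,n}(x)=\zeta(\mathfrak{c})\hat Q_{t,m}(y)\}$, and comparing the two presentations of $Z$ produces the affine $\alpha$ conjugating $R$ to $R_t$. Your treatment of the affine matching is in fact slightly more explicit than the paper's (which just says ``conjugating $R$ by a suitable dilatation''); note that the correct justification for $\hat P_{t,n}=\alpha\circ u$ and $\zeta(\mathfrak{c})\hat Q_{t,m}=\alpha\circ v$ is that the irreducible defining polynomials $u(x)-v(y)$ and $\hat P_{t,n}(x)-\zeta(\mathfrak{c})\hat Q_{t,m}(y)$ must be proportional, which after separating variables gives the common affine $\alpha$.
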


\paragraph*{(III) End of proof when ($\diamond$) is satisfied}
Recall that $n$ and $m$ are determined by our data as the mininal integers such that $n \cdot \mathsf{D}_{P,a}= m \cdot \mathsf{D}_{Q,b}$. 
Also $N$ and $M$ are the least integers such that $\Delta := d^N = \delta^M$.

We fix any branch at infinity $\mathfrak{c} \in \mathcal{B}$, and any Archimedean place $v_0 \in M_\KK$.
Recall the definitions of $L, \zeta(\mathfrak{c})$ and $\ell(\mathfrak{c})$ from Lemma~\ref{lem:image}.

Consider the set $\mathcal{S}$ of all parameters $t \in C(\KK_{v_0})$ such that the following conditions hold:
\begin{itemize}
\item[($\mathcal{P}_1$)]
there exists a polynomial $R_t \in \KK_{v_0}[T]$ of degree $\Delta^L$
satisfying~\eqref{eq:1shot} and~\eqref{eq:2shot} above;
\item[($\mathcal{P}_2$)] 
the curve $\hat{P}_{t,n}(x)  = \zeta(\mathfrak{c}) \cdot \hat{Q}_{t,m}(y)$ is irreducible in $\A^2$ and has 
a unique branch at infinity given by $\varphi_{P_t}(x)^n = \zeta(\mathfrak{c}) \cdot \varphi_{Q_t}(y)^m$;
\item[($\mathcal{P}_3$)]
we have $\hat{P}_{t,n}(a_{\ell(\mathfrak{c})}(t)) = \zeta(\mathfrak{c})\cdot \hat{Q}_{t,m}(b_{\ell(\mathfrak{c})}(t))$.
\end{itemize}

Observe that Lemma~\ref{lem:at a fixed parameter} implies that $\mathcal{S}$ contains all points in $C(\bar{\KK})\cap U_{v_0}(\mathfrak{c})$.
We shall prove that  the set of $t\in C(\KK_{v_0})$ satisfying the three conditions ($\mathcal{P}_1$), ($\mathcal{P}_2$), and ($\mathcal{P}_3$)  is a Zariski-closed subset.

\smallskip

Condition ($\mathcal{P}_3$) states the equality of two regular functions on $C$. Since $C(\bar{\KK})\cap U_{v_0}(\mathfrak{c})$ is infinite,  Condition ($\mathcal{P}_3$) is satisfied for all $t \in C(\KK_{v_0})$
hence for all parameters  $t \in C(\bar{\KK})$.

\smallskip

Let us deal next with Condition ($\mathcal{P}_2$).
Pick any $t \in C'(\KK_{v_0})$. Observe that the affine curve $Z = \{ \hat{P}_{t,n}(x)  =  \zeta(\mathfrak{c})\cdot \hat{Q}_{t,m}(y)\}$ is always irreducible, and has a unique branch at infinity because $n = \deg(\hat{P}_{t,n})$ and $m = \deg(\hat{Q}_{t,m})$ are coprime. The fact that the branch at infinity of $Z$ is given by  $\varphi_{P_t}(x)^n =  \zeta(\mathfrak{c})\cdot\varphi_{Q_t}(y)^m$ is equivalent to the vanishing of the formal Laurent series $ \hat{P}_{t,n}(\varphi_{P_t}^{-1} (\xi \cdot t^m)) - \zeta \hat{Q}_{t,m}(\varphi_{Q_t}^{-1}(t^n)))$ where $\xi$ is any root of unity satisfying $\xi^n =  \zeta(\mathfrak{c})$. 
Since $\varphi_{P_t}^{-1}$ and $\varphi_{Q_t}^{-1}$ are formal Laurent series in $z^{-1}$ whose coefficients are regular functions on $C$ by~\eqref{eq-bott},  the set of $t\in C(\KK_{v_0})$ for which Condition ($\mathcal{P}_2$) holds is a Zariski closed subset of $C$.

\smallskip

To understand Condition ($\mathcal{P}_1$), recall that we wrote $P_t(z) = \alpha(t)^{d-1} z^d + o_t(z^d)$. To simplify notation we assume $NL =1$.
We remark that the equation~\eqref{eq:1shot} given by $\hat{P}_{n} \circ P = R \circ \hat{P}_{n}$ is equivalent to resolution of $n\times d+1$ linear equations
$\mathcal{L}_i$, $i=0, \ldots, n\times d$ obtained by identifying the coefficients in $z^i$  of both sides. 

Look at the equations $\mathcal{L}_{nd}, \mathcal{L}_{n(d-1)}, \ldots, \mathcal{L}_{0}$. 
We get a linear system of the form
\[\begin{cases}
a_0 \alpha^{dn} &= A_0
\\
a_1 \alpha^{(d-1)n} + a_0 B_{10} &= A_1
\\
a_2 \alpha^{(d-2)n} + a_1 B_{21} + a_0 B_{20} &= A_2
\\
\ldots
\\
a_d +  a_{d-1} B_{d,d-1}+ \cdots + a_0 B_{d0} & = A_d
\end{cases}\]
where $B_{ij}$ and $A_i$ are regular functions on $C$. Since $\alpha$ is invertible, we get a \emph{unique} polynomial $R_{P,n}$
satisfying all these equations whose coefficients are regular functions $a_i \in K[C]$. 

It follows from this discussion that~\eqref{eq:1shot} is solvable iff the coefficients of $R_{P,n}$ satisfy the linear equations
$\mathcal{L}_i$, $i=0, \ldots, n\times d$. 
In other words, the set of parameters $t$ such that~\eqref{eq:1shot} is solvable is the intersection of the zero locus of
finitely many regular functions on $C$ hence is Zariski closed.
A similar argument applies to~\eqref{eq:2shot}, and this concludes the proof.

\medskip

We have proved that the set of parameters for which the three conditions ($\mathcal{P}_1$), ($\mathcal{P}_2$), and ($\mathcal{P}_3$)
hold is equal to $C(\bar{\KK})$. We also observe  that the coefficients to the polynomial $R_t$ depends algebraically on $t$, since all coefficients
$\alpha$, $A_i$ and $B_{ij}$ are regular functions on $C$ in the linear system above. 
This completes the proof of (2) $\Rightarrow$ (3) of Theorem~\ref{tm:unlikely}.

\paragraph*{(IV) Base change and Condition ($\diamond$)}

Let us assume now that $P$ and $Q$ are arbitrary families parametrized by $C$. Pick any base change 
$C' \to C$ such that the induced families on $C'$  satisfy the condition ($\diamond$).
By what precedes, there exists a branch at infinity $\mathfrak{c}$ of $C'$, integers $N,M, n, m, L, l(\mathfrak{c})$, and a root of unity $\zeta(\mathfrak{c})$ such that 
for all $t \in C'(\bar{\KK})$, there exists a polynomial $R_t$ satisfying the three Conditions ($\mathcal{P}_1$), ($\mathcal{P}_2$), and ($\mathcal{P}_3$).

Since $R_t$ is uniquely determined (for instance by~\eqref{eq:1shot}), we have $R_t = R_{t'}$ whenever $\pi(t) = \pi(t')$.
It follows that the coefficients of $R_t$ are regular functions on $C'$ that are pull-back of regular functions on $C$, and
we obtain a family of polynomial $\tilde{R}$ parametrized by $C$ satisfying the three Conditions ($\mathcal{P}_1$), ($\mathcal{P}_2$), and ($\mathcal{P}_3$)
for the dynamical pairs $(P,a)$ and $(Q,b)$ as required.

\paragraph*{Another approach in the case of a single branch}

The previous proof is quite intricate. One difficulty lies in dealing with the existence of several branches of $C$ at infinity for which the roots of unity $\zeta(\mathfrak{c},l)$ might a priori be different. 

Assume that we may find an integer $l$ and $\xi$ such that  $\xi = \zeta(\mathfrak{c},l)$ for all $\mathfrak{c}$. Note that this is in particular the case when $C$ has a single place at infinity. 

The function $\hat{P}_n(P^{Nl}(a)) - \xi \hat{Q}_m(Q^{Ml}(b))$ is regular on $C$ and 
vanishes at all branches at infinity for $l$ large enough by Lemma~\ref{lem:345}. It is
hence equal to $0$.

This argument simplifies Step II and avoids to rely on Xie's theorem.


\subsection{More precise forms of Theorem~\ref{tm:unlikely}}
The proof that we have developed of Theorem~\ref{tm:unlikely} actually gives more. 
We first observe that the arguments of the previous section yields
\begin{theorem}\label{tm:precise intrication}
Pick any irreducible affine curve $C$ which is defined over a number field $\KK$. Assume that all
its branches at infinity are also defined over $\KK$.

Let $(P,a)$ and $(Q,b)$ be active non-integrable dynamical \emph{entangled} pairs 
parametrized by $C$ of respective degree $d,\delta\geq2$. 

Then there exist  coprime integers $n$ and $m$ such that $n \cdot \mathsf{D}_{P,a} = m \cdot \mathsf{D}_{Q,b}$
and coprime integers $N, M$ such that $\Delta := d^N = \delta^M$. 

Furthermore, let $\ell$ and $L$ be any integers such that 
\begin{itemize}
\item
$\Delta^{\ell} \times \ord_\mathfrak{c}(\mathsf{D}_{P,a}) >
\ord_\mathfrak{c}(\mathsf{D}_{P})$ for all branch at infinity of $C$;
\item
$M^{\ell + L}_\Delta(\xi) = M^{\ell}_\Delta(\xi)$ for any root of unity in $\xi \in \KK$.
\end{itemize}
Then one can find a root of unity $\zeta \in \KK$, 
and a family of polynomial $R_t$ of degree $\Delta^{L}$ such that 
\begin{align}
&\hat{P}_{t,n} \circ P_t^{N L} = R_t \circ \hat{P}_{t,n}~; \text{ and }\label{eq:1shot'}\\
&(\zeta \cdot \hat{Q}_{t,m}) \circ Q_t^{M L} = R_t \circ (\zeta \cdot \hat{Q}_{t,m})~. \label{eq:2shot'}
\end{align}
Moreover we have
$\hat{P}_{t,n}(P^{\ell N}(a(t))) = \zeta \cdot \hat{Q}_{t,m}(Q^{\ell M}(b(t)))$.
\end{theorem}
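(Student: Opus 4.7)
The plan is to follow essentially the same four-step strategy as in the proof of the implication $(2)\Rightarrow (3)$ of Theorem~\ref{tm:unlikely}, extracting along the way the precise uniformity in $n,m,N,M,\ell,L$ claimed here. First I would invoke the entanglement hypothesis and Theorem~\ref{thm:partial1} to obtain a proportionality $n\,h_{P,a}=m\,h_{Q,b}$ with $n,m$ coprime, together with the divisor identity $n\cdot\mathsf{D}_{P,a}=m\cdot\mathsf{D}_{Q,b}$ and the equality of Green functions at every place. Theorem~\ref{tm:multiplicative}, applied at an Archimedean place, furnishes coprime $N,M$ with $\Delta:=d^N=\delta^M$. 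Then, by a preliminary base change $C'\to C$ trivializing the leading coefficients of $P$ and $Q$, I may assume condition $(\diamond)$ of \S\ref{sec:biggest proof} holds, so that the B\"ottcher coordinates $\varphi_{P_t},\varphi_{Q_t}$ are well-defined adelic Laurent series.

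Next I would apply the argument of Step (I) in~\S\ref{sec:biggest proof}: for each branch $\mathfrak c\in\mathcal B$, Lemma~\ref{lem:344} produces, for every integer $l\geq l(\mathfrak c)$, a root of unity $\zeta(\mathfrak c,l)\in\KK$ such that
\[
\varphi_{P_t}\bigl(P^{lN}_t(a(t))\bigr)^n=\zeta(\mathfrak c,l)\,\varphi_{Q_t}\bigl(Q^{lM}_t(b(t))\bigr)^m
\]
as an equality of adelic series near $\mathfrak c$. The relation $\zeta(\mathfrak c,l+1)=\zeta(\mathfrak c,l)^\Delta$ of Lemma~\ref{lem:image}, combined with the hypothesis $M_\Delta^{\ell+L}(\xi)=M_\Delta^{\ell}(\xi)$ for every root of unity $\xi\in\KK$, forces $\zeta(\mathfrak c,\ell+lL)=\zeta(\mathfrak c,\ell)=:\zeta_{\mathfrak c}$ for all $l\geq0$, uniformly over branches. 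The condition $\Delta^{\ell}\ord_{\mathfrak c}(\mathsf{D}_{P,a})>\ord_{\mathfrak c}(\mathsf{D}_{P})$ guarantees, via Propositions~\ref{prop:behave green} and~\ref{prop:GreenBottcher}, that $P^{\ell N}_t(a(t))$ and $Q^{\ell M}_t(b(t))$ already lie in the range of convergence of $\varphi_{P_t}$ and $\varphi_{Q_t}$ at every branch at infinity, so Lemma~\ref{lem:344} can be applied with $l=\ell$.

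Fixing one branch $\mathfrak c\in\mathcal B$ and an Archimedean place $v_0$, I would then run verbatim Step (II) of~\S\ref{sec:biggest proof}: apply Xie's algebraization (Theorem~\ref{thm:Junyi}) to the $\KK$-rational sequence $(P^{(\ell+lL)N}_t(a(t)),Q^{(\ell+lL)M}_t(b(t)))$, $l\geq0$, at a parameter $t\in C(\bar\KK)$ close to $\mathfrak c$ for which neither $P_t$ nor $Q_t$ is integrable. This yields an irreducible algebraic curve whose unique branch at infinity is $\varphi_{P_t}(x)^n=\zeta_{\mathfrak c}\varphi_{Q_t}(y)^m$, and Theorem~\ref{thm:inv-curve} (Medvedev–Scanlon/Pakovich) produces a polynomial $R_t$ of degree $\Delta^L$ semi-conjugating $P^{NL}_t$ to itself through $\hat P_{t,n}$ and $Q^{ML}_t$ through $\zeta_{\mathfrak c}\hat Q_{t,m}$, together with the equality $\hat P_{t,n}(P^{\ell N}(a(t)))=\zeta_{\mathfrak c}\hat Q_{t,m}(Q^{\ell M}(b(t)))$. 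This gives simultaneously the properties $(\mathcal P_1),(\mathcal P_2),(\mathcal P_3)$ of~\S\ref{sec:biggest proof} at every such parameter.

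Finally, Step (III) upgrades this pointwise statement to an algebraic one: the coefficients of the polynomial $R_t$ solving~\eqref{eq:1shot'} are uniquely determined and depend regularly on $t$ (by the triangular linear system of~\S\ref{sec:biggest proof}), and the three conditions $(\mathcal P_1),(\mathcal P_2),(\mathcal P_3)$ each cut out Zariski closed subsets of $C$. Since they hold on a Zariski dense set of points of $C(\bar\KK)$ clustering at $\mathfrak c$, they hold identically on $C$, proving the existence of a family $R_t$ with the required properties and with $\zeta=\zeta_{\mathfrak c}$. The last move is to descend from $C'$ to $C$ as in Step (IV): the uniqueness of $R_t$ implies it is Galois invariant under $C'\to C$ and hence descends to a family of polynomials parametrized by $C$. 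The main technical obstacle I anticipate is ensuring that the integer $L$ obtained this way serves \emph{simultaneously} for all branches $\mathfrak c\in\mathcal B$ and for the algebraization argument; this is exactly what the uniform periodicity condition $M_\Delta^{\ell+L}(\xi)=M_\Delta^{\ell}(\xi)$ is designed to provide, combined with the stabilization of the sequences $\zeta(\mathfrak c,l)$ from Lemma~\ref{lem:image}.
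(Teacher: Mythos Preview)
Your proposal is correct and follows essentially the same approach as the paper. The paper itself does not give a separate proof of this theorem but simply states that ``the arguments of the previous section yield'' it; your outline is an accurate and careful unpacking of how Steps (I)--(IV) of \S\ref{sec:biggest proof} specialize to the precise integers $n,m,N,M,\ell,L$ and root of unity $\zeta$ asserted here, with the two bullet-point hypotheses replacing the constants $l(\mathfrak{c}),L_0,L$ of Lemmas~\ref{lem:344} and~\ref{lem:image}.
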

In the statement we wrote $M_\Delta (z) = z^\Delta$.
\begin{remark}
Observe that the integers $n,m, N$ and $M$ are uniquely determined by the condition to be coprime, and that $L$ can be bounded from above by a constant depending 
only on $[\KK:\Q]$.
\end{remark}

We also have the next result, compare with~\cite[Theorem 1.3]{BD}. 
\begin{theorem}\label{tm:unlikelyprecise}
Pick any irreducible affine curve $C$ defined over a number field $\KK$. Let $(P,a)$ and $(Q,b)$ be active, non-integrable monic centered dynamical pairs parametrized by $C$ of respective degree $d,\delta\geq2$. Then, the following are equivalent:
\begin{enumerate}
\item $\mathrm{Preper}(P,a,\bar{\KK})\cap\mathrm{Preper}(Q,b,\bar{\KK})$ is an infinite subset of $C(\bar{\KK})$;
\item $\mathrm{Preper}(P,a,\bar{\KK})=\mathrm{Preper}(Q,b,\bar{\KK})$;
\item the height functions $h_{P,a}$ and $h_{Q,b}$ are proportional;
\item there exists a non-repeating sequence $t_n\in C(\bar{\KK})$ such that
\[\lim_{n\to\infty}h_{P,a}(t_n)=\lim_{n\to\infty}h_{Q,b}(t_n)=0;\]
\item there exist integers $n,m\geq1$ such that for any place $v\in M_\KK$,
\[n\cdot \Delta g_{P,a,v}=m\cdot\Delta g_{Q,b,v},\]
 as positive measures on $C^{v,\an}$;
\item there exist integers $n,m\geq1$ such that for all places $v\in M_\KK$, we have
\[n\cdot g_{P,a,v} =  m\cdot g_{Q,b,v} \ \text{ on } C^{\an}_v;\]
\item  there exist integers $n,m \ge1$, $N,M\geq1$, $r,s\ge0$, and families $R,\tau$ and $\pi$ of polynomials parametrized by $C$ such that
\[\tau\circ P^N= R\circ\tau \ \text{ and } \ \pi\circ Q^M= R\circ\pi,\]
and $\tau (P^{r}(a))= \pi(Q^{s}(b))$.
\end{enumerate}
\end{theorem}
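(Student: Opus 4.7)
The strategy organizes the seven conditions into groups whose internal equivalences are either tautological or already established, and then glues the groups together using three inputs developed in the previous sections: Theorem~\ref{tm:unlikely}, Theorem~\ref{thm:partial1}, and the rigidity at one place (Theorem~\ref{thm:identical measures}), together with Thuillier--Yuan's equidistribution Theorem~\ref{tm:yuan}. The first observation is that Theorem~\ref{thm:partial1} already yields the equivalences $(1)\Leftrightarrow(2)\Leftrightarrow(6)\Leftrightarrow[\exists n,m\ge 1,\ n\,h_{P,a}=m\,h_{Q,b}]$, while Theorem~\ref{tm:unlikely} supplies $(1)\Leftrightarrow(7)$. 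It therefore suffices to weave (3), (4) and (5) into this chain.

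For (3), I would note that $(6)\Rightarrow(3)$ is immediate upon summing the local equalities with the correct weights over all places, using Proposition~\ref{prop:adelic}(1); conversely, $(3)\Rightarrow(2)$ is trivial because proportional non-negative heights share the same zero locus, and those zero loci equal $\preper(P,a,\bar\KK)$ and $\preper(Q,b,\bar\KK)$ respectively by Proposition~\ref{prop:adelic}(2). The implication $(6)\Rightarrow(5)$ is obvious by taking Laplacians. For the genuinely non-trivial direction $(5)\Rightarrow(1)$, I would apply (5) at a fixed Archimedean place $v_0$ and invoke Theorem~\ref{thm:identical measures} to upgrade proportionality of bifurcation measures to proportionality of Green functions $n\,g_{P,a,v_0}=m\,g_{Q,b,v_0}$ at $v_0$; transversally prerepelling parameters for $(P,a)$ are then dense in $\mathrm{Bif}(P,a)$ by Theorem~\ref{tm:suppT}, consist of algebraic points defined over $\bar\KK$ (they are roots of algebraic equations over $\KK$), and by Proposition~\ref{lm:bothprerepelling} every such parameter also belongs to $\preper(Q,b,\bar\KK)$; this exhibits an infinite subset of $\preper(P,a,\bar\KK)\cap\preper(Q,b,\bar\KK)$, which is (1), and from there Theorem~\ref{thm:partial1} recovers (6) at every place.

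The only remaining equivalence is $(4)\Leftrightarrow(1)$. The direction $(1)\Rightarrow(4)$ is trivial by choosing $t_n\in\preper(P,a,\bar\KK)\cap\preper(Q,b,\bar\KK)$, on which both heights vanish. For $(4)\Rightarrow(5)$ I would appeal directly to Theorem~\ref{tm:yuan} applied simultaneously to the two semi-positive adelic line bundles $\bar{\mathcal{L}}_{P,a}$ and $\bar{\mathcal{L}}_{Q,b}$ furnished by Proposition~\ref{prop:adelic}; the crucial numerical input is that the global heights $h_{\bar{\mathcal{L}}_{P,a}}(\bar C)=h_{\bar{\mathcal{L}}_{Q,b}}(\bar C)=0$ by Proposition~\ref{prop:adelic}(3), so the hypothesis (4) matches exactly the hypothesis of the equidistribution theorem for both bundles. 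At every place $v\in M_\KK$, the Galois-orbit averages $\tfrac{1}{\deg(t_n)}\sum_{y\in\mathsf{O}(t_n)}\delta_y$ then converge simultaneously to normalized multiples of $\mu_{P,a,v}$ and of $\mu_{Q,b,v}$, forcing these two measures to be proportional, which is precisely (5). The main technical obstacle in the whole chain is the Archimedean rigidity step that underlies $(5)\Rightarrow(6)$ at $v_0$; this was however already invested in proving Theorem~\ref{thm:identical measures}, which in turn rests on Theorem~\ref{tm:rigidaffine}, hence on Zdunik's theorem together with Tan~Lei's similarity theorem. Once that ingredient is granted, the remainder of the argument is bookkeeping.
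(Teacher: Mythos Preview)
Your overall organization is sound and matches the paper's approach almost everywhere, but there is a genuine gap in your argument for $(5)\Rightarrow(1)$. You invoke Proposition~\ref{lm:bothprerepelling} to conclude that every transversally prerepelling parameter for $(P,a)$ lies in $\preper(Q,b,\bar\KK)$. However, the hypotheses of that proposition are those of Theorem~\ref{tm:multiplicative}, which explicitly require $\preper(P,a)=\preper(Q,b)$ in addition to the proportionality of Green functions. Indeed, the proof of Proposition~\ref{lm:bothprerepelling} begins by using this assumption to assert that a transversally prerepelling parameter $t_0$ for $(P,a)$ already belongs to $\preper(Q,b)$, and then upgrades preperiodicity to \emph{properly prerepelling}. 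So your invocation is circular: you are assuming $(2)$ to deduce $(1)$.

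The fix is to prove $(5)\Rightarrow(6)$ directly, as the paper does. From $(5)$ at an Archimedean place $v_0$, Theorem~\ref{thm:identical measures} gives $n\,g_{P,a,v_0}=m\,g_{Q,b,v_0}$, and comparing the expansions at the branches at infinity (Proposition~\ref{prop:behave green}) yields the divisor equality $n\cdot\mathsf{D}_{P,a}=m\cdot\mathsf{D}_{Q,b}$. Now fix any place $v$. By $(5)$ the function $n\,g_{P,a,v}-m\,g_{Q,b,v}$ is harmonic on $C^{\an}_v$, and by the divisor equality it extends continuously to $\bar C^{\an}_v$; the maximum principle forces it to be a constant $c$. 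Evaluating at $t\in\preper(P,a,\bar\KK)$ gives $c=-m\,g_{Q,b,v}(t)\le 0$, while evaluating at $t'\in\preper(Q,b,\bar\KK)$ gives $c=n\,g_{P,a,v}(t')\ge 0$, so $c=0$. This yields $(6)$ at every place, and the rest of your chain goes through unchanged.
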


\begin{proof}
The fact that (1) $\Leftrightarrow$ (2) $\Leftrightarrow$ (3) $\Leftrightarrow$ (6)
is the content of Theorem~\ref{thm:partial1}.  The implication (3) $\Rightarrow$ (7) is the content of \S\ref{sec:biggest proof}, and (7) $\Rightarrow$ (2) is clear.
The implication (1) $\Rightarrow$ (4) is clear as we may take $t_n \in \mathrm{Preper}(P,a,\bar{\KK})\cap\mathrm{Preper}(Q,b,\bar{\KK})$
and $h_{P,a}(t_n) = h_{Q,b}(t_n) =0$.  The implication (4) $\Rightarrow$ (5) is a direct application of Thuillier-Yuan's Theorem~\ref{tm:yuan}, see \S \ref{sec:1on3}.

Let us prove (5) $\Rightarrow$ (6). The argument is contained in \S \ref{sec:1on3}, but we repeat it for the convenience of the reader.
Suppose $n\cdot \Delta g_{P,a,v}=m\cdot\Delta g_{Q,b,v}$ for all places $v\in M_\KK$.
If $v$ is archimedean, then $n\cdot g_{P,a,v} =  m\cdot g_{Q,b,v}$
by Theorem~\ref{thm:identical measures}, and $n \cdot \mathsf{D}_{P,a} = m \cdot \mathsf{D}_{Q,b}$ by Theorem~\ref{atoneplace}. 
Pick any place $v \in M_\KK$, then
$n\cdot g_{P,a,v} - m\cdot g_{Q,b,v}$ is a harmonic function which extends continuously to $\bar{C}^{\an}_v$ hence is constant by the maximum principle.
Since it vanishes on $\mathrm{Preper}(P,a,\bar{\KK})$ we get $n\cdot g_{P,a,v} = m\cdot g_{Q,b,v}$ as required.
\end{proof}


\section{Proof of Theorem~\ref{tm:unlikely-car0}}\label{sec:specialization}

Let us recall its statement. 

\unlikelyspecial*

We let $K$ be a field of characteristic $0$ over which $C$, $(P,a)$ and $(Q,b)$ are defined. 
As in \S \ref{sec:overviewB} the implication (2) $\Rightarrow$ (1) is easy. 
We thus assume that $\preper(P,a,\bar{K}) \cap \preper(Q,b,\bar{K})$ is infinite. To prove (2),
we shall use a specialization argument to reduce to the case $K$ is a number field which was treated in the previous section. 
We proceed in four steps.

\medskip

\noindent {\bf 1.}
Fix any embedding $ C \subset \A^N$ and pick polynomials $a_i, b_j \in K[z_1, \ldots, z_N]$, and
$V_1, \ldots, V_M \in K[z_1, \ldots, z_N]$
such that $ C$ is the scheme theoretic intersection of the hypersurfaces $\{ V_i = 0\}$, $P(T) = \sum_i a_i|_C T^i$, and
$Q(T) = \sum b_j|_C T^j$. The completion $\bar{C}$ 
of $C$ in $\p^N$ is defined as the vanishing of the homogeneous polynomials
$\bar{V}_i(z_0, \cdots, z_N) = z_0^{\deg(V_i)} V_i(\frac{z_1}{z_0},\cdots,\frac{z_N}{z_0})$. 
By choosing an adequate embedding we may suppose that $\bar{C}$
is smooth near any of its point lying on the hyperplane at infinity. 

Choose any finitely generated $\Q$-algebra $R$ contained in $K$ that contains all coefficients defining our data so that 
the polynomials $a_i, b_j$ and $V_l$ belong to $R[z_1, \cdots , z_N]$.
We may assume that $K= \mathrm{Frac}(R)$.
Then $\Lambda:=\spec (R)$ is an affine variety defined over $\Q$ which is in general neither reduced nor irreducible. 
We let $\mathfrak{C}$ (resp. $\bar{\mathfrak{C}}$) be the affine (resp. projective) $\Lambda$-scheme defined by the vanishing 
of all polynomials $V_i$ (resp. $\bar{V}_i$). 
Up to replace $\Lambda$ by a Zariski open dense subset (i.e. to replace $R$ by a larger ring), we may assume by generic flatness 
and~\cite[Theorem~9.9]{MR0463157}
that $\Lambda$ is irreducible, and $\bar{\mathfrak{C}} \to \Lambda$ is a flat family of curves 
such that $\bar{C} = \bar{\mathfrak{C}}_K$ and $C = \mathfrak{C}_K$.

The dynamical pairs $(P,a)$ and $(Q,b)$ induce 
regular maps $P_\mathfrak{C}, Q_\mathfrak{C} \colon \p^1_\mathfrak{C} \to  \p^1_\mathfrak{C}$
of degree $d$ and $\delta$ respectively, and regular functions $\mathfrak{a}, \mathfrak{b} \colon \mathfrak{C} \to \p^1_\mathfrak{C}$.

\medskip

\noindent {\bf 2.}
Any maximal ideal $s$ of $R$ corresponds to a closed point in $\Lambda$.
We denote by $\bar{C}_s$, $C_s$, $P_s$, $a_s$, $Q_s$ and $b_s$ the specializations of the corresponds objects at $s$. 
These are all defined over the residue field $\kappa(s)$ of $s$ which is a finite extension of $\Q$.

Note that the intersection of $\bar{C}$ with the hyperplane at infinity is a finite set of points defined over an algebraic extension of $K$. 
Enlarging $R$ if necessary, we may suppose that all these points are defined over $R$. It follows that the Zariski closure in $\bar{\mathfrak{C}}$ 
of a branch $\mathfrak{c}$ at infinity of $C$ defines a branch at infinity $\mathfrak{c}_s$ of $\mathfrak{C}_s$ for all $s$.

\begin{lemma}\label{lem:specialization divisor}
There exists a Zariski open dense subset $U$ of $\Lambda$ such that 
$D(P,a)_s= D(P_s,a_s)$ for all $s\in U$.
\end{lemma}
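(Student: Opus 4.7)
My plan is to reduce the equality $\mathsf{D}_{P_s, a_s} = (\mathsf{D}_{P,a})_s$ to finitely many Zariski‐open conditions on $s \in \Lambda$, exploiting the stabilization property supplied by Lemma~\ref{lem:divisor}.

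First I will set up the geometry at infinity. Since every branch at infinity of $C$ is defined over $K$ by the running hypothesis, the finitely many branches $\mathfrak{c}_1,\dots,\mathfrak{c}_r$ each extend—after possibly shrinking $\Lambda$—to a section $\sigma_i\colon\Lambda\to\bar{\mathfrak{C}}\setminus\mathfrak{C}$ along which $\bar{\mathfrak{C}}$ is smooth, and at each such $\sigma_i$ I can choose an étale uniformizer $t_i$ defined over $R$. Under these arrangements every $\mathfrak{c}_i$ specializes to a unique branch $\mathfrak{c}_{i,s}$ of $\bar{\mathfrak{C}}_s$ at infinity for every $s\in\Lambda$, and no spurious branches appear. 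I may also shrink $\Lambda$ so that the leading coefficient of $P$, viewed as a section of a suitable line bundle on $\bar{\mathfrak{C}}$, has the expected degree on every geometric fibre; let $U_0$ be the resulting open set on which $\deg(P_s)=d$.

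Second, set $q_i:=q_{\mathfrak{c}_i}(P,a)$. For each index $i$ with $q_i>0$ the second assertion of Lemma~\ref{lem:divisor} lets me fix an integer $n_i\ge 1$ such that
\[
\ord_{\mathfrak{c}_i}\bigl(P^{n_i}(a)\bigr)=-d^{n_i}q_i,
\]
and write the Laurent expansion $P^{n_i}(a)=c_i\,t_i^{-d^{n_i}q_i}+O(t_i^{-d^{n_i}q_i+1})$ with $c_i\in R\setminus\{0\}$ (after possibly enlarging $R$). I then define $U$ to be the intersection of $U_0$ with the Zariski open loci $\{c_{i,s}\ne 0\}$ for all $i$ with $q_i>0$; this is a Zariski open dense subset of $\Lambda$.

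Third, I will verify that $q_{\mathfrak{c}_{i,s}}(P_s,a_s)=q_i$ for every $i$ and every $s\in U$. For the upper bound, since $\deg(P_s)=d$ one has $(P^n(a))_s=P_s^n(a_s)$ for all $n$, so specialization can only raise the order of vanishing along $t_i$:
\[
\ord_{\mathfrak{c}_{i,s}}\bigl(P_s^n(a_s)\bigr)\ge \ord_{\mathfrak{c}_i}\bigl(P^n(a)\bigr)
\]
for every $n$, and passing to the limit in the definition of $q$ yields $q_{\mathfrak{c}_{i,s}}(P_s,a_s)\le q_i$. This already closes the case $q_i=0$. For $q_i>0$, the non-vanishing of $c_{i,s}$ forces $\ord_{\mathfrak{c}_{i,s}}(P_s^{n_i}(a_s))=-d^{n_i}q_i$, hence $q_{\mathfrak{c}_{i,s}}(P_s,a_s)\ge q_i$ directly from the definition. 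Combining both bounds gives the claimed equality, and summing over $i$ gives $\mathsf{D}_{P_s,a_s}=(\mathsf{D}_{P,a})_s$.

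The principal obstacle is therefore not dynamical but geometric bookkeeping: one must ensure that the uniformizers, the leading coefficients, and the smoothness of $\bar{\mathfrak{C}}$ along the sections at infinity all behave well under specialization, which is precisely why several successive shrinkings of $\Lambda$ are needed. Once this framework is in place, the crucial point is that Lemma~\ref{lem:divisor} reduces the infinite definition of $\mathsf{D}_{P,a}$ to a single iterate $n_i$ at each branch, so that only finitely many Zariski-open conditions have to be imposed.
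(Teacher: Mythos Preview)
Your overall strategy---reduce to a single iterate via Lemma~\ref{lem:divisor} and impose finitely many Zariski-open conditions---matches the paper's, and your treatment of the branches with $q_i=0$ via the inequality $\ord_{\mathfrak{c}_{i,s}}(P_s^n(a_s))\ge \ord_{\mathfrak{c}_i}(P^n(a))$ is in fact cleaner than the conjugation trick the paper uses there.

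However, there is a genuine gap in your lower bound at the branches with $q_i>0$. From the non-vanishing of $c_{i,s}$ you correctly deduce $-\tfrac{1}{d^{n_i}}\ord_{\mathfrak{c}_{i,s}}(P_s^{n_i}(a_s))=q_i$, but this does \emph{not} give $q_{\mathfrak{c}_{i,s}}(P_s,a_s)\ge q_i$ ``directly from the definition'': the sequence $-\tfrac{1}{d^n}\min\{0,\ord(P_s^n(a_s))\}$ is not monotone, and knowing one term equals $q_i$ says nothing about the limit. Concretely, it is perfectly possible that $\ord_{\mathfrak{c}_{i,s}}(P_s^{n}(a_s))>-d^nq_i$ for all $n>n_i$ (your own upper-bound inequality only goes in this direction), so the limit could drop below $q_i$.

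What is missing is an \emph{escape criterion} that transfers to the specialization. The paper supplies this via Proposition~\ref{prop:classic-estim}: after writing $P=P_{c,\alpha}$, one has $q_\mathfrak{c}(P,a)=-\tfrac{1}{d^q}\ord_\mathfrak{c}(P^q(a))$ as soon as $-\ord_\mathfrak{c}(P^q(a))>-\ord_\mathfrak{c}(c,\alpha)$. This inequality is a Zariski-open condition on $s$ (preserve the orders of both $P^q(a)$ and of the coefficients $c,\alpha$), and once it holds for $P_s$ the stabilization at step $q$ is guaranteed for the \emph{specialized} pair as well. To repair your argument, you should add the open conditions that the pole orders of the coefficients of $P$ at each $\mathfrak{c}_i$ are preserved under specialization, and take $n_i$ large enough that the escape inequality holds; then the same inequality holds for $P_s$ and forces $q_{\mathfrak{c}_{i,s}}(P_s,a_s)=q_i$.
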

\begin{proof}
We may suppose that $D(P,a)$ is non zero and the family is given under the form~\eqref{eq:critical form} so that $P= P_{c,\alpha}$ 
for some $c,\alpha \in R$.

Pick any branch at infinity of  $C$ lying in the support of $D(P,a)$. By Proposition~\ref{prop:classic-estim}, 
for any $q$ such that \[ - \ord_\mathfrak{c} (P^q(a)) > - \ord_\mathfrak{c} (c, \alpha)~,\]
we have  $\ord_\mathfrak{c}(D(P,a)) = \frac1{d^q}\ord_\mathfrak{c} (P^q(a))$. 
Since for any $\phi \in K(C)$, one has $\ord_\mathfrak{c}(\phi) = \ord_{\mathfrak{c}_s} (\phi_s)$
for all $s$ in a Zariski open dense subset of $\Lambda$, we conclude that 
\[ 
\ord_\mathfrak{c}(D(P,a)) = \frac1{d^q}\ord_\mathfrak{c} (P^q(a)) = 
\frac1{d^q}\ord_{\mathfrak{c}_s} (P_s^q(a_s)) = \ord_{\mathfrak{c}_s}(D(P_s,a_s)) \]
for all $s$ in a Zariski open dense subset.

This concludes the proof when all branches at infinity lie in the support of $D(P,a)$.
Otherwise, let $\mathfrak{c}_1, \ldots, \mathfrak{c}_k$ be the set of branches at infinity of $C$
such that $\ord_{\mathfrak{c}_j}(D(P,a)) =0$. 
By Lemma~\ref{lem:divisor}, we may fix an integer $N\ge 1$, such that for any local parametrization $\theta$ of a branch $\mathfrak{c}_j$
the series  $t^N \times P_{\theta(t)}^q(a(\theta(t)))$ has no pole at $0$.

Pick any regular function $A \colon C\to \A^1$
whose poles are contained in $\mathfrak{c}_1, \ldots, \mathfrak{c}_k$ and have order $\ge N$, and whose zeroes are included in the support
of $D(P,a)$. Up to replace $N$ by a larger integer, such a function always exist. 
In the sequel, we identify this function to a family of dilatation $z \mapsto A z$ on $\A^1_C$.
Observe that the conjugated pair is defined by the family $\tilde{P} = A^{-1} \circ P\circ A$ which is still parametrized by $C$, and by 
the new marked point $\tilde{a} = A^{-1}_t(a(t))$. By construction, for any parametrization $\theta$ of a branch $\mathfrak{c}_j$, we have
\[ \tilde{P}^q_{\theta(t)} (\tilde{a}({\theta(t)})) =
A_{\theta(t)}^{-1} \circ P^q_{\theta(t)}(a({\theta(t)}))\] 
which is regular near $0$ for all $q$. Observe that this remains true by specialization at any point $s$
so that 
\[ 0 \le   \ord_{\mathfrak{c}_j}(\tilde{P}^q(\tilde{a})) \le  \ord_{\mathfrak{c}_{j,s}}(\tilde{P}_s^q(\tilde{a}_s)) \] for all $q$ , hence
\[ 0 = \ord_{\mathfrak{c}_j}(D(P,a)) =  \ord_{\mathfrak{c}_{j,s}}(D(P_s,a_s)) \]
for all $s$. 
\end{proof}

\medskip

\noindent {\bf 3.}
The key point of the proof is contained in the next result. 
\begin{proposition}\label{prop:specialization fiber}
If Condition ($\vartriangle$) is satisfied,  for any archimedean place $v$,
the set 
\[\{ s \in \Lambda^{\an}_v, \preper(P_s,a_s) \cap \preper(Q_s,b_s) \text{ is infinite}\}\] contains a non-empty open subset of $\Lambda^{\an}_v$.
\end{proposition}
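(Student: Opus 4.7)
The main idea is to work at the archimedean place $v$, using the embedding $K = \mathrm{Frac}(R) \hookrightarrow \C_v$ provided by $v$ to obtain a distinguished complex point $s_\star \in \Lambda^{\an}_v$ above which the specialized fiber $(\mathfrak{C}_{s_\star}, P_{s_\star}, a_{s_\star}, Q_{s_\star}, b_{s_\star})$ identifies with the complexification of $(C, P, a, Q, b)$. At $s_\star$ the intersection $\preper(P_{s_\star},a_{s_\star})\cap\preper(Q_{s_\star},b_{s_\star})$ already contains the images of the infinite set $\preper(P,a,\bar K)\cap \preper(Q,b,\bar K)$ under $\bar K \hookrightarrow \C_v$, so it is infinite. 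The whole task is to upgrade this to an analytically open neighborhood of $s_\star$.

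First I would spread out. Choose an infinite sequence $t_1,t_2,\ldots$ of distinct points in $\preper(P,a,\bar K)\cap \preper(Q,b,\bar K)$. Each $t_n$ is algebraic over $K$, so after enlarging $R$ (which keeps $\Lambda$ open-dense in its original form) it extends to a closed irreducible $1$-dimensional subscheme $T_n \subset \mathfrak{C}$ that is finite and dominant over $\Lambda$, with the two preperiodicity relations $P^{N_n}(\mathfrak{a})=P^{M_n}(\mathfrak{a})$ and $Q^{N'_n}(\mathfrak{b})=Q^{M'_n}(\mathfrak{b})$ holding on $T_n$. By Lemma~\ref{lem:specialization divisor}, we may also assume $\mathsf{D}_{P_s,a_s}=\mathsf{D}_{P,a}$ and $\mathsf{D}_{Q_s,b_s}=\mathsf{D}_{Q,b}$ for every closed point $s\in \Lambda$; in particular, by $(\vartriangle)$, both divisors have every branch at infinity of $C_s$ in their support. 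Over a small polydisk $V\subset \Lambda^{\an}_v$ containing $s_\star$ and lying inside the common étale locus of $T_1,\ldots,T_N$ (any finite collection), each $T_n$ decomposes analytically as a disjoint union of $d_n=\deg(T_n/\Lambda)$ holomorphic sections $\sigma_n^{(i)}\colon V\to \mathfrak{C}^{\an}_v$ whose values in $C_s(\C_v)$ are automatically preperiodic intersection points.

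The key step is to show that on some open neighborhood of $s_\star$, infinitely many of these sections take pairwise distinct values. For any fixed pair $(n,i)\neq(n',i')$ the coincidence locus $\{s\in V:\sigma_n^{(i)}(s)=\sigma_{n'}^{(i')}(s)\}$ is a proper analytic subvariety of $V$ (by the identity theorem together with irreducibility of $\Lambda$ and the distinctness of $t_n\neq t_{n'}$). I would argue by contradiction: if the conclusion failed, then on every neighborhood of $s_\star$ there would be points $s$ at which only finitely many values $\sigma_n^{(i)}(s)$ occur, so by pigeonhole and a diagonal extraction we would obtain a point $(s_0,t_0)\in \mathfrak{C}^{\an}_v$ through which infinitely many distinct $T_n$ pass. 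Pushing forward via the regular map $\mathfrak{a}\colon \mathfrak{C}\to \Lambda\times \A^1$, $(s,t)\mapsto (s,a_s(t))$, we would obtain infinitely many distinct irreducible hypersurfaces of the preperiodic locus of the family $\{P_s\}$ meeting at a common point of $\Lambda\times \A^1$. By Theorem~\ref{thm:finite branches} (applied after base change and passage to an iterate), this forces $a_{s_0}(t_0)$ to be a persistent super-attracting fixed point of $P_{s_0}$, with non-locally-constant local degree function.

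The last ingredient is to use $(\vartriangle)$ to rule out this configuration. The persistent super-attracting structure at $(s_0,t_0)$ means that, along a suitable curve in $\mathfrak{C}$ through $(s_0,t_0)$, the marked point $\mathfrak{a}$ is captured into a bounded orbit; this would force the Green function $g_{P_s,a_s,v}$ to stay bounded near a branch at infinity of $C_s$, contradicting the fact (from $(\vartriangle)$ and Lemma~\ref{lem:specialization divisor}) that $\ord_\mathfrak{c}(\mathsf{D}_{P_s,a_s})>0$ at every branch at infinity $\mathfrak{c}$, which by Proposition~\ref{prop:behave green} forces $g_{P_s,a_s,v}\to\infty$ along every such branch. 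The contradiction completes the proof, and a polydisk around $s_\star$ contained in the common étale locus of the finitely many $T_n$'s isolated in the above argument is the desired open subset.

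\textbf{Main obstacle.} The delicate point is the final step linking Theorem~\ref{thm:finite branches} to condition $(\vartriangle)$: one must carefully verify that the persistent super-attracting fixed point detected on the dynamical side actually obstructs the divisor condition on the parameter curve, i.e.\ translate the local super-attracting dynamics along one factor into a genuine branch at infinity of $C$ lying outside $\supp(\mathsf{D}_{P,a})$. This passage between the dynamical plane and the parameter curve, together with the uniform control over infinitely many sections $\sigma_n^{(i)}$, is where the bulk of the technical work lies.
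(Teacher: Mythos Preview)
Your overall structure—spreading out preperiodic parameters to irreducible hypersurfaces $T_n \subset \mathfrak{C}$ and then invoking Theorem~\ref{thm:finite branches}—matches the paper's. However, there are two genuine gaps, and both concern where and how condition~$(\vartriangle)$ enters.

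First, you assert that each $T_n$ is finite (i.e., proper with finite fibers) over $\Lambda$ after enlarging $R$. This is not automatic: the closure in $\mathfrak{C}$ of a closed point of the generic fiber is an irreducible hypersurface which is finite-to-one onto a dense open of $\Lambda$, but the projection need not be proper—points of $T_n$ can escape to the boundary $\bar C_s\setminus C_s$ in the fibers. (Also, you cannot enlarge $R$ once for each of the infinitely many $t_n$.) The paper establishes properness precisely via $(\vartriangle)$, in Lemma~\ref{lem:321}: near any branch $\mathfrak{c}$ at infinity one has $g_{P,a}>0$ because $\ord_\mathfrak{c}(\mathsf{D}_{P,a})>0$, so no $T_n$ can meet a neighborhood of $\mathfrak{c}$. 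Without this, your sections $\sigma_n^{(i)}$ need not exist over all of $V$, and the pigeonhole step collapses.

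Second, and more seriously, your endgame—using $(\vartriangle)$ to derive a contradiction from the super-attracting configuration produced by Theorem~\ref{thm:finite branches}—does not work. That theorem locates the degeneracy at a \emph{finite} point $t_0\in C_{s_0}$ (the value $a(t_0)$ is a critical fixed point of $P_{t_0}$ whose local degree is not locally constant); this has no bearing on the behavior of $g_{P_s,a_s}$ near branches at infinity of $C_s$, which is all $(\vartriangle)$ controls. There is no translation of this local super-attracting structure into a branch missing from $\supp(\mathsf{D}_{P,a})$. The paper does \emph{not} seek a contradiction here. It accepts the degeneracy at $(s_0,t_0)$, shows that the jump locus $\mathcal{E}\subset\mathfrak{C}$ of the local degree is a hypersurface meeting each fiber $C_s$ transversally in finitely many points, proves (Lemma~\ref{lem:422}) that in a neighborhood the $T_n$ meet $\mathcal{E}$ only at $t_0$, and then concludes via Theorem~\ref{thm:finite branches} (in the contrapositive direction) that for $s$ in a \emph{punctured} euclidean neighborhood of $s_0$ no point of $C_s$ lies in infinitely many $T_n$, hence $\preper(P_s,a_s)\cap\preper(Q_s,b_s)$ is infinite there. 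The open set produced is thus a punctured neighborhood of an arbitrary $s_0\in\mathcal{F}$, not a neighborhood of your distinguished point $s_\star$.
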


\begin{proof}
In the whole proof, we argue in the analytifications  with respect to a fixed archimedean place. To simplify notation, we drop the reference to this place.  
We view $\pi \colon\mathfrak{C}\to\Lambda$ as a flat family of complex algebraic curves. The closure in $\mathfrak{C}$ 
of any closed point $a \in C$ determines an irreducible hypersurface $Z(a)$ such that $Z(a)\to\Lambda$ is finite-to-one onto a Zariski dense open subset of $\Lambda$.
In general this projection is not proper.

Let $x_i$ be a sequence of distincts points in $\preper(P,a,\bar{K}) \cap \preper(Q,b,\bar{K})$. 
Then $Z_i := Z(x_i)$ is a sequence of distinct irreducible hypersurfaces of $\mathfrak{C}$ included in 
the space $\preper(P_\mathfrak{C},\mathfrak{a}) \cap \preper(Q_\mathfrak{C},\mathfrak{b})$. 
\begin{lemma}\label{lem:321}
For each $i$, the projection map $Z_i \to \Lambda$ is finite-to-one, proper and surjective.
\end{lemma}

\begin{proof}[Proof of Lemma~\ref{lem:321}]
It is sufficient to prove that $Z_i \to \Lambda$ is proper. We fix any archimedean place $v$, and prove that 
$Z_{i,v}^{\an} \to \Lambda^{\an}_v$ is proper, see~\cite[Proposition~3.4.7]{berko-book}. Pick any point $\mathfrak{c} \in \bar{\mathfrak{C}}$
which defines a branch at infinity of some curve of $C_{s_0}$. 

Observe that near the point $\mathfrak{c}$, we have
\[\frac1{d^q} \log^+|P^q(a)| = \frac{1}{d^q}\ord_\mathfrak{c} (P^q(a))  \log |z|^{-1} + O(1).\]
The condition ($\vartriangle$) thus implies $\frac1{d^q} \log^+|P^q(a)| > G(P)$ in a neighborhood $U$ of $\mathfrak{c}$.
This implies $g_{P,a} >0$ on $U$ by Proposition~\ref{prop:GreenBottcher}, hence $Z_i \cap U = \varnothing$. 
\end{proof}

Consider the set $\mathcal{F} := \{ s \in \Lambda^{\an}, \preper(P_s,a_s) \cap \preper(Q_s,b_s) \text{ is finite}\}$. We need to 
exhibit an open euclidean subset of $\Lambda^{\an}_v$ which does not intersect the countable set $\mathcal{F}$.
Pick any $s_0 \in \mathcal{F}$. By the previous lemma, each $Z_i$ intersects the fiber $C_{s_0}$
so that we may find a closed point $t_0 \in C_{s_0}$ which belongs to infinitely many $Z_i$'s. 

By Theorem~\ref{thm:finite branches}, we may thus suppose that $0$ is a fixed point for the family $P$ which is super-attracting at $t_0$, whose local degree
is not locally constant, and $Z_i$ is an irreducible component of $\{P^{n_i}(a) = 0\}$ with $n_i \to \infty$.

The local degree function $t \mapsto \deg_0(P_t)$ is upper-semi continuous with respect to the Zariski topology on $\mathfrak{C}$. Restricting $\Lambda$ if necessary, 
we thus suppose that the locus $\mathcal{E} \subset \mathfrak{C}$ where $\deg_0(P_t)$ is not locally constant is a smooth hypersurface, 
which intersects each curve $C_s$ transversally, and whose projection onto $\Lambda$ is proper. 
We may also assume that $\deg_0(P_t) = \nu$ if $t$ belongs to $\mathcal{E}$, 
and $\deg_0(P_t) = \mu <\nu$ otherwise.

\begin{lemma}\label{lem:422}
Any point $t_1 \in \mathcal{E}$ admits a (euclidean) neighborhood $V$
such that 
$Z_i \cap \mathcal{E} \cap V= \{t_1\}$ for all $i$.
\end{lemma}

By definition $\mathcal{E} \cap C_{s_0}$ is finite, and we may apply the previous lemma
to each point in this set. By properness of $\mathcal{E} \to \Lambda$, we get a euclidean neighborhood $W$ of $s_0$ such that 
for any $s \in W$ distinct from $s_0$, for any $i$ the intersection $Z_i \cap C_s$ 
does not lie in $\mathcal{E}$. By Theorem~\ref{thm:finite branches}, no point in $C_s$ may thus contain infinitely many hypersurfaces $Z_i$. 
This implies that $C_s \cap \{ Z_i\}$ is infinite which shows that the euclidean open set $W \setminus \{s_0\}$ does not intersect $\mathcal{F}$.
\end{proof}

\begin{proof}[Proof of Lemma~\ref{lem:422}]
Locally near $t_1$, choose local analytic coordinates $(\lambda, t)$ 
such that $\mathcal{E} = \{ t=0\}$, $\pi (\lambda, t) = \lambda$, and
\[P_{\lambda,t}(z)= b_\mu(\lambda,t) z^\mu + \cdots + b_{\nu-1}(\lambda,t) z^{\nu-1} +  z^\nu ( 1+ h(\lambda, t,z))\]
where $ t | b_j$ for all $j = \mu, \cdots, \nu-1$, and  $h(0) = 0$.

The restriction of the family $P$ to $\mathcal{E}$ is thus equal to $P_{\lambda,0}(z) =z^\nu ( 1+ h(\lambda, 0,z))$
and we may find an analytic change of coordinates $w= z + O_\lambda(z)$ such that 
$P_{\lambda,0}(w) = w^\nu$, see e.g.~\cite[Theorem~1.3]{MR3186512}. In an open neighborhood $V$ of $t_1$, for any integer $n\ge 1$ the intersection of the hypersurface
$\{ P^n(a) =0\}$ with $\mathcal{E}$ is thus determined by the equation $\{a^\nu =0\} $.
By reducing $V$ if necessary, we thus have $Z_i \cap \mathcal{E} \cap V\subset \{ a =0\} = \{t_1\}$.
\end{proof}

\medskip

\noindent {\bf 4.} 
One can now conclude the proof as follows. Fix any archimedean place $v$. 
By Proposition~\ref{prop:specialization fiber} there exists an open set $U \subset \Lambda^{\an}_v$ such that any closed point 
$s\in \Lambda \cap U$ satisfies  $\preper(P_s,a_s) \cap \preper(Q_s,b_s)$ is infinite, and $D(P,a)_s= D(P_s,a_s)$.
 
 For any $s \in \Lambda \cap U$,  Theorem~\ref{tm:precise intrication} shows that for any $t \in C_s$ we have
\begin{align}
&\hat{P}_{t,n} \circ P_t^{N L} = R_t \circ \hat{P}_{t,n}~; \text{ and }\label{eq:21}\\
&(\zeta \cdot \hat{Q}_{t,m}) \circ Q_t^{M L} = R_t \circ (\zeta \cdot \hat{Q}_{t,m})\label{eq:22} \\
&\hat{P}_{t,n}(P^{\ell N}(a(t))) = \zeta \cdot \hat{Q}_{t,m}(Q^{\ell M}(b(t)))\label{eq:23}
\text{ for all } l \ge 1,
\end{align}
for some polynomial $R_t$, for some integers $n,m,N,M, \ell, L$ and for a root of unity $\zeta$.

By Lemma~\ref{lem:specialization divisor}, the four integers $n,m, N$ and $M$ are actually independent on $s$ so that $\Delta := d^N = \delta^M$ too.  
We need to argue that $\zeta$ may be chosen uniformly in $s$.

To see this, we go back to Lemma~\ref{lem:344}. For each branch at infinity $\mathfrak{c}$ such that 
$\ord_\mathfrak{c}(\mathsf{D}_{P,a})>0$ we pick an integer $l(\mathfrak{c})>0$ such that 
\[
\Delta^{l(\mathfrak{c})} \times \ord_\mathfrak{c}(\mathsf{D}_{P,a}) >
\ord_\mathfrak{c}(\mathsf{D}_{P})
\text{ and }
\Delta^{l(\mathfrak{c})} \times \ord_\mathfrak{c}(\mathsf{D}_{Q,b}) >
\ord_\mathfrak{c}(\mathsf{D}_{Q})
~.\]
Lemma~\ref{lem:344} then yields for each $l \ge l(\mathfrak{c})$ a root of unity $\zeta(\mathfrak{c},l,U)$
such that 
\[\varphi_{P_t}\left(P^{lN}_t(a(t))\right)^n = \zeta(\mathfrak{c},l,U) \times  \varphi_{Q_t}\left(Q^{lM}_t(b(t))\right)^m
\]
for all $s \in U$, and all $t$ in the branch $\mathfrak{c}$ on $C_s$.
In particular Lemma~\ref{lem:image} holds uniformly on $U$: 
we can find two integers $\ell$ and $L$ such that 
$\zeta(\mathfrak{c},\ell,U) = \zeta(\mathfrak{c},\ell + l\cdot L,U)$
for all $l \ge0$.  

It follows that for a Zariski dense set of points $s \in \Lambda\cap U$, 
there exist $\ell, L$ and $\zeta$ (independent on $s$) satisfying~\eqref{eq:21},~\eqref{eq:22}, and~\eqref{eq:23}. 
Since these equations are linear in the coefficients of $R_t$, one may choose these coefficients in the fraction field of $\Lambda$, and
we conclude that these relations are actually satisfied for all $s \in \Lambda$. 

This implies the next theorem and concludes the proof of Theorem~\ref{tm:unlikely-car0}.

\begin{theorem}\label{tm:precise intrication-car0}
Pick any irreducible affine curve $C$ defined over an algebraically closed field $K$ of characteristic $0$. 
Let $(P,a)$ and $(Q,b)$ be active non-integrable dynamical \emph{intricated} pairs 
parametrized by $C$ of respective degree $d,\delta\geq2$ such that $(P,a)$ satisfies the condition ($\vartriangle$).

Then there exist  coprime integers $n$ and $m$ such that $n \cdot \mathsf{D}_{P,a} = m \cdot \mathsf{D}_{Q,b}$
and coprime integers $N, M$ such that $\Delta := d^N = \delta^M$. 

Furthermore, there exists root of unity $\zeta$
satisfying  $M^{\ell + L}_\Delta(\zeta) = M^{\ell}_\Delta(\zeta)$ for some integers 
$\ell$ and $L$; 
and a family of polynomial $R_t$ of degree $\Delta^{L}$ such that 
\begin{align}
&\hat{P}_{t,n} \circ P_t^{N L} = R_t \circ \hat{P}_{t,n}~;\label{eq:1shot-car0}\\
&(\zeta \cdot \hat{Q}_{t,m}) \circ Q_t^{M L} = R_t \circ (\zeta \cdot \hat{Q}_{t,m})~;  \text{ and } \label{eq:2shot-car0}\\
&\hat{P}_{t,n}(P^{\ell N}(a(t)) = \zeta \cdot \hat{Q}_{t,m}(Q^{\ell M}(b(t)). \label{eq:3shot-car0}
\end{align}
\end{theorem}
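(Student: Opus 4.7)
The plan is to repeat verbatim the four-step specialization argument already used to deduce Theorem~\ref{tm:unlikely-car0} from Theorem~\ref{tm:unlikely}, but to be more careful in tracking the structural data (the integers $n,m,N,M,\ell,L$, the root of unity $\zeta$, and the polynomial family $R_t$) through the descent. First I would spread the situation out. Replacing $K$ by its algebraic closure of a finitely generated subfield, I pick a finitely generated $\mathbb{Q}$-algebra $R\subset K$ containing all coefficients of $C$, $P$, $Q$, $a$, $b$, and set $\Lambda:=\spec(R)$. After shrinking $\Lambda$ I obtain a flat family $\bar{\mathfrak{C}}\to\Lambda$ with regular endomorphisms $P_\mathfrak{C},Q_\mathfrak{C}\colon \mathbb{P}^1_{\mathfrak{C}}\to\mathbb{P}^1_{\mathfrak{C}}$ and marked sections $\mathfrak{a},\mathfrak{b}$, so that $(P_s,a_s)$ and $(Q_s,b_s)$ are dynamical pairs over the number field $\kappa(s)$ for every closed point $s\in\Lambda$. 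By Lemma~\ref{lem:specialization divisor} applied to both pairs, after further shrinking $\Lambda$ I have $\mathsf{D}_{P_s,a_s}=(\mathsf{D}_{P,a})_s$ and $\mathsf{D}_{Q_s,b_s}=(\mathsf{D}_{Q,b})_s$ for all $s$; in particular condition $(\vartriangle)$ descends to every fiber.

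Second, I fix an archimedean place $v$ on $\mathbb{Q}$ and invoke Proposition~\ref{prop:specialization fiber}: there exists a non-empty euclidean open set $U\subset\Lambda^{\an}_v$ meeting $\Lambda(\bar{\mathbb{Q}})$ in a dense subset of closed points $s$ for which $\preper(P_s,a_s)\cap\preper(Q_s,b_s)$ is infinite. For each such $s$, Theorem~\ref{tm:precise intrication} applied to the number-field pair $(P_s,a_s),(Q_s,b_s)$ produces coprime integers $n_s,m_s$ with $n_s\cdot\mathsf{D}_{P_s,a_s}=m_s\cdot\mathsf{D}_{Q_s,b_s}$, coprime integers $N_s,M_s$ with $d^{N_s}=\delta^{M_s}$, integers $\ell_s,L_s$, a root of unity $\zeta_s\in\kappa(s)$, and a polynomial $R_{s,t}$ of degree $\Delta_s^{L_s}$ satisfying the three relations \eqref{eq:1shot'}, \eqref{eq:2shot'}, and $\hat{P}_{s,t,n_s}(P_{s,t}^{\ell_s N_s}(a_s(t)))=\zeta_s\,\hat{Q}_{s,t,m_s}(Q_{s,t}^{\ell_s M_s}(b_s(t)))$. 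Because the divisors $\mathsf{D}_{P,a}, \mathsf{D}_{Q,b}$ are constant in the fibers and $d,\delta$ are fixed, the coprimality forces $n_s,m_s,N_s,M_s,\Delta_s$ to be independent of $s$; I simply rename them $n,m,N,M,\Delta$.

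The main obstacle, as in the proof of Theorem~\ref{tm:unlikely-car0}, is to pick the root of unity $\zeta$ and the integers $\ell,L$ uniformly in $s$. To handle this, I would rerun the Laurent-series analysis at each branch at infinity $\mathfrak{c}$ of $C$: Lemma~\ref{lem:344} produces roots of unity $\zeta(\mathfrak{c},l,U)$ with the identity $\varphi_{P_t}(P^{lN}_t(a(t)))^n=\zeta(\mathfrak{c},l,U)\,\varphi_{Q_t}(Q^{lM}_t(b(t)))^m$ holding at every point of every fiber $s\in U$ near $\mathfrak{c}$, since the adelic series involved are regular functions on $C$ (not on some particular specialization). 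The iteration argument of Lemma~\ref{lem:image}, which relies only on $\zeta(\mathfrak{c},l,U)$ lying in a fixed number field of degree bounded by $[\kappa(s):\mathbb{Q}]$ - and for a Zariski-dense family of $s\in U$ this degree stays bounded - produces integers $\ell,L$ (independent of $s$) and a root of unity $\zeta=\zeta(\mathfrak{c})$ such that $M_\Delta^{\ell+L}(\zeta)=M_\Delta^\ell(\zeta)$ and $\zeta(\mathfrak{c},\ell+lL,U)=\zeta$ for all $l\geq 0$.

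Finally, with $n,m,N,M,\ell,L,\zeta$ all fixed, the three relations \eqref{eq:1shot-car0}-\eqref{eq:3shot-car0} are, when read as relations among the coefficients of the hypothetical polynomial family $R_t$, a system of \emph{linear} equations whose matrix has entries in $K[C]$ (compare step (III) of \S\ref{sec:biggest proof}, where this was the main Zariski-closedness argument). For each closed point $s$ in a Zariski-dense subset of $\Lambda$, this linear system admits a solution $R_{s,t}$ over $\kappa(s)(C_s)$. Therefore the system is solvable over the generic fiber $K(C)$, and the unique solution $R_t$ has coefficients in $K(C)$; using the equation $\hat{P}_{t,n}\circ P_t^{NL}=R_t\circ\hat{P}_{t,n}$ and the fact that $\hat{P}_{t,n}$ has leading coefficient that is a unit in $K[C]$ (by Proposition~\ref{prop:expan bottcher}), one recursively identifies the coefficients of $R_t$ as elements of $K[C]$, so $R$ is a genuine algebraic family of polynomials over $C$ of degree $\Delta^L$. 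This completes the proof of Theorem~\ref{tm:precise intrication-car0}.
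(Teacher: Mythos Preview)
Your proposal is correct and follows essentially the same approach as the paper: spread out over $\Lambda=\spec(R)$, use Proposition~\ref{prop:specialization fiber} and Lemma~\ref{lem:specialization divisor} to produce a Zariski-dense set of number-field specializations where Theorem~\ref{tm:precise intrication} applies, argue that $n,m,N,M$ are forced to be constant by the divisor data, show the root of unity $\zeta$ and the integers $\ell,L$ can be taken uniform in $s$ by redoing Lemmas~\ref{lem:344} and~\ref{lem:image} in family, and finally use linearity of \eqref{eq:1shot-car0}--\eqref{eq:3shot-car0} in the coefficients of $R_t$ to lift the solution to $K[C]$. Your justification for the uniformity of $\zeta$ (``the adelic series involved are regular functions on $C$'') is slightly imprecise---what you really need is that the Laurent coefficients at the branch are regular on $\mathfrak{C}$, so the ratio $\zeta_s$ is an analytic function of $s$ of modulus one at an archimedean place, hence locally constant---but this is no less precise than the paper's own phrasing of the same step.
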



\section{Further results and open problems} \label{sec:further-open}

\subsection{Effective versions of the theorem} 

Suppose that $(P,a)$ and $(Q,b)$ are algebraic dynamical pairs parametrized by a curve $C$ defined over a number field $\KK$. 
If the two pairs are entangled then we have seen that the two heights $h_{P,a}$ and $h_{Q,b}$ are proportional so that 
$\preper (P,a) \cap \preper(Q,b) = \{ h_{P,a} =0\}= \{h_{Q,b} =0\}$. 

When the two pairs are not entangled then $H:= h_{P,a} + h_{Q,b}$ is an Arakelov height whose essential minimum is positive (apply Theorem~\ref{tm:unlikelyprecise} (4)). 
In particular, there exists $\epsilon >0$ such that $\preper (P,a) \cap \preper(Q,b) \cap \{ H \le \epsilon\}$ is a finite subset of $C(\bar{\KK})$.

It would be very interesting to explore whether one can obtain uniform bounds on $\preper (P,a) \cap \preper(Q,b) $ when $P,Q,a,b$ vary in families. 

\begin{conj}
Let $P$ be any non-integrable family of polynomials of degree $d\ge 2$. Then there exists a constant $C = C(d,N)$ such that 
for any $a,b$ of degree $\le N$ the set  $\preper (P,a) \cap \preper(P,b)$ is either infinite or its cardinality is $\le C$.
\end{conj}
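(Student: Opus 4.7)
The plan is to adapt to our dynamical setting the strategy that has been recently successful for uniform Manin-Mumford and uniform Bogomolov-type conjectures (Kühne, Dimitrov-Gao-Habegger, Yuan-Zhang, DeMarco-Krieger-Ye). First I would reduce to the case where everything is defined over a number field $\KK$ via the specialization argument of \S\ref{sec:specialization} (using that one may pass to an iterate of $P$ so that the condition $(\vartriangle)$ is forced, e.g.\ by a base change adding a branch to the support of $\mathsf{D}_{P,a}$). Then the space $\mathcal{A}_N$ of marked points $a$ with $[\KK(C,a):\KK(C)]\le N$ is itself (the set of $\KK$-points of) a finite-type $\KK$-scheme, and by Proposition~\ref{prop:adelic} each $a\in \mathcal{A}_N$ gives rise to a canonical height $h_{P,a}$ arising from a semi-positive adelic metrization on an ample line bundle of degree $\deg(\mathsf{D}_{P,a})$. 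A first elementary observation is that $\deg(\mathsf{D}_{P,a})$ is bounded in terms of $d$ and $N$ only, so that $h_{P,a}$ and $h_{P,b}$ remain uniformly comparable to fixed Weil heights.

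The core of the argument would then be a uniform positivity statement for the essential minimum. For a non-entangled pair $(a,b)\in \mathcal{A}_N\times\mathcal{A}_N$ the height $H_{a,b}:=h_{P,a}+h_{P,b}$ is induced by a semi-positive adelic line bundle $\bar L_{a,b}$ on $\bar C$, and Theorem~\ref{tm:unlikely} together with Theorem~\ref{thm:HSA} imply that the essential minimum
\[
e(a,b)\,:=\,\inf\bigl\{\,\liminf_{n\to\infty} H_{a,b}(x_n)\ :\ x_n\in C(\bar\KK)\text{ distinct}\bigr\}
\]
is strictly positive. The conjecture amounts to proving that $e(a,b)\ge \varepsilon(d,N)>0$ uniformly outside the countable union of entangled strata in $\mathcal{A}_N\times\mathcal{A}_N$. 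Granting this, the conjecture follows from Zhang's inequality (Theorem~\ref{thm:HSA}) applied to $\bar L_{a,b}$: if $\#\big(\preper(P,a)\cap\preper(P,b)\big)\ge M$, then averaging over this finite set gives $2 h_{\bar L_{a,b}}(\bar C)\ge M\cdot e(a,b)$, and uniform positivity together with an upper bound for $h_{\bar L_{a,b}}(\bar C)$ (which is easy, as the metrizations are controlled at finitely many places and the relevant divisor has bounded degree) forces $M\le C(d,N)$.

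To establish uniform positivity, the natural approach is a limiting/compactness argument: assume a sequence $(a_n,b_n)\in\mathcal{A}_N\times\mathcal{A}_N$ of non-entangled pairs with $e(a_n,b_n)\to 0$. By Northcott (applied at a fixed archimedean place) the families $\{a_n\},\{b_n\}$ are precompact in a suitable analytic topology on $\mathcal{A}_N$; extract a limit $(a_\infty,b_\infty)$. Using the continuity of $a\mapsto g_{P,a,v}$ in the $C^0$-topology on $C^{\an}_v$ (which follows from Theorem~\ref{thm:continuity}) together with an arithmetic equidistribution result for points of small height approaching the essential infimum (Theorem~\ref{tm:yuan}), one shows that the bifurcation measures $\mu_{P,a_\infty,v}$ and $\mu_{P,b_\infty,v}$ coincide up to a rational factor at every place. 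By Theorem~\ref{tm:unlikelyprecise}, this forces $(a_\infty,b_\infty)$ to be entangled. It then remains to prove that the entangled locus in $\mathcal{A}_N\times\mathcal{A}_N$ is \emph{isolated from} the locus where $e$ is small in a quantitative way—this is the dynamical analogue of a non-degeneracy / Betti-form argument.

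The hard part will be this final quantitative step. Qualitatively, the entanglement relations provided by Theorem~\ref{tm:unlikely-marked} and Theorem~\ref{tm:unlikely-finiteness} show that the entangled strata in $\mathcal{A}_N\times\mathcal{A}_N$ form a countable but discrete-looking family (parametrized essentially by $\Sigma(P)$-orbits and by the degrees $r,s$ in the relation $P^r(b)=g\cdot P^s(a)$), so one can hope that each such stratum is rigid and that no non-entangled pair can approach it too rapidly in height. Making this quantitative in a uniform way is however the central difficulty: it requires either a Kühne-style non-degeneracy argument for the map $a\mapsto \mu_{P,a,v}$ into a suitable infinite-dimensional space of measures, or a Yuan-Zhang type adelic intersection theory in families, both of which go considerably beyond the techniques developed in this book. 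I expect the conjecture to be genuinely out of reach of the methods presented here, although the unicritical case (where $\Sigma(P)$ and the Ritt-combinatorics collapse, see Chapter~\ref{chapter:unicritical}) may be accessible by a direct argument using the B\"ottcher expansions of \S\ref{sec:bottcher}.
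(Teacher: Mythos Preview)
The statement you are addressing is a \emph{conjecture} in the paper, not a theorem: the authors state it as an open problem in \S\ref{sec:further-open} and give no proof, only citing the partial results of Fili, Buff, and DeMarco--Krieger--Ye for constant quadratic families. So there is no ``paper's own proof'' to compare against, and your proposal is, as you yourself say at the end, a strategic outline rather than a proof.

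Your overall direction---importing the uniform Bogomolov machinery of K\"uhne, Dimitrov--Gao--Habegger, and Yuan--Zhang---is the natural one and is indeed how progress on such questions has since been made. But several of the intermediate steps, as written, do not work. First, your reduction to a number field via $(\vartriangle)$ is not justified: passing to an iterate does not change the support of $\mathsf{D}_{P,a}$ (since $g_{P^n,a}=g_{P,a}$), and a base change cannot force every branch at infinity into that support. Second, the sentence ``By Northcott \dots\ the families $\{a_n\},\{b_n\}$ are precompact in a suitable analytic topology on $\mathcal{A}_N$'' is a non sequitur: Northcott bounds the number of points of bounded height and degree over a number field, it says nothing about analytic precompactness of a sequence of marked points of bounded degree over $\KK(C)$. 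Third, your application of Zhang's inequality is garbled: the inequality $2\deg(L)\cdot e(\bar L)\ge h_{\bar L}(\bar C)$ (Theorem~\ref{thm:HSA}) does not produce a cardinality bound of the form ``$M\cdot e(a,b)\le 2h_{\bar L_{a,b}}(\bar C)$'' by averaging over $M$ points of height zero; the route from uniform positivity of the essential minimum to a cardinality bound requires a genuinely different counting argument (a gap principle, or an arithmetic intersection estimate as in DeMarco--Krieger--Ye).

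Your own diagnosis in the final paragraph is accurate: the heart of the matter is a quantitative non-degeneracy statement for the map $a\mapsto \mu_{P,a,v}$ near the entangled strata, and this lies well beyond the tools of the book.
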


Using Zhang's pairing for heights, Fili \cite{Fili} gave an upper bound on the cardinality of $\preper(z^2+c,0)\cap\preper(z^2+c,1)$. The latter set was later computed exactly by Buff~\cite{Buff-PCF-unicritical}. DeMarco, Krieger and Ye~\cite{DKY-quadratic} have recently obtained uniform bounds for a similar problems: 
by estimating Zhang's pairings, they managed to prove the existence of a constant $B>1$ such that, for any two complex parameters $c_1\neq c_2$, the set $\preper (z^2+c_1) \cap \preper(z^2+c_2)$ has cardinality $\le B$.


\subsection{The integrable case} 

In all statements above, we have supposed that the families were not integrable. 
Suppose that $(P,a)$ is an active integrable pair parametrized by a curve $C$, and pick any any active pair $(Q,b)$ which is entangled with it.
We shall describe all such possibilities up to a base change. 

\smallskip

\noindent {\bf 1.}
The family $Q$ is also integrable. 

Observe that the bifurcation locus of $P$ is equal to $\{ t, \, a(t) \in J(P)\}$ by Proposition~\ref{prop:bifJstab}. It thus contains a smooth curve, and $Q$ is integrable by Theorem~\ref{tm:rigidaffine}.

\smallskip

\noindent {\bf 2.} 
Reduction to the monomial case.

Replacing $C$ by an open affine subset and doing a base change if necessary, one may conjugate the two families to constant families, so that 
$(P,Q) = (M_d, M_\delta), (M_d, \pm T_\delta)$, or $=(\pm T_d, \pm T_\delta)$ (up to a permutation).
Since $T_d$ and $M_d$ are semi-conjugate for all $d$, and $-T_d$ and $-M_d$ are semi-conjugate for all odd $d$, we may also
make a base change to reduce the situation to $(P,Q) = (M_d, M_\delta)$.

\smallskip

\noindent {\bf 3.} 
Apply Lang's result.

The entanglement of the pairs $(M_d,a)$ and $(M_\delta,b)$ says that the set of $t \in C$ such that 
both $a(t)$ and $b(t)$ are roots of unity is infinite. Fix any $t_0$ in this set, and write $\zeta = a(t_0)^{-1}, \xi = b(t_0)^{-1}$
so that $\zeta, \xi \in \mathbb{U}_\infty$.

Consider the algebraic curve $ D= \{(\zeta a(t), \xi b(t)), \, t \in C \}$ inside $\A^2$.
Then $D$ contains the point $(1,1)$ and infinitely many other points for which both coordinates are roots of unity. 
By~\cite{MR190146}, the curve $D$ is given by the equation $D = \{x^n y^m=1\}$ for some non-zero integers $n,m\in \Z$.
We conclude that $a(t)^n b(t)^m$ is a constant equal to some root of unity.  


\subsection{Algorithm}

In this section we discuss briefly the possibility of constructing an algorithm deciding whether two dynamical pairs
$(P,a)$ and $(Q,b)$ are entangled. Suppose these two pairs are defined over an algebraic curve $C$ defined over a number field
and have respective degree $d$ and $\delta$.

\smallskip

\noindent {\bf 1.} Decide whether the pairs are active or not.  

To see whether $(P,a)$ is active, we first make a base change and conjugate the family to polynomials of the form~\eqref{eq:defpoly}.
Under this parametrization, $(P,a)$ is passive iff $P$ is a constant family and $a$ is a constant.

If either $(P,a)$ or $(Q,b)$ is passive, then we stop. 
Otherwise we proceed to the next step.

\smallskip

\noindent {\bf 2.} Compare the divisors $\mathsf{D}_{P,a}$ and $\mathsf{D}_{Q,b}$.

We first compute $\mathsf{D}_{P,a}$. Pick any branch $\mathfrak{c}$ of $C$ at infinity. 
If $\ord_\mathfrak{c}(\mathsf{D}_{P,a}) >0$, then it is posible to compute $\ord_\mathfrak{c}(\mathsf{D}_{P,a})$ in finitely many steps.  Indeed, for all $q\geq1$ sufficiently large, we have $\ord_\mathfrak{c}(\mathsf{D}_{P,a})=-d^{-q}\ord_\mathfrak{c}(P^q(a))$ in this case.

We do not know however whether one can decide if $\mathfrak{c}$ belongs to the support of  $\mathsf{D}_{P,a}$ or not.
To continue our algorithm, we shall thus suppose that $C$ has a single place at infinity\footnote{this works when $C= \A^1$ for instance}. 

If $\mathsf{D}_{P,a}$ and $\mathsf{D}_{Q,b}$ are not proportional, then we stop. 
Otherwise we find the smallest integers $n$ and $m$ such that $n\cdot \mathsf{D}_{P,a}= m\cdot \mathsf{D}_{Q,b}$.

\smallskip

\noindent {\bf 3.} Solving a linear system. 

We first determine a field of definition $\KK$ for the two pairs. We then decompose $d$ and $\delta$ into prime factors, and pick coprime integers $N,M$ so that
$\Delta:= d^N = \delta^M$ (if there are no such integer then we stop). We next determine $\ell$ and $l$ such that the conditions
\begin{itemize}
\item
$\Delta^{\ell} \times \ord_\mathfrak{c}(\mathsf{D}_{P,a}) >
\ord_\mathfrak{c}(\mathsf{D}_{P})$ for all branch at infinity of $C$;
\item
$M^{\ell + L}_\Delta(\xi) = M^{\ell}_\Delta(\xi)$ for any root of unity in $\xi \in \KK$.
\end{itemize}
are both satisfied. Note that $\ell$ and $l$ are bounded solely in terms of $\mathsf{D}_{P,a}$, $\mathsf{D}_{P}$ (which can be computed as in Step 2), and $[\KK:\Q]$.

Then we solve the equations~\eqref{eq:1shot'} and~\eqref{eq:2shot'} with the family of polynomials $R_t$ being the unknowns.
Note that these equations are \emph{linear}: if they are not solvable then we stop our algorithm. Otherwise, we produce
$R_t$. If we have $\hat{P}_{t,n}(P^{\ell N}(a(t))) = \zeta \cdot \hat{Q}_{t,m}(Q^{\ell M}(b(t)))$, then we conclude that the two pairs are entangled.


\subsection{Application to Manin-Mumford's problem}

Let $f \colon X \to X$ be a surjective endomorphism of a projective variety (of dimension $ \ge 2$). 
The Manin-Mumford problem for $f$ concerns the classification of all $f$-invariant subvarieties of $X$,
and asks whether an irreducible subvariety $Z$ containing a Zariski dense subset of $\preper(f)$
is itself preperiodic. 

This problem has been explored for a polarized endomorphisms\footnote{i.e. whose action on the Neron-Severi space admits an ample class as an eigenvector}, see~\cite{GTZ,Pazuki,GNY,GNY2};
and for H\'enon maps~\cite{Favre-Dujardin}.

\smallskip

Let $(P,a)$ and $(Q,b)$ be two dynamical pairs with \emph{fixed} polynomials and marked points parametrized by the same affine curve $C$. 
Observe that  $f(z,w)= (P(z),Q(w))$ induces an endomorphism\footnote{it is polarized iff $\deg(P) = \deg(Q)$} of $\p^1 \times \p^1$. 
Denote by  $Z$ the closure in $\p^1 \times \p^1$ of the image of $C$ by the map $t \mapsto (a(t), b(t))$, so that 
$Z$ contains infinitely many preperiodic points iff $(P,a)$ and $(Q,b)$ are entangled.
Ghioca, Nguyen, and Ye~\cite{GNY} have proved that this occurs iff $\deg(P)^N = \deg(Q)^M$ for some integers $N, M$ and 
$Z$ is preperiodic for the endomorphism $(P^N,Q^M)$.\footnote{The rational case was further explored by Mimar~\cite{Mimar-13}}.

Using our results, we obtain the following extension of their theorem.

\begin{theorem}\label{thm:partialDMM}
Let $P$ and $Q$ be two algebraic families of non-integrable polynomials of degree $\ge 2$ defined over a number field $\KK$ 
parametrized by a smooth affine curve $C$.

Suppose that $Z$ is an irreducible curve  of $\A^2 \times C$ which projects surjectively onto $C$, and
contains infinitely many preperiodic points of the endomorphism $(z,w,t) \mapsto (P_t(z), Q_t(w),t)$. 

Then there exist two integers $N, M \ge 1$ such that $\deg(P)^N = \deg(Q)^M$, 
and an algebraic surface $S$ containing $Z$ which is preperiodic under 
$F(z,w,t) := (P^N_t(z), Q^M_t(w),t)$.
\end{theorem}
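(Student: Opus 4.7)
The plan is to reduce Theorem~\ref{thm:partialDMM} to an application of Theorem~\ref{tm:unlikely} after a base change that turns the curve $Z$ into the graph of a pair of marked points, and then to descend the resulting invariant surface from the cover back to $\A^2\times C$.

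First, let $\mathsf{n}\colon \tilde Z\to Z$ be the normalization of $Z$ and set $\varpi := \mathrm{pr}_C\circ \mathsf{n}\colon \tilde Z\to C$; this is a finite surjective morphism between smooth irreducible curves. The two coordinate projections $\A^2\times C\to \A^1$, composed with $\mathsf{n}$, produce regular functions $a, b \in \KK[\tilde Z]$ such that the morphism $\tilde t\mapsto (a(\tilde t), b(\tilde t), \varpi(\tilde t))$ has image $Z$. Pulling the families $P, Q$ back along $\varpi$ yields algebraic dynamical pairs $(P, a)$ and $(Q, b)$ parametrized by $\tilde Z$, and the hypothesis that $Z$ contains infinitely many preperiodic points of the endomorphism $(z,w,t)\mapsto (P_t(z), Q_t(w), t)$ translates into $\preper(P, a, \bar{\KK})\cap \preper(Q, b, \bar{\KK}) \subset \tilde Z(\bar{\KK})$ being infinite; in other words, the pairs $(P, a)$ and $(Q, b)$ are dynamically entangled.

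Assume next that both pairs are active. Since the families $P$ and $Q$ are non-integrable, neither pair is integrable, and Theorem~\ref{tm:unlikely} applies: there exist integers $N, M\ge 1$, $r, s\ge 0$ and families of polynomials $R, \tau, \pi$ parametrized by $\tilde Z$, each of degree $\ge 1$, satisfying
\[\tau\circ P^N=R\circ\tau,\quad \pi\circ Q^M=R\circ\pi,\quad \tau(P^r(a))=\pi(Q^s(b)).\]
Equating degrees in the first two identities gives $\deg(R)=d^N=\delta^M$, so the multiplicative dependence of degrees is automatic. Set
\[\tilde S := \{(z,w,\tilde t)\in \A^2\times \tilde Z\ :\ \tau_{\tilde t}(P_{\tilde t}^r(z))=\pi_{\tilde t}(Q_{\tilde t}^s(w))\}.\]
The two semi-conjugacies imply that $\tilde S$ is invariant under $\tilde F(z,w,\tilde t) := (P_{\tilde t}^N(z), Q_{\tilde t}^M(w), \tilde t)$, and the third identity shows that the graph of $(a,b)\colon \tilde Z\to \A^2\times \tilde Z$ is contained in $\tilde S$. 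The finite map $\Pi := (\mathrm{id}_{\A^2}, \varpi)\colon \A^2\times\tilde Z\to \A^2\times C$ intertwines $\tilde F$ with $F$; hence its image $S := \Pi(\tilde S)$ is a closed $2$-dimensional subvariety of $\A^2\times C$ (possibly reducible, in which case one picks an irreducible component containing $Z$) that contains $Z = \Pi(\{(a(\tilde t), b(\tilde t), \tilde t)\})$ and satisfies $F(S)\subset S$, so $S$ is $F$-preperiodic.

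The main obstacle lies in the degenerate case where one of the pairs, say $(P, a)$, is passive. By Theorem~\ref{thm:active-characterization1}, either $(P, a)$ is isotrivial, or $a$ is stably preperiodic, $P^n(a)=P^m(a)$ for some $n>m$. In the stably preperiodic case the surface $\{(z,w,t)\in\A^2\times C\ :\ P_t^n(z)=P_t^m(z)\}$ contains $Z$ and is trivially invariant under $(P_t^N, Q_t^M, \mathrm{id})$ for \emph{any} exponents $N, M$; the isotrivial case is handled analogously. What is non-trivial there is to extract the required relation $d^N=\delta^M$: this no longer follows directly from Theorem~\ref{tm:unlikely} and seems to require an independent rigidity argument that combines the activity of $(Q, b)$ with the non-integrability of the two families, along the lines of Theorem~\ref{tm:multiplicative}.
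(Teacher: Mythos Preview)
Your argument in the active case is correct and coincides with the paper's own proof: normalize $Z$, pull $P$ and $Q$ back along $\tilde Z\to C$ to obtain marked points $a,b$, apply Theorem~\ref{tm:unlikely}, and take the surface cut out by $\tau(P^r(z))=\pi(Q^s(w))$. You are in fact more careful than the paper on one point: the families $R,\tau,\pi$ supplied by Theorem~\ref{tm:unlikely} are parametrized a priori by $\tilde Z$, not by $C$, so the invariant surface lives in $\A^2\times\tilde Z$ and has to be pushed forward along the finite map $\A^2\times\tilde Z\to\A^2\times C$. The paper writes ``parametrized by $C$'' where $\hat Z$ is meant and silently identifies the two.

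Your worry about the passive case is entirely justified, and the paper does not address it either: its sentence ``our assumption implies that they are entangled'' already presupposes activity (entanglement is defined only for active pairs). This is not a removable gap --- without activity the degree relation can fail outright. Take $C=\A^1$, $P_t(z)=z^2+t$, $Q_t(z)=z^3+t$, and let $Z=\{(z,0,t):z^2-z+t=0\}$: here $a$ is a fixed point of $P_t$ (so $(P,a)$ is passive), $b\equiv 0$ is the active critical point of $Q_t$, and the preperiodic points of $Z$ correspond to the PCF parameters of the cubic unicritical family pulled back by $t=z-z^2$, hence are infinite; yet $2^N\neq 3^M$ for all $N,M\ge 1$. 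So the statement tacitly assumes both $(P,a)$ and $(Q,b)$ are active on $\tilde Z$, and you should add this as a hypothesis rather than attempt to rescue the degree relation via Theorem~\ref{tm:multiplicative}.
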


\begin{remark}
The statement may look awkward from the perspective of the dynamical Manin-Mumford conjecture. However when $\deg(P) > \deg(Q)$, the only irreducible surfaces $S$ which are invariant under
$(z,w,t) \mapsto (P_t(z),Q_t(w),t)$ and project onto $C$ are of the form $\A^1 \times \{ w_0\} \times C$ or $\{ z_0\} \times \A^1 \times C$. Indeed for a generic point $t_0$,   
the curve $S \cap \A^2 \times \{ t_0\}$ is fixed by  the endomorphism $(P_{t_0},Q_{t_0})$ which admits only the vertical and horizontal fibers of $\p^1 \times \p^1$
as invariant classes.
\end{remark}

\begin{remark}
Suppose conversely that  $Z$ is included in an algebraic surface $S$ which projects onto $C$ is fixed by $F$.
We obtain a family of affine curves $S_t, t \in C$ with an algebraic dynamical system $F_t \colon S_t \to S_t$. 
The latter cannot be invertible since $\deg(P), \deg(Q) \ge 2$. 
It follows that $F_t$ defines a family of polynomials and $Z$ determines a marked point. 
If the corresponding marked point is active, then $Z$ contains infinitely many preperiodic points for $F$. 
The same is true when $Z$ is stably preperiodic. By Theorem~\ref{thm:active-characterization1}, the only remaining case is when
the pair $(F_t, Z_t)$ is isotrivial. 
The projections $S \to \A^1 \times C$ induce semi-conjugacies between $F_t$ and the families $P_t$ and $Q_t$ hence
the latter are both isotrivial.

We have thus shown that the converse to Theorem~\ref{thm:partialDMM} holds except if $P$ and $Q$ are isotrivial 
and both sem-conjugated to the same polynomial.
\end{remark}

\begin{proof}
Let $\mathsf{n} \colon \hat{Z} \to Z$ be the normalization of $Z$ and let $a  = \pi_1 \circ \mathsf{n}$ and  $b  = \pi_2 \circ \mathsf{n}$.
Observe that we may assume by base change $\hat{Z} \to C$ that $Z = \{ (a(t), b(t),t), t \in \hat{Z}\}$, and that the two dynamical pairs $(P,a)$ and $(Q,b)$ are parametrized by $\hat{Z}$. Our
assumption implies that they are entangled.

By Theorem~\ref{tm:unlikely}, there exist integers $N,M\geq1$, $r,s\ge0$, and families $R,\tau$ and $\pi$ of polynomials parametrized by $C$ such that
\[\tau\circ P^N= R\circ\tau \ \text{ and } \ \pi\circ Q^M= R\circ\pi,\]
and $\tau (P^{r}(a))= \pi(Q^{s}(b))$.

We conclude observing that if $ p = (a(t), b(t), t) \in Z$ for some $t\in \hat{Z}$, then for any integer $l$, the point $F^l(p)= (P^{Nl}(a(t)), Q^{ML}(b(t)), t)$ is included in the surface
$\{ (z,w,t), \, \tau (P^{r}(z))= \pi(Q^{s}(w))\}$ which is fixed by $F$.
\end{proof}

%
%
%
%
%
%


\chapter{Entanglement of marked points}

We  specialize the results of the previous chapters to a single family of polynomials. 
Let $P$ be any algebraic family of polynomials of degree $d$ parameterized by a curve $C$
defined over an algebraically closed field $K$ of characteristic $0$.
Given any marked point $a \in K[C]$, we describe all dynamical pairs $(P,b)$
that are entangled with $(P,a)$, and obtain  Theorem~\ref{tm:unlikely-marked} from the introduction that we recall
for convenience. 

\unlikelymarked*

We  then proceed by proving a finiteness theorem when our data are defined over a number field (Theorem~\ref{tm:unlikely-finiteness} from the introduction). 

\unlikelyfiniteness*


\section{Proof of Theorem~\ref{tm:unlikely-marked}}
Suppose first that there exists $g\in\Sigma(P)$ and integers $r,s\geq0$ such that
\[P^r(b)=g\cdot P^s(a),\] as regular functions on $C$. Since $P$ is not integrable, $\Sigma(P)$ is a finite group and
there exists a morphism $\rho \colon \Sigma(P) \to \Sigma(P)$ such that
 $P (g \cdot x) = \rho(g) \cdot P(x)$ for all $x$. In particular, for any parameter $t$ such that $P_t^s(a(t))$ has a finite orbit, there exists $g_n \in \Sigma(P)$ such that 
 $P^{rn}_t(b(t)) = g_n \cdot P^{sn}_t(a(t))$ so that $b(t)$ is also preperiodic. We have proved that $\preper(P,a,\bar{K}) \subset \preper(P,b,\bar{K})$. 
 
Since $a$ is active, Remark~\ref{rem:zar dense} implies $\preper(P,a,\bar{K})$ to be infinite, hence $\preper(P,a,\bar{K}) \cap \preper(P,b,\bar{K})$ is infinite.

\medskip
 
 Suppose conversely that $\preper(P,a,\bar{K}) \cap \preper(P,b,\bar{K})$ is infinite. 
Application of Theorems~\ref{tm:unlikely} and~\ref{tm:unlikely-car0} do not quite imply what we are looking for, so that we go back
to  Section~\ref{sec:biggest proof} and follow the details of the
proof of (3) $\Rightarrow$ (2).

Let us suppose that $K=\KK$ is a number field. 
Following the arguments of \S\ref{sec:1on3} we get two coprime integers say $n \le m$ such that
$n \cdot h_{P,a} = m \cdot h_{P,b}$.

We now follow carefully the arguments in Section~\ref{sec:biggest proof} (I) \& (II) using the fact that $N=M=1$. 
Assume  $(\diamond)$, i.e. the leading term of $P$ admits a $(d-1)$-th root.
We get a constant $L\ge1$ such that the following holds. 
For any fixed Archimedean place $v_0 \in M_\KK$, and for each branch at infinity $\mathfrak{c}$ with $\ord_{\mathfrak{c}}(\mathsf{D}_{P,a}) >0$, there exists a connected neighborhood  
$U_{v_0}(\mathfrak{c})$ of the branch in $C^{\an,v_0}$ such that 
for any parameter $t \in C(\bar{\KK}) \cap U_{v_0}(\mathfrak{c})$ , the map 
$\Phi'_t(x,y) = (P^L_t(x),P^L_t(y))$ fixes the Zariski closure $Z$ of 
\[ C_{v_0}(\mathfrak{s}_\zeta)= \left\{ (x,y) \in \C_{v_0}^2, \, \min \{ |x|_{v_0}, |y|_{v_0} \} >R_{v_0}, \, \varphi_{P_t}(x)^n = \zeta \varphi_{P_t}(y)^m\right\}~.\]
Recall that $Z$ is an irreducible algebraic curve of $\A^2$.
By~\cite[Theorem~6.24]{medvedev-scanlon} (see also~\cite[Theorem~4.9]{pakovich}), $Z$
is necessarily the graph or the cograph of a polynomial $v_t$ which commutes with $P_t$. As $n\leq m$, this gives $n=1$ and $Z=\{x=v_t(y)\}$, 
and arguing as in the paragraph preceding the statement of Lemma~\ref{lem:at a fixed parameter}, we 
get the existence of a root of unity $\zeta(\mathfrak{c})\in \KK$, and of an integer $\ell(\mathfrak{c})$ satisfying~\eqref{eq:1shot} and~\eqref{eq:2shot}, i.e.
\begin{align*}
&\zeta(\mathfrak{c}) \cdot \hat{P}_{m,t} \circ P_t^L = P_t^L \circ  (\zeta(\mathfrak{c}) \hat{P}_{m,t})\\
 & a_l  = \zeta(\mathfrak{c}) \hat{P}_{m,t} (b_l), \text{ for } l \ge \ell(\mathfrak{c})~.
 \end{align*}
The Main Theorem of \cite{Schmidt-Steinmetz} applied to $P_t$ and $\zeta(\mathfrak{c}) \hat{P}_{m,t}$ now implies the existence of a polynomial
$R_t\in\C[z]$ and $\sigma_1,\sigma_2\in\Sigma(R_t)$ such that 
\[\zeta(\mathfrak{c})\cdot \hat{P}_{m,t}=\sigma_1\cdot R_t^{k_1} \ \text{ and } \ P_t=\sigma_2\cdot R_t^{k_2}~,\]
for some integers $k_1,k_2$. 

Since the family $P$ is primitive, we may reduce the neighborhood $U_{v_0}(\mathfrak{c})$ so that $P_t$ is primitive for all $t\in U_{v_0}(\mathfrak{c})$,
and assume $R_t= P_t$. Observe that $k_2 =1$ and $k_1=m$. 
We have obtained
\begin{lemma}\label{lem:at a fixed parameter2}
Fix any parameter $t  \in C(\bar{\KK}) \cap U_{v_0}(\mathfrak{c})$ such that $P_t$ is primitive, and not integrable. 
There exists $\sigma_t \in \Sigma(P_t)$ such that 
\[ a_l = \sigma_t(b_{m+l})\]
for all $l\ge \ell(\mathfrak{c})$, where $a_l =P_t^l(a(t))$ and $b_{m+l}=P_t^{m+l}(b(t))$.
\end{lemma}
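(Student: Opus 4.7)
The plan is to combine the two identities established in the paragraph immediately preceding the lemma and check that they yield the asserted relation. Indeed, under the primitivity assumption on $P_t$ together with the Schmidt--Steinmetz decomposition already invoked (which, after the reductions $R_t = P_t$, $k_2 = 1$, $k_1 = m$, forces $\sigma_2 = \mathrm{id}$), we have
\[
\zeta(\mathfrak{c}) \cdot \hat{P}_{m,t} \;=\; \sigma_1 \cdot P_t^{m}
\]
as an identity of polynomials, for some $\sigma_1 \in \Sigma(P_t)$. On the other hand, the second identity of the display just before the lemma gives
\[
a_l \;=\; \zeta(\mathfrak{c}) \, \hat{P}_{m,t}(b_l) \qquad \text{for all } l \ge \ell(\mathfrak{c}).
\]

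I would then substitute the first display into the second to obtain
\[
a_l \;=\; \sigma_1 \cdot P_t^{m}(b_l) \;=\; \sigma_1\bigl(P_t^{m}(b_l)\bigr) \;=\; \sigma_1(b_{m+l}),
\]
where in the last equality we use $P_t^{m}(b_l) = P_t^{m+l}(b(t)) = b_{m+l}$. Setting $\sigma_t := \sigma_1 \in \Sigma(P_t)$ then yields the lemma. Note that the same symmetry $\sigma_t$ works simultaneously for every $l \ge \ell(\mathfrak{c})$ because the identity $\zeta(\mathfrak{c}) \hat{P}_{m,t} = \sigma_1 \cdot P_t^{m}$ holds globally as polynomials, not just at one specific iterate.

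There is no real obstacle to overcome here: the substantive work---Xie's algebraization theorem, Pakovich's classification of invariant curves by product maps, the Schmidt--Steinmetz theorem, and the primitivity reduction allowing the simplification $R_t = P_t$---has already been carried out in the preceding paragraphs, so the statement ultimately reduces to a single substitution and a careful bookkeeping of the symmetries. The only minor subtlety, namely that the primitivity reduction forces $k_2 = 1$ and hence $k_1 = m$ by comparing degrees in the Schmidt--Steinmetz decomposition, has also already been noted.
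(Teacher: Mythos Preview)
Your proposal is correct and follows exactly the paper's approach: the lemma is stated in the paper as a direct consequence (``We have obtained'') of the preceding paragraph, and your proof simply spells out the substitution $a_l = \zeta(\mathfrak{c})\hat{P}_{m,t}(b_l) = \sigma_1 P_t^{m}(b_l) = \sigma_1(b_{m+l})$ that the paper leaves implicit.
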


We continue arguing as in  Section~\ref{sec:biggest proof} (III). It is clear that the set of parameters $t$ in $C$ such that there exists $\sigma_t \in \Sigma(P_t)$ satisfying
\[ a_{\ell(\mathfrak{c})} = \sigma_t(b_{m+\ell(\mathfrak{c})})\]
is Zariski closed. It follows that this set is to $C$ by the preceding Lemma. Since $\sigma_t$ is uniquely determined by the data (except for those finitely many parameters
for which $b_t$ is the fixed point of a non-trivial symmetry of $P_t$), we conclude to the existence of $\sigma \in \Sigma(P)$ such that 
$a_{\ell(\mathfrak{c})} = \sigma (b_{m+\ell(\mathfrak{c})})$ as required. 

The case when $(\diamond)$ is not satisfied is taken care of as in  Section~\ref{sec:biggest proof} (IV), and the same specialization argument as in Section~\ref{sec:specialization} yields the theorem over any field of characteristic zero if condition $(\vartriangle)$ holds.


\section{Proof of Theorem~\ref{tm:unlikely-finiteness}}
We assume that $P$ is a non-integrable and primitive algebraic family of degree $d\ge2$ polynomials parameterized by an affine curve which is
defined over a number field $K$. We also fix a marked point $a\in K[C]$ such that $(P,a)$ is active.

\smallskip

Up to an affine transformation, we shall assume that $P$  has a reduced presentation  $P(z) = z^\mu P_0(z^m)$ so that the center of $P$ is $0$.
Recall that the group of dynamical symmetries $\Sigma(P) = \U_m$ of a degree $d$ polynomial $P$ comes equipped with a morphism
$\rho: \Sigma(P) \to \Sigma(P)$ such that $P(g\cdot z) = \rho(g) \cdot P(z)$. Recall that
we denoted by $\Sigma_0(P)$ the union of the kernels all $\rho^n$ for all $n\ge 1$.
\begin{lemma}
Fix any $g \in \Sigma(P)$ and any integer $n\ge0$.

Then the point $g\cdot P^n(a)$ belongs to the grand orbit of $a$ iff
either $g\in \Sigma_0(P)$, or $P^m(a)$ is the center of $P$ for some $m\ge n$.
\end{lemma}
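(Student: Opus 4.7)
My plan is to prove both implications of the equivalence by translating the grand orbit condition into an equation of regular functions on $C$ and then exploiting the semi-conjugacy $P(g \cdot z) = \rho(g) \cdot P(z)$ together with the activity hypothesis.

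For the easy direction, I will argue directly. If $g \in \Sigma_0(P)$, pick $N \ge 1$ with $\rho^N(g) = \id$; then
\[ P^N\bigl(g \cdot P^n(a)\bigr) \;=\; \rho^N(g) \cdot P^{N+n}(a) \;=\; P^{N+n}(a),\]
so $g \cdot P^n(a)$ lies in the grand orbit of $a$. If instead $P^m(a) = 0$ for some $m \ge n$, set $j = m-n$; using $\rho^j(g) \cdot 0 = 0$,
\[ P^j\bigl(g\cdot P^n(a)\bigr) \;=\; \rho^j(g)\cdot P^m(a) \;=\; 0 \;=\; P^m(a),\]
giving the same conclusion.

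For the converse, suppose $P^k(g\cdot P^n(a)) = P^l(a)$ in $K[C]$ for some $k,l\ge 0$. Writing $\zeta := \rho^k(g) \in \Sigma(P) = \U_m$ and using the semi-conjugacy, this becomes the equation of regular functions
\[ \zeta \cdot P^{k+n}(a) \;=\; P^l(a).\]
I will then split into three cases according to the relative size of $l$ and $k+n$. When $l = k+n$, the equation reads $\zeta \cdot w = w$ with $w = P^{k+n}(a)$, which forces either $\zeta = \id$ (so $g \in \ker(\rho^k) \subset \Sigma_0(P)$) or $w \equiv 0$ (so $P^m(a)=0$ for $m=k+n\ge n$), which are precisely the two alternatives in the conclusion.

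The heart of the argument is dealing with the remaining cases $l > k+n$ and $l < k+n$, where I expect to have to work: here the hard part is to rule them out using activity. Setting either $w := P^{k+n}(a)$ and $j := l-k-n$ (for $l>k+n$) or $u := P^l(a)$ and $j := k+n-l$ (for $l<k+n$), the equation becomes $P^j(w) = \zeta w$ or $P^j(u) = \zeta^{-1}u$. Iterating $P^j$ and repeatedly applying the semi-conjugacy $P(\xi z) = \rho(\xi)\cdot P(z)$ for $\xi \in \U_m$, I get $P^{rj}(w) = \eta_r \cdot w$ (resp.\ for $u$) where $\eta_r \in \U_m$ is defined recursively by $\eta_{r+1} = \rho^j(\eta_r)\cdot \eta_1$. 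Since $\Sigma(P) = \U_m$ is finite, the pigeonhole principle gives $r_1 < r_2$ with $\eta_{r_1} = \eta_{r_2}$, yielding
\[ P^{r_1 j + k + n}(a) \;=\; P^{r_2 j + k + n}(a) \quad\text{(or analogously for $u$)}\]
as regular functions on $C$. This directly contradicts Theorem~\ref{thm:active-characterization1}, since $(P,a)$ being active excludes $a$ from being stably preperiodic. Thus these two cases do not occur, and the proof is complete. The only subtlety to check carefully is that the derivation $P^{rj}(w) = \eta_r \cdot w$ is valid even when $w$ (or $u$) vanishes identically: in that degenerate case the same equality still collapses to $0=0$, but then $P^{k+n}(a) = 0$ (resp.\ $P^l(a)=0$) holds, and in the former instance we already land in the desired conclusion, while in the latter one $l < n$ is possible only if $\mu \ge 1$ (since $P_0(0)\ne 0$ forces $P(0) = 0$ only when $\mu \ge 1$), in which case $0$ is fixed and $P^n(a) = 0$ still gives an $m = n$ satisfying the statement; otherwise the chain of iterates again forces a stable preperiodicity relation, contradicting activity.
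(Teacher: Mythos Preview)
Your proof is correct, but takes a genuinely different route from the paper's. For the converse implication, the paper dispatches the cases $l\neq k+n$ in a single stroke using the divisor $\mathsf{D}_{P,a}$: since $(P,a)$ is active this divisor is nonzero, and because symmetries preserve the Green function one has $\mathsf{D}_{P,\zeta\cdot P^{k+n}(a)}=d^{k+n}\mathsf{D}_{P,a}$ while $\mathsf{D}_{P,P^l(a)}=d^{l}\mathsf{D}_{P,a}$; equality of the two regular functions then forces $l=k+n$ immediately. You instead iterate the relation $P^j(w)=\zeta w$ and use finiteness of $\Sigma(P)$ to manufacture a stable preperiodicity identity, which contradicts activity. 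This is more elementary in that it avoids the Green-function and divisor machinery entirely, at the cost of a slightly longer case analysis.

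Your handling of the degenerate case is more convoluted than necessary, and the claim that ``$l<n$ is possible only if $\mu\ge 1$'' is neither justified nor needed. If $u=P^l(a)\equiv 0$ (or $w\equiv 0$), the original equation $\zeta\cdot P^{k+n}(a)=P^l(a)$ already gives $P^{k+n}(a)\equiv 0$, and since $k+n\ge n$ you land directly in the second alternative of the conclusion. Alternatively, $P^{k+n}(a)=P^l(a)=0$ with $k+n\ne l$ is itself a stable preperiodicity relation, so the degenerate case cannot occur at all; either way no discussion of $\mu$ is required.
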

\begin{proof}
If $g$ belongs to $\Sigma_0(P)$ and $\rho^m(g) = \id$, then
we have
$P^m(g \cdot P^n(a)) = \rho^{m}(g) \cdot P^{nm}(a) =P^{nm}(a)$. 

If $P^m(a) = 0$ with $m\ge n$, then $P^{m-n}(g \cdot P^n(a)) = \rho^{m-n}(g) \cdot P^m(a) = 0$
hence $g \cdot P^n(a)$ lies in the grand orbit of $a$.

This proves one implication.

\smallskip

Suppose conversely that $g \cdot P^n(a)$ belongs to the grand orbit of $a$. 
Then there exist two integers $l$ and $q$ such that $P^l(a) = P^q(g \cdot P^n(a))= \rho^q(g) \cdot P^{qn}(a)$.
Since $a$ is active the divisor at infinity $\mathsf{D}:= \mathsf{D}_{P,a}$ is non zero, and 
$\mathsf{D}_{P,P^l(a)}= d^l \mathsf{D}$ whereas $\mathsf{D}_{P,\rho^q(g) \cdot P^{qn}(a)} = d^{qn}\mathsf{D}$. 
Indeed we may always assume that $K$ is a subfield of $\C$, and apply
Propositions~\ref{prop:sameSigma} and~\ref{prop:behave green}.
We conclude that  $l = qn$ hence $P^l(a) = \rho^q(g) \cdot P^{l}(a)$. 
We thus have either $P^l(a) =0$ with $l \ge n$, or $g \in \Sigma_0(P)$ as required.
\end{proof}

Denote by $\mathcal{E}(a)$ the set of marked points $b \in \bar{K}[C]$ such that the pairs $(P,a)$ and $(P,b)$ are entangled.
Observe that 
\[
\mathcal{E}(a) \supset \mathcal{A}_0 := \{ g \cdot P^n(a)\,\text{ with } n \ge 0  \text{ and } g \in \Sigma_0(P) \}~. \]
Our objective is to show that the complement of the right hand side in $\mathcal{E}(a)$ is a finite set. 
Since by Theorem~\ref{tm:unlikely-marked} we have the inclusion $\mathcal{E}(a)\subset \mathcal{A}$ with 
\[
\mathcal{A}: =\{ b \in \bar{K}[C], \text{ there exists } n,m \ge0 \text{ and } g\in \Sigma(P) \text{ s.t. } P^n(b) = g \cdot P^m(a) \}~,
\]
we are reduced to proving that
\begin{theorem}\label{thm:pfThmC}
The set $\mathcal{A} \setminus \mathcal{A}_0$ is finite.
\end{theorem}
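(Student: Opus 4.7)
The plan is to argue by contradiction along the lines of the proof of Theorem~\ref{tm:unlikely-marked} (\S\ref{sec:biggest proof}), with the hypothesis $b\in\bar{\Q}[C]$ playing the arithmetic role that the number-field hypothesis played there. Suppose $\mathcal{A}\setminus\mathcal{A}_0$ contains an infinite sequence $\{b_i\}_{i\ge 1}$. Each $b_i$ satisfies $P^{n_i}(b_i)=g_i\cdot P^{m_i}(a)$ for some $g_i\in\Sigma(P)$ and $n_i,m_i\ge 0$ with $b_i\notin\mathcal{A}_0$. Since $P$ is not integrable $\Sigma(P)$ is finite, so we may pass to a subsequence with $g_i=g$ fixed; choosing $n_i$ minimal and exploiting the preceding lemma, after replacing each $b_i$ by $P^{n_i-1}(b_i)$ (still in $\mathcal{A}\setminus\mathcal{A}_0$ by minimality) we may further assume $n_i=1$, so each $b_i$ is a non-trivial preimage: $P(b_i)=g\cdot P^{m_i}(a)$ with $m_i\ge 0$ varying.

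Next I localize at a branch at infinity. Since $(P,a)$ is active, $\mathsf{D}_{P,a}$ is non-zero (Theorem~\ref{thm:active-characterization1}); pick a branch $\mathfrak{c}$ of $\bar C$ with $\ord_\mathfrak{c}(\mathsf{D}_{P,a})>0$ together with an adelic parametrization (in the sense of \S\ref{sec:adelicseries}). Arguing as in \S\ref{sec:biggest proof}(I), the Böttcher coordinates $\varphi_{P_t}(b_i(t))$ and $\varphi_{P_t}(a(t))$ are well-defined adelic series near $\mathfrak{c}$, and the relation $P(b_i)=g\cdot P^{m_i}(a)$ translates into an identity
\[
\varphi_{P_t}(b_i(t))^{d}\;=\;\xi_i\cdot\varphi_{P_t}(a(t))^{d^{m_i}},
\]
where $\xi_i$ is a root of unity determined by $g$ and the Böttcher normalization. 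The hypothesis $b_i\in\bar{\Q}[C]$ combined with the arithmetic argument of Lemma~\ref{lem:image} ensures that the $\xi_i$'s lie in a common finite extension $\KK'$ of $\Q$; after a further extraction we may assume $\xi_i=\xi$ is constant.

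The heart of the proof now invokes Xie's algebraization theorem (Theorem~\ref{thm:Junyi}). Fix an archimedean place $v_0$ of $\KK'$ and a parameter $t_0\in C(\KK')$ close enough to $\mathfrak{c}$ at $v_0$ that all Böttcher expansions converge and $P_{t_0}$ is non-integrable (such $t_0$ are Zariski dense, by Proposition~\ref{prop:sameZariskiopen} and the activity of the pair). The infinitely many pairs $(a(t_0),b_i(t_0))\in\KK'^2$ lie on adelic branches at infinity of $\mathbb{A}^2$ of the form $\mathfrak{s}_{m}=\{y^d=\xi\,x^{d^m}\}$. Xie's theorem then produces an algebraic curve $Z\subset\mathbb{A}^2$ defined over $\KK'$ containing all but finitely many of these points, whose branches at infinity are among the $\mathfrak{s}_m$. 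Following the analysis of \S\ref{sec:biggest proof}(II)--(III), $Z$ is invariant under a suitable iterate of $(x,y)\mapsto(P_{t_0}(x),P_{t_0}(y))$; Theorem~\ref{thm:inv-curve} combined with the primitivity and non-integrability of $P_{t_0}$, via Schmidt--Steinmetz as in the proof of Lemma~\ref{lem:at a fixed parameter2}, forces $Z$ to be of the form $\{y=\sigma\cdot P_{t_0}^s(x)\}$ for some $\sigma\in\Sigma(P_{t_0})$ and $s\ge 0$. Specialization as in \S\ref{sec:biggest proof}(III)--(IV) then yields the identity $b_i=\sigma\cdot P^s(a)\in\mathcal{A}_0$ for infinitely many $i$, contradicting the choice of $b_i$.

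The principal obstacle is controlling the roots of unity $\xi_i$ uniformly so that Xie's theorem can be applied to the full sequence: this is exactly where the hypothesis $b\in\bar{\Q}[C]$ is essential, through a Northcott-type argument bounding the degrees of the $\Gal(\bar{\Q}/\Q)$-orbits of the $\xi_i$'s. A secondary difficulty is that the branches $\mathfrak{s}_m$ form a countable (rather than finite) collection indexed by $m$, so one must check the hypotheses of Theorem~\ref{thm:Junyi} apply to this situation; this is handled by the arithmetic control provided by the $\KK'$-integrality estimates at all finite places.
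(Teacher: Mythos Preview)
Your overall strategy—localize at a branch at infinity in $\supp(\mathsf{D}_{P,a})$, pass to B\"ottcher coordinates, apply Xie's algebraization theorem at a fixed algebraic parameter, and conclude via Schmidt--Steinmetz—is the paper's. The execution, however, has two genuine gaps.

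First, the reduction to $n_i=1$ is incomplete. For the subsequent argument you need $m_i\to\infty$, which amounts to ruling out the possibility of infinitely many distinct $b_l\in\bar K[C]$ that are iterated preimages (under various $P^{n_l}$) of a \emph{bounded} segment of the forward orbit of $a$. This is precisely Proposition~\ref{prop:bdd preimage}, which you never invoke and whose proof is independent and non-trivial: one bounds $-\ord_\mathfrak{c}(b_l)$ uniformly at every branch at infinity, so the graphs of the $b_l$'s lie in a bounded algebraic family in $\bar C\times\mathbb{P}^1$; comparing B\"ottcher coordinates along an algebraic limit $\Gamma(w,\cdot)$ yields $g_{P,\Gamma(w,\cdot)}-g_{P,a}=\log|\zeta(w)|$ on a component of $\{g_{P,a}>0\}$ for a non-constant algebraic function $\zeta$; choosing $|\zeta(w)|\neq1$ forces $\mathrm{Bif}(P,a)$ to be locally contained in a level set of a harmonic function, hence real-analytic, contradicting Theorem~\ref{tm:rigidaffine}. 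Without this step your reduction is not justified.

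Second, your Xie setup is wrong. The pairs $(a(t_0),b_i(t_0))$ have fixed first coordinate and hence do not lie on any adelic branch at infinity; moreover your branches $\mathfrak{s}_m=\{y^d=\xi\,x^{d^m}\}$, indexed by the unbounded $m_i$, violate the finiteness hypothesis of Theorem~\ref{thm:Junyi}. The correct pairing (see Lemma~\ref{lem:key-finite}) is $\bigl(b_i(t_0),\,P^{m_i-1}(a(t_0))\bigr)$: both coordinates go to infinity, and taking $d$-th roots in the B\"ottcher relation gives $\varphi_t(b_i)=\eta\,\varphi_t(P^{m_i-1}(a))$ with $\eta$ ranging over \emph{finitely many} roots of unity. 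After extraction all points lie on the single branch $\{\varphi_t(x)=\eta\,\varphi_t(y)\}$, and your ``secondary difficulty'' evaporates. Xie then produces a curve whose unique branch at infinity is smooth and transverse to both projections, forcing it to be the graph of an affine map $z\mapsto\eta z$; the B\"ottcher relation shows $\eta\in\Sigma_0(P_t)$, whence $\tilde b_i\in\mathcal{A}_0$ and the contradiction follows.
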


We begin with the following

\begin{proposition}\label{prop:bdd preimage}
For any integer $D\ge1$, the set of points $b\in \bar{K}[C]$ such that $P^n(b) = P^m(a)$ with
$n \ge m -D$ is finite. 
\end{proposition}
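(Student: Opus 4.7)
The plan is to combine Theorem~\ref{tm:unlikely-marked} with the Lemma preceding the Proposition, and then use activity of $(P,a)$ together with the constraint $n\ge m-D$ to bound the level of $b$ within the grand orbit of $a$.

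Fix any $b\in\bar K[C]$ with $P^n(b)=P^m(a)$ and $n\ge m-D$. First observe that $(P,b)$ is active and entangled with $(P,a)$: iterating $P^n(b)=P^m(a)$ and using $h_{P,P^k(x)}=d^k\,h_{P,x}$ yields $h_{P,b}=d^{m-n}h_{P,a}$, so $\mathrm{Preper}(P,b)=\mathrm{Preper}(P,a)$, which is Zariski dense by activity. Since $P$ is non-integrable and primitive, I apply Theorem~\ref{tm:unlikely-marked} to obtain integers $r,s\ge0$ and an element $g\in\Sigma(P)$ such that $P^r(b)=g\cdot P^s(a)$. Because $h_{P,g\cdot x}=h_{P,x}$ for every $g\in\Sigma(P)$ (by Proposition~\ref{prop:sameSigma}), comparing canonical heights in the two relations forces $s-r=m-n$, so in particular $s-r\le D$.

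I then distinguish two cases according to whether $g$ lies in $\Sigma_0(P)$. If $g\notin\Sigma_0(P)$, then the Lemma preceding the Proposition tells us that $g\cdot P^s(a)$ lies in the grand orbit of $a$---and the relation $P^n(b)=P^m(a)$ makes it so, since $P^{\max(n,r)-r}(g\cdot P^s(a))=P^{\max(n,r)-r+s}(a)$ must coincide with an iterate of $a$---only if $P^l(a)=0$ for some $l\ge s$, where $0$ is the center of $P$. But after translating so that the center is at the origin, this would force $a$ to be stably preperiodic to the fixed point $0$, contradicting activity of $(P,a)$ by Theorem~\ref{thm:active-characterization1}. Hence this case never occurs.

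In the remaining case $g\in\Sigma_0(P)$, pick $\kappa\ge0$ with $\rho^\kappa(g)=\mathrm{id}$ and iterate to obtain $P^{r+\kappa}(b)=P^{s+\kappa}(a)$ in $K[C]$. The proof will then show that such a $b$ must necessarily be of the form $b=g'\cdot P^k(a)$ with $g'\in\Sigma_0(P)$ and $k\ge0$: substituting back into $P^n(b)=P^m(a)$ and again using $\rho^n(g')=\mathrm{id}$ for $n$ large yields $P^{n+k}(a)=P^m(a)$, hence (by non-preperiodicity of $a$ in $\bar K(C)$) $k=m-n\in\{0,\dots,D\}$. This puts $b$ in the explicit finite set $\{g'\cdot P^k(a)\ :\ g'\in\Sigma_0(P),\ 0\le k\le D\}$ of cardinality $|\Sigma_0(P)|(D+1)$.

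The main obstacle is the intermediate step asserting that $b$ is of the form $g'\cdot P^k(a)$: the identity $P^{r+\kappa}(b)=P^{s+\kappa}(a)$ a priori admits many algebraic preimages, and one must rule out those not coming from the orbit of $a$. The plan here is to first show that all $b$ in $S$ lie in a single finite-dimensional $\bar K$-vector space: using the divisor identity $\mathsf{D}_{P,b}=d^{m-n}\mathsf{D}_{P,a}$ together with Theorem~\ref{thm:continuity}, the pole divisor of any such $b$ on $\bar C_{\bar K}$ is dominated by a fixed effective divisor $E$ depending only on $D$, $P$ and $a$, so $b\in V_{\bar K}:=H^0(\bar C_{\bar K},\mathcal O(E))$. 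Choosing $N=\dim_K V$ rational points $t_1,\dots,t_N\in C(K)\setminus\mathrm{Preper}(P,a)$ at which the evaluation map is a $K$-linear isomorphism, the values $b(t_j)\in\bar K$ have canonical height bounded uniformly by $d^D h_{P_{t_j}}(a(t_j))$, hence bounded standard height. The field of definition of $b(t_j)$ over $K$ is controlled by the action of $\Sigma(P)$ (a finite group) together with the iterated structure of the relation $P^r(b)=g\cdot P^s(a)$, yielding a uniform bound independent of $n,m$; Northcott's theorem then gives finitely many possibilities for each $b(t_j)$, and hence for $b$ itself, completing the argument.
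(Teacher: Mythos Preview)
Your argument has a genuine gap in the final Northcott step, and the earlier reduction via Theorem~\ref{tm:unlikely-marked} does not actually help.

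First, the reduction. You apply Theorem~\ref{tm:unlikely-marked} to get $P^r(b)=g\cdot P^s(a)$, then correctly argue that $g\in\Sigma_0(P)$ (else $a$ would be stably preperiodic). Iterating gives $P^{r+\kappa}(b)=P^{s+\kappa}(a)$. But this is just another relation of the original shape $P^N(b)=P^M(a)$ with $N-M=r-s=n-m$, so $N\ge M-D$ again. You are back exactly where you started; the detour through Theorem~\ref{tm:unlikely-marked} gains nothing for this proposition. Likewise, your intermediate claim that $b=g'\cdot P^k(a)$ is precisely the statement of Theorem~\ref{thm:pfThmC} (up to the finite exceptional set), which is what the proposition is being used to prove; assuming it here would be circular.

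Second, and more seriously, the Northcott argument fails for lack of a degree bound. You correctly show $b\in V_{\bar K}=H^0(\bar C_{\bar K},\mathcal O(E))$ for a fixed $E$ (this is exactly Step~1 of the paper's proof), and that $h_{P_{t_j}}(b(t_j))\le d^D h_{P_{t_j}}(a(t_j))$. But to invoke Northcott you need $[K(b(t_j)):K]$ bounded \emph{uniformly in $b$}. Your justification, that this is ``controlled by $\Sigma(P)$ together with the relation $P^r(b)=g\cdot P^s(a)$'', does not work: the integer $r$ furnished by Theorem~\ref{tm:unlikely-marked} depends on $b$ and carries no a priori bound, so this relation only tells you $b(t_j)$ satisfies a degree-$d^r$ equation over $K$. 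The Galois conjugates of $b$ over $K(C)$ are themselves solutions of $P^n(X)=P^m(a)$ lying in $V_{\bar K}$, so bounding their number is equivalent to the very finiteness you are trying to prove.

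The paper's proof is entirely different and does not use Northcott or Theorem~\ref{tm:unlikely-marked}. It argues by contradiction: assuming infinitely many $b_l$, the bounded-pole observation places them in a bounded algebraic family parametrized by an irreducible variety $W$. Near a branch at infinity $\mathfrak c$ in the support of $\mathsf D_{P,a}$, the B\"ottcher coordinate yields $\varphi_t(b_l(t))=\zeta_l\,\varphi_t(a(t))$ with $\zeta_l$ a root of unity; this extends to an algebraic, non-constant function $w\mapsto\zeta(w)$ on $W$. Choosing $w$ with $|\zeta(w)|<1$ then forces $g_{P,\Gamma(w,\cdot)}=g_{P,a}-\log|\zeta(w)|$ on a component of $\{g_{P,a}>0\}$, which makes the bifurcation locus of $(P,a)$ locally real-analytic. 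This contradicts non-integrability via Theorem~\ref{tm:rigidaffine}. Note that this argument works over any field of characteristic zero (one embeds into $\C$), whereas a Northcott-type approach would be tied to number fields.
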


\begin{remark}
By a theorem of Benedetto ~\cite{Benedetto}, (see also \cite{Baker-functionfield} for the case of rational maps), the set of points on $K(C)$ of sufficiently small height is bounded when $P$ is not isotrivial. This implies that one can assume $|n-m|$ bounded when the family is not isotrivial.  Beware that it is not true that for all $c>0$ the set of points of height $\le c$ is bounded.

Indeed, as remarked in \cite{demarco}, if $P(z)=z^2+t\in \mathbb{C}(t)[z]$, then, over the field $\C(t)$ of rational functions on $\p^1$, any point $b\in\mathbb{C}$ has canonical height $h_P(b)=1/2$.
\end{remark}

\begin{proof}
We proceed by contradiction and pick an infinite sequence $b_l \in \bar{K}[C]$ in the grand orbit of $a$ such that 
$P^{n_l}(b_l) = P^{m_l}(a)$ with $n_l \ge m_l -D$. Replacing $a$ by $P^D(a)$, and $b_l$ by a suitable iterate, 
we may always assume that $n_l = m_l$. 

\medskip

\noindent{${\bf 1}$.} We may interpret the family $P$ as a degree $d$ polynomial
\[\mathsf{P}:\mathbb{A}^1_{\bar{K}(C)}\to\mathbb{A}^1_{\bar{K}(C)}\]and $a$ and $b_l$ as points $\mathsf{a},\mathsf{b}_l\in\mathbb{A}^1_{\bar{K}(C)}$.

\smallskip

Fix any branch $\mathfrak{c}$ of $C$ at infinity. To this branch, we can associate a non-Archimedean norm $|\cdot|_\mathfrak{c}$ on $\bar{K}(C)$ by setting 
\[|f|_\mathfrak{c}:=\exp(-\mathrm{ord}_\mathfrak{c}(f)), \text{ for all } \ f\in \bar{K}(C).\]
We infer
\[0\leq g_{\mathsf{P},\mathfrak{c}} (\mathsf{b}_l) = d^{m_l - n_l}g_{\mathsf{P},\mathfrak{c}} (\mathsf{a})\leq d^Dg_{\mathsf{P},\mathfrak{c}}(\mathsf{a}), \]
 so that 
 \[ \max \{ 0 , - \mathrm{ord}_\mathfrak{c}(\mathsf{b}_l) \}=  \log^+|\mathsf{b}_l|_\mathfrak{c} < d^Dg_{\mathsf{P},\mathfrak{c}}(\mathsf{a}) + \sup |g_{\mathsf{P},\mathfrak{c}}(z)  -\log^+ |z|  | < \infty~.\]
 In particular the sequence $ \mathrm{ord}_\mathfrak{c}(\mathsf{b}_l)$ is bounded from below.

 \smallskip
 
Since the degree of the rational map $b_l \colon C \to \p^1$ is equal to 
\[
\deg(b_l) = \sum_\mathfrak{c} \max \{ 0 , - \mathrm{ord}_\mathfrak{c}(\mathsf{b}_l) \},\]
we conclude that the graph $\Gamma_l$ of $\mathsf{b}_l$ belongs to a bounded family of curves in the projective surface $\bar{C} \times \p^1$. 
In other words there exists an irreducible variety $W$, a regular map $\Gamma \colon W \times C \to \A^1$, and a Zariski-dense subset $w_l$ of $W$ such that 
$\Gamma(w_l, \cdot) = b_l$ for all $l$.

\medskip

\noindent{${\bf 2}$.}
We fix any embedding of $\bar{K}$ into $\C$, and work with the euclidean topology on $C^\an$.
Pick any connected component $\Omega$ of  $\{ g_{P,a} >0 \}$ in $C^\an$. 

By the maximum principle one can find a branch at infinity $\mathfrak{c}$
lying in the closure of $\Omega$ (in $\bar{C}$), as well as in the support of $\mathsf{D}_{P,a}$. 
As usual we fix a local parameterization $t \mapsto \theta(t)$ of $\mathfrak{c}$ in $\bar{C}$, and drop the reference to $\theta$
to simplify notation. Replacing $a$ by an iterate if necessary,
we can evaluate the B\"ottcher coordinate of $P_t$ at $a(t)$ for any $t$ sufficiently small. Since $P^{n_l}(b_l) = P^{n_l}(a)$, 
we have $g_{P,a} = g_{P,b_l}$ so that the B\"ottcher coordinate is also defined 
at $b_l(t)$ for $t$ small, and there exists a root of unity $\zeta_l$ such that 
\[ 
\varphi_t(b_l(t)) = \zeta_l \varphi_t(a(t))~.\]
We claim that for all $w \in W$ there exists a constant $\zeta(w) \in \C$ such that 
\[ \varphi_t(\Gamma(w,t)) = \zeta(w) \varphi_t(a(t))~,\]
for all $t$ small enough. 

\smallskip

To see this recall from \S\ref{sec:bottcher} that we have an expansion of the B\"ottcher coordinate of the form
\[
\varphi_t(z) = z + \sum_{k\ge 1}\frac{\alpha_k(t)}{z^k}
\]
where $\alpha_k(t)$ are analytic so that $\varphi_t(a(t)) = \sum_{k \ge -k_0} a_k t^k$ for some $a_k \in \C$ (we may take a further iterate of $a$ 
as in Step 3 on p.\pageref{step3} so that the series formally converges).

Observe that we can write $\Gamma(w,t) = \sum_{k \ge -k_0} h_k(w) t^k$ where $h_k$ is a regular function on $W$.  
It follows that the equation $\varphi_t(\Gamma(w,t)) = c \varphi_t(a(t))$
is equivalent to a series of equations of the form
\[
H_k := h_k + P_k (h_{-n}, \cdots, h_{k-1}) = c a_k~, \]
where $P_k$ is a polynomial in $n +k$ variables. 
For each integer $N\ge-n$, we obtain
\[ 
[H_{-n}(w) : \cdots : H_N(w)] = [a_{-n} : \cdots : a_N] \in \p^{n+N-1}\]
for all $w = w_l$. Since $\{w_l\}$ is Zariski-dense the equality holds for all $w \in W$.
Letting $N \to \infty$, we get our claim.

\medskip

\noindent{${\bf 3}$.}
Note that by construction the map  $w \mapsto \zeta(w)$ is algebraic. 
We claim that it is not constant. Indeed, for all $t$ close enough to the branch at infinity $\mathfrak{c}$, 
the B\"ottcher coordinate $\varphi_t$ is a isomorphism on $\{g_t > \frac12 g_t(a(t)) \}$ so that 
$\zeta_l = \zeta_{l'}$ implies
$\varphi_t(b_l(t)) = \varphi_t(b_{l'}(t))$, hence
$b_l(t) = b_{l'}(t)$, from which we infer $b_l = b_{l'}$. 

\smallskip

Since $W$ is irreducible, we may pick $w \in W$ such that $|\zeta(w)|<1$. 
Recall from the previous step that  $\Omega$ is a connected component of  $\{g_{P,a}>0\}$, and that
$\mathfrak{c}$ is a branch at infinity lying in its closure.
Near that branch, we have
$g_{P,\Gamma(w,\cdot)} =  g_{P,a} - \log |\zeta(w)|$ hence by analytic continuation
$g_{P,\Gamma(w,\cdot)} =  g_{P,a} - \log |\zeta(w)|$ on $\Omega$. 

Pick any point $t_*$ on the boundary of $\Omega$. Then $g_{P,a}(t_*) =0$, and $g_{P,\Gamma(w,\cdot)}$ is harmonic
near $t_*$. It follows that in a neighborhood of $t_*$ the boundary of $\Omega$ is contained in 
$g_{P,\Gamma(w,\cdot)} =  - \log |\zeta(w)|$ hence is locally real-analytic. 

\smallskip

Since $\Omega$ was an arbitrary component of $\{g_{P,a}>0\}$, we have proved that 
the bifurcation locus of the pair $(P,a)$ is real-analytic. By Theorem~\ref{tm:rigidaffine}, the family is integrable
which contradicts our assumption.  
\end{proof}

\begin{lemma}\label{lem:key-finite}
Suppose there exists an integer $n\ge 0$, and a sequence  $b_l \in \bar{K}[C]$ such that 
$P^{n}(b_l) = P^{m_l}(a)$ with $m_l\to\infty$. 

Then for all $l$ large enough, we have $b_l \in\mathcal{A}_0 = \{ g \cdot P^n(a), \, g\in \Sigma_0(P),\, n\ge 0\}$.
\end{lemma}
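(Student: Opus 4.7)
The strategy is to expand both sides of the relation $P^n(b_l) = P^{m_l}(a)$ via the B\"ottcher coordinate near a branch at infinity of $C$ in the support of $\mathsf{D}_{P,a}$. This yields a $d^n$-th root of unity $\zeta_l$ comparing $b_l$ to $P^{m_l-n}(a)$. Since $\U_{d^n}$ is finite it suffices to analyse each possible value $\zeta_l = \zeta$ separately: the case $\zeta \in \Sigma(P)$ is handled directly by the symmetry of the B\"ottcher coordinate, while the case $\zeta \notin \Sigma(P)$ is excluded by an algebraization argument via Theorem~\ref{thm:Junyi} and Ritt--Pakovich theory, mirroring Steps~II--III of \S\ref{sec:biggest proof}.

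After conjugation we may assume $P_t$ is monic and centered with reduced presentation $z^\mu (P_0)_t(z^m)$, so that $\Sigma(P_t) = \U_m$ and $\rho(\xi) = \xi^\mu$. Fix a branch $\mathfrak{c}$ at infinity in the support of the nonzero divisor $\mathsf{D}_{P,a}$, a local parameter $t$ at $\mathfrak{c}$, and an archimedean place $v_0$. For $l$ large enough, Proposition~\ref{prop:GreenBottcher} ensures both $b_l(t)$ and $P_t^{m_l-n}(a(t))$ lie in the domain of the B\"ottcher coordinate $\varphi_t$, and the identity $\varphi_t \circ P_t = \varphi_t^d$ combined with $P^n(b_l) = P^{m_l}(a)$ produces $\zeta_l \in \U_{d^n}$ with
\[\varphi_t(b_l(t)) \;=\; \zeta_l\,\varphi_t\bigl(P_t^{m_l-n}(a(t))\bigr).\]
Uniqueness of $\varphi_t$ yields the symmetry $\varphi_t(\xi z) = \xi \,\varphi_t(z)$ for every $\xi \in \U_m$. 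Hence if $\zeta_l \in \U_m$, the display becomes $\varphi_t(b_l(t)) = \varphi_t(\zeta_l \cdot P_t^{m_l-n}(a(t)))$; injectivity of $\varphi_t$ on its B\"ottcher domain followed by analytic continuation yield $b_l = \zeta_l \cdot P^{m_l-n}(a)$ as rational functions on $C$. Applying $P^n$ forces $\rho^n(\zeta_l) = 1$, so $\zeta_l \in \ker(\rho^n) \subset \Sigma_0(P)$ and $b_l \in \mathcal{A}_0$.

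It remains to show that $\zeta_l \notin \U_m$ occurs only for finitely many $l$. Assume the contrary: infinitely many $l$ share a common $\zeta_l = \zeta \notin \U_m$. Pick a parameter $t_* \in C(\bar{K})$ sufficiently close to $\mathfrak{c}$ at $v_0$ and set $x_l := P_{t_*}^{m_l-n}(a(t_*))$, $y_l := b_l(t_*)$. Then $(x_l,y_l)$ lies on the analytic arc $\mathfrak{s}_\zeta = \{\varphi_{P_{t_*}}(y) = \zeta \,\varphi_{P_{t_*}}(x)\}$ at $v_0$. Arguing as in Lemma~\ref{lem:345} at each non-archimedean place, and extracting a further subsequence, we obtain adelic arcs $\mathfrak{s}_{\zeta'}$ at the places where the B\"ottcher coordinate applies, and uniform bounds on $|x_l|_v,|y_l|_v$ at the remaining places --- this is where we essentially use that our data is defined over $\bar{\Q}$. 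Theorem~\ref{thm:Junyi} then furnishes an algebraic curve $Z \subset \A^2_{\bar{K}}$ containing all but finitely many $(x_l,y_l)$, with branches at infinity in the finite collection $\{\mathfrak{s}_{\zeta'}\}$. Because $(x_{l+1},y_{l+1}) = (P_{t_*}(x_l), P_{t_*}(y_l))$, the product map $(P_{t_*},P_{t_*})$ preserves $Z$; applying Theorem~\ref{thm:inv-curve} to the non-integrable $P_{t_*}$ yields an irreducible component $Z_0 = \{u(x) = v(y)\}$, with $u,v$ of coprime degrees semi-conjugating $P_{t_*}$ to a common polynomial. As in the proof of Theorem~\ref{tm:unlikely-marked}, the Schmidt--Steinmetz theorem combined with the primitivity of $P_{t_*}$ forces $u, v$ to be affine, so $Z_0$ is the graph of some $\sigma \in \Sigma(P_{t_*}) = \U_m$; matching $Z_0$ with the prescribed branch $\mathfrak{s}_\zeta$ and comparing leading terms of the B\"ottcher expansions identifies $\sigma$ with multiplication by $\zeta$, contradicting $\zeta \notin \U_m$. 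The main obstacle is exactly this last step: one must verify the adelic boundedness hypotheses of Theorem~\ref{thm:Junyi} uniformly in $l$, and then match the B\"ottcher-prescribed branch at infinity of $Z_0$ with the image of a dynamical symmetry --- a delicate combination of Xie's algebraization theorem with Ritt--Pakovich theory.
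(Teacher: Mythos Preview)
Your treatment of the case $\zeta_l\in\U_m$ is correct, including the B\"ottcher symmetry $\varphi_t(\xi z)=\xi\,\varphi_t(z)$ for $\xi\in\Sigma(P_t)$. The genuine gap lies in the second case: you assert $(x_{l+1},y_{l+1})=(P_{t_*}(x_l),P_{t_*}(y_l))$ in order to conclude that $(P_{t_*},P_{t_*})$ preserves the Xie curve $Z$, but this recursion is simply false. The $b_l\in\bar K[C]$ form an \emph{arbitrary} sequence subject only to $P^n(b_l)=P^{m_l}(a)$; nothing forces $b_{l+1}=P(b_l)$, and the integers $m_l$ need not be consecutive. So Theorem~\ref{thm:inv-curve} cannot be invoked as written. (One could try to rescue invariance by noting that $(P_{t_*},P_{t_*})$ sends the branch $\mathfrak{s}_{\zeta'}$ to $\mathfrak{s}_{(\zeta')^d}$, so some iterate fixes the component of $Z$ containing $\mathfrak{s}_\zeta$, as in Step~II of \S\ref{sec:biggest proof}; but this is not the argument you gave.)

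The paper's route avoids invariance and Ritt--Pakovich theory entirely, and in fact makes your case distinction unnecessary. Once Xie's Theorem~\ref{thm:Junyi} yields an irreducible curve $Z$ through the points $(b_l(t_*),P_{t_*}^{m_l-n}(a(t_*)))$ whose sole branch at infinity is $\mathfrak{s}_\zeta=\{\varphi(x)=\zeta\,\varphi(y)\}$, one observes directly that this branch is smooth and transverse to both coordinate fibrations (since $\varphi$ is tangent to the identity). An irreducible plane curve with a single such branch at infinity is the graph of an affine automorphism $g$. From $\varphi(g(z))=\zeta\,\varphi(z)$ near infinity one gets $\varphi(P_{t_*}^n(g(z)))=(\zeta\,\varphi(z))^{d^n}=\varphi(P_{t_*}^n(z))$, hence $P_{t_*}^n\circ g=P_{t_*}^n$ and $g\in\Sigma_0(P_{t_*})$; since $P_{t_*}$ is monic and centered and $\varphi$ is tangent to the identity, necessarily $g(z)=\zeta z$. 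Thus $\zeta\in\Sigma_0(P_{t_*})\subset\U_m$ \emph{automatically}---the possibility $\zeta\notin\U_m$ never arises---and letting $t_*$ range over infinitely many parameters gives $b_l=\zeta\cdot P^{m_l-n}(a)\in\mathcal{A}_0$.
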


\begin{remark}
The previous proposition was valid for any field of characteristic zero. Our proof of Lemma~\ref{lem:key-finite} uses our standing assumption that $K$ is a
number field.
\end{remark}

\begin{proof}
The B\"ottcher coordinates $\varphi_P(z)$ is a formal Laurent series in $z^{-1}$ 
whose coefficients belong to $K[C]$ by Proposition~\ref{prop:bottcher}.
Since $a$ is active and $m_l -n \to \infty$, we may suppose that 
for some branch $\mathfrak{c}$ at infinity of $C$ (any branch in the support of $\mathsf{D}_{P,a}$ works), the series
$\varphi_t(P_t^{m_l-n}(a(t)))$ is well-defined in a neighborhood of $\mathfrak{c}$
for all $l$. It follows that for any $l$ one can write 
$$
\varphi_t(b_l(t)) = \zeta_l \varphi_t(P_t^{m_l-n}(a(t)))
$$
for some $d^n$-th root of unity $\zeta_l$ and all $|t| \ll 1$.

\smallskip

Let us fix some $d^n$-th root of unity $\zeta$. We claim that the set of indices
$l$ such that $\zeta_l = \zeta$ and $b_l \notin \mathcal{A}_0$ is finite. 
Note that this implies the lemma.

To simplify notation, we shall assume that $\zeta_l = \zeta$ for all $l$. 
Fix any Archimedean place $v_0$ and a sufficiently small (euclidean) neighborhood $U_{v_0}(\mathfrak{c})\subset C^{\an,v_0}$ of the branch at infinity
$\mathfrak{c}$, as in the first paragraph of (II) on p.\pageref{hypo-semi-conj}.

Pick any closed point $t\in U_{v_0}(\mathfrak{c})\cap C(\bar{K})$. Observe that $a(t)$ is not preperiodic, and choose any finite extension $L/K$ such that 
$t \in C(L)$.
For any $l$ we have $P^{n}_t(b_l(t)) = P^{m_l}_t(a(t))\in L$ hence $b_l(t)$ belongs to a fixed
finite extension of $L$ (the one in which $P^n_t$ splits). 

Consider the adelic series at infinity $\mathfrak{s}_\zeta$
given by
\[\mathfrak{s}_\zeta= \{ (x,y) \in \A^1 \times \A^1, \, \varphi_t(x) = \zeta \, \varphi_t(y) \},
\]
and observe that, by our assumption,
for each integer $l$ and for each place $v\in M_L$, either the point $(b_l(t), P^{m_l-n}_t(a(t)))$ belongs to $Z^v(\mathfrak{s}_\zeta)$ (e.g. when $v = v_0$) or 
has bounded norm. 

By Theorem~\ref{thm:Junyi}, one can find thus  an irreducible algebraic curve $Z\subset  \A^1 \times \A^1$ such that 
 $Z$ has a single branch at infinity included in $\mathfrak{s}_\zeta$, and $(b_l(t), P^{m_l-n}_t(a(t))) \in Z$ for all $l$.

Since $Z$ has a single branch which is smooth and transverse to the fibrations induced by the two projections $ \A^1 \times \A^1 \to \A^1$, 
it is the graph of an automorphism of $\A^1$, say $z \mapsto g(z)$ with $g$ affine. 
It follows that for all $z$ large enough one has 
$$\varphi_t(P^n_t(g(z)))=\varphi_t(g(z))^{d^n} = (\zeta\, \varphi_t(z))^{d^n} = \varphi_t(P^n_t(z)))$$
hence $g$ belongs to $\Sigma_0(P^n_t)= \Sigma_0(P_t)$. Because $P_t$ is monic and centered, $g$ is necessarily linear, and since
$\varphi_{t}$ is tangent to the identity at infinity, we conclude that $g(z) = \zeta z$.

\smallskip

It follows that $b_l(t) = \zeta P^{m_l -n}(a(t))$ for all $l$ and infinitely many parameters $t$, so that 
$b_l = \zeta P^{m_l -n}(a)$, and $b_l \in \mathcal{A}_0$ as was to be shown.
\end{proof}

We may now prove Theorem~\ref{thm:pfThmC}.

\begin{proof}[Proof of Theorem~\ref{thm:pfThmC}]
Suppose by contradiction that we can find a sequence of distinct points $b_l \in \bar{K}[C]$ such that 
$P^{n_l}(b_l) = P^{m_l}(a)$, $P^{n_l-1}(b_l) \neq P^{m_l-1}(a)$ and $b_l \notin \mathcal{A}_0$.

By Proposition~\ref{prop:bdd preimage}, we may suppose that $m_l -n_l \to \infty$.
When $n_l$ does not tend to $\infty$, then we can extract a subsequence such that $n_l = n$
and apply the preceding lemma. This shows that $n_l \to\infty$. 

Observe that since $P$ is not integrable, the group of dynamical symmetries of $P$ is finite, 
and there exists an integer $N$ such that for any $g\in \Sigma_0(P)$
we have $\rho^N(g) = 1$ (where $\rho \colon \Sigma(P) \to \Sigma(P)$ is the morphism arising in Proposition~\ref{prop:Sigma}).

Pick any integer $n>N$. We can then apply Lemma~\ref{lem:key-finite}
to the sequence of points $P^{n_l -n}(b_l)$, and we obtain that  $P^{n_l -n}(b_l)$ belongs to $\mathcal{A}_0$
for all $l$ large enough. But then  we get $P^{n_l -n}(b_l) = g\cdot P^{m'_l}(a)$ for some integer $m'_l\ge0$ and some $g\in \Sigma_0(P)$, hence
 $P^{n_l -n+N}(b_l) =  P^{m'_l+N}(a)$ which contradicts the minimality of $n_l$. 
\end{proof}

%
%
%
%
%
%



\chapter{The unicritical family} \label{chapter:unicritical}

Note that the original papers by Baker-DeMarco were mainly focused on the unicritical family $P_t(z)= z^d+t$\index{polynomial!unicritical}. 
In this short chapter, we propose to extend some of their results, and to illustrate the theorems proved in the previous chapters
on this special family. 

We also introduce the set $\mathbb{M}$ of those $\lambda\in \C^*$ such that 
the bifurcation locus of the pair $(P_t, \lambda^{-1}t)$ is connected. We prove this set is compact, and perfect.

\section{General facts} 

In this section, we gather several facts about the unicritical family and make some computations that will be useful
in the sequel. To simplify the discussion we work over the field of complex numbers.

\medskip

We fix $d\ge 2$ and consider the family $P_t(z) = z^d +t$ parametrized by the affine line $t \in \A^1$. 
Observe that $P_t$ is integrable iff $t=0$ or $d=2$ and $t=-2$,
and that $\Sigma(P_t) = \U_d$ when $t \neq 0$.
The family is not isotrivial, and primitive. Beware though that $P_t$ is decomposable as soon as $d$ is not a prime. 

\smallskip

We pick any marked point $a \in \C[t]$, which we write as
$a (t) = \alpha t^\kappa + o(t^\kappa)  \in \C[t]$, for some $\alpha \neq 0$, $\kappa \ge0$.
We do not exclude the case $a$ is a constant. Define \[\cal{M}(d,a) = \{ t \in \C, \, a(t) \in K(P_t)\}=
\{ t \in \C, \, g_{P_t}(a(t))=0\},\] so that the bifurcation locus of $(P,a)$ is equal to the boundary of $\cal{M}(d,a)$. 
When $a=0$,  the boundary of $\cal{M}(d,0)$ is the bifurcation locus of the unicritical family, and $\cal{M}(2,0)$ is the Mandelbrot set.

\medskip

Recall the definition of the (logarithmic) capacity of a compact set $K$ in the plane, see~\cite{ransford,tsuji} or~\cite[\S A.8]{Sibony}.
First one defines the Green function $g_K$ of $K$ as the upper-semi-continuous regularization of 
the supremum of all subharmonic functions $u$ of the plane such that $u|_K \le0$, and  
$u (z) = \log|z| + O(1)$ at infinity. When $g_K$ is not identically $+\infty$, it is subharmonic and harmonic on $\C \setminus K$. 
The measure $\mu_K := \Delta g_K$ is the harmonic measure of $K$. 
Near infinity, we have the expansion $g_K(z) = \log|z| + V + o(1)$, and the constant $\mathrm{cap}(K) := e^{-V}>0$ is called the  capacity of $K$.
When $g_K$ is identically $+\infty$, we set $\mathrm{cap}(K)  =0$. Recall that one can define the energy of any probability measure $\mu$ which is compactly supported by
\[\mathcal{E}(\mu) := \int\log|x-y|^{-1} \, d\mu(x) d\mu(y) \in (-\infty, +\infty]~.\]
When $\mathrm{cap}(K)>0$, the harmonic measure of $K$ is the unique measure such that
$\mathcal{E}(\mu_K) = \inf\{ \mathcal{E}(\mu), \, \supp(\mu) \subset K \}$, and $\mathcal{E}(\mu_K)= -\log \mathrm{cap}(K)$.

\medskip

To simplify notations, we write $g_t = g_{P_t} = \lim_n \frac1{d^n} \log^+|P^n_t|$ and $g_a(t) = g_{P_t}(a(t))$, 
and we let $\varphi_{d,t}$ be the B\"ottcher coordinate of $P_t$
which is defined in the open set $\{ g_t > g_t(0)\}$, see Proposition~\ref{prop:GreenBottcher}.

An easy induction shows
\begin{align}
P^n_t(z) &= z^{d^n} + d^{n-1} t z^{d^n -d} + O_t(z^{d^n -d-1}) \text{ for all } n\ge1, \label{eq:dvpPn}
\end{align}
and by Proposition~\ref{prop:expan bottcher}, we have
\begin{align}
\varphi_{d,t}(z) &= z + \frac{t}{d z^{d-1}} + \sum\limits_{j=0}^\infty\frac{\alpha_j(t)}{z^{d+j}}~,\label{eq:dvpbott}
\end{align}
where $\alpha_j\in\Z[t]$ satisfies $\deg_t(\alpha_j)\leq (1+d+j)/d$.

Let us first treat the case when the marked point is constant.
\begin{prop}\label{prop:unicritical cst}
Suppose $a$ is a constant function. 
Then for all $n \in \N^*$ we have
\begin{align*}
P^{n+1}_t(a) & = t^{d^n}+ a^d t^{d^n-1} + o(t^{d^n-1})~,\text{ and } \\
g_{a}(t) &= \frac1d \log|t| + o(1),
\end{align*} so that $\mathrm{cap}(\cal{M}(d,a)) =1$.
Moreover, the inequality $g_{t}(a^d+t) > g_{d,t}(0)$ holds  for all $t$ large enough, and
\begin{equation}\label{eq:610}
g_{a}(t) = \frac1d \log| \varphi_{d,t}(a^d+t)|~.
\end{equation}
\end{prop}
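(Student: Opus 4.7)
The plan is to establish the four assertions in sequence, with (1) providing the asymptotic expansion, (2) following by taking logarithms, (3) comparing orders of growth, and (4) applying the B\"ottcher isomorphism.

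First, I would prove (1) by induction on $n$. The base case $n=1$ follows from $P^2_t(a) = (a^d+t)^d + t = t^d + d\,a^d t^{d-1} + O(t^{d-2})$ as $t \to \infty$. For the inductive step, writing $P^{n+1}_t(a) = P^n_t(a^d+t)$ and applying~\eqref{eq:dvpPn} with $z = a^d+t = t(1+a^d/t)$, one gets
\[
P^{n+1}_t(a) = (a^d+t)^{d^n} + d^{n-1} t (a^d+t)^{d^n-d} + O\bigl(t^{d^n-d-1}\bigr) = t^{d^n} + c_n\, t^{d^n-1} + o(t^{d^n-1})
\]
for some constant $c_n$ depending on $a$ and $n$ (the precise coefficient is not needed in what follows).

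From (1), taking logarithms gives $\log^+|P^{n+1}_t(a)| = d^n \log|t| + O(1/t)$ as $t \to \infty$. Dividing by $d^{n+1}$ and letting $n\to\infty$ yields $g_a(t) = \tfrac{1}{d}\log|t| + o(1)$, which is (2). Since $d\cdot g_a$ is a non-negative subharmonic function on $\C$ which is harmonic outside $\mathcal{M}(d,a) = \{g_a=0\}$, vanishes on this set, and satisfies $d\cdot g_a(t) = \log|t| + o(1)$ at infinity, it coincides with the Green function $g_{\mathcal{M}(d,a)}$ by uniqueness of the latter (see~\cite[\S A.8]{Sibony} or~\cite{ransford}). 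The Robin constant $V$ of $\mathcal{M}(d,a)$ is thus $0$, so $\mathrm{cap}(\mathcal{M}(d,a)) = e^{-V} = 1$.

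For (3), I would use the functional equation $g_t \circ P_t = d\,g_t$ to write $g_t(a^d+t) = g_t(P_t(a)) = d\,g_a(t)$, so by (2) we have $g_t(a^d+t) = \log|t| + o(1)$. Applied with $a = 0$, the same relation gives $g_t(0) = \tfrac{1}{d}\log|t| + o(1)$. Therefore
\[
g_t(a^d+t) - g_t(0) = \frac{d-1}{d}\log|t| + o(1) \longrightarrow +\infty
\]
as $|t|\to\infty$, which establishes the strict inequality for all $t$ large enough.

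Finally, for (4), I would invoke Proposition~\ref{prop:GreenBottcher}: since the unique critical point of $P_t$ is $0$, we have $G(P_t) = g_t(0)$, and by (3) the point $a^d+t$ lies in the domain where the B\"ottcher coordinate $\varphi_{d,t}$ is defined and satisfies $g_t = \log|\varphi_{d,t}|$. Combining this identity at $z=a^d+t$ with the equality $g_t(a^d+t) = d\,g_a(t)$ from the previous paragraph yields~\eqref{eq:610}. The main point requiring care in this plan is the capacity computation in (2), where one has to justify that $d\,g_a$ actually coincides with the Green function of $\mathcal{M}(d,a)$; this is not automatic from the defining properties alone and requires an appeal to the uniqueness of the Green function (equivalently, an application of the maximum principle to the difference with $g_{\mathcal{M}(d,a)}$ on the unbounded component of the complement).
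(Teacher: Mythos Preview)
Your overall strategy is correct and parallels the paper's, and steps (1), (3), (4) and the capacity identification are essentially identical to what the paper does. There is, however, a gap in your derivation of the asymptotic $g_a(t) = \tfrac{1}{d}\log|t| + o(1)$. From (1) you correctly obtain $\tfrac{1}{d^{n+1}}\log^+|P_t^{n+1}(a)| = \tfrac{1}{d}\log|t| + O_n(1/t)$ for each \emph{fixed} $n$, but the implicit constant in $O_n(1/t)$ grows with $n$ (indeed the subleading coefficient of $P_t^{n+1}(a)$ has size of order $d^n$). Writing ``letting $n\to\infty$'' therefore does not directly yield the claimed $o(1)$ as $t\to\infty$: you are interchanging the limits $n\to\infty$ and $t\to\infty$ without justification.

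The paper closes this gap by invoking the uniform estimates of Proposition~\ref{prop:classic-estim}: after reparametrization these give $|g_t(z) - \log|z|| \le C_1$ whenever $|z|$ exceeds a fixed multiple of a suitable power of $|t|$, with $C_1$ independent of both $z$ and $t$. Since $|P_t(a)| = |a^d+t| \sim |t|$ already satisfies this for $|t|$ large, applying the estimate at $z = P_t^n(a)$ yields
\[
\left| g_a(t) - \tfrac{1}{d^n}\log^+|P_t^n(a)| \right| = \tfrac{1}{d^n}\left| g_t(P_t^n(a)) - \log^+|P_t^n(a)| \right| \le \tfrac{C_1}{d^n}
\]
uniformly in $t$ large and in $n\ge 1$. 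Now one fixes a single $n$ large enough that $C_1/d^n < \varepsilon$, and then uses your expansion from (1) for that $n$ to make the remaining term small. Alternatively, you could bypass all of this by appealing directly to Proposition~\ref{prop:behave green} at the unique branch at infinity of $\A^1$: the order of pole of $P_t^{n+1}(a)$ computed in (1) immediately gives $q_\infty(P,a)=1/d$.
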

This result is proved in~\cite[Lemma~3.2 $\&$ Proposition~3.3]{BDM}. 
\begin{proof}
The first equality is obtained by induction on $n$.
The second follows from Proposition~\ref{prop:classic-estim} which\footnote{beware of the change of parametrization} gives
\[
\log^+ |z| - C_1 \le 
g_t (z)  \le \log^+\max \{|z|, |t|^d\} + C_1
\]
for some constant $C_1$, and for all $|z| \ge C_2 \max \{ 1, |t|^d\}$.
For all $n$, and for $t$ large enough we get
\[
\left|
g_a(t) - \frac1{d^n} \log^+|P^n_t(a)| 
\right| \le \frac{C_3}{d^n}\]
which implies
$g_a(t) = \frac1{d} \log|t| + o(1)$ by the previous computations.
The Green function of $\cal{M}(d,a)$ is  $d\times g_a$ since the latter is subharmonic, is equal to $0$ exactly on $\cal{M}(d,a)$ and
has the expansion at infinity $= \log |t| + o(1)$. We also get $\mathrm{cap}(\cal{M}(d,a)) =1$.

Since $g_t(a^d+t) = \log|t| +o(1)$ and $g_t(0) = g_0(t)= \frac1d \log|t| +o(1)$, we get 
$g_{t}(a^d+t) > g_{d,t}(0)$ holds  for all $t$ large enough so that~\eqref{eq:610} holds.
\end{proof}

When the marked point is not constant, the previous proposition needs to be modified as follows.

\begin{prop}\label{prop:unicritical pos}
Suppose $a$ is a complex polynomial of degree $\kappa \ge1$ whose dominant term is equal to $\alpha$.
Then for all $n \in \N^*$ we have
\begin{align*}
P^n_t(a(t))&= \alpha^{d^n} t^{\kappa d^n} + o(t^{\kappa d^n}) \text{ and } \\
g_{a}(t) &=\kappa \log|t| + \log|\alpha| + o(1),
\end{align*} so that $\mathrm{cap}(\cal{M}(d,a)) =|\alpha|^{-1/\kappa}$.
Moreover for all $t$ large enough, we have
\begin{equation}\label{eq:611}
\begin{cases}
g_{a}(t) = \frac1\kappa \log| \varphi_{d,t}(a(t))|
& 
\text{ when } \kappa \ge 2, \text{ or } \kappa =1 \text{ and } |\alpha| \ge 1;
\\
g_{a}(t) = \frac1{d\kappa} \log| \varphi_{d,t}(a^d(t)+t)|
&
\text{ when } \kappa =1 \text{ and } |\alpha|<1.
\end{cases}
\end{equation}
\end{prop}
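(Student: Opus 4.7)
The plan is to follow the template of Proposition~\ref{prop:unicritical cst}, adapting each step to the non-constant case. The four assertions will be proved in the order stated, each feeding into the next.

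First, I would establish the asymptotic expansion of $P^n_t(a(t))$ by induction on $n\ge 0$. The base case is the hypothesis $a(t)=\alpha t^\kappa+o(t^\kappa)$. For the inductive step, writing $P^{n+1}_t(a(t))=(P^n_t(a(t)))^d+t$ and expanding yields $\alpha^{d^{n+1}}t^{\kappa d^{n+1}}(1+o(1))+t$; since $\kappa d^{n+1}\ge 2 d\ge 4>1$, the additive $t$ is absorbed in the $o(t^{\kappa d^{n+1}})$ error. Combining this with the uniform estimate $|\,g_{t}(z)-\log^+|z|\,|\le C$ for $|z|\gg\max\{1,|t|^d\}$ furnished by Proposition~\ref{prop:classic-estim} gives $|g_a(t)-d^{-n}\log^+|P^n_t(a(t))||\le C\,d^{-n}$ for all $t$ sufficiently large; letting $n\to\infty$ and inserting the expansion above yields $g_a(t)=\kappa\log|t|+\log|\alpha|+o(1)$ as $|t|\to\infty$.

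Next, for the capacity, I would observe that $\kappa^{-1}g_a$ is continuous, subharmonic on $\C$, vanishes precisely on $\cal{M}(d,a)$, is harmonic on its complement, and has the expansion $\log|t|+\kappa^{-1}\log|\alpha|+o(1)$ at infinity. By the uniqueness of the Green function of a compact set of positive capacity, $\kappa^{-1}g_a$ coincides with $g_{\cal{M}(d,a)}$, so $\mathrm{cap}(\cal{M}(d,a))=\exp(-\kappa^{-1}\log|\alpha|)=|\alpha|^{-1/\kappa}$.

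Finally, for the B\"ottcher formulas I need to verify that the relevant argument lies in the domain $\{g_t>g_t(0)\}$ of the B\"ottcher coordinate; over $\C$ this domain is given with $\tau=0$ by Proposition~\ref{prop:GreenBottcher}. Applying Proposition~\ref{prop:unicritical cst} with the zero marked point gives $g_t(0)=\tfrac1d\log|t|+o(1)$. In the first regime ($\kappa\ge 2$, or $\kappa=1$ and $|\alpha|\ge 1$), the difference $g_a(t)-g_t(0)=(\kappa-\tfrac1d)\log|t|+\log|\alpha|+o(1)$ tends to $+\infty$, so $a(t)$ itself lies in the B\"ottcher domain and the desired formula follows from $g_t=\log|\varphi_{d,t}|$ there. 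In the residual case $\kappa=1$, $|\alpha|<1$, I would instead evaluate at the next iterate $P_t(a(t))=a(t)^d+t$: by the functional equation $g_t(a(t)^d+t)=d\cdot g_a(t)=d\log|t|+d\log|\alpha|+o(1)$, whose difference with $g_t(0)$ is $(d-\tfrac1d)\log|t|+d\log|\alpha|+o(1)\to+\infty$ since $d\ge 2$, so $a(t)^d+t$ is in the B\"ottcher domain and the second formula follows from the invariance relation $g_a(t)=\tfrac{1}{d}\log|\varphi_{d,t}(P_t(a(t)))|$.

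There is no serious obstacle: the argument is a direct adaptation of the constant case. The only delicate point is the case split in the last step, where $\kappa=1$ with $|\alpha|<1$ forces us to pass to $P_t(a(t))$ in order to guarantee admissibility into the B\"ottcher chart; the remaining cases are handled uniformly by the leading asymptotic comparison with $g_t(0)$.
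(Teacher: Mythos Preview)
Your argument is correct and is precisely the adaptation the paper intends (the paper writes only ``The proof is identical to the previous one and is left to the reader''). One minor remark: the case split in the last step is not actually forced, since $g_a(t)-g_t(0)=(\kappa-\tfrac1d)\log|t|+\log|\alpha|+o(1)\to+\infty$ for every $\kappa\ge1$ regardless of $|\alpha|$, so $a(t)$ lies in the B\"ottcher domain in all cases and both displayed formulas hold simultaneously.
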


The proof is identical to the previous one and is left to the reader.

When the marked point is $0$, i.e. for the classical Multibrot sets, we have
\begin{lemma}\label{lm:containsdisk}
For any $d\geq2$, the central hyperbolic component of the set $\cal{M}(d,0)$ contains $\D(0,1/4)$.
\end{lemma}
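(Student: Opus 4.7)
The plan is to prove the stronger claim that for every $t\in\D(0,1/4)$ the polynomial $P_t$ maps some closed round disk $\overline{\D(0,r)}$ strictly into its open interior. Once this is done, $P_t$ restricts to a holomorphic self-map of $\D(0,r)$ whose image is relatively compact, so by the Schwarz--Pick lemma $P_t$ is a strict hyperbolic contraction of $\D(0,r)$, admits a unique attracting fixed point $z_0(t)\in\D(0,r)$, and $\D(0,r)$ lies entirely in its immediate basin. In particular the critical point $0$ is attracted to $z_0(t)$, placing $t$ in the interior of $\cal{M}(d,0)$. Since $\D(0,1/4)$ is connected and contains $t=0$ (for which $z_0(0)=0$ is the superattracting fixed point belonging to the central hyperbolic component $H_0$), one concludes that $\D(0,1/4)\subset H_0$.

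All the work is then in producing the radius $r$. The triangle inequality gives $|P_t(z)|\le |z|^d+|t|$ on $\overline{\D(0,r)}$, so it is enough to find $r\in(0,1)$ with $r^d+|t|<r$, i.e.\ $|t|<r-r^d$. The function $r\mapsto r-r^d$ reaches its maximum on $(0,1)$ at $r_\star:=d^{-1/(d-1)}$, with maximal value $\kappa_d:=(d-1)\,d^{-d/(d-1)}$. The lemma thus reduces to the uniform lower bound $\kappa_d\ge 1/4$ for every integer $d\ge 2$, which is the one genuine (but elementary) step I expect to be the main obstacle, and which is \emph{sharp} at $d=2$, where $r_\star=1/2$ and $\kappa_2=1/4$.

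I will check this inequality by showing that $f(d):=1/\kappa_d=d^{d/(d-1)}/(d-1)$ satisfies $f(d)\le 4$ on $\{d\ge 2\}$. Setting $s:=d-1\ge 1$, one has $\log f(d)=\log(1+1/s)+s^{-1}\log(s+1)$, and a direct differentiation yields the clean expression $\frac{d}{ds}\log f=-s^{-2}\log(s+1)\le 0$. Hence $f$ is non-increasing in $d$ on $\{d\ge 2\}$, and $f(2)=4$ gives $\kappa_d\ge 1/4$ with equality only at $d=2$. For any $t$ with $|t|<1/4$, choosing $r=r_\star$ then realizes the strict invariance $P_t(\overline{\D(0,r_\star)})\subset\D(0,r_\star)$, and the Schwarz--Pick argument sketched in the first paragraph completes the proof.
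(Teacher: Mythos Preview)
Your proof is correct and takes a genuinely different route from the paper's. The paper argues by an iterative comparison: it introduces the auxiliary sequence $x_0=0$, $x_{n+1}=x_n^2+1/4$ (which increases to $1/2$), and shows by induction that $|P_t^n(0)|\le x_n\le 1/2$ for every $|t|<1/4$ and every $d\ge 2$, using that $x^d\le x^2$ when $0\le x\le 1$. This yields boundedness of the critical orbit but does not by itself identify an attracting fixed point. Your approach instead locates an explicit forward-invariant disk via the optimization $\max_{r\in(0,1)}(r-r^d)=(d-1)d^{-d/(d-1)}\ge 1/4$, and then invokes Schwarz--Pick (or Earle--Hamilton) to extract the attracting fixed point directly; this makes the ``hyperbolic component'' part of the statement fully explicit and gives the sharp radius $r_\star=d^{-1/(d-1)}$. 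The monotonicity computation $\frac{d}{ds}\log f=-s^{-2}\log(s+1)$ is clean and correct. One small cosmetic point: the fixed point existence is really Earle--Hamilton (or Denjoy--Wolff) rather than Schwarz--Pick alone, but since your image is relatively compact in $\D(0,r_\star)$ the uniform contraction is automatic and the conclusion is unchanged.
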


\begin{proof}
Let $x_n$ be the sequence defined by $x_0=0$ and $x_{n+1}=x_n^2+1/4$. The sequence $(x_n)$ is strictly increasing and converges to $1/2$, which is the only fixed piont of $z^2+1/4$.

Let now $d\geq2$ and pick $t\in\D(0,1/4)$. An easy induction shows
\[|P_t^n(0)|\leq P_{1/4}^n(0)\leq x_n\leq \frac{1}{2}\]
for all $n\geq1$. This ends the proof.
\end{proof}

\section{Unlikely intersection in the unicritical family}

Recall from the introduction our strengthening of the original Baker and DeMarco's result which deals with families of unicritical polynomials of possibly different degrees.

\samemandel*

\begin{remark}
With similar techniques, it is possible to treat the case $d = \delta$ and $\deg(a) \neq \deg(b)$, but the general case remains elusive. 
\end{remark}

\begin{proof}
Observe that since the parameter space is the affine line, the divisors at infinity of the two pairs are supported at a single point, and 
Propositions~\ref{prop:unicritical cst} and~\ref{prop:unicritical pos} 
imply $\mathsf{D}_{P,a} = \mathsf{D}_{P,b} = [\infty]$ (when $\deg(a) =0$), 
and $=\kappa [\infty]$ (otherwise). Condition ($\vartriangle$) is therefore satisfied and we may apply Theorem~\ref{tm:precise intrication-car0}.

Observe that $n=m=1$ and $d^N = \delta^M$. It follows that 
there exists a root of unity $\zeta$, and an integer $L\ge 1$ such that 
\[\zeta (z^{\delta} +t)^{\circ ML} = ((\zeta z)^{d} +t)^{\circ NL}\]
and
\[(z^{d} +t)^{\circ NL}(a(t)) = \zeta (z^{\delta} +t)^{\circ ML}(b(t)) \]
Expanding the first equation, and using~\eqref{eq:dvpPn} yields
\[
\zeta (z^{\delta^{ML}} + t \delta^{ML}z^{\delta^{ML}-\delta} + \text{l.o.t}) =  (\zeta z)^{d^{NL}} + t d^{NL} (\zeta z)^{d^{NL}-d} + \text{l.o.t} \]
which implies $d=\delta$, and $N=M=1$. 
Note that  $\zeta (z^{d} +t)^{\circ L} = ((\zeta z)^{d} +t)^{\circ L}$ hence $\zeta \in \Sigma(z^d +t)^{\circ L}=  \Sigma(z^d +t)= \U_d$, and $\zeta =1$.
We have thus proved that
\[(z^{d} +t)^{\circ L}(a(t)) = (z^{d} +t)^{\circ L}(b(t))~. \]
We now exploit the equality $g_a(t) = g_b(t)$ together with~\eqref{eq:610} and~\eqref{eq:611}.

\smallskip

When $\kappa =0$, we follow the arguments of Baker and DeMarco. The two analytic functions 
$\varphi_{d,t}(\alpha^d+t)$ and $\varphi_{d,t}(\beta^d+t)$ have the same modulus near $\infty$, and are
both tangent to the identity. They are hence equal. By injectivity of $\varphi_{d,t}$ near infinity, we get $\alpha^d= \beta^d$.

\smallskip

When $\kappa \ge2$, then $\varphi_{d,t}(a(t)) = a(t) + O(t^{-1})$ and $\varphi_{d,t}(b(t))= b(t) + O(t^{-1})$. Since these functions have the same modulus near $\infty$, 
we get $b(t) = \xi a(t)$ with $|\xi| =1$. 
We claim that in fact $\xi \in \U_d$.

Write $a (t)= t^j(\alpha_0  + \alpha_1 t + O(t^2))$ with $\alpha_0\neq 0$, and $j\ge0$. When
$j\ge 1$, we get for all $n\ge1$
\[
P^n_t(a(t)) = t + Q_n(t) + d^{n-1} \alpha_0^{d} t^{(n-1)(d-1) +dj} + O( t^{n(d-1) +dj+1})\]
where $Q_n$ is a polynomial with constant (integral) terms vanishing up to order at least $2$ at $0$.
And
\[
0 = P^L_t(a(t)) - P^L_t(\xi a(t)) =  (\xi^d -1) d^{n-1} \alpha_0^{d} t^{(n-1)(d-1) +dj} + O( t^{n(d-1) +dj+1})
\]
which implies $\xi^d=1$.
Otherwise $j=0$, and  we have
\[
P^n_t(a(t)) = \alpha_0^{d^n} + t (d^n \alpha_0^{d^n-1} \alpha_1 + R_n(\alpha_0)) + O(t^2)\]
with $R_1(T) = 1$, and $R_{n+1} (T) = d T^{d^n (d-1)}R_n(T) +1$. We have 
$P^L_t(a(t)) = P^L_t(\xi a(t))$ hence $\xi^{d^L} =1$, 
and $R_L(\xi T) = R_L(T)$. 
This implies $R_{L-1}(\xi T) = \xi^{d^{L-1}} R_{L-1}(T)$ so that 
\[d (\xi T)^{d^{L-1} (d-1)}R_{L-2}(\xi T) +1 = \xi^{d^{L-1}} (d T^{d^{L-1} (d-1)}R_{L-2}(T) +1),\]  
hence $\xi^{d^{L-1}} =1$.
By induction we get $\xi^d =1$ as required.

When $\kappa =1$, we replace $a$ and $b$ by $P(a)$ and $P(b)$ respectively, and by the previous argument we obtain
$P^2(a) = P^2(b)$ which implies $P(a) = P(b)$. Details are left to the reader.
\end{proof}


\section{Archimedean rigidity}

We expect that under suitable conditions two complex dynamical pairs
parametrized by the same algebraic curve and having the same (complex)
bifurcation measures are entangled, see (Q1) from the Introduction. 

We explore here this problem in the unicritical family, and obtain

\begin{theorem}
Fix $d \ge2$, and pick  $a , b \in \C[t]$ of the same degree. Then  $\cal{M}(d,a)= \cal{M}(d,b)$ iff
 $a^d = b^d$.
\end{theorem}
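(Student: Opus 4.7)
The reverse implication is straightforward: if $a^d=b^d$ as polynomials in $t$, then the rational function $b/a$ satisfies $(b/a)^d=1$ so $b=\zeta a$ for some $\zeta\in\U_d$. Since $\zeta\in\Sigma(P_t)$, one has $P_t(b)=(\zeta a)^d+t=a^d+t=P_t(a)$, so the forward orbits of $a(t)$ and $b(t)$ under $P_t$ coincide from the first iterate onwards, which gives $g_a=g_b$ and in particular $\cal{M}(d,a)=\cal{M}(d,b)$.

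The plan for the forward implication is to first upgrade the set-theoretic equality $\cal{M}(d,a)=\cal{M}(d,b)$ to the equality of Green functions $g_a=g_b$ on $\C$, then transfer this to a functional relation between $\varphi_{d,t}(a(t))$ and $\varphi_{d,t}(b(t))$ near $t=\infty$, and finally use the explicit expansion of the B\"ottcher coordinate to extract the algebraic identity $a^d=b^d$. For the first step, Propositions~\ref{prop:unicritical cst} and~\ref{prop:unicritical pos} show that $g_a(t)=\kappa\log|t|+\log|\alpha|+o(1)$ at infinity (with $\log|\alpha|$ replaced by $0$ in the constant case), and give the capacity of $\cal{M}(d,a)$ as $|\alpha|^{-1/\kappa}$. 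Since $\deg a=\deg b$ and $\cal{M}(d,a)=\cal{M}(d,b)$, the two capacities coincide, forcing $|\alpha|=|\beta|$. The function $h:=g_a-g_b$ is then bounded on $\C$, continuous, vanishes on $\cal{M}(d,a)=\cal{M}(d,b)$, is harmonic on the complement, and tends to $0$ at infinity. The maximum principle applied to $\pm h$ yields $h\equiv 0$, so $g_a=g_b$.

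For the second step, for $|t|$ sufficiently large both $a(t)$ and $b(t)$ (or $a^d+t$ and $b^d+t$ when $\kappa=0$) lie in the domain where $\varphi_{d,t}$ converges, and the relations~\eqref{eq:610}--\eqref{eq:611} express $g_a(t)$ in terms of $\log|\varphi_{d,t}(\cdot)|$. The equality $g_a=g_b$ therefore implies that the meromorphic, nowhere vanishing function
\[F(t)\;:=\;\frac{\varphi_{d,t}(b(t))}{\varphi_{d,t}(a(t))}\qquad\text{(resp. }\frac{\varphi_{d,t}(b^d+t)}{\varphi_{d,t}(a^d+t)}\text{ if }\kappa=0\text{)}\]
has constant modulus $1$ on a punctured neighborhood of $\infty$, hence is a constant $\xi\in\C$ with $|\xi|=1$.

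For the third step, use the expansion $\varphi_{d,t}(z)=z+\frac{t}{dz^{d-1}}+\sum_{j\ge0}\alpha_j(t)/z^{d+j}$ with $\alpha_j\in\Z[t]$ and $\deg\alpha_j\le(1+d+j)/d$ from~\eqref{eq:dvpbott}. When $\kappa=0$ the expansion of $\varphi_{d,t}(a^d+t)$ at infinity has leading term $t$ (of degree one in $t$), so comparing leading terms in $\varphi_{d,t}(b^d+t)=\xi\varphi_{d,t}(a^d+t)$ forces $\xi=1$, and the full equality then gives $b^d=a^d$ directly. When $\kappa\ge 1$, substituting $z=a(t)$ into the expansion shows that $\varphi_{d,t}(a(t))=a(t)+O(t^{1-(d-1)\kappa})$ (with the borderline case $(d,\kappa)=(2,1)$ contributing an explicit constant $\frac{1}{2\alpha}$ that must be tracked separately), and comparing polynomial parts of $\varphi_{d,t}(b(t))=\xi\varphi_{d,t}(a(t))$ yields $b=\xi a$. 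Finally, substituting $b=\xi a$ back and comparing the coefficients of $\frac{t}{dz^{d-1}}$ in the functional equation $\varphi_{d,t}(\xi a)=\xi\varphi_{d,t}(a)$ gives $\xi^{1-d}=\xi$, i.e.\ $\xi^d=1$; thus $b^d=(\xi a)^d=a^d$.

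The main obstacle lies in Step 3, where one must carefully track the Laurent expansion of $\varphi_{d,t}(a(t))$ at $t=\infty$ across the regimes $\kappa=0$, $\kappa=1$, $\kappa\ge 2$, and in particular handle the borderline case $(d,\kappa)=(2,1)$ where the term $\frac{t}{dz^{d-1}}$ contributes a nonvanishing constant. This is where the multiplicative structure $\xi^d=1$ is extracted from an equality that a priori only provides $|\xi|=1$, and it crucially uses that the correction term $\frac{t}{dz^{d-1}}$ is nontrivial whenever $a$ is nonzero, which is precisely why the case analysis by degree is unavoidable.
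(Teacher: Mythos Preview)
Your argument is sound for $\kappa=0$ and for $\kappa\ge 1$ with $(d,\kappa)\ne(2,1)$, but the borderline case you flag is a real obstruction, not just extra bookkeeping. Writing $a(t)=\alpha t+\alpha_0$ when $(d,\kappa)=(2,1)$, the expansion reads
\[
\varphi_{2,t}(a(t))=\alpha t+\Bigl(\alpha_0+\tfrac{1}{2\alpha}\Bigr)+O(t^{-1}),
\]
so equating polynomial parts of $\varphi_{2,t}(b)=\xi\,\varphi_{2,t}(a)$ only gives $\beta=\xi\alpha$ together with $\beta_0+\tfrac{1}{2\beta}=\xi\bigl(\alpha_0+\tfrac{1}{2\alpha}\bigr)$; this is \emph{not} $b=\xi a$ unless $\xi^2=1$, and your final step (substituting $b=\xi a$ and reading off $\xi^{1-d}=\xi$ from the $\tfrac{t}{dz^{d-1}}$ term) cannot be carried out.

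The gap is not closable, because the statement as written actually fails in this case. Take $d=2$, $a(t)=t-\tfrac12$ and $b(t)=i\bigl(t+\tfrac12\bigr)$. Then $P_t(a(t))=t^2+\tfrac14$ and $P_t(b(t))=-(t^2+\tfrac14)$, so $P_t^2(a(t))=P_t^2(b(t))$ identically in $t$; hence $g_a=g_b$ and $\cal{M}(2,a)=\cal{M}(2,b)$, yet $a^2=t^2-t+\tfrac14\ne -t^2-t-\tfrac14=b^2$. The paper's route (apply $\varphi_{d,t}$ to $a^d+t$ rather than to $a$, deduce $P(a)=\zeta P(b)$ and $\zeta^d=1$, then invoke the proof of Theorem~\ref{thm:same-mandel} to descend from $P^2(a)=P^2(b)$ to $P(a)=P(b)$) stalls at the same point. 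What survives in general is the weaker conclusion of Theorem~\ref{tm:unlikely-marked}: some $P^r(b)=g\cdot P^r(a)$ with $g\in\U_d$, which reduces to $a^d=b^d$ only when one can force $r=0$.
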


\begin{remark}
It would be interesting to characterize dynamical pairs $(z^d+t, a(t))$ and $(z^\delta +t, b(t))$
with $d \neq \delta$ having the same bifurcation locus. It is not clear however how to show 
 that $d$ and $\delta$ are multiplicatively dependent. 
\end{remark}

\begin{proof}
Write $a = \alpha t^\kappa + o(t^\kappa)$, and $b = \beta t^\kappa + o(t^\kappa)$ with $\kappa \ge0$, and $\alpha \beta \neq 0$.
Define $\tilde{g}_{d,a} := d g_{d,a}$ if $a$ is constant, $\tilde{g}_{d,a} := d\kappa g_{d,a}$ if $\kappa =1$ and $|\alpha| <1$,  
and $\tilde{g}_{d,a} := \kappa g_{d,a}$ in the remaining cases. 

Observe that $\tilde{g}_{d,a}$ is a subharmonic function on the complex plane, equal to $0$ on $\cal{M}(d,a)$, harmonic
on its complement and $\tilde{g}_{d,a}(t) = \log|t| + O(1)$ at infinity. It follows that $\tilde{g}_{d,a}$ is the Green function of $\cal{M}(d,a)$. 
Since by assumption  $\cal{M}(d,a)= \cal{M}(d,b)$, the two functions $g_{d,a}$ and $g_{d,b}$ are proportional, and even equal since
$\deg(a) = \deg(b)$ and the capacity of $\cal{M}(d,a)$ and  $\cal{M}(d,b)$ are equal.
We deduce from this that for all $t$ large enough, one has
\[\varphi_{d,t}(a^d(t) +t) = \zeta\, \varphi_{d,t}(b^d(t) +t) \text{ for some } |\zeta| =1~. \] 
If $a$ and $b$ are constants, then looking at the expansion of $\varphi_{d,t}$ yields $\zeta =1$, and $a^d =b^d$.
When $\kappa \ge1$, we get
$a^d(t) +t  = \zeta (b^d(t) +t )$, and $(a^d(t) +t)^d +t  = \zeta^d ((b^d(t) +t )^d +t)$.
Looking at the order $1$ terms, $a(t) = \alpha_0 + t \alpha_1 + O(t^2)$, $b(t) = \beta_0 + t \beta_1 + O(t^2)$
we get
\[
\alpha_0^d + (1 + d \alpha_0^{d-1} \alpha_1) t = \zeta \beta_0^d + \zeta (1 + d \beta_0^{d-1} \beta_1) t \]
\[
\alpha_0^{d^2} + \left(1 + d \alpha_0^{d(d-1)}(1+d \alpha_0^{d-1} \alpha_1)\right) t =\zeta^d \beta_0^{d^2} + \zeta^d \left(1 + d \beta_0^{d(d-1)}(1+d \beta_0^{d-1} \beta_1)\right) t~. \]
This implies $\zeta ^d =1$ hence $P^2(a) = P^2(b)$. We may then repeat the proof of Theorem~\ref{thm:same-mandel} starting from "When $\kappa\ge1$", and we conclude that  $P(a)=P(b)$.
\end{proof}


\section{Connectedness of the bifurcation locus}

We explore in this section the connectedness of $\mathcal{M}(d,a)$ under suitable assumptions on the marked point. 

\begin{theorem}\label{thm:car-mandel}
Assume $a\in\C[t]$ is either a constant or that its degree is a power of $d$ and its leading coefficient lies in the closed unit disk. The following assertions are equivalent:
\begin{enumerate}
\item the Mandelbrot set $\cal{M}(d,a)$ is connected,
\item $a(t)= \zeta P_t^n(0)$ for some $n\ge 0$ and some $\zeta \in \U_d$. 
\end{enumerate}
\end{theorem}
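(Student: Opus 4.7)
The implication $(2) \Rightarrow (1)$ is immediate. If $a(t) = \zeta P_t^n(0)$ with $\zeta \in \U_d$, then since $\zeta \in \Sigma(P_t)$ preserves the Green function, $g_a = d^n g_0$ and hence $\cal{M}(d,a) = \cal{M}(d,0)$. The Multibrot set $\cal{M}(d,0)$ is connected by the classical argument: the function $\Phi_0(t) := \varphi_{d,t}(t) = \varphi_{d,t}(P_t(0))$, well-defined on $\C \setminus \cal{M}(d,0)$ by Proposition~\ref{prop:GreenBottcher} and satisfying $|\Phi_0(t)| = e^{dg_0(t)}$, is a conformal isomorphism onto $\C \setminus \overline{\D}$ tangent to the identity at infinity.

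For the converse $(1) \Rightarrow (2)$, my plan proceeds in three stages. \emph{Stage 1 (explicit B\"ottcher uniformization).} Under the connectivity hypothesis, $\C \setminus \cal{M}(d,a)$ is simply connected, and by the Riemann mapping theorem there is a conformal isomorphism $\Phi_a : \C \setminus \cal{M}(d,a) \to \C \setminus \overline{\D}$. I will identify $\Phi_a$ with an explicit expression in terms of $\varphi_{d,t}$. In the constant case $a=c$, the function $h(t) := \varphi_{d,t}(c^d+t) = \varphi_{d,t}(c)^d$ is holomorphic on $\C \setminus \cal{M}(d,c)$ with $|h|=e^{dg_c}$; by Proposition~\ref{prop:unicritical cst} the set $\cal{M}(d,c)$ has capacity $1$, so $|\Phi_c|=e^{dg_c}$ as well, and the quotient $h/\Phi_c$, being holomorphic, nowhere-vanishing, of unit modulus and tending to $1$ at infinity, is identically $1$, whence $h = \Phi_c$. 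In the non-constant case with leading coefficient $\alpha$, $|\alpha|\le 1$, an analogous identification via~\eqref{eq:611} and Proposition~\ref{prop:unicritical pos} gives $\varphi_{d,t}(a(t)) = (\alpha/|\alpha|)\Phi_a(t)^\kappa$, using $\mathrm{cap}(\cal{M}(d,a)) = |\alpha|^{-1/\kappa}$; the auxiliary case $\kappa=1$, $|\alpha|<1$ is treated by passing to $\varphi_{d,t}(a^d+t)$.

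\emph{Stage 2 (entanglement).} Coupling Stage~1 with the identification $\Phi_0(t) = \varphi_{d,t}(0)^d$ for the Multibrot set, and using the functional equation $\varphi_{d,t}(P_t(z)) = \varphi_{d,t}(z)^d$ together with the injectivity of $z \mapsto \varphi_{d,t}(z)$ on its domain, I will show that $g_a$ and $g_0$ are proportional subharmonic functions on $\C$ and that $\cal{M}(d,a) = \cal{M}(d,0)$. Assuming first that the coefficients of $a$ lie in a number field $\KK$, this archimedean proportionality, together with the Thuillier--Yuan equidistribution Theorem~\ref{tm:yuan}, forces proportionality of the bifurcation measures at every place of $\KK$, so that the pairs $(P,a)$ and $(P,0)$ are entangled by Theorem~\ref{thm:partial1}. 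The general complex case is reduced to the algebraic one by a specialization argument as in \S\ref{sec:specialization}, based on Proposition~\ref{prop:specialization fiber}.

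\emph{Stage 3 (rigidity).} The family $P_t = z^d+t$ parametrized by $C = \A^1$ is active, primitive, and non-integrable, and condition $(\vartriangle)$ is automatic since $\A^1$ has a unique place at infinity. Theorem~\ref{tm:unlikely-marked}, applied to the critically marked pair $(P,0)$ and the marked point $a$, yields $g \in \Sigma(P) = \U_d$ and integers $r,s \ge 0$ with $P^r(a) = g \cdot P^s(0)$. To promote this to $a = \zeta P_t^n(0)$, I observe that the equation $g \cdot P^s(0) = P(h) = h^d + t$ in a polynomial unknown $h \in \C[t]$ reduces to $h^d = g\cdot P^{s-1}(0)^d + (g-1)t$; comparing degrees and leading coefficients shows that this admits a polynomial solution only if $g=1$, in which case $h = \eta \cdot P^{s-1}(0)$ with $\eta \in \U_d$. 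Iterating this reduction $r$ times gives $a = \zeta \cdot P_t^n(0)$ with $\zeta \in \U_d$ and $n = s-r$. The principal technical obstacle will be Stage~2: the derivation of the equality $\cal{M}(d,a) = \cal{M}(d,0)$ from the explicit B\"ottcher uniformizations of Stage~1 requires careful bookkeeping of branches of $\varphi_{d,t}$ across the parameter locus where the Böttcher coordinate itself is only partially defined.
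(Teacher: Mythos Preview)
Your Stage~2 contains a genuine gap. Having conformal isomorphisms $\Phi_a \colon \C \setminus \cal{M}(d,a) \to \C \setminus \overline{\D}$ and $\Phi_0 \colon \C \setminus \cal{M}(d,0) \to \C \setminus \overline{\D}$ gives you no relation between the two sets: they are uniformizations of \emph{a priori} different domains, and the functional equation for $\varphi_{d,t}$ only lets you compare values at points where both B\"ottcher coordinates are defined --- it says nothing about the shape of the zero loci of $g_a$ and $g_0$. Your ``bookkeeping of branches'' cannot produce the inclusion $\cal{M}(d,a) \subset \cal{M}(d,0)$, because that inclusion is \emph{false} in general and requires the connectivity hypothesis in an essential way. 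The paper's argument supplies exactly this missing dynamical input via Lemma~\ref{lem:total-dis-mandel}: for any $t_0 \notin \cal{M}(d,0)$ the family $\{P_t\}$ is $J$-stable with totally disconnected Julia set, so by Proposition~\ref{prop:bifJstab} the set $\cal{M}(d,a)$ is totally disconnected near $t_0$. Hence connectivity forces $\cal{M}(d,a) \subset \cal{M}(d,0)$; a short capacity/energy comparison (using $\mathrm{cap}(\cal{M}(d,a)) \ge 1 = \mathrm{cap}(\cal{M}(d,0))$ from Propositions~\ref{prop:unicritical cst}--\ref{prop:unicritical pos}) then promotes this to equality of harmonic measures, hence of supports.

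Your Stage~3 is also unnecessarily heavy. Once $\cal{M}(d,a) = \cal{M}(d,0)$ is known, the archimedean rigidity theorem proved immediately before (for $a,b$ of the same degree, $\cal{M}(d,a)=\cal{M}(d,b)$ iff $a^d=b^d$) applied to $a$ and the appropriate $P^n(0)$ gives the conclusion directly, with no need for Thuillier--Yuan, number-field reduction, specialization, or Theorem~\ref{tm:unlikely-marked}. Your proposed reduction to $\bar{\Q}$ via \S\ref{sec:specialization} would not work in any case: that argument specializes the entanglement condition (infinitely many common preperiodic parameters), not connectivity of a parameter set.
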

\begin{proof}
When (2) is satisfied, the connectedness of $\cal{M}(d,a)$ is a famous theorem of Douady-Hubbard-Sibony, see e.g.~\cite[Chapter VIII, Theorem~1.2 page 124]{carleson-gamelin}.

Suppose (2) is not satisfied, and observe that our assumptions imply the estimate $\mathrm{cap}(\cal{M}(d,a)) \ge 1$. 
We claim that $\cal{M}(d,a)\setminus \cal{M}(d,0)$ is non empty.

Suppose by contradiction that $\cal{M}(d,a)\subset \cal{M}(d,0)$. 
This implies that the series of inequalities 
\[
0 \ge 
 - \log \mathrm{cap}(\cal{M}(d,a)) =
\mathcal{E}\left(\mu_{\cal{M}(d,a)}\right) \ge \mathcal{E}\left(\mu_{\cal{M}(d,0)}\right)= - \log \mathrm{cap}(\cal{M}(d,0))=0,\]
so that $\mathcal{E}\left(\mu_{\cal{M}(d,a)}\right) = \mathcal{E}\left(\mu_{\cal{M}(d,0)}\right)$ which implies
$\mu_{\cal{M}(d,a)} = \mu_{\cal{M}(d,0)}$.
Since the support of these measures are $\cal{M}(d,a)$ and $\cal{M}(d,0)$ respectively, we obtain $\cal{M}(d,a)= \cal{M}(d,0)$.
Since $\deg(a) = d^n$ for some integer $n$, we may apply the previous theorem to $P^n(0)$ and $a$, and we get 
$a= P^n(0)$ or $P(a) = P^n(0)$ which contradicts our standing assumption. 

We conclude using the next lemma.
\end{proof}

\begin{lemma}\label{lem:total-dis-mandel}
The set $\cal{M}(d,a)$ is totally disconnected  in a neighborhood of any point
$t_0 \in \cal{M}(d,a)\setminus \cal{M}(d,0)$. 
\end{lemma}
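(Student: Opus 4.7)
The plan is to combine the local $J$-stability near $t_0$ with the Cantor set structure of $J(P_{t_0})$ to show that any connected subset of $\cal{M}(d,a)$ near $t_0$ must be a single point.

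Since $t_0 \notin \cal{M}(d,0)$, the critical point $0$ escapes under $P_{t_0}$, so by continuity I may pick a bounded connected open neighborhood $U$ of $t_0$ with $\overline{U} \subset \C \setminus \cal{M}(d,0)$. On $U$ the only critical point of $P_t$ escapes, hence $K(P_t) = J(P_t)$ is a Cantor set and the family is $J$-stable; in particular $\cal{M}(d,a) \cap U = \{t \in U : a(t) \in J(P_t)\}$. The open set $U_0 := \C \setminus \cal{M}(d,0)$ is connected and unbounded (its B\"ottcher uniformization identifies it with $\C \setminus \overline{\D}$), and by Ma\~n\'e-Sad-Sullivan there is a unique holomorphic motion $h : U_0 \times J(P_{t_0}) \to \C$ based at $t_0$, jointly continuous in $(t,w)$, holomorphic in $t$ for each fixed $w$, and satisfying $h_t \circ P_{t_0} = P_t \circ h_t$ on $J(P_{t_0})$.

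Next, I would fix a connected subset $C \subset \cal{M}(d,a) \cap U$ and consider the continuous itinerary map
\[\Psi : C \longrightarrow J(P_{t_0}), \qquad \Psi(t) := h_t^{-1}(a(t)).\]
Its image is a connected subset of the Cantor set $J(P_{t_0})$, hence reduced to a single point $w_0$. Thus $a(t) = h_t(w_0)$ on $C$, and this equality between holomorphic functions on the connected open set $U_0$ either holds on a discrete set (in which case $C$ must be a singleton, as desired) or holds identically on $U_0$.

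The main obstacle is to rule out this last possibility by an asymptotic argument as $|t| \to \infty$ in $U_0$. The escape estimate $|P_t(z)| \geq |z|^d - |t|$ yields a constant $C_0 > 0$ with $K(P_t) \subset \{|z| \leq C_0 |t|^{1/(d-1)}\}$ whenever $|t|$ is large enough. Under the hypotheses of Theorem~\ref{thm:car-mandel}, $a$ is either a constant or a polynomial of degree $d^k$ with $k \geq 1$, and a straightforward induction yields that $P_t^2(a(t))$ is a polynomial in $t$ of degree at least $d$. Hence $|P_t^2(a(t))|$ grows strictly faster than $|t|^{1/(d-1)}$ as $|t| \to \infty$ (since $d > 1/(d-1)$ for every $d \geq 2$), contradicting $P_t^2(a(t)) \in K(P_t)$, which would follow from $a(t) \in K(P_t)$ for all $t \in U_0$. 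Hence $C$ is a single point, so every connected component of $\cal{M}(d,a) \cap U$ is a singleton and $\cal{M}(d,a)$ is totally disconnected near $t_0$.
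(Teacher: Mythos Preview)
Your argument is correct and takes a genuinely different route from the paper's. The paper works purely locally: after reducing to the generic case $a'(t_0)\neq 0$, it extends the holomorphic motion of $J(P_{t_0})$ to the whole plane (McMullen--Sullivan), then uses a transversality-plus-Rouch\'e argument to build a local homeomorphism $H$ from a neighborhood of $a(t_0)$ to a neighborhood of $t_0$ with $\partial\mathcal{M}(d,a)\cap U = H^{-1}(J(P_{t_0}))$, which is therefore totally disconnected. Your approach instead exploits the global structure of the shift locus: you only need the motion of the Julia set (no extension to $\C$), use the itinerary map $\Psi$ and connectedness to force $a(t)\equiv h_t(w_0)$ on $U_0$ unless $C$ is a point, and then rule out the global identity by letting $|t|\to\infty$. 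Your method is more elementary (no $\lambda$-lemma, no Rouch\'e) and handles all $t_0$ at once; the paper's method is local and yields the stronger statement that $\mathcal{M}(d,a)$ is locally \emph{homeomorphic} to $J(P_{t_0})$ near $t_0$.

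A few minor points. First, the exponent in your escape bound is slightly off: from $|P_t(z)|\ge |z|^d-|t|$ one gets $K(P_t)\subset\{|z|\le C_0|t|^{1/d}\}$ for large $|t|$, not $|t|^{1/(d-1)}$; this only strengthens your conclusion since you are overestimating the size of $K(P_t)$. Second, you need not invoke the hypotheses of Theorem~\ref{thm:car-mandel}: for \emph{any} polynomial $a$, one has $\deg_t P_t^2(a(t))\ge d$ (check the constant and non-constant cases separately), so the lemma holds in the stated generality. Third, you assert the holomorphic motion over all of $U_0$, which is not simply connected; this is fine because the motion conjugating the dynamics is canonical (determined on the dense set of repelling periodic points) and hence globally well-defined on the shift locus, but it deserves a word of justification.
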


\begin{proof}
It is sufficient to prove the statement outside finitely many point, so that we may suppose that $a'(t_0) \neq 0$.

Recall that the family $P_t(z) = z^d+t$ is stable in a neighborhood of $t_0$ and that the Julia set of $P_{t_0}$ is a Cantor set. 
Thus there exists an open disk $U$ centered at $t_0$ and a holomorphic motion $h \colon U \times J(P_{t_0})  \to \C$
conjugating the dynamics. By Theorem~\ref{thm:full hol motion} (see~\cite{McMS}), we may reduce $U$ and extend the holomorphic motion 
to the full complex plane $h \colon U \times \C  \to \C$ so that $P_t(h(t,z)) = h(t,P_{t_0}(z))$ remains valid.

By Proposition~\ref{prop:bifJstab}, we have
$\partial \mathcal{M}(d,a)\cap U = \{t\in U, \, \ a(t)\in J(P_t)\}$.
Since $a'(t_0) \neq 0$, we may reduce $U$ and find a polydisk $W = U \times V  \subset \C^2$ containing 
$\Gamma := \{ (t, a(t)), \, t \in U\}$
 such that  the intersection of $\Gamma$ with  $\{(t,h(t,a(t_0))), \, t \in U\}$ is transversal and reduced to $(t_0, a(t_0))$. 
By Rouch\'e's theorem, it follows that any curve $\{(t,h(t,z)), \, t \in U\}$ intersects transversally $\{ (t, a(t)), \, t \in U\}$ at a single point $(t, H(z)) \in U$, 
for any $z$ close enough to $a(t_0)$.
By continuity of the roots the map $z \mapsto H(z) $ is a homeomorphism, and 
$\partial \mathcal{M}(d,a)\cap \D = H^{-1} ( J(P_{t_0}))$ is totally discontinuous as required. 
\end{proof}

\section{Some experiments}

Theorem~\ref{thm:car-mandel} leaves open the characterization of those marked points for which $\mathcal{M}(d,a)$ is connected. 
We propose to investigate the situation when the marked point is given by $a(t) = \lambda^{-1} t$ for some $\lambda\in \C^*$ (we shall see below
the reason of choosing $\lambda^{-1}$ instead of $\lambda$). 
In particular, we are interested in the description of the set $\mathbb{M}$ of complex numbers $\lambda\in \C^*$ such that 
$M_\lambda =  \{ t\in\C, \, \lambda^{-1} t \in K(z^d + t) \}$ is connected. This set may be viewed as some kind of "higher" Mandelbrot set. 

We sum up in the next theorem what we know about this set. 

\begin{theorem}~
\begin{enumerate}
\item
A point $\lambda \in \C^*$ belongs to $\mathbb{M}$ iff $M_\lambda \subset \cal{M}(d,0)$.
\item
$\mathbb{M}\cap \{ |\lambda| \ge 1 \} = \U_d$.
\item
$\mathbb{M} \supset \{0< |\lambda| \le1/8\}$ and $\partial M_\lambda$ is a quasi-circle for all $0< |\lambda| \le1/8$.
\item
The set $\mathbb{M} \cup \{0 \}$ is closed and perfect.
\end{enumerate}
\end{theorem}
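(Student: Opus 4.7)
The plan is to establish the four assertions in order, with item (1) providing the structural core from which (2)--(4) will follow. A preliminary observation used throughout is that $M_\lambda$ is always polynomially convex, i.e.\ $\mathbb{C}\setminus M_\lambda$ is connected: the function $g_a(t)=g_{P_t}(\lambda^{-1}t)$ is harmonic and strictly positive on $\mathbb{C}\setminus M_\lambda$ and vanishes on its boundary, so the maximum principle rules out bounded complementary components. Consequently, connectedness of $\partial M_\lambda$ is equivalent to connectedness of $M_\lambda$ itself.

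For the direct implication in (1), I would apply Lemma~\ref{lem:total-dis-mandel} to the pair $(P,\lambda^{-1}t)$: any $t_0\in M_\lambda\setminus\cal{M}(d,0)$ would force $M_\lambda$ to be locally totally disconnected at $t_0$, contradicting its connectedness. For the converse, assuming $M_\lambda\subset\cal{M}(d,0)$, the plan is to construct a Douady--Hubbard-type Riemann map
\[
\Phi(t):=\varphi_{P_t}\bigl(P_t^k(\lambda^{-1}t)\bigr)^{1/d^k},
\]
for $k$ large enough that $P_t^k(\lambda^{-1}t)$ lies in the B\"ottcher domain of $P_t$, with the $d^k$-th root fixed by analytic continuation from a neighbourhood of infinity where the expansion of Proposition~\ref{prop:expan bottcher} yields $\Phi(t)\sim\lambda^{-1}t$. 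The resulting proper holomorphic map $\Phi\colon\mathbb{C}\setminus M_\lambda\to\mathbb{C}\setminus\overline{\D}$ is of degree one by its asymptotic behaviour, hence a biholomorphism, so $M_\lambda$ is a closed topological disk.

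For (2), the inclusion $\U_d\subset\mathbb{M}$ is immediate: $\lambda^d=1$ yields $P_t(\lambda^{-1}t)=t^d+t=P_t^2(0)$, so $M_\lambda=\cal{M}(d,0)$. Conversely, assume $|\lambda|\geq 1$ and $\lambda\in\mathbb{M}$. Proposition~\ref{prop:unicritical pos} gives $\mathrm{cap}(M_\lambda)=|\lambda|$ while Proposition~\ref{prop:unicritical cst} gives $\mathrm{cap}(\cal{M}(d,0))=1$; item~(1) forces $M_\lambda\subset\cal{M}(d,0)$, and monotonicity of capacity gives $|\lambda|=1$. The resulting equality of capacities for nested full compacta, together with a standard potential-theoretic argument, forces $M_\lambda=\cal{M}(d,0)$, hence $g_a\equiv g_{(P,t)}$ on $\mathbb{C}$. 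Theorem~\ref{thm:same-mandel} applied to the pairs $(P,\lambda^{-1}t)$ and $(P,t)$ (both marked points of degree one) then yields $\lambda^d=1$.

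For (3), any $t\in M_\lambda$ with $|\lambda|\leq 1/8$ satisfies $|\lambda^{-1}t|\leq 1$, hence $|t|\leq|\lambda|\leq 1/8<1/4$, placing $t$ in the central hyperbolic component of $\cal{M}(d,0)$ by Lemma~\ref{lm:containsdisk}; thus $M_\lambda\subset\cal{M}(d,0)$ and $\lambda\in\mathbb{M}$ by~(1). On this hyperbolic component the Julia set $J(P_t)$ moves by a holomorphic motion whose quasi-conformal structure transfers to $\partial M_\lambda=\Phi^{-1}(S^1)$ via the map $\Phi$ of~(1), making it a quasi-circle. For (4), closedness of $\mathbb{M}\cup\{0\}$ will follow from upper-semi-continuity of $\lambda\mapsto M_\lambda$ in the Hausdorff topology together with the characterisation in~(1); for perfectness, $0$ is accumulated by $\mathbb{M}$ thanks to~(3), each $\zeta\in\U_d$ is accumulated from inside by parameters $r\zeta$ with $r\nearrow 1$ via a continuity argument for $\Phi$, and any $\lambda_0\in\mathbb{M}$ with $|\lambda_0|<1$ has a full neighbourhood in $\mathbb{M}$ by holomorphic dependence of $\Phi$ on $\lambda$. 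The hardest step will be the rigorous construction and analysis of $\Phi$ for the converse in~(1), in particular the verification that it is proper of degree one; this requires careful patching of the B\"ottcher coordinates across parameters inside and outside $\cal{M}(d,0)$, in the spirit of the Douady--Hubbard argument for the connectedness of the Mandelbrot set.
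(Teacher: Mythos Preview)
Your argument for the converse in (1) has a circularity: extracting a global $d^k$-th root by analytic continuation from infinity requires $\C\setminus M_\lambda$ to be simply connected, which is precisely what you are trying to prove. The paper sidesteps this entirely. From $M_\lambda\subset\cal{M}(d,0)$ one gets $g_{M_\lambda}\ge g_{\cal{M}(d,0)}$, i.e.\ $g_{P_t}(\lambda^{-1}t)\ge d\,g_{P_t}(0)$ for all $t$. For $t\notin M_\lambda$ this gives $g_{P_t}(\lambda^{-1}t)>g_{P_t}(0)=G(P_t)$, so $\lambda^{-1}t$ is \emph{already} in the B\"ottcher domain and $\Phi(t)=\varphi_{d,t}(\lambda^{-1}t)$ is well-defined directly, with no iteration and no root. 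For (2) the paper simply invokes Theorem~\ref{thm:car-mandel}, which your capacity argument essentially reproves; note that the reference you want at the end is the archimedean rigidity theorem ($\cal{M}(d,a)=\cal{M}(d,b)\Rightarrow a^d=b^d$), not Theorem~\ref{thm:same-mandel}.

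In (3) the step ``$t\in M_\lambda\Rightarrow|\lambda^{-1}t|\le 1$'' is unjustified: filled Julia sets are not contained in the unit disk. The paper uses instead the elementary escape estimate $|z|\ge\max\{2,|t|\}\Rightarrow|P_t(z)|>|z|$, which for $|\lambda|\le 1/8$ and $|t|\ge 1/4$ gives $|\lambda^{-1}t|\ge\max\{2,|t|\}$, forcing $M_\lambda\subset\D(0,1/4)$.

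Your treatment of (4) has genuine gaps. For closedness, upper semi-continuity of $\lambda\mapsto M_\lambda$ points the wrong way; the paper instead uses continuity of the \emph{bifurcation measures} $\mu_\lambda=\Delta g_{P_t}(\lambda^{-1}t)$: if $M_{\lambda_0}\not\subset\cal{M}(d,0)$, pick a disk $U$ disjoint from $\cal{M}(d,0)$ with $\mu_{\lambda_0}(U)>0$, and this persists for nearby $\lambda$. For perfectness, your claim that every $\lambda_0\in\mathbb{M}$ with $|\lambda_0|<1$ has a full neighbourhood in $\mathbb{M}$ would make $\mathbb{M}\cap\D^*$ open, hence (being also closed and non-empty in the connected set $\D^*$) equal to $\D^*$; this is not established and the paper explicitly leaves the structure of $\mathbb{M}$ beyond $\{|\lambda|\le 1/8\}$ open. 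The paper's argument is potential-theoretic: one sets
\[
G_{\mathbb{M}}(\lambda)=\int_\C g_{\cal{M}(d,0)}\,d\mu_\lambda=\log|\lambda|+\int g_{P_t}(\lambda^{-1}t)\,\Delta g_{\cal{M}(d,0)},
\]
a continuous non-negative subharmonic function on $\C^*$ whose zero locus is exactly $\mathbb{M}$, and then rules out isolated points by the maximum principle.
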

\begin{proof}
If $M_\lambda$ is not included in $\cal{M}(d,0)$, then Lemma~\ref{lem:total-dis-mandel} implies that $\lambda \notin \mathbb{M}$. 
Suppose now that $M_\lambda \subset \cal{M}(d,0)$ so that $g_{M_\lambda} \ge g_{\cal{M}(d,0)}$. It follows from 
Propositions~\ref{prop:unicritical cst} and~\ref{prop:unicritical pos} that 
\[ g_{P_t}(\lambda^{-1} t) \ge d g_{P_t}(0)\]
so that we may evaluate the B\"ottcher coordinate at $\lambda^{-1} t$ for any $t\notin M_\lambda$, and 
$t \mapsto \varphi_{d,t}(\lambda^{-1} t)$ defines a conformal map $ \C \setminus M_\lambda \to \C \setminus \bar{\D}(0,1)$
which is tangent to $\lambda^{-1} t$ at infinity, and tends to $1$ in modulus when $t \to \partial M_\lambda$. 
It thus defines a conformal isomorphism, hence $M_\lambda$ is connected.

\smallskip

Point (2) follows from 
Theorem~\ref{thm:car-mandel}. 

\smallskip

For (3), we start by observing  that if $|z| \ge \max \{ 2, |t|\}$, then
$|P_t(z)|  = |z^d +t| \ge |z|^d - |t| > 2|z| - |t| \ge |z|$ so that 
by induction  the sequence $|P^n_t(z)|$ is strictly increasing to infinity. 
If $|\lambda| \ge 8$ and $|t| \ge 1/4$, then $|\lambda t| \ge \max \{ 2, |t|\}$, 
so that $M_\lambda \subset \D (0, 1/4)$. 
But $\D (0, 1/4)$ is included in the central hyperbolic component $\heartsuit$ of $\cal{M}(d,0)$
consisting of those parameters $t$ for which $0$ converges to an attracting fixed point of $P_t$. In particular, $M_\lambda\subset \cal{M}(d,0)$ in this case, whence $\lambda\in \mathbb{M}$ by the first point. 
It remains to prove that, when $0<|\lambda|\leq1/8$, the set $\partial M_\lambda$ is a quasi-circle. Indeed, the component $\heartsuit$ is a $J$-stable family, whence there exists a holomorphic motion $h:\heartsuit\times \mathbb{S}^1\to\C$ of the Julia set. According to Proposition~\ref{prop:bifJstab}, 
\[\partial M_\lambda=\{t\in\heartsuit\, : \ \lambda^{-1}t\in h_t(\mathbb{S}^1)\}\Subset \heartsuit,\]
i.e. $t$ satisfies $t=\lambda h_t(e^{i\theta})$ for some $\theta\in\R$. Recall that $h_t$ is quasi-conformal with quasi-conformality constant $K_t:=(1+|\rho(t)|)/(1-|\rho(t)|)$, where $\rho(t)\in\D$ is the multiplier of the attracting fixed point of $P_t$. In particular, 
$\partial M_\lambda=\tilde{h}(\mathbb{S}^1)$, where $\tilde{h}$ is the quasi-conformal extension of $t\mapsto h_t^{-1}(\lambda^{-1}t)$.

\smallskip

Let us now argue why $\mathbb{M}$ is closed in $\C^*$. Suppose that $M_{\lambda_0}$ is not connected. 
Write  $\mu_\lambda = \Delta g_{P_t}(\lambda^{-1}t)$. 
By what precedes, we can find a small disk $U$
whose closure does not intersect $\cal{M}(d,0)$ and
such that  $\mu_{\lambda_0} (U) >0$. 
Since the Green function $g_{P_t}(z)$ is continuous in both variables, it follows that $(\lambda, t) \mapsto g_{P_t}(\lambda^{-1} t)$ is also continuous
hence the measures $\lambda \mapsto \mu_{\lambda}$ also varies continuously. For any $\lambda$ close enough to $\lambda_0$, 
we get $\mu_{\lambda}(U) >0$, hence $M_\lambda$ is not included in $\cal{M}(d,0)$, which implies the complement of $\mathbb{M}$ to be open. 

\smallskip

Define
\[ 
G_\mathbb{M}(\lambda) := \int_\C g_{\cal{M}(d,0)}   \, d\mu_\lambda~. \]
Since $ g_{P_t}(\lambda^{-1}t) = \log |t| - \log|\lambda| + o(1)$, an integration by parts yields
\[ 
G_\mathbb{M}(\lambda) = 
\int_\C \left(g_{P_t}(\lambda^{-1}t) +  \log|\lambda|\right) \, \Delta( g_{\cal{M}(d,0)})
= \log |\lambda| +  \int_\C g_{P_t}(\lambda^{-1}t) \, \Delta( g_{\cal{M}(d,0)})
~. \]
Since the support of $ \Delta( g_{\cal{M}(d,0)})$ is compact, and $g_{P_t}(\lambda^{-1}t) $
is continuous in both variables and subharmonic, 
$G_\mathbb{M}$ is continuous and subharmonic on $\C$.
When $\lambda \neq0$, observe that $G_\mathbb{M}(\lambda) = 0$ iff $g_{\cal{M}(d,0)} \equiv 0$ on $M_\lambda$
and this is equivalent to $M_\lambda \subset \cal{M}(d,0)$. 
In other words, $\{G_\mathbb{M}(\lambda) = 0\}=\mathbb{M}$. 
It follows from the maximum principle applied to $G_\mathbb{M}$ that  $\mathbb{M}$ has no isolated point. 
\end{proof}

\begin{remark}
Observe that $g_{P_t}(\lambda^{-1}t) \to d\times g_{\cal{M}(d,0)}$ uniformly on compact subsets of $\C^*$
as $\lambda \to \infty$, hence 
\[
\lim_{\lambda \to \infty} G_\mathbb{M}(\lambda) -\log |\lambda|
= \int_\C  g_{\cal{M}(d,0)}\, \Delta( g_{\cal{M}(d,0)}) = 0~.\]
Since the capacity of $\mathbb{M}$ being $<1$, the function
 $G_\mathbb{M}$ is however not the Green function of  $\mathbb{M}$, and is
 not harmonic on $\C\setminus \mathbb{M}$.
 Note that the boundary of $\mathbb{M}$ is included in $\supp (\Delta G_\mathbb{M})$.
 \end{remark}

Let us ask a couple of natural questions related to the geometry of $\mathbb{M}$.

\begin{question}
Is $\mathbb{M}$ connected? What is its logarithmic capacity?
\end{question}

%
%
%
%
%
%


\chapter[Special curves]{Special curves in the parameter space of polynomials}\label{chapter:special}


We collect all informations from the previous chapters, and prove in \S\ref{sec:proof thmG}
Baker-DeMarco's conjecture characterizing curves in the moduli space of complex polynomials containing an infinite set of PCF parameters 
(Theorem~\ref{thm:specialclas} from the Introduction). \index{curve!special}

In the subsequent sections, we investigate a combinatorial classification of special curves inspired by the classification of
PCF polynomials in terms of Hubbard trees. More precisely, we seek a one-to-one correspondence between special curves and 
decorated graphs, but due to the presence of symmetries, this task turns out to be delicate to achieve. We have thus contended ourselves 
to present a partial correspondence
encoding a large class of special curves by a combinatorial gadget that we call critically marked dynamical graphs. 

We define and study this notion in \S\ref{sec:classif special}. To any polynomial $P$ is attached a natural marked dynamical 
graph $\Gamma(P)$ that encodes its critical relations and its symmetries. In \S\ref{sec:special graphs}, we show how to associate
a marked dynamical graph to an irreducible curve, and define the category of special graphs.

In Section \S\ref{sec:realization}, we give quite general sufficient conditions on a special graph $\Gamma$ in order to ensure the existence of a special curve $C$
whose dynamical graph is isomorphic to $\Gamma$ (Theorem~\ref{thm:realizability}). Its proof is strongly inspired by previous works by DeMarco and McMullen. Although the overall strategy that we follow is similar to the proof of~\cite[Theorem~1.2]{DeMarco-McMullen}, our approach is somewhat more involved since we need to control when orbits of critical points merge whereas DeMarco and McMullen  only quantified
when the Green functions at critical points agree. 

In \S\ref{sec:correspondence}, we exhibit a correspondence between special curves and the class of marked dynamical graphs that we have defined (Theorem~\ref{thm:correspondence}).

The applicability of this theorem relies on our ability to realize PCF polynomials whose critical points have prescribed period and preperiod. 
In \S\ref{sec:real-PCF}, we state and prove a series of results on the realizability of some combinatorics by PCF polynomials. Our results are complete for unicritical polynomials, and fairly optimal in any degree for strictly PCF polynomials (Theorem~\ref{thm:realizationPCF}).

We conclude this chapter by discussing the classification of special curves in low degrees (\S\ref{sec:class-petite}), and
by a series of questions and open problems on the geometry of special curves (\S\ref{sec:classic}).

\section{Special curves in the moduli space of polynomials}\label{sec:proof thmG}

In this section we give a proof of Theorem~\ref{thm:specialclas}.
Let us first recall its statement. 

\specialclas*

\begin{proof}
If the family is not primitive, then by definition one can find a finite ramified cover $C' \to C$,  a family of polynomials $Q$ parameterized by $C'$, and a
symmetry $\sigma \in \Sigma(P)$  such that $P = \sigma Q^n$ for some $n \ge2$. We are thus in Case 1. 

From now on, we suppose that the family is primitive. Since all critical points are marked and the family is not isotrivial, we can make a base change, and 
suppose that $C$ is an algebraic curve in $\A^{d-1}$ so that $P$ can be written under the form
\begin{equation*}
P_{c,a}(z)\pe \frac{1}{d}z^d+\sum_{j=2}^{d-1}(-1)^{d-j}\sigma_{d-j}(c)\frac{z^j}{j}+a^d,
\end{equation*}
where $\sigma_j(c)$ is the monic symmetric polynomial in $(c_1,\ldots,c_{d-2})$ of degree $j$,
so that $c_0 = 0, c_1, \cdots, c_{d-2}$ are the critical points of $P$ (and $a^d$ is a critical value). 

Recall that PCF polynomials of the form $P_{c,a}$ have algebraic coefficients by Corollary~\ref{cor:PCF}. 
Since $C$ contains a Zariski-dense subset of $\bar{\Q}$-points, it can be defined by equations with coefficients lying in a number field (see~\cite[\S4.1 page 384]{specialcubic} for a proof). 

By Theorem~\ref{thm:active-characterization1}, for any passive critical points $c_i$ there exist integers $n_i>m_i$ such that 
$P^{n_i}(c_i) = P^{m_i}(c_i)$. 
Let $\mathsf{A}$ be the set of active critical points. It is non-empty since otherwise $P_{c,a}$ would be PCF for all $(c,a) \in C$ by Theorem~\ref{thm:demarco-stab}.
The family being defined over a number field, we may apply Theorem~\ref{tm:unlikely-marked} to each pair of active critical points 
$(P,c_i)$ and $(P,c_j)$ which implies the result.
\end{proof}

One can complement the previous result by the following theorem. 
Recall the definition of the critical heights $h_\bif$ and $\widetilde{h_\bif}$ on p.\pageref{def:bif-height}.
\begin{theorem}\label{tm:crit}
Pick any non-isotrivial  family $P$ of polynomials of degree $d\ge2$ with marked critical points which is parameterized by an algebraic curve $C$
containing infinitely many PCF parameters and defined over a number field $\KK$. 
Let $c$ be any active critical point. 

Then $\mathrm{Preper}(P,c)=\mathrm{PCF}(P)$, and there exists 
$\alpha \in\Q_+^*$ such that for any place $v\in M_\KK$ we have
\[\alpha \cdot g_{P,v}(c) = G_v(P); \text{ and }
\alpha \cdot \mu_{P,c,v}=\mu_{\bif,v}.
\]
In particular, the height functions $h_{P,c}$, $h_{\bif}$ and $\widetilde{h_\bif}$ are proportional on $C(\bar{\KK})$.
\end{theorem}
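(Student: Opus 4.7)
The plan is to convert the geometric entanglement coming from the infinite PCF locus on $C$ into proportionality of Green functions place by place with a $v$-independent rational constant, then take Laplacians and sum over places to obtain the analogous statements for measures and heights.

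Let $I\subset\{0,1,\ldots,d-2\}$ denote the set of indices $j$ such that $c_j$ is an active critical point, so that $c=c_{j_0}$ for some $j_0\in I$. Since $P$ is non-isotrivial and the integrable locus in $\mpoly^d$ is zero-dimensional, the family $P$ is non-integrable. For each passive $j\notin I$, Theorem~\ref{thm:active-characterization1} yields integers $n_j>m_j\geq 0$ with $P^{n_j}(c_j)=P^{m_j}(c_j)$, so that $\mathrm{Preper}(P,c_j)=C(\bar\KK)$ and $g_{P,c_j,v}\equiv 0$ at every place $v\in M_\KK$. For each $j\in I$, every PCF parameter simultaneously preperiodises $c$ and $c_j$, so $\mathrm{Preper}(P,c)\cap\mathrm{Preper}(P,c_j)\supset\mathrm{PCF}(P)$ is infinite; Theorem~\ref{tm:unlikelyprecise} applied to the two active non-integrable pairs $(P,c)$ and $(P,c_j)$ provides coprime positive integers $n_j,m_j$, independent of $v$, such that $\mathrm{Preper}(P,c)=\mathrm{Preper}(P,c_j)$ and
\[ n_j\cdot g_{P,c,v}=m_j\cdot g_{P,c_j,v}\quad\text{for every }v\in M_\KK. \]
Combining the cases $j\in I$ and $j\notin I$ gives $\mathrm{PCF}(P)=\bigcap_j\mathrm{Preper}(P,c_j)=\mathrm{Preper}(P,c)$, which is the first assertion.

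Setting $\beta_j:=n_j/m_j\in\Q_+^*$ for $j\in I$ (with $\beta_{j_0}=1$) and $\beta_j=0$ otherwise, one has $g_{P,c_j,v}=\beta_j\cdot g_{P,c,v}$ at every place. Since $g_{P,c,v}\geq 0$, taking the pointwise maximum over $j$ yields
\[ G_v(P_t)=\max_{0\leq j\leq d-2}g_{P,c_j,v}(t)=\alpha\cdot g_{P,c,v}(t),\qquad \alpha:=\max_{j\in I}\beta_j\in\Q_+^*, \]
with $\alpha$ strictly positive and independent of the place $v$. Applying the Laplace operator at each place transforms this into $\alpha\mu_{P,c,v}=\Delta G_v(P)$. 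On the other hand, $\mu_{\bif,v}$ is, by construction, a positive linear combination of the measures $\mu_{P,c_j,v}=\beta_j\mu_{P,c,v}$ whose normalization depends only on the total masses $\deg\mathsf{D}_{P,c_j}$; these masses are $v$-independent by Theorem~\ref{thm:continuity}, so $\mu_{\bif,v}$ is itself a $v$-independent positive rational multiple of $\mu_{P,c,v}$. Absorbing this normalization factor into $\alpha$ yields a common rational $\alpha\in\Q_+^*$ realizing simultaneously $\alpha\,g_{P,c,v}=G_v(P)$ and $\alpha\,\mu_{P,c,v}=\mu_{\bif,v}$ at every place.

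Finally, summing the local equalities $G_v(P_t)=\alpha\,g_{P,c,v}(t)$ and $g_{P,c_j,v}=\beta_j\,g_{P,c,v}$ over places with weights $n_v/[\KK:\Q]$ produces $\widetilde{h_\bif}=\alpha\cdot h_{P,c}$ and $h_\bif=\sum_j h_{P,c_j}=\bigl(\sum_{j\in I}\beta_j\bigr)\cdot h_{P,c}$ on $C(\bar\KK)$, so the three height functions are pairwise proportional. The only nontrivial input of the whole argument, supplied by Theorem~\ref{tm:unlikelyprecise}, is that the integers $n_j,m_j$ governing the proportionality of Green functions between the entangled active pairs $(P,c)$ and $(P,c_j)$ can be chosen to work simultaneously at every place $v\in M_\KK$; without this place-uniformity one could not combine the local rational proportionalities into the measure and height statements. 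The remainder of the proof is essentially bookkeeping.
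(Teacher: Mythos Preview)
Your argument is correct and follows essentially the same route as the paper: apply Theorem~\ref{tm:unlikelyprecise} pairwise to the active critical points to get place-uniform proportionality of Green functions, take the maximum to recover $G_v(P)$, then take Laplacians and sum over places. The paper cites Theorem~\ref{thm:specialclas} for the equality $\mathrm{Preper}(P,c)=\mathrm{PCF}(P)$ while you extract it directly from item~(2) of Theorem~\ref{tm:unlikelyprecise}, but this is the same content. One cosmetic point: your sentence about ``absorbing this normalization factor into $\alpha$'' suggests a single $\alpha$ works for both the Green-function and the measure identities, which is not quite right (the paper in fact uses distinct constants $\alpha$ and $\alpha'$); the substance of your argument---that both proportionality constants are $v$-independent positive rationals---is however correct.
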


\begin{proof}
All passive critical points are persistently preperiodic so that $\mathrm{PCF}(P)$
is the intersection of the loci $\mathrm{Preper}(P,c)$ for all active critical points $c$. But the previous theorem implies
that these loci are all equal, hence $\mathrm{Preper}(P,c)=\mathrm{PCF}(P)$.

Let $\mathsf{A}$ be the set of active critical points as above. 
Observe that for any place $v \in M_\KK$, we have
\[
G_v(P) = \max \{ g_{P,v}(c_i), \, P'(c_i) =0 \} = \max \{ g_{P,v}(c_i), \, c_i \in \mathsf{A} \}~.\]
Apply Theorem~\ref{tm:unlikelyprecise} to all dynamical pairs $(P,c)$ with $c\in \mathsf{A}$. Item (6)
implies that $g_{P,c,v}$ are all proportional which leads to $\alpha \cdot g_{P,v}(c) = G_v(P)$, for some $\alpha\in\Q_+^*$ independent of $v$.

Taking the Laplacian on both sides yields $\alpha' \cdot \mu_{P,c_i,v}=\mu_{\bif,v}$ for some constant $\alpha'\in\Q_+^*$.
Similarly, we have 
\[\alpha \cdot h_P(c) = \widetilde{h_\bif}
\]
for any $c \in \mathsf{A}$, 
hence  $ h_{P,c}$ and $h_{\bif}$ are proportional, as claimed.
\end{proof}


\section{Marked dynamical graphs} \label{sec:classif special}

We define and study the notion of marked dynamical graphs, which encode 
the dynamical relations between critical orbits of a polynomial.
 
\subsection{Definition}
A vector field $\xi$ on a graph $\Gamma$ is an orientation of each edge of $\Gamma$ such that for each vertex $v$ there exists a unique
outgoing edge at $v$. A flow on a graph is a (continuous) graph map $\mu\colon \Gamma \to \Gamma$ such that there exists a vector field $\xi$
for which the unique outgoing edge at a vertex $v$ is the edge $[v , \mu(v)]$.

Suppose $\Gamma$ is a connected graph endowed with a vector field. If it  is not a tree, then it is the union of a single loop with finitely many trees attached to it, 
and the induced flow $\mu$ maps a point in any of the trees into the loop after finitely many iterations, whereas the restriction of $\mu$ to the loop is a periodic rotation. 
In the case $\Gamma$ is a tree, there exists a height function $H \colon V(\Gamma) \to \Z$ such that $H(\mu(z)) = \mu(H(z)) + 1$
which is uniquely defined once the value at one point is fixed.

\begin{figure}[h]
\centering
 \def\svgwidth{6cm}
\begingroup%
  \makeatletter%
  \providecommand\color[2][]{%
    \errmessage{(Inkscape) Color is used for the text in Inkscape, but the package 'color.sty' is not loaded}%
    \renewcommand\color[2][]{}%
  }%
  \providecommand\transparent[1]{%
    \errmessage{(Inkscape) Transparency is used (non-zero) for the text in Inkscape, but the package 'transparent.sty' is not loaded}%
    \renewcommand\transparent[1]{}%
  }%
  \providecommand\rotatebox[2]{#2}%
  \newcommand*\fsize{\dimexpr\f@size pt\relax}%
  \newcommand*\lineheight[1]{\fontsize{\fsize}{#1\fsize}\selectfont}%
  \ifx\svgwidth\undefined%
    \setlength{\unitlength}{476.5375682bp}%
    \ifx\svgscale\undefined%
      \relax%
    \else%
      \setlength{\unitlength}{\unitlength * \real{\svgscale}}%
    \fi%
  \else%
    \setlength{\unitlength}{\svgwidth}%
  \fi%
  \global\let\svgwidth\undefined%
  \global\let\svgscale\undefined%
  \makeatother%
  \begin{picture}(1,0.47453526)%
    \lineheight{1}%
    \setlength\tabcolsep{0pt}%
    \put(0,0){\includegraphics[width=\unitlength,page=1]{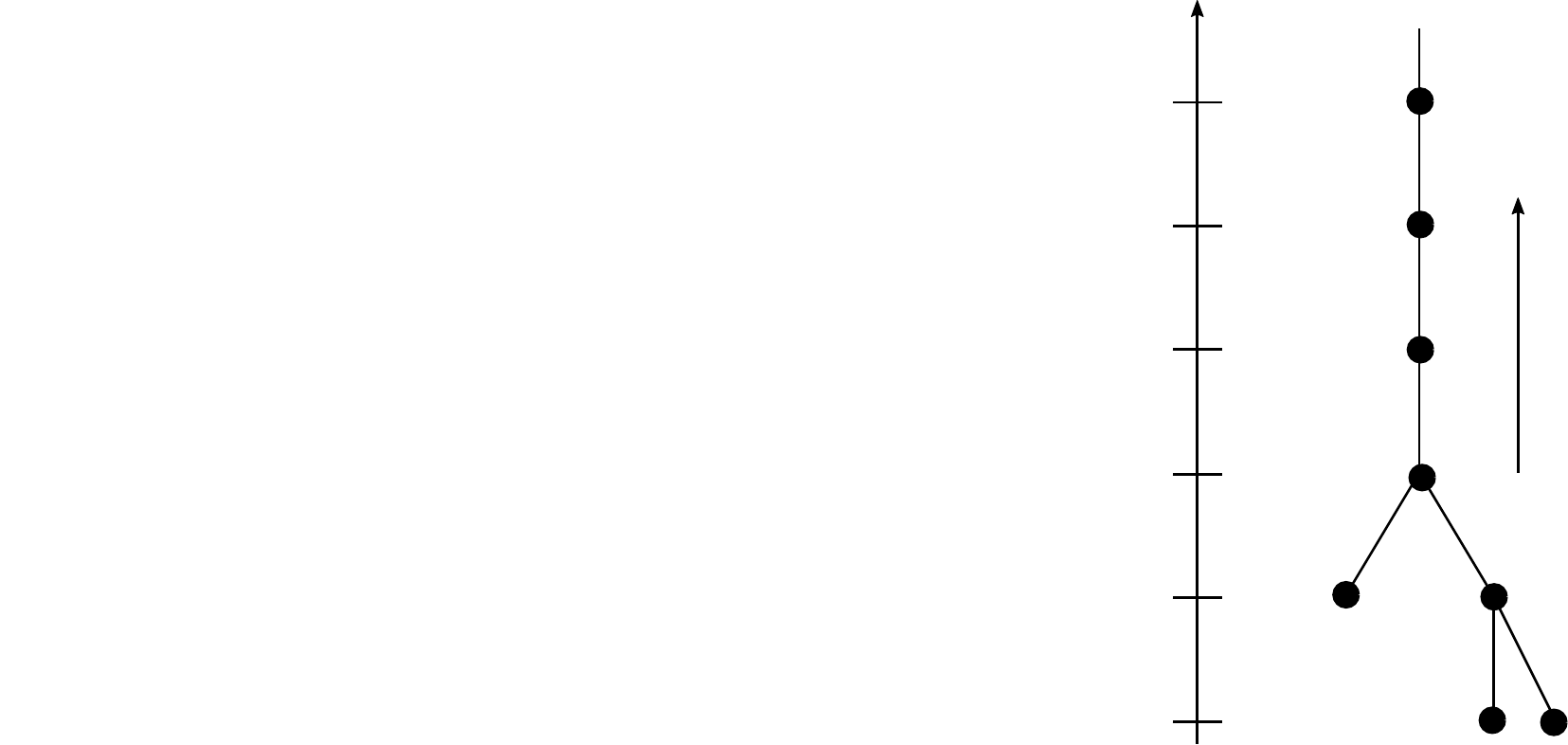}}%
    \put(0.68467587,0.44200711){\makebox(0,0)[lt]{\lineheight{1.25}\smash{\begin{tabular}[t]{l}$H$\end{tabular}}}}%
    \put(0.68507878,0.2518165){\makebox(0,0)[lt]{\lineheight{1.25}\smash{\begin{tabular}[t]{l}$0$\end{tabular}}}}%
    \put(0.66006834,0.1678105){\makebox(0,0)[lt]{\lineheight{1.25}\smash{\begin{tabular}[t]{l}$-1$\end{tabular}}}}%
    \put(0.68320011,0.33050909){\makebox(0,0)[lt]{\lineheight{1.25}\smash{\begin{tabular}[t]{l}$1$\end{tabular}}}}%
    \put(0,0){\includegraphics[width=\unitlength,page=2]{dynamical-graph.pdf}}%
  \end{picture}%
\endgroup%
  
  \caption{Flows on graphs}
 \label{fig:flow on graphs}
\end{figure}

A  finite or infinite graph $\Gamma$ with vertex\index{graph!dynamical} set $V(\Gamma)$ is a \emph{dynamical graph of degree $\le d$ marked by a finite set $\mathcal{F}$}
when it is endowed with the following data:
\begin{itemize}
\item[(G1)]
a  map $\mu \colon \mathcal{F} \to \Gamma$;
\item[(G2)]
a function $d_\pi \colon V(\Gamma) \to \N^*$ such that  $\sum_{\pi(w) =v} d_\pi(w) \le d$ for all $v\in V(\Gamma)$;
\item[(G3)]
a flow $\pi \colon \Gamma \to \Gamma$;
\item[(G4)]
an action of $\U_k$ on $V(\Gamma)$ 
such that: 
\begin{itemize}
\item
the action is free on the complement of at most one point;
\item
$\pi( g \cdot v) = \rho(g) \cdot \pi(v)$ for some morphism $\rho \colon \U_k \to \U_k$;
\item 
$d_\pi( g \cdot v)  = d_\pi(v)$;
\end{itemize}
\item[(G5)]
an action of $\U_k$ on $ \mathcal{F}$ such that $ \mu(g \cdot i) = g \cdot \mu(i)$.
\end{itemize}
We also impose the following minimality condition:
\begin{itemize}
\item[(G6)]
$\Gamma$ is the orbit by the flow 
of  the set $\{g \cdot \mu(i), \, g \in \U_k, \, i \in   \mathcal{F}\}$.
\end{itemize}

The group $\U_k$ is called the symmetry group of the marked dynamical graph. 
One can always replace $\mathcal{F}$ by its image in $V(\Gamma)$ and suppose that $\mu$ is injective. 
For our purposes it is however convenient not to do so. 

\begin{remark}
When $k=1$, we say that $\Gamma$ has no symmetry. In this case, $\Gamma$ is completely determined by a finite graph, 
e.g. by the union of its finite components and the convex hull of the points $\mu(i)$ lying in infinite tree components. 
\end{remark}

\begin{remark}
When $k\ge 2$, observe that the action of $\U_k$ does not extend in general
to a continuous action on the edges of $\Gamma$ (except if $\rho = \id$). 
\end{remark}

Marked dynamical graphs encodes the relations between iterates of a finite set of points. 

\begin{example}\label{ex:mark-dyn}
Let  $\Phi \colon U \to U$ be any self-map on a set $U$, and 
let $\mathcal{F} \subset U$
be any finite collection of points. We define a marked dynamical graph $G(\mathcal{F},\Phi)$ as follows.
Vertices of $G(\mathcal{F},\Phi)$ are points in $U$ lying in the forward orbit of
at least one point in $\mathcal{F}$; an edge
joins two vertices $v$ and $v'$ iff $v=\Phi(v')$ or $v'=\Phi(v)$; 
the flow is defined by $\pi_\mathcal{F}(z) = \Phi(z)$;  the marking
$\mu_\mathcal{F} \colon \mathcal{F} \to G(\mathcal{F},\Phi)$ is given
by the natural inclusion; and $d_\pi (v) = \# \Phi^{-1}(v)$.
\end{example}

\subsection{Critically marked dynamical graphs}\label{sec:criticallymarkedgraph}
We say that $\Gamma$ is a \emph{critically marked dynamical graph}\index{graph!critically marked dynamical} of degree $d$
when $\mathcal{F} = \{ 0, \cdots, d-2\}$,  $d_\pi (v) = 1+ \Card \mu^{-1} (v)$, 
and the action of $\U_k$ on $V(\Gamma)$ 
satisfies the extra condition (G4'):
\begin{itemize}
\item
either it is free on $\mu(\mathcal{F})$ and $\pi( g \cdot v) = g \cdot \pi(v)$;
\item
or there exists a vertex $v_* \in \mu(\mathcal{F})$ which is fixed by both $\U_k$
and $\pi$, and 
$\pi( g \cdot v) = g^{d_\pi(v_*)} \cdot \pi(v)$ for all $v$;
\item
or there exists a vertex $v_* \in \mu(\mathcal{F})$ which is fixed by $\U_k$, 
but not by $\pi$,  $k =d_\pi(v_*)$, and
$\pi( g \cdot v) = \pi(v)$ for all $v$.
\end{itemize}

\begin{lemma}\label{lem:basic-count}
For any critically marked dynamical graph of degree $d$ having a symmetry group of order $k$, 
we have $d -1 = \sum_v (d_\pi(v) -1)$ and $k \le d$. 
\end{lemma}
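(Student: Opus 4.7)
The plan is to establish the two claims separately. For the sum formula, the critically marked condition $d_\pi(v) - 1 = \#\mu^{-1}(v)$ immediately gives
\[
\sum_{v \in V(\Gamma)} (d_\pi(v) - 1) = |\mathcal{F}| = d - 1,
\]
since the fibers of $\mu \colon \mathcal{F} \to V(\Gamma)$ partition $\mathcal{F}$. This step is essentially an unwinding of definitions.

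For the inequality $k \le d$, I will distinguish according to the three alternatives in (G4'). In Case 3 the equality $k = d_\pi(v_*)$ is imposed, and (G2) applied to $\pi(v_*)$ directly yields $k \le d$. In Case 1 the action of $\U_k$ is free on $\mu(\mathcal{F})$, which is non-empty since $d \ge 2$; hence $\mu(\mathcal{F})$ is a disjoint union of free $\U_k$-orbits of size $k$, and I obtain $k \le |\mu(\mathcal{F})| \le |\mathcal{F}| = d - 1$.

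Case 2 requires a further split. If there exists $i \in \mathcal{F}$ with $\mu(i) \ne v_*$, the freeness of the $\U_k$-action on $V(\Gamma)\setminus\{v_*\}$ from (G4), combined with the equivariance (G5) of $\mu$, produces a free $\U_k$-orbit of size $k$ inside $\mu(\mathcal{F})\setminus\{v_*\}$, so that $k \le |\mu(\mathcal{F})|-1 \le d-2$. Otherwise $\mu(\mathcal{F}) = \{v_*\}$, forcing $d_\pi(v_*) = 1 + (d-1) = d$; (G2) at $v_*$ then gives $\pi^{-1}(v_*) = \{v_*\}$, and the minimality condition (G6) collapses $\Gamma$ to the single vertex $\{v_*\}$.

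The main obstacle is this last degenerate sub-case, in which the $\U_k$-action on the unique vertex is tautological and none of (G4)--(G5) yields an \emph{a priori} bound on $k$. I expect that this configuration is only realized by the monomial polynomial $P(z) = z^d$, whose full symmetry group is $\U_\infty$; hence no finite $k$ can be the order of the full symmetry group of such a graph, and the degenerate sub-case falls outside the hypothesis ``symmetry group of order $k$'' (implicitly assuming $k$ finite). Once this sub-case is discarded, the bound $k \le d$ follows uniformly from the non-degenerate analyses above.
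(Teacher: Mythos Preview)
Your approach is essentially the same as the paper's: the sum formula is immediate from $|\mathcal{F}|=d-1$, and the bound $k\le d$ comes from a case split along the three alternatives of (G4'). The only stylistic difference is that where you count orbit sizes in $\mu(\mathcal{F})$ (obtaining $k\le d-1$ in Case~1 and $k\le d-2$ in the non-degenerate part of Case~2), the paper uses the $\U_k$-invariance of $d_\pi$ to get divisibility statements: $k\mid(d-1)$ in Case~1, and $k\mid(d-d_\pi(v_*))$ in Case~2. Both routes give the desired inequality.

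Your handling of the degenerate sub-case $\mu(\mathcal{F})=\{v_*\}$ in Case~2 is honest but not rigorous: nothing in axioms (G1)--(G6) and (G4') forbids equipping the one-vertex graph $\{v_*\}$ with a $\U_k$-action for arbitrary finite $k$, so the appeal to ``no finite $k$ can be the order of the full symmetry group'' does not close the argument as stated. That said, the paper's proof has the same gap --- it simply writes ``or $k=1$'' for this case without justification. This boundary configuration is the abstract analogue of the monomial $z^d$, whose symmetry group is $\U_\infty$, and the lemma should be read as tacitly excluding it (or equivalently, the inequality $k\le d$ is intended for graphs that actually arise from non-monomial polynomials). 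Your instinct here is correct; you just cannot derive the exclusion from the axioms alone.
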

\begin{proof}
The first equality follows from $\Card(\mathcal{F}) = d-1$. For the second inequality, 
suppose first that $\U_k$ acts freely on $\mu(\mathcal{F})$. Then by (G5), we have
$d_\pi(g \cdot v) = d_\pi(v)$ hence $k$ divides $d-1$. 
When $\U_k$ fixes a point $v_* \in \mu(\mathcal{F})$, then
$d = d_\pi(v_*) + \sum_{v\neq v_*} (d_\pi(v) -1)$ hence
$k$ divides $d-d_\pi(v_*)$, or $k=1$.
\end{proof}

\begin{remark}
Given any critically marked dynamical graph $\Gamma$ and any subgroup $G$ of its symmetry group $\U_k$
such that $G \cdot \mu(\mathcal{F}) = \U_k \cdot \mu(\mathcal{F})$, 
we can build a new marked dynamical graph by replacing $\U_k$ by $G$. 
\end{remark}
This remark leads to the following notion. 

\begin{definition}
A critically marked dynamical graph $\Gamma$ of degree $d$ is said to be asymmetric if it cannot be embedded 
into a critically marked dynamical graph $\Gamma'$ of degree $d$ having a non-trivial group of symmetry.
\end{definition}

There are simple criteria detecting whether a graph is asymmetric or not. We refer to \S \ref{sec:asymmetric graph} for results
in that direction. 

\smallskip

Let us discuss graphs with $\U_2$-symmetries, and let $\Gamma$ be any critically marked dynamical graph having
this group of symmetries. Two possibilities arise. 

Either $\rho (-1) =-1$ so that the action commutes with $\pi$, 
and $\U_2$ acts continuously on the graph $\Gamma$.  In that case the action of $\U_2$ on $\mathcal{F}$ is free by (G4').
It may or may not have a fixed point in $V(\Gamma) \setminus \mathcal{F}$.

Or $\rho (-1) =+1$, and we have $\pi( - v) = \pi(v)$. In this case, 
(G4') implies there exists a vertex $v_*\in\mu(\mathcal{F})$ which is fixed by $\mathbb{U}_2$ but not by $\pi$.
Observe that $\pi ( - \pi^n(v)) = \pi^{n+1}(v)$ for all vertices.

\begin{figure}[h]
\centering
 \def\svgwidth{10cm}
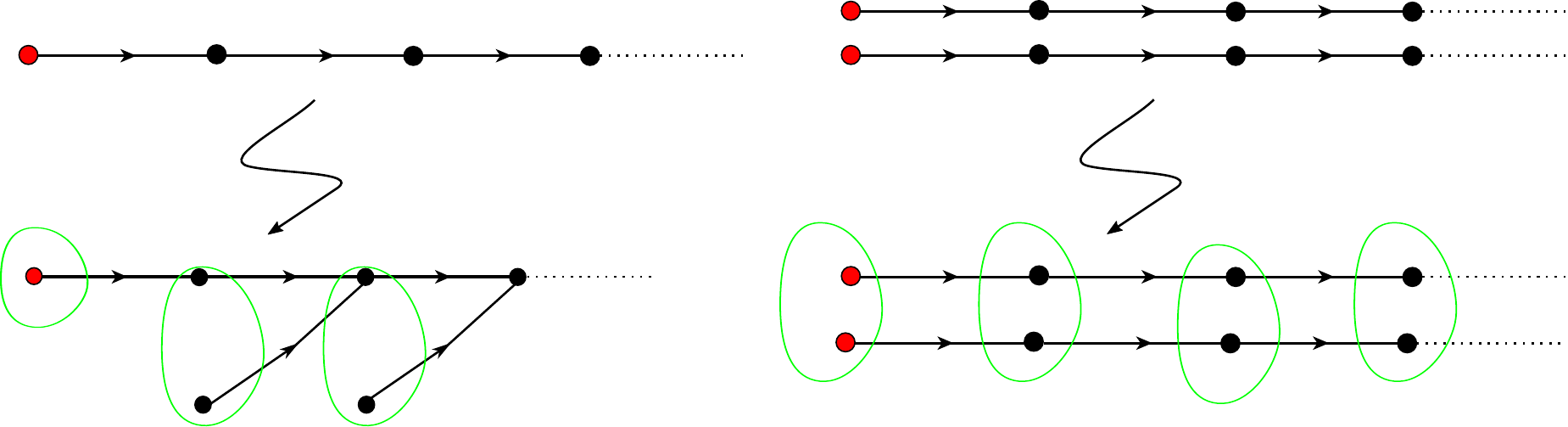
  \caption{Two examples of non-asymmetric graphs}
 \label{fig:non-asymmetric}
\end{figure}

\subsection{The critical graph of a polynomial}
To any polynomial $P$ of degree $d\ge 2$ with marked critical points $c_0, \ldots c_{d-2}$, we can attach a (possibly disconnected) critically marked dynamical graph
$\Gamma(P)$ which encodes the critical relations as follows.  Vertices of $\Gamma$
are given by $\sigma P^n(c_i)$, with $i \in \{ 0, \cdots, d-2\}$, $\sigma \in \Sigma(P)$ and $n\ge0$, and we draw an oriented 
edge between any point $z$ and its image $P(z)$ so that $\Gamma$ carries a canonical flow $\pi \colon \Gamma \to \Gamma$ sending $z$ to $P(z)$. 
The marking is given by the map $\mu \colon \{ 0, \cdots, d-2\} \to \Gamma$ which sends $i$ to $c_i$. It satisfies $\pi \circ \mu = \mu \circ P$. 
Observe that for any vertex $v$ of $\Gamma$, the degree $d(v)$ equals the local degree of $P$ at $v$, so that
(G2) holds since the number of preimages of a point by $P$ counted with multiplicity equals $d$.

Finally recall that $\Sigma(P)$ is canonically isomorphic to $\U_k$ for some $1 \le k\le d$. We let
the action of $\U_k$ on the set of vertices of $\Gamma$ be the one induced by $\Sigma(P)$ on the orbits of the critical points. 

Since $\U_k$ acts by rotation on the complex plane, its action is free off the origin which is fixed so that 
the action of $\U_k$ on $\Gamma$ satisfies (G4). If $P$ is a monic and centered polynomial and if we write its minimal decomposition 
$P(z) = z^m Q(z^k)$ with $Q(0) \neq 0$, then either $m=1$ so that the fixed point of the action by $\U_k$ is not a critical point; 
or $m\ge 2$ and $0$ is a critical point or order $m$; or $m=0$ and $0$ is a critical point which is not fixed by $P$. 
This implies (G4') and $\Gamma(P)$ is a critically marked dynamical graph
in the sense above.

\medskip

For any polynomial outside a countable union of  algebraic subvarieties in the moduli space, the marked dynamical graph is a union of 
$d-1$ rays whose extremities are $\mu(i), i =0, \cdots, d-2$. The marked dynamical graph of a PCF polynomial is a finite union of
finite graphs (having a single loop with finitely many trees attached to it).

The marked dynamical graph of a polynomial encodes the dynamical relations between critical points in the following sense. 
\begin{lemma}\label{lem:how-to-prove-gamma}
Let $P$ and $Q$ be two polynomials with marked critical points of the same degree $d \ge2$.
Then the two critically marked dynamical graphs $\Gamma(P)$ and $\Gamma(Q)$ are equal iff the following two properties hold.
\begin{enumerate}
\item
There exists $k\le d$ such that $\U_k= \Sigma(P) = \Sigma(Q)$ and the two morphisms $\rho_P \colon \Sigma(P)\to \Sigma(P)$
and $\rho_Q \colon \Sigma(Q)\to \Sigma(Q)$ are identical;
\item
for each pair of integers $i, j \in \{0, \cdots, d-2\}$ and for each $\sigma \in \U_k$, 
the two sets $\{ (n,m), \, P^n(c_i) = \sigma P^m(c_j)\}$ and $\{ (n,m), \, Q^n(c_i) = \sigma Q^m(c_j)\}$
are equal.
\end{enumerate}
\end{lemma}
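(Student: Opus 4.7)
The plan is to unpack the definitions on both sides and construct an explicit isomorphism of critically marked dynamical graphs under the assumptions~(1) and~(2).

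The forward implication is essentially tautological: if the two critically marked dynamical graphs coincide, then by construction both symmetry groups $\Sigma(P)$ and $\Sigma(Q)$ are identified with the same cyclic group $\U_k$ acting on $V(\Gamma(P))=V(\Gamma(Q))$, and the morphism $\rho$ governing the interaction of this action with the flow $\pi$ is part of the intrinsic data (cf.~(G4)), so $\rho_P=\rho_Q$. Condition~(2) then follows from the observation that $P^n(c_i)=\sigma P^m(c_j)$ holds if and only if the labels $(\sigma,i,n,j,m)$ collapse to the same vertex, a purely combinatorial statement about the marked graph.

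For the converse, the plan is to define a map $\phi\colon V(\Gamma(P))\to V(\Gamma(Q))$ by $\phi(\sigma P^n(c_i)):=\sigma Q^n(c_i)$ and to verify that $\phi$ is an isomorphism of critically marked dynamical graphs. The first step is well-definedness and bijectivity: the equality $\sigma_1 P^{n_1}(c_{i_1})=\sigma_2 P^{n_2}(c_{i_2})$ is equivalent to $P^{n_1}(c_{i_1})=(\sigma_1^{-1}\sigma_2)\cdot P^{n_2}(c_{i_2})$, which by~(2) is equivalent to the analogous equality for $Q$, hence $\phi$ is well-defined and injective; surjectivity is clear from the minimality condition~(G6) applied to $\Gamma(Q)$.

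The second step is to check compatibility with all the structure. The flow:
\[
\phi(\pi_P(\sigma P^n(c_i)))=\phi(\rho_P(\sigma)P^{n+1}(c_i))=\rho_P(\sigma)Q^{n+1}(c_i)=\rho_Q(\sigma)Q^{n+1}(c_i)=\pi_Q(\phi(\sigma P^n(c_i))),
\]
where the third equality uses $\rho_P=\rho_Q$ from~(1). Marking: $\phi(\mu_P(i))=\phi(c_i)=c_i=\mu_Q(i)$ for each $i$. The $\U_k$-action: the computation $\phi(\tau\cdot\sigma P^n(c_i))=(\tau\sigma)Q^n(c_i)=\tau\cdot\phi(\sigma P^n(c_i))$ is immediate. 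The degree function: since $d_\pi(v)=1+\Card\mu^{-1}(v)$ for a critically marked dynamical graph and $\phi\circ\mu_P=\mu_Q$, the function $d_\pi$ is automatically preserved.

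The main delicate point will be the bookkeeping for the well-definedness of $\phi$ when $\U_k$ fixes a vertex $v_\ast\in\mu(\mathcal{F})$, i.e.\ in the second and third cases of~(G4'): one must check that the relations $\sigma P^n(c_i)=P^n(c_i)$ for $\sigma\in\U_k$, which necessarily hold at $v_\ast$, transfer faithfully to $Q$. This is subsumed by condition~(2) with $j=i$ and $m=n$, so no separate argument is needed, but it is worth highlighting because it is exactly the place where the compatibility between~(1) and~(2) via the morphism $\rho$ is invoked.
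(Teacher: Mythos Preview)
Your proof is correct and follows essentially the same approach as the paper. The only cosmetic difference is that the paper phrases the converse by introducing an auxiliary ``universal'' graph $\hat{\Gamma}$ with vertices $(i,n,\sigma)$ and exhibits both $\Gamma(P)$ and $\Gamma(Q)$ as the same quotient of $\hat{\Gamma}$, whereas you write down the induced bijection $\phi$ directly; these are two presentations of the same argument.
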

\begin{proof}
Suppose that $P$ and $Q$ have identical critically  marked dynamical graphs.
Then we have $\Sigma(P) = \Sigma(Q)$. Pick any vertex $v$ of the graph $\Gamma(P)$ whose image by $\pi$ is not fixed by $\U_k$. 
For any $g \in \U_k$, we have $\pi( g \cdot v)  = \rho_P(g) \cdot \pi(v) = \rho_Q(g) \cdot \pi(v)$. 
Since $\U_k$ acts freely on the orbit of $\pi(v)$, we conclude that $\rho_P = \rho_Q$ hence (1) is satisfied. 
Choose $i, j \in \{0, \cdots, d-2\}$ and $\sigma \in \U_k$. 
Since
\begin{equation}\label{eq:identify graph}
\left\{ (n,m), \, P^n(c_i) = \sigma P^m(c_j)\right\} = \left\{ (n,m), \, \pi^n (\mu(i)) = \sigma \cdot \pi^m (\mu(j))\right\} 
\end{equation}
 condition (2) holds.

\smallskip

Conversely suppose that $P$ and $Q$ satisfy (1) and (2). 
We first show that one can recover the marked dynamical graph of $P$ from $\rho =\rho_P$ and the sets
$\{ (n,m), \, P^n(c_i) = \sigma P^m(c_j)\}$ where $i, j$ range over all pairs in $\{0, \cdots, d-2\}$ and $\sigma$ over all elements of $\U_k = \Sigma(P)$.
We first build the infinite graph $\hat{\Gamma}$ whose vertices are triple $(i,n,\sigma)$ with  $i \in \{0, \cdots, d-2\}$, $n \ge0$ and $\sigma \in \U_k$; 
and edges join $(i,n,\sigma)$ to $(i,n+1,\rho(\sigma))$. On this graph the map $\hat{\pi}(i,n,\sigma) = (i,n+1,\rho(\sigma))$ is a flow, and we have a marking $\hat{\mu}(i) = (i,0,1)$.
We also have a natural action by $\U_k$ given by composition on the last factor $\sigma' \cdot (i,n,\sigma) = (i,n, \sigma' \sigma)$.

We observe now that the map sending $(i,n,\sigma)$ to $\sigma P^n(c_i)$ identifies $\Gamma(P)$ 
as the quotient of $\hat{\Gamma}$ by the relation identifying $(i_1,n_1,\sigma_1)$ and $(i_2,n_2,\sigma_2)$ iff
$(n_1, n_2) \in \{ (n,m), \, P^n(c_i) = \sigma_1^{-1} \sigma_2 P^m(c_j)\}$. Moreover the flow on $\Gamma(P)$ is induced by $\hat{\pi}$ and similarly the marking is induced by
$\hat{\mu}$. 

This shows that the two graphs $\Gamma(P)$ and $\Gamma(Q)$ are isomorphic, that their markings, their flows, and their corresponding actions of $\U_k$ coincide.
\end{proof}

\subsection{The critical graph of an irreducible subvariety in the moduli space of polynomials}
We observe that one can attach to any irreducible subvariety of the moduli space of critically marked polynomials a marked dynamical graph.

\begin{proposition}
Let $V$ be any irreducible subvariety in the moduli space of critically marked polynomials. 
Then there exists a  marked dynamical graph $\Gamma(V)$ such that $\Gamma(P) = \Gamma(V)$
for all $P\in V$ outside a countable union of subvarieties. 
\end{proposition}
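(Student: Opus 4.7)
The strategy is to apply the criterion of Lemma \ref{lem:how-to-prove-gamma}, which says that the critically marked dynamical graph of a polynomial is completely determined by (i) its symmetry group $\Sigma(P) \simeq \U_k$ together with the morphism $\rho_P \colon \Sigma(P) \to \Sigma(P)$, and (ii) for every tuple $(i,j,\sigma) \in \{0,\ldots,d-2\}^2 \times \U_k$, the subset
\[
R_{i,j,\sigma}(P) := \{(n,m) \in \N^2 : P^n(c_i) = \sigma \cdot P^m(c_j)\} \subset \N^2 .
\]
Thus to build $\Gamma(V)$ it suffices to exhibit a Zariski open dense $U_0 \subset V$ and a countable union $E$ of proper subvarieties of $V$ such that this data is the same for every $P \in U_0 \setminus E$.

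First, I will apply Proposition \ref{prop:sameZariskiopen} to the family $V \to \mpcrit^d$: the function $t \mapsto \Card(\Sigma(P_t))$ is Zariski upper semi-continuous, so there exists a Zariski open dense $U_0 \subset V$ and an integer $k \ge 1$ such that $\Sigma(P_t) \simeq \U_k$ for every $t \in U_0$. On $U_0$, the morphism $\rho_{P_t}$ is determined by the leading coefficient of $P_t$ through the explicit formula of Proposition \ref{prop:Sigma}, hence varies algebraically; up to further restricting $U_0$, we may therefore assume $\rho_{P_t}$ is a single fixed morphism $\rho$ for all $t \in U_0$.

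Next, for each quintuple $(n,m,i,j,\sigma)$ with $n,m \ge 0$, $i,j \in \{0,\ldots,d-2\}$ and $\sigma \in \U_k$, the locus
\[
W_{n,m,i,j,\sigma} := \{ P \in U_0 : P^n(c_i) = \sigma\cdot P^m(c_j)\}
\]
is Zariski closed in $U_0$, because the equation is polynomial in the coefficients of $P$ and in the marked critical points. Let $\mathcal{R}$ be the set of quintuples for which $W_{n,m,i,j,\sigma} = U_0$, and set
\[
E := \bigcup_{(n,m,i,j,\sigma) \notin \mathcal{R}} W_{n,m,i,j,\sigma} \, .
\]
Since $V$ is irreducible, each $W_{n,m,i,j,\sigma}$ appearing in this union is a proper Zariski closed subset of $U_0$, and there are only countably many such quintuples, so $E$ is a countable union of proper subvarieties. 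By construction, for every $t \in U_0 \setminus E$ and every $(i,j,\sigma)$, the set $R_{i,j,\sigma}(P_t)$ equals $\{(n,m) : (n,m,i,j,\sigma)\in \mathcal{R}\}$, which is independent of $t$.

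We then build $\Gamma(V)$ from the pair $(\rho, \mathcal{R})$ following the recipe in the proof of Lemma \ref{lem:how-to-prove-gamma}: start from the infinite skeleton $\hat\Gamma$ with vertex set $\{0,\ldots,d-2\}\times \N \times \U_k$, flow $\hat\pi(i,n,\sigma)=(i,n+1,\rho(\sigma))$, $\U_k$-action on the third factor, and marking $\hat\mu(i)=(i,0,1)$; then quotient by the equivalence relation generated by identifying $(i_1,n_1,\sigma_1)\sim(i_2,n_2,\sigma_2)$ whenever $(n_1,n_2,i_1,i_2,\sigma_1^{-1}\sigma_2)\in \mathcal{R}$. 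The resulting object is a critically marked dynamical graph satisfying (G1)--(G6) and $\Gamma(P_t)=\Gamma(V)$ for every $t \in U_0 \setminus E$ by Lemma \ref{lem:how-to-prove-gamma}. The only slightly delicate point is verifying the compatibility of the $\U_k$-action with the flow required by axiom (G4'), which follows from the fact that $\rho$ arises from a genuine polynomial with symmetry $\U_k$ (hence already satisfies (G4') for any $t \in U_0$), and this compatibility is a closed condition on the quintuples in $\mathcal{R}$; there is no real obstacle beyond bookkeeping.
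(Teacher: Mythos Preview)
Your proof is correct and follows essentially the same route as the paper: reduce to a Zariski open where $\Sigma(P_t)$ and $\rho_{P_t}$ are constant via Proposition~\ref{prop:sameZariskiopen}, then discard the countable union of proper closed loci $W_{n,m,i,j,\sigma}$ that are not all of $U_0$, and conclude by Lemma~\ref{lem:how-to-prove-gamma}. The paper's argument is slightly more terse (it does not spell out the skeleton-and-quotient construction of $\Gamma(V)$, nor the check of (G4')), but the substance is identical.
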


\begin{remark}
It is in general not possible to get equality $\Gamma(P) = \Gamma(V)$ on a Zariski dense open subset. When $V$ is a special curve, 
then the set of PCF polynomials in $V$ is infinite countable (hence Zariski dense) and for each of these polynomials the marked dynamical graph is
finite, although $\Gamma(V)$ is not.

However we expect that $\Gamma(P) = \Gamma(V)$ for a (euclidean) dense subset of $P \in V$ (see Question~\ref{qst:wringing} below). 
\end{remark}

\begin{proof}
By Proposition~\ref{prop:sameZariskiopen}, there exists a open Zariski dense subset $V^\circ \subset V$
such that $\Sigma(P) = \U_k$ for all $ P \in V^\circ$. Reducing $V^\circ$ if necessary, we may also assume
that the morphism $\rho_P \colon \Sigma(P) \to \Sigma(P)$ is induced by the same morphism $\rho \colon \U_k \to \U_k$
for all $P\in V^\circ$. 

For any $i,j \in \{0, \cdots, d-2\}$,  and $\sigma \in \U_k$, observe that 
the set $Z(n,m,i,j,\sigma) = \{ t \in V^\circ, \,  P^n_t(c_i) = \sigma P^m_t(c_j)\} $ is Zariski closed. 
The union $\mathcal{Z}$ of all sets $Z(n,m,i,j,\sigma)$ which have empty interior is thus a countable
union of strict subvarieties of $V$.

 All polynomials $P_t$ with $t \in V^\circ \setminus \mathcal{Z}$ have the same group of symmetries, 
 and the same sets $\{ (n,m), \, P^n(c_i) = \sigma P^m(c_j)\}$
so that the marked dynamical graph $\Gamma(P)$ is constant on this set by the previous lemma.
\end{proof}

\section{Dynamical graphs attached to special curves}\label{sec:special graphs}

We aim at characterizing marked dynamical graphs attached to special curves. 
Before doing so let us begin with the following observation. 
\begin{lemma}
 Let $v$ be any vertex of a marked dynamical graph $\Gamma$ and pick any symmetry $g$ of $\Gamma$.
Then $v$ has a finite $\pi$-orbit iff $g\cdot v$ has.
\end{lemma}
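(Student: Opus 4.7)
The plan is to reduce the statement to the relation $\pi(g\cdot w)=\rho(g)\cdot \pi(w)$ provided by axiom (G4), together with the finiteness of the symmetry group $\U_k$.

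First, I would establish by induction on $n\ge 0$ the formula
\[
\pi^n(g\cdot v)=\rho^n(g)\cdot \pi^n(v),
\]
the base case being trivial and the inductive step following from
$\pi(\rho^n(g)\cdot \pi^n(v))=\rho^{n+1}(g)\cdot \pi^{n+1}(v)$, which is (G4) applied to the vertex $\pi^n(v)$ with symmetry $\rho^n(g)$. Note that in the critically marked case (G4') the same identity holds, after possibly replacing $\rho$ by its explicit description in each of the three listed cases; what matters is only the existence of a morphism $\rho\colon\U_k\to\U_k$ satisfying the commutation.

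Assume now that $v$ has a finite $\pi$-orbit, i.e. the set $O(v):=\{\pi^n(v):n\ge0\}$ is finite. The formula above gives
\[
\{\pi^n(g\cdot v):n\ge0\}\subset \U_k\cdot O(v),
\]
and the right-hand side is finite since both $\U_k$ and $O(v)$ are finite. Hence $g\cdot v$ has a finite $\pi$-orbit. Conversely, because $\U_k$ is a group, $g^{-1}$ is also a symmetry of $\Gamma$; applying the implication just proved to the vertex $g\cdot v$ and the symmetry $g^{-1}$ shows that $v=g^{-1}\cdot(g\cdot v)$ has a finite $\pi$-orbit whenever $g\cdot v$ does. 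This yields the equivalence.

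There is no real obstacle here: the only subtle point is to verify that the commutation relation $\pi(g\cdot w)=\rho(g)\cdot\pi(w)$ holds uniformly in the various cases of (G4) and (G4'), but in each of these cases such a morphism $\rho$ is part of the data, so the inductive argument goes through without change.
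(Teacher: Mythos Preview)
Your proof is correct and follows exactly the same approach as the paper: both rely on the iterated commutation formula $\pi^n(g\cdot v)=\rho^n(g)\cdot\pi^n(v)$ together with the finiteness of the symmetry group $\U_k$. Your write-up is simply a more detailed unpacking of the paper's one-line argument.
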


\begin{proof}
This result follows from the fact that the symmetry group of $\Gamma$ is a finite group and $\pi^n (g \cdot v) = \rho^n(g) \cdot \pi^n(v)$ for any integer $n$.
\end{proof}

A geometric consequence of the previous lemma is the following. 
For any symmetry $g$, a vertex $v$ belongs to an infinite 
connected component of $\Gamma$ iff $g \cdot v$ does. 

\begin{definition}
A marked dynamical graph is said to be \emph{special}\index{graph!special dynamical} if it  contains exactly one infinite connected component
up to symmetry. In other words, for any two infinite connected components
$T$ and $T'$ of the dynamical graph, there exists a symmetry $\sigma$ such that $\sigma(T) \cap T'\neq \varnothing$.
\end{definition}

\begin{theorem}\label{tm:marked graph is special}
Let $C$ be any irreducible curve in the moduli space of critically marked polynomials. 
If the family of polynomials induced by $C$ is primitive, then 
the following are equivalent: 
\begin{itemize}
\item
the curve $C$ is special (i.e. contains infinitely many PCF polynomials);
\item
the critically marked dynamical graph $\Gamma(C)$ is special.
\end{itemize}
\end{theorem}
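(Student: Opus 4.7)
The plan is to unpack the classification theorem for special curves (Theorem~\ref{thm:specialclas}) and translate it via the dictionary between the marked dynamical graph $\Gamma(C)$ and algebraic identities satisfied generically by the critical orbits on $C$. Throughout, I will use that $\Gamma(C)=\Gamma(P_t)$ for $t$ outside a countable union of proper subvarieties of $C$, so that a relation $\pi^n(\mu(i)) = \sigma\cdot\pi^m(\mu(j))$ in $\Gamma(C)$ holds if and only if the regular functions $P^n(c_i)$ and $\sigma\cdot P^m(c_j)$ on $C$ coincide.

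For the forward direction, assume $C$ is special and the family is primitive. Case~(1) of Theorem~\ref{thm:specialclas} is excluded by the primitivity hypothesis, so case~(2) applies and furnishes a non-empty subset $\mathsf{A}$ of the critical points such that any $c_i,c_j\in\mathsf{A}$ satisfy $P^n(c_i)=\sigma\cdot P^m(c_j)$ for some $\sigma\in\Sigma(P)$ and $n,m\geq 0$, while every $c_i\notin\mathsf{A}$ is persistently preperiodic. By Theorem~\ref{thm:active-characterization1}, $\mathsf{A}$ is precisely the set of active critical points. In $\Gamma(C)$, the passive critical markings $\mu(i)$ lie in finite connected components, while each active $\mu(i)$ lies in an infinite component. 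The entanglement relations for pairs in $\mathsf{A}$ translate exactly into the statement that any two infinite components are identified by some symmetry, so $\Gamma(C)$ contains a unique infinite component modulo $\Sigma(P)$, which is the definition of being special.

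For the converse, assume $\Gamma(C)$ is special. Since $C$ is an actual curve in the moduli space, Corollary~\ref{cor:PCF} rules out that $C$ lies entirely in the PCF locus, so at least one critical point $c_i$ is active, and $\mu(i)$ lies in an infinite component $T_i$ of $\Gamma(C)$. For any other active critical point $c_j$, the component $T_j$ of $\mu(j)$ is also infinite, hence by speciality there exist $\sigma\in\Sigma(P)$ and $n,m\geq 0$ with $\pi^n(\mu(i))=\sigma\cdot\pi^m(\mu(j))$ in $\Gamma(C)$, equivalently $P^n(c_i)=\sigma\cdot P^m(c_j)$ as regular functions on $C$. Pick any $t_0\in\preper(P,c_i)$. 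Then $P^n(c_i)(t_0)$ is preperiodic for $P_{t_0}$, hence so is $\sigma(t_0)\cdot P^m(c_j)(t_0)$; since $\sigma(t_0)\in\Sigma(P_{t_0})$ preserves the set of preperiodic points by Proposition~\ref{prop:algebraiccharacterization}, we conclude that $P^m(c_j)(t_0)$ is preperiodic, and therefore so is $c_j(t_0)$. The passive critical points are preperiodic at every $t_0\in C$. Combining these, $P_{t_0}$ is PCF for every $t_0\in\preper(P,c_i)$, so $\preper(P,c_i)\subset\mathrm{PCF}(P)\cap C$. Since the pair $(P,c_i)$ is active, Remark~\ref{rem:zar dense} guarantees that $\preper(P,c_i)$ is infinite, and hence so is $\mathrm{PCF}(P)\cap C$, showing $C$ is special.

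The main nontrivial ingredient is Theorem~\ref{thm:specialclas}, which already encodes the hard arithmetic content (built on Theorem~\ref{tm:unlikely-marked} applied to pairs of active critical points); the rest of the argument amounts to the purely combinatorial translation between relations in $\Gamma(C)$ and identities of regular functions on $C$, together with the basic fact that a symmetry of $P_{t_0}$ preserves preperiodicity. The only mild subtlety is ensuring that the active critical points produce genuinely infinite components (resolved by Theorem~\ref{thm:active-characterization1}) and that at least one such component exists (resolved by Corollary~\ref{cor:PCF}).
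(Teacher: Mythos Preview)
Your proof is correct and follows essentially the same approach as the paper: the forward direction invokes Theorem~\ref{thm:specialclas} (Case~2, using primitivity to exclude Case~1) and translates the critical relations into the graph language, while the converse extracts the relations $P^n(c_i)=\sigma\cdot P^m(c_j)$ from speciality of $\Gamma(C)$ and uses activity of one critical point to produce infinitely many PCF parameters. Your converse is in fact slightly more explicit than the paper's, which simply asserts $\mathrm{PCF}(P)=\preper(P,c_i)$ without spelling out the role of the symmetry relation; you also make precise the step that $\sigma(t_0)\in\Sigma(P_{t_0})$ preserves preperiodic points, which the paper leaves implicit.
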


For the proof, we rely on the next lemma.

\begin{lemma}\label{lm:partition-Crit}
Let $\Gamma$ be any special marked dynamical graph. 
Then there exists a partition $\{0, \cdots, d-2\} = \mathsf{A} \sqcup \mathsf{P}$ such that
\begin{itemize}
\item
the $\pi$-orbit of a point $\mu(i)$, $i \in \{0, \cdots, d-2\}$ is finite if and only if $i \in \mathsf{P}$;
\item
for any $i, j \in \mathsf{A}$, there exist $n,m \ge 0$, and a symmetry $\sigma$ such that
$\pi^n(\mu(i)) = \sigma \cdot \pi^m(\mu(j))$.
\end{itemize}
\end{lemma}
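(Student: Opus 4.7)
The plan is to set $\mathsf{P} := \{i \in \{0,\ldots,d-2\} : \text{the $\pi$-orbit of } \mu(i) \text{ is finite}\}$ and $\mathsf{A} := \{0,\ldots,d-2\} \setminus \mathsf{P}$; the first bullet then holds by construction. For the second bullet, we first observe that when $i \in \mathsf{A}$ the connected component $T_i$ of $\Gamma$ containing $\mu(i)$ must be an infinite tree: a connected component containing a loop would force every orbit in it to land in the loop and become eventually periodic---hence finite as a set of vertices---contradicting $i \in \mathsf{A}$.

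The key combinatorial input is the following fact $(\star)$: in any connected graph $T$ whose vertices each have exactly one outgoing edge, any two vertices $v, w \in T$ have forward orbits that eventually meet, i.e.\ $\pi^p(v) = \pi^q(w)$ for some $p, q \ge 0$. We would prove this by induction on the graph distance $d(v,w)$: the case $d(v,w)=1$ is settled by noting that the unique edge between $v$ and $w$ is oriented either as $v \to w$ (so $\pi(v)=w$) or as $w \to v$ (so $\pi(w)=v$), and the inductive step is immediate via an intermediate vertex on the geodesic path.

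Now fix $i, j \in \mathsf{A}$. Since $\Gamma$ is special, there exists a symmetry $\sigma \in \U_k$ with $\sigma(T_i) \cap T_j \neq \varnothing$, so one may pick $v \in T_i$ with $\sigma \cdot v \in T_j$. Applying $(\star)$ inside $T_i$ to the pair $(\mu(i), v)$ yields integers $n', b \ge 0$ with $\pi^{n'}(\mu(i)) = \pi^b(v)$; applying $(\star)$ inside $T_j$ to $(\mu(j), \sigma \cdot v)$ yields $m_0, a \ge 0$ with $\pi^{m_0}(\mu(j)) = \pi^a(\sigma \cdot v) = \rho^a(\sigma) \cdot \pi^a(v)$, where we invoke the morphism $\rho : \U_k \to \U_k$ provided by (G4), whose compatibility with each of the three cases of (G4') is built into the definition.

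Setting $N := \max(a,b)$ and iterating both equations by the appropriate power of $\pi$ (using $\pi^k(g \cdot z) = \rho^k(g)\cdot \pi^k(z)$), both sides match at $\pi^N(v)$ and combine into
\[
\pi^{n'+N-b}(\mu(i)) \;=\; \rho^N(\sigma)^{-1} \cdot \pi^{m_0 + N - a}(\mu(j)),
\]
which is the desired equality with $\sigma' := \rho^N(\sigma)^{-1}$ and the indices $n := n' + N - b$ and $m := m_0 + N - a$, both nonnegative. The only genuine content is the tree fact $(\star)$; the subsequent step is mechanical bookkeeping of how the $\U_k$-action propagates through iteration of $\pi$ via the homomorphism $\rho$, and no deeper obstacle is expected.
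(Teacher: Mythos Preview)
Your proof is correct, and in fact cleaner than the argument the paper gives. The paper's ``proof'' of this lemma declares $\mathsf{P}$ and $\mathsf{A}$ to be the sets of indices for which the critical point $c_i$ is passive or active \emph{on a curve $C$}, and then invokes Theorem~\ref{thm:active-characterization1} and Theorem~\ref{thm:specialclas}. But the lemma as stated concerns an abstract special marked dynamical graph $\Gamma$, with no curve $C$ or polynomial family in sight; the paper's argument thus only covers graphs of the form $\Gamma(C)$ for $C$ a special curve, not the generality actually claimed.

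Your approach, by contrast, argues purely from the graph axioms. The fact $(\star)$ you isolate---that any two vertices in the same connected component of a graph-with-flow have forward orbits that eventually merge---is exactly the right combinatorial core, and your inductive proof of it is fine. The remainder is, as you say, bookkeeping with the intertwining relation $\pi(g\cdot v)=\rho(g)\cdot\pi(v)$ from (G4), and the computation checks out: with $N=\max(a,b)$ both resulting exponents $n'+N-b$ and $m_0+N-a$ are nonnegative, and $\rho^N(\sigma)^{-1}\in\U_k$ is a legitimate symmetry. One small remark: you don't actually need the observation that $T_i$ is a tree---$(\star)$ holds in any connected component regardless of loops---though the tree remark is correct and does no harm.
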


\begin{proof}
Let $\mathsf{P}$ (resp. $\mathsf{A}$) be the set of indices $i$ such that the critical point $c_i$ is passive (resp. active) on $C$.
By Theorem~\ref{thm:active-characterization1}, a critical point $c_i$ is passive if and only if it is stably preperiodic, hence the first point follows from the definition of $\mu(i)$. The second point follows from Theorem~\ref{thm:specialclas}.
\end{proof}

\begin{proof}[Proof of Theorem~\ref{tm:marked graph is special}]
Suppose first that the curve $C$ is special. Observe that at least one critical point $c$ is active on $C$ so that 
for all $t \in C$ outside a countable set, the orbit of $c$ under $P_t$ is infinite. 
Pick any $t \in C$ such that $\Gamma(P_t) = \Gamma(C)$ and $\{ P_t^n(c)\}_{n\ge0}$ is infinite.
By assumption the family $\{P_t\}_{t\in C}$ is primitive, and Theorem~\ref{thm:specialclas} implies that $\Gamma(P)$ has at most one infinite component (up to symmetry). 
Since $c$ is not preperiodic, it has exactly one infinite component, and $\Gamma(C)$ is special. 

\smallskip

Conversely, when the marked dynamical graph $\Gamma(C)$ is special, we let $\mathsf{A}$ be the set of indices $i\in \{0, \cdots, d-2\}$ 
such that $\mu(i)$ falls into the infinite component of $\Gamma(C)$. Observe that any critical point $c_i$ for which  $i \notin \mathsf{A}$ is stably preperiodic on $C$. 
Pick any $i \in \mathsf{A}$. Since $C$ is a curve inside the moduli space of critically marked polynomials, 
its image in the moduli space of polynomials remains a curve, hence the family is not isotrivial. Theorem~\ref{thm:active-characterization1} implies
 the critical point $c_i$ to be active.

It follows that the set of parameters $t\in C$ such that $P_t$ is PCF coincides with the set $\{ t \in C, c_i \text{ is preperiodic}\}$. 
The latter set is infinite countable by Montel's theorem (see,~e.g., \cite[Lemma~2.3]{favredujardin}) which shows that $C$ is special.
\end{proof}

The next two figures describe all special critically marked dynamical graphs of degree $2$ and $3$.

\begin{figure}[h]
\centering
 \def\svgwidth{10cm}
\begingroup%
  \makeatletter%
  \providecommand\color[2][]{%
    \errmessage{(Inkscape) Color is used for the text in Inkscape, but the package 'color.sty' is not loaded}%
    \renewcommand\color[2][]{}%
  }%
  \providecommand\transparent[1]{%
    \errmessage{(Inkscape) Transparency is used (non-zero) for the text in Inkscape, but the package 'transparent.sty' is not loaded}%
    \renewcommand\transparent[1]{}%
  }%
  \providecommand\rotatebox[2]{#2}%
  \newcommand*\fsize{\dimexpr\f@size pt\relax}%
  \newcommand*\lineheight[1]{\fontsize{\fsize}{#1\fsize}\selectfont}%
  \ifx\svgwidth\undefined%
    \setlength{\unitlength}{573.71052713bp}%
    \ifx\svgscale\undefined%
      \relax%
    \else%
      \setlength{\unitlength}{\unitlength * \real{\svgscale}}%
    \fi%
  \else%
    \setlength{\unitlength}{\svgwidth}%
  \fi%
  \global\let\svgwidth\undefined%
  \global\let\svgscale\undefined%
  \makeatother%
  \begin{picture}(1,0.6058455)%
    \lineheight{1}%
    \setlength\tabcolsep{0pt}%
    \put(0,0){\includegraphics[width=\unitlength,page=1]{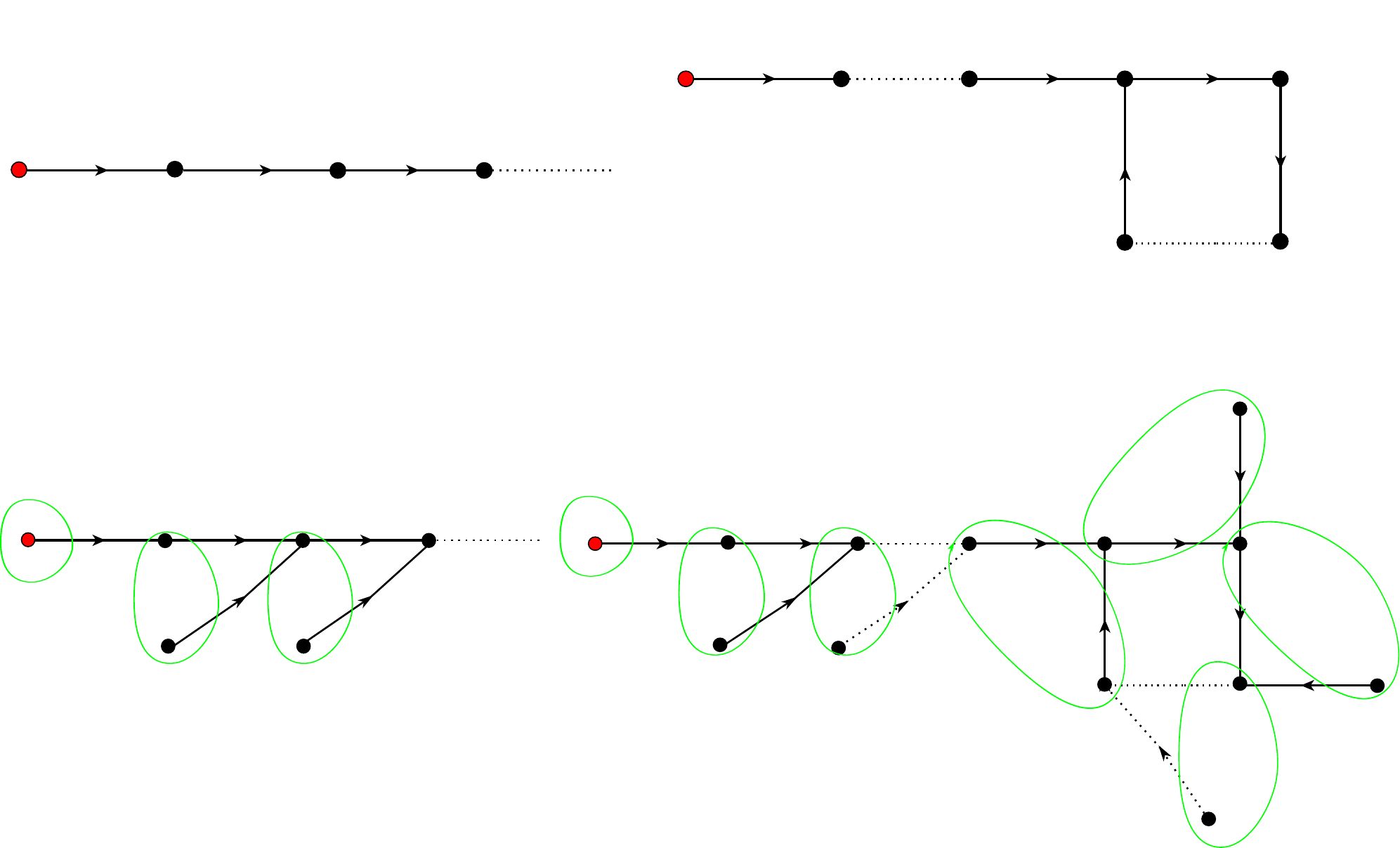}}%
    \put(0.0915752,0.59525106){\makebox(0,0)[lt]{\lineheight{1.25}\smash{\begin{tabular}[t]{l}With no symmetry.\\\end{tabular}}}}%
    \put(0.09516614,0.34152476){\makebox(0,0)[lt]{\lineheight{1.25}\smash{\begin{tabular}[t]{l}With $\mathbb{U}_2$-symmetry group\end{tabular}}}}%
  \end{picture}%
\endgroup%

  \caption{Special critically marked dynamical graphs of degree $2$: the critical point is marked in red, and $\mathbb{U}_2$-orbits in green}
 \label{fig:marked-degree2}
\end{figure}

\begin{figure}[h]
\centering
 \def\svgwidth{10cm}
\begingroup%
  \makeatletter%
  \providecommand\color[2][]{%
    \errmessage{(Inkscape) Color is used for the text in Inkscape, but the package 'color.sty' is not loaded}%
    \renewcommand\color[2][]{}%
  }%
  \providecommand\transparent[1]{%
    \errmessage{(Inkscape) Transparency is used (non-zero) for the text in Inkscape, but the package 'transparent.sty' is not loaded}%
    \renewcommand\transparent[1]{}%
  }%
  \providecommand\rotatebox[2]{#2}%
  \newcommand*\fsize{\dimexpr\f@size pt\relax}%
  \newcommand*\lineheight[1]{\fontsize{\fsize}{#1\fsize}\selectfont}%
  \ifx\svgwidth\undefined%
    \setlength{\unitlength}{553.05309233bp}%
    \ifx\svgscale\undefined%
      \relax%
    \else%
      \setlength{\unitlength}{\unitlength * \real{\svgscale}}%
    \fi%
  \else%
    \setlength{\unitlength}{\svgwidth}%
  \fi%
  \global\let\svgwidth\undefined%
  \global\let\svgscale\undefined%
  \makeatother%
  \begin{picture}(1,0.55505153)%
    \lineheight{1}%
    \setlength\tabcolsep{0pt}%
    \put(0,0){\includegraphics[width=\unitlength,page=1]{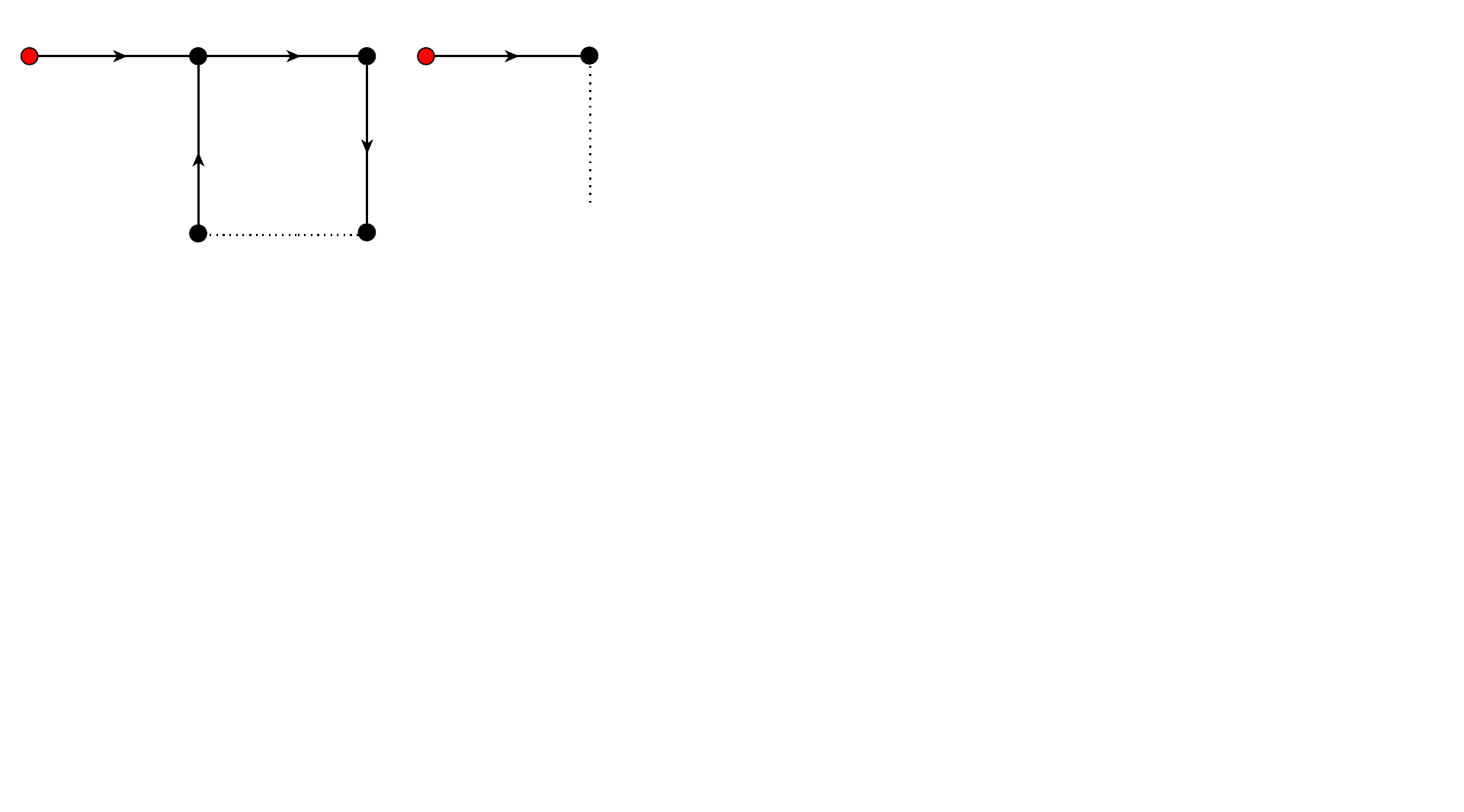}}%
    \put(0.30763618,0.29876963){\makebox(0,0)[lt]{\lineheight{1.25}\smash{\begin{tabular}[t]{l}Not realizable\end{tabular}}}}%
    \put(0,0){\includegraphics[width=\unitlength,page=2]{marked-graph-degree3.pdf}}%
    \put(0.3100988,0.54446397){\makebox(0,0)[lt]{\lineheight{1.25}\smash{\begin{tabular}[t]{l}Asymmetric\end{tabular}}}}%
    \put(0.30534707,0.13551588){\makebox(0,0)[lt]{\lineheight{1.25}\smash{\begin{tabular}[t]{l}With $\mathbb{U}_2$-symmetry group\end{tabular}}}}%
    \put(0,0){\includegraphics[width=\unitlength,page=3]{marked-graph-degree3.pdf}}%
  \end{picture}%
\endgroup%

  \caption{Special critically marked dynamical graphs of degree $3$. Circled red dots are multiple critical critical points}
 \label{fig:marked-degree3}
\end{figure}
 

\section{Realization theorem}\label{sec:realization}
This section is devoted to the proof of the following result which forms the bulk of the proof of our correspondence theorem
to be stated in the next section. 

Observe that if $\Gamma$ is critically marked dynamical  graph, then the union of all its finite components
is a finite critically marked dynamical  graph $\Gamma_\fin$ (of smaller degree). 

We shall say that a finite critically marked dynamical graph $\Gamma_0$ without symmetry is realizable by a PCF polynomial if there exists a PCF polynomial $P$
such that $\Gamma_0$ is equal to $\Gamma(P)$ with the action of $\Sigma(P)$ removed. 

\begin{theorem}\label{thm:realizability}
Let $\Gamma$ be any special asymmetric critically marked dynamical  graph such that:
\begin{itemize}
\item[(R1)]
two distinct marked points have different images;
\item[(R2)]
the finite dynamically marked subgraph $\Gamma_\fin$ is 
realizable by a PCF polynomial.
\end{itemize}
Then we can find a primitive polynomial $P$ with disconnected Julia set
such that $\Gamma(P) = \Gamma$.
\end{theorem}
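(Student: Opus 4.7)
My plan is to realize $\Gamma$ by combining two construction principles that correspond to the two natural pieces of the graph: the finite part $\Gamma_{\fin}$ (encoding the passive critical orbits) and the unique infinite component (encoding the active critical orbits up to symmetry). The strategy mimics, in a substantial way, the combined use of the Bielefeld-Fisher-Hubbard realization of PCF combinatorics and the DeMarco-McMullen theory of dynamical trees on the escape locus.

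Concretely, let $\mathsf{A}, \mathsf{P} \subset \{0,\dots,d-2\}$ be the partition of critical markings into \emph{active} and \emph{passive} indices provided by Lemma~\ref{lm:partition-Crit}. First, by (R2), I fix a PCF polynomial $P_0$ realizing $\Gamma_{\fin}$; working in the orbifold cover $\A^{d-1}\to \mpcrit^d$ of \S\ref{sec:spaceofpoly}, the conditions that the critical points indexed by $\mathsf{P}$ satisfy exactly the preperiodic relations imposed by $\Gamma_{\fin}$ cut out an algebraic subvariety $V\subset \A^{d-1}$ that is non-empty (it contains a lift of $P_0$) and whose active critical points still move. Second, on the subvariety $V$, the locus where all critical points indexed by $\mathsf{A}$ escape to infinity is a non-empty open subset $V^{\esc}$, since along a one-parameter deformation increasing the escape rate of one active critical point I can arrange that no passive relation is violated. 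On this open set, each active critical point $c_i$ receives, via the B\"ottcher coordinate of \S\ref{sec:bottcher}, an external angle and a level indicator; the subset of $V^{\esc}$ where, for each pair $(i,j)\in\mathsf{A}^2$ and each symmetry $\sigma$ prescribed by $\Gamma$, one has $P^n(c_i)=\sigma\cdot P^m(c_j)$ with the exact $(n,m,\sigma)$ read from $\Gamma$ is precisely the locus where the "dynamical tree" of $(P,c_i)_{i\in \mathsf{A}}$ matches the combinatorial tree extracted from the infinite component of $\Gamma$. The realization theorem of DeMarco-McMullen \cite{DeMarco-McMullen} applies on each stratum where the passive critical points have prescribed PCF behaviour, after verifying that the combinatorial tree one must realize is admissible: this admissibility is ensured by (R1) (no forced coincidence of distinct marked points) and by the consistency conditions (G1)-(G6) on $\Gamma$.

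Once a candidate $P$ has been produced inside $V^{\esc}$ with the correct escape combinatorics, I verify that $\Gamma(P)=\Gamma$ using Lemma~\ref{lem:how-to-prove-gamma}: by construction, for every $i,j$ and every $\sigma$ the set $\{(n,m): P^n(c_i)=\sigma P^m(c_j)\}$ contains the relations imposed by $\Gamma$, and I must argue there is no additional relation. Supplementary relations would force either an extra coincidence between distinct critical orbits in the escape locus (ruled out by the freeness of the tree model outside the prescribed identifications), or an enlargement of the symmetry group $\Sigma(P)\supsetneq\{\id\}$ compatible with $\Gamma$; but the latter is excluded exactly by the asymmetry of $\Gamma$, via the algebraic characterization of $\Sigma(P)$ given in Theorem~\ref{tm:algebraiccharacterization} and the discussion of \S\ref{sec:alg-char-sym}. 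Since at least one critical point escapes, the Julia set of $P$ is automatically disconnected, and since $\Sigma(P)$ acts trivially and $P$ cannot be written as $\sigma\cdot Q^n$ with $n\ge 2$ (this would force extra relations absent from $\Gamma$ or enlarge the symmetries), the polynomial $P$ is primitive in the sense of \S\ref{sec:primitivity}.

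The hard part will be the binding step: controlling the geometry of the PCF variety $V$ well enough to ensure that $V^{\esc}$ is not only non-empty but contains a parameter matching the tree combinatorics from $\Gamma$. The PCF conditions are algebraic and rigid while the tree realization lives on the transcendental escape locus; reconciling them requires showing that along paths in $V$ from a PCF point $P_0$ (where all critical points are bounded) to the escape region, one can independently prescribe the escape pattern of the active critical points without perturbing the passive PCF relations. This is where the critical-marking bookkeeping and the subtle interplay between the Bielefeld-Fisher-Hubbard construction (which produces $P_0$ with prescribed Hubbard tree for the $\mathsf{P}$-critical set) and the DeMarco-McMullen construction (which parametrizes the $\mathsf{A}$-critical escape locus by trees) enter; an explicit model would use the dimension count $\dim V = |\mathsf{A}|$ together with the fact that the joint escape-rate map $V^{\esc}\to \R_+^{\mathsf{A}}$ is proper and surjective onto an open cone, and that the fibres carry enough freedom to realize any prescribed symbolic coincidence pattern among the active critical orbits compatible with the asymmetry of $\Gamma$.
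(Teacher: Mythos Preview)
Your proposal has a genuine gap at precisely the point you yourself flag as ``the hard part''. The appeal to DeMarco--McMullen is not sufficient for what you need: their realization theorem controls the \emph{Green function values} at critical points (equivalently, the metric tree of escape rates), not the exact pattern of orbit collisions $P^n(c_i)=\sigma P^m(c_j)$. The paper's authors note this explicitly: their construction ``is somewhat more involved since we need to control when orbits of critical points merge whereas DeMarco and McMullen only quantified when the Green functions at critical points agree''. So even on the pure escape locus (with $\Gamma_{\fin}=\varnothing$), the step ``the subset of $V^{\esc}$ where \dots\ $P^n(c_i)=\sigma P^m(c_j)$ with the exact $(n,m,\sigma)$ read from $\Gamma$'' is not furnished by the cited theorem, and you give no independent argument that this locus is non-empty. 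The binding step when $\Gamma_{\fin}\neq\varnothing$ is even more problematic: your dimension claim $\dim V=|\mathsf{A}|$ for the PCF stratum is unjustified (such loci can be reducible or of unexpected dimension), and the assertion that the escape-rate map on $V^{\esc}$ is proper and surjective with fibres carrying ``enough freedom'' is a heuristic, not a proof.

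The paper bypasses all of this by a direct \emph{construction} rather than a search in parameter space. It builds an increasing sequence of planar Riemann surfaces $S_n$ with maps $\Phi_n\colon S_n\to S_n$ inductively, realizing the truncated graphs $\Gamma_n$ one level at a time; the key technical input is a hands-on realization theorem (Theorem~\ref{thm:real portrait}) producing a polynomial with prescribed local degrees over a given circle, which is where (R1) is actually used. When $\Gamma_{\fin}=\varnothing$, a modulus estimate shows the complement of $\bigcup S_n$ has absolute area zero and $\Phi$ extends to a polynomial. When $\Gamma_{\fin}\neq\varnothing$, one glues in a disk carrying the dynamics of the PCF polynomial $P_0$ from (R2) at a deep level $n_0$, obtaining a smooth branched cover that is holomorphic except on finitely many annuli, and then straightens via Ahlfors--Bers. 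This constructive route never needs to analyze the global structure of PCF strata in $\A^{d-1}$ or to invoke a tree-realization black box; the orbit collisions are built in by hand at each step.
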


We shall give in \S \ref{sec:real-PCF} quite general conditions on a graph to be realizable by a PCF polynomial. 

\medskip

During the whole construction, we fix a constant $\rho>1$, and write $M_d(z) = z^d$.
A brief explanation of our strategy is given at the end of \S \ref{sec:trunca}.

\subsection{Asymmetric special graphs}\label{sec:asymmetric graph}
Before embarking on the proof of the theorem above, we note the ubiquity of asymmetric graphs among special ones. 

\begin{proposition}
Let $\Gamma$ be any special dynamical graph without any symmetry. 
Suppose that $\Gamma$ is not asymmetric. 
Then
\begin{enumerate}
\item
either there exists a vertex $v_* \in \mu(\mathcal{F})$ which is fixed by $\pi$, 
an integer $k\ge2$ which divides a power of $d_\pi(v_*)$,
and a free action of $\U_k$ on $\mu(\mathcal{F})\setminus \{ v_*\}$ which preserves $d_\pi$;
\item 
or there exists an integer $k\ge2$, a vertex $v_* \in \mu(\mathcal{F})$ with infinite $\pi$-orbit and $d_\pi(v_*) =k$, 
and a free action of $\U_k$ on $\mu(\mathcal{F})\setminus \{ v_*\}$ which preserves $d_\pi$, and 
such that $\pi( g \cdot v) = \pi(v)$ for all $v \neq v_*$.
\end{enumerate}
\end{proposition}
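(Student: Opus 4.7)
The strategy is to leverage the failure of asymmetry to produce an embedding $\iota\colon \Gamma\hookrightarrow \Gamma'$ into a critically marked dynamical graph $\Gamma'$ of the same degree $d$ with non-trivial symmetry group $\U_k$, $k\geq2$, and then use the trichotomy (G4') to read off the two scenarios. Throughout we may assume, as the natural notion of embedding, that $\iota$ is injective, preserves the flow and $d_\pi$, and satisfies $\iota\circ\mu = \mu'$; in particular $\iota(\mu(\mathcal{F})) = \mu'(\mathcal{F})$, which is a $\U_k$-invariant subset of $\Gamma'$ by axiom (G5).

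First I would rule out the case in which $\U_k$ acts freely on $\mu'(\mathcal{F})$ with $\pi(g\cdot v) = g\cdot\pi(v)$ (the first bullet of (G4')). In that situation the $\U_k$-action commutes with $\pi$, so the subset $\iota(\Gamma)\subset V(\Gamma')$ is simultaneously $\pi$-stable and contains the $\U_k$-invariant set $\mu'(\mathcal{F})$; it is therefore $\U_k$-invariant, and by the minimality axiom (G6) applied to $\Gamma'$ it coincides with $\Gamma'$. Pulling the $\U_k$-action back along $\iota$ would then endow $\Gamma$ itself with a non-trivial symmetry group, contradicting our standing hypothesis. Consequently $\Gamma'$ falls into the second or third case of (G4').

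In the second case there is $v_*\in\mu'(\mathcal{F})$ fixed by both $\U_k$ and $\pi$ with $\rho(g) = g^{d_\pi(v_*)}$. The general freeness axiom (G4) then gives a free $\U_k$-action on $V(\Gamma')\setminus\{v_*\}$, and restricting via $\iota^{-1}$ furnishes a free action on $\mu(\mathcal{F})\setminus\{v_*\}$ that preserves $d_\pi$ (the latter because $d_\pi$ is constant on $\U_k$-orbits in $\Gamma'$). This yields scenario~(1) except for the arithmetic divisibility. To obtain the latter, I would exploit that, because $\Gamma$ is symmetry-free, $\iota(\Gamma)\subsetneq\Gamma'$ is \emph{not} $\U_k$-invariant, while $\Gamma'$ is by (G6) the flow-orbit of $\U_k\cdot\mu'(\mathcal{F})$. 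Write any $w\in V(\Gamma')$ as $w = \rho^n(g)\cdot\pi^n(\mu'(i))$ for some $g,i,n$. The new vertices added by passing from $\iota(\Gamma)$ to $\Gamma'$ must disappear after finitely many applications of $\pi$, because every flow-orbit in $\Gamma'$ must eventually synchronize with an orbit of $\iota(\Gamma)$ (there is only one infinite component of $\Gamma$ up to $\U_k$, so after applying $\pi$ enough the $\U_k$-translates must collapse into $v_*$ or coincide with vertices of $\iota(\Gamma)$). This forces $\rho^N$ to act trivially on $\U_k$ for some $N$, i.e.\ $g^{d_\pi(v_*)^N} = 1$ for all $g\in\U_k$, which is exactly the statement that $k$ divides a power of $d_\pi(v_*)$.

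In the third case of (G4') there is $v_*\in\mu'(\mathcal{F})$ fixed by $\U_k$ but not by $\pi$, with $k = d_\pi(v_*)$ and $\pi(g\cdot v) = \pi(v)$ for all $v$. Since $v_*$ is not $\pi$-periodic (iterating $\pi(g\cdot v_*) = \pi(v_*)$ would return $v_*$ into a fixed component and collapse us into the previous case), the $\pi$-orbit of $v_*$ is infinite; combined with the already-established freeness of $\U_k$ on $\mu'(\mathcal{F})\setminus\{v_*\}$ and the relation $\pi\circ g = \pi$ on $\Gamma'\setminus\{v_*\}$, which transfers verbatim to $\Gamma$ via $\iota$, this gives scenario~(2). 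The main technical obstacle is the divisibility statement in case~(1): one must translate the geometric condition that $\Gamma'$ is generated by flow-orbits of $\U_k\cdot\mu'(\mathcal{F})$ while $\Gamma$ carries no symmetry into the purely arithmetic relation $k\mid d_\pi(v_*)^N$. I expect this to require an explicit identification, via speciality, of the point at which the various $\U_k$-translated flow-orbits of a fixed active marked vertex must coalesce inside $\iota(\Gamma)$.
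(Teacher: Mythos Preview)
Your overall strategy—embed $\Gamma$ into $\Gamma'$ with symmetry $\U_k$ and run through the trichotomy (G4')—is exactly the paper's. The gap lies in your treatment of the first branch of (G4').

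You correctly argue that when $\U_k$ acts freely on $\mu'(\mathcal{F})$ with $\rho=\mathrm{id}$, the image $\iota(\Gamma)$ is $\U_k$-stable and hence (by minimality) coincides with $\Gamma'$, so the action pulls back to $\Gamma$. But this does \emph{not} contradict the hypothesis that $\Gamma$ is ``without symmetry''. That hypothesis says only that the symmetry group recorded in the data of $\Gamma$ is the trivial group $\U_1$; it does not assert that no larger symmetry can be imposed on the same underlying graph. Indeed, the possibility of such an enlargement is precisely the meaning of ``not asymmetric'', which is your other standing assumption. So no contradiction has been reached, and the first branch of (G4') is not yet excluded.

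The paper's argument here is different and makes essential use of speciality—an ingredient you never invoke in this step. Since $\Gamma$ is special with trivial symmetry, it has a \emph{unique} infinite component $\Gamma_{\esc}$. Any marked vertex $v\in\mu(\mathcal{F})$ with infinite $\pi$-orbit lies in $\Gamma_{\esc}$, and so does $g\cdot v$ for each $g\in\U_k$ (it is still a marked vertex with infinite orbit). With $\rho=\mathrm{id}$ the group commutes with $\pi$, and connectedness of the single tree $\Gamma_{\esc}$ then forces $\pi^n(g\cdot v)=\pi^n(v)$ for large $n$, i.e.\ $g\cdot\pi^n(v)=\pi^n(v)$; this produces infinitely many fixed points for $g\neq 1$, contradicting (G4).

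Your divisibility argument in the second branch of (G4') has the right shape but is imprecise. The paper's version is short and again exploits $\Gamma_{\esc}$: choose $v\in\mu(\mathcal{F})\cap\Gamma_{\esc}$ and a generator $g$ of $\U_k$; then $g\cdot v\in\Gamma_{\esc}$ as above, connectedness gives $\pi^n(g\cdot v)=\pi^n(v)$ for some $n$, and since $\pi^n(g\cdot v)=g^{d_\pi(v_*)^n}\cdot\pi^n(v)$ while the action is free off $v_*$, one gets $g^{d_\pi(v_*)^n}=1$, i.e.\ $k\mid d_\pi(v_*)^n$. There is no need to track ``new vertices'' of $\Gamma'$ or to argue that $\rho^N$ becomes globally trivial.
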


\begin{remark}
Suppose $\Gamma$ is a special dynamical graph without any symmetry. 
Then 2.  implies $\Gamma$ to be non-asymmetric. 
It is however not true that 1. implies $\Gamma$ to be non-asymmetric.
\end{remark}

\begin{proof}
Since $\Gamma$ is special and has no symmetry, it has a unique infinite component $\Gamma_{\esc}$. 
Suppose that $\Gamma$ is not asymmetric, embed $\Gamma \subset \Gamma'$ where $\Gamma'$ is a critically
marked dynamical graph with symmetry group $\U_k$ with $k\ge2$.
By (G4'), we have three possibilities.

 If $\U_k$ acts freely on $\mu(\mathcal{F})$ then 
$\pi(g \cdot v) = g \cdot \pi(v)$, and $g \cdot \Gamma_{\esc} \cap \Gamma_{\esc} = \varnothing$
if $g \neq 1$. But points in $\mu(\mathcal{F})$ having an infinite $\pi$-orbit are permuted by $\U_k$
hence all belong to $\Gamma_{\esc}$ which is impossible. 

The second option is that $\U_k$ fixes a vertex $v_*$ which is also fixed by $\pi$, and that
$\pi( g \cdot v) = g^{d_\pi(v_*)} \cdot \pi(v)$. Pick any vertex $v\in \mu(\mathcal{F}) \cap \Gamma_0$, and 
$g$ a generator of $\U_k$. Since $\Gamma_0$ is connected, we have $\pi^n(g \cdot v) = \pi^n(v)$ for some $n$, 
hence $k$ divides $d_\pi(v_*)^n$. We thus fall into Case 1. 

 The last option is when $\U_k$ fixes a vertex $v_*$ which has infinite $\pi$-orbit. Then $\pi(g \cdot v) = \pi(v)$
 and the fact that $d_\pi$ is $\U_k$-invariant implies the result.
\end{proof}

\subsection{Truncated marked dynamical graphs}\label{sec:trunca}
We thus fix once and for all a special marked dynamical graph $\Gamma$ satisfying the assumptions of Theorem~\ref{thm:realizability}. 
As in Lemma~\ref{lm:partition-Crit}, we define $\mathsf{A}$ to be the set of $i\in \{0, \cdots, d-2\}$
such that the $\pi$-orbit of $\mu(i)$ is infinite; and let $\mathsf{P}$ be its complement.

We let $H \colon V(\Gamma) \to \Z\cup \{ -\infty \}$ be the unique (height) function such that $H(\pi(v)) = H(v) +1$, which we 
normalize by the condition $\max \{ H(\mu(i)), i = 0, \cdots, d-2\} = 0$.
By convention we set $H|_{\Gamma_\fin} = -\infty$.

Write $\Gamma_\esc = \Gamma\setminus \Gamma_\fin$.
We first build from $\Gamma_\esc$ a sequence of marked dynamical graphs as follows. 
For any $n\in \Z$, let $\Gamma_n  = \Gamma_\esc \cap \{H \ge 1- n\}$. Observe that $\Gamma_n$ is connected since 
for any two vertices $v, v'$ of $\Gamma_\esc$, one may find integers $m, m'$ such that $\pi^m(v) =\pi^{m'}(v')$.
It is also naturally a marked dynamical graph of $\Gamma$ as follows. 
We have a canonical (injective) marking
$\mu_n \colon \partial\Gamma_n \to \Gamma_n$, and the flow $\pi$ preserves $\Gamma_n$, so that the data
$(\Gamma_n, \mu_n, \pi)$ determines a marked dynamical graph as defined in \S\ref{sec:classif special}. 

Note that $\partial \Gamma_n$ contains $\{ H = 1-n\}$ but might be strictly larger. Note also
that $d_\pi(v) =1$ for all vertices of $\Gamma_n$, $n\le 0$, whereas $d_\pi(v) \ge2$ for at least one vertex in $\Gamma_1$.
Finally, we have $\Gamma_n = \Gamma_\esc$ for all $n$ sufficiently large ($n \ge 1 - \min_{\Gamma_\esc} H$).

Recall the construction of marked dynamical graphs from Example~\ref{ex:mark-dyn}.

\begin{lemma}\label{lem:first realization}
There exists a finite set $\mathcal{F} \subset \{ |z| = \rho\}$ such that $G(\mathcal{F},M_d) = \Gamma_0$.
\end{lemma}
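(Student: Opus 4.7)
The proof will be by an explicit construction of a bijection $\phi : V(\Gamma_0) \to V(G(\mathcal{F},M_d))$ respecting the flow, markings and local degrees, for a suitably chosen finite set $\mathcal{F} \subset \{|z|=\rho\}$.

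The first step is to make the structure of $\Gamma_0$ transparent. Because every critical mark $\mu(i)$ satisfies $H(\mu(i))\le 0$ by the normalisation of $H$, the identity $d_\pi(v) = 1+\Card\mu^{-1}(v)$ forces $d_\pi \equiv 1$ on $\Gamma_0$, and the set of sources of $\Gamma_0$ coincides with $\partial\Gamma_0 = \{v\in\Gamma_\esc : H(v)=1\}$. Since $\Gamma$ is asymmetric one has $\U_k=\{1\}$, and since $\Gamma$ is special, $\Gamma_\esc$ is a single infinite component; being infinite it contains no $\pi$-cycle and is therefore a tree. Applying Lemma~\ref{lm:partition-Crit} together with (G6), every vertex of $\Gamma_\esc$ is of the form $\pi^n(\mu(i))$ with $i\in\mathsf{A}$, and for any $i,j\in\mathsf{A}$ the orbits of $\mu(i)$ and $\mu(j)$ coalesce at a common finite height. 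It follows that $|\partial\Gamma_0|\le|\mathsf{A}|\le d-1$ and that there exists an integer $N$ above which $\Gamma_0 \cap \{H\geq N\}$ reduces to a single ray $u_N, u_{N+1},\ldots$

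I will then build $\phi$ by descending induction on the height, exploiting that $M_d : \C^*\to\C^*$ is an unramified degree $d$ covering. On the trunk I set $\phi(u_h) := \rho^{d^{h-1}}$ for every $h\ge N$, which is forced by the required flow identity $\phi\circ\pi = M_d\circ\phi$. Assuming $\phi$ has been defined on $\{H\geq h+1\}$, for each vertex $v$ at height $h+1$ the preimage set $\pi^{-1}(v)\cap\Gamma_0$ has cardinality at most $d$ by (G2), and I distribute among these preimages pairwise distinct values of the form $\phi(v)^{1/d}\omega_j$, where $\omega_j$ ranges over a subset of the $d$-th roots of unity and $\phi(v)^{1/d}$ is any fixed $d$-th root of $\phi(v)$. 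An immediate consequence is $|\phi(v)| = \rho^{d^{H(v)-1}}$, so the marked set $\mathcal{F} := \phi(\partial\Gamma_0)$ is contained in $\{|z|=\rho\}$.

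The remaining verification that $\phi$ realises an equality of critically marked dynamical graphs $G(\mathcal{F},M_d)=\Gamma_0$ is then routine. Injectivity at a fixed height follows from the inductive choice of distinct $d$-th roots together with the observation that distinct values of $\phi$ at height $h+1$ give rise to disjoint sets of $d$-th roots; injectivity across heights is automatic because $|\phi(v)|=\rho^{d^{H(v)-1}}$ is strictly increasing in $H(v)$. The image of $\phi$ is exactly $\bigcup_{n\geq 0}M_d^n(\mathcal{F})$ because $\Gamma_0 = \bigcup_{v\in\partial\Gamma_0}\{\pi^n(v):n\geq 0\}$. Edges, flow, markings match by construction, and the local degrees match since no point of the orbit equals the critical point $0$ of $M_d$. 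The only place where the hypotheses on $\Gamma$ genuinely enter is in producing the trunk, i.e.\ in ensuring that all escaping critical orbits eventually merge; everything else amounts to a coherent choice of $d$-th roots and presents no real difficulty.
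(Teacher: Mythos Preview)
Your proof is correct and follows essentially the same approach as the paper: construct a map from $V(\Gamma_0)$ into $\C$ by descending induction on the height, choosing at each step pairwise distinct $d$-th roots for the preimages of a given vertex, and then take $\mathcal{F}$ to be the image of $\partial\Gamma_0$. Your exposition is in fact somewhat more careful than the paper's in two respects: you use the correct modulus normalisation $|\phi(v)|=\rho^{d^{H(v)-1}}$ compatible with $M_d$ (the paper writes $\rho^{H(v)}$, which cannot be literally right), and you make explicit the injectivity check across and within height levels.
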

The domain of definition of $M_d$ can be chosen to be $\{|z| >\rho^{1/d}\}$.

\begin{proof}
Observe first that any point in  $\{ H =1\}$ lies in the orbit of at least one point 
$\mu(i)$ so that the cardinality $n$ of $\{ H=1 \}$ is at most $d-1$.

We build a map $\theta \colon V(\Gamma_0) \to \C$ whose image will be the vertices of $G(\mathcal{F},M_d)$.
Let $v_1$ be the branched point of $\Gamma_0$ of maximal height equal to $H(v_*) = H_*$. 
This point is uniquely determined and we let $\theta(v_*)$ be any complex number of modulus $\rho^{H_*}$.

Since $\pi$ is a flow, the number of preimages by $\pi$ of $v_*$ is equal to the number of branches of $\Gamma$ at $v_1$
hence is $\le (d-1)$. We may thus find an injective map $\theta \colon \pi^{-1}(v_*) \to \{ |z| = \rho^{H_* -1}\}$
such that $\theta (\pi(v)) = M_d(\theta(v))$. Applying the same argument to each point of height $H_* -2, H_* -3$, etc.
we construct by induction an injective map $\theta \colon V(\Gamma_0) \to \C$ such that $\theta (\pi(v)) = M_d(\theta(v))$. 
We conclude by setting $\mathcal{F} = \theta(\{ H =1 \})$.
\end{proof}

\begin{remark}\label{rem:def mu0}
From the previous lemma, we get a canonical injective map $\mu_0 \colon \Gamma_0 \to \{ |z| \ge \rho\}$
such that $\mu_0(\pi(v)) = M_d(\mu_0(v))$. Observe that $\mathcal{F}$ is in bijection with $\{ H=1\}$. 
\end{remark}
We can now explain our strategy for the proof of Theorem~\ref{thm:realizability}. We shall construct by induction on $n$, 
an increasing sequence 
of Riemann surfaces $S_n$ and finite map $\Phi_n\colon S_n\to S_{n}$ 
such that $\Phi_n|_{S_{n-1}} = \Phi_{n-1}$ in such a way that the dynamical 
graph associated to the critical points of $\Phi_n$ equals $\Gamma_n$. 
The construction of the sequence $S_n$ is given in \S \ref{sec:construct-sequence-Riemann}.
At step $n$, we shall need to pick a polynomial whose critical points satisfy some constraints forced by
the geometry of the graph $\{ n \le H \le n-1\}$. We discuss in the next section the construction of such a
polynomial. 

When $\Gamma_\fin =\emptyset$, then the union of $S_n$ is a planar domain and it follows from an argument of McMullen
that $\Phi_n$ extends through the complement of $\cup S_n$ in $\C$ thus defining the required polynomial. 
When $\Gamma_{\fin}\not=\emptyset$, the construction is more involved as we have to patch a suitable disk containing
the filled-in Julia of a PCF polynomial realizing $\Gamma_\fin$ with $S_n$ for some large $n$. Once this is done, the argument
proceeds as in the former case. 
\begin{remark}
Our assumption (R1) is only used to get condition (D2) in Theorem~\ref{thm:real portrait} below, and it is likely that (R1) is in fact superfluous. 
\end{remark}

\subsection{Polynomials with a fixed portrait}
We now discuss the construction of polynomials with prescribed ramification locus. We shall need a more precise result
than~\cite[Proposition~7.3]{DeMarco-McMullen}. Our treatment is slightly different from op. cit. and more topological in nature. 

\smallskip

To avoid a statement with too many assumptions, we first describe our setup.  
Let $\gamma$ be any  simple closed curve in the complex plane.
We fix a finite set of points $ \mathcal{G} \subset \gamma$ and for each $p \in \mathcal{G}$
a non-empty finite set of positive integers $\mathcal{D}(p) = \{ n_{i,p}\}$. We also 
fix a (possibly empty) subset $\mathcal{D}_0(p) \subset \mathcal{D}(p)$. 

\begin{theorem}\label{thm:real portrait}
Suppose $\gamma, \mathcal{G}, \mathcal{D}(p)$ and $\mathcal{D}_0(p)$ are given as above, and pick any two positive integers
$d'$ and $N$ such that 
\begin{itemize}
\item[(D1)]
$(d'-1) = (N-1) + \sum_\mathcal{G} \sum_{i\in \mathcal{D}(p)} (n_{i,p} -1)$;
\item[(D2)]
for all $p\in \mathcal{G}$, $d' \ge \sum_{i\in \mathcal{D}(p)} n_{i,p}$;
\item[(D3)]
for all $p\in \mathcal{G}$, $N \ge \sum_{p\in \mathcal{G}} \Card \mathcal{D}_0(p)$.
\end{itemize}
Then for any point $z$ lying in the bounded connected component of $\C \setminus \gamma$,  
there exist a polynomial $Q$ of degree $d'$, and a simple closed curve $\gamma' \subset Q^{-1}(\gamma)$, 
 such that:
\begin{itemize}
\item[(R1)]
the bounded component of $\C \setminus \gamma'$ contains a unique point $z' \in Q^{-1}(z)$
and $\deg_{z'}(Q) =N$;
\item[(R2)]
for each $p\notin Q^{-1}(\mathcal{G}) \cup \{z\}$, $\deg_p(Q) =1$;
\item[(R3)]
for  each $p \in \mathcal{G}$, there exists a finite set  $\mathcal{Q}(p) \subset Q^{-1}(p)$ such that 
the function $\delta_p(q) := \deg_q(Q)$ defines a bijective map $\delta_p\colon \mathcal{Q}(p) \to \mathcal{D}(p)$;
\item[(R4)]
for each $p \in \mathcal{G}$, the set  $\delta_p (\mathcal{Q}(p) \cap \gamma')$ contains $\mathcal{D}_0(p)$.
\end{itemize}

\end{theorem}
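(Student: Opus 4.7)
The plan is to construct $Q$ in two stages: first build a topological branched cover of $S^2$ with the prescribed combinatorial data, then promote it to a polynomial via uniformization. After composing with a Riemann map I may assume $\gamma = \partial\mathbb{D}$ and $z = 0$, so the bounded component of $\mathbb{C}\setminus\gamma$ is $\mathbb{D}$. The goal becomes producing a degree-$d'$ branched cover $\tilde Q\colon S^2\to S^2$ with $\tilde Q^{-1}(\infty) = \{\infty\}$ (full local degree $d'$), a distinguished preimage $z'$ of $0$ of local degree $N$, and, for each $p\in\mathcal{G}$, a prescribed finite set of preimages with local degrees $\mathcal{D}(p)$. Condition (D1) is exactly the Riemann--Hurwitz identity $(d'-1) = (N-1)+\sum_p\sum_i(n_{i,p}-1)$, so the prescribed critical multiplicities saturate the total critical budget; thus no extra critical points are forced, and (R2) will hold.

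The combinatorial content is encoded in a ribbon graph $\Sigma\subset S^2$ playing the role of $\tilde Q^{-1}(\gamma)$. I first lay down a single simple closed curve $\gamma'\subset S^2$ bounding a disk $D'$, mark $z'\in D'$ with its required local degree $N$, and subdivide $\gamma'$ by placing $N$ vertices above each $p\in \mathcal{G}$. At appropriate vertices of $\gamma'$ I assign the local degrees prescribed by $\mathcal{D}_0(p)$; the disk $D'$ then carries a unique branched cover structure onto $\overline{\mathbb{D}}$ of degree $N$, ramified only at $z'$. Next I extend $\tilde Q$ to $\mathbb{D}\setminus\overline{D'}$ and to $S^2\setminus\overline{D'}$ by attaching further sheets: for each $p$, critical points with degrees in $\mathcal{D}(p)\setminus\mathcal{D}_0(p)$ are placed in the exterior of $\gamma'$, and additional unramified sheets are added to reach total degree $d'$. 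Condition (D2) guarantees that, for each $p$, the degrees $\sum_i n_{i,p}$ fit within the global $d'$ sheets. Once $\tilde Q$ is built, I pull back the standard complex structure of the target; at a critical point of local degree $n$ the standard chart $w\mapsto w^n$ makes $\tilde Q$ holomorphic. By uniformization the resulting Riemann surface is $S^2$ with its standard structure, so there exists a diffeomorphism $h\colon S^2\to S^2$ such that $Q := \tilde Q\circ h^{-1}$ is holomorphic; the total ramification at $\infty$ forces $Q$ to be a polynomial of degree $d'$. The image $h(\gamma')$ is the desired simple closed curve, and (R1)--(R4) transfer directly from the topological construction.

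The delicate point, and in my view the main obstacle, is the combinatorial assembly in the middle stage: one must ensure that the prescribed data admits a consistent realisation as a branched cover in which the curve $\gamma'$ is a \emph{simple closed} subcurve of $\tilde Q^{-1}(\gamma)$ bounding a disk with the correct interior structure. This amounts to extending a ``boundary dessin'' on $\gamma'$ to a global cover, and it is exactly here that (D3) enters: the inequality $N\ge \sum_p |\mathcal{D}_0(p)|$ says that there is enough room on the $N$-fold covering circle $\gamma'\to \gamma$ to accommodate all the critical points required to lie on $\gamma'$, which prevents any obstruction to the extension. Verifying that (D3) is indeed sufficient (perhaps by an explicit cut-and-paste along radial arcs inside $\mathbb{D}$ and star patterns outside) is the technical heart of the argument; the rest of the proof is then a routine application of uniformization.
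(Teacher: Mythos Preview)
Your overall strategy is correct and coincides with the paper's: reduce to building a topological branched cover of $S^2$ with the prescribed portrait, then pull back the complex structure and invoke uniformization to obtain a genuine polynomial. The paper also starts from a degree-$N$ cover $h\colon \overline{D_0}\to\overline{D}$ totally ramified over $z$, uses the $N$ preimages of each $p\in\mathcal{G}$ on $\gamma':=\partial D_0$ as the slots for $\mathcal{D}_0(p)$ (this is exactly where (D3) enters, as you say), and then attaches extra disks at boundary points to create the remaining critical preimages.

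However, your proposal is a sketch that stops precisely at the point that matters. You write that ``verifying that (D3) is indeed sufficient \ldots\ is the technical heart of the argument'' and leave it there. In fact (D3) alone is \emph{not} sufficient: once the $\mathcal{D}_0(p)$ are placed on $\gamma'$, one must still allocate all elements of $\mathcal{D}(p)\setminus\mathcal{D}_0(p)$ to preimages of $p$, and the constraint is that each new disk attached at a point of $h^{-1}(p)$ creates new free preimages of every $q\neq p$ but consumes one free preimage of $p$. The paper resolves this with a careful inductive allocation: order the points by $\Card(\mathcal{D}_+(p))$ where $\mathcal{D}_+(p)=\{i\notin\mathcal{D}_0(p):n_{i,p}\ge 2\}$, allocate one element from each nonempty $\mathcal{D}_+(p_j)$ per round, track the number $F_k(p)$ of free preimages and $\Delta_k(p)$ of unallocated elements, and show the process terminates. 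Condition (D2) is essential in this count (not merely the vague ``$\sum_i n_{i,p}$ fits within $d'$ sheets'' you mention): it guarantees that after all of $\mathcal{D}_+(p)$ is allocated there remain at least $\Card(\mathcal{D}_1(p))$ free preimages for the degree-$1$ elements, and it is also used to close the final estimate $F_k(p_1)\ge\Delta_k(p_1)$. There is moreover a boundary case $\Card(\mathcal{D}_0(p_*))=N$ that requires separate treatment. None of this is difficult once set up, but it is the entire content of the theorem, and your proposal does not supply it.
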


In plain words, it is always possible to find a polynomial with a prescribed branched portrait (determined by $\mathcal{D}$) , with critical values $\mathcal{G}$ in $\gamma$ and 
branched locus in a fixed closed disk (the bounded component of $\C \setminus \gamma'$). 

We include below two examples. The first one (Figure~\ref{fig:portrait1}) shows that  in general $\delta_p (\mathcal{Q}(p) \cap \gamma')$
might exceed $\mathcal{D}_0(p)$. The second one (Figure~\ref{fig:portrait2}) proves that
 the polynomial $Q$ satisfying the conditions above may not be unique (up to a composition by an affine transformation).

\begin{figure}[h]
\centering
 \def\svgwidth{10cm}
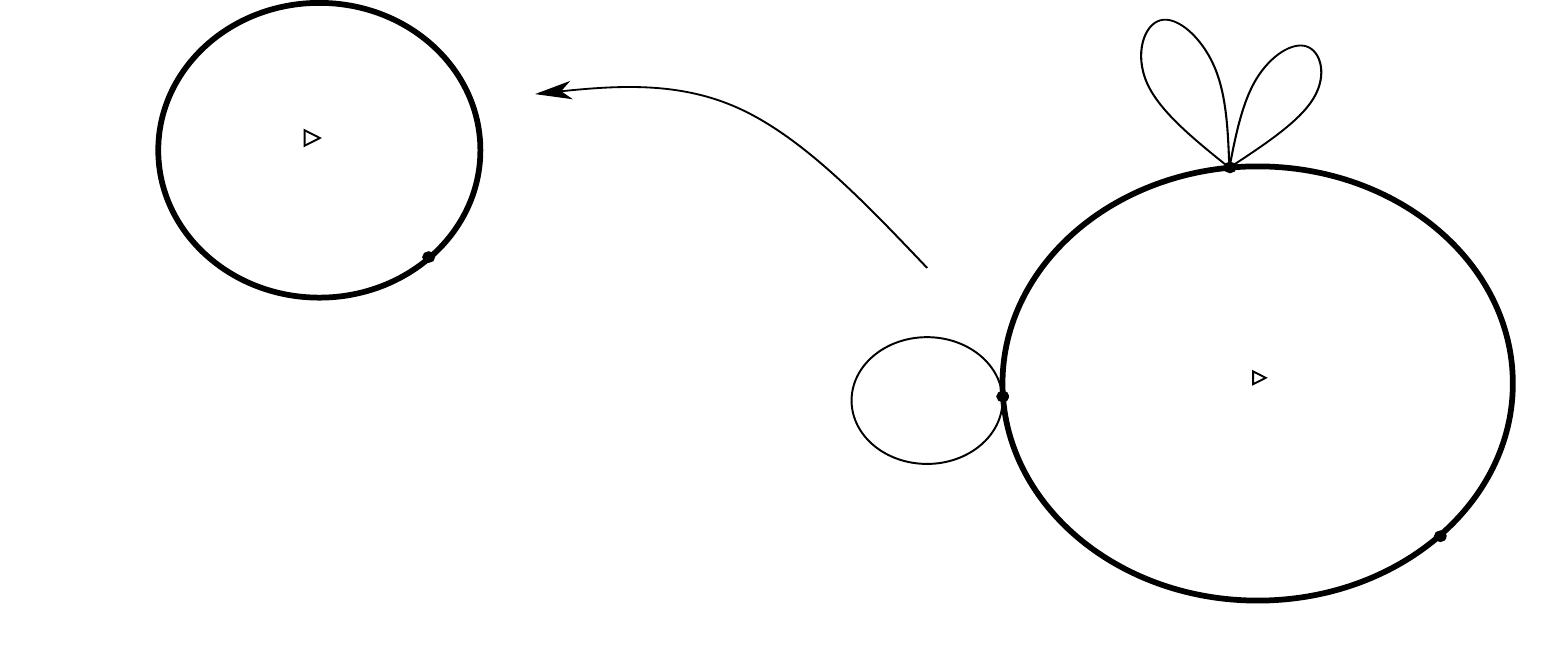
  \caption{Example  where $\mathcal{Q}(p)\subset \gamma'$ but $\mathcal{D}_0(p)\subsetneq \mathcal{D}(p)$}
 \label{fig:portrait1}
\end{figure}

\medskip

\begin{figure}[h]
\centering
 \def\svgwidth{\linewidth}
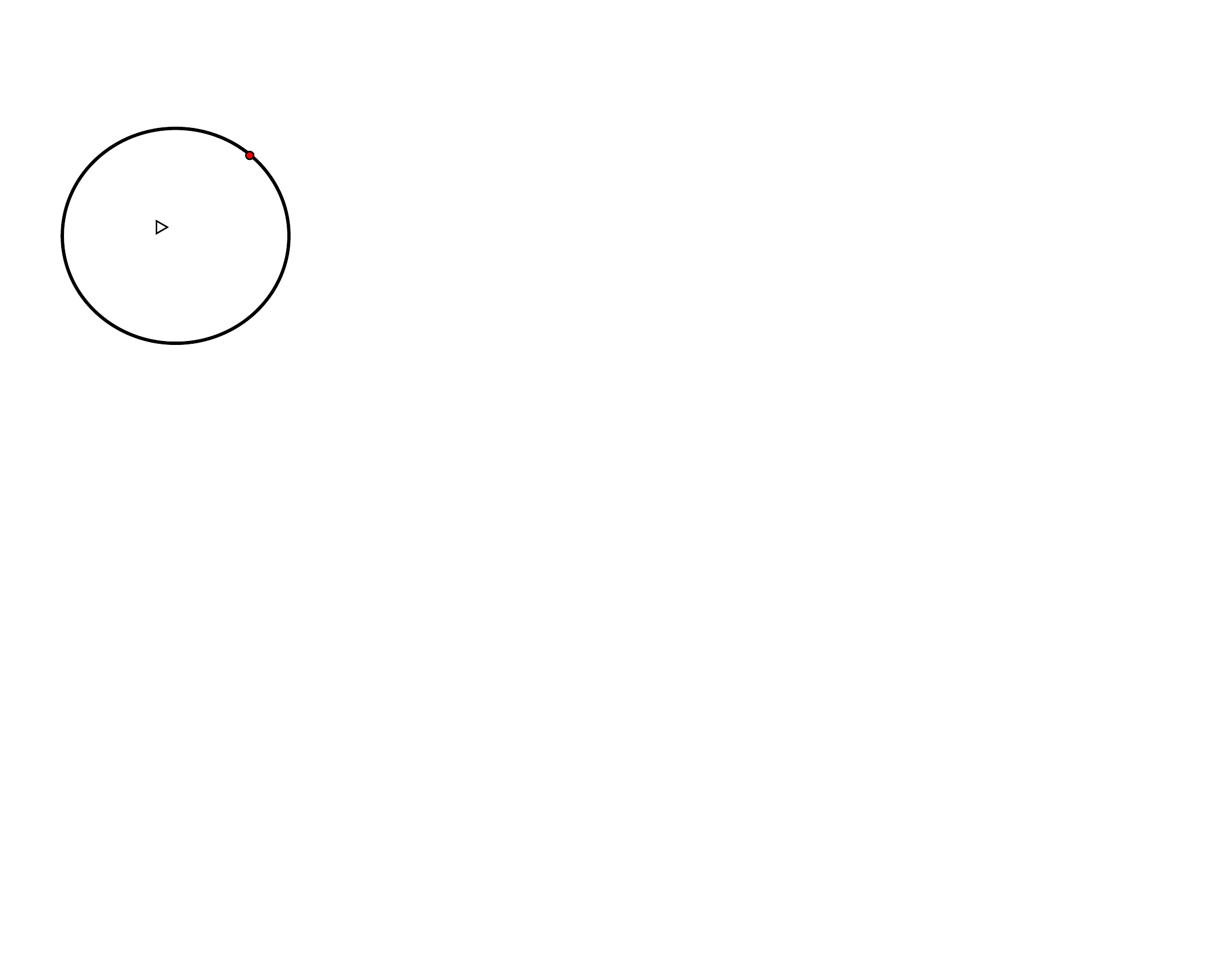  
\caption{Non-uniqueness of the realization of a critical portrait }
 \label{fig:portrait2}
\end{figure}

\begin{proof}
Recall that any topological branched cover over a complex domain admits a canonical structure of Riemann surface
for which the cover is holomorphic. In order to prove the theorem, it is thus only necessary to produce a topological branched cover of the Riemann
sphere having the prescribed branch portrait. 

To simplify the discussion, we shall use the following convenient terminology. A disk is a domain of the complex plane
homeomorphic to the unit disk whose boundary is a  simple closed curve. 
Given a disk $D$ and a point $p \in \partial D$, we say that we attach $k\ge 1$ disks to $D$ at $p$, when we choose $k$ disjoint disks $D_1, \ldots, D_k$ such that 
$\overline{D_i} \cap \overline{D_j} = \overline{D_i} \cap \overline{D} = \{ p\}$ for all $i \neq j$. 
Observe that the pull-back by $z \mapsto (1-z)^{k+1}$ of the unit disk, gives a (topological) disk containing $0$ in its interior with $k$ disks attached to it at $1$.
When the boundary of $D$ is a general closed simple curve, one can use Jordan-Schoenflies theorem to reduce the situation to the unit disk and attach disks to $D$. 
\smallskip

Let $D$ be the bounded connected component of $\C \setminus \gamma$ which is a disk by Jordan's theorem. 
Fix any other disk $D_0$, and take any branched cover  $h \colon \overline{D}_0 \to \bar{D}$
of degree $N$ which is totally ramified only at one point $z'$ which is mapped to $z$. 
Observe that each point $p \in \mathcal{G}$ has exactly $N$ preimages on $\gamma':= \partial D_0$.

\smallskip

Suppose first $\mathcal{G}$ is a singleton. By (D1) and (D2), we have
$d' \le  N + d' - \Card (\mathcal{D}(p))$ so that $N \ge\Card (\mathcal{D}(p))$. 
We may thus  select $\mathcal{Q}(p) \subset h^{-1}(p) \subset \gamma'$,  
together with a bijection  $\delta_p\colon\mathcal{Q}(p) \to \mathcal{D}(p)$.
To each point $q \in \mathcal{Q}(p)$, we attach $\delta_p(q)-1$ disks, and
extend $h$ to be a homeomorphism from the closure of each of the attached disks onto $\bar{D}$, and mapping 
$q$ to $p$. At this point, we have a union of open disks $U$ whose boundary $\Gamma$ is
a union of simple closed curves whose two-by-two intersections are either empty or reduced to a point in $\mathcal{Q}(p)$.  
The number of connected components of $U$ equals $1 + \sum (n_{i,p} -1) = d' - N +1$. 
The map $h \colon \Gamma \to \gamma$ is a branched cover of degree $(d'-N) + N = d'$. 
We may thus extend $h$ to a branched cover of the Riemann sphere of degree $d'$ which leaves $\infty$ totally invariant, 
and is unramified over $\C \setminus \overline{U}$. This concludes the proof in this case.

\smallskip

The case  $\Card (\mathcal{G}) \ge 2$ is harder to treat 
since it may happen that $\Card (\mathcal{D}(p)) \ge N$ for some $p$. 
Observe though that $\Card (\mathcal{D}_0(p)) \le N$ for all $p$ by (D3). 
For each $p\in \mathcal{G}$, we may thus select $\mathcal{Q}_0(p)\subset h^{-1}(p) \subset \gamma'$
and a bijective map $\delta_p\colon\mathcal{Q}_0(p) \to \mathcal{D}_0(p)$. We attach $\delta_p(q) -1$ disks 
$D_i(q)$ to each point $q \in \mathcal{Q}_0(p)$, and extend $h$ as a homeomorphism from each $\overline{D_i(q)}$ onto $\bar{D}$ as above. 
Denote by $U_0=\cup_{i,q} D_i(q)$ the union of these disks. 

The proof now proceeds  as follows. We attach one step at a time $(n_{i,p} -1)$ disks at a well-chosen point in $h^{-1}(p)$, and for some $n_{i,p}$, 
and extend $h$ as a homeomorphism
from each of these disks onto $\bar{D}$. At each step $k\ge0$, we build a domain $U_k$ which is a union of open disks,
whose boundary is a union of circles whose two-by-two intersection is either a point or empty, and we get a finite branched cover
$h\colon\overline{U}_k \to \bar{D}$.

We also have a sequence of sets $\mathcal{Q}_{k}(p) \subset h^{-1}(p)$, and 
injective maps $\delta^{(k)}_p \colon \mathcal{Q}_k(p) \to \mathcal{D}(p)$ such that  $\delta^{(0)}_p = \delta_p$
is the map defined on $\mathcal{Q}_0(p)$ above. 
We shall say that $n_{i,p} \in \mathcal{D}(p)$ has been \emph{allocated} at step $k$, when it belongs to the image of $\delta^{(k)}(p)$.
The goal is to reach a situation where $\delta^{(k)}_p$ is surjective for all $p$ (i.e. all elements of $\mathcal{D}(p)$ have been allocated for all $p$). 

Introduce the sets
\[
\mathcal{D}_1(p) = \{i\notin \mathcal{D}_0(p), n_{i,p} =1\}
\text{ and }
\mathcal{D}_+(p) = \{i \notin \mathcal{D}_0(p), n_{i,p} \ge2\} 
~.\]
We claim that there exists a procedure such that after finitely many steps, all elements of $\mathcal{D}_+(p)$ have been allocated for all $p$.
Grant this claim, and let $k$ be the number of steps needed to allocate all elements of $\mathcal{D}_+(p)$. Note that in this case, the number of points in $h^{-1}(p)$ which are not in $\mathcal{Q}_{k}(p)$ is equal to 
\begin{align*}
\mu & :=
N - \Card(\mathcal{D}_0(p)) - \Card(\mathcal{D}_+ (p)) + \sum_{q \neq p} \sum_{\mathcal{D}(q)} (n_{i,q} -1)
\\
&\mathop{=}\limits^{(D1)}
d'- \sum_{\mathcal{D}(p)} (n_{i,p} -1) -   \Card(\mathcal{D}_0(p))- \Card(\mathcal{D}_+ (p))
\\
&=  \left(d'- \sum_{\mathcal{D}(p)} n_{i,p}\right) + \Card(\mathcal{D}_1(p))
\mathop{\ge}\limits^{(D2)}
\Card(\mathcal{D}_1(p))~.
\end{align*}
Since the number of points  in $\mathcal{D}(p)$ which remains to be allocated is equal to 
$\Card(\mathcal{D}_1(p))$, we can extend the function $\delta_p$ to $\mathcal{D}(p)$ injectively as required. 

\medskip

To prove our claim, we need to allocate elements in $\mathcal{D}_+(p)$.
For each $p$ and at each step $k \ge0$ of the construction, we let $\Delta_k(p)$ be the number of elements of $\mathcal{D}_+(p)$
which have not been allocated, and let $F_k(p)$ be the number of preimages of $h^{-1}(p)$ that are free in the sense 
that they do not belong to  $\mathcal{Q}_{k}(p)$. 

At step 0, we have $\Delta_0(p) = \Card (\mathcal{D}(p)) - \Card (\mathcal{D}_0(p))$, and 
\begin{align*} 
F_0(p) 
&=  \sum_{p'\neq p} \sum_{i\in \mathcal{D}_0(p')}(n_{i,p'} -1) + N - \Card (\mathcal{Q}_{0}(p))
\\ &=  \sum_{p'\neq p} \sum_{i\in \mathcal{D}_0(p')}(n_{i,p'} -1) + N - \Card (\mathcal{D}_{0}(p)).
\end{align*}
Order the points in $\mathcal{G}$ such that 
\[
\Card(\mathcal{D}_+(p_1)) \ge \cdots \ge \Card(\mathcal{D}_+(p_s))> 0 = \Card(\mathcal{D}_+(p_j))\] 
for all $j\ge s+1$, and let us first suppose that $\Card (\mathcal{D}_{0}(p)) <N$ for all $p$.
At step $1$, we may thus allocate one (randomly chosen) element $n_{i_1,p_1}, \ldots, n_{i_s,p_s}$ of each set $\mathcal{D}_+(p_1), \ldots, \mathcal{D}_+(p_s)$. 
Observe that 
\[
\begin{cases}
\Delta_{1}(p_i) &= \Delta_0(p_i) -1,\text{ and }
\\
F_{1}(p_i) &= F_0(p_i) -1 + \sum_{q\neq p_i} (n_{i,q}-1) \ge F_0(p_i).
\end{cases}
\] 
for all $1\le i\le s$ since $\Card (\mathcal{G}) \ge2$. If $\Delta_1(p_i) =0$ for all $i$, then we are done. Otherwise, 
$\Delta_1(p_1), \ldots, \Delta_1(p_{s_1}) \ge1$ and $\Delta_1(p_{s_1 +1}), \ldots, \Delta_1 (p_s) = 0$.
At step $2$, we allocate one element of each set $\mathcal{D}_+(p_1), \ldots, \mathcal{D}_+(p_{s_1})$. We may continue
in this way until $\Delta_k(p_i)=0$ for all $i\ge2$ and $\Delta_k(p_1) >0$.
At this step, we get 
\begin{align*} 
F_k(p_1)
&=  N - \Card (\mathcal{D}_0(p_1))- \Card (\mathcal{D}_+(p_2)) + \sum_{p\neq p_1} \sum_{i\in \mathcal{D}(p)}(n_{i,p} -1)
\\
& =  d' - \Card (\mathcal{D}_0(p_1))- \Card (\mathcal{D}_+(p_2)) - \sum_{i\in \mathcal{D}(p_1)}(n_{i,p_1} -1)
\\
&
= \left(d' - \sum_{i\in \mathcal{D}(p_1)} n_{i,p_1} \right) +   \Card (\mathcal{D}(p_1)) -  \Card (\mathcal{D}_0(p_1)) - \Card (\mathcal{D}_+(p_2)) 
\\
& \ge 
 \Card (\mathcal{D}(p_1)) -  \Card (\mathcal{D}_0(p_1))
- \Card (\mathcal{D}_+(p_2)) 
= \Delta_k(p_1)~,
\end{align*}
so that we may allocate all remaining points in $\mathcal{D}_+(p_1)$. This proves the claim when $\Card (\mathcal{D}_{0}(p)) <N$ for all $p$.

\smallskip

Suppose finally that $\Card (\mathcal{D}_{0}(p_*)) =N$. Then (D3) implies that $\mathcal{D}_{0}(p) =\varnothing$ for all $p\neq p_*$. 
Write  $\varepsilon:= \sum_{q \neq p_*} \sum_i (n_{i,q} -1)$, so that
\begin{align*}
\Card (\mathcal{D}_+(p_*)) +  \Card (\mathcal{D}_1(p_*))
&= 
 \sum_i n_{i,p_*} - \sum_i (n_{i,p_*} -1) - \Card (\mathcal{D}_0(p_*))
\\
&\mathop{=}\limits^{(D1)} \sum_i n_{i,p_*} - d' + \varepsilon \mathop{\le}\limits^{(D2)} \varepsilon~.
\end{align*}
One then removes $p_*$ from the set $\{p_1, \ldots, p_s\}$, and apply the sequence of steps above. 
The number of free points in $h^{-1}(p_*)$ is  equal to $\varepsilon$, and one can thus allocate
all elements of  $\mathcal{D}_+(p_*) \cup  \mathcal{D}_1(p_*)$.
This concludes the proof. 
\end{proof}

\subsection{Construction of a suitable sequence of Riemann surfaces}\label{sec:construct-sequence-Riemann}
This part is the key to the proof of Theorem~\ref{thm:realizability}. It is strongly inspired by the approach of~\cite{DeMarco-McMullen}. 

Recall that $\Gamma$ is a fixed marked dynamical graph of degree $d$. 
We let $\Delta := 1 + \sum (d_\pi(\mu(i)) -1)$ where the sum is taken over all indices $i$ such that $\mu(i)$ lies 
in a bounded component of $\Gamma$ (i.e. in $\Gamma_\fin$). 

To alleviate notations we identify a graph and its set of vertices. 

Choose $\mathcal{F} \subset \gamma_0 := \{ |z| = \rho\}$ as in Lemma~\ref{lem:first realization} so that $G(\mathcal{F}, M_d) = \Gamma_0$.
By definition $\mathcal{F} = \partial \Gamma_0$ and we have an injective map $\mu_0 \colon \Gamma_0 \to \{ |z| = \rho\}$.

\smallskip

We shall build by induction a sequence of objects $S_n, \Phi_n, G_n, \mu_n,\gamma_n$ indexed by $n\in \N$
which satisfy the following conditions: 
\begin{itemize}
\item[(C0)]
$S_0 = \{ |z| > \rho^{1/d}\}$, $G_0 = \log^+|z|$,  $\Phi_0 = M_d$, and 
$\mu_0 \colon \Gamma_0 \to \{ |z| \ge \rho\}$ is the map above
such that $G(\mathcal{F},\Phi_0)= \Gamma_0$;
\item[(C1)]
$S_0 \subset S_{n-1} \subset S_{n}$ is an increasing sequence of connected open Riemann surfaces such that $\partial S_{n-1}$ is a real analytic curve in $S_{n}$, and $S_{n} \setminus \overline{S_{n-1}}$
is a finite union of conformal annuli of finite modulus; 
\item[(C2)]
$G_n \colon S_n \to \R_+$ is a harmonic function 
such that $G_n |_{S_{n-1}} = G_{n-1}$, and 
$G_n(z_k) \to  \frac1{d^{n+1}} \log \rho$ for any diverging sequence $z_k \in S_n\setminus S_{n-1}$ (i.e. eventually leaving any compact subset  
of $S_n \setminus S_{n-1}$);
\item[(C3)]
$\Phi_n \colon S_n \to S_n$ is a finite proper holomorphic map
such that $\Phi_n(S_n)=S_{n-1}$, $\Phi_n|_{S_{n-1}} = \Phi_{n-1}$, and $G_n \circ \Phi_n = d\, G_n$;
\item[(C4)]
 $\mu_n \colon \Gamma_{n} \to S_n$ is an injective map such that 
$\mu_n|_{\Gamma_{n-1}} = \mu_{n-1}$, and $\mu_n ( \pi(v))= \Phi_n(\mu_n(v))$;
\item[(C5)]
$d_\pi (v) = \deg_{\mu_n(v)} (\Phi_n)$ for all $v \in \Gamma_{n}$, and  $\deg_{z} (\Phi_n)=1$ if $z \notin \mu_n(\Gamma_{n})$;
\item[(C6)]
$\gamma_n$ is a simple closed curve included in $\{G_n = \frac1{d^n} \log \rho\}$,
which contains $\mu_n ( \partial \Gamma_{n} \setminus \partial \Gamma_{n-1})$ and bounds a connected component $A_{\star,n}$
of $S_{n} \setminus \overline{S_{n-1}}$;
\item[(C7)]
the restriction of $\Phi_n$ to any connected component $\neq A_{\star,n}$ of $S_{n} \setminus \overline{S_{n-1}}$ induces a conformal isomorphism onto its image;
\item[(C8)]
the restriction of $\Phi_n$ to $A_{\star,n}$  induces a covering map onto its image of degree
\[
d_\star(n) := \Delta + \sum_{H(v) \le -n}  (d_\pi(v) -1)
~,
\]
and $\gamma_n$ is included in the boundary of  $\overline{\Phi_n(A_{\star,n})}$ in $S_n$.
\end{itemize}

\begin{figure}
\centering
 \def\svgwidth{\linewidth}
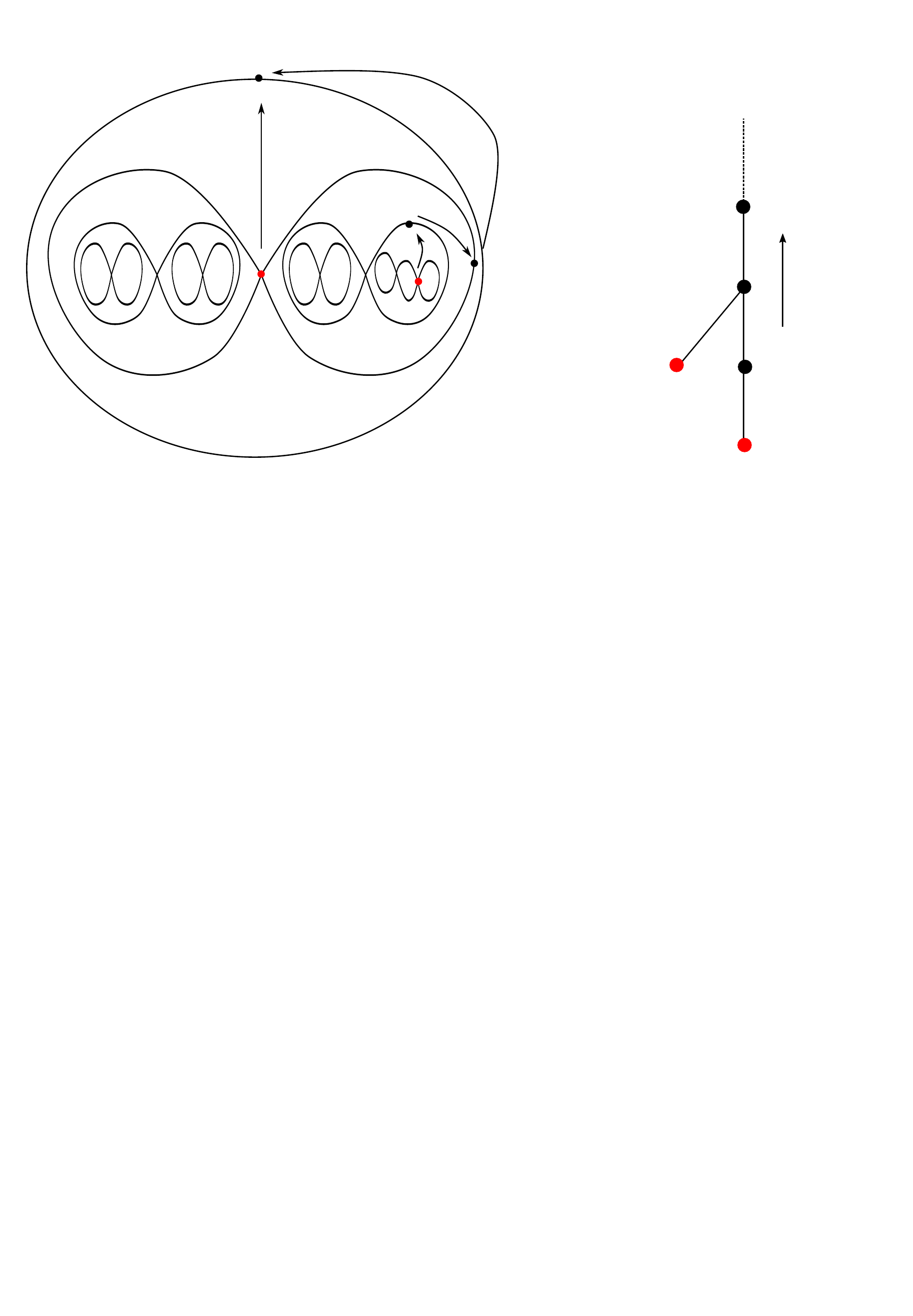  
\caption{Constructing the surfaces $S_n$}
 \label{fig:picture-Sn}
\end{figure}

\smallskip

We impose (C0). 
Since $G(\mathcal{F}, \Phi_0) = \Gamma_0$
we can extend $\mu_0$ in a unique way such that (C4) is satisfied. Observe that (C2) and (C3) are satisfied since $G_0(z) \to \frac1d \log \rho$ when $|z| \to \rho^{1/d}$, and
$G_0 \circ M_d = \log |z^d| = d G_0$. Similarly (C5) -- (C7) hold, and (C8) holds with $d_\star(0) = d$  
by Hurwitz formula
since on the one hand
\[\Delta  = 1+\sum (d_\pi(\mu(i)) -1)\] where the sum ranges over all $\mu(i)$ lying in bounded components of $\Gamma$, and on the other hand
$H(\mu(i)) \le 0$ for all $i$ in the unbounded component of $\Gamma$. 

\smallskip

Assume now we have constructed $S_n, \Phi_n, G_n, \mu_n,\gamma_n$ for some $n\ge0$. Before we explain 
our construction of $S_{n+1}$, we begin with some simple observations.

Adding $\infty$ to the surface $S_0$ yields a Riemann surface which we denote by $\hat{S}_0$. 
By (C1), the boundary of $S_0$ in $S_1$ is a curve, hence the natural inclusion $\imath \colon S_0 \to S_1$
extends to an injective holomorphic map $\hat{S}_0 \to \hat{S}_1 := S_1 \cup \{\infty\}$.
By induction, we get an increasing sequence of Riemann surfaces $\hat{S}_n = S_n \cup \{\infty\}$, so that 
$\hat{S}_0 \subset \hat{S}_{n-1}  \subset \hat{S}_n$.

The function $G_n$ extends continuously (as a $\R_+ \cup{\infty}$-valued function) to $\hat{S}_n$ by setting $G_n(\infty) = +\infty$. It is superharmonic and
$\Delta G_n = - \delta_\infty$.
\begin{lemma}\label{lem:Sn-use}
In $S_n$, we have 
\[S_{n-1} = \left\{ G_n >\frac1{d^n} \log \rho \right\}, \text{ and } \partial S_{n-1} =  \left\{ G_n = \frac1{d^{n}} \log \rho \right\}.\] 
More precisely, for any connected component $A$ of $S_n \setminus \overline{S_{n-1}}$ there exists a conformal
isomorphism $\psi \colon \{ \rho_{n,A} < |z| < \rho^{1/d^n} \}\to A$ for some $\rho_{n,A} >0$ such that 
$G_n( \psi(z))$ is proportional to $\log |z|$. 
\end{lemma}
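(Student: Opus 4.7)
The plan is to prove both assertions simultaneously by induction on $n\ge 1$, the inductive hypothesis being the full statement of the lemma at all previous levels. The first task at the inductive step is to show that $G_n$ extends continuously across $\partial S_{n-1}\subset S_n$ with the constant value $\tfrac{1}{d^n}\log\rho$. By (C2), $G_n|_{S_{n-1}}=G_{n-1}$, while the level-$(n-1)$ version of (C2) ensures $G_{n-1}(z_k)\to\tfrac{1}{d^n}\log\rho$ for any sequence $z_k\in S_{n-1}\setminus S_{n-2}$ leaving every compact of $S_{n-1}$; by (C1), such sequences, viewed in $S_n$, accumulate precisely on $\partial S_{n-1}$. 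Since $G_n$ is harmonic (hence continuous) on $S_n$, this forces $G_n\equiv\tfrac{1}{d^n}\log\rho$ on $\partial S_{n-1}$.

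Next I would analyze each connected component $A$ of $S_n\setminus\overline{S_{n-1}}$. By (C1), $A$ is a conformal annulus of finite modulus, hence biholomorphic to some standard annulus $\{r_1<|z|<r_2\}$ via a map $\tilde\psi$. Under this uniformization, one boundary circle corresponds in $S_n$ to a component of $\partial S_{n-1}\cap\partial A$, on which the previous step gives $G_n\circ\tilde\psi\to\tfrac{1}{d^n}\log\rho$; the other corresponds to sequences leaving every compact of $S_n$, on which (C2) gives $G_n\circ\tilde\psi\to\tfrac{1}{d^{n+1}}\log\rho$. Hence $G_n\circ\tilde\psi$ is a bounded harmonic function on the standard annulus with continuous constant boundary values on both circles; by uniqueness for the Dirichlet problem, it must equal $a\log|z|+b$ for some real $a,b$. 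Replacing $\tilde\psi(z)$ by $\tilde\psi(\lambda z)$ for a suitable $\lambda>0$ absorbs the additive constant, yielding a conformal isomorphism $\psi$ with $G_n\circ\psi=a\log|z|$. Matching boundary values then forces $a>0$, $r_2=\rho^{1/d^n}$ and $\rho_{n,A}:=r_1=\rho^{1/d^{n+1}}$ (in fact $a=1$), which proves the second assertion and in particular yields $G_n<\tfrac{1}{d^n}\log\rho$ strictly on each such $A$.

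The first assertion then follows by combining these two steps with the inductive hypothesis: on $S_{n-1}$ the identity $G_n=G_{n-1}$ together with the level-$(n-1)$ lemma give $G_n>\tfrac{1}{d^{n-1}}\log\rho>\tfrac{1}{d^n}\log\rho$ on $S_{n-2}$ and $G_n>\tfrac{1}{d^n}\log\rho$ on $S_{n-1}\setminus\overline{S_{n-2}}$ (via the level-$(n-1)$ annulus analysis), while the annulus analysis above yields $G_n<\tfrac{1}{d^n}\log\rho$ outside $\overline{S_{n-1}}$. The key technical point will be the identification of boundary values in the first step: one must transfer the asymptotic control of $G_{n-1}$, which is expressed as divergence within $S_{n-1}$, into a genuine continuous boundary value of $G_n$ across $\partial S_{n-1}\subset S_n$. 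The real-analyticity of $\partial S_{n-1}$ guaranteed by (C1) is precisely what makes this transfer rigorous; the annulus computation itself is otherwise standard potential theory.
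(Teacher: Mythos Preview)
Your approach is correct and close to the paper's, with one genuine difference in strategy and one small gap to fill.

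The difference: to obtain $S_{n-1}\subset\{G_n>\tfrac{1}{d^n}\log\rho\}$ you run an induction, decomposing $S_{n-1}$ as $S_{n-2}\cup\partial S_{n-2}\cup(S_{n-1}\setminus\overline{S_{n-2}})$ and invoking the level-$(n-1)$ statement. The paper instead observes that $G_n$ extends to $\hat{S}_n=S_n\cup\{\infty\}$ as a superharmonic function with $G_n(\infty)=+\infty$, and then applies the minimum principle on $\hat{S}_{n-1}$: since $G_n\equiv\tfrac{1}{d^n}\log\rho$ on $\partial S_{n-1}$ and $G_n$ is not constant (it blows up at $\infty$), the inequality is strict in the interior. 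This avoids induction entirely and is slightly cleaner; your version is equally valid but carries more bookkeeping.

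The gap: you assert that for each annulus component $A$ of $S_n\setminus\overline{S_{n-1}}$, one boundary circle lies in $\partial S_{n-1}$ while the other ``corresponds to sequences leaving every compact of $S_n$''. This is not automatic from (C1): a priori both ends of $A$ could lie in $\partial S_{n-1}$. The paper rules this out by noting that if both boundary values were $\tfrac{1}{d^n}\log\rho$, then $G_n$ would be constant on $A$; since $S_n$ is connected and $G_n$ is harmonic with $G_n\to\infty$ at $\infty$, the identity principle forbids this. You should insert this one-line argument before your Dirichlet computation.
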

\begin{proof}
By (C2), we know that $G_n \to  \frac1{d^n} \log \rho$
when approaching the boundary of $S_{n-1}$, hence $\partial S_{n-1 } \subset  \{ G_n = \frac1{d^n} \log \rho \}$.
By the minimum principle applied to $\hat{G}_n$, we get the inclusion  $S_{n-1} \subset \{ G_n >\frac1{d^n} \log \rho \}$.

Now look at $S_n \setminus \overline{S_{n-1}}$. By (C1), it is a finite collection of annuli. 
Choose one of them say $A$: it has at least one boundary component included in $\partial S_{n-1}$ over which one has
 $G_n = \frac1{d^n} \log \rho$. Since $S_n$ is connected and $G_n (z)\to \infty$ as $z \to \infty$, $G_n|_A$
 is harmonic and not constant, so that the other boundary component of $A$ cannot be included in  $\partial S_{n-1}$.
 It follows from (C2) that $G_n \to \frac1{d^{n+1}} \log \rho$ when approaching this boundary. 

Pick any conformal isomorphism $\psi \colon \{ \rho_{n,A} < |z| < \rho^{1/d^n} \} \to A$ with $\rho_{n,A} <   \rho^{1/d^n}$ sending
 $\{ z= \rho^{1/d^n}\}$ to the boundary of $A$ included in $\partial S_{n-1}$.
Then $G_n \circ \psi$ is harmonic, equal to $\frac1{d^n} \log \rho$ on $\rho^{1/d^n}$, and tends to 
$\frac1{d^{n+1}} \log \rho$
when one approaches the circle $\{ |z| = \rho_{n,A}\}$. By circular symmetry, it follows that $G_n \circ \psi (z) = \lambda \log|z|$ for some $\lambda \neq 0$
and  $\lambda \log \rho_{n,A} = \frac1{d^{n+1}} \log \rho$.

This implies the inclusions $ \{ G_n >\frac1{d^n} \log \rho \}\subset S_{n-1}$, $\left\{ G_n = \frac1{d^{n-1}} \log \rho \right\} \subset \partial S_{n-1}$
and the second part of the lemma.
\end{proof}

We now need to analyze the ramification locus of $\Phi_n$. 
\begin{lemma}\label{lem:imu}
The image of $\mu_n$ is included in $\cup_{l \le n} \{ G_n = \frac1{d^l} \log \rho \}$. 
\end{lemma}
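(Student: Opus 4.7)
The plan is to prove the stronger statement that $G_n(\mu_n(v)) = \dfrac{1}{d^{1-H(v)}}\log\rho$ for every $v\in\Gamma_n$, where $H$ is the height function of \S\ref{sec:trunca}. Since $v\in\Gamma_n$ is equivalent to $H(v)\ge 1-n$, setting $l := 1-H(v)$ then yields $l\le n$ and gives exactly the inclusion claimed in the lemma. I would proceed by induction on $n$, exploiting the two functional relations that are built into the construction: $\mu_n\circ\pi = \Phi_n\circ\mu_n$ (from (C4)) and $G_n\circ\Phi_n = d\,G_n$ (from (C3)). Together they imply the key identity
\[
G_n(\mu_n(\pi(v))) = d\,G_n(\mu_n(v)),
\]
so that the value of $G_n\circ\mu_n$ is multiplied by $d$ whenever the height increases by $1$.

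For the base case $n=0$, one reads off the conclusion directly from the construction of $\mu_0$ in Lemma~\ref{lem:first realization} and Remark~\ref{rem:def mu0}: the set $\mathcal{F}=\mu_0(\partial\Gamma_0)$ sits on $\{|z|=\rho\}=\{G_0=\log\rho\}$, which corresponds to $H=1$; and the relation $\mu_0(\pi(v))=M_d(\mu_0(v))$ forces $G_0(\mu_0(v))=d^{H(v)-1}\log\rho$ by an immediate induction on $H(v)$, so the formula holds on $\Gamma_0$.

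For the inductive step, assume the formula is known for $\mu_{n-1}$ on $\Gamma_{n-1}$. A vertex $v\in\Gamma_n$ falls into one of two cases. If $v\in\Gamma_{n-1}$, then (C4) gives $\mu_n(v)=\mu_{n-1}(v)\in S_{n-1}$ and (C2) gives $G_n(\mu_n(v))=G_{n-1}(\mu_{n-1}(v))$, so we are done by the inductive hypothesis. Otherwise $H(v)=1-n$, so $\pi(v)\in\Gamma_{n-1}$ with $H(\pi(v))=2-n$, and the inductive hypothesis (applied to the minimum-height boundary of $\Gamma_{n-1}$) gives $G_{n-1}(\mu_{n-1}(\pi(v)))=\tfrac{1}{d^{\,n-1}}\log\rho$. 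Combining this with the key identity above yields
\[
G_n(\mu_n(v)) \;=\; \tfrac{1}{d}\,G_n(\mu_n(\pi(v))) \;=\; \tfrac{1}{d}\cdot\tfrac{1}{d^{n-1}}\log\rho \;=\; \tfrac{1}{d^{\,n}}\log\rho,
\]
which is precisely $\tfrac{1}{d^{1-H(v)}}\log\rho$.

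There is no real obstacle here: everything is forced by the two functional equations once the base case is in hand. The only small subtlety is the bookkeeping between the indices $n$ (truncation level) and the height $H(v)$, and making sure that the appropriate boundary vertices of $\Gamma_{n-1}$ (those with $H=2-n$) do reach the value $\tfrac{1}{d^{\,n-1}}\log\rho$ under $\mu_{n-1}$; this however is precisely the same inductive statement one step down, so the argument closes up cleanly.
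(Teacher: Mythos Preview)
Your proof is correct and follows essentially the same inductive strategy as the paper. The only minor difference is that the paper invokes (C6) directly for the inductive step (the new vertices $\partial\Gamma_n\setminus\partial\Gamma_{n-1}$ are placed on $\gamma_n\subset\{G_n=d^{-n}\log\rho\}$), whereas you derive the same conclusion from the functional equations (C3) and (C4); your route has the small bonus of yielding the precise formula $G_n(\mu_n(v))=d^{H(v)-1}\log\rho$.
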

\begin{proof}
Observe that $\mu_0(\Gamma_0)$ is the union of the orbits of points in $\mathcal{F}$ under $M_d$, hence
by construction it is included in $\cup_{j \ge 0} \{ |z| = \rho^j \} = \cup_{ l \le 0} \{ G_n = \frac1{d^l} \log \rho \}$.

Now the complement of $\Gamma_0$ in $\Gamma_1$ is precisely the set  $\partial \Gamma_1 \setminus \partial \Gamma_0$, 
and by (C6) this set is mapped by $\mu_1$ inside $\{G_1 = \frac1d \log \rho\}$. This implies the lemma in the case $n=1$, 
and a simple induction on $n$ allows one to conclude.
\end{proof}

We shall construct $S_{n+1}$ by patching an open Riemann surface
along each connected component of $S_n \setminus \overline{S_{n-1}}$. 
More precisely, for each component $A$, we shall find a planar domain $V_A$ containing an annulus $B_A$ and build 
a conformal isomorphism $\Phi_A \colon B_A \to A$. 
The surface $S_{n+1}$ will be obtained as the disjoint union of $S_n$ and all domains of the form $V_A$, each patched to $S_n$ along the annulus
$B_A$ using $\Phi_A$. We refer the reader to the figures~\ref{fig:map1} and~\ref{fig:map2} for a schematic view on the patching procedure. 

\begin{figure}[h]
\centering
 \def\svgwidth{\linewidth}
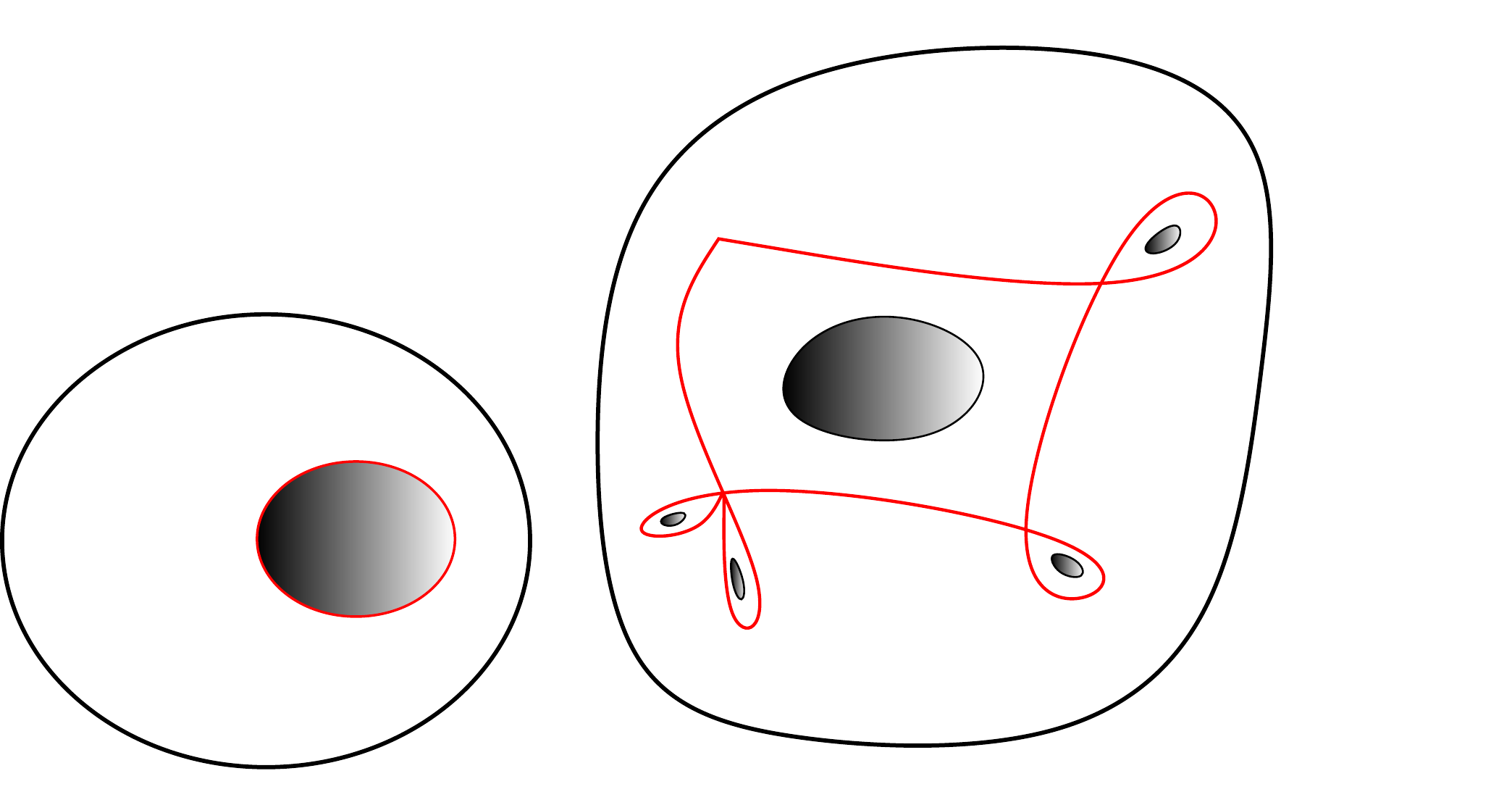  
\caption{The patching procedure when $A\neq A_\star$}
 \label{fig:map1}
\end{figure}

\begin{figure}[h]
\centering
 \def\svgwidth{\linewidth}
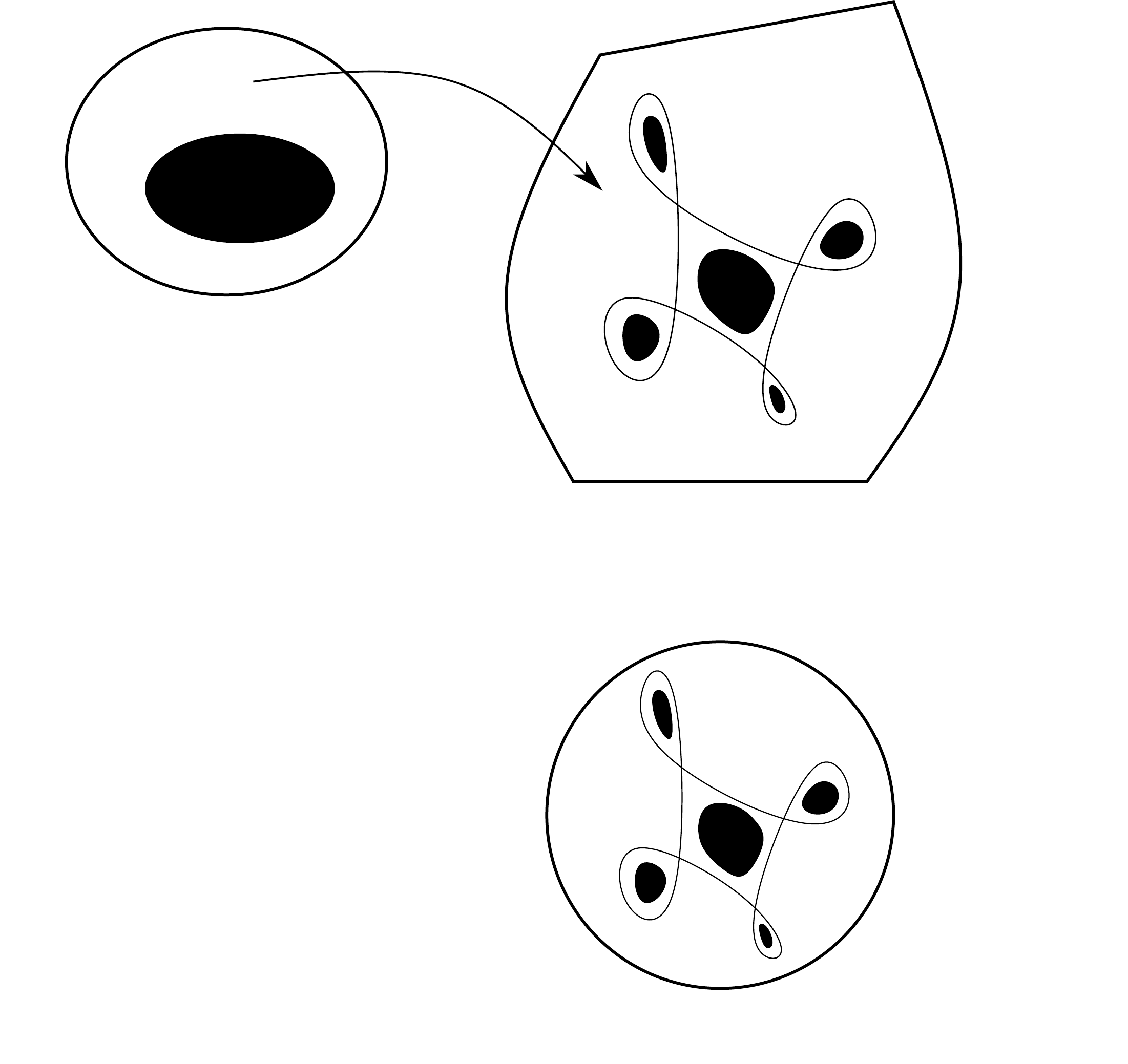  
\caption{The patching procedure when $A= A_\star$}
 \label{fig:map2}
\end{figure}

\paragraph*{Construction of $V_A$, $B_A$ and $\Phi_A$} 
Recall from (C6), that $\gamma_n$ is a simple closed curve bounding a connected component $A_\star (= A_{\star,n})$ of $S_n \setminus \overline{S_{n-1}}$. 
We begin with the following general lemma.
\begin{lemma}\label{lem:planar1}
Let $A$ be any connected component of $S_n\setminus\overline{S_{n-1}}$, and let
$\widetilde{V_A}$ be the connected component of $\{G_n < \frac1{d^{n-1}} \log \rho \}$ in $S_n$ which contains $\Phi_n(A)$.
Then the following holds: 
\begin{enumerate}
\item
the boundary of $\widetilde{V_A}$ in $S_n$ is a real-analytic simple closed curve $\gamma'_A$; 
\item
$\widetilde{V_A}\cap \{G_n <\frac1{d^n}\log\rho\}$ is a finite union of conformal annuli of finite modulus; 
\item 
there exists a univalent holomorphic map $\theta \colon \widetilde{V_A} \to \D$ 
which extends continuously to $\widetilde{V_A}\cup \{\gamma'_A\} \to \overline{\D}$ 
whose image is the complement of finitely many connected full compact subsets of $\D$.
\item
$\widetilde{V_A}$ is a planar domain. 
\end{enumerate}
Moreover, when $A=A_\star$, then $\gamma'_{A_\star} = \Phi_n(\gamma_n)$ and $\widetilde{V_{A_\star}}$ contains $A_\star$.
\end{lemma}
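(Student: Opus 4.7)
Here is the plan. The overall strategy is to identify the component $\widetilde{V_A}$ explicitly as a union of annuli of two ``layers'' that have already been built, and then exploit their conformal structure.

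\textbf{Step 1: identification of the level sets.} Using $G_n|_{S_{n-1}} = G_{n-1}$ (by (C2)) and applying Lemma~\ref{lem:Sn-use} to $G_{n-1}$ on $S_{n-1}$, I first observe that for $n\geq 2$, the level set $\{G_n = \frac{1}{d^{n-1}} \log\rho\} \cap S_{n-1}$ equals $\partial S_{n-2}$, which by (C1) is a disjoint union of finitely many real-analytic simple closed curves in $S_{n-1}$. Since $G_n < \frac{1}{d^n}\log\rho$ on $S_n \setminus \overline{S_{n-1}}$, the full level set $\{G_n = \frac{1}{d^{n-1}} \log\rho\}$ in $S_n$ equals $\partial S_{n-2}$. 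The cases $n=0,1$ are handled by direct inspection since on $S_0$ the function $G_0 = \log|z|$ has no critical points. Thus $\{G_n < \frac{1}{d^{n-1}}\log\rho\} = S_n\setminus \overline{S_{n-2}}$ (with the convention $S_{-1}=\varnothing$).

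\textbf{Step 2: structure of $\widetilde{V_A}$.} By $G_n\circ \Phi_n = d\,G_n$ and the $G_n$-range on $A$, the image $\Phi_n(A)$ is contained in $S_{n-1}\setminus \overline{S_{n-2}}$, which by (C1) is a disjoint union of conformal annuli of finite modulus. Let $A'$ be the unique such annulus containing $\Phi_n(A)$. The connected component of $S_n\setminus\overline{S_{n-2}}$ containing $A'$ consists of $A'$ together with the annular components of $S_n\setminus\overline{S_{n-1}}$ whose outer boundary lies on the inner boundary of $A'$ (this uses that $G_n$ is continuous and strictly less than $\frac{1}{d^{n-1}}\log\rho$ in a neighborhood of this inner boundary). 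This identifies $\widetilde{V_A}$ completely.

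\textbf{Step 3: conclusions (1)--(4).} Item (1) follows from Step 2 because the boundary of $\widetilde{V_A}$ in $S_n$ coincides with the outer boundary of $A'$, which is one component of $\partial S_{n-2}\subset S_{n-1}$ and thus a single real-analytic simple closed curve by (C1). Item (2) is immediate since $\widetilde{V_A}\cap\{G_n<\frac{1}{d^n}\log\rho\}$ is exactly the union of the annular components of $S_n\setminus \overline{S_{n-1}}$ attached to $A'$, each of which is a conformal annulus of finite modulus by (C1). For item (4) I prove by induction on $n$ that $\hat{S}_n = S_n\cup\{\infty\}$ embeds into $\hat{\mathbb{C}}$: the base case $\hat{S}_0 \simeq \hat{\mathbb{C}}\setminus\overline{\D(0,\rho^{1/d})}$ is clear, and at each inductive step we are glueing finitely many annuli of finite modulus along real-analytic Jordan curves to a planar surface, which preserves planarity. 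Consequently $\widetilde{V_A}\subset \hat{S}_n$ is a planar domain.

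\textbf{Step 4: the univalent map $\theta$ (item (3)).} By construction $\widetilde{V_A}$ is a finitely connected planar domain whose outer boundary $\gamma'_A$ is a real-analytic Jordan curve (Step 3). By the classical generalization of Riemann's mapping theorem to finitely connected planar domains (Koebe's theorem, see for instance \cite[\S V.6]{MR0247039} or any standard reference), there is a univalent holomorphic map $\theta\colon \widetilde{V_A}\to\mathbb{D}$ mapping $\gamma'_A$ to $\partial \mathbb{D}$ so that the complement of the image consists of finitely many pairwise disjoint full compact subsets (one for each ``hole'' of $\widetilde{V_A}$). The boundary regularity of the extension to $\gamma'_A$ is standard since this boundary component is a real-analytic Jordan curve.

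\textbf{Step 5: the case $A = A_\star$.} By (C8), $\Phi_n|_{A_\star}$ is a covering of degree $d_\star(n)$ onto its image annulus, whose inner boundary contains $\gamma_n$; hence $\Phi_n(A_\star)$ is precisely the annulus $A'$ bounded by the simple closed curve $\Phi_n(\gamma_n)$ (outer) and $\gamma_n$ (inner), and $\gamma'_{A_\star} = \Phi_n(\gamma_n)$. Since $\gamma_n\subset \partial S_{n-1}$ is one of the inner boundaries along which an annular component of $S_n\setminus\overline{S_{n-1}}$ is attached to $A'$ in the description of Step 2, and that component is precisely $A_\star$, we conclude $A_\star\subset \widetilde{V_{A_\star}}$.

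The main obstacle I anticipate is not the construction itself, but the bookkeeping for item (3): showing precisely that all ``holes'' of $\widetilde{V_A}$ in $S_n$ (namely, the inner boundaries of the attached annuli of $S_n\setminus\overline{S_{n-1}}$) correspond to full compact subsets of $\mathbb{D}$ under $\theta$. This requires verifying that each such ``hole'' is the limit of a sequence of points in $\widetilde{V_A}$ that leave every compact subset of $S_n$, so that $\theta$ sends them to compact subsets of $\mathbb{D}$, and that these limits are topologically full (i.e., disk-like), which follows from the planarity in Step 3.
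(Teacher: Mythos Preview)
Your overall strategy—identifying $\widetilde{V_A}$ as a connected component of $S_n\setminus\overline{S_{n-2}}$ and then appealing to Koebe's uniformization—is appealing, but there is a genuine gap: you repeatedly assume that the level sets $\partial S_{k}=\{G_n=\tfrac{1}{d^k}\log\rho\}$ are disjoint unions of \emph{simple} closed curves. Condition~(C1) only says $\partial S_{n-1}$ is a real-analytic curve, and in this construction that curve \emph{does} acquire singular points. Indeed, by (C5) every point $\mu_n(v)$ with $d_\pi(v)\ge 2$ is a critical point of $\Phi_n$, hence of $G_n$ (since $G_n\circ\Phi_n=dG_n$); by Lemma~\ref{lem:imu} and (C6) such points lie on $\gamma_n\subset\partial S_{n-1}$. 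At each such critical point the level curve locally looks like $2k$ rays meeting, so a connected component of $\partial S_{n-1}$ containing one of these points is a ``tree of circles'' rather than a single Jordan curve (this is exactly what the paper's remark after the proof records).

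This gap infects two of your steps. In Step~3 your argument for item~(1) invokes ``a single real-analytic simple closed curve by (C1)'', which is not justified; the paper instead proves that the \emph{outer} boundary $\gamma'_A=\Phi_n(\gamma_A)$ is simple by pushing forward $\gamma_A$ under the covering $\Phi_n|_A$. More seriously, your inductive proof of planarity in Step~3 (item~(4)) assumes you are ``glueing finitely many annuli of finite modulus along real-analytic Jordan curves''; since these curves may be singular, this inductive step is not substantiated. In the paper the logical order is reversed: Lemma~\ref{lem:planar1}(3) is proved first, by attaching closed disks along the simple closed curves $\ell_1,\ldots,\ell_k$ whose union is the inner boundary $\gamma$ (this decomposition of $\gamma$ is obtained via the gradient-flow argument), and only afterwards is the planarity of $S_n$ deduced (Lemma~\ref{lem:planar2}). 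Your route would be circular unless you independently control the singularities of $\gamma$—which is precisely what the paper's gradient-flow claim does.
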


For any connected component $A \neq A_\star$ of $S_n \setminus \overline{S_{n-1}}$, set $V_A:= \widetilde{V_A}$.
By (C3) and (C7), $\Phi_n$ induces a conformal isomorphism
of $A$ onto an annulus $B_A$ which is a component of $\{ \frac1{d^{n}} \log \rho < G_n <  \frac1{d^{n-1}} \log \rho\}$, and we define
 $\Phi_A := \Phi_n \colon A \to B_A$. 

\smallskip

Let us consider now the component $A_\star$ which is bounded by $\gamma_n$.
By (C8), $\Phi_n$ induces an unramified cover  of degree $d_\star(n) := \Delta + \sum_{H(v) \le -n} (d_\pi(v) -1)$
from $A_\star$ onto an annulus $A'_\star:=\Phi_n(A_\star)$ bounded by two real analytic connected curves, 
$\Phi_n(\gamma_n) \subset \{G_n = \frac1{d^{n-1}} \log \rho\}$, and $\gamma_{A_\star}\subset \{ G_n = \frac1{d^n} \log \rho\}$ which contains
$\gamma_n$ by the previous lemma. 

Define $\mathcal{G}$ as the image under $\theta$ of $\mu_n \{ H = 1-n\} = \mu_n ( \partial \Gamma_{n} \setminus \partial \Gamma_{n-1})$. 
Since $\theta$ is injective, any point $p\in \mathcal{G}$ has a unique preimage $p=\theta(v)$, and we may set 
\begin{align}
\mathcal{D}(p) &:= \{ d_\pi(v'),\, (v,v') \in \mathcal{B}(v)\}\label{eq:defD}
\\
\mathcal{D}_0(p) &:= \{ d_\pi(v'), \, (v,v') \in \mathcal{B}(v) \text{ and } v' \notin \partial \Gamma_{n+2}\}\label{eq:defD0}
\end{align}
where $\mathcal{B}(v)$ is the set of edges $(v,v')$ of $\Gamma$ with $H(v') < H(v)$.
In other words, $\mathcal{D}(p)$ encodes all multiplicities of those vertices $w$ mapped by $\pi$ to $v$; and $\mathcal{D}_0(p)$
consists of multiplicities of those vertices $w\in\pi^{-1}(v)$ for which $\pi(w') = w$ for at least one $w'\in \Gamma_{n+1}$.
Finally set  
\begin{align*}
\gamma &:= \theta(\gamma_n), \text{ and }
N := \Delta + \sum_{H(v)\le -n-1} (d_\pi(v) -1).
\end{align*}

\begin{lemma}\label{lem:4321}
The three conditions (D1) -- (D3) hold for the collection of objects $\mathcal{G}$, $\mathcal{D}(p)$, $\mathcal{D}_0(p)$, $\gamma$, and $d':= d_\star(n)$. 
\end{lemma}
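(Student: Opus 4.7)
The lemma packages three combinatorial assertions about the degrees $d' = d_\star(n)$, $N = d_\star(n+1)$, and the multisets $\mathcal{D}(p), \mathcal{D}_0(p)$ derived from the graph $\Gamma$ at heights $-n$ and $1-n$. My approach is to dispatch the three conditions in order, each by careful bookkeeping tied to the defining formula $d_\star(m) = \Delta + \sum_{H(v) \le -m}(d_\pi(v)-1)$.

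For (D1), the plan is to compute $d' - N = d_\star(n) - d_\star(n+1)$ directly. By telescoping,
\[
d_\star(n) - d_\star(n+1) = \sum_{H(w) = -n}(d_\pi(w)-1).
\]
Since $\pi$ is a flow sending $\{H=-n\}$ to $\{H=1-n\}$, the level set decomposes as the disjoint union $\{H=-n\} = \bigsqcup_{v \in \{H=1-n\}} \pi^{-1}(v)$, and each element $v'\in\pi^{-1}(v)$ corresponds bijectively to an edge $(v,v') \in \mathcal{B}(v)$, hence to one element of $\mathcal{D}(p)$ for $p = \theta(\mu_n(v))$. Substituting gives $d' - N = \sum_{p \in \mathcal{G}}\sum_{i \in \mathcal{D}(p)}(n_{i,p}-1)$, which is exactly (D1).

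For (D2), I would begin from the identity
\[
d_\star(n) - \sum_{v' \in \pi^{-1}(v)}d_\pi(v') = \bigl(N - \Card\pi^{-1}(v)\bigr) + \sum_{\substack{H(w) = -n \\ w \notin \pi^{-1}(v)}}(d_\pi(w)-1),
\]
obtained by decomposing $d_\star(n)$ via (D1) and isolating the contribution of $\pi^{-1}(v)$ inside the sum over vertices at height $-n$. The second term is non-negative, so (D2) reduces to controlling $\Card\pi^{-1}(v)$ relative to $N$, with possible compensation from critical vertices at height $-n$ outside $\pi^{-1}(v)$. The trivial bound $\sum d_\pi(v') \le d$ from (G2) is too weak in general; the refinement genuinely uses the asymmetry hypothesis on $\Gamma$. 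If (D2) were to fail for some $v \in \{H=1-n\}$, the multiset of multiplicities attached to $\pi^{-1}(v)$ would be rigid enough (and compatible with the values of $d_\pi$ at the other fibers) to admit a cyclic permutation that extends coherently to a non-trivial $\U_k$-action on $\Gamma$ satisfying (G4) and (G4'), contradicting the asymmetry assumption.

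For (D3), I would unpack $\partial\Gamma_{n+2}$ as the frontier of $\Gamma_{n+2}$ (the vertices of $\Gamma_{n+2}$ whose $\pi$-preimages lie outside $\Gamma_{n+2}$) so as to identify $v' \in \pi^{-1}(v) \cap \partial\Gamma_{n+2}$ with those preimages at height $-n$ that are not themselves further branched by $\pi$ into $\Gamma_{n+2}$. Then $\sum_p \Card\mathcal{D}_0(p)$ counts a specific subset of vertices at height $-n$, which is bounded by $N = d_\star(n+1)$ by an argument parallel to (D2), again invoking asymmetry to prevent over-saturation.

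The main obstacle is (D2): the combinatorial identity above reduces it to an inequality that does not follow from (G2) alone, and the work lies in translating the qualitative non-embeddability of $\Gamma$ into a $\U_k$-symmetric graph into the quantitative refinement $\sum_{v'}d_\pi(v') \le d_\star(n)$.
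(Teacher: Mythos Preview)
Your treatment of (D1) is correct and is exactly the paper's telescoping argument.

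For (D2) and (D3), however, you have reached for the wrong hypothesis. Asymmetry of $\Gamma$ plays no role in this lemma; the paper obtains (D2) from condition (R1) (two distinct marked points have different images under $\pi$) together with the minimality condition (G6), and obtains (D3) from (G6) alone. The paper even remarks that (R1) is used \emph{only} to secure (D2).

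The paper's mechanism for (D2) is the following. Fix $p = \theta(\mu_n(v))$ with $H(v) = 1-n$. Since $d_\pi(v') \ge 2$ forces $v' \in \mu(\mathcal{F})$, condition (R1) guarantees that $\pi^{-1}(v)$ contains at most one marked vertex $v_0$; hence
\[
\sum_{v'\in\pi^{-1}(v)} d_\pi(v') = (d_\pi(v_0)-1) + \Card\pi^{-1}(v).
\]
Now (G6) enters: every other $v' \in \pi^{-1}(v)$ lies in the forward $\pi$-orbit of some marked vertex $w(v')$ with $H(w(v')) \le -n-1$, and distinct $v'$ give distinct $w(v')$ since $H$ increases strictly along orbits. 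Counting yields $\Card\pi^{-1}(v) \le 1 + \sum_{H(w)\le -n-1}(d_\pi(w)-1)$, and combining with the contribution $(d_\pi(v_0)-1)$ at height $-n$ gives $\sum d_\pi(v') \le d_\star(n)$. The proof of (D3) is parallel: vertices $v'$ at height $-n$ that are \emph{not} in $\partial\Gamma_{n+2}$ correspond to edges reaching down to height $-n-1$, and by (G6) each such edge lies on the forward orbit of a distinct marked vertex at height $\le -n-1$, giving $\sum_p \Card\mathcal{D}_0(p) \le \sum_{H\le -n-1}(d_\pi(w)-1) \le N$.

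Your proposed route --- extracting a nontrivial $\U_k$-action on $\Gamma$ from a hypothetical failure of (D2) --- is not developed beyond a sketch, and the constraints in (G4') make it far from automatic: the unique fixed vertex of such an action must lie in $\mu(\mathcal{F})$, so a coincidence of multiplicities within a single fiber $\pi^{-1}(v)$ does not obviously globalize. Proving (D2) without (R1) would in fact be a strengthening of what the paper establishes (the paper conjectures (R1) is superfluous but does not prove it).
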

Pick any point $z \in \partial \theta(A_{\star}) \cap \partial \theta(\widetilde{V_{A_\star}})$, and
apply Theorem~\ref{thm:real portrait} to $\gamma, \mathcal{G}$, and
the two collections of integers  $\mathcal{D}_0$ and  $\mathcal{D}$ defined above. 
We obtain a polynomial $Q$ of degree $d'$, a real analytic curve $\gamma' \subset Q^{-1}(\gamma)$ and a point $z'$
in the bounded component  of $\C \setminus \gamma'$ such that (R1) -- (R3) hold.

By construction, all critical values of  $Q$ are included in $\mathcal{G} \cup \{z\} \subset \gamma$ hence in $\D$.  
It follows that $Q \colon Q^{-1}(\D) \to \D$ is a ramified cover of degree $d' = \deg(Q)$, hence $Q^{-1}(\D)$ is connected. 
We also get that $A'_\star$ contains no critical values of $\tilde{Q}:= \theta^{-1}\circ Q$, hence $B_\star = \tilde{Q}^{-1}(A'_\star)$ is an annulus, 
and $\tilde{Q} \colon B_\star \to A'_\star$ is an unramified covering map of degree $d'$.
 
By (C8), $\Phi_n$ is a covering map of degree $d'$ from $A_\star$ onto $A'_\star$,  so that we may find a conformal isomorphism 
$\Phi_\star \colon B_\star \to A_\star$ such that $\Phi_n \circ \Phi_\star = \tilde{Q}$.

We let $V_\star := \tilde{Q}^{-1}(\widetilde{V_{A_\star}})$, and we patch $V_\star$ to $S_n$ using the conformal isomorphism $\Phi_\star$.

\begin{proof}[Proof of Lemma~\ref{lem:planar1}]
By (C1), the boundary $\gamma_A$ of $A$ in $S_n$ is real analytic. It is a simple closed curve  since $S_{n-1}$ is connected.
By (C7) and (C8), $\Phi_n$ induces a finite cover of $A$ onto its image so that $A':= \Phi_n(A)$ is a conformal annulus which is a component
of $S_{n-1}\setminus\overline{S_{n-2}}$. 
The boundary of $A'$ in $S_n$ has two connected components, one of which is the image under $\Phi_n$
of the boundary of $A$ in $S_n$. It follows that $\partial A'$ is the disjoint union of $\gamma'_A=\Phi_n(\gamma_A)$ which is a simple closed curve proving 1., and a closed subset $\gamma$ of $S_n$. 
By (C2) and (C3), $G_n|_\gamma = \frac1{d^n} \log \rho$, hence $\gamma$ is a compact real-analytic curve since $G_n$ is harmonic. 

We claim that $\gamma$ may be written as the union of
finitely simple closed curves $\gamma = \ell_1 \cup \cdots \cup \ell_k$.

Grant this claim. Recall that $\widetilde{V_A}$ is the connected component of $ \{G_n <\frac1{d^{n-1}}\log \rho\}$
containing $A'_\infty$.

Since $\ell_i\subset \{G_n =\frac1{d^n}\log \rho\}$, this curve is included in $\partial S_{n-1}$ hence
by (C1) it bounds an annulus of finite modulus in $S_n$. The union of these annuli is equal to $V_A\cap \{G_n <\frac1{d^n}\log\rho\}$ proving 2.

We may thus attach to each $\ell_i$ a closed conformal disk $\bar{\D}_i$, 
and the union of $A'$ and these disks contains $\widetilde{V_A}$ and is simply connected (any path is homotopic to one in $A'$, and the path generating the fundamental group of
$A'$ is homotopic to the union of the $\ell_i$'s). This shows the existence of a univalent holomorphic map $\theta\colon \widetilde{V_A}\to \D$ 
satisfying condition 3. (the complement of the image of $V_A$ in $\bar{\D}_i$ is the decreasing intersection of closed disks hence is connected and full). 

When $A=A_\star$, the boundary of $A_\star$ in $S_n$ is $\gamma_n$ by (C8), hence $A_\star$ is included in $\widetilde{V_{A_\star}}$.

\smallskip

To prove the claim, observe that the singular locus of $\gamma$ is included in the intersection $\mathcal{S}$ of $\gamma$ with the critical locus of $G_n$: the latter is a finite (possibly empty) set.  The gradient flow of $G_n$ induces a continuous map from $S^1$ to $\gamma$ which is a local diffeomorphism
onto $\gamma\setminus\mathcal{S}$ and displays $\gamma$ as the quotient of the circle by equivalence relation of the following form:
there exists a finite set $F\subset S^1$ such that all equivalence class of points $\notin F$ are trivial.
An induction on the cardinality of $F$ shows that any quotient of the circle is a union of circles as claimed.
\end{proof}
\begin{remark}
Although we shall not use it, note that in our situation  the equivalence relation is induced by a family of finite subsets of $S^1$ that are unlinked (see \S\ref{sec:combin} below for a definition). This implies $\gamma$ to be a tree of simple closed curves, see Figure~\ref{fig:tree-circle}
\end{remark}

\begin{figure}[h]
\centering
\def\svgwidth{8cm}
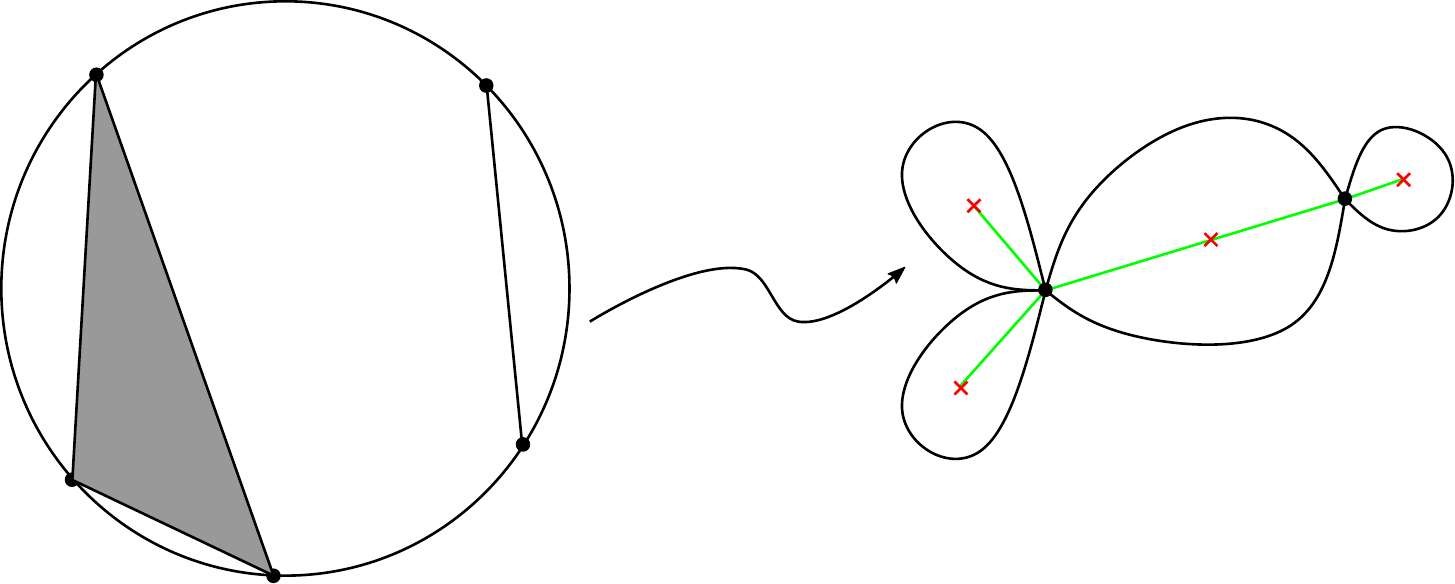
\caption{A tree of circles}
\label{fig:tree-circle}
\end{figure}

\begin{remark}\label{rem:compl-disk}
In fact $\widetilde{V_A}$ is the complement in a conformal disk of finitely many compact domains with real-analytic boundary. 
Indeed, this is clear if $n=1$, and by induction on $n$, $\widetilde{V_A}$ is isomorphic to the pull-back by a polynomial of a region with real-analytic boundary.
\end{remark}

\begin{proof}[Proof of Lemma~\ref{lem:4321}]
Recall that the collection of integers $\{n_{i,p}\}$ with $i\in \mathcal{D}(p)$ is in bijection
with the set $\{d_\pi(v')\}$ with $\pi(v')= v$ and $p=\theta(v)$.
Condition (D1) is an easy check: 
\begin{align*}
d' -1 = \sum_{H \le -n} (d_\pi(v)-1)
&= 
\left(
1+ \sum_{H \le -n-1} (d_\pi(v)-1)
\right)
-1 + \sum_{H = -n} (d_\pi(v)-1)
\\
&= 
N -1 + \sum_\mathcal{G} \sum_{\mathcal{D}(p)}(n_{i,p} -1)~.
\end{align*}
For the proof of (D2) we use our standing assumption (R2). Pick any point $p\in \mathcal{G}$, say $p= \theta(v)$ with $H(v) = -n+1$. 
By (R2), $\mathcal{B}(v)$ contains at most one vertex $v_0$ such that $d_\pi(v_0) \ge 2$.
By the minimality condition (G6), for any other vertex $v'\in \mathcal{B}(v)$ there exists a vertex $w= w(v')$ such that $d_\pi(w)\ge2$ and
$\pi^s(w) = v'$ for some $s\ge1$.
We infer
\begin{align*}
\sum n_{i,p}
&= 
\sum_{\mathcal{B}(v)} d_{\pi}(v')
= 
d_\pi(v_0)  -1 + \Card(\mathcal{B}(v))
\\
&\le 
d_\pi(v_0)  -1 + \sum_{H\le -n-1} (d_{\pi}(w) -1) \le d'
\end{align*}
which implies (D2).

\smallskip

Observe finally that  $\bigcup_{\mathcal{G}} \mathcal{D}_0(p)$ is in bijection with the set of edges of $\Gamma_{n+1}$ that
connect a vertex in $\{H = -n-1\}$ to a vertex in $\{H = -n\}$. By the minimality condition (G6), for each of these edge $e$ there exists a 
vertex $v \in \Gamma$ and an integer $s\ge1$ such that $e = (\pi^s(v), \pi^{s+1}(v))$ and $d_\pi(v) \ge 2$. Since we have $H(v)= -s + H(\pi^{s+1}(v)) = -s -n $, 
we obtain
\[
\sum_{\mathcal{G}} \Card \left(\mathcal{D}_0(p) \right)
= 
\Card \left( \bigcup_{\mathcal{G}} \mathcal{D}_0(p) \right)
\le
\sum_{H \le -n-1} 
(d_\pi(v) -1)
\le N
\]
which implies (D3).
\end{proof}

\paragraph*{Construction of $\Phi_{n+1}$, $G_{n+1}$, and $\mu_{n+1}$} 
Now that we defined $S_{n+1}$ as the union of $S_n$ and annuli $V_A$ attached to any component $A$ of
$\{\frac1{d^n}< G_n< \frac1{d^{n-1}}\log \rho\}$. To avoid confusion, denote by $I_A\colon V_A\to\widetilde{V_A}$
the canonical isomorphism between the open subset $V_A$ of $S_{n+1}$ and $\widetilde{V_A}\subset S_n$.
We define the map:
\[
\Phi_{n+1}
=
\begin{cases}
\Phi_n \text{ on } S_n,
\\
I_A  \text{ on } V_A \text{ for } A \neq A_\star,
\\
 \tilde{Q} \text{ on }  V_\star. 
\end{cases}
\]
Observe that by construction $\Phi_{n+1}$ is a well-defined holomorphic map
such that  $\Phi_{n+1}(S_{n+1}) \subset S_n$.
Similarly we set 
\[
G_{n+1} = 
\begin{cases}
G_n \text{ on } S_n,
\\ 
 \frac1d G_n \text{ on }  V_A \text{ for } A \neq A_\star,
 \\
 \frac1d G_n \circ \tilde{Q} \text{ on }  V_\star. 
\end{cases}
\]
When $A \neq A_\star$, it follows from (C3) that $G_{n+1}$ is well-defined and harmonic on $V_A$. 
On $V_\star$, we observe that $\frac1d G_n \circ \tilde{Q}$ is a harmonic function on $B_\star$
whose restriction to the outer (resp. inner) boundary of $V_\star$, i.e. on $\tilde{Q}^{-1}(\gamma_+)$ (resp. on $\tilde{Q}^{-1}(\gamma_-)$) is constant equal to $\frac1{d^n} \log \rho$
(resp. to $\frac1{d^{n+1}} \log \rho$) so that $\frac1d G_n \circ \tilde{Q}= G_n\circ \Phi_\star$ on $B_\star$.

We conclude that $G_{n+1}$ is a well-defined harmonic function on $S_{n+1}$ satisfying $G_{n+1} \circ \Phi_{n+1} = dG_{n+1}$. 

\medskip

Let us now define the map $\mu_{n+1}$. We set $\mu_{n+1} = \mu_n$ on $\Gamma_n$.  Observe that 
\[
T:= \Gamma_{n+1} \setminus \Gamma_n=\partial \Gamma_{n+1} \setminus \partial \Gamma_n= \{ H=-n\}~.\] 
Recall that we defined $\mathcal{G} := \theta( \mu_n(\{ H=1-n\}))$. 
We attach to a vertex $w\in T$ a point $p(w)\in \mathcal{G}$ by setting $p(w) =  (\theta\circ \mu_n) (\pi(w))$.

From the definition of $\mathcal{D}(p)$ and $\mathcal{D}_0(p)$, see~\eqref{eq:defD} and~\eqref{eq:defD0}, and since $\mu_n$ is injective on $\{ H=1-n\}$, and $\theta$ is a conformal isomorphism, we also get a 
canonical bijection 
\[\alpha\colon T \to \bigcup_{p\in\mathcal{G}} \mathcal{D}(p)\]
such that: for all $w\in T$ we have $\alpha(w) \in \mathcal{D}(p(w))$; and $\alpha(w) \in \mathcal{D}_0(p(w))$ iff $w\notin \partial \Gamma_{n+2}$ (i.e. 
$w=\pi(w')$ for some $w'\in \Gamma_{n+2}$).

We now observe that the polynomial $Q$ obtained by applying Theorem~\ref{thm:real portrait} comes with a family of bijections
$\delta_p\colon\mathcal{Q}(p)\to\mathcal{D}(p)$ where $\mathcal{Q}(p)$ is a subset of $Q^{-1}(p)$ (see condition (R3)), and that
$\delta^{-1}_p(\mathcal{D}_0(p))$ is included in $\gamma'$.
For any $w\in T$, we may thus set $\mu_{n+1}(w)= \delta^{-1}_{p(w)}(\alpha(w))$. The latter point naturally belongs to $V_\star$
hence to $S_{n+1}$ since the latter is obtained by patching $V_\star$ to $S_n$ using the map $\Phi_\star$.

\paragraph*{Verification that all conditions (C1) -- (C8) are satisfied} 

By construction $S_{n+1}$ is the union $S_n \cup\bigcup_A V_A$ where $A$ ranges over all connected components of $S_{n}\setminus \overline{S_{n-1}}$.
We further have a conformal isomorphism $\Phi_A\colon B_A\to A$ from an annulus $B_A\subset V_A$, and
$V_A$ is patched with $S_n$ using
this biholomorphism. To check (C1) we need to prove that $V_A\setminus B_A$ is a finite union of conformal annuli of finite modulus, 
and that the boundary of $B_A$ inside $V_A$ is real analytic.

When $A\neq A_\star$, we have 
\[V_A\setminus B_A = \widetilde{V_A}\setminus \Phi_n(A) = \widetilde{V_A}\cap \left\{G_n < \frac1{d^n}\log \rho\right\}\]
which is a union of annuli by Lemma~\ref{lem:planar1} (2). Note also that $\partial B_A = \{G_n = \frac1{d^n}\log \rho\}$,
hence  $\partial B_A$ is real-analytic since $G_n$ is harmonic.

Otherwise $A = A_\star$, and
\[V_\star\setminus B_\star = \tilde{Q}^{-1}(\widetilde{V_{A_\star}}\setminus A'_\star)~,\]
with $A'_\star= \Phi_n(A_\star)$. The latter equality implies that $\partial A'_\star = \{G_n = \frac1{d^n}\log \rho\}$ as before
so that  $\partial A'_\star$ is real-analytic. By Lemma~\ref{lem:imu}, the image of $\mu_n$ is included in  $\bigcup_{l \le n} \{ G_n = \frac1{d^l} \log \rho \}$,
hence $\mu_n(\Gamma_n) \cap \widetilde{V_{A_\star}} \subset \{G_n = \frac1{d^n}\log \rho\}$.
Now by  Lemma~\ref{lem:planar1} (2), $\widetilde{V_{A_\star}}\setminus A'_\star$ is a finite union of annuli, and since $\tilde{Q}$ is ramified only
over the image of $\mu_n$, it follows that $\tilde{Q}$ induces a finite cover from $V_\star\setminus B_\star$ onto  $\widetilde{V_{A_\star}}\setminus A'_\star$.
This proves (C1).

\medskip

The function $G_{n+1}$ is a composition of a harmonic function and a holomorphic map, hence is harmonic. By construction, we also have $G_{n+1}|_{S_n}=G_n$.
If $z_k$ is a diverging sequence in $S_{n+1}\setminus S_n$, then after extraction it belongs to some open set $V_A$ for some $A$.
If $A\neq A_\star$, then the sequence $z_k$ can be identified with a sequence $\widetilde{z_k}$ 
diverging in $\widetilde{V_A}\setminus B_A$, so that 
$G_{n+1}(z_k) =\frac1d G_n(\widetilde{z_k}) \to \frac1{d^{n+2}}\log\rho$.
When $z_k\in A_\star$, then $\tilde{Q}(z_k)$ diverges in $\widetilde{V_{A_\star}}\setminus A'_\star$, hence
again $G_{n+1}(z_k) =\frac1d G_n(\tilde{Q}(z_k)) \to \frac1{d^{n+2}}\log\rho$ proving (C2).

\medskip

We have $\Phi_{n+1}|_{S_n} = \Phi_n$, $\Phi_{n+1}(V_A)\subset S_n$ for all $A$, and $G_{n+1}\circ \Phi_{n+1}=d G_{n+1}$ by the very definitions of $\Phi_{n+1}$ and $G_{n+1}$. The properness is a consequence of the properness of $\Phi_n$ and of (C7) and (C8) to be proved below. 

Let $W$ be any connected component of $S_n\setminus\overline{S_{n-1}}$, and consider the component $B$ of 
$S_{n}\setminus\overline{S_{n-2}}$ containing it. By induction we know that there exists a component $W'$ of 
$S_n\setminus\overline{S_{n-1}}$ whose image by $\Phi_n$ is included in $B$. Let $V'$ be the component of $S_{n+1}\setminus\overline{S_{n-1}}$ 
containing $W'$. Its image by $\Phi_{n+1}$ is a connected component of $S_{n}\setminus\overline{S_{n-2}}$ by the properness of $\Phi_{n+1}$, 
which contains $B$ hence it is $W$. This proves $\Phi_{n+1}$ is surjective.

\medskip

Observe that $\mu_{n+1}=\mu_n$ on $\Gamma_n$ by definition, and that $\mu_{n+1}(w)= \delta^{-1}_{p(w)}(\alpha(w))$
for any $w\in \{H= -n\}\subset \Gamma_{n+1}$. From the previous section, we also infer 
\begin{align*}
\Phi_{n+1} (\mu_{n+1}(w)) &= \theta^{-1} \left(Q \left(\delta^{-1}_{p(w)}(\alpha(w))\right)\right) 
\\& = \theta^{-1}(p(w))= \mu_n(\pi(w))= \mu_{n+1}(\pi(w))
\end{align*}
which completes the proof of (C4).

\medskip

Pick any $v\in \Gamma_{n+1}$.  If $v\in\Gamma_n$, then 
$d_\pi (v) = \deg_{\mu_n(v)} (\Phi_n)= \deg_{\mu_{n+1}(v)} (\Phi_{n+1})$. Otherwise $v\in T$, so that 
$\alpha(v)= d_\pi(v)\in\mathcal{D}(p(v))$, and
\[
\deg_{\mu_{n+1}(v)}(\Phi_{n+1})
=\deg_{\mu_{n+1}(v)}(\tilde{Q})
=
\deg_{\mu_{n+1}(v)}(Q)
\mathop{=}\limits^{(R3)}
\delta_{p(v)}(\mu_{n+1}(v))= 
d_\pi(v)
\]
proving (C5).

\medskip

We now define the curve $\gamma_{n+1}$. Recall that Theorem~\ref{thm:real portrait} yields a polynomial $Q$ and a real-analytic curve $\gamma'$
bounding a disk $D'$ such that $Q(\gamma')=\gamma$. In particular, $Q(D')$ is the disk $D$ bounded by the simple closed curve $\gamma$.
Note further that the restriction of $Q$ to $D'$ has a single ramification point $z'$ of degree $N = \Delta + \sum_{H(v)\le -n-1} (d_\pi(v) -1)$, 
and that $Q(z')$ is a point $z$ which was fixed on $\partial \theta(A_{\star}) \cap \partial \theta(\widetilde{V_{A_\star}})$.
We let $\gamma_{n+1}$ be the image of $\gamma'\subset V_\star$ in $S_{n+1}$. 

Before proving (C6) -- (C8), we discuss first the structure of $S_{n+1} \setminus \overline{S_{n}}$. This set is equal to
$\{G_{n+1} < \frac1{d^{n+1}} \log \rho\}$, hence is included in the union $\cup V_A$ where $A$ ranges over all connected components of 
 $S_{n} \setminus \overline{S_{n-1}}$. By definition of $G_{n+1}$ and by Lemma~\ref{lem:planar1} (2), it follows that $S_{n+1} \setminus \overline{S_{n}}$ is a finite
 union of annuli, each of finite moduli. 
 
 Observe that $G_{n+1}|_{\gamma_{n+1}}=  \frac1{d^{n+1}} \log \rho$, hence $\gamma_{n+1}$ bounds a unique component of $S_{n+1} \setminus \overline{S_{n}}$
 that we define to be $A_{\star,n+1}$. But $\gamma'$ also contains $\bigcup_{\mathcal{G}} \delta_p^{-1}(\mathcal{D}_0(p))$, and the latter set is precisely $\mu_{n+1} ( \partial \Gamma_{n+1} \setminus \partial \Gamma_{n})$ proving (C6).

\medskip

Pick any component $A'$ of $S_{n+1} \setminus \overline{S_{n}}$
different from $A_{\star,n+1}$.  Then either $A'$ is contained in a component of 
$S_{n+1} \setminus \overline{S_{n-1}}$ that does not contain $A_{\star,n}$ and $\Phi_{n+1}$ is an isomorphism on $A$ by definition.
Or we may view $A'$ inside $V_\star$ where is it a component of $V_\star\setminus \overline{B_\star}$ (see Figure~\ref{fig:map2}).
Critical points of $Q$ outside $\{z'\}$ are mapped to $\gamma$, hence are included in the boundary of $B_\star$ (inside $V_\star$).
It follows that $Q$ has no critical point in the smallest conformal disk in  $Q^{-1}(\D)$ which contains $A'$. 
This proves that $Q|_{A'}$ is a conformal isomorphism onto its image concluding (C7).

\medskip

Finally, the smallest conformal disk in $Q^{-1}(\D)$ containing $A_{\star,n+1}$ contains a single critical point $z'$ for $Q$ 
of multiplicity $d_\star(n+1) =N$, hence $\Phi_{n+1}|_{A_{\star,n+1}}$  
induces a covering map onto its image of degree $d_\star(n+1)$.
When viewed in $V_\star$, then the set $\overline{\Phi_{n+1}(A_{\star,n+1})}$  equals $B_\star$
whose boundary contains $\gamma'$ so that
 $\gamma_{n+1}$ is included in the boundary of  $\overline{\Phi_{n+1}(A_{\star,n+1})}$
concluding the proof of (C8).

\paragraph*{Addendum} 
Let us include here the following important remark. 
\begin{lemma}\label{lem:planar2}
Each surface $S_n$ is a planar domain. 
\end{lemma}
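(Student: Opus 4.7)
The plan is to proceed by induction on $n$. The base case $n=0$ is immediate since $S_0 = \{|z| > \rho^{1/d}\}$ is an open subset of $\hat{\C}$, and we fix $\iota_0 \colon S_0 \hookrightarrow \hat{\C}$ as the identity. For the inductive step, assuming we have an embedding $\iota_n \colon S_n \hookrightarrow \hat{\C}$, I would extend it to $\iota_{n+1} \colon S_{n+1} \hookrightarrow \hat{\C}$. Recall that $S_{n+1}$ is built from $S_n$ by patching, for each connected component $A$ of $S_n \setminus \overline{S_{n-1}}$, a surface $V_A$ to $S_n$ along the annulus $B_A \subset V_A$ via the conformal isomorphism $\Phi_A \colon B_A \to A$.

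The first step is to verify that each patch $V_A$ is itself planar. For $A \neq A_\star$, the planarity of $V_A = \widetilde{V_A}$ is precisely Lemma~\ref{lem:planar1}(4); moreover, by Remark~\ref{rem:compl-disk}, $V_A$ is conformally the complement in a disk of finitely many compact domains with real-analytic boundaries. For $A = A_\star$, the surface $V_\star = \tilde{Q}^{-1}(\widetilde{V_{A_\star}})$ is, via $\tilde{Q} = \theta^{-1} \circ Q$ with $Q$ a polynomial, realized as a subset of $Q^{-1}(\D) \subset \C$, hence planar. In both cases we even get an explicit embedding of $V_A$ into $\hat{\C}$.

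The second step is to assemble these embeddings. Using the inductive property (C1), the set $S_n \setminus \overline{S_{n-1}}$ is a disjoint union of annuli, and each such annulus $A$ has exactly one boundary curve lying in $\partial S_n$. Under $\iota_n$, this curve maps to a real-analytic simple closed curve in $\hat{\C}$ that bounds a unique open disk $D_A$ in $\hat{\C} \setminus \iota_n(S_n)$ (distinct $A$'s give disjoint $D_A$'s). The piece of $V_A$ actually added to $S_n$ is the connected component of $V_A \setminus \overline{B_A}$ adjacent to the boundary of $B_A$ corresponding under $\Phi_A$ to $\partial A \cap \partial S_n$; this piece is a planar Riemann surface with one real-analytic outer boundary curve. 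By the Riemann mapping theorem applied to the (planar) $V_A$ and to $D_A$, and by post-composing with a suitable M\"obius transformation, I would embed this new piece conformally into $D_A$ in such a way that the common boundary matches the gluing prescribed by $\iota_n \circ \Phi_A$.

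The main obstacle is precisely this boundary matching: Riemann mappings a priori only match boundaries up to reparametrization, whereas the gluing prescribes a specific conformal identification on the common curve. Two ingredients resolve this. First, because the relevant boundary curves are real-analytic (as level sets of the harmonic function $G_n$), Carath\'eodory's theorem provides a well-defined boundary extension of any conformal isomorphism between the interiors, and real-analytic boundaries even allow holomorphic extension across them. Second, the Riemann mapping between two Jordan domains with real-analytic boundary can be chosen to realize any prescribed conformal identification on the boundary circle, by composing with an automorphism of the disk. Together these yield the compatible extension, and $\iota_{n+1}$ is obtained by gluing these partial embeddings along $S_n$, proving that $S_{n+1}$ is planar.
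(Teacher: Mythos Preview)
There is a genuine gap in the boundary-matching step. Your key claim---that ``the Riemann mapping between two Jordan domains with real-analytic boundary can be chosen to realize any prescribed conformal identification on the boundary circle, by composing with an automorphism of the disk''---is false. The automorphism group of the disk is only three-real-dimensional, so composing with it lets you prescribe the images of three boundary points, not an arbitrary real-analytic diffeomorphism of the circle. What you are attempting is precisely a conformal welding: you have $\iota_n \circ \Phi_A \colon B_A \to \iota_n(A)\subset\hat{\C}$ fixed, and you want to extend it to a conformal embedding of all of $V_A$. Since $V_A$ is planar you have \emph{some} embedding $\psi\colon V_A\hookrightarrow\hat{\C}$, but then you would need the conformal isomorphism $\iota_n\circ\Phi_A\circ(\psi|_{B_A})^{-1}$ between two annuli in $\hat{\C}$ to be the restriction of a M\"obius map, which it has no reason to be. More fundamentally, your inductive scheme is too rigid: even if $S_{n+1}$ is planar, the embedding of $S_{n+1}$ need not restrict to the particular $\iota_n$ you built at the previous step.

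The paper avoids this obstruction by arguing topologically rather than conformally. It caps off the ends of $\hat S_n$ with disks to form a closed surface $\overline{S}_n$, and shows (using the description from Lemma~\ref{lem:planar1} of the regions $\widetilde{V_A}$) that $\overline{S}_n$ is \emph{diffeomorphic} to $\overline{S}_{n-1}$: in each region $B$ between two consecutive level sets of $G_n$, capping along the inner annuli versus the outer one both yield a disk with the same boundary $\partial B$, and one patches together diffeomorphisms of disks relative to their boundary. By induction $\overline{S}_n$ is diffeomorphic to $S^2$, hence by uniformization conformally $\hat\C$, and $S_n\subset\overline{S}_n$ is planar. The point is that ``planar domain'' is a topological property of the surface, so one only needs a diffeomorphism to the sphere; your attempt to carry a specific conformal embedding through the induction imposes a conformal constraint that is not in general satisfiable.
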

\begin{proof}
Observe that $\hat{S}_n\setminus \overline{\hat{S}_{n-1}}$ is a finite union of annuli of finite modulus so that
we may attach to $\hat{S}_n$ finitely many disks to obtain a compact Riemann surface $\overline{S}_n$. 
We prove by induction on $n$ that $\overline{S}_n$ is conformally equivalent to the Riemann sphere.

Pick any connected component $A$ of $S_n\setminus \overline{S_{n-1}}$, and let $B$ be the connected component of
$\{G_n<\frac1{d^{n-1}}\log \rho\}$ containing it. Since $\Phi_n$ is surjective onto $S_{n-1}$, one can find a component $\tilde{A}$
of $S_n\setminus \overline{S_{n-1}}$ whose image by $\Phi_n$ is included in $B$, and Lemma~\ref{lem:planar1} proves that 
$B$ is conformally equivalent to the complement of finitely many connected sets $K_1, \cdots, K_r$ in the unit disk.
We know moreover that the function $G_n$ is harmonic on $B$, tends to $\frac1{d^{n-1}}\log \rho$ when $z \to \partial B$, 
(resp. to $\frac1{d^{n+1}}\log \rho$ when $z\to K_i$), that $B_+= \{z\in B, G_n> \frac1{d^{n}}\log \rho\}$ is an annulus, and that
$\{z\in B, \frac1{d^{n+1}}\log \rho< G_n<  \frac1{d^{n}}\log \rho\}$ is a union of annuli $A_1, \cdots, A_r$.

Note that $\overline{S_{n-1}}$ is obtained by attaching a disk to $B_+$, whereas $\overline{S_{n}}$ is obtained by attaching $r$ disks
to each of the annuli $A_i$, $i=1,\ldots, r$. In both cases, we obtain a disk with boundary $\partial B$. 
We may thus patch together diffeomorphisms of the disk being the identity on its boundary to obtain
a smooth diffeomorphism between $\overline{S_{n-1}}$ and $\overline{S_{n}}$. By induction the latter is 
thus compact and simply connected. 
\end{proof}

\subsection{End of the proof of Theorem~\ref{thm:realizability}}
Let us first suppose that $\mathsf{P} = \varnothing$. In this case, one can use geometric arguments based on estimating moduli of annuli.
Apply the construction of \S\ref{sec:construct-sequence-Riemann} with $\Delta =0$: we obtain
an open Riemann surface $\hat{S} = \cup_{n\ge0} \hat{S_n}$ with 
a marked point $\infty \in \hat{S}$, and a holomorphic map  $\Phi \colon \hat{S} \to \hat{S}$ leaving $\infty$ totally invariant, and such that 
$\Phi|_{S_n} = \Phi_n$ for all $n$. 
By Lemma~\ref{lem:planar2}, $S_n = \hat{S}_n\setminus \{\infty\}$ is a planar domain for each $n$, hence we may find a sequence of univalent maps
$\kappa_n \colon \hat{S}_n \to \C \cup \{ \infty \}$ such that $\kappa_n(\infty) = \infty$. We normalize them 
so that in the chart $S_0 = \{ |z| >\rho^{1/d} \}$ we have the expansion $\kappa_n(z) = z + O(1)$. 

By Koebe's $1/4$-theorem, $\psi_n$ forms a normal family, so that one can extract a subsequence and get a univalent map 
$\psi \colon S \to \C$.  We thus get a domain $U = \psi(S) \subset \C$ and a holomorphic map $\Phi \colon U \to U$.

Observe that for $n$ large enough (larger than $ 1- \min_\Gamma H$), then $d_\star(n) =1$, and 
$S_n \setminus \overline{S_{n-1}}$ is a finite union of annuli on which the restriction of $\Phi_n$ is a biholomorphism by (C7) and (C8).
We thus get a system of nested annuli of modulus bounded from below by a positive constant, 
and it follows from~\cite[\S 2.8]{MR1312365} (or~\cite[\S 8D]{MR0264064}) that  $K := \C \setminus U$ has absolute area zero in the sense of Ahlfors,
so that $\Phi$ extends through
$K$ (see e.g.~\cite[\S 4]{MR3429163}). 

By (C0) we get a polynomial map $\Phi$  of degree $d$, whose marked dynamical graph is equal to $\Gamma$ by (C4) as required. 
 
\medskip

To handle the general case $\mathsf{P} \neq \varnothing$, we take a slightly different approach using quasi-conformal deformation
arguments. 

The graph $\Gamma$ is the union of an infinite connected marked dynamical graph $\Gamma_\esc$
and a finite marked dynamical graph $\Gamma_\fin$. By assumption the latter is realizable by a PCF polynomial so that we may find 
a PCF polynomial $P_0$  whose marked dynamical graph (with the action of the symmetry group removed)
is isomorphic to $\Gamma_\fin$.

Apply the construction of \S\ref{sec:construct-sequence-Riemann} with $\Delta =\deg(P_0)$.
Fix any integer $n_0$ larger than the depth of the infinite part of the marked dynamical graph, i.e. $n_0\ge \min_{\Gamma_\esc} H$.
Build a Riemann surface $S$ by patching a conformal disc $\D_A=\{|z| <1\}$ to each annuli component $A$ of $S_{n_0}\setminus \overline{S_{n_0-1}}$.
By Lemma~\ref{lem:planar2}, $S$ is a conformal plane which contains $S_{n_0}\supset S_{n_0-1}$ as open subsets, and $S_{n_0}\setminus \overline{S_{n_0-1}}$
is a finite union of annuli of finite modulus.

Recall that for anyone of these components $A$, the map $\Phi_{n_0}|_A$ induces an unramified cover onto its image, 
whose degree is equal to $1$ when $A\neq A_\star$ and to $\Delta$ when $A= A_\star$.

We may thus find a smooth map $\Phi\colon S \to S$ such that:
\begin{itemize}
\item
$\Phi$ is an orientation preserving finite branched cover;
\item
$\Phi = \Phi_{n_0}$ on a neighborhood of $S_{n_0-1}$;
\item for each $A\neq A_\star$, $\Phi$ is conformal and univalent on a conformal disk $\hat{\D}_A \Subset\D_A$
such that $\overline{\D_A\setminus \hat{\D}_A}$ is a compact subset of $A$;
\item
there exists a conformal disk $\hat{\D}_\star \Subset\D_\star$ such that
 $\overline{\D_\star\setminus \hat{\D}_\star}$ is a compact subset of $A$, and
$\Phi|_{\hat{\D}_\star}$ is conformally conjugated to $P_0$ in a neighborhood of its Julia set;
\item
critical points of $\Phi$ in $S\setminus S_{n_0}$ are 
contained in $\hat{\D}_\star$.
\end{itemize}
Observe that the set $K(\Phi)$ of points $x\in S$ such that $\Phi^n(x)\in \cup_A \D_A$ for all $n$
is compact and that $\Phi$ is conformal in a neighborhood of $K(\Phi)$.

We now modify the complex structure on $S$ as follows.
Let $\sigma_0$ be the standard complex structure, and set $\sigma_n:=\Phi^{n*} \sigma_0$ with corresponding Beltrami form $\mu_n$. Note that $\mu_0\equiv0$. Moreover, as $\Phi$ is a finite branched cover, we have
\[\mu_1(z)=\frac{\partial_{\bar{z}}\Phi(z)}{\partial_z\Phi(z)}\frac{d\bar{z}}{dz}, \quad \text{for a.e} \ z\in S,\]
and $\mu_1$ is non-zero exactly where $\Phi$ is not conformal, i.e. on a finite union of conformal annuli and $\|\mu_1\|_{L^\infty}<1$.
Now, outside a neighborhood 
of $K(\Phi)$, all the maps $\Phi^n$ being conformal, we have $\mu_n=\mu_0$. Finally,
in any disk $\D_A$, we may write
\[
\Phi^* \left(
\mu_n \frac{d\bar{z}}{dz} \right)
= 
\mu_n \circ \Phi
\, 
\frac{\overline{\Phi_z}}{\Phi_z}
\,
\frac{d\bar{z}}{dz}
\]
so that $\mu_n$ converges in a neighborhood of $K(\Phi)$
to an $L^\infty$ Beltrami form $\mu_\infty$ of sup-norm $<1$ since $\Phi$ is orientation preserving, 
and equal to $0$ on $K(\Phi)$.

We may thus apply the Ahlfors-Bers theorem which yields a quasi-conformal homeomorphism
$\psi \colon S \to S$ such that $\psi^*\mu_\infty$ corresponds to the standard complex structure.
The map $P:= \psi^{-1}\circ \Phi \circ \psi$ is a conformal map of the complex plane
which is conjugated near $\infty$ to $\Phi$ hence is a polynomial of degree $d$.

Since critical points and their multiplicities are topological invariant, it follows that the marked critical graph
of $P$ is isomorphic to $\Gamma$ which completes the proof.

\begin{remark}
By construction $P$ admits a renormalization which is hybrid equivalent to $P_0$.
\end{remark}


\section[Special curves and dynamical graphs]{Special curves and special critically marked dynamical graphs}\label{sec:correspondence}

\begin{theorem}\label{thm:correspondence}
Let $\Gamma$ be any special asymmetric critically marked dynamical  graph such that 
\begin{itemize}
\item[(R1)]
two distinct marked points have different images;
\item[(R2)]
the finite dynamically marked a subgraph $\Gamma_\fin$ is 
realizable by a PCF polynomial.
\end{itemize}
Then the Zariski closure in $\mpoly_{\rm crit}^d$  of the set of primitive polynomials
$P$ with disconnected Julia set and such that $\Gamma(P) = \Gamma$ is a (possibly reducible) special curve $C(\Gamma)$.

Moreover for any irreducible component $C_i$ of $C(\Gamma)$, we have $\Gamma (C_i) = \Gamma$.
\end{theorem}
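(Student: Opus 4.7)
My plan would proceed as follows. Let $W(\Gamma) \subset \mpoly_{\rm crit}^d$ denote the Zariski closed subvariety cut out by the critical identifications encoded in $\Gamma$, that is, by the equations $P^n(c_i) = \sigma \cdot P^m(c_j)$ for every triple $(n,m,\sigma)$ such that $\pi^n(\mu(i)) = \sigma \cdot \pi^m(\mu(j))$ in $\Gamma$. Clearly $\{P : \Gamma(P) = \Gamma\} \subset W(\Gamma)$, and the desired curve $C(\Gamma)$ is the Zariski closure of the subset of $W(\Gamma)$ formed by primitive polynomials with disconnected Julia set and $\Gamma(P) = \Gamma$.

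First I would prove $\dim W(\Gamma) \le 1$. Partition $\{0,\dots,d-2\} = \mathsf{A} \sqcup \mathsf{P}$ as in Lemma~\ref{lm:partition-Crit}. Each passive index $i \in \mathsf{P}$ yields a codimension-one equation $P^{n_i}(c_i) = P^{m_i}(c_i)$; fix $i_0 \in \mathsf{A}$ (which is non-empty since $\Gamma$ is special), and for every other $i \in \mathsf{A}\setminus\{i_0\}$, specialness of $\Gamma$ provides integers $n_i, m_i$ and a symmetry $\sigma_i$ with $P^{n_i}(c_i) = \sigma_i \cdot P^{m_i}(c_{i_0})$, giving another codimension-one condition. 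Using assumption (R1) to rule out trivial identifications, these $|\mathsf{P}| + |\mathsf{A}| - 1 = d-2$ equations are independent in $\mpoly_{\rm crit}^d$ (of dimension $d-1$), yielding the upper bound.

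Next I would show that the irreducible component of $W(\Gamma)$ passing through the polynomial $P_0$ furnished by Theorem~\ref{thm:realizability} is at least one dimensional --- this is the main obstacle. Since $P_0$ is primitive with disconnected Julia set, it is not PCF, so the critical point $c_{i_0}$ is not preperiodic under $P_0$. I would use the Mañé-Sad-Sullivan theory (via Theorem~\ref{thm:full hol motion} and Theorem~\ref{thm:discrete preper}) in a small neighborhood of $P_0$ in $\mpoly_{\rm crit}^d$: the set of parameters at which $c_{i_0}$ stays on its prescribed grand orbit under $P_0$ is not discrete, because were $P_0$ an isolated point of $W(\Gamma)$, the Northcott property applied to $h_{P,c_{i_0}}$ restricted to a hypothetical curve through $P_0$ would contradict the existence of a holomorphic motion deforming $c_{i_0}$ while maintaining the critical relations inside the $J$-stable Fatou components in parameter space. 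Concretely, in a Fatou component of $\mpoly_{\rm crit}^d$ near $P_0$, holomorphic motions furnish an analytic family $P_t$ realizing $\Gamma$ for $t$ varying analytically, producing a positive-dimensional analytic germ inside $W(\Gamma)$; combined with the upper bound, the component of $W(\Gamma)$ through $P_0$ is an irreducible curve.

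Finally I would verify that each irreducible component $C_i$ of $C(\Gamma)$ is a special curve with $\Gamma(C_i) = \Gamma$. The inclusion $\Gamma(C_i) \supseteq \Gamma$ (in the sense of containing all its critical identifications) is automatic since $C_i \subset W(\Gamma)$. Conversely, any extra identification holding on a Zariski open subset of $C_i$ would propagate to $P_0$ by density and contradict $\Gamma(P_0) = \Gamma$; asymmetry of $\Gamma$ rules out the appearance of extra symmetries in $\Sigma(P_t)$ for generic $t \in C_i$ (using Proposition~\ref{prop:sameZariskiopen} and the fact that primitivity is Zariski open by Proposition~\ref{prop:prim}, together with $P_0$ being primitive), so $\Gamma(C_i) = \Gamma$ exactly. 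Since $\Gamma$ is special and the family on $C_i$ is primitive, Theorem~\ref{tm:marked graph is special} implies that $C_i$ contains infinitely many PCF parameters, i.e. is a special curve, completing the proof.
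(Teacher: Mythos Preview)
Your proposal has the right overall shape but misses the two key technical ingredients, and in fact your dimension arguments are inverted.

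\textbf{The upper bound $\dim \le 1$.} Counting $d-2$ equations in a $(d-1)$-dimensional space gives, by Krull's principal ideal theorem, a \emph{lower} bound $\dim W(\Gamma) \ge 1$ on each nonempty component, not an upper bound. To get $\dim \le 1$ you would need to check that your equations form a regular sequence (a complete intersection), and there is no reason the relations $P^n(c_i) = \sigma \cdot P^m(c_j)$ should be transverse; invoking (R1) does not suffice. The paper bypasses this entirely with a pluripotential-theoretic argument: on any component $W$, the specialness of $\Gamma$ forces $G(P_{c,a}) = g_{P_{c,a}}(c_0)$ for a suitably chosen active critical point $c_0$. Since $G$ extends to a continuous metric on $\mathcal{O}(1)$ over $\mathbb{P}^{d-1}$, one has $(dd^c G|_W)^{\dim W}$ of positive mass; but $dd^c g_{c_0}$ is supported on $\{g_{c_0}=0\}$, so $(dd^c g_{c_0})^2 = 0$, forcing $\dim W \le 1$. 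You cannot avoid this argument.

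\textbf{The lower bound $\dim C(\Gamma) \ge 1$.} Your Ma\~n\'e--Sad--Sullivan paragraph does not produce a coherent argument: $J$-stability in the ambient moduli space gives a holomorphic motion of Julia sets, but it does \emph{not} preserve the algebraic critical relations $P^n(c_i)=P^m(c_j)$ as you vary over a $(d-1)$-dimensional neighborhood, and the Northcott clause is unrelated. What is actually needed is the Branner--Hubbard wringing deformation (the paper's Proposition~\ref{prop:wring}): since the polynomial $P_0$ furnished by Theorem~\ref{thm:realizability} has disconnected Julia set, the wringing construction yields a non-constant holomorphic family $\tau \mapsto P_\tau$ of quasi-conformally conjugate polynomials with $\Gamma(P_\tau)=\Gamma$ and $\Sigma(P_\tau)=\Sigma(P_0)$ for every $\tau$. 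This entire disk lies in $\mathcal{Z}$, hence in $Z$, and since it passes through a chosen smooth point of $Z$ it lies in a single irreducible component $W$. This simultaneously gives $\dim W \ge 1$ \emph{and} the identification $\Gamma(W)=\Gamma$ (your density argument for the latter is fine in spirit, but wringing is what makes it work cleanly, and is in any case indispensable for the dimension bound).
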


In other words, there is a natural one-to-one correspondence between large classes of critically marked dynamical  graphs and
of special curves.

\subsection{Wringing deformations and marked dynamical graphs}
Let us review the construction of wringing deformations by Branner and Hubbard. We refer to~\cite[Chapter~II]{BH}
for details on the construction which relies on quasiconformal deformation techniques.

Let $P$ be any monic and centered complex polynomial of degree $d\ge2$.
For any $\tau:= \rho+i\theta$ with $\rho>0$ (i.e. $i\tau\in \H$), we define the analytic diffeomorphism $\ell_\tau:\C\setminus \overline{\D}\to\C\setminus \overline{\D}$ by setting
\[\ell_\tau(e^{r+i\psi}):=\exp(r\rho+i(\theta r+\psi)),\]
 so that
 \[\ell_\tau^{-1}(e^{r+i\psi})=\exp(r\rho^{-1}+i(-\theta\rho^{-1} r+\psi)).\]
Denote by $\sigma_0$ the standard complex structure on $\mathbb{C}$.
We let  $\sigma_{\tau}$ be the unique measurable almost complex structure on the complex plane
satisfying the following conditions:
\begin{itemize}
\item
$\sigma_{\tau}=\sigma_0$ on  $K(P)$;
\item
$\sigma_{\tau}= \varphi_P^* \ell_\tau^*(\sigma_0)$ on $\{g_P> G(P)\}$;
\item
$\sigma_{\tau}$ is invariant under $P$.
\end{itemize}
It follows from~\cite[Proposition~6.1]{BH} that there exists a unique quasiconformal map
$h_{\tau}:\p^1\to \p^1$ solving the Beltrami equation
\[\frac{\partial h_\tau}{\partial\bar{z}}=\mu_{\tau}\frac{\partial h_\tau}{\partial z}\]
and such that 
\begin{itemize}
\item
$h_{\tau}(\infty)=\infty$;
\item
$P_\tau :=h_{\tau} \circ P \circ h_{\tau}^{-1}$ is a monic and centered polynomial of degree $d$;
\item
the map $\ell_\tau \circ \varphi_P\circ h_{\tau}^{-1}$ is conformal in a neighborhood of infinity and satisfies
$\ell_\tau \circ \varphi_P\circ h_{\tau}^{-1} (z) = z + O(1)$.
\end{itemize}
We infer from~\cite[Proposition~7.2]{BH} that for each $z$ the map $\tau \mapsto h_{\tau}(z)$ is analytic in $\tau$, and
that the family of polynomials $P_\tau$ is also analytic.
When $K(P)$ is connected, then $P_\tau =P$ for all $\tau$, see~\cite[Proposition~8.3]{BH}.
When $K_P$ is disconnected, then the map  $\tau \mapsto P_\tau$ is not constant in the space of monic centered polynomials of degree $d$, see~\cite[Proposition~8.4]{BH}.

Let us now mark the critical points of $P$ so that $\crit(P) = \{ c_0, \ldots, c_{d-2}\}$.
Since being critical is a purely topological property, 
the critical points of $P_\tau$ are given by $c_i(\lambda):= h_\tau(c_i)$
so that $P_\tau$ is also critically marked.
We may thus talk about the critically marked dynamical graph $\Gamma(P_\tau)$ for all $\tau\in -i\H$.

Recall the definition of the symmetry group $\Sigma(P)$ from \S\ref{sec:symmetry} and its associated morphism $\rho_P\colon\Sigma(P) \to \Sigma(P)$
such that $P \circ g= \rho_P(g)\circ P$.
\begin{proposition}\label{prop:wring}
For any critically marked monic and centered polynomial $P$, and
for any $\tau\in -i\H$, we have $\Sigma(P)=\Sigma(P_\tau)$, $\rho_P=\rho_{P_\tau}$, and $\Gamma(P) = \Gamma(P_\tau)$.
\end{proposition}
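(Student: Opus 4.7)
The strategy is to exploit the functorial nature of the wringing construction with respect to the action of $\Sigma(P)$. Specifically, I will show that for every $g\in\Sigma(P)$ the Beltrami form $\mu_\tau$ associated to $\sigma_\tau$ satisfies $g^*\mu_\tau=\mu_\tau$, and then use Ahlfors--Bers uniqueness together with the normalization of $h_\tau$ to conclude that $h_\tau\circ g=g\circ h_\tau$. Once this is established, all three assertions follow by direct verification.

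\noindent\textbf{Key calculation on the B\"ottcher coordinate.} The first technical step is to establish the intertwining identity
\[
\varphi_P(\zeta z)=\zeta\,\varphi_P(z)\qquad\text{whenever}\quad g(z)=\zeta z\in\Sigma(P).
\]
To see this, write $P(z)=z^\mu Q(z^m)$ in reduced form, so that $P(\zeta z)=\zeta^\mu P(z)$ and $m\mid (d-\mu)$. The power series $\psi(z):=\zeta^{-1}\varphi_P(\zeta z)$ is tangent to the identity at infinity, and using $\zeta^{d-\mu}=1$ one checks that $\psi$ satisfies the B\"ottcher equation $\psi\circ P=\psi^d$. Uniqueness of the B\"ottcher coordinate then forces $\psi=\varphi_P$, yielding the claimed identity.

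\noindent\textbf{Invariance of $\mu_\tau$.} Next I verify that $\ell_\tau$ commutes with rotations: writing $\ell_\tau(e^{r+i\psi})=e^{r\rho+i(\theta r+\psi)}$ and $\zeta=e^{i\alpha}$, an immediate computation gives $\ell_\tau\circ\zeta=\zeta\circ\ell_\tau$. Since rotations are conformal, $(\ell_\tau\circ\zeta)^*\sigma_0=\ell_\tau^*\sigma_0$. Combining this with the intertwining $\varphi_P\circ g=\zeta\circ\varphi_P$, we obtain
\[
g^*\mu_\tau=(\varphi_P\circ g)^*\ell_\tau^*\sigma_0=\varphi_P^*(\zeta\circ\ell_\tau)^*\sigma_0=\mu_\tau
\]
on $\{g_P>G(P)\}$. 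The extension of this invariance to the whole basin of infinity follows from the $P$-invariance of $\sigma_\tau$ together with the functional equation $P\circ g=\rho(g)\circ P$ (applied iteratively), and on $K(P)$ both sides vanish.

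\noindent\textbf{Conjugation by $h_\tau$ is trivial.} The Ahlfors--Bers uniqueness implies that $g_\tau:=h_\tau\circ g\circ h_\tau^{-1}$ is a M\"obius transformation; since it fixes $\infty$, it is affine, hence of the form $z\mapsto\zeta' z$ (using that $P_\tau$ is monic and centered so $g_\tau\in\Sigma(P_\tau)$ acts as a rotation). To identify $\zeta'=\zeta$, I use the normalization $\Phi:=\ell_\tau\circ\varphi_P\circ h_\tau^{-1}(z)=z+O(1)$. Applying $\Phi$ to $g_\tau z$ and using once more $\ell_\tau\circ\zeta=\zeta\circ\ell_\tau$ and $\varphi_P\circ g=\zeta\circ\varphi_P$ gives $\Phi(g_\tau z)=\zeta\Phi(z)=\zeta z+O(1)$, while the leading term of $\Phi(\zeta' z)$ is $\zeta' z$, forcing $\zeta'=\zeta$. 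Thus $h_\tau$ commutes with every $g\in\Sigma(P)$, and in particular $\Sigma(P)\subseteq\Sigma(P_\tau)$ with $\rho_{P_\tau}|_{\Sigma(P)}=\rho_P$. The reverse inclusion follows by applying the same argument to the inverse wringing deformation (which produces $P$ from $P_\tau$), establishing $\Sigma(P)=\Sigma(P_\tau)$ and $\rho_P=\rho_{P_\tau}$.

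\noindent\textbf{Equality of marked dynamical graphs.} Finally, $\Gamma(P)=\Gamma(P_\tau)$ follows from Lemma~\ref{lem:how-to-prove-gamma}: condition (1) of that lemma is exactly what was just proved, and for condition (2) one observes that $c_i(\tau)=h_\tau(c_i)$, $P_\tau^n(c_i(\tau))=h_\tau(P^n(c_i))$, and $\sigma\circ h_\tau=h_\tau\circ\sigma$, so the injectivity of $h_\tau$ gives $P_\tau^n(c_i(\tau))=\sigma P_\tau^m(c_j(\tau))\iff P^n(c_i)=\sigma P^m(c_j)$. The main conceptual obstacle is the B\"ottcher intertwining identity, which is the bridge that turns the rotational invariance of $\ell_\tau$ into the $\Sigma(P)$-equivariance of the Beltrami form; once that identity is in hand, the rest of the argument is essentially formal.
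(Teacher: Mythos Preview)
Your argument is correct except for one step: the verification of the B\"ottcher intertwining identity $\varphi_P(\zeta z)=\zeta\,\varphi_P(z)$. Setting $\psi(z)=\zeta^{-1}\varphi_P(\zeta z)$ and computing, one finds $\psi(z)^d=\zeta^{-d}\varphi_P(P(\zeta z))=\zeta^{-\mu}\varphi_P(\zeta^\mu P(z))$ (using $\zeta^{d-\mu}=1$), whereas $\psi(P(z))=\zeta^{-1}\varphi_P(\zeta P(z))$; equating these amounts to the assertion $\zeta^{-1}\varphi_P(\zeta w)=(\zeta^\mu)^{-1}\varphi_P(\zeta^\mu w)$, which is precisely the identity you are trying to prove (it is tautological only when $\mu\equiv1\bmod m$, i.e.\ when $\rho(\zeta)=\zeta$). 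The clean fix is via the Green function: since $g_P(\zeta z)=g_P(z)$ for every $\zeta\in\Sigma(P)$, the ratio $\varphi_P(\zeta z)/\varphi_P(z)$ is holomorphic of modulus one on the connected domain $\{g_P>G(P)\}$, hence constant, and comparing leading terms at infinity gives the constant $\zeta$.

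With that repair, your route is genuinely different from the paper's and somewhat more direct. The paper shows that $g_\tau:=h_\tau\circ g\circ h_\tau^{-1}$ is holomorphic by working through the filtration $\Sigma(P)\supset\rho(\Sigma(P))\supset\cdots\supset\rho^N(\Sigma(P))$: for $g$ in the stable subgroup one replaces $P$ by an iterate so that $g$ commutes with $P$, establishes holomorphy of $g_\tau$ near infinity via the B\"ottcher coordinate and then on the whole plane via $P_\tau g_\tau=g_\tau P_\tau$, and finally descends the filtration using $g_\tau=P_\tau^{-1}\circ(\rho(g))_\tau\circ P_\tau$ on local inverse branches. You instead establish the B\"ottcher intertwining for \emph{all} $\zeta\in\Sigma(P)$ at once, so that $g^*\mu_\tau=\mu_\tau$ holds on $\{g_P>G(P)\}$ for the entire group; the propagation to the full basin via $P^n\circ g=\rho^n(g)\circ P^n$ is then a one-line computation (since $\rho^n(g)$ is again a symmetry for which invariance on the top level is already known), and Ahlfors--Bers uniqueness together with the asymptotics of $\Phi=\ell_\tau\circ\varphi_P\circ h_\tau^{-1}$ pin down $g_\tau=g$ exactly. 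This yields the slightly stronger conclusion that $h_\tau$ commutes with every element of $\Sigma(P)$, from which $\Sigma(P)=\Sigma(P_\tau)$ and $\rho_P=\rho_{P_\tau}$ follow immediately without a separate reverse-inclusion argument.
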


\begin{proof}
Since $P$ and $P_\tau$ are conjugated by a homeomorphism of the plane, condition (2) from Lemma~\ref{lem:how-to-prove-gamma}  
is satisfied so that it is sufficient to prove that $\Sigma(P)=\Sigma(P_\tau)$ and $\rho_P=\rho_{P_\tau}$.

For any $g \in \Sigma(P)$ and any $\tau \in -i\H$, we set $g_\tau =  h_\tau \circ g \circ h_\tau^{-1}$.
We claim that $g_\tau$ is holomorphic. This shows that $g\mapsto g_\tau$ defines an injective morphism
 $\Sigma(P)\to\Sigma(P_\tau)$. Reversing the argument we get that the morphism is bijective, 
 and it is clear that $P_\tau \circ g_\tau = g_{\rho(\tau)} \circ P_\tau$.
 
  It thus remains to prove the claim. 
 Let $N$ be the minimal integer so that  
 $\Sigma(P) \supset \rho(\Sigma(P)) \supset \cdots \supset \rho^{N}(\Sigma(P)) = \rho^{N+1}(\Sigma(P))$. 
 
 Pick any $g\in \rho^{N}(\Sigma(P))$.
 Observe that $\rho$ induces a group isomorphism of $\rho^{N}(\Sigma(P))$. Replacing $P$ by a suitable iterate we may thus suppose that 
 $P \circ g = g \circ P$, and write $g(z)= \zeta z$ for some $\zeta^d =\zeta$. 
 
Observe that $\ell_\tau$ and $M_d$ commute so that $\varphi_{\tau} := \ell_\tau \circ \varphi_P \circ h_\tau^{-1}$ satisfies 
$\varphi_{\tau} \circ P_\tau = (\varphi_{\tau})^d$, $\varphi_{\tau}(z) = z + O(1)$ hence is the B\"ottcher coordinate of $P_\tau$.
In fact, since $P_\tau$ is also monic and centered then  $\varphi_{\tau}(z) = z + O(z^{-1})$, and
$h_\tau(z)=\ell_\tau(\varphi_P(z)) + O(z^{-1})=\ell_\tau(z) + O(z^{-1})$.

Let us look at $g'_\tau= \varphi_{\tau} \circ g_\tau \circ \varphi_{\tau}^{-1}$.
Then the equation $P_\tau g_\tau = g_\tau P_\tau$ translates as 
\[
(\varphi_\tau g_\tau \varphi_{\tau}^{-1})^{d}(z) = \varphi_\tau g_\tau P_\tau\varphi_{\tau}^{-1}= \varphi_\tau g_\tau  \varphi_\tau^{-1} (z^{d}) \]
so that
\begin{align*}
(g'_\tau(z))^{d} &= g'_\tau(z^{d}) =
h_\tau (\zeta h_\tau^{-1}(z^{d}))
=
\ell_\tau (\zeta \ell_\tau^{-1}(z^{d}))
=\zeta z^d = (\zeta z)^d~. 
\end{align*}
which implies $g'_\tau$ and $g_\tau$ to be holomorphic  near infinity. Using $P_\tau g_\tau = g_\tau P_\tau$ again, one obtains
that $g_\tau$ is holomorphic outside the filled-in Julia set. Since by construction the Beltrami coefficient of $h_\tau$ is also
$0$ on $K(P_\tau)$, it follows that $g_\tau$ is holomorphic everywhere so that the claim is proved for any $g\in \rho^N(\Sigma(P))$.

One now prove by decreasing induction on $n$ that $g_\tau$ is holomorphic for all $g\in \rho^n(\Sigma(P))$.
Suppose this is proved for some $n\le N$ and pick $g\in \rho^{n-1}(\Sigma(P))$. 
Then we have $P_\tau \circ g_\tau= g_{\rho(\tau)} \circ P_\tau$, and by the inductive hypothesis  $g_{\rho(\tau)}$
is holomorphic. Outside finitely many points, one may locally find an inverse branch of $P_\tau$ and write
$g_\tau= P_\tau^{-1} \circ g_{\rho(\tau)} \circ P_\tau$ so that $g_\tau$ is holomorphic. 
Since it is continuous, it is holomorphic everywhere
which concludes the proof.
\end{proof}

Observe that the previous proof uses quasi-conformal conjugacies in families in a fixed wringing plaque.
We conjecture the following more general result.
\begin{conj}\label{qst:wringing}
Let $P$ and $Q$ be degree $d\geq2$ polynomials. Assume there exists a quasi-conformal homeomorphism $h:\p^1\to\p^1$ which conjugates $P$ to $Q$. Then we have equalities $\Sigma(P)=\Sigma(Q)$, $\rho_P=\rho_Q$ and $\Gamma(P)=\Gamma(Q)$.
\end{conj}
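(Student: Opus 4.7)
The plan is to interpolate between $P$ and $Q$ by a holomorphic family of polynomials arising from a quasi-conformal deformation, and to transport the dynamical symmetries along this family using the argument of Proposition~\ref{prop:wring}.

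First I would reduce to the case where $P$ and $Q$ are both monic and centered by conjugating them (and $h$) by suitable affine transformations. The two symmetry groups then sit as subgroups of $\U_{d-1}$, so the claim becomes a literal equality between two specific subgroups of $\U_{d-1}$ together with compatibility of the morphisms $\rho_P$ and $\rho_Q$. To normalize $h$, I would use that $P$ and $Q$ both have a totally super-attracting fixed point of local degree $d$ at $\infty$: the map $\varphi_Q\circ h\circ \varphi_P^{-1}$ then commutes with $z\mapsto z^d$ near $\infty$, and a power-series identification shows it must be of the form $z\mapsto \zeta z$ with $\zeta^{d-1}=1$. In particular, $h$ is conformal on $\Omega(P)$, and its Beltrami coefficient $\mu_h := \partial_{\bar z} h/\partial_z h$ is supported on $K(P)$, is $P$-invariant, and satisfies $\|\mu_h\|_\infty<1$.

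Next I would construct the family. For $t\in \D$, let $h_t$ be the unique quasi-conformal solution of the Beltrami equation with coefficient $t\mu_h$, normalized at $\infty$ so that $P_t := h_t\circ P\circ h_t^{-1}$ is monic and centered. By the Measurable Riemann Mapping Theorem with parameters, together with the $P$-invariance of $\mu_h$, the polynomial $P_t$ has degree $d$ and depends holomorphically on $t$, with $P_0 = P$ and $P_1$ equal to $Q$ up to a residual element of $\U_{d-1}\cap \Sigma(Q)$ that one absorbs at the end. For each $g\in \Sigma(P)$, set $g_t := h_t\circ g\circ h_t^{-1}$ and run the reverse-inductive argument of Proposition~\ref{prop:wring} on the filtration $\Sigma(P)\supset \rho(\Sigma(P))\supset \cdots \supset \rho^N(\Sigma(P))$: using the relation $P_t\circ g_t = \rho(g)_t\circ P_t$ together with the explicit expansion of $g_t$ in the common B\"ottcher coordinate (made available by the tangent-to-identity normalization at $\infty$), one deduces successively that each $g_t$ is holomorphic on $\C$, and then by comparing leading coefficients that in fact $g_t = g$ for all $t\in \D$. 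Thus $g\in \Sigma(P_t)$ and $\rho_{P_t}(g)=\rho_P(g)$; applying the same argument to $h^{-1}$ yields the reverse inclusion, which gives $\Sigma(P)=\Sigma(Q)$ and $\rho_P=\rho_Q$. As a byproduct, $h$ commutes with every element of $\Sigma(P)=\Sigma(Q)$.

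Finally, the equality $\Gamma(P)=\Gamma(Q)$ follows from Lemma~\ref{lem:how-to-prove-gamma}: its first condition is the content of the previous step, and the second holds because $h$ sends critical points of $P$ to critical points of $Q$ preserving local degrees, intertwines the dynamics, and commutes with every $\sigma\in \Sigma(P)=\Sigma(Q)$, so an orbit relation $P^n(c_i)=\sigma\cdot P^m(c_j)$ transports bijectively to $Q^n(h(c_i))=\sigma\cdot Q^m(h(c_j))$. The main obstacle I expect is the careful normalization of the family $h_t$ at $\infty$: one must ensure that $P_t$ lies in the space of monic centered polynomials for every $t$ and not merely up to affine conjugacy, which is precisely what forces the literal equality $\Sigma(P)=\Sigma(Q)$ rather than an equality up to affine conjugation. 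Controlling the root-of-unity ambiguity from the B\"ottcher coordinates, in a way compatible with the full filtration $\rho^\bullet(\Sigma(P))$, is the delicate technical point; without the quasi-conformal assumption on $h$ the Beltrami-equation machinery is unavailable and an entirely different argument would be required.
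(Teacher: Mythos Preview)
The statement is presented in the paper as an open \emph{conjecture}; no proof is given there.

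Your argument has a genuine gap at the normalization step. You assert that $\varphi_Q\circ h\circ \varphi_P^{-1}$, a quasi-conformal germ near $\infty$ commuting with $M_d$, must be a rotation $z\mapsto \zeta z$ via ``a power-series identification''. But power series presuppose holomorphicity, which is precisely what is in question: the wringing maps $\ell_\tau(e^{r+i\psi})=\exp(r\rho+i(\theta r+\psi))$ used in the paper are quasi-conformal, commute with $M_d$, and are \emph{not} rotations when $\tau\notin\R$. Hence your conclusion that $h$ is conformal on $\Omega(P)$ and that $\mu_h$ is supported on $K(P)$ is unfounded. This is fatal to the rest of the plan, because the proof of Proposition~\ref{prop:wring} you invoke rests on two specific features of the wringing conjugacy $h_\tau$: its explicit log-linear form in B\"ottcher coordinates, which commutes with every rotation and is what forces $g_\tau$ to be holomorphic on $\Omega(P_\tau)$; and the vanishing of its Beltrami coefficient on $K(P)$, which is what extends holomorphicity across the filled Julia set. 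For an arbitrary quasi-conformal conjugacy neither feature is available (and note that in your setup the support of the Beltrami coefficient is on the opposite side from the wringing case).

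There is also a circularity lurking in your family argument: once you integrate $t\mu_h$ to obtain $h_t$, the map $g_t=h_t\circ g\circ h_t^{-1}$ is holomorphic if and only if $g^*\mu_h=\mu_h$. But the $g$-invariance of $\mu_h$ is essentially the statement that $h$ carries the symmetry $g$ of $P$ to a symmetry of $Q$, which is exactly what you are trying to prove. The ``explicit expansion of $g_t$ in B\"ottcher coordinates'' does not sidestep this, since without conformality of $h$ on $\Omega(P)$ there is no holomorphic expansion to expand. These obstructions are the reason the paper records the statement as a conjecture rather than a proposition.
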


\subsection{Proof of Theorem~\ref{thm:correspondence}}

We shall work in the affine space $\mathcal{M}:= \C^{d-1}$ with critically marked polynomials of the form
$P_{c,a}(z)= \frac{1}{d}z^d+\sum_{j=2}^{d-1}(-1)^{d-j}\sigma_{d-j}(c)\frac{z^j}{j}+a^d$.
There is a natural finite ramified cover from this space onto $\mpoly_{\rm crit}^d$.

Pick any special asymmetric critically marked dynamical graph $\Gamma$.
Let us denote by $\mathcal{Z}$ the set of parameters $(c,a)\in \mathcal{M}$ such that $P_{c,a}$ has degree $d$, $\Gamma(P_{c,a}) = \Gamma$  and $J_{P_{c,a}}$ is disconnected, 
and let $Z$ be the Zariski closure of $\mathcal{Z}$ in $\mathcal{M}$.

By  Theorem~\ref{thm:realizability}, $\mathcal{Z}$ is non-empty, therefore $Z$ is a non-empty algebraic variety. 

We start with the following observation. 
\begin{lemma} 
The algebraic subvariety $Z$ is of pure dimension $1$, and 
$\Gamma(W)$ is special for any irreducible component $W$ of $Z$.
\end{lemma}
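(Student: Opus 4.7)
The plan is to prove the two assertions separately: pure one-dimensionality of $Z$, and the identification $\Gamma(W) = \Gamma$ (which is special) for every irreducible component $W$.

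For the lower bound $\dim Z \ge 1$, I will invoke the existence of a non-trivial wringing deformation. Since $\Gamma$ satisfies (R1) and (R2), Theorem~\ref{thm:realizability} produces some $(c_0, a_0) \in \mathcal{Z}$, so in particular $\mathcal{Z} \neq \varnothing$. Because $J_{P_{c_0,a_0}}$ is disconnected, the wringing map $\tau \mapsto P_\tau$ defines a non-constant holomorphic map $-i\H \to \mathcal{M}$ by \cite[Proposition~8.4]{BH}; by Proposition~\ref{prop:wring} its image lies entirely inside $\mathcal{Z}$ (since both the critical graph and the disconnectedness of $J$ are wringing-invariant), and the Zariski closure of this image is an irreducible curve in $Z$.

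For the upper bound $\dim Z \leq 1$, I partition $\{0,\ldots,d-2\} = \mathsf{P} \sqcup \mathsf{A}$ according to whether $\mu(i) \in \Gamma_\fin$ or not, and fix $i_0 \in \mathsf{A}$. Reading off critical relations from $\Gamma$, one finds integers $n_i, m_i$ (for $i\in \mathsf{P}$) and $n_j, m_j$ together with symmetries $\sigma_j \in \Sigma(P)$ (for $j \in \mathsf{A}\setminus\{i_0\}$) such that every $(c,a) \in \mathcal{Z}$ is a zero of
\[ \Phi(c,a):=\bigl(P^{n_i}(c_i)-P^{m_i}(c_i)\bigr)_{i\in\mathsf{P}}\ \oplus\ \bigl(P^{n_j}(c_j)-\sigma_j\cdot P^{m_j}(c_{i_0})\bigr)_{j\in\mathsf{A}\setminus\{i_0\}} \in \C^{d-2}. \]
In particular $Z \subset \Phi^{-1}(0)$. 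Suppose for contradiction that some component $W \subset Z$ has $\dim W \ge 2$. Choose $(c_0,a_0) \in \mathcal{Z} \cap W$ smooth on $W$ and an algebraic curve $C \subset W$ through $(c_0,a_0)$ whose tangent direction is not colinear with the wringing direction. Along $C$, by construction, all passive critical points satisfy persistent preperiod relations and any two active ones are entangled; applying Theorem~\ref{tm:unlikely-marked} is then consistent with these relations. Combined with the transverse wringing direction, this produces a genuinely two-parameter analytic deformation of a primitive non-integrable pair with constant critical graph and with $J$ disconnected, on which the dynamical pair $(P,c_{i_0})$ is active. Restricting to a generic pencil of curves through $(c_0,a_0)$ and invoking Theorem~\ref{thm:active-characterization1} on each slice forces an isotriviality along one transverse direction, contradicting the activity of $(P,c_{i_0})$.

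For the last assertion, any irreducible component $W$ of $Z$ satisfies $\mathcal{Z} \cap W$ Zariski dense in $W$. By the proposition in \S\ref{sec:classif special} constructing $\Gamma(W)$ outside a countable union of subvarieties of $W$, the equality $\Gamma(P_{c,a})=\Gamma$ on the Zariski dense subset $\mathcal{Z} \cap W$ forces $\Gamma(W)=\Gamma$, which is special by hypothesis. The hard part will be the upper bound on $\dim Z$: the $d-2$ equations of $\Phi$ give $\dim Z \ge 1$ for free but not the matching upper bound, and one must rule out spurious higher-dimensional components in $\Phi^{-1}(0)$. Promoting the unlikely intersection statement of Theorem~\ref{tm:unlikely-marked} from curves to two-dimensional bases is the delicate step; if the direct slicing argument above proves too imprecise, a cleaner alternative is to fiber the escape locus in $\mathcal{M}$ by Branner--Hubbard's one-dimensional wringing leaves and observe that every leaf meeting $\mathcal{Z}$ is entirely contained in $\mathcal{Z}$, so $\mathcal{Z}$ is a union of leaves whose Zariski closures are curves.
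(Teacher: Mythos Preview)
Your lower bound $\dim Z\ge 1$ via wringing is fine, and your argument that $\Gamma(W)=\Gamma$ (hence special) is essentially right once one knows $\dim W=1$: with $W$ a curve, the countable union of exceptional subvarieties from the proposition defining $\Gamma(W)$ is just a countable set, while $\mathcal{Z}\cap W$ contains a full wringing disk, so they cannot coincide. The paper in fact proves only the weaker statement that $\Gamma(W)$ is special inside the lemma, and establishes $\Gamma(W)=\Gamma$ afterwards by exactly this wringing-disk argument at a smooth point of $Z$.

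The genuine gap is your upper bound $\dim W\le 1$. Neither of your two proposed routes works. In the main argument, invoking Theorem~\ref{thm:active-characterization1} on slices does not ``force an isotriviality along one transverse direction'': that theorem only says a pair is passive iff it is isotrivial or stably preperiodic, and there is no mechanism by which a two-parameter active family must degenerate in one direction. The slicing by an algebraic curve $C$ transverse to the wringing direction, combined with Theorem~\ref{tm:unlikely-marked}, only reproduces the relations you already imposed via $\Phi$; it yields no new constraint cutting down the dimension. Your alternative, foliating $\mathcal{Z}$ by wringing leaves, is equally inconclusive: $\mathcal{Z}$ being a union of one-dimensional leaves is perfectly compatible with its Zariski closure being a surface swept out by those leaves.

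The paper's argument is completely different and pluripotential-theoretic. One chooses an active critical point $c_0$ which is ``highest'' in $\Gamma_\esc$ (so that $P^n(c_0)=P^m(c_j)$ forces $m\ge n$). Then on $W$ one has $G(P_{c,a})=g_{P_{c,a}}(c_0)=:g_0$, because every other critical value of the Green function is dominated by $g_0$. Now $G$ extends to a continuous semi-positive metrization on $\mathcal{O}(1)|_{\bar W}$, so $(dd^cG|_W)^{\dim W}$ has total mass $\deg(W)>0$. On the other hand, $dd^cg_0$ is supported on $\{g_0=0\}$, hence
\[
(dd^cG|_W)^2=(dd^cg_0)^2=\lim_{\varepsilon\to 0} dd^cg_0\wedge dd^c\max\{g_0,\varepsilon\}=0,
\]
since the second factor is harmonic on a neighbourhood of $\{g_0=0\}$. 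This forces $\dim W\le 1$. This Monge--Amp\`ere vanishing is the key idea you are missing; it is what replaces, in higher-dimensional parameter spaces, the one-dimensional unlikely-intersection machinery you were trying to bootstrap.
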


\begin{proof}
Pick any irreducible component $W$ of $Z$. Let $\mathsf{A}$ (resp. $\mathsf{P}$) be the set of active (resp. passive) critical points of the family induced by $W$.
Since $W$ is a component of $Z$, one can find an infinite set of polynomials $P_k\in W$ such that $\Gamma(P_k) = \Gamma$. 
Note that a priori $\Gamma(P_k)$ may be different from $\Gamma(W)$.

Since $P_k$ have all the same critically marked dynamical graph, 
the set $\mathsf{A}_\star$ of critical points of $P_k$ with infinite orbit does not depend on $n$, and $\mathsf{A}_\star \subset \mathsf{A}$.
Moreover for each $i\notin\mathsf{A}_\star$ one can find integers $m > n\ge 0$ such that $P_k^n(c_i) = P_k^{m}(c_i)$ for all $k$, hence
$\mathsf{A}= \mathsf{A}_\star$. 
Now since $\Gamma$ is special, $\mathsf{A}_\star$ is non-empty and for each $i,j \in \mathsf{A}_\star$, there exist integers $n$ and $m$ such that 
$P^n(c_i) = P^m(c_j)$. This implies $\Gamma(W)$ to have at most one infinite connected component so that this graph is special. 
Note that these arguments prove in particular that $W$ cannot have dimension $0$.

Fix any critical point, say $c_0$ such that  $P^n(c_0) = P^m(c_j)$ implies $m\ge n$. 
 It follows that the two functions $G:= G(P_{c,a})$ and $g_0(c,a) := g_{P_{c,a}}(c_0)$ coincide on $W$.
 Suppose by contradiction that $\dim(W)=l \ge 2$. Since $G$ is continuous, psh and its growth at infinity is
 $G(c,a) = \log^+\max\{|c|,|a|\} + O(1)$ and $G(c,a) - \log^+\max\{|c|,|a|\}$ extends continuously at infinity in $\bar{\mathcal{M}}=\p^{d-1}$, it induces a continuous semi-positive metrization on $\cO(1)_{\bar{W}}$ 
 where $\bar{W}$ is the closure of $W$ in $\mathbb{P}^{d-1}_\C$. It follows that 
the mass of   $(dd^c G|_W)^l$ is positive equal to  $\mathrm{deg}(W)$,
 so that the positive closed $(2,2)$-current $(dd^c G|_W)^2$ is non-zero.
On the other hand, we have 
\[
 (dd^c G|_W)^2 = \lim_{\epsilon \to0} dd^c g_0 \wedge dd^c \max\{g_0,\epsilon\} =0\]
 since the current $dd^c g_0$ is supported on $\{g_0 =0\}$ by~\cite[Proposition~6.9]{favredujardin}, which gives the required contradiction.
 This proves $\dim(W) =1$ which concludes the proof.
  \end{proof}

Note that by Theorem~\ref{tm:marked graph is special} the curve $W$ is special.
It remains to prove that $\Gamma(W) = \Gamma$. Pick any polynomial $P\in W$ for which $\Gamma(P) = \Gamma$, and
which is a smooth point of $Z \supset W$. 
Note that any polynomial $P_{c,a}(z)= \frac1d z^d - \sigma_1(c)\frac{z^{d-1}}{d-1} + o(z^{d-1})$ 
is conjugated by $z \mapsto \delta z +\frac{  \sigma_1(c)}{d-1} $ for some fixed $\delta^{d-1} =d$ to 
a monic and centered polynomial, so that we have a finite ramified cover from $\mathcal{M}$ onto the space
of monic and centered polynomials. It follows from Proposition~\ref{prop:wring} that the lift of the wringing plaque to $\mathcal{M}$ yields
a holomorphic disk $\D_P$  which contains $P$ and such that $\Gamma(Q) = \Gamma$ for all $Q \in \D_P$.
We get $\D_P \subset Z$, and since $P$ is a regular point of $Z$, we conclude that $\D_P \subset W$ which implies 
$\Gamma(W) = \Gamma$ as required.


\section{Realizability of PCF maps}\label{sec:real-PCF}
The applicability of Theorem~\ref{thm:correspondence} for a graph relies on our ability to realize
a given finite critically marked dynamical graph as the marked graph of a PCF polynomial (up to symmetry). 

Recall that a finite critically marked dynamical graph $\Gamma$ without symmetry is realizable\index{graph!realizable dynamical} by a PCF polynomial if there exists a PCF polynomial $P$
such that $\Gamma$ is equal to $\Gamma(P)$ with the action of $\Sigma(P)$ removed.

In this section we give various conditions on $\Gamma$ to ensure its realizability.

In the case of a single marked points, the realizability amounts to the existence
of a solution to a polynomial equation which have been studied in details by Buff~\cite{MR3890968}.
A simple count then gives: 
\begin{theorem}\label{thm:realization-unicritical}
Any finite critically marked dynamical graph of degree $d\ge2$ with a single marked point
is realizable by a PCF polynomial. 
\end{theorem}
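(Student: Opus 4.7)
The plan is to reduce the realizability of $\Gamma$ to the existence of parameters of prescribed exact combinatorial type in the one-parameter unicritical family $P_c(z)=z^d+c$, a problem settled by work of Buff~\cite{MR3890968}. First, I would extract from $\Gamma$ a pair of integers $(m,k)$ with $m\ge 0$ and $k\ge 1$ that determines it up to isomorphism. Under the hypothesis that the image $\mu(\mathcal{F})$ reduces to a single vertex $v_*$, the relation $\sum_v(d_\pi(v)-1)=d-1$ of Lemma~\ref{lem:basic-count} combined with the definition $d_\pi(v)=1+\#\mu^{-1}(v)$ of \S\ref{sec:criticallymarkedgraph} forces $d_\pi(v_*)=d$ and $d_\pi(v)=1$ elsewhere, while the minimality condition (G6) identifies $V(\Gamma)$ with the forward $\pi$-orbit of $v_*$; its finiteness then yields unique $m,k$ such that $v_*,\pi(v_*),\dots,\pi^{m+k-1}(v_*)$ are pairwise distinct and $\pi^{m+k}(v_*)=\pi^m(v_*)$. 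Any PCF polynomial whose marked graph equals $\Gamma$ must accordingly possess a unique critical point of multiplicity $d-1$, and is therefore affinely conjugate to a unique $P_c(z)=z^d+c$ with $0$ as its critical point. Realizability of $\Gamma$ is thus equivalent to the existence of $c\in\C$ for which the orbit of $0$ under $P_c$ has exact preperiod $m$ and exact period $k$.

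Next, I would translate this into an algebraic problem. For each pair $(n,k)$ the equation $P_c^{n+k}(0)=P_c^n(0)$ defines a polynomial $\Phi_{n,k}\in\Z[c]$ of degree $d^{n+k-1}$, as can be read off from the expansion $P_c^{j}(0)=c^{d^{j-1}}+\mathrm{l.o.t.}$ The set of $c$'s realizing exact preperiod $m$ and exact period $k$ is then obtained from $\{\Phi_{m,k}=0\}$ by removing the union of the loci $\{\Phi_{m',k'}=0\}$ over all pairs $(m',k')\ne(m,k)$ with $m'\le m$ and $k'\mid k$. The task reduces to counting, with multiplicity, those roots of $\Phi_{m,k}$ that are not explained by strictly smaller combinatorial types.

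The main obstacle, and the technical content of~\cite{MR3890968}, is to verify that this residual set is genuinely non-empty. This is carried out by a M\"obius inversion over the divisors of $k$, which isolates the strictly $k$-periodic locus inside $\{\Phi_{0,k}=0\}$, together with a careful accounting that separates the strictly preperiodic contributions ($m\ge 1$) from the periodic ones; one must further check that no exact-type root of $\Phi_{m,k}$ is absorbed by a spurious multiplicity coming from the smaller loci. Once this transversality is secured, the count produces an explicit positive integer of exact $(m,k)$-parameters for each admissible pair, from which one extracts a parameter $c$ whose associated polynomial $P_c$ realizes $\Gamma$, completing the argument.
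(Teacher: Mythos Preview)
Your proposal is correct and follows essentially the same approach as the paper: reduce to the unicritical family, translate realizability into the existence of a parameter with prescribed exact preperiod and period, and appeal to Buff's results \cite{MR3890968} for the simplicity of the relevant roots, with M\"obius inversion handling the exact-period count. The paper works with the conjugate normalization $P_c(z)=cz^d+1$ and makes the positivity of the resulting counts explicit via two elementary lemmas bounding $\sum_{m\mid n}\mu(n/m)\rho^m$ and $\sum_{m\mid n}\mu(n/m)\rho^{\gcd\{l,m\}}$, whereas you leave this final estimate inside the citation; but the structure is the same. One small point you leave implicit: the case $m=1$ (preperiod one) never arises, since in that case the image $\pi(v_*)$ would have preimages $v_*$ and a cycle vertex with $d_\pi$ summing to $d+1>d$, violating (G2); the paper records this by restricting to $k\neq 1$ in its notation.
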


The situation is much harder in the presence of several marked points. 
Using transversality technics developed by A. Epstein, see e.g.~\cite{buffepstein}, 
the approach of~\cite{favregauthier} yields the following result. 
\begin{theorem}\label{thm:realizationPCF3}
Any finite critically marked dynamical graph of degree $d$
which is a union of $(d-1)$ cycles of different length 
is realizable by a PCF polynomial. 
\end{theorem}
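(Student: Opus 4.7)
The plan is to work in the orbifold parameterization $\Lambda := \A^{d-1}$ of $\mpcrit^d$ given by the family $P_{c,a}$ of~\eqref{eq:critical form}, whose $d-1$ critical points are $c_1, \ldots, c_{d-2}$ and $c_{d-1} := 0$. Let $n_1 < \cdots < n_{d-1}$ be the cycle lengths of $\Gamma_0$, indexed so that $c_i$ lies on the cycle of length $n_i$, and for each $n \ge 1$ and each $i$, set
\[
Y_i^{(n)} := \{(c,a) \in \Lambda :  P^n_{c,a}(c_i) = c_i\},
\]
together with the exact-period-$n$ stratum $\mathrm{Per}^*(n,i) \subset Y_i^{(n)}$, so that $Y_i^{(n)} = \bigsqcup_{m\mid n} \mathrm{Per}^*(m,i)$. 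Since the $n_i$'s are pairwise distinct, any point in $\bigcap_i \mathrm{Per}^*(n_i, i)$ produces a PCF polynomial whose critically marked dynamical graph (with symmetries forgotten) is exactly $\Gamma_0$, as distinct $n_i$ automatically forces the cycles to be disjoint. The theorem thus reduces to proving that this intersection is non-empty.

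First I would apply the polynomial version of Epstein's transversality theorem, see \cite{buffepstein}: at any point where the critical points are periodic with pairwise distinct exact periods $m_1, \ldots, m_{d-1}$, the hypersurfaces $Y_i^{(m_i)}$ meet transversally. Applied to the present setting, this guarantees that every component of $\bigcap_i \mathrm{Per}^*(n_i, i)$ is zero-dimensional and appears with multiplicity one in the scheme-theoretic intersection $\bigcap_i Y_i^{(n_i)}$.

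Next, following \cite{favregauthier}, I would control the total intersection number of the $Y_i^{(n_i)}$'s inside the projective compactification $\p^{d-1}$. By Proposition~\ref{prop:growth Green}, each critical height $g_{P,c_i}$ extends to a continuous semi-positive metrization on an ample $\Q$-line bundle $\bar{L}_i \to \p^{d-1}$, and the class of $Y_i^{(n_i)}$ differs from $d^{n_i}\,c_1(\bar{L}_i)$ by lower-order terms. The top-degree product $c_1(\bar{L}_1) \cdots c_1(\bar{L}_{d-1})$ equals, up to a positive factor, the total mass of the bifurcation measure $\mu_\bif$ on $\mpcrit^d$, which is strictly positive by the key computation of~\cite{favregauthier}. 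Hence $\bigcap_i Y_i^{(n_i)}$ has positive degree, and the properness of the critical heights along the boundary divisor of $\p^{d-1}$ confines this intersection to the affine space $\Lambda$, so it is a non-empty finite set.

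The main obstacle is then to extract from this finite set a point in the prescribed stratum, namely with \emph{exact} periods $n_i$ and not proper divisors. Writing $\bigcap_i Y_i^{(n_i)} = \bigsqcup_{m_i \mid n_i} \bigcap_i \mathrm{Per}^*(m_i, i)$ and applying Möbius inversion reduces this to an alternating-sum count whose leading term $d^{n_1 + \cdots + n_{d-1}}$ comes from the stratum with $m_i = n_i$ and dominates the corrections. The delicate technical point is that tuples $(m_i)$ with a repeated entry $m_i = m_j$ escape Epstein's transversality; however, along such a stratum the remaining critical points still give rise to active marked dynamical pairs by Theorem~\ref{thm:active-characterization1}, so these strata have strictly higher codimension than expected and can be absorbed into lower-order terms by induction on the number of coincidences among the $m_i$'s. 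This yields $\#\bigcap_i \mathrm{Per}^*(n_i, i) > 0$ and completes the proof.
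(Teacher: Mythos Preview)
Your overall strategy matches the paper's: use the parameterization $(c,a)\mapsto P_{c,a}$, invoke Epstein-type transversality (the paper cites \cite[Corollary~4.9]{favregauthier}) at points where the critical orbits have pairwise distinct periods, and exploit the absence of intersection at infinity (via Bassanelli--Berteloot) to obtain a positive finite count. The paper, however, works from the outset with the \emph{exact}-period hypersurfaces $\mathrm{Per}(m_j,0)$, which are known to be algebraic hypersurfaces and to avoid the hyperplane at infinity; transversality then gives directly that $\bigcap_j \mathrm{Per}(m_j,0)$ has $\prod_j \deg(\mathrm{Per}(m_j,0))>0$ points, and there is nothing further to do.

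Your detour through the divisible-period loci $Y_i^{(n_i)}$ followed by M\"obius inversion is where the argument becomes shaky. The sentence ``these strata have strictly higher codimension than expected'' for tuples $(m_i)$ with a coincidence $m_i=m_j$ is not correct as stated: the locus where $c_i$ and $c_j$ each have exact period $m$ is still generically of codimension $2$, not higher, and your appeal to Theorem~\ref{thm:active-characterization1} does not produce an extra algebraic relation. What you would actually need is a degree bound showing that the total contribution of all strata with at least one coincidence is $o(d^{n_1+\cdots+n_{d-1}})$; this is plausible but requires controlling multiplicities at non-transverse points, which you have not done. The cleaner fix---and what the paper does---is simply to bypass M\"obius inversion by starting with the exact-period loci, since those are already known to be hypersurfaces of computable degree.
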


Let us introduce the following numerical invariant attached to a critically marked dynamical graph.
Enumerate the marked vertices of $\Gamma$ such that  if $v_j$ lies in the orbit of $v_i$
then $j\ge i$. For each $i$, let $n_i$ be the minimum integer such that 
either $\pi^{n_i}(v_i)$ is periodic, or belongs to the orbit of $v_j$ for some $j<i$.
We set $\delta(\Gamma)$ to be the minimum of those positive numbers of the form
$\max \{ d^{-n_1}, \sum_{i\ge2} d^{-n_i}\}$ over all orderings of marked vertices satisfying the condition above.

\begin{theorem}\label{thm:realizationPCF}
Suppose that $\Gamma$ is a finite critically marked dynamical graph such that no two distinct cycles have the same period.
If $\delta(\Gamma) \le \frac1{2d}$, then $\Gamma$ is realizable by a PCF polynomial. 
\end{theorem}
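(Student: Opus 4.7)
My plan is to realize $\Gamma$ as the marked graph of a PCF polynomial by a contraction–mapping argument on the parameter space. Choose an ordering $v_1, \ldots, v_{d-1}$ of the marked vertices achieving the infimum defining $\delta(\Gamma)$. The combinatorial data of $\Gamma$ then provides, for each $i$, an integer $n_i$ and a target rule: either $\pi^{n_1+p_1}(v_1) = \pi^{n_1}(v_1)$, so that $v_1$ eventually enters a cycle of period $p_1$, or $\pi^{n_i}(v_i) = \pi^{m_i}(v_{j(i)})$ for some $j(i) < i$ and $m_i \ge 0$. Working in the parametrization $P_{c,a}$ from \eqref{eq:defpoly} on $\A^{d-1}$, in which the critical points are $0, c_1, \ldots, c_{d-2}$, these relations translate to the polynomial system
\begin{align*}
P_{c,a}^{n_1 + p_1}(c_1) &= P_{c,a}^{n_1}(c_1), \\
P_{c,a}^{n_i}(c_i) &= P_{c,a}^{m_i}(c_{j(i)}) \quad (i \ge 2),
\end{align*}
consisting of $d-1$ equations in the $d-1$ unknowns $(c_1, \ldots, c_{d-2}, a)$. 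Realizing $\Gamma$ by a PCF polynomial reduces to exhibiting a solution.

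The next step is to recast the system as a fixed-point equation $T(c,a) = (c,a)$ on a closed polydisk $B \subset \A^{d-1}$ centered at a carefully chosen model polynomial $P_0$. The first equation pins $P_{c,a}^{n_1}(c_1)$ to a periodic point $q_1(c,a)$ of exact period $p_1$; the hypothesis that no two distinct cycles of $\Gamma$ share a period is used here to guarantee that the correct cycle varies holomorphically in parameters, rather than merging with a spurious cycle whose period divides $p_1$. Selecting a persistent inverse branch $P_{c,a}^{-n_1}$ above $q_1(c,a)$ gives $T_1(c,a) := P_{c,a}^{-n_1}(q_1(c,a))$, a function of $(c_2, \ldots, c_{d-2}, a)$ alone; analogously, $T_i(c,a) := P_{c,a}^{-n_i}(P_{c,a}^{m_i}(c_{j(i)}))$ is a function not depending on $c_i$. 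The model polynomial $P_0$ serves as the base-point for choosing inverse branches coherently, and can be constructed by combining unicritical realizations of each cycle via Theorem~\ref{thm:realization-unicritical} with generic preperiodic perturbations to place the remaining critical points.

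The crucial estimate is that $T_i$ is Lipschitz on $B$ with constant at most $C \cdot d^{-n_i}$, where $C$ is a universal constant depending only on $d$ and on the distortion of the chosen inverse branches of $P_{c,a}^{n_i}$. The factor $d^{-n_i}$ reflects that $P^{n_i}$ has derivative of order $d^{n_i}$ at points near repelling cycles or in the escape region, so inversion contracts by $d^{-n_i}$; the universal distortion factor can be controlled using Proposition~\ref{prop:GreenBottcher} and a Koebe-type estimate on $B$, yielding $C \le 2d$. Summing in the sup-norm,
\[
\mathrm{Lip}(T) \;\le\; C \cdot \max\bigl\{ d^{-n_1}, \textstyle\sum_{i \ge 2} d^{-n_i}\bigr\} \;\le\; 2d \cdot \delta(\Gamma) \;\le\; 1,
\]
and strict contraction is obtained by slightly shrinking $B$ (or by a more careful accounting giving $C$ strictly less than $2d$). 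Banach's fixed-point theorem then produces the unique solution $(c,a) \in B$, and the PCF polynomial $P_{c,a}$ has $\Gamma(P_{c,a}) = \Gamma$ by construction of the system.

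The main obstacle I expect is making the constant $C$ explicit and uniform over $B$. This requires two kinds of control: first, bounded Koebe distortion for the inverse branches $P_{c,a}^{-n_i}$, which is valid provided the branch is defined on a univalent disk containing the target with a definite margin; and second, uniform $O(1)$ Lipschitz dependence of the targets $q_1(c,a)$ and $P_{c,a}^{m_i}(c_{j(i)})$ on the parameters. Both bounds can be made uniform in $B$ by choosing $B$ as a sufficiently small neighborhood of the model polynomial $P_0$, provided $P_0$ is chosen generic enough that its relevant inverse branches are defined on uniform Koebe-type domains. The factor $\frac{1}{2d}$ in the hypothesis is what this accounting produces: one factor $d$ comes from the leading-order derivative of $P$ at infinity, and the factor $2$ absorbs the universal distortion of the B\"ottcher normalization of escaping orbits.
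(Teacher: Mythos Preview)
Your approach is fundamentally different from the paper's and, as written, has gaps you flag but do not close.

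The paper does not use any contraction in parameter space. It works combinatorially on $\R/\Z$ and invokes the Bielefeld--Fisher--Hubbard realization theorem (Theorem~\ref{thm:kiwi}). One first picks explicit $M_d$-periodic angles $p_i\in\R/\Z$ realizing the cycle lengths of $\Gamma$ (together with a distance estimate, Lemma~\ref{lem-estim-distance}), then chooses strictly preperiodic angles $\theta_j$ eventually landing on the $p_i$, and sets $\Theta_j = \{\theta_j, \theta_j + 1/d, \ldots, \theta_j + (d_j-1)/d\}$. The hypothesis $\delta(\Gamma)\le 1/(2d)$ enters \emph{only} to check there is room on the circle: placing each $\theta_j$ uses an arc of length at most $(d_j-1)/d + 2/d^{n_j}$, and summing gives $\sum_j(d_j-1)/d + 2\sum_{j\ge 2} d^{-n_j} \le (d-1)/d + 1/d = 1$, so the $\Theta_j$ are pairwise unlinked and form a critical portrait. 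BFH then produces $P_\Theta$, and the distinct-periods hypothesis together with the distance lemma yields $\Gamma(P_\Theta)=\Gamma$ via Proposition~\ref{prop:easy obs graph}.

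In your argument two steps are not justified. First, you need a base polynomial $P_0$ around which the inverse branches and the ball $B$ are defined, but ``combining unicritical realizations of each cycle'' is not a construction: Theorem~\ref{thm:realization-unicritical} produces degree-$d$ \emph{unicritical} polynomials (all critical points coincide), and there is no operation merging several of these into a single degree-$d$ map with prescribed separate cycles. Without $P_0$ there is no $B$. Second, the bound $\mathrm{Lip}(T_i)\le C\,d^{-n_i}$ with a universal $C\le 2d$ is the heart of the matter and you offer no proof. The heuristic ``$P^{n_i}$ has derivative of order $d^{n_i}$'' is valid in the B\"ottcher region near infinity, but your targets $q_1(c,a)$ and $P^{m_i}(c_{j(i)})$ lie in the filled Julia set, where expansion along orbits is governed by cycle multipliers, not by $d$. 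Worse, the quantity you actually need is the \emph{parameter} derivative of $(c,a)\mapsto P_{c,a}^{n_i}(c_i)$ (the dynamical derivative vanishes since $c_i$ is critical), and bounding this from below by order $d^{n_i}$ uniformly on a definite ball is itself a transversality statement of exactly the kind your fixed-point scheme is meant to avoid. The paper's combinatorial route sidesteps all of these analytic issues entirely.
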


The condition on  $\delta(\Gamma)$ requires all marked vertices to be strictly pre-periodic (so that 
the polynomial realizing $\Gamma$ is strictly PCF).
Note that the condition is satisfied when 
we have $n_i\ge3$ for all integers defined above.

The proof of this result is based on the realization theorem of Bielefeld, Fisher, Hubbard~\cite{BFH}
of strictly PCF polynomials with prescribed combinatorics.

\smallskip

It is likely that \emph{any} finite critically marked dynamical graph is realizable. 
Note however that the combinatorial complexity becomes huge 
already in degree $3$ when the conditions in the previous theorems are dropped. 
We refer to the work of Poirier~\cite{Poirier}
for a combinatorial description of all PCF polynomials.  


\subsection{Proof of Theorem~\ref{thm:realization-unicritical}}
Write $P_c(z) = c z^d+1$. We need to show that for any $n\ge0$ and $k\neq1$ there exists one $c\in \C$
such that $0$ is mapped by $P^k_c$ to a periodic point of exact period $n$ and $P^{k-1}_c(0)$ is not periodic.

\smallskip

Let us first treat the case $k=0$.
By~\cite[Lemma~3]{MR3890968} the following holds. 
For any integer $n$, the polynomial $Q_n(c):= P^n_c(0)$ has  degree $\frac{d^{n-1}-1}{d-1}$ and its roots are simple. 
Observe that $Q_n^{-1}(0)$ is the set of $c\in \C$ such that the critical point $0$ is periodic for 
$P_c$ of period divisible by $n$. By M\"obius inversion formula, it follows that the cardinality $\delta(n)$ of the set
$c\in \C$ such that the critical point $0$ is periodic for 
$P_c$ of exact period $n$ is equal to 
\begin{align*}
\delta(n) 
&:= 
\sum_{m\le n} \mu \left( \frac{n}{m} \right)\, \frac{d^{m-1}-1}{d-1}
~.
\end{align*}
Since $\sum_{m\le n} \mu \left( \frac{n}{m} \right)=0$ for all $n\ge2$, we infer $\delta(n)>0$ from the next lemma
\footnote{our proof actually yields $\delta(n) \ge c d^n$ for some positive constant $c>0$, see~\cite{favregauthier}.}. 

\begin{lemma}
For any $\rho\ge2$ and for any $n\ge1$, we have
\[\sum_{m| n} \mu \left( \frac{n}m \right) \rho^m
>0~.\]
\end{lemma}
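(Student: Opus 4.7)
The plan is to isolate the leading term $\rho^n$ in the sum, which corresponds to the divisor $m = n$ where $\mu(n/m) = \mu(1) = 1$, and to show that it dominates all the other contributions. Since every proper divisor of $n$ is at most $n/2$, this strategy should give plenty of room when $\rho \ge 2$.

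First I would dispose of the small cases directly: for $n = 1$ the sum reduces to $\rho$, and for $n = 2$ it equals $\rho^2 - \rho = \rho(\rho-1) \ge 2$. For the main range $n \ge 3$, I would use the trivial bound $|\mu| \le 1$ together with the observation that any proper divisor of $n$ is at most $\lfloor n/2\rfloor$, yielding
\[
\sum_{m\mid n}\mu(n/m)\rho^m \;\ge\; \rho^n - \sum_{\substack{m\mid n \\ m<n}}\rho^m \;\ge\; \rho^n - \sum_{m=1}^{\lfloor n/2\rfloor}\rho^m.
\]
The geometric sum on the right is bounded by $\tfrac{\rho}{\rho-1}\,\rho^{\lfloor n/2\rfloor}\le 2\rho^{\lfloor n/2\rfloor}$, since $\rho/(\rho-1)\le 2$ for $\rho\ge 2$. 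Combined with the elementary inequality $\lfloor n/2\rfloor \le n-2$, valid for $n\ge 3$, this gives
\[
\sum_{m\mid n}\mu(n/m)\rho^m \;\ge\; \rho^n - 2\rho^{n-2} \;=\; \rho^{n-2}(\rho^2-2) \;>\;0,
\]
because $\rho^2\ge 4$.

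There is no serious obstacle here; the only delicate point is that the general estimate $\rho^n>2\rho^{\lfloor n/2\rfloor}$ becomes tight at $(n,\rho)=(2,2)$, which forces the direct treatment of $n\in\{1,2\}$. The stronger asymptotic lower bound $\delta(n)\ge c\,d^n$ alluded to in the footnote would follow from a slightly sharper bookkeeping of the same geometric-series estimate.
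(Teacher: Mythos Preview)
Your proof is correct and follows essentially the same approach as the paper: isolate the dominant term $\rho^n$ and bound the remaining terms by a geometric sum up to $n/2$. The only cosmetic difference is that the paper handles $n\ge 2$ in one stroke via the inequality $\rho^n(\rho-1)\ge \rho^{1+n/2}$, whereas you split off $n=2$ separately and use $\lfloor n/2\rfloor\le n-2$ for $n\ge 3$; both arrive at the same conclusion with the same idea.
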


\begin{proof}
We may assume $n\ge2$, so that
\begin{align*}
\sum_{m | n} \mu \left( \frac{n}m \right) \rho^m
&\ge
\rho^n - \sum_{1 \le m \le n/2} \rho^m
\ge
\rho^n 
- \left( 
\frac{\rho^{1+n/2} -1}{\rho-1}
\right)
\\
&>
\frac{\rho^n (\rho-1)  - \rho^{1+n/2}}{\rho-1} 
\ge 0~,
\end{align*}
since $n\ge 1+n/2$.\end{proof}

Suppose now that $k$ is positive. Observe that $k\ge2$ since $P_c$ is unicritical.
Let $Q_{k,n}$ be the polynomial with simple roots whose zero locus is the set of $c\in\C$ such that $0$ is mapped by $P^k_c$ to a periodic point of exact period $n$ and $P^{k-1}_c(0)$ is not periodic. We need to show that $\delta_{k,n} := \deg(Q_{k,n}) >0$.

By~\cite[Lemma~10]{MR3890968}, we have 
\[
\frac{Q^d_{k+n-1}(c) - Q_{k-1}^d(c)}{Q_{k+n-1}(c) - Q_{k-1}(c)}
 = Q^{d-1}_{\gcd\{k-1,n\}} \prod_{m| n} Q_{k,m}(c)\] so that 
\begin{align*}
\delta_{k,n}
&= 
\sum_{m|n} \mu \left( \frac{n}{m} \right)\, 
\left( d^{m+k-2} - d^{-1 + \gcd\{k-1,m\}}
\right)
~.
\end{align*}
Write $l = k-1$. By the lemma below, we have
\begin{align*}
d\times \delta_{k,n}
=
\sum_{m|n} \mu \left( \frac{n}{m} \right)\, 
\left( d^{m+l} - d^{\gcd\{l,m\}}
\right)
\ge 
d^l \left(d^n 
- \left( 
\frac{d^{1+n/2} -1}{d-1}
\right)\right)
 -d^l>0
 ~.
\end{align*}
This concludes the proof.
\begin{lemma}
For any $\rho\ge2$, for any integer $l\ge 1$, and for any $n\ge1$, we have
\[0\le \sum_{m| n} \mu \left( \frac{n}m \right) \rho^{\gcd\{l,m\}}
\le \rho^l~.\]
\end{lemma}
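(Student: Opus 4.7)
The plan is to introduce the function
$$f(n) := \sum_{m\mid n}\mu(n/m)\,\rho^{\gcd(l,m)}$$
and to exploit the Möbius inversion identity
$$\sum_{d\mid n} f(d) \;=\; \rho^{\gcd(l,n)}.$$
Granting the nonnegativity $f(d)\ge 0$ for every $d\mid n$, the upper bound falls out at once:
$$f(n)\;\le\;\sum_{d\mid n} f(d)\;=\;\rho^{\gcd(l,n)}\;\le\;\rho^l,$$
since $\gcd(l,n)\le l$ and $\rho\ge 1$. So the whole task reduces to showing $f(n)\ge 0$.

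To establish nonnegativity I would first show that $f(n)=0$ unless $n$ divides $l$. Write $n=p_1^{a_1}\cdots p_r^{a_r}$ and set $b_i:=v_{p_i}(l)$. The Möbius-nonzero summands correspond bijectively to subsets $S\subseteq\{1,\dots,r\}$ via $m_S=\prod_{i\notin S}p_i^{a_i}\prod_{i\in S}p_i^{a_i-1}$, with $\mu(n/m_S)=(-1)^{|S|}$. A direct computation of $\gcd(l,m_S)$ shows that for any index $i$ with $b_i<a_i$ one has $\min(b_i,a_i)=\min(b_i,a_i-1)=b_i$, so the exponent of $\rho$ is independent of whether $i\in S$ for such indices. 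Splitting $\{1,\dots,r\}=T\sqcup U$ with $T:=\{i:b_i\ge a_i\}$, the sum factors as
$$f(n)\;=\;\Bigl(\sum_{S'\subseteq T}(-1)^{|S'|}\rho^{E(S')}\Bigr)\cdot\Bigl(\sum_{S''\subseteq U}(-1)^{|S''|}\Bigr),$$
where $E(S')$ is the obvious exponent. The second factor vanishes whenever $U\neq\varnothing$, i.e.\ unless $b_i\ge a_i$ for all $i$, i.e.\ unless $n\mid l$.

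It remains to check the case $n\mid l$. Then $\gcd(l,m)=m$ for every $m\mid n$, so $f(n)$ collapses to
$$f(n)\;=\;\sum_{m\mid n}\mu(n/m)\,\rho^m,$$
which is strictly positive by the preceding lemma in the excerpt (using $\rho\ge 2$). This completes both inequalities.

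The only subtle step is the factorization that forces $f(n)=0$ off the divisors of $l$; the bookkeeping with $p$-adic valuations has to be done carefully, but no genuine analytic difficulty arises — once this vanishing is in hand, both the lower bound (via the previous lemma, which is where $\rho\ge 2$ enters) and the upper bound (via the telescoping identity $\sum_{d\mid n}f(d)=\rho^{\gcd(l,n)}$) are essentially automatic.
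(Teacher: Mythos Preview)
Your proof is correct and follows the same overall architecture as the paper's: define $f(n)$ (the paper calls it $\Delta(n)$), use M\"obius inversion to get $\sum_{d\mid n} f(d) = \rho^{\gcd(l,n)}$, reduce both inequalities to showing $f(n)\ge 0$, and then split into the cases $n\mid l$ (where the previous lemma applies) and $n\nmid l$ (where $f(n)=0$).

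The one genuine difference is in how you establish the vanishing $f(n)=0$ for $n\nmid l$. You compute it directly: writing out the M\"obius-nonzero terms via the prime factorization of $n$, you observe that the exponent $\gcd(l,m_S)$ is insensitive to the coordinates in $U=\{i:v_{p_i}(l)<a_i\}$, and then factor the alternating sum so that a factor $\sum_{S''\subseteq U}(-1)^{|S''|}$ appears, which kills everything when $U\neq\varnothing$. The paper instead proceeds by induction on $n$: assuming $\Delta(m)=0$ for all proper divisors $m\mid n$ with $m\nmid l$, it applies the identity $\rho^{\gcd(l,n)}=\sum_{m\mid n}\Delta(m)$ both to $n$ and to $L:=\gcd(l,n)$, and subtracts to isolate $\Delta(n)=0$. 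Your argument is more explicit and self-contained (no induction needed); the paper's is shorter but requires noticing that the same M\"obius identity, applied to $L$, supplies exactly the cancellation required. Both are clean, and neither has any real advantage over the other.
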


\begin{proof}
Set $\Delta(n)= \sum_{m| n} \mu \left( \frac{n}m \right) \rho^{\gcd\{l,m\}}$ so that 
$\rho^{\gcd\{l,n\}}= \sum_{m| n} \Delta(m)$. It is sufficient to show that $\Delta(n)\ge0$ for all $n$.
Observe that for each divisor $n$ of $l$ we have
$\Delta(n)= \sum_{m| n} \mu \left( \frac{n}m \right) \rho^{m}$ so that $\Delta(n)\ge0$ follows from the previous lemma.
We claim that by induction $\Delta(n) =0$ if $n$ is not a divisor of $l$. Indeed write $\gcd\{n,l\} = L$. Then
\begin{align*}
\rho^L
&= 
\sum_{l'|L} 
\left(\Delta (l') + \left(
\sum_{\substack{l' < m, \, m|n \\ \gcd\{m,l\} =l'}} \Delta(m) 
\right)
\right)
= 
\sum_{l'|L} \Delta (l') + \Delta(n)
\end{align*}
hence $\Delta(n) =0$ as claimed.
\end{proof}


\subsection{Proof of Theorem~\ref{thm:realizationPCF3}}
Let $m_1 \neq m_2 \neq \cdots \neq m_{d-1}$ be the length of each cycle of $\Gamma$. 
The locus $\mathrm{Per}(m_j,0)$ of polynomials $P_{c,a}$ having a critical orbit of exact period $m_j$
is an algebraic hypersurface, see e.g.~\cite[\S 2.1]{BB2}. It follows from~\cite[Corollary~4.9]{favregauthier}
that the $(d-1)$ hypersurfaces  $\mathrm{Per}(m_j,0)$ intersect transversally. 
In the parameterization by $P_{c,a}$ these hypersurfaces have no intersection at infinity by Bassanelli and Berteloot, see e.g.~\cite[\S 4]{BB2},
so that  the number of PCF polynomials realizing $\Gamma$ is equal to the product
$\prod_{i=1}^{d-1} \deg(\mathrm{Per}(m_j,0))$ which is positive (in fact $\gtrsim d^{\sum m_j}$), see~\cite[Lemma~6.3]{favregauthier}.


\subsection{Combinatorics of strictly PCF polynomials}\label{sec:combin}
We review briefly the classification of PCF polynomial in terms of critical portraits. 
A PCF polynomial is said to be strict when none of its critical point is periodic. 

\paragraph*{Critical portrait}
The notion of a critical portrait was introduced in~\cite{BFH} which encodes the combinatorics of a given polynomial. 
Recall that two finite disjoint subsets $F, F'$ of $\R/\Z$ are said to be unlinked when 
$F$  lies in a single connected component of $(\R /\Z)\setminus F'$. 
Two subsets are unlinked iff there exist disjoint open segments $I$ and $I'$ such that 
$F \subset I$ and $F'\subset I'$.

\begin{definition}
A critical portrait is a collection $(\Theta_1, \cdots , \Theta_N)$ of disjoint finite sets in $\R/\Z$ satisfying the following three conditions:\index{critical portrait}
\begin{itemize}
\item[(CP1)]
for any fixed $i$, $\Theta_i = \{\theta_{i,1},\cdots,\theta_{i,d(i)}\}$ with $d(i)\ge2$ and $d\theta_{i,j} = d\theta_{i,1}$ for all $j$;
\item[(CP2)]
$\sum_i (\Card  (\Theta_i) -1) = d  - 1$;
\item[(CP3)]
for any $i\neq j$, the sets $\Theta_i$ and $\Theta_j$ are unlinked.
\end{itemize}
\end{definition}

\paragraph*{Landing map}
Let $P$ be any polynomial of degree $d\ge2$ with connected  Julia set so that 
all its critical points have a bounded orbit.
Recall from Proposition~\ref{prop:GreenBottcher} that the B\"ottcher coordinate $\varphi_P$ 
defines a univalent map from $\{g_P>0\}=\C \setminus K_P$ onto $\{ |z| > 1\}$. 
The external ray $R_\theta$ of angle $\theta\in \R/\Z$ 
is the image under $\varphi_P^{-1}$ of the ray $\{te^{i\pi\theta}\}_{t>1}$. Equivalently external rays are gradient lines
of the Green function. One says that the external ray $R_\theta$ lands at a point $z\in K_P$ if 
$z$ is the unique boundary point of the ray.  

When $K_P$ is locally connected, it follows from Caratheodory theorem that all external rays land at a point, so that we get a continuous surjective map
$\mathsf{e}\colon \R/\Z \to J_P$ which semi-conjugates $M_d$ to $P$.

\paragraph*{Critical portrait of PCF maps}
When $P$ is a strictly PCF polynomial with critical points $c_1, \ldots , c_N$, its Julia set is connected and locally connected (see~\cite[Theorem~17.5]{Milnor4}).
For each $i$, one may thus choose a ray $R_{\eta_i}$ landing at $P(c_i)$, and set 
$\Theta_i = M_d^{-1} (\eta_i)$. One can prove that the collection of sets $\{\Theta_i\}_{1\le i\le N}$ defines a critical portrait, see~\cite[Proposition~2.10]{BFH}.
Note that the critical portrait is only well-defined once a choice of external rays landing at critical values is made.

To state the next result we recall the definition the $\Theta$-equivalence relation given in~\cite{BFH}:
\begin{itemize}
\item
$x, y\in \R/\Z$ are $\Theta$-unlinked if $\{x,y\}$ and $\Theta_i$ are unlinked for all $i$;
\item
$x$ and $y$ are $\Theta$-unlinkable if one can find  a pair $\{x', y'\}$ arbitrarily close to $\{x,y\}$
which is $\Theta$-unlinked;  
\item
$x$ and $y$ are $\Theta$-equivalent if for all $n\ge0$ the points $d^nx$ and $d^ny$ are $\Theta$-unlinkable. 
\end{itemize}
We shall write $x \sim_\Theta y$ whenever $x$ and $y$ are $\Theta$-equivalent.\index{critical portrait!of a PCF polynomial}
\begin{theorem}\label{thm:equivalence}
Suppose $\Theta = \{ \Theta_1, \cdots , \Theta_N\}$ is a critical portrait such that 
any $\theta \in \bigcup_i \Theta_i$ is strictly preperiodic for $M_d$. 
Then the landing map $\mathsf{e} \colon \R/\Z \to J_P$ induces a conjugacy
$(\R/\Z / \sim_\Theta, M_d) \to (J_P, P)$.
\end{theorem}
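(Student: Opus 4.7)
My plan is to follow the approach of Bielefeld–Fisher–Hubbard \cite{BFH} adapted to our terminology. The proof naturally splits into three parts: (i) show the landing map is well-defined and yields a semi-conjugacy, (ii) show the $\Theta$-equivalence is dynamically preserved so that $\mathsf{e}$ factors through the quotient, and (iii) verify the induced map is injective.

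For (i), I would first invoke the fact that a strictly PCF polynomial has a locally connected Julia set (Douady–Hubbard, see e.g.~\cite[Theorem 17.5]{Milnor4}), so Carathéodory's theorem produces a continuous surjection $\mathsf{e} \colon \R/\Z \to J(P)$. The relation $\mathsf{e}\circ M_d = P\circ \mathsf{e}$ is immediate from the functional equation $\varphi_P\circ P = \varphi_P^d$ valid outside the filled-in Julia set. At this point, the external rays of angles in $\bigcup_i \Theta_i$ land at the critical values $P(c_i)$ by the very definition of a critical portrait associated to $P$ (see the paragraph before the theorem statement and \cite[Proposition 2.10]{BFH}).

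For (ii), the key is to show that if $\mathsf{e}(x) = \mathsf{e}(y)$ then $x$ and $y$ are $\Theta$-equivalent. I would argue that rays landing at a common point are never linked with any critical portrait $\Theta_i$: indeed, if $\{x,y\}$ separated some $\Theta_i$, then the two rays $R_x \cup R_y \cup \{\mathsf{e}(x)\}$ would form a Jordan curve in $\C$ enclosing part of $\Theta_i$ and some other element of $\Theta_i$ outside, contradicting the fact that all rays in $\Theta_i$ land at a common preimage of $P(c_i)$ (a strictly preperiodic point, hence not on $R_x \cup R_y$). Applying the same reasoning to all iterates gives $d^n x, d^n y$ unlinkable from every $\Theta_i$, i.e. $x \sim_\Theta y$. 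Conversely, I would check that $\Theta$-equivalent angles have the same image: first for $\Theta$-unlinked pairs using that the corresponding rays bound a region disjoint from all critical rays, hence are mapped injectively by $P^n$ for all $n$; then pass to the closure to cover $\Theta$-unlinkable pairs; finally use invariance of $\mathsf{e}$ under $M_d$ and the fact that $J(P)$ is Hausdorff (so orbits of points separate them) to upgrade to full $\Theta$-equivalence.

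For (iii), the induced map $\bar{\mathsf{e}} \colon \R/\Z/{\sim_\Theta} \to J(P)$ is a continuous bijection by (ii); since the source is compact and $J(P)$ is Hausdorff, $\bar{\mathsf{e}}$ is a homeomorphism, and the semi-conjugacy from (i) descends to a genuine conjugacy. The main obstacle I anticipate is the careful combinatorial bookkeeping in step (ii), in particular controlling the orbit of a pair $\{x,y\}$ that could a priori become linked with some $\Theta_i$ only after many iterations; handling this requires tracking how preimages of the critical portrait rays partition $\R/\Z$ into a shrinking sequence of Markov-like pieces, and using the strict preperiodicity of the angles in $\bigcup_i \Theta_i$ to ensure no periodic cycle of identifications can occur outside the $\Theta$-equivalence classes. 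The strict preperiodicity assumption is exactly what rules out linked Siegel/Cremer obstructions and makes the landing combinatorics finite at each level.
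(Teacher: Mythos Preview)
The paper does not prove this theorem: it is stated as background from \cite{BFH} (and \cite{kiwi-portrait}), with no proof given. So your outline cannot be compared against a proof in the paper; it must stand on its own.

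Your steps (i) and (iii), and the first half of (ii), are fine. The genuine gap is in the direction $x\sim_\Theta y \Rightarrow \mathsf{e}(x)=\mathsf{e}(y)$. You write that for $\Theta$-unlinked pairs ``the corresponding rays bound a region disjoint from all critical rays, hence are mapped injectively by $P^n$ for all $n$''. This does not give what you need: two angles being $\Theta$-unlinked simply means they lie in the same piece of the partition of $\R/\Z$ induced by the $\Theta_i$'s, and such pairs typically land at \emph{different} points of $J(P)$. So the implication ``$\Theta$-unlinked $\Rightarrow$ same landing point'' is false, and your ``pass to the closure, then upgrade via Hausdorffness'' cannot repair it.

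The correct mechanism is a shrinking-diameter (itinerary) argument. The $\equiv_\Theta$-classes partition $\R/\Z$ into exactly $d$ arcs $L_1,\ldots,L_d$, each mapped bijectively onto $\R/\Z$ by $M_d$; their images under $\mathsf{e}$ are the $d$ pieces of $J(P)$ obtained by cutting along the rays landing at the critical points. Two angles are $\Theta$-equivalent precisely when their $M_d$-orbits have the same itinerary in $\{L_1,\ldots,L_d\}$. Since $P$ is strictly PCF, $J(P)$ carries an expanding (orbifold) metric, so the nested pull-backs of the pieces have diameters tending to $0$; hence same itinerary forces $\mathsf{e}(x)=\mathsf{e}(y)$. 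This is the substance of \cite[\S5--7]{BFH}, and it is where the strict preperiodicity hypothesis is actually used (to get expansion, not merely to rule out Siegel/Cremer behaviour). Your final paragraph hints at a Markov picture, but the argument must be placed in the $\Rightarrow$ direction, not presented as an afterthought.
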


\paragraph*{Realization of a critical portrait}
The next result follows from~\cite[Theorem~II]{BFH}, see also~\cite[Corollary~5.3]{kiwi-portrait}.

\begin{theorem}\label{thm:kiwi}
Suppose $\Theta = \{ \Theta_1, \cdots , \Theta_N\}$ is a critical portrait such that 
any $\theta \in \bigcup_i \Theta_i$ is strictly preperiodic for $M_d$. 

Then there exist a strictly PCF polynomial $P_\Theta$
and a choice of external rays landing at its critical values
for which the critical portrait is equal to $\Theta$.

Moreover, for each $i$ all external rays $R_\theta$ with $\theta\in\Theta_i$ land at the same critical point, and $c_i$
which has multiplicity $\Card(\Theta_i)$. 
\end{theorem}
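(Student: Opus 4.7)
The plan is to follow the Bielefeld--Fisher--Hubbard strategy: construct a combinatorial/topological model from $\Theta$, realize it as a post-critically finite branched self-cover of the sphere, and then invoke Thurston's characterization theorem to promote it to an honest polynomial.

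First I would form the quotient $X = (\R/\Z)/\!\sim_\Theta$ equipped with the descended map $\bar{M}_d$. Conditions (CP1)--(CP3) together with the strict preperiodicity of every angle in $\bigcup_i \Theta_i$ guarantee that $\sim_\Theta$ is a closed equivalence relation with finite classes and that $X$ is a locally connected planar continuum (a topological dendrite); this is the combinatorial template for the Julia set of the polynomial we are trying to construct. At this point Theorem~\ref{thm:equivalence} can be proved purely at the combinatorial level, independently of the existence of $P_\Theta$.

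Next I would build a post-critically finite topological branched cover $f \colon S^2 \to S^2$ of degree $d$ fixing $\infty$ (hence of polynomial type), with critical points $c_1, \ldots, c_N$ of local degrees $\Card(\Theta_i)$ whose external-ray structure encodes $\Theta$: the $\Card(\Theta_i)$ prescribed rays at angles $\theta_{i,j}\in\Theta_i$ are glued so that they land at $c_i$, and their common image at angle $d\theta_{i,1}$ lands at $f(c_i)$. Riemann--Hurwitz is consistent by (CP2), while (CP3) makes the gluing planar and unambiguous. Because every $\theta_{i,j}$ is strictly preperiodic, the post-critical set $P_f$ is finite and no critical point is periodic, so $f$ is combinatorially hyperbolic.

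The main step is to apply Thurston's characterization of rational maps to $f$: $f$ is combinatorially equivalent to a rational map iff it admits no Thurston obstruction. The hard part is to rule out such obstructions, and this is where the strict-preperiodicity hypothesis is essential. By Levy's theorem, for a branched cover of polynomial type whose critical points are all strictly pre-periodic, any Thurston obstruction must contain a Levy cycle; but a Levy cycle is incompatible with $\infty$ being totally invariant and with the unlinked property (CP3). This is the technical heart of~\cite[Theorem II]{BFH} (see also Kiwi's lamination-theoretic treatment in~\cite[Corollary~5.3]{kiwi-portrait}). Once obstructions are excluded, Thurston's theorem produces a rational map combinatorially equivalent to $f$; since $\infty$ is totally invariant, this rational map is a polynomial $P_\Theta$ of degree $d$ with exactly the right critical data.

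Finally I would verify the critical portrait of $P_\Theta$. Since $P_\Theta$ is strictly PCF, its Julia set is locally connected by~\cite[Theorem~17.5]{Milnor4}, so the landing map $\mathsf{e}\colon\R/\Z\to J(P_\Theta)$ is defined and semi-conjugates $M_d$ to $P_\Theta$. Transporting the combinatorial structure via the Thurston equivalence, the rays $R_{\theta_{i,j}}$ with $\theta_{i,j}\in\Theta_i$ land at a common critical point $c_i$ of multiplicity $\Card(\Theta_i)$, which is precisely the statement of the theorem. The principal obstacle throughout is the non-obstruction step: even though the outcome is known classically, carrying it out rigorously requires a detailed combinatorial analysis of how a hypothetical invariant multicurve would have to interact with the preimages of the rays in $\bigcup_i\Theta_i$, and it is in this analysis that the hypotheses on $\Theta$ are fully used.
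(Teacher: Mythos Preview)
Your proposal is a reasonable sketch of the Bielefeld--Fisher--Hubbard argument, but you should be aware that the paper does not give its own proof of this statement: it simply cites \cite[Theorem~II]{BFH} and \cite[Corollary~5.3]{kiwi-portrait} and moves on. So there is no ``paper's own proof'' to compare against; the authors treat this as a black box from the literature.

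That said, your outline correctly identifies the architecture of the BFH proof (build a topological model from $\Theta$, check there is no Thurston obstruction via a Levy-cycle argument, apply Thurston rigidity). One caveat: your claim that Theorem~\ref{thm:equivalence} ``can be proved purely at the combinatorial level, independently of the existence of $P_\Theta$'' is slightly misleading as stated, since that theorem is about the landing map of an actual polynomial; what is true is that the lamination $\sim_\Theta$ and the quotient dynamics can be analyzed before $P_\Theta$ exists, and the identification with $J(P_\Theta)$ comes only after Thurston's theorem has been applied. Also, Kiwi's approach in \cite{kiwi-portrait} is somewhat different in flavor (working directly with real laminations rather than topological branched covers), so if you were to write this up in full you would need to commit to one route or the other rather than conflating them.
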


We now explain how to understand the critically marked dynamical graph of $P_\Theta$ from its critical portrait.  
Pick an angle $\theta_i \in \Theta_i$ for each $i$, and consider the dynamical graph $\Gamma_\Theta:= G(\{\theta_1, \ldots, \theta_N\},M_d)$
as follows (see Example~\ref{ex:mark-dyn}).
Vertices are $\{M_d^n(\theta_i)\}_{i,n\ge0}$ and an edge joins two vertices $v$ and $v'$ when $M_d(v)= v'$ or 
$M_d(v')= v$. The map $M_d$ induces a natural flow on $\Gamma$.
By (CP2), we may partition $\{ 0, \cdots, d-2\}$ into $N$ subsets $I_1, \ldots, I_N$ 
such that each set $I_j$ has the same cardinality as $\Theta_j$.
We define the marking $\mu \colon \{ 0, \cdots, d-2\} \to \Gamma$ sending
each integer $i\in I_j$ to $\theta_j$. 
We obtain in this way a critically marked dynamical graph (with no symmetry) that we denote by $\Gamma_\Theta$.

Since the landing map  is a semi-conjugacy between $M_d$ and $P_\Theta$, Theorem~\ref{thm:kiwi} implies
 $\mathsf{e}$ to induce a canonical map $\mathsf{e}\colon \Gamma_\Theta\to \Gamma(P_\Theta)$ 
 which is surjective by construction and semi-conjugates the flows.
\begin{prop}\label{prop:easy obs graph}
If $\Gamma_\Theta$ and  $\Gamma(P_\Theta)$ have the same number of cycles of any given period, 
then $\mathsf{e}$ induces an isomorphism
between $\Gamma_\Theta$ and the critically dynamical graph without symmetry obtained from $\Gamma(P_\Theta)$ 
by forgetting the action of $\Sigma(P_\Theta)$.
\end{prop}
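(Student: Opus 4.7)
The plan is to reduce the claim to the bijectivity of $\mathsf{e}$ on the vertex set of $\Gamma_\Theta$: once bijectivity is established, the required preservation of edges is automatic from flow-equivariance $\mathsf{e}\circ M_d = P_\Theta\circ\mathsf{e}$, the markings match by construction ($\mathsf{e}(\theta_i)=c_i$), and the local-degree functions agree since $d_\pi(\theta_i)=|\Theta_i|=d_\pi(c_i)$ by Theorem~\ref{thm:kiwi} while all other vertices have local degree $1$ on both sides.

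First I would handle periodic vertices. Because $\mathsf{e}$ is surjective and flow-equivariant, each period-$k$ cycle of $\Gamma_\Theta$ maps onto a cycle of $\Gamma(P_\Theta)^\circ$ whose period divides $k$, and every cycle of $\Gamma(P_\Theta)^\circ$ arises this way. Writing $C^\Theta_k$, $C^P_k$ for the respective numbers of period-$k$ cycles, the hypothesis $C^\Theta_k=C^P_k$ for every $k\geq 1$ gives $\sum_k kC^\Theta_k = \sum_k kC^P_k$, i.e.\ the two sets of periodic vertices have equal cardinality; combined with surjectivity, this forces $\mathsf{e}$ to be a bijection between them, and in particular the induced map on cycles is a period-preserving bijection. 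A first consequence is that for each marked point $i$, if $q^\Theta_i$ and $q^P_i$ denote the periods of the destiny cycles of $\theta_i$ under $M_d$ and of $c_i$ under $P_\Theta$, then $q^\Theta_i=q^P_i$.

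Next, I would extend bijectivity to preperiodic vertices. Set $O^\Theta_i=\{M^n_d\theta_i\}_{n\geq 0}$ and $O^P_i=\{P^n_\Theta c_i\}_{n\geq 0}$, with preperiods $p^\Theta_i$ and $p^P_i$. Since $\mathsf{e}$ restricts to a surjection $O^\Theta_i\to O^P_i$, one has $p^\Theta_i\geq p^P_i$. I claim $p^\Theta_i=p^P_i$: otherwise the vertex $v=M^{p^P_i}_d\theta_i$ of $\Gamma_\Theta$ is preperiodic, while its image $P^{p^P_i}_\Theta c_i$ lies on the destiny cycle of $c_i$, hence is periodic, so by the bijection on periodic vertices already proved it equals $\mathsf{e}(w)$ for a unique periodic $w\in\Gamma_\Theta$ distinct from $v$. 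The aim is then to derive a contradiction by propagating this collapse along the tail: using flow-equivariance and the fact that the angles $\theta\in\bigcup_i\Theta_i$ are strictly preperiodic under $M_d$, one tracks the resulting forced identifications of vertices through the $\Theta$-equivalence relation of Theorem~\ref{thm:equivalence}, and shows that they would eventually produce either a merged periodic cycle (contradicting Step~1) or an incompatibility with the prescribed local degrees at the $\theta_j$. A parallel argument rules out cross-orbit identifications in $\Gamma(P_\Theta)^\circ$ that are not already present in $\Gamma_\Theta$, since any such extra identification, pushed forward by the flow, would again reach the periodic part and contradict Step~1.

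The main obstacle is the second step. The cycle-count hypothesis directly controls only the periodic part, so the key difficulty is to transfer this control to the preperiodic part. The mechanism I would rely on is the description of the fibers of $\mathsf{e}$ via the $\Theta$-equivalence (Theorem~\ref{thm:equivalence}): any non-trivial fiber on $V(\Gamma_\Theta)$ consists of $\sim_\Theta$-equivalent angles, and I would exploit the combinatorial rigidity of $\sim_\Theta$ on strictly preperiodic orbits — specifically, that a non-trivial $\sim_\Theta$-identification on $V(\Gamma_\Theta)$ forces further identifications under $M_d$ that ultimately reach the cyclic part — to transport any hypothetical collapse into a collapse of periodic vertices, which is forbidden by Step~1. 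Once this is achieved, $\mathsf{e}|_{O^\Theta_i}$ is a bijection onto $O^P_i$ for every $i$, and the same rigidity argument forces the overlap pattern between distinct orbits to agree on the two sides; bijectivity of $\mathsf{e}$ on $V(\Gamma_\Theta)$ then follows, completing the proof.
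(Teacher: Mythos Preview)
Your Step~1 on the periodic part is correct and matches the paper. The gap is in Step~2: the forward-propagation mechanism you describe does not produce a contradiction. If $v$ is preperiodic and $w$ is periodic with $\mathsf{e}(v)=\mathsf{e}(w)$, then pushing forward by the flow only tells you that $M_d^k v = M_d^k w$ once $k$ reaches the preperiod of $v$; this is not a ``collapse of periodic vertices'' but simply the statement that the tail of $v$ eventually joins the cycle of $w$, which is automatic. No nontrivial identification among periodic vertices is forced, so Step~1 is never contradicted. Your appeal to ``combinatorial rigidity of $\sim_\Theta$'' stays at the level of a hope rather than a mechanism.

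What the paper does instead is argue \emph{backward}, by induction on the distance to the periodic cycles. Assuming $\mathsf{e}$ is injective on the subgraph at distance $\le n$, take $\theta\neq\theta'$ at distance $\le n+1$ with $\mathsf{e}(\theta)=\mathsf{e}(\theta')$; then $M_d\theta, M_d\theta'$ lie at distance $\le n$ and have the same image, so by induction $M_d\theta=M_d\theta'$, i.e.\ $\theta-\theta'\in\frac1d\Z$. The decisive input is then a concrete combinatorial lemma (Lemma~\ref{lem:easy obs graph}): two $\sim_\Theta$-equivalent angles differing by a nonzero multiple of $\frac1d$ must lie in a single $\Theta_i$. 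This is exactly the ``rigidity'' you allude to, but in a precise form that makes the induction close. Your proposal is missing both this lemma and the backward inductive structure that reduces the problem to it.
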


\begin{proof}
We show that $\mathsf{e}\colon \Gamma_\Theta\to \Gamma(P_\Theta)$  induces a conjugacy. 
For each $n$ denote by $\Gamma_{\Theta,n},  \Gamma(P_\Theta)_n$ the dynamical subgraphs
of $\Gamma_\Theta$ and $\Gamma(P_\Theta)$ respectively of those points at (graph) distance $\le n$
from the periodic cycles.

Since both graphs are finite, for $n$ sufficiently large we have $\Gamma_{\Theta,n}= \Gamma_\Theta$ and $\Gamma(P_\Theta)_n =\Gamma(P_\Theta)$.
Observe that $\mathsf{e}$ maps $\Gamma_{\Theta,n}$ onto  $\Gamma(P_\Theta)_n$. Also $\Gamma_{\Theta,0}$ and  $\Gamma(P_\Theta)_0$ are unions of periodic cycles, and by assumption $\mathsf{e}\colon \Gamma_{\Theta,0}\to \Gamma(P_\Theta)_0$ is a conjugacy. 

We assume by induction that $\mathsf{e}\colon \Gamma_{\Theta,n}\to \Gamma(P_\Theta)_n$ is a conjugacy. 
Pick $\theta,\theta' \in \Gamma_{\Theta,n+1}$ such that $\mathsf{e}(\theta) = \mathsf{e}(\theta')$. By the inductive assumption $M_d(\theta)=M_d(\theta')$, 
so that the two angles differ by a multiple of $\frac1d$. We conclude using the next lemma.
\end{proof}

\begin{lemma}\label{lem:easy obs graph}
Let $\theta\neq\theta'\in \R/\Z$ be two angles such that $\theta- \theta' \in \frac{\Z}d$, and 
$\theta\sim_\Theta\theta'$. Then $\{\theta,\theta'\} \subset  \Theta_i$ for some $i$.
\end{lemma}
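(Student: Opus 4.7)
The plan is to argue dynamically rather than combinatorially: translate the hypothesis $\theta\sim_\Theta\theta'$ into a statement about the landing map of the PCF polynomial $P_\Theta$ furnished by Theorem~\ref{thm:kiwi}, and then exploit the fact that $d\theta=d\theta'$ to force the common landing point to be a critical point of $P_\Theta$.

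First I would invoke Theorem~\ref{thm:kiwi}: since the critical portrait $\Theta$ has only strictly preperiodic angles, there exists a strictly PCF polynomial $P:=P_\Theta$ whose critical points $c_1,\ldots,c_N$ satisfy that for each $i$ the external rays landing at $c_i$ are exactly $\{R_\eta\,:\,\eta\in\Theta_i\}$, and $\deg_{c_i}(P)=\Card(\Theta_i)$. By Theorem~\ref{thm:equivalence}, the landing map $\mathsf{e}\colon\R/\Z\to J(P)$ induces a conjugacy $(\R/\Z)/\!\sim_\Theta\,\to (J(P),P)$. Consequently, the hypothesis $\theta\sim_\Theta\theta'$ is equivalent to $\mathsf{e}(\theta)=\mathsf{e}(\theta')=:z$, so that the two rays $R_\theta$ and $R_{\theta'}$ land at the same point $z\in J(P)$.

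The crucial step is to show that $z$ is a critical point of $P$. Since $\mathsf{e}$ semi-conjugates $M_d$ to $P$, the relation $d\theta=d\theta'$ implies that $P(R_\theta)=P(R_{\theta'})=R_{d\theta}$, and both rays land at the common point $P(z)$. Suppose by contradiction that $\deg_z(P)=1$; then $P$ is locally a biholomorphism near $z$, and an inverse branch $P^{-1}$ defined on a small neighborhood of $P(z)$ sends the germ of $R_{d\theta}$ at $P(z)$ to a unique germ of external ray at $z$. Since external rays are pairwise disjoint in $\C\setminus K(P)$ and coincide as soon as their germs at a common landing point coincide, this germ is the germ of both $R_\theta$ and $R_{\theta'}$, forcing $R_\theta=R_{\theta'}$ hence $\theta=\theta'$, a contradiction. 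Therefore $z$ is critical, i.e., $z=c_i$ for some $i$, and by the description of the landing rays at $c_i$ recalled above, we conclude $\{\theta,\theta'\}\subset\Theta_i$.

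The hard part of the argument is the justification that two distinct rays with the same $P$-image cannot land at a non-critical point; this is a classical feature of polynomial ray dynamics, but it requires the continuity of $\mathsf{e}$ (hence local connectivity of $J(P)$, guaranteed here by $P$ being strictly PCF, see~\cite[Theorem~17.5]{Milnor4}) together with the local description of $P$ as $w\mapsto w^{\deg_z(P)}$ in appropriate coordinates. One could alternatively attempt a purely combinatorial proof using only the definitions of $\Theta$-unlinkability and (CP1)--(CP3), by analyzing how the pair $\{\theta,\theta'\}$ (a two-element subset of the fiber $M_d^{-1}(d\theta)$) can fail to link each $\Theta_i$; however, the dynamical route through Theorems~\ref{thm:kiwi} and~\ref{thm:equivalence} is considerably more transparent and avoids casework on the possible positions of $\theta,\theta'$ relative to $\bigcup_i\Theta_i$.
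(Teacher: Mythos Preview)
Your dynamical argument correctly establishes that $z=\mathsf{e}(\theta)=\mathsf{e}(\theta')$ is a critical point of $P_\Theta$; the local-injectivity reasoning is sound. The gap is in the final step. You invoke Theorem~\ref{thm:kiwi} as saying that the external rays landing at $c_i$ are \emph{exactly} those with angles in $\Theta_i$, but the theorem (as stated in the paper) only asserts one inclusion: rays with angles in $\Theta_i$ land at $c_i$. The reverse inclusion is false in general. For instance, take the degree~$3$ portrait $\Theta_1=\{1/36,25/36\}$, $\Theta_2=\{1/12,5/12\}$; then $M_3(\Theta_1)=\{1/12\}\subset\Theta_2$, so $P_\Theta(c_1)=c_2$, and since two rays land at $c_2$ and $\deg_{c_1}(P_\Theta)=2$, four rays land at $c_1$: namely $\{1/36,25/36,17/36,29/36\}$, strictly containing $\Theta_1$. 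Knowing only that $\theta,\theta'$ land at $c_i$ does not force $\{\theta,\theta'\}\subset\Theta_i$; it is precisely the extra hypothesis $d\theta=d\theta'$ that singles out the fiber $\Theta_i$ among all rays at $c_i$, and exploiting this requires an additional argument.

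The paper's proof avoids the polynomial $P_\Theta$ entirely and works combinatorially. It observes that the relation $x\equiv_\Theta y$ (meaning $\{x,y\}$ is $\Theta$-unlinked) partitions $(\R/\Z)\setminus\bigcup_i\Theta_i$ into $d$ classes, each a finite union of arcs with endpoints in $\bigcup_i\Theta_i$; consecutive arcs of a given class are separated by a single chord of some $\Theta_j$, so under $M_d$ their images fit together to cover $\R/\Z$ exactly once. Thus $M_d$ is injective on each $\equiv_\Theta$-class, and two distinct points with the same $M_d$-image must lie in different classes. If neither lies in $\bigcup_i\Theta_i$, this immediately gives that $\{\theta,\theta'\}$ is linked with some $\Theta_j$, hence not $\Theta$-unlinkable at level $n=0$, contradicting $\theta\sim_\Theta\theta'$. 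The boundary case $\theta\in\Theta_i$ is then handled by the same injectivity, applied to the closures of the two classes adjacent to $\theta$.
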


\begin{proof}
Consider the equivalence relation on $(\R/\Z)\setminus \bigcup_i\Theta_i$  defined by $x\equiv_\Theta y$ iff $\{x,y\}$ is $\Theta$-unlinked.
Equivalent classes for $\equiv_\Theta$ are in bijection with connected components of the complement in the unit disk of the union of the convex hulls
of $\Theta_i$ (see Figure below). 
\begin{figure}[h]
\centering
\def\svgwidth{6cm}
\begingroup%
  \makeatletter%
  \providecommand\color[2][]{%
    \errmessage{(Inkscape) Color is used for the text in Inkscape, but the package 'color.sty' is not loaded}%
    \renewcommand\color[2][]{}%
  }%
  \providecommand\transparent[1]{%
    \errmessage{(Inkscape) Transparency is used (non-zero) for the text in Inkscape, but the package 'transparent.sty' is not loaded}%
    \renewcommand\transparent[1]{}%
  }%
  \providecommand\rotatebox[2]{#2}%
  \newcommand*\fsize{\dimexpr\f@size pt\relax}%
  \newcommand*\lineheight[1]{\fontsize{\fsize}{#1\fsize}\selectfont}%
  \ifx\svgwidth\undefined%
    \setlength{\unitlength}{272.96080656bp}%
    \ifx\svgscale\undefined%
      \relax%
    \else%
      \setlength{\unitlength}{\unitlength * \real{\svgscale}}%
    \fi%
  \else%
    \setlength{\unitlength}{\svgwidth}%
  \fi%
  \global\let\svgwidth\undefined%
  \global\let\svgscale\undefined%
  \makeatother%
  \begin{picture}(1,0.71165699)%
    \lineheight{1}%
    \setlength\tabcolsep{0pt}%
    \put(0,0){\includegraphics[width=\unitlength,page=1]{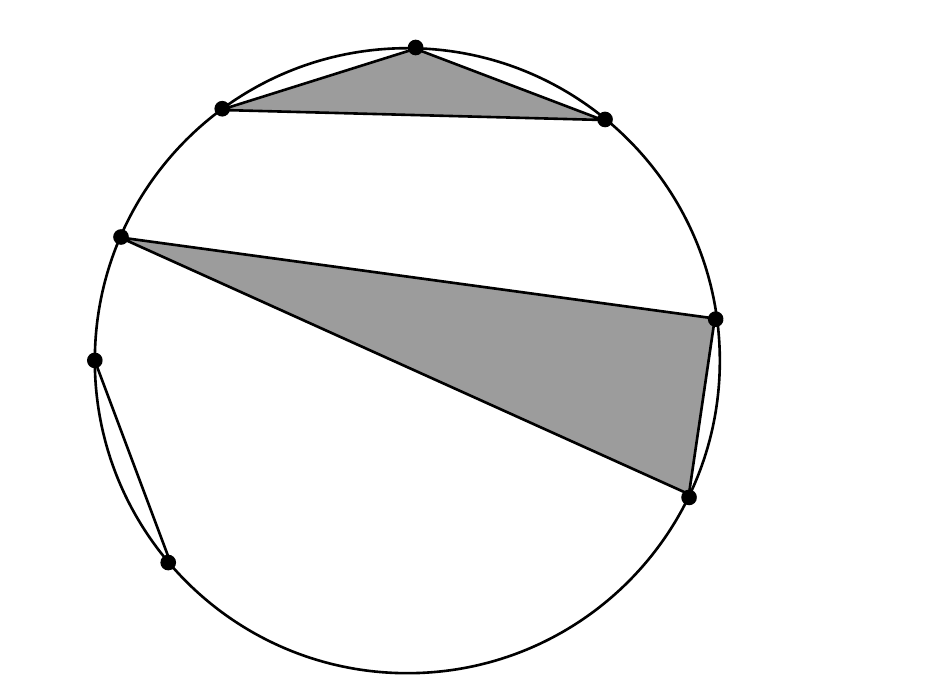}}%
    \put(-0.00221816,0.33903628){\makebox(0,0)[lt]{\lineheight{1.25}\smash{\begin{tabular}[t]{l}$\theta_{1,1}$\end{tabular}}}}%
    \put(0.10207983,0.07596735){\makebox(0,0)[lt]{\lineheight{1.25}\smash{\begin{tabular}[t]{l}$\theta_{1,2}$\end{tabular}}}}%
    \put(0.03081191,0.48469147){\makebox(0,0)[lt]{\lineheight{1.25}\smash{\begin{tabular}[t]{l}$\theta_{2,1}$\end{tabular}}}}%
    \put(0.77079979,0.40994489){\makebox(0,0)[lt]{\lineheight{1.25}\smash{\begin{tabular}[t]{l}$\theta_{2,2}$\end{tabular}}}}%
    \put(0.73837531,0.14891787){\makebox(0,0)[lt]{\lineheight{1.25}\smash{\begin{tabular}[t]{l}$\theta_{2,3}$\end{tabular}}}}%
    \put(0.66416741,0.58988709){\makebox(0,0)[lt]{\lineheight{1.25}\smash{\begin{tabular}[t]{l}$\theta_{3,1}$\end{tabular}}}}%
    \put(0.41815214,0.68910332){\makebox(0,0)[lt]{\lineheight{1.25}\smash{\begin{tabular}[t]{l}$\theta_{3,2}$\end{tabular}}}}%
    \put(0.16130894,0.62242471){\makebox(0,0)[lt]{\lineheight{1.25}\smash{\begin{tabular}[t]{l}$\theta_{3,3}$\end{tabular}}}}%
  \end{picture}%
\endgroup%

\caption{A critical portrait}
\label{fig:topView}
\end{figure}

In particular, there is exactly $(d-1)$ equivalent classes, and each of them is a finite union of open 
segments $\cup_{i=1}^k I_i$ that we may order cyclically clockwise. We observe now that there is a unique translation $T_2$ by a multiple of 
$\frac1d$ such that $T_2(I_2)$ has one common boundary point with  $I_1$. Proceeding inductively we find translations $T_j$ such that 
$T_j(I_i)$ has one common boundary point with $T_{j-1}(I_{j-1})$, so that the union $\overline{I_1} \cup \overline{T_j(I_j)}$ is a segment of length $\frac1d$.

If  $\theta$ and $\theta'$ differ by a multiple of $\frac1d$ and none of them belong to $\bigcup_i \Theta_i$, then $\theta$ and $\theta'$ cannot belong to the
same $\equiv_\Theta$-equivalence class so that in particular we have
$\theta \not\sim_\Theta \theta'$. In fact if $\theta\in \Theta_i$ and $\theta \sim_\Theta \theta'$, then
$\theta'\in \Theta_i$ too.
\end{proof}

\subsection{Proof of Theorem~\ref{thm:realizationPCF}}
Let $\Gamma$ be a critically marked dynamical finite graph as in the statement of the theorem. We may suppose $d\ge 3$
by Theorem~\ref{thm:realization-unicritical}.
Recall the definition of a marked dynamical graph attached to an arbitrary dynamical system from \S\ref{sec:criticallymarkedgraph}.

\medskip
\noindent {\bf Step 1}. Choosing adapted periodic orbits for $M_d$.

It is convenient to work in $d$-ary base for points in $\R/\Z$. 
If $p = \sum_{k\ge1}\frac{\epsilon_k}{d^k}$ with $\epsilon_k \in \{0, \ldots, d-1\}$ then we have
$M_d(p) =  \sum_{k\ge1}\frac{\epsilon_{k+1}}{d^k}$.

Let $n_1, \ldots, n_r$ be the period of each periodic cycle of $\Gamma$. By assumption, we have $n_i\neq n_j$ for $i\neq j$.
For  each $i = 1, \ldots, r$, define
\[p_i := \left(\sum_{k=1}^{n_i}\frac{\epsilon^{(i)}_k}{d^k} \right) \cdot \frac{d}{d-1}~,\]
where $\epsilon^{(i)}_k = 2$ if $(k-1) |n_i$ and $\epsilon^{(i)}_k = 0$ otherwise. 

Observe that $\frac{d}{d-1}= \sum_{k\ge0} \frac1{d^k}$ hence $p_i$ is periodic of period divisible by $n_i$. 
By construction the first digit of $M_d^k(p_i)$ when $k|n_i$ and $k<n_i$ is $2$, whereas the first digit of $p_i$ is $0$,
hence the exact period of $p_i$ is $n_i$.

\begin{lemma}\label{lem-estim-distance}
For each $i$ and for each each divisor $m$ of $n_i$ such that $m<n_i$, we have
$d_{\R/\Z}( M_d^m(p_i),p_i) \ge \frac1d$.
\end{lemma}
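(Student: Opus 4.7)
The plan is to work with base-$d$ expansions throughout. Write $p_i = 0.\overline{\epsilon_1\epsilon_2\cdots\epsilon_{n_i}}_d$ with $\epsilon_k\in\{0,2\}$, where $\epsilon_k=2$ iff $k-1$ is a positive proper divisor of $n_i$; in particular $\epsilon_1=0$ and $\epsilon_2=2$. For any $m\mid n_i$ with $1\le m<n_i$, the iterate $q_m:=M_d^m(p_i)$ has expansion $\sum_{k\ge 1}\epsilon_{m+k}/d^k$ (indices mod $n_i$), with leading digit $\epsilon_{m+1}=2$ since $m$ itself is a positive proper divisor of $n_i$. I would then expand the line distance as
\[
q_m - p_i \;=\; \frac{2}{d} \;+\; \sum_{k\ge 2}\frac{\epsilon_{m+k}-\epsilon_k}{d^k},
\]
whose tail is bounded in absolute value by $\sum_{k\ge 2}2/d^k = 2/(d(d-1))$. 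For $d\ge 4$ this immediately confines $q_m-p_i$ to $(1/d,(d-1)/d)$, so the circle distance $\min(q_m-p_i,1-(q_m-p_i))$ is automatically $\ge 1/d$.

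The delicate case is $d=3$, where the leading-digit bound does not rule out $q_m-p_i$ being close to $1$. The central new ingredient I would introduce is the following arithmetic lemma: \emph{if $m,m+1,\ldots,m+l$ all divide $n_i$, then so do $1,2,\ldots,l+1$}. The proof is immediate: any $l+1$ consecutive integers contain a multiple of each $j\le l+1$, and that multiple, lying in $\{m,\ldots,m+l\}$, divides $n_i$, whence $j\mid n_i$.

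With this lemma in hand, let $l\ge 0$ be maximal such that $m+1,\ldots,m+l$ all divide $n_i$. The lemma forces $\epsilon_k=\epsilon_{m+k}=2$ for $2\le k\le l+1$, so these contributions cancel in the series. At $k=l+2$ one has $\epsilon_{l+2}=2$ (since $l+1\mid n_i$ by the lemma) but $\epsilon_{m+l+2}=0$ (by maximality of $l$), giving the decisive negative term $-2/d^{l+2}$. Since $d\ge 3$, this term strictly dominates the $\ell^1$-remainder $2/(d^{l+2}(d-1))$ bounding the subsequent contributions, yielding the strict upper bound $q_m-p_i<2/d$. Combined with the strict lower bound $q_m-p_i>1/d$ — which follows from $q_m\ge 2/d$ together with $p_i<1/d$, the latter being strict for $d=3$ because $n_i$-periodicity gives $\epsilon_{n_i+1}=\epsilon_1=0$ and so the a priori estimate $p_i\le 2/(d(d-1))=1/d$ cannot be saturated — one concludes $q_m-p_i\in(1/d,2/d)$, and hence $d_{\mathbb{R}/\mathbb{Z}}(q_m,p_i)\ge 1/d$ since both $q_m-p_i$ and $1-(q_m-p_i)$ exceed $1/d$ when $d\ge 3$.

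The main obstacle is thus the $d=3$ case: unlike for $d\ge 4$, neither one nor two leading digits suffice to separate $p_i$ from its orbit on the circle. The arithmetic lemma is precisely the step that converts divisibility information at position $m$ into divisibility at position $1$, and it is exactly this translation that exploits the tailor-made construction of $p_i$ via divisors of $n_i$.
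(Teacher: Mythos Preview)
Your approach is essentially the paper's: both work in base-$d$, isolate the leading contribution $2/d$ from the digit $\epsilon_{m+1}=2$, and then control the tail to confine $q_m-p_i$ to an interval where the circle distance is at least $1/d$. The paper organises the upper bound slightly differently—using the smallest integer $r\ge 2$ \emph{not} dividing $n_i$ (which depends only on $n_i$) rather than your $l$ (which depends on $m$)—but your arithmetic lemma translating divisibility at $m,\ldots,m+l$ into divisibility at $1,\ldots,l+1$ achieves the same effect, and your write-up is in fact more careful about why the crucial negative term $-2/d^{l+2}$ dominates the remainder.

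There is, however, one small gap. Your digit rule ``$\epsilon_j=2 \Leftrightarrow (j-1)\mid n_i$'' is only valid for the representative $j\in\{1,\ldots,n_i\}$; for larger $j$ one must first reduce modulo $n_i$. In your cancellation step you apply it at $j=m+k$ for $k\le l+2$, tacitly assuming $m+k-1<n_i$. But $m+l$ can reach $n_i$: since $m,\ldots,m+l$ are consecutive divisors of $n_i$, having $m+l=n_i$ forces $n_i-1\mid n_i$, hence $n_i=2$, $m=1$, $l=1$. In that single case $\epsilon_{m+2}=\epsilon_3=\epsilon_1=0$ (not $2$) and $\epsilon_{l+2}=\epsilon_3=0$ (not $2$), so both your cancellation at $k=2$ and your ``decisive negative term'' at $k=3$ fail as stated. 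The lemma still holds there—$p_i=1/4$, $M_3(p_i)=3/4$, distance $1/2$—but you should either treat $n_i=2$ separately or note that for $n_i\ge 3$ one always has $m+l<n_i$ (and $l+1<n_i$), which legitimises your index computations.
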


\begin{proof}
Recall that for any two points $z, w \in [0,1]$, we have $d_{\R/\Z}(z,w)= \min\{ |z-w|, 1- |z-w|\}$.
To simplify notation we drop the index $i$, and estimate the distance between $p$ and $M^m_d(p)$
for $m$ dividing $n$.
We have
\begin{align*}
\tau:= d^m p -p \mod 1
&=  \left(\sum_{k=1}^n\frac{\epsilon_{k+m} - \epsilon_{k}}{d^k} \right) \cdot \frac{d}{d-1}
\\
&=  \left(\frac2d + \sum_{k=2}^n\frac{\epsilon_{k+m} - \epsilon_{k}}{d^k} \right) \cdot \frac{d}{d-1}~.
\end{align*}
We thus get the lower bound
\[
\tau \ge \left(\frac2d - \frac1d\right) \cdot \frac{d}{d-1} =  \frac1{d-1}
~.
\]
To get an upper bound, we let $r$ be the smallest integer $\ge2$
not dividing $n$. 
Then 
\[
\tau \le  \frac2d - \frac2{d^r} + \frac1{d^r}\le \frac2{d}
\]
hence\[
d_{\R/\Z}( M_d^m(p),p) =  \min\{ \tau, 1- \tau\}
\ge \min \left\{\frac1{d-1}, 1 - \frac2{d}\right\}
\]
which concludes the proof.
\end{proof}

\medskip
\noindent {\bf Step 2}. Find a critical portrait $\Theta$ such that $\Gamma_\Theta = \Gamma$.

We enumerate the marked points $\mu(\mathcal{F})=\{ v_1, \ldots, v_r\}$ in such a way that 
$\pi^n(v_i) = v_j$ for some $n\ge0$ implies $i\ge j$.

Choose the first angle $\theta_1$ with the same combinatorics as $v_1$. 
Let $n_1\ge2$ be the minimum integer such that 
$\pi^{n_1}(p_1)$ is periodic. Then there are $(d-1)$ possibilities 
for $\pi^{n_1-1}(p_1)$ at distance either $1/d$ or $2/d$ from one to the other. 
We thus have $(d-1)d^{n_1-1}$ possibilities for
$\theta_1$ at distance either $\frac1{d^{n_1}}$ or  $\frac2{d^{n_1}}$. 
We may thus choose the angle such that 
$0 <\theta_1 < \frac2{d^{n_1}}$, and
set $\Theta_1 = \{ \theta_1, \ldots, \theta_1 + \frac{d_1-1}d\}$. 

In the same way, we choose the angle $\theta_2$ realizing the combinatorics of $p_2$, and such that 
$\theta_1 + \frac{d_1-1}d<\theta_2 < \theta_1 + \frac{d_1-1}d + \frac{2}{d^{n_2}}$, and set
$\Theta_2 = \{ \theta_2, \ldots, \theta_2 + \frac{d_2-1}d\}$. 

Observe that 
\[
\theta_2 +  \frac{d_2-1}d
< 
\theta_1 + \frac{d_1-1}d + \frac{2}{d^{n_2}} +  \frac{d_2-1}d
\le
\theta_1 + \frac{d-1}d + \frac{1}{d} \le \theta_1 +1
\]
so that $\Theta_1$ and $\Theta_2$ are unlinked. 
We may thus define inductively a critical portrait
$\Theta_1, \ldots, \Theta_N$ such that 
$\Theta_j = \{ \theta_j, \ldots, \theta_j + \frac{d_j-1}d\}$, 
$\theta_{j-1} + \frac{d_j-1}d<\theta_j < \theta_{j-1} + \frac{d_j-1}d + \frac{2}{d^{n_j}}$
hence
\[
\theta_N +  \frac{d_N-1}d
< 
\theta_1 + \sum_{j=1}^N \frac{d_j-1}d + \sum_{j=2}^N \frac{2}{d^{n_j}} 
\mathop{\le}\limits^{\delta(\Gamma)\le \frac1{2d}}
\theta_1 + \frac{d-1}d + \frac{1}{d} \le \theta_1 +1~.
\]
By construction, we have $\Gamma_\Theta = \Gamma$.

\medskip
\noindent {\bf Step 3}. Proof that $\Gamma_\Theta = \Gamma (P_\Theta)$.

Recall that $P_\Theta$ is a PCF polynomial obtained by applying Theorem~\ref{thm:kiwi}. 
We claim that $\Gamma_\Theta$ and $\Gamma (P_\Theta)$
have the same number of periodic cycles of any given period. 

Then by Proposition~\ref{lem:easy obs graph}, $\Gamma_\Theta$ is isomorphic to $\Gamma (P_\Theta)$ with the action of
the symmetry group removed, and the proof is complete. 
It thus remains to prove our claim. 

Pick any periodic point as in Step 1 of the proof. We drop the index $i$ for simplicity. Then $p$ has period $n$
and its $d$-ary expansion is of the form $p=\frac2{d^2} + \sum_{k\ge3}\frac{\epsilon_k}{d^k}$. Note that 
for any $m$ dividing $n$, we have $M_d^m(p) =\frac2{d} + \sum_{k\ge3}\frac{\epsilon_{k+m}}{d^k} \mod 1$.
It follows that $p$ belongs to the segment $(2/d^2, 3/d^2)$. Since $\theta_1 \in (0 , \frac2{d^{n_1}})$ and $n_1\ge2$,
we conclude that $p\in (\theta_1, \theta_1+ \frac1d)$. Since $d_{\R/\Z}(M^m_d(p),p)\ge \frac1d$ by
Lemma~\ref{lem-estim-distance}, $M^m_d(p)$ does not belong to the segment $ (\theta_1, \theta_1+ \frac1d)$
hence $\{M^m_d(p),p\}$ is not $\Theta$-unlinked. In particular, $\mathsf{e}(M^m_d(p)) \neq \mathsf{e}(p)$
which implies the point $\mathsf{e}(p)$ to have exact period $n$. 

This concludes the proof.


\section{Special curves in low degrees}\label{sec:class-petite}

We discuss special curves of degree $d\leq 5$.
Observe that (up to finite branched cover) the only non-isotrivial one-parameter family of degree $2$ polynomials is a curve: the family $P_t(z)=z^2+t$, and it forms a special family. 

\medskip

Recall that we denoted by $\Sigma(d,k,\mu)$ the set of monic centered polynomials of degree $d\geq2$ which can be written as $z^\mu Q(z^k)$ with $k\geq2$ maximal and $Q(0)\neq0$.

\paragraph*{Classification in degree $3$}
The special curves of cubic polynomials can be classified in the following way:

\begin{itemize}
\item either one critical point is preperiodic 
\item or the two distinct critical points belong to the same grand orbit. Remark that the unicritical family $\Sigma(3,3,0)$ is a particular example where this happens,
\item or the curve is the Zariski (or Euclidean) closure of $\Sigma(3,2,1)$: for a general polynomial $P$ in the curve, $\Sigma(P)=\mathrm{Aut}(P)=\mathbb{U}_2$. this curve can be parametrized by $P_t(z)=z(z^2+t)$, $t\in\mathbb{A}^1$.
\end{itemize}

One has a combinatorial classification in degree $3$: if the curve is is not  $\Sigma(3,2,1)$, then the dynamical graph is asymetrical 
and we have a correspondence special curves/dynamical graphs. We refer to Figure~\ref{fig:marked-degree3} for the description of special marked dynamical graphs of degree $3$.

\paragraph*{Classification in degree $4$}
The special curves of degree $4$ polynomials can be classified in the following way:

\begin{itemize}
\item either the curve is non-primitive: it can be parametrized as $P(z)= z^4 +az^2 +c$, with $4c = a^2 - 2a\zeta$ and $\zeta^3 = -1$.
\item or two critical points are preperiodic,
\item or one critical point is periodic, and the other two lie in the same grand orbit,
\item or the three distinct critical points belong to the same grand orbit. Remark that the unicritical family $\Sigma(4,4,0)$ is a particular example where this happens,
\item or the curve is the Zariski (or Euclidean) closure of a curve in $\Sigma(4,2,0)$ such that $\Sigma(P)=\mathrm{Aut}(P)=\mathbb{U}_2$, two critical points are permutted by a symmetry and
\begin{itemize}
\item either the third critical point is preperiodic,
\item or the third critical point shares the same grand orbit as one of the swapped critical points.
\end{itemize}
\item or the curve is the Zariski closure of $\Sigma(4,3,1)$: for a general polynomial $P$ in the curve, the three critical points are permuted by $\U_3$. This curve can be parametrized by $P_t(z)=z(z^3+t)$, $t\in \A^1$,
\item or the curve is the Zariski closure of $\Sigma(4,2,2)$: for a general polynomial $P$ in the curve, one critical point is fixed and the other two are swapped by the $\U_2$.This curve can be parametrized by $P_t(z)=z^2(z^2+t)$, $t\in \A^1$.
\end{itemize}

\begin{figure}[h]
\centering
\def\svgwidth{12cm}
\begingroup%
  \makeatletter%
  \providecommand\color[2][]{%
    \errmessage{(Inkscape) Color is used for the text in Inkscape, but the package 'color.sty' is not loaded}%
    \renewcommand\color[2][]{}%
  }%
  \providecommand\transparent[1]{%
    \errmessage{(Inkscape) Transparency is used (non-zero) for the text in Inkscape, but the package 'transparent.sty' is not loaded}%
    \renewcommand\transparent[1]{}%
  }%
  \providecommand\rotatebox[2]{#2}%
  \newcommand*\fsize{\dimexpr\f@size pt\relax}%
  \newcommand*\lineheight[1]{\fontsize{\fsize}{#1\fsize}\selectfont}%
  \ifx\svgwidth\undefined%
    \setlength{\unitlength}{435.75118143bp}%
    \ifx\svgscale\undefined%
      \relax%
    \else%
      \setlength{\unitlength}{\unitlength * \real{\svgscale}}%
    \fi%
  \else%
    \setlength{\unitlength}{\svgwidth}%
  \fi%
  \global\let\svgwidth\undefined%
  \global\let\svgscale\undefined%
  \makeatother%
  \begin{picture}(1,0.49947178)%
    \lineheight{1}%
    \setlength\tabcolsep{0pt}%
    \put(0,0){\includegraphics[width=\unitlength,page=1]{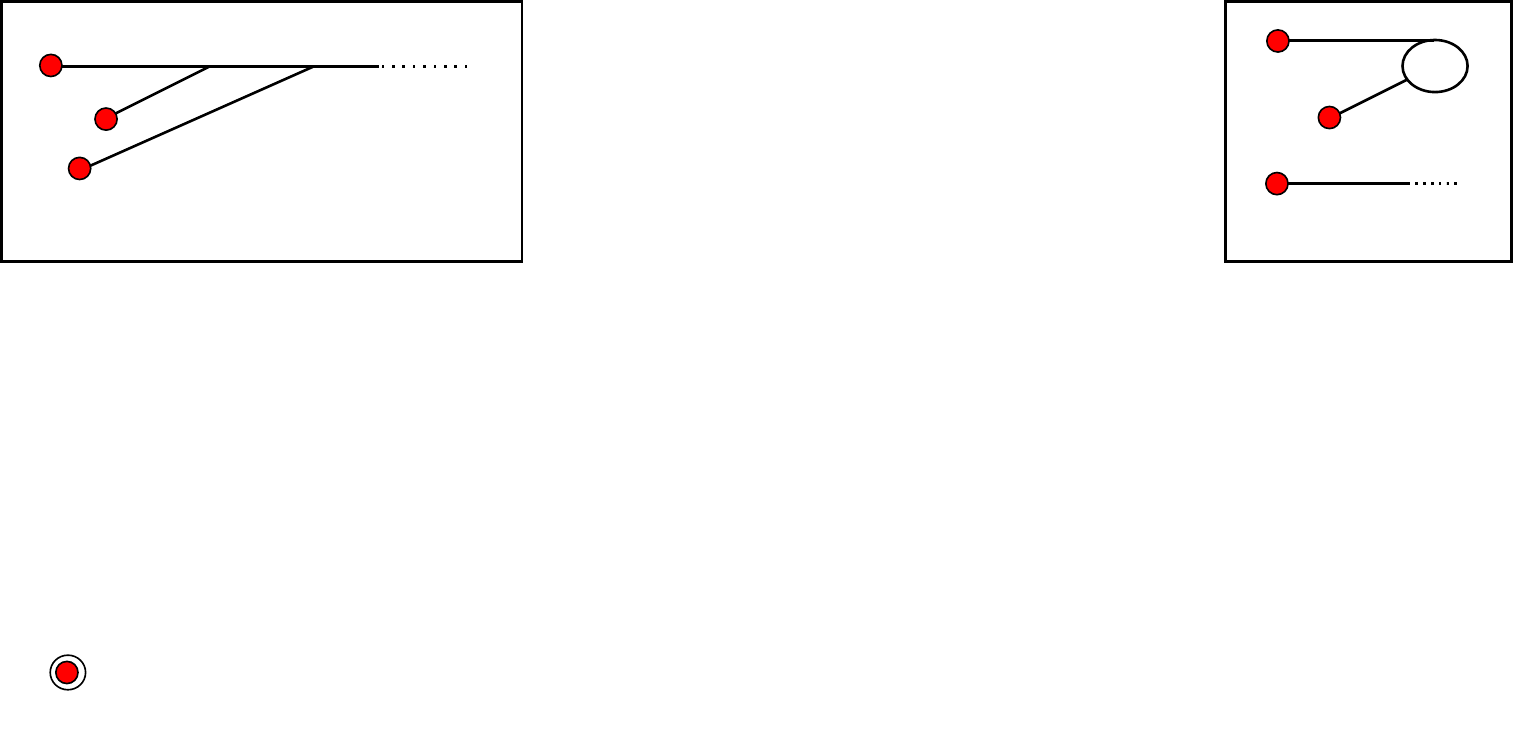}}%
    \put(0.07753771,0.04633411){\makebox(0,0)[lt]{\lineheight{1.25}\smash{\begin{tabular}[t]{l}multiple critical point \end{tabular}}}}%
    \put(0,0){\includegraphics[width=\unitlength,page=2]{marked-graph-degree4.pdf}}%
    \put(0.08043145,0.00369335){\makebox(0,0)[lt]{\lineheight{1.25}\smash{\begin{tabular}[t]{l}simple critical point \end{tabular}}}}%
    \put(0,0){\includegraphics[width=\unitlength,page=3]{marked-graph-degree4.pdf}}%
    \put(0.67822965,0.03389513){\makebox(0,0)[lt]{\lineheight{1.25}\smash{\begin{tabular}[t]{l}Direction of the flow\end{tabular}}}}%
  \end{picture}%
\endgroup%

  \caption{Asymmetrical special critically marked dynamical graphs of degree $4$}
\label{fig:clas_graph4}
\end{figure}

\paragraph*{Classification in degree $5$}
Since $5$ is prime, all special curves are primitive.
The special curves of degree $5$ polynomials can be classified in the following way:

\begin{itemize}
\item three critical points are preperiodic,
\item two critical points are preperiodic and the other two lie in the same grand orbit,
\item one critical point is preperiodic and the other three lie in the same grand orbit,
\item all four critical points lie in the same grand orbit. A particular example is the closure of $\Sigma(5,5,0)$ which is the unicritical family and can be parametrized by $P_t(z)=z^5+t$, $t\in\A^1$,
\item the curve is the closure of $\Sigma(5,2,3)$: for a general $P$ in the curve, $\Sigma(P)=\mathrm{Aut}(P)=\U_2$, two critical point are fixed and equal and the other two critical points are permuted by a symmetry,
\item the curve is the closure of $\Sigma(5,3,2)$: for a general $P$ in the curve, $\Sigma(P)=\U_3$ and $\mathrm{Aut}(P)=\U_1$, one critical point is fixed and the other three critical points are permuted by a symmetry,
\item the curve is the closure of $\Sigma(5,4,1)$: for a general $P$ in the curve, $\Sigma(P)=\mathrm{Aut}(P)=\U_4$ and all four critical points are permuted by a symmetry,
\item the curve is the closure of a curve contained in $\Sigma(5,2,1)$ such that, for a general $P$ in the curve, $\Sigma(P)=\mathrm{Aut}(P)=\U_2$, critical points are permuted by a symmetry by pairs and one critical point in each pair share the same grand orbit. 
\end{itemize}

\section{Open questions on the geometry of special curves}\label{sec:classic}

We know very little about the geometry of special curves. 

\begin{itemize}
\item[(Q1)]
Are special curves smooth in the space of monic and centered polynomials with marked critical points?  Beware that this space is a finite cover of the moduli space of critically marked polynomials, and is isomorphic to $\A^{d-1}$ (it is the quotient of $\{P_{c,a}\}$ by the action of $\U_d$ defined by $\zeta\cdot (c,a) = (c,\zeta a)$).  

In degree $3$, Milnor~\cite[\S 5]{Milnor-cubic} proved that the curve $\mathcal{S}_p$ for which the marked critical point has exact period $p$ is smooth.

\item[(Q2)] 
What is the Euler characteristic and the genus of special curves? Again in degree $3$, for curves for which one critical point is periodic, Bonifant, Kiwi and Milnor~\cite{Bonifant-Kiwi-Milnor} proved that the Euler characteristic of the curve $\mathcal{S}_p$ satisfies
\[\chi(\mathcal{S}_p)=\deg(\mathcal{S}_p)\cdot (2-p)+N_p\]
where $N_p$ is the number of branches at infinity of $\mathcal{S}_p$.
Dujardin~\cite{dujardin2} subsequently showed that
$3^{-p}\chi(\mathcal{S}_p)\to-\infty$,
as $p\to+\infty$. DeMarco-Schiff~\cite{DeMarco-Schiff} gave an algorithm to compute $N_p$ for all $p\ge1$, and implemented it for $p\leq 26$.

\item[(Q3)] 
Milnor~\cite{Milnor-cubic} also conjectured that in degree $3$, the curves $\mathcal{S}_p$ are connected (or equivalently irreducible since they are smooth). Pick any integer $d\geq3$ and any special marked dynamical graph $\Gamma$. Is the curve $C(\Gamma)$ connected (or irreducible)?

\item[(Q4)]
Estimate (or better compute) the degree of $C(\Gamma)$ in terms of the geometry of their special marked dynamical graph $\Gamma$? Note that, in degree $3$, for any integer $p$, the graph $\Gamma(\mathcal{S}_p)$ consists of a single loop of length $p$, together with an infinite half-line, so that their degrees satisfy
$\deg(\mathcal{S}_p)\sim \alpha 3^p$ for some constant $\alpha>0$.

\item[(Q5)]
How are special curves distributed in the moduli space of critically marked degree $d$ polynomials (or equivalently in $\{P_{c,a}\}$)? 

For each $i=0,\cdots, d-2$, define the bifurcation current of the $i$-th critical point by $T_{\bif, i} = dd^c g_i(c,a)$. For each multi-index $I=(i_0, \cdots, i_{d-2})$
define \[T_{\bif}^I =T_{\bif,0}^{i_0} \wedge\cdots \wedge T_{\bif,d-2}^{i_{d-2}}~.\]  
Recall that   $T_{\bif}^I =0$ iff $i_j\ge2$ for some $j$ or $|I| \ge d$.
Recall also that an algebraic subvariety is said to be special if it contains a Zariski dense subset of PCF polynomials.

Now suppose that $\Gamma_k$ is a sequence of special marked dynamical graphs such that the special curve $C_k=C(\Gamma_k)$ is well-defined
(by Theorem~\ref{thm:correspondence}).
Is any weak limit of the sequence of closed positive $(d-2,d-2)$-currents
\[\frac{1}{\deg(C_k)}[C_k]\]
 equal to some bifurcation current $T_{\bif}^I\wedge [Z]$ for some multi-index $I$ and some special algebraic subvariety $Z$ such that 
 $|I| + \codim(Z) = d-2$? 

This is proved in degree $3$ by Dujardin-Favre~\cite{favredujardin} for curves for which one critical point is preperiodic.

\item[(Q6)]
Is any finite critically marked graph realizable by a PCF polynomial (using e.g. the work of Poirier~\cite{Poirier})?
Can we remove conditions (R1) and (R2) from Theorems~\ref{thm:realizability} and~\ref{thm:correspondence}? In other words, 
are all asymmetric special marked dynamical graph realizable?
using e.g. the work of Poirier~\cite{Poirier}? Can one extend the correspondence to graphs with no-trivial symmetries?

\item[(Q7)]
Let $\Gamma$ be any special marked dynamical graph. Is it true that 
the euclidean closure of the set of polynomials having $\Gamma$ as marked dynamical graph
is a special curve?
\end{itemize}

%
%
%
%
%
%

\backmatter

\printindex

\bibliographystyle{plain}
\bibliography{biblio}

\end{document}